 \renewcommand*{\backref}[1]{}
 \renewcommand*{\backrefalt}[4]{
   \ifcase #1 %
    [No citations.]%
   \or
    [#2]%
   \else
    [#2]%
   \fi
 }
\let\oldmarginpar\marginpar
\renewcommand\marginpar[1]{\oldmarginpar[\raggedleft\footnotesize #1]%
{\raggedright\footnotesize #1}}
\renewcommand{\setminus}{{\smallsetminus}}
\newcommand{\RR}{{\mathbb{R}}}
\newcommand{\ZZ}{{\mathbb{Z}}}
\newcommand{\NN}{{\mathbb{N}}}
\newcommand{\QQ}{{\mathbb{Q}}}
\newcommand{\CalD}{{\mathcal{W}}}
\newcommand{\bdy}{{\partial}}
\newcommand{\guts}{{\rm guts}}
\newcommand{\cut}{{\backslash \backslash}}
\newcommand{\clock}{{\phi}}
\newcommand{\vol}{{\rm vol}}
\newcommand{\pitos}{{PITOS}}
\newcommand{\nsep}{{n_{\rm sep}}}
\newcommand{\negeul}{{\chi_{-}}}
\newcommand{\abs}[1]{{\left\vert #1 \right\vert}}
\newcommand{\GA}{{\mathbb{G}_A}}
\newcommand{\GB}{{\mathbb{G}_B}}
\newcommand{\GRA}{{\mathbb{G}'_A}}
\newcommand{\GRB}{{\mathbb{G}'_B}}
\newcommand{\G}{{\mathbb{G}}}
\def\co{\colon\thinspace}
\theoremstyle{plain}
\newtheorem{theorem}{Theorem}%[section]
\newtheorem{corollary}[theorem]{Corollary}
\newtheorem{lemma}[theorem]{Lemma}
\newtheorem{prop}[theorem]{Proposition}
\newtheorem*{namedtheorem}{\theoremname}
\newcommand{\theoremname}{testing}
\newenvironment{named}[1]{\renewcommand{\theoremname}{#1}\begin{namedtheorem}}{\end{namedtheorem}}
\theoremstyle{definition}
\newtheorem{define}[theorem]{Definition}
\newtheorem{remark}[theorem]{Remark}
\newtheorem{notation}[theorem]{Notation}
\newtheorem{example}[theorem]{Example}
\newtheorem{question}[theorem]{Question}
\newtheorem{problem}[theorem]{Problem}
\numberwithin{figure}{chapter}
\numberwithin{theorem}{chapter}
\numberwithin{section}{chapter}
\begin{document}

\title[Guts of surfaces and the colored Jones polynomial]{Guts of surfaces and \\ the colored Jones polynomial}
\author[D. Futer]{David Futer}
\author[E. Kalfagianni]{Efstratia Kalfagianni}
\author[J. Purcell]{Jessica S. Purcell}
%\date{ \today}

\maketitle

\centerline{ \bf \Large{Authors}}
\vspace {0.8in}

\noindent David Futer \\
Department of Mathematics \\
Temple University \\
1805 North Broad Street \\
Philadelphia, PA 19122 \\
USA	\\
\email{ Email: {\tt dfuter@temple.edu}} \\
%% \href{http://www.math.temple.edu/\%7Edfuter}{http://www.math.temple.edu/$\sim$dfuter} \\
% {Supported in part by NSF grant DMS--1007221. }

\vspace{0.4in}

\noindent Efstratia Kalfagianni \\
Department of Mathematics\\ 
Michigan State University\\ 
619 Red Cedar Road\\
East Lansing, MI 48824\\
USA \\
 \email{Email: {\tt kalfagia@math.msu.edu}}\\
 %% \href{http://www.math.msu.edu/\%7Ekalfagia}{http://www.math.msu.edu/$\sim$kalfagia} \\

% {Supported in part by NSF grants DMS--0805942 and DMS--1105843.}

\vspace{0.4in}

\noindent Jessica S. Purcell \\
Department of Mathematics \\ 
275 TMCB\\
 Brigham Young  University \\ 
Provo, UT 84602 \\
USA	\\
\email{ Email: {\tt  jpurcell@math.byu.edu}}\\
%%\href{http://www.math.byu.edu/\%7Ejpurcell}{http://www.math.byu.edu/$\sim$jpurcell}
%%\\
% {Supported in part by NSF grant DMS--1007437 and a Sloan Research Fellowship.}

\newpage
  
\centerline{ \bf \Large{Preface}}
	
\vspace{0.6in}
	
%\bigskip

%%Preface
%chapter{Preface}\label{preface}

 Around 1980, W.~Thurston proved that every knot complement satisfies the
geometrization conjecture: it decomposes into pieces that admit
locally homogeneous geometric structures.  In addition, he proved that the complement of
any non-torus, non-satellite knot admits a complete hyperbolic
metric which, by the Mostow--Prasad rigidity theorem, is is necessarily
unique up to isometry. As a result, geometric
information about a knot complement, such as its volume, gives
topological invariants of the knot.

\medskip

Since the mid-1980's, knot theory has also been invigorated by ideas from
quantum physics, which have led to powerful and subtle knot
invariants, including the Jones polynomial and its relatives, the
\emph{colored} Jones polynomials.  Topological quantum field theory
predicts that these quantum invariants are very closely connected to
geometric structures on knot complements, and particularly to
hyperbolic geometry.  The \emph{volume conjecture} of
R. Kashaev, H.~Murakami, and J.~Murakami, which asserts that the volume of a hyperbolic
knot is determined by certain asymptotics of colored Jones
polynomials, fits into the context of these predictions.  Despite
compelling experimental evidence, these conjectures are currently
verified for only a few examples of hyperbolic knots.

\medskip

This monograph initiates a systematic study of relations between quantum
and geometric knot invariants.  Under mild diagrammatic hypotheses
that arise naturally in the study of knot polynomial invariants ($A$--
or $B$--adequacy), we derive direct and concrete relations between
colored Jones polynomials and the topology of incompressible spanning
surfaces in knot and link complements.  We prove that the growth of
the degree of the colored Jones polynomials is a boundary slope of an
essential surface in the knot complement, and that certain
coefficients of the polynomial measure how far this surface is from
being a fiber in the knot complement. In particular, the surface is a
fiber if and only if a certain coefficient vanishes.

\medskip

Our results also yield concrete relations between hyperbolic geometry
and colored Jones polynomials: for certain families of links,
coefficients of the polynomials determine the hyperbolic volume to
within a factor of $4$.  Our methods here provide a deeper and more
intrinsic explanation for similar connections that have been
previously observed.

\medskip

Our approach is to generalize the checkerboard decompositions of
alternating knots and links. For $A$-- or $B$--adequate diagrams, we
show that the checkerboard knot surfaces are incompressible, and
obtain an ideal polyhedral decomposition of their complement.  We use
normal surface theory to establish a dictionary between the pieces of
the JSJ decomposition of the surface complement and the combinatorial
structure of certain spines of the checkerboard surface (state
graphs).  In particular, we give a combinatorial formula for the
complexity of the hyperbolic part of the JSJ decomposition (the guts)
of the surface complement in terms of the diagram of the knot, and use
this to give lower bounds on volumes of several classes of knots.
Since state graphs have previously appeared in the study of Jones
polynomials, our setting and methods create a bridge between quantum
invariants and geometries of knot complements.

%\newpage

\tableofcontents

%% 1
\chapter{Introduction}\label{sec:intro}
In the last three decades, there has been significant progress in
3--dimensional topology, due in large part to the application of new
techniques from other areas of mathematics and from physics.  On the
one hand, ideas from geometry have led to geometric decompositions of
3--manifolds and to invariants such as the $A$--polynomial and
hyperbolic volume.  On the other hand, ideas from quantum physics have
led to the development of invariants such as the Jones polynomial and
colored Jones polynomials.  While ideas generated by these invariants
have helped to resolve several problems in knot theory, their
relationships to each other, and to classical knot topology, are still
poorly understood.  Topological quantum field theory predicts
that these invariants are in fact tightly related, as does mounting
computer evidence.  However, there are few proofs and many open
problems in this area.

In this monograph, we initiate a systematic study of relations between
quantum knot invariants and geometries of knot complements.  We
develop the setting and machinery that allows us to establish direct
and concrete relations between colored Jones knot polynomials and
geometric knot invariants.  In several instances, our results provide
deeper and more intrinsic explanations for the connections between
geometry and quantum topology that have been observed in special cases
in the past.  In addition, this work leads to some surprising new
relations between the two areas, and offers a promising environment
for further exploring such connections.

We begin with some history and background on the problems under
consideration, then give an overview of the work contained in this
manuscript, including some of the results mentioned above.

%%%%%%%%%%%%%%%%%%%%%%%%%%%%%%%%%%%%%%%%%%%%%%%%%%%%%%%%%%%%%%%%%
\section{History and motivation}

W.~Thurston's ground-breaking work in the late 1970s established
the ubiquity and importance of hyperbolic geometry in three--dimensional
topology.  In fact, hyperbolic 3--manifolds had been studied since
the beginning of the 20th century as a subfield of complex analysis.
In the 1960s and '70s, Andreev \cite{Andreev2, Andreev1}, Riley
\cite{Riley1, Riley}, and J{\o}rgensen \cite{Jorgensen} found 
several families of hyperbolic 3--manifolds
with increasingly complex topology.  In particular, Riley constructed the first
examples of hyperbolic structures on complements of knots in the
3--sphere.  In a different direction, Jaco and Shalen
\cite{jaco-shalen} and Johannson \cite{johannson} found a canonical way to decompose a $3$--manifold along surfaces
of small genus (this is now called the \emph{JSJ decomposition} or \emph{torus decomposition})\index{JSJ decomposition}.  In
particular, they observed that \emph{simple} $3$--manifolds, i.e.\ ones  that do not contain
homotopically essential spheres, disks, tori or annuli, have fundamental groups that share similar
properties with the groups of hyperbolic 3--manifolds. 
Thurston's major insight was  that the pieces of the JSJ decomposition
should admit locally homogeneous geometric structures, and furthermore that the simple pieces should admit complete hyperbolic structures. This insight was formalized in the celebrated \emph{geometrization conjecture}. Thurston proved the conjecture for $3$--manifolds with non-empty boundary \cite{thurston:bulletin}, among others. In 2003, Perelman proved the general conjecture \cite{perelman02,
  perelman03, morgan-tian}.

A special case of Thurston's theorem  \cite{thurston:bulletin} is that
link complements in the 3--sphere satisfy the geometrization
conjecture.  In particular, the complement of any
non-torus, non-satellite knot must admit a complete hyperbolic metric.  By Mostow--Prasad rigidity
\cite{mostow, prasad}, this hyperbolic structure is unique up to
isometry.  As a result, geometric information about a hyperbolic knot  complement,
such as its volume, gives topological knot invariants. \index{hyperbolic volume} For
arbitrary knots, one can obtain a similar invariant, called the \emph{simplicial volume}\index{simplicial volume}, by considering the sum of the volumes of the
hyperbolic components in the JSJ decomposition.  The simplicial volume is a constant multiple of the Gromov norm of the knot complement
\index{Gromov norm}
\cite{gromov}.

Since the mid-1980s, low--dimensional topology has also been
invigorated by ideas from quantum physics, which have led to powerful
and subtle invariants.  The first major invariant along these lines is the celebrated
Jones polynomial, first formulated by Jones in 1985 using operator algebras \cite{jones:announce}. Soon after, Kauffman described a direct construction of the polynomial using the combinatorics of link projections  \cite{KaufJones}, and several authors 
 generalized it to links and trivalent graphs
\cite{HOMFLY, Jones,  Kauf2variable, ReTugraphs}. Witten showed that the Jones polynomial
of links in the 3--sphere has an interpretation in terms of a $2+1$ dimensional
\emph{topological quantum field theory} (TQFT)\index{topological quantum field theory (TQFT)}.  At the
same time, he introduced new invariants for links in arbitrary
3--manifolds, as well as invariants of 3--manifolds \cite{Wittengravity, WittenJones}.  The resulting theory, although defined only at the
physical level of rigor, predicted that the Jones--type invariants and
their generalizations are intimately connected to geometric structures
of 3--manifolds, and particularly to hyperbolic geometry \cite[page
  77]{Wittengravity}.  As explained by Atiyah \cite{Abook}, the TQFT
proposed by Witten is completely characterized by
certain ``gluing axioms.''  In the late 1980s, Reshetikhin and Turaev gave the first 
mathematically rigorous construction of
a TQFT that fit this axiomatic description  \cite{ReTu}.  
Unlike that
of \cite{WittenJones}, which is intrinsically 3--dimensional, the
constructions of \cite{ReTu}, as well as those of \cite{KaufJones, ReTugraphs}, relied on combinatorial descriptions of
3--manifolds and the representation theory of quantum groups.  
This
approach makes it harder to establish connections with the geometry of
3--manifolds. 

In the 1990s, Kashaev defined an infinite family of complex
valued invariants of links in 3--manifolds, using the combinatorics of
triangulations and the quantum dilogarithm function  \cite{kashaev6j}. 
For links in the 3--sphere, these invariants can also be formulated in
terms of tangles and $R$--matrices \cite{kashaevdilog}.
Kashaev's invariants are parametrized by the positive integers; there is
an invariant for each $n \in \NN$.  He
conjectured that the large--$n$ asymptotics of these invariants determine
the volume of hyperbolic knots \cite{kashaev:vol-conj}.   Building on these works,
 H.~Murakami and J.~Murakami were able to
recover Kashaev's invariants as special values of the \emph{colored}
Jones polynomials: an infinite family of polynomials, closely related
to the Jones polynomial, also parametrized by $n \in \NN$ \cite{ murakami:vol-conj}.  As a result, Kashaev's original conjecture
has been reformulated into the  \emph{volume conjecture}\index{volume conjecture},
which asserts that the volume of a hyperbolic knot is determined by
the large--$n$ asymptotics of the colored Jones polynomials.
Furthermore, Murakami and Murakami
generalized the conjecture to all knots in $S^3$ by replacing the hyperbolic
volume with the simplicial volume  \cite{murakami:vol-conj}.  The volume conjecture fits into
a more general, conjectural framework relating hyperbolic
geometry and quantum topology; for details, see the survey papers 
\cite{gukov:vc, murakamisurvey} and references therein.  Despite compelling
experimental evidence, the aforementioned conjectures are currently
known for only a few examples of hyperbolic knots.

At the same time, a growing body of evidence points to strong
relations between the coefficients of the Jones and colored Jones
polynomials and the volume of hyperbolic links.  One such form of
evidence consists of numerical computations, for example those by
Champanerkar, Kofman, and Patterson \cite{ckp:simplest-knots}.  A
second form of evidence consists of theorems proved for several
classes of links, for example alternating links by Dasbach and Lin
\cite{dasbach-lin:head-tail}. The authors of this monograph have 
extended those results to closed $3$--braids \cite{fkp:farey},
highly twisted links \cite{fkp:filling}, and certain sums of
alternating tangles \cite{fkp:conway}.  The approach in all of these 
results is somewhat indirect,
in that they relate hyperbolic volume to the Jones polynomial by
estimating both quantities in terms of the twist number of a link
diagram\index{twist number!and hyperbolic volume}.  To mention two examples, for alternating links the result
follows from Lackenby's volume estimate in terms of the twist number
in any alternating projections \cite{lackenby:volume-alt} and the
relation of the twist number to the colored Jones polynomial observed
by Dasbach and Lin \cite{ dasbach-lin:head-tail}.  For highly twisted
links, our argument works as follows. First, we proved an effective version of 
Gromov and Thurston's
$2\pi$--theorem and applied it to estimate the hyperbolic link volume in terms of the
twist number of any highly twisted projection.  Second, we relied on
the combinatorial properties of \emph{Turaev surfaces}\index{Turaev surface}, 
as studied in \cite{dasbach-futer...}, to relate the
twist numbers to the coefficients of Jones polynomials.  However, for
general links, twist numbers have a highly imperfect relationship to
hyperbolic volume  \cite{fkp:coils}.  This
limits the applicability of these methods to special families of knots and links.

In this monograph, we modify our approach to these problems, focusing on
the topology of incompressible surfaces in knot complements and their
relations to the colored Jones knot polynomials.  Our motivation for
the project has been two-fold.  On the one hand, certain spanning
surfaces of knots have been shown to carry information on colored
Jones polynomials \cite{dasbach-futer...}.  On the other hand,
essential surfaces also shed light on volumes of manifolds
\cite{ast} and additional geometry and topology
(e.g. \cite{adams:quasi-fuchsian, menasco:incompress, miyamoto}).
With these ideas in mind, we develop a machine that allows us to
establish relationships between colored Jones polynomials and
topological/geometric invariants.

For example, under mild diagrammatic hypotheses that arise naturally in
the study of Jones--type polynomials, we show that the growth of the
degree of the colored Jones polynomials is a \emph{boundary
  slope}\index{boundary slope} of an essential surface in the knot
complement, as predicted by Garoufalidis
\cite{garoufalidis:jones-slopes}.  Furthermore, certain coefficients
of the polynomials measure how far this surface is from being a fiber
in the knot complement.  Our work leads to direct and detailed
relations between hyperbolic geometry and Jones--type polynomials: for
certain families of links, coefficients of the Jones and colored Jones
polynomials determine the hyperbolic volume to within a factor of $4$.
Compared to previous arguments, which were all somewhat indirect, the
way in which our machine produces volume inequalities gives a clearer
and deeper conceptual explanation for why the hyperbolic volume should
be related to particular coefficients of  the Jones polynomial.

A survey of this monograph, in which the main theorems are illustrated by a running example,  is given in \cite{fkp:survey}. 

%%%%%%%%%%%%%%%%%%%%%%%%%%%%%%%%%%%%%%%%%%%%%%%%%%%%%%%%%%%%%%%%%
\section{State graphs, and state surfaces far from fibers}
We begin with some terminology and conventions.  Throughout this
manuscript, $D = D(K)$ will denote a link diagram, in the equatorial
$2$--sphere of $S^3$. It is worth pointing out two conventions. First,
we always assume (without explicit mention) that link diagrams are
connected. Second, we abuse notation by referring to the projection
$2$--sphere using the common term \emph{projection plane}. In
particular, $D(K)$ cuts the projection ``plane'' into compact regions.

Let $D(K)$ be a (connected) diagram of a link $K$, as above, and let
$x$ be a crossing of $D$.  Associated to $D$ and $x$ are two link
diagrams, each with one fewer crossing than $D$, called the
\emph{$A$--resolution}\index{$A$--resolution} and
\emph{$B$--resolution}\index{$B$--resolution} of the crossing.
\begin{figure}[h]
	\centerline{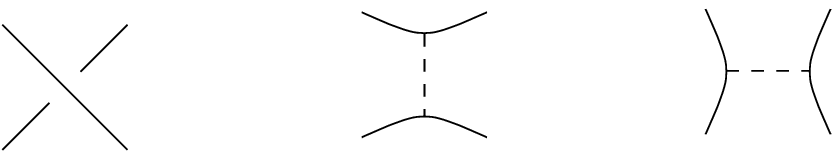}
\caption{$A$-- and $B$--resolutions at a crossing of
  $D$.\index{$A$--resolution}\index{$B$--resolution}}
\label{fig:splicing}
\end{figure}

\begin{define}\label{def:reduced}
A \emph{state}\index{state} $\sigma$ is a choice of $A$-- or
$B$--resolution at each crossing of $D$.  Resolving every crossing, as
in Figure \ref{fig:splicing}, gives rise to a crossing--free diagram
$s_\sigma(D)$, which is a collection of disjoint circles in the
projection plane.  Thus one obtains a \emph{state graph}\index{state graph} $\mathbb{G}_\sigma$, whose vertices correspond to circles of
$s_\sigma$ and whose edges correspond to former crossings.  For a
given state $\sigma$, the \emph{reduced state graph}\index{state graph!reduced}\index{reduced state graph} $\mathbb{G}'_\sigma$ is
the graph obtained from $\mathbb{G}_\sigma$ by removing all multiple
edges between pairs of vertices.
\end{define}

The notion of states on link diagrams was first considered by Kauffman
\cite{KaufJones} during his construction of the bracket polynomial
that provided a new construction and interpretation of the Jones
polynomial.

Our primary focus is on the all--$A$ and all--$B$ states. The
crossing--free diagram $s_A(D)$\index{state!all--$A$ state}\index{all--$A$ state} is obtained by applying the
$A$--resolution to each crossing of $D$.  Its state graph is denoted
$\GA$ or $\GA(D)$\index{$\GA$, $\GB$: state graph}, and its reduced
state graph $\GRA$ or $\GRA(D)$\index{$\GRA$, $\GRB$: reduced state graph}.  Similarly, for the all--$B$ state
$s_B(D)$\index{state!all--$B$ state}\index{all--$B$ state}, the state
graph is denoted $\GB$, and the reduced state graph $\GRB$.

To a state $\sigma$, we associate a \emph{state surface}\index{state surface}
$S_\sigma$ as follows.  The state circles of $\sigma$ bound disjoint
disks in the $3$--ball below the projection plane; these disks can be 
connected to one another by half--twisted bands at the crossings.  The
surface $S_\sigma$ will have $\bdy S_\sigma = K$.  A special case of
this construction is the Seifert surface constructed from the diagram
$D(K)$, where the state $\sigma$ is determined by an orientation on $K$. 

When $\sigma$ is the all--$A$ or all--$B$ state, the surfaces
$S_{\sigma}$ hold significance for both geometric topology and quantum
topology.  The graph $\GA$ canonically embeds as a spine of the
surface $S_A$.  On the quantum side, the combinatorics of this
embedding can be used to recover the colored Jones polynomials
$J^n_K(t)$ \cite{dasbach-futer..., dasbach-lin:head-tail}.  On the
geometric side, as we will see below, the combinatorics of $\GA$
dictates a geometric decomposition of the 3--manifold $M_A$ obtained
by cutting the link complement along the surface $S_A$.  Because every
statement has a $B$--state counterpart (by taking a mirror of the
diagram), we will mainly discuss the all--$A$ state for ease of
exposition.

\begin{define}\label{def:cut}
Let $M=S^3\setminus K$ denote the 3--manifold with torus boundary
component(s) obtained by removing a tubular neighborhood of $K$ from
$S^3$. Let $S_A$ be the all--$A$ state surface, as above, and let
$M\cut S_A$\index{$M\cut S_A$} denote the path--metric closure of $M
\setminus S_A$.  Note that $(S^3\setminus K)\cut S_A$ is homeomorphic
to the 3--manifold $S^3\cut S_A$ obtained by removing a regular
neighborhood of $S_A$ from $S^3$.  We will usually write $S^3\cut S_A$
for short, and denote this manifold with boundary by $M_A$\index{$M_A = S^3 \cut S_A$}.

We will refer to $P=\bdy M_A \cap \bdy M$ as the \emph{parabolic
  locus}\index{parabolic locus} of $M_A$.  This parabolic locus
consists of annuli.  The remaining, non-parabolic boundary $\bdy M_A
\setminus \bdy M$ is the unit normal bundle of $S_A$.
\end{define}

\begin{define}\label{def:essential}
Let $M$ be an orientable $3$--manifold and $S \subset M$ a properly embedded surface. 
We say that $S$ is \emph{essential}\index{essential surface} in $M$ if the boundary of a regular neighborhood of $S$, denoted $\widetilde{S}$, 
 is incompressible and boundary--incompressible. If $S$ is orientable, then $\widetilde{S}$ consists of two copies of $S$, and the definition is equivalent to the standard notion of ``incompressible and boundary--incompressible.''
 If $S$ is non-orientable, this is equivalent to $\pi_1$--injectivity of $S$, the stronger of two possible senses of incompressibility.

In the setting of Definition \ref{def:cut}, the surface $S_A$ is often non-orientable. In this case, $S^3 \cut \widetilde{S_A}$ is the disjoint union of $M_A = S^3 \cut S_A$ and a twisted $I$--bundle over $S_A$. Since we are interested in the topology of $M_A$, 
 it is appropriate to look at the incompressibility of $\widetilde{S_A}$. 
\end{define}

Guided by the combinatorial structure of the state graph $\GA$, we
construct a decomposition of $M_A$ into topological balls.  The
connectivity properties of $\GA$ govern the behavior of this
decomposition; in particular, if $\GA$ has no loop edges, we obtain a
decomposition of $M_A$ into checkerboard ideal polyhedra with
4--valent vertices (Theorem \ref{thm:simply-connected}).  This
decomposition generalizes Menasco's decomposition of alternating link
complements, which has been used frequently in the literature
\cite{menasco:polyhedra}.  As a first application of our machinery, we
use normal surface theory with respect to our polyhedral decomposition
to give a new proof of the following theorem of Ozawa \cite{ozawa}.

\begin{named}{Theorem \ref{thm:incompress} ({\rm Ozawa \cite{ozawa}})}
Let $D(K)$ be a diagram of a link $K$.  Then the all--$A$ state 
surface $S_A$ is essential
 in $S^3
\setminus K$ if and only if $\GA$ contains no 1--edge loops.
Similarly, the surface $S_B$ is essential
in $S^3 \setminus K$ if and only if $\GB$
contains no 1--edge loops.
\end{named}

Our polyhedral decomposition is designed to provide much more detailed
information about the topology and geometry of $M_A = S^3 \cut S_A$.
In particular, we can characterize exactly when the surface $S_A$ is a
fiber of the link complement.

\begin{named}{Theorem \ref{thm:fiber-tree}}
Let $D(K)$ be any link diagram, and let $S_A$ be the spanning surface
determined by the all--$A$ state of this diagram.  Then the following
are equivalent:
\begin{enumerate}
\item[\eqref{item:tree}] The reduced graph $\GRA$ is a tree.
\item[\eqref{item:fiber}] $S^3 \setminus K$ fibers over $S^1$, with fiber $S_A$.
\item[\eqref{item:semi-fiber}] $M_A = S^3 \cut S_A$ is an $I$--bundle over $S_A$.
\end{enumerate}
\end{named}

It is remarkable to note that the state graph connectivity conditions
that ensure incompressibility of the state surfaces first arose in the
study of Jones--type knot polynomials. The following definition,
formulated by Lickorish and Thistlethwaite \cite{lick-thistle,
  thi:adequate}, captures exactly the class of link diagrams whose
polynomial invariants are especially well--behaved.
  
\begin{define}\label{def:adequate}
A link diagram $D(K)$ is called $A$--adequate\index{$A$--adequate}
(resp. $B$--adequate\index{$B$--adequate}) if $\GA$ (resp. $\GB$) has
no 1--edge loops.  If both conditions hold for a diagram $D(K)$, then
$D(K)$ and $K$ are called \emph{adequate}\index{adequate diagram}.  
If $D(K)$ is either $A$-- or $B$--adequate, then $D(K)$ and $K$ are
called \emph{semi-adequate}\index{semi-adequate diagram}.  As we will
discuss in the next section, the hypothesis of semi-adequacy is rather
mild. 
\end{define}

Building on Theorem \ref{thm:fiber-tree}, we start with an
$A$--adequate diagram $D$ and strive to understand the geometric and
topological complexity of $S^3 \cut S_A$.  In Chapter
\ref{sec:decomp}, we will see that the 3--manifold $M_A = S^3 \cut
S_A$ is in fact a handlebody, and thus atoroidal.  The annulus version
of the JSJ decomposition theory \cite{jaco-shalen, johannson}\index{JSJ decomposition} provides
a way to cut $M_A$ along annuli (disjoint from the parabolic locus)
into three types of pieces: $I$--bundles over sub-surfaces of $S_A$,
Seifert fibered spaces, and the \emph{guts}\index{guts}, which is the
portion that admits a hyperbolic metric with totally geodesic
boundary.  The Seifert fibered components are solid tori. Thus
$\chi(\guts(M_A))= 0$ precisely when $\guts(M_A)=\emptyset$ and $M_A$
is a union of $I$--bundles and solid tori.  In this case, $M_A$ is
called a \emph{book of $I$--bundles}\index{book of $I$--bundles} and
$S_A$ is called a \emph{fibroid}\index{fibroid} \cite{culler-shalen}.
The guts are the complex, interesting pieces of the geometric
decomposition of $M_A$.  Because hyperbolic surfaces, and guts, have
negative Euler characteristic, it is convenient to work with the
following definition.

\begin{define}\label{def:neg-euler}
Let $Y$ be a compact cell complex, whose connected components are
$Y_1, \ldots, Y_n$. Then the Euler characteristic of $Y$ can be split
into positive and negative parts:\index{negative Euler characteristic}

$$
\chi_+(Y) = \sum_{i=1}^{n} \max \{  \chi(Y_i), \, 0 \}, \qquad 
\negeul(Y) = \sum_{i=1}^{n} \max \{ - \chi(Y_i), \, 0 \}.
\index{$\xi$@$\negeul(Y)$, $\chi_+(Y)$} 
$$ It follows immediately that $\chi(Y) = \chi_+ (Y) - \negeul(Y).$
This notation is borrowed from the Thurston norm \cite{thurston:norm}.
By convention, when $Y = \emptyset$, the above sums have no terms,
hence $\chi_+(\emptyset) = \negeul(\emptyset) = 0$.
\end{define}

The negative Euler characteristic $\negeul(\guts(M_A))$ serves as a
useful measurement of how far $S_A$ is from being a fiber or a fibroid
in $S^3 \setminus K$. 
In fact, $\negeul(\guts)$ is a key measurement
of complexity in Agol's virtual fibering criterion \cite{agol:fibering-criterion}, which 
is needed in the proof  of the virtual fibering conjecture for hyperbolic 
$3$--manifolds \cite{agol:virtual-haken}.  The Euler
characteristic of guts also has a direct connection to hyperbolic
geometry.  Agol, Storm, and Thurston have shown that for any essential
surface $S$ in a hyperbolic $3$--manifold $M$, a constant times
$\negeul(\guts(M))$ gives a lower bound for $\vol(M)$ \cite{ast}. This
is applied below, in Section \ref{sec:intro-volume}.  On the other
hand, the Euler characteristic $\chi(\GRA)$ of the reduced graph
$\GRA$ first arose in the study of Jones--type polynomials
\cite{dasbach-lin:head-tail, stoimenow:coeffs}, and in fact expresses
one of their coefficients. This is explored in Section
\ref{sec:intro-jones}.

One of our main results is a diagrammatic formula for the guts of
state surfaces for all $A$--adequate diagrams. In relating guts to
reduced state graphs, it provides a bridge between hyperbolic geometry
and quantum topology.

\begin{named}{Theorem \ref{thm:guts-general}}
Let $D(K)$ be an $A$--adequate diagram, and let $S_A$ be the
essential spanning surface determined by this diagram. Then
$$\negeul( \guts(S^3 \cut S_A))= \negeul (\GRA) - || E_c||,$$
where $|| E_c|| \geq 0$ is a diagrammatic quantity defined in
Definition \ref{def:ec}. 
\end{named}

In many cases, the correction term $ || E_c||$ vanishes. For example,
this happens for alternating links \cite{lackenby:volume-alt}, as well
as for most Montesinos links.  See Theorem \ref{thm:monteguts}, stated
on page \pageref{monte-statement-intro} and Corollary
\ref{cor:onlybigons} on page \pageref{cor:onlybigons}.  In each of
these cases, Theorem \ref{thm:guts-general} says that a geometric
quantity, $\negeul(\guts(M_A))$, is equal to $\negeul(\GRA)$, which,
as shown in \cite{dasbach-lin:head-tail}, expresses a coefficient of
the Jones polynomial.

%%%%%%%%%%%%%%%%%%%%%%%%%%%%%%%%%%%%%%%%%%%%%%%%%%%%%%%%%%%%%%%%%
\section{Which links are semi-adequate?}\label{subsec:largeclass}

We will be considering semi-adequate links throughout this
manuscript. (After taking a mirror if necessary, such a link is
$A$--adequate.)  Before we continue with the description of our
results, it is worth making some remarks about the class of
semi-adequate links. It turns out that the class is very broad, and
that the condition that a knot be semi-adequate seems to be rather
mild.  For example, with the exception of two 11--crossing knots that
we will discuss below, and a handful of 12--crossings knots, all knots
with at most 12 crossings are semi-adequate.  Furthermore, every
minimal crossing diagram for each of these semi-adequate knots is
semi-adequate \cite{stoimenow:coeffs, thi:adequate}.  Thus, apart from
a few exceptions, our results in this monograph apply directly to the
diagrams in the knot tables up to 12 crossings.  The situation is
similar with the larger tabulated knots: Stoimenow has computed that
among the 253,293 prime knots with 15 crossings tabulated in
\cite{knotscape}, at least 249,649 are semi-adequate \cite{stoimenow}.

Several well--studied families of links are semi-adequate. These
include alternating links, positive or negative closed braids, all
closed $3$--braids, all Montesinos links, and planar cables of all of
the above.  We refer the reader to \cite{lick-thistle, stoimenow,
  thi:adequate} for more discussion and examples.

Nevertheless, there exist knots and links that are not semi-adequate.
Before discussing examples, we recall that the Jones polynomial can be
used to detect semi-adequency.  Indeed, the last coefficient of an
$A$-adequate link must be $\pm 1$. Similarly, the first coefficient of
an $B$-adequate link must be $\pm 1$ \cite{thi:adequate}.  With the
notation of Knotinfo \cite{knotinfo}, the knot $K=11n_{95}$ has Jones
polynomial equal to $J_K(t)= 2t^2-3t^3+5t^4-6t^5+6t^6-5t^7+4t^8-2t^9$.
Hence, $K$ is not semi-adequate; this is the first such knot in the
knot tables.  An infinite family of non semi-adequate knots, detected
by the extreme coefficients of their Jones polynomial, can be obtained
by \cite[Theorem 5]{manchon}. However, as we discuss below, the
extreme coefficients of the Jones polynomial are not a complete
obstruction to semi-adequecy.

Thistlethwaite \cite{thi:adequate} showed that certain coefficients of
the 2--variable Kauffman polynomial \cite{Kauf2variable} provide the 
obstruction to semi-adequacy. Building on Thistlethwaite's
results, Stoimenow obtained a set of semi-adequacy
criteria and applied them to several knots whose adequacy could not be
determined by the Jones polynomial.  For example, he showed that the
knot $K'=11n_{118}$ is not semi-adequate. Note that
in this case, the last coefficient of the Jones polynomial, 
$J_{K'}(t)=2t^2-2t^3+3t^4-4t^5+4t^6-3t^7+2t^8-t^9$, is $-1$.

Ozawa has considered link diagrams and Kauffman states $\sigma$ that are
adequate\index{$\sigma$--adequate} (meaning ${\G}_\sigma$
has no $1$--edge loops) and
homogeneous\index{$\sigma$--homogeneous}\index{homogeneous state}\index{state!homogeneous} (meaning ${\G}_\sigma$ contains a
set of cut vertices that decompose it into a collection of all--$A$
and all--$B$ state graphs) \cite{ozawa}.  See Definition
\ref{def:sigma-homo} for more details.  
%	\marginpar{Decide on terminology. Or punt decision?} 
%	Links that admit a diagram with
%	such a state are called \emph{adequate} and
%	\emph{homogeneous}.  
Semi-adequate diagrams clearly have this
property, but the class of \cite{ozawa} is broader.  As an example,
consider the 12--crossing knot $K''=12n_{0706}$.  This is not 
semi-adequate since both the extreme coefficients of the Jones polynomial are equal
to 2.  Indeed
$J_{K''}(t)=2t^{-4}-4t^{-3}+6t^{-2}-8t^{-1}+9-8t+6t^2-4t^3+2t^4$.
However $K''$ can be written as a $5$--string braid that is
homogeneous in the sense of Cromwell \cite{cromwell}.  Thus the
Seifert state of this closed braid diagram is homogeneous and
adequate.

Ozawa proved that the state surface $S_\sigma$ corresponding to a
$\sigma$--adequate, $\sigma$--homogeneous diagram is always essential in
$S^3 \setminus K$.  In \cite{futer}, Futer gave a direct proof of a slightly weaker version of 
Theorem
\ref{thm:fiber-tree}, and also  generalized it  to $\sigma$--adequate, $\sigma$--homogeneous  link diagrams.
It turns out that many properties of the polyhedral decompositions
that we develop below, as well as a number of results proved using
the polyhedral decomposition, also extend to all adequate,
homogeneous states.  
See Sections
\ref{subsec:generalization}, \ref{subsec:idealsigma},
\ref{sec:gensigmahomo},  \ref{subsec:spanningsigma} where, in particular, we obtain analogues
of Theorems \ref{thm:incompress}, \ref{thm:fiber-tree} and \ref{thm:guts-general} in this generalized setting.  Our study of the
geometry of such links is continued in \cite{fkp:qsf}.

%%%%%%%%%%%%%%%%%%%%%%%%%%%%%%%%%%%%%%%%%%%%%%%%%%%%%%%%%%%%%%%%%
\section{Essential surfaces and  colored Jones polynomials}\label{sec:intro-jones}

The Jones and colored Jones polynomials have many known connections to
the state graphs of diagrams. To specify notation, let
$$ J^n_K(t)= \alpha_n t^{m_n}+ \beta_n t^{m_n-1}+ \ldots + \beta'_n
t^{r_n+1}+ \alpha'_n t^{r_n},
$$ denote the $n$-th \emph{colored Jones polynomial}\index{colored Jones polynomial} of a link $K$.  Recall that $J^2_K(t)$ is the
usual Jones polynomial. 
Consider the sequences
$$js_K:= \left\{{{ 4m_n}\over {n^2} } \: : \: n > 0\right\} \quad
\mbox{and} \quad js^*_K:= \left\{ {{ 4 r_n}\over {n^2}} \: : \: n > 0
\right\}.$$

Garoufalidis'  slope conjecture\index{slope conjecture}  predicts that for
each knot $K$, every cluster point (i.e., every limit of a
subsequence) of $js_K$ or $js^*_K$ is a \emph{boundary slope}\index{boundary slope} of $K$ \cite{garoufalidis:jones-slopes}, i.e.\ a fraction $p/q$ such that the homology class $p \mu + q \lambda$ occurs as the boundary of an essential surface in $S^3 \setminus K$.

For a given diagram $D(K)$, there is a
lower bound for $r_n$ in terms of data about the state graph $\GA(D)$,
and this bound is sharp when $D(K)$ is $A$--adequate.  Similarly,
there is an upper bound on $m_n$ in terms of $\GB$ that is realized
when $D(K)$ is $B$--adequate \cite{lickorish:book}.  In
\cite{fkp:PAMS}, building on these properties and using Theorem
\ref{thm:incompress}, we relate the extreme degree of $J^n_K(t)$
to the boundary slope of $S_A$, as predicted by
the slope conjecture.

\begin{theorem}[\cite{fkp:PAMS}]\label{thm:slopes}
Let $D(K)$ be an $A$--adequate diagram of a knot $K$ and let $b(S_A)
\in {\ZZ}$ denote the boundary slope of the essential surface $S_A$.
Then
$$\lim_{n\to \infty} {\frac {4 r_n}{n^2}}= b(S_A),$$
where $r_n$ is the lowest degree of $J^n_K(t)$.
\end{theorem}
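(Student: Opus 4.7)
The plan is to prove the equality of two quadratic-in-$n$ quantities computed independently of one another: first, the colored Jones minimum degree $r_n$ expressed diagrammatically via the $A$--adequate hypothesis; second, the boundary slope $b(S_A)$ expressed diagrammatically via the construction of the state surface. Both will turn out to be explicit functions of the crossing number $c(D)$, the writhe $w(D)$, and the number $|s_A|$ of state circles of $s_A(D)$, and the theorem will follow by matching coefficients.

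First I would compute $r_n$ via the Kauffman bracket. Applying the Jones--Wenzl idempotent formulation of the colored Jones polynomial to the $(n-1)$--cable of $D(K)$, the lowest power of the bracket variable comes from inserting the all--$A$ resolution at each cabled crossing and then expanding the idempotents. The $A$--adequacy hypothesis, together with the standard cabling trick of Lickorish and the analysis used in the proofs of the minimum-degree bounds for $\langle D\rangle$, ensures that the state contributing the minimum degree survives without cancellation, and that its contribution is an exact quadratic polynomial
\[
r_n \;=\; \alpha\,n^2 + \beta\,n + \gamma,
\]
where $\alpha$, $\beta$, $\gamma$ are explicit in $c(D)$, $w(D)$ and $|s_A|$. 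In particular $\lim_{n\to\infty} 4r_n/n^2 = 4\alpha$ exists and is a specific function of the diagram.

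Next I would compute $b(S_A) \in \mathbb{Z}$ directly. By Theorem \ref{thm:incompress}, $S_A$ is essential in $S^3\setminus K$, so $\partial S_A \subset \partial N(K)$ has a well-defined slope in the standard meridian/longitude basis. The surface $S_A$ is built by gluing disks (bounded by the state circles of $s_A$) with half-twisted bands at the crossings, and this prescription equips $\partial S_A$ with the blackboard framing of $D$, up to a contribution from each crossing that depends on its sign. Converting the blackboard framing to the Seifert longitude via the writhe, and accounting for the effect of the half-twisted bands at positive versus negative crossings, yields a closed-form expression for $b(S_A)$ in the same quantities $c_\pm(D)$, $w(D)$ and $|s_A|$.

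The final step is to check that the two closed formulas agree. The main obstacle I expect is careful bookkeeping of conventions: the writhe correction $(-A)^{-3w(D)}$ that converts the Kauffman bracket to the Jones polynomial shifts the minimum degree by a specific multiple of $w(D)$; the substitution $A = t^{-1/4}$ must be applied consistently and accounts for the factor of $4$ in the statement; and the sign of the framing contribution at a crossing must be correctly correlated with the $A$/$B$--resolution conventions used in the bracket computation. Once these normalizations are aligned, the quadratic leading coefficient $4\alpha$ computed from $r_n$ coincides exactly with $b(S_A)$ computed from $S_A$, and the theorem follows.
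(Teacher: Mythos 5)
Your plan is essentially the proof that this monograph cites from \cite{fkp:PAMS}: there, $A$--adequacy of the blackboard cables gives an exact quadratic formula for $r_n$ (the extreme term of the Kauffman bracket of $D^n$ comes from the all--$A$ state with no cancellation), while $b(S_A)$ is computed directly from the disks-plus-half-twisted-bands construction (each negative crossing contributes $-2$, giving $b(S_A) = -2c_-(D)$), and the leading coefficients are then matched after the $(-A)^{-3w}$ and $t = A^{-4}$ normalizations. So your proposal is correct in approach and essentially the same as the original argument, with only the convention bookkeeping you already flag left to carry out.
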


Similarly, if $D(K)$ is a $B$--adequate diagram of a knot $K$, let
$b(S_B) \in {\ZZ}$ denote the boundary slope of the essential surface
$S_B$.  Then
$$\lim_{n\to \infty} {\frac {4 m_n}{n^2}}= b(S_B),$$
where $m_n$ is the highest degree of $J^n_K(t)$.

Work of Garoufalidis and Le \cite{garquasi, garoufalidisLe} implies
that each coefficient of $J^n_K(t)$ satisfies linear recursive
relations in $n$.  For adequate links, these relations manifest
themselves in a very strong form: Dasbach and Lin showed that if $K$
is $A$--adequate, then the absolute values $ \abs{\beta'_n}$ and
$\abs{\alpha'_n}$ are independent of $n>1$
\cite{dasbach-lin:head-tail}.  In fact, $\abs{\alpha'_n}=1$ and
$\abs{\beta'_n} = 1 - \chi(\GRA)$, where $\GRA$ is the reduced graph.
Similarly, if $D$ is $B$--adequate, then $\abs{\alpha_n}=1$ and
$\abs{\beta_n} = 1 - \chi(\GRB)$.  Thus we can define the \emph{stable
  values}\index{stable value}\index{$\beta_K$, $\beta_K'$}
$$\beta_K'\: := \: \abs{\beta'_n} \: = \: 1 - \chi(\GRA), \qquad
\mbox{and} \qquad \beta_K \: := \: \abs{\beta_n} \: = \: 1 -
\chi(\GRB).
$$ The main results of this monograph explore the idea that the stable
coefficient $\beta_K'$ does an excellent job of measuring the
geometric and topological complexity of the manifold $M_A = S^3 \cut
S_A$. (Similarly, $\beta_K$ measures the complexity of $M_B = S^3 \cut
M_B$.)  For instance, it follows from Theorem \ref{thm:fiber-tree}
that $\beta'_K$ is exactly the obstruction to $S_A$ being a fiber.

\begin{named}{Corollary \ref{cor:beta-fiber}} 
For an $A$--adequate link $K$, the following are equivalent:
\begin{enumerate}
\item $\beta'_K=0$. 
\item For \emph{every} $A$--adequate diagram of $D(K)$,  $S^3
  \setminus K$ fibers over $S^1$ with fiber the corresponding state
  surface $S_A = S_A(D)$.  
\item For \emph{some} $A$--adequate diagram $D(K)$,  $M_A = S^3 \cut
  S_A$ is an $I$--bundle over $S_A(D)$. 
\end{enumerate}
\end{named}

Similarly, $\abs{\beta_K'} = 1$ precisely when $S_A$ is a fibroid
of a particular type.

\begin{named}{Theorem \ref{thm:fibroid-detect}}
For an $A$--adequate link $K$, the following are equivalent:
\begin{enumerate}
\item $\beta'_K=1$.
\item For \emph{every} $A$--adequate diagram of $K$, the corresponding
  3--manifold $M_A$ is a book of $I$--bundles, with $\chi(M_A)=
  \chi(\GA) - \chi(\GRA)$, and is not a trivial $I$--bundle over the
  state surface $S_A$.
\item For \emph{some} $A$--adequate diagram of $K$, the corresponding
  3--manifold $M_A$ is a book of $I$--bundles, with $\chi(M_A)=
  \chi(\GA) - \chi(\GRA)$.
\end{enumerate}
\end{named}

In general, the geometric decomposition of $M_A$ contains some
non-trivial hyperbolic pieces, namely guts.  In this case,
$\abs{\beta_K'}$ measures the complexity of the guts together with
certain complicated parts of the maximal $I$--bundle of $M_A$.  To
state our result we need the following definition.

\begin{define}\label{primetwist}
A link diagram $D$ is called \emph{prime}\index{prime!diagram} if any
simple closed curve that meets the diagram transversely in two points
bounds a region of the projection plane without any crossings.
  
Two crossings in $D$ are defined to be \emph{twist equivalent} if
there is a simple closed curve in the projection plane that meets $D$
at exactly those two crossings. The diagram is called \emph{twist
  reduced}\index{twist reduced} if every equivalence class of
crossings is a \emph{twist region}\index{twist region} (a chain of
crossings between two strands of $K$).  The number of equivalence
classes is denoted $t(D)$, the \emph{twist number}\index{twist number} of $D$.
\label{def:prime-diagram}
\end{define}

\begin{named}{Theorem \ref{thm:no2loops}}\index{two-edge loop}
Suppose $K$ is an $A$--adequate link whose stable colored Jones
coefficient is $\beta_K' \neq 0$.  Then, for every $A$--adequate
diagram $D(K)$,
$$\negeul( \guts(M_A)) + || E_c || \: = \: \abs{\beta'_K} - 1, $$
where as above $|| E_c|| \geq 0$ is the diagrammatic quantity of
Definition \ref{def:ec}.  Furthermore, if $D$ is prime and every
2--edge loop in $\GA$ has edges belonging to the same twist region,
then $||E_c|| = 0$ and
$$\negeul( \guts(M_A)) \: = \: \abs{\beta'_K}- 1. $$
\end{named}

To briefly discuss the meaning of the correction term $||E_c||$,
recall that the non-hyperbolic components of the JSJ decomposition of
$M_A$ are $I$--bundles and solid tori.  In Chapter \ref{sec:ibundle},
we show that the $I$--bundle components with negative Euler
characteristic are spanned by \emph{essential product disks
  (EPDs)}\index{essential product disk (EPD)}\index{EPD}: properly
embedded essential disks in $M_A$ whose boundary meets the parabolic
locus twice.  These disks come in two types: those corresponding to
(strings of) complementary regions of $\GA$ with just two sides, and
certain ``complicated'' ones, which we call
\emph{complex}\index{complex EPD}\index{EPD!complex}.  (See Definition
\ref{def:simple} on page \pageref{def:simple}).  The minimal number of
complex EPDs in a spanning set is denoted $||E_c||$; this is exactly
the correction term of Theorems \ref{thm:guts-general} and
\ref{thm:no2loops}.

It is an open question whether \emph{every} $A$--adequate link admits
a diagram for which $||E_c ||=0$: see Question \ref{quest:allequal} on
page \pageref{quest:allequal}. For instance, Lackenby showed that this
is the case for prime alternating links \cite{lackenby:volume-alt}. By
Theorem \ref{thm:no2loops}, $||E_c|| = 0$ when every $2$--edge loop of
$\GA$ has edges belonging to the same twist region.  This is also the
case for most Montesinos links (the reader is referred to Chapter
\ref{sec:montesinos} for the terminology).

\begin{named}{Corollary \ref{cor:monte-exact}}
Suppose $K$ is a Montesinos link with a reduced admissible diagram
$D(K)$ that contains at least three tangles of positive slope.  Then
$$\negeul( \guts(M_A)) \:= \: \abs{\beta'_K}-1.$$
Similarly, if $D(K)$ contains at least three tangles of negative
slope, then
$$\negeul( \guts(M_B)) \: = \: \abs{\beta_K}-1.$$
\end{named}

When $||E_c ||=0$, Theorem \ref{thm:no2loops} offers striking evidence
that coefficients of the colored Jones polynomials measure something
quite geometric: when $\abs{\beta'_K}$ is large, the link complement
$S^3 \setminus K$ contains essential spanning surfaces that are
correspondingly far from being a fiber.  Whereas the Alexander
polynomial and its generalization in Heegaard Floer homology are known
to have many connections to the geometric topology of spanning
surfaces of a knot \cite{ni:fiber, ozsvath-szabo:genus,
  ozsvath-szabo:thurston-norm}, the geometric meaning of Jones-type
polynomials has traditionally been a mystery.  Theorems
\ref{cor:beta-fiber}, \ref{thm:fibroid-detect}, and \ref{thm:no2loops}
establish some of the first detailed connections between surface
topology and the Jones polynomial.

%%%%%%%%%%%%%%%%%%%%%%%%%%%%%%%%%%%%%%%%%%%%%%%%%%%%%%%%%%%%%%%%%
\section{Volume bounds from topology and combinatorics}\label{sec:intro-volume}

Recall that by the work of Agol, Storm, and Thurston \cite{ast}, any
computation of, or lower bound on, $\negeul(\guts)$ of an essential
surface $S \subset S^3 \setminus K$ leads to a proportional lower
bound on $\vol(S^3 \setminus K)$.  For instance, Lackenby's
diagrammatic lower bound on the volumes of alternating knots and links
came as a result of computing the guts of checkerboard surfaces
\cite{lackenby:volume-alt}.  However, computing $\negeul(\guts)$ has
typically been quite hard: apart from alternating knots and links,
there are very few infinite families of manifolds for which there are
known computations of the guts of essential surface \cite{agol:guts,
  kuessner:guts}.

The results of this manuscript greatly expand the list of manifolds for
which such computations exist.  In Chapter \ref{sec:applications}, we
combine \cite{ast} with our results in Theorems \ref{thm:guts-general}
and \ref{thm:no2loops}, as well as some of their specializations, to
give lower bounds on hyperbolic volume for all $A$--adequate knots and
links. See Theorem \ref{thm:volume} on page \pageref{thm:volume} for
the most general result along these lines.

We also focus on two well--studied families of links: namely, positive
braids and Montesinos links.  For these families, we are able to
compute or estimate the quantity $\negeul(\guts(M_A))$ in terms of
much simpler diagrammatic data.  As a consequence, we obtain tight,
two--sided estimates on the volumes of knots and links in terms of the
twist number $t(D)$ (see Definition \ref{primetwist}).

\begin{named}{Theorem \ref{thm:positive-volume}}
Let $D(K)$ be a diagram of a hyperbolic link $K$, obtained as the
closure of a positive braid with at least three crossings in each
twist region. Then
$$\frac{2 v_8}{3} \, t(D) \: \leq \:\vol(S^3 \setminus K) \: < \:
10v_3(t(D)-1),$$  
where $v_3 = 1.0149...$ is the volume of a regular ideal tetrahedron  
and $v_8 = 3.6638...$ is the volume of a regular ideal octahedron. 
\end{named}
Observe that the multiplicative constants in the upper and lower
bounds differ by a rather small factor of about $4.155$.  For
Montesinos links, we obtain similarly tight two--sided volume bounds.

\begin{named}{Theorem \ref{thm:monte-volume}}
Let $K \subset S^3$ be a Montesinos link with a reduced Montesinos
diagram $D(K)$.  Suppose that $D(K)$ contains at least three positive
tangles and at least three negative tangles.  Then $K$ is a hyperbolic
link, satisfying
\begin{equation*}
  \frac{v_8}{4} \, \left( t(D) - \#K \right) \:
  \leq \: \vol(S^3 \setminus K) \:
  < \: 2 v_8 \, t(D),
\end{equation*}
where $v_8 = 3.6638...$ is the volume of a regular ideal octahedron
and $\# K$ is the number of link components of $K$.  The upper bound
on volume is sharp.
\end{named}

We also relate the volumes of these links to quantum invariants.
Recall that the volume conjecture of Kashaev and Murakami--Murakami  \cite{kashaev:vol-conj, murakami:vol-conj}\index{hyperbolic volume!volume conjecture}\index{volume conjecture}\index{colored Jones polynomial!volume conjecture}
states that all hyperbolic knots satisfy
$$2\pi\lim_{n\to \infty} { \frac{\log \abs {J^n_K(e^{2\pi i / n} ) }}
  {n}}=\vol(S^3 \setminus K).
$$ If this volume conjecture is true, it would imply for large $n$ a
relation between the volume of a knot $K$ and coefficients of
$J^n_K(t)$.  For example, for $ n \gg 0$ one would have
$\vol(S^3\setminus K) < C || J^n_K||$, where $|| J^n_K||$ denotes the
$L^1$--norm of the coefficients of $J_K^n(t)$, and $C$ is an
appropriate constant.  In recent years, a series of articles by
Dasbach and Lin, as well as the authors, has established such
relations for several classes of knots \cite{dasbach-lin:volumish, fkp:filling, fkp:conway, fkp:farey}.  In fact, in all known cases,
the upper bounds on volume are paired with similar lower bounds.
However, in all of the past results, showing that coefficients of
$J^n_K(t)$ bound volume below required two steps: first, showing that
Jones coefficients give a lower bound on twist number $t(D)$, and then
showing that twist number gives a lower bound on volume.  Each of these
two steps is known to fail outside special families of knots
\cite{fkp:farey, fkp:coils}, and their combination produces an
indirect argument in which the constants are far from sharp.

By contrast, our results in this manuscript bound volume below in
terms of a topological quantity, $\negeul(\guts)$, that is directly
related to colored Jones coefficients. As a consequence, we obtain
much sharper lower bounds on volume, along with an intrinsic and
satisfactory conceptual explanation for why these lower bounds exist.
See Section \ref{sec:jones-volume} in Chapter \ref{sec:applications}
for more discussion.

Our techniques also imply similar results for additional classes of
knots.  For instance, Theorems \ref{thm:positive-volume} and
\ref{thm:monte-volume} have the following corollaries.

\begin{named}{Corollary \ref{cor:positive-vol-jones}}
Suppose that a hyperbolic link $K$ is the closure of a positive braid
with at least three crossings in each twist region. Then
$$v_8 \, ( \abs{\beta'_K}-1 )\: \leq \:\vol(S^3 \setminus K) \: < \: 15 v_3 \, (\abs{\beta'_K}-1) - 10 v_3,$$ 
where $v_3 = 1.0149...$ is the volume of a regular ideal tetrahedron  
and $v_8 = 3.6638...$ is the volume of a regular ideal octahedron.
\end{named}

\begin{named}{Corollary \ref{cor:jones-volumemonte}}
Let $K \subset S^3$ be a Montesinos link with a reduced Montesinos
diagram $D(K)$.  Suppose that $D(K)$ contains at least three positive
tangles and at least three negative tangles.  Then $K$ is a hyperbolic
link, satisfying
$$ v_8 \left( \max \{ \abs{\beta_K}, \abs{\beta'_K} \}  -1 \right) \:
\leq \: \vol(S^3 \setminus K) \: < \: 4 v_8  \left(  \abs{\beta_K} +
\abs{\beta'_K} -2 \right) + 2 v_8 \, (\# K),$$ 
where $\#K$ is the number of link components of $K$.
 \end{named}

%%%%%%%%%%%%%%%%%%%%%%%%%%%%%%%%%%%%%%%%%%%%%%%%%%%%%%%%%%%%%%%%%
\section{Organization}

We now give a brief guide to the organization of this monograph.
  
In Chapter \ref{sec:decomp}, we begin with a connected link diagram
$D(K)$, and explain how to construct the state graph $\GA$ and the
state surface $S_A$.  Guided by the structure of $\GA$, we will cut
the 3--manifold $M_A=S^3 \cut S_A$ along a collection of disks into
several topological balls.  We obtain a collection of \emph{lower}
balls that are in one--to--one correspondence with the alternating
tangles in $D(K)$ and a single \emph{upper} 3--ball\index{upper 3--ball}.  The boundary of each ball admits a checkerboard coloring
into white and shaded regions that we call faces. 
In the last section of the chapter we discuss the generalization of the decomposition to 
$\sigma$--homogeneous and $\sigma$--adequate diagrams.

In Chapter \ref{sec:polyhedra}, we show that if $D(K)$ is
$A$--adequate, each of these balls is a checkerboard colored ideal
polyhedron with 4--valent vertices.  This amounts to showing that the
shaded faces on each of the 3--balls are simply--connected (Theorem
\ref{thm:simply-connected}).  Furthermore, we show that the ideal
polyhedra do not contain normal bigons (Proposition
\ref{prop:no-normal-bigons}), which quickly implies Theorem
\ref{thm:incompress}.  In the last section of the chapter, we generalize these results to
homogeneous and adequate states.

In Chapter \ref{sec:ibundle}, we prove a structural result about the
geometric decomposition of $M_A$. As already mentioned, the JSJ
decomposition yields three kinds of pieces: $I$--bundles, solid tori,
and the guts, which admit a hyperbolic metric with totally geodesic
boundary. Let $B$ be an $I$--bundle in the characteristic submanifold
of $M_A$.  We say that a finite collection of disjoint essential
product disks (EPDs) $\{D_1, \dots, D_n \}$ \emph{spans}\index{spans}
$B$ if $B \setminus (D_1 \cup \dots \cup D_n)$ is a finite collection
of prisms (which are $I$--bundles over a polygon) and solid tori
(which are $I$--bundles over an annulus or M\"obius band).  We prove
the following.

\begin{named}{Theorem \ref{thm:epd-span}}
Let $B$ be a component of the characteristic submanifold of $M_A$
which is not a solid torus. Then $B$ is spanned by a collection of
essential product disks (EPDs) $D_1, \dots, D_n$, with the property
that each $D_i$ is embedded in a single polyhedron in the polyhedral
decomposition of $M_A$.
\end{named}

Like all results from the early chapters, Theorem \ref{thm:epd-span} generalizes  
to $\sigma$--adequate and  $\sigma$--homogeneous diagrams. See Sect. \ref{sec:gensigmahomo} for details.

In Chapter \ref{sec:spanning}, we calculate the number of EPDs
required to span the $I$--bundle of $M_A$.  We do this by explicitly
constructing a suitable spanning set of disks (Lemmas
\ref{lemma:spanning-lower} and \ref{lemma:spanning-upper}).  The EPDs
in the spanning set that lie in the \emph{lower} polyhedra of the
decompositions are well understood; they are in one--to--one
correspondence with 2--edge loops in the state graph $\GA$.  The EPDs
in the spanning set that lie in the upper polyhedron are
\emph{complex}; they are not parabolically compressible to EPDs in the
lower polyhedra.  The construction of this spanning set leads to a
proof of Theorem \ref{thm:guts-general}. The spanning set of Chapter
\ref{sec:spanning} also makes it straightforward to detect when the
manifold $M_A$ is an $I$--bundle, leading to a proof of Theorem
\ref{thm:fiber-tree}.
 
The main tool used in Chapters \ref{sec:polyhedra}, \ref{sec:ibundle},
and \ref{sec:spanning} is normal surface theory. In fact, our results
about normal surfaces in the polyhedral decomposition of $M_A$ can
likely be used to attack other topological problems about
$A$--adequate links: see Section \ref{sec:control} in Chapter
\ref{sec:questions} for variations on this theme.

The results of Chapter \ref{sec:spanning} reduce the problem of
computing the Euler characteristic of the guts of $M_A$ to counting
how many complex EPDs are required to span the $I$--bundle of the
\emph{upper} polyhedron.  In Chapter \ref{sec:epds}, we restrict
attention to prime diagrams and address the problem of how to
recognize such EPDs from the structure of the all--$A$ state graph
$\GA$.  Our main result there is Theorem \ref{thm:2edgeloop}, which
describes the basic building blocks for such EPDs.  Roughly speaking,
each of these building blocks maps onto to a 2--edge loop of $\GA$.

In Chapter \ref{sec:nononprime}, we restrict attention to
$A$--adequate diagrams $D(K)$ for which the polyhedral decomposition
includes no non-prime arcs or switches (see Definition
\ref{def:non-prime} on page \pageref{def:non-prime}).  In this case,
one can simplify the statement of Theorem \ref{thm:guts-general} and
give an easier combinatorial estimate for the guts of $M_A$.  To state
our result, let $b_A$ denote the number of bigons in twist regions
of the diagram such that a loop tracing the boundary of this bigon
belongs to the $B$--resolution of $D$. (The $A$--resolution of these
twist regions is \emph{short} in Figure \ref{fig:twist-resolutions} on
page \pageref{fig:twist-resolutions}.) Then, define $m_A = \chi(\GA) -
\chi(\GRA) - b_A$.  We prove the following estimate.

\begin{named}{Theorem \ref{thm:guts-nononprime}}
Let $D(K)$ be a prime, $A$--adequate diagram, and let $S_A$ be the
essential spanning surface determined by this diagram. Suppose that
the polyhedral decomposition of $M_A = S^3\cut S_A$ includes no
non-prime arcs.  Then
$$ \negeul (\GRA) - 8 m_A \: \leq \: \negeul ( \guts(M_A)) \: \leq \: \negeul(\GRA),$$
where the lower bound is an equality if and only if $m_A = 0$.
\end{named}

In Chapter \ref{sec:montesinos}, we study the polyhedral decompositions
of Montesinos links. The main result is the following.

\begin{named}{Theorem \ref{thm:monteguts}}\label{monte-statement-intro}
Suppose $K$ is a Montesinos link with a reduced admissible diagram
$D(K)$ that contains at least three tangles of positive slope.  Then
$$\negeul( \guts(M_A))=\negeul(\GRA).$$
Similarly, if $D(K)$ contains at least three tangles of negative slope, then
$$\negeul( \guts(M_B))=\negeul(\GRB).$$
\end{named}

The arguments in Chapters \ref{sec:epds}, \ref{sec:nononprime}, and
\ref{sec:montesinos} require a detailed and fairly technical analysis
of the combinatorial structure of the polyhedral decomposition; we
call this analysis \emph{tentacle chasing}\index{tentacle chasing}. In
addition, Chapters \ref{sec:nononprime} and \ref{sec:montesinos}
depend heavily on Theorem \ref{thm:2edgeloop} in Chapter
\ref{sec:epds}.

In Chapter \ref{sec:applications}, we give the applications to volume
estimates and relations with the colored Jones polynomials that were
discussed earlier in this introduction. The results in this chapter do
not use Chapter \ref{sec:nononprime} at all, and do not directly
reference Chapter \ref{sec:epds} or the arguments of Chapter
\ref{sec:montesinos}.  Thus, having the statement of Theorem
\ref{thm:monteguts} at hand, a reader who is eager to see the
aforementioned applications may proceed to Chapter
\ref{sec:applications} immediately after Chapter \ref{sec:spanning}.

In Chapter \ref{sec:questions}, we state several open questions and
problems that have emerged from this work, and discuss potential
applications of the methods that we have developed.

%%%%%%%%%%%%%%%%%%%%%%%%%%%%%%%%%%%%%%%%%%%%%%%%%%%%%%%%%%%%%%%%%
\section{Acknowledgments}

This manuscript is the fruit of a project conducted over several years, in several different
locations, with the assistance of many people and organizations.  We
thank our home institutions, Temple University, Michigan State
University, and Brigham Young University, for providing support and
venues for various collaborative meetings on each of the three
campuses and
the National Science Foundation (NSF) for supporting
this project through grant funding. 
During this project, Futer was supported by
 by NSF grant DMS--1007221, Kalfagianni was supported by grants DMS--0805942 and DMS--1105843,
 and Purcell was supported by NSF grant DMS--1007437 and a Sloan Research Fellowship.

Some of this work was carried out and discussed while the authors were
in attendance at workshops and conferences. These include the
\emph{Moab topology conference}, in Moab, Utah in May 2009;
\emph{Interactions between hyperbolic geometry, quantum topology, and
  number theory}, at Columbia University in June 2009; the \emph{Joint
  AMS--MAA Meetings} in San Francisco in January 2010; \emph{Topology
  and geometry in dimension three}, in honor of William Jaco, at
Oklahoma State University in June 2010; \emph{Knots in Poland III} at
the Banach Center in Warsaw, Poland, in July 2010; and \emph{Faces of
  geometry: 3--manifolds, groups, \& singularities}, in honor of
Walter Neumann, at Columbia University in June 2011.  We thank the
organizers of these conferences for their hard work, and hospitality.

Our tools and techniques owe a sizable intellectual debt to the prior
work of Ian Agol and Marc Lackenby. We are grateful to both of them
for a number of enlightening discussions.  

We thank Jessica Banks for her careful reading and numerous helpful suggestions.
We also thank the
MSU graduate students Cheryl Balm, Adam Giabrone, Christine Lee, and
Indra Schottland for the interesting questions that they raised during
a reading seminar on parts of this monograph.

We thank the referees for their helpful comments. We also thank Chris
Atkinson, John Baldwin, Abhijit Champanerkar, Oliver Dasbach, Charlie
Froh\-man, Cameron Gordon, Eli Grigsby, Joanna Kania-Bartoszynska, Ilya
Kofman, Tao Li, William Menasco, Walter Neumann, Neal Stoltzfus, and Roland
van der Veen for their comments on, and interest in, this project.

Finally, we thank Yael Futer, George Pappas, and Tim Purcell for their
support --- especially during several extended visits that the authors
had to pay at each other's households during the course of this
project.

%% 2
\chapter{Decomposition into 3--balls}\label{sec:decomp}
In this chapter, we start with a connected link diagram and explain
how to construct state graphs and state surfaces.  We cut the link
complement in $S^3$ along the state surface, and then describe how to
decompose the result into a collection of topological balls whose boundaries have a
checkerboard coloring.  There are two steps to this decomposition; the
first is explained in Section \ref{subsec:decomp-balls}, and the
second in Section \ref{subsec:primeness}.
Finally, in Section \ref{subsec:generalization}, we briefly describe how to generalize
the decomposition to a broader class of links considered by Ozawa in \cite{ozawa}.

The combinatorics of the decomposition will be used heavily in later
chapters to prove our results.  Consequently, in this chapter we will
define terminology that will allow us to refer to these combinatorial
properties efficiently.  Thus the terminology and results of this chapter
are important for all the following chapters.

%%%%%%%%%%%%%%%%%%%%%%%%%%%%%%%%%%%%%%%%%%%%%%%%%%%%%%%%%%%%%%%%
\section{State circles and state surfaces} 

Let $D$ be a connected link diagram, and $x$ a crossing of $D$.
Recall that associated to $D$ and $x$ are two link diagrams, each with
one fewer crossing than $D$, called the \emph{$A$--resolution} and
\emph{$B$--resolution} of the crossing.  See Figure
\ref{fig:splicing} on page \pageref{fig:splicing}.

A \emph{state}\index{state} of $D$ is a choice of $A$-- or $B$--resolution for each
crossing.  Applying a state to the diagram, we obtain a crossing free
diagram consisting of a disjoint collection of simple closed curves on
the projection plane.  We call these curves \emph{state
  circles}\index{state circle}.  The
\emph{all--$A$ state}\index{state!all--$A$ state}\index{all--$A$ state} of the diagram $D$ chooses the $A$--resolution
at each crossing.  We denote the union of corresponding state circles by
$s_A(D)$\index{$s_A = s_A(D)$}, or simply $s_A$.  Similarly, one can define an all--$B$
state\index{state!all--$B$ state}\index{all--$B$ state} and state
circles $s_B = s_B(D)$\index{$s_B = s_B(D)$}.

Start with the all--$A$ state of a diagram.  From this, we may form a
connected graph in the plane.

\begin{define}\label{def:HA}
Let $s_A$ be the union of state circles in the all--$A$ state of a diagram $D$. To this union of circles, we attach one edge for each crossing, which records the location of the crossing. (These edges are dashed in Figure \ref{fig:splicing} on page
\pageref{fig:splicing}.)  The resulting graph is trivalent, with edges
coming from crossings of the original diagram and from state circles.
To distinguish between these two, we will refer to edges coming from
state circles just as state circles, and edges from crossings as
\emph{segments}\index{segment}.  This graph will be important in the
arguments below.  We will call it \emph{the graph of the
  $A$--resolution}\index{graph of the $A$--resolution, $H_A$}, and denote it by $H_A$\index{$H_A$, graph of the $A$--resolution}.
\end{define}

In the introduction, we introduced the \emph{$A$--state graph} $\GA$.
This will factor into our calculations in later chapters.  For
now, note that $\GA$ is obtained from $H_A$ by collapsing state
circles to single vertices.

We may similarly define the graph of the $B$--resolution, $H_B$, and the
$B$--state graph $\GB$.  Indeed, every construction that follows will work with
only minor modifications (involving handedness) if we replace $A$--resolutions
with $B$--resolutions.  For ease of exposition, we will mostly
consider $A$--resolutions.

We now construct a surface related to a state $\sigma$.  First, draw
the circles of the $\sigma$--resolution, $s_\sigma$.  These state circles
bound disjoint disks in the $3$--ball below the projection plane.  
Form the state surface\index{state surface} $S_\sigma$\index{$S_\sigma$, state surface of $\sigma$} by
 taking this disjoint collection of disks bounded by state circles,
and attaching a twisted band for each crossing.
The result is a surface whose boundary is the link.  A well--known example of a state surface is the Seifert surface constructed from a diagram, where the state $\sigma$
is chosen following an orientation on $K$.

See Figure \ref{fig:statesurface} for a state surface $S_A$\index{$S_A$, all--$A$ state surface}
corresponding to the all--$A$ state.

\begin{figure}
\includegraphics{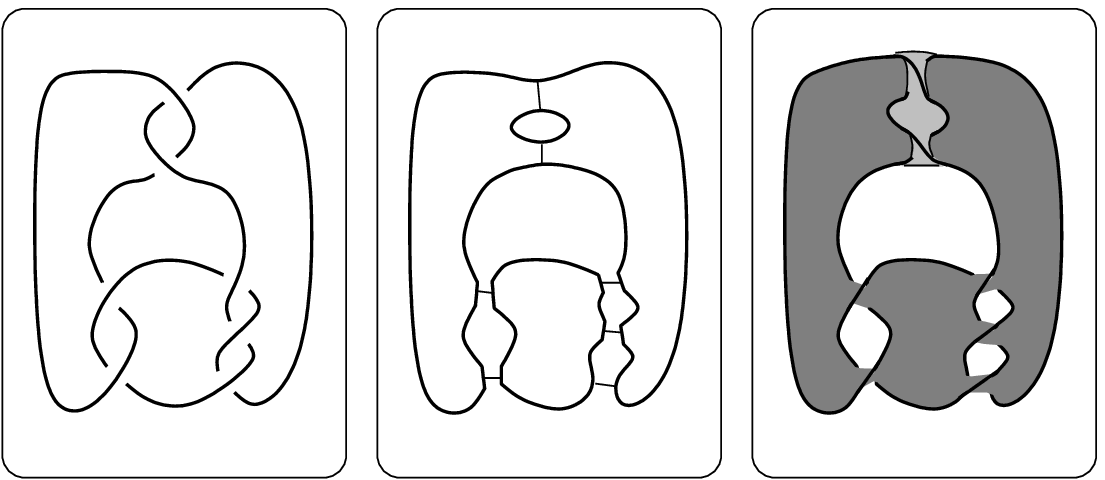}
\caption{Left to right:  A diagram.  The graph
  $H_A$\index{$H_A$, graph of the $A$--resolution!example}.  The state surface $S_A$\index{$S_A$, all--$A$ state surface!example}.}
\label{fig:statesurface}
\end{figure}

\begin{lemma}\label{lemma:ga-spine}
The graph $\mathbb{G}_\sigma$ is a spine for the surface $S_\sigma$.
\end{lemma}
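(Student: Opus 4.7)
The plan is to produce an explicit embedding of $\mathbb{G}_\sigma$ into $S_\sigma$ and then exhibit a deformation retraction of $S_\sigma$ onto this embedded copy. Recall that $S_\sigma$ is built from two kinds of pieces: disks $D_1, \ldots, D_k$ bounded by the state circles of $s_\sigma$ lying just below the projection plane, and half--twisted bands $B_1, \ldots, B_c$, one for each crossing, joining these disks in pairs (with multiplicity allowed). By definition, $\mathbb{G}_\sigma$ has one vertex $v_i$ for each disk $D_i$ and one edge $e_j$ for each band $B_j$, connecting the two vertices corresponding to the disks that $B_j$ meets.

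First I would embed $\mathbb{G}_\sigma$ in $S_\sigma$ as follows. For each $i$, place the vertex $v_i$ at an interior point of $D_i$ (say, a chosen ``center''). For each band $B_j$ joining disks $D_i$ and $D_{i'}$, choose a properly embedded arc $\alpha_j$ in $B_j$ that runs through the twisted band from $\bdy D_i \cap B_j$ to $\bdy D_{i'} \cap B_j$; then extend $\alpha_j$ inside $D_i$ and $D_{i'}$ to reach $v_i$ and $v_{i'}$, doing so in such a way that the extensions inside a single disk $D_i$ are pairwise disjoint except at $v_i$ (this is possible because $D_i$ is a disk and the band feet on $\bdy D_i$ are disjoint). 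The resulting subcomplex is a homeomorphic copy of $\mathbb{G}_\sigma$ sitting inside $S_\sigma$, and it meets each band in a single arc and each disk in a cone on finitely many boundary points.

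Next I would construct the deformation retraction piecewise. Each twisted band $B_j$ is homeomorphic to $I \times I$ with its core $\alpha_j = I \times \{1/2\}$; it deformation retracts onto $\alpha_j$ in the obvious way, via the retraction $(s,t) \mapsto (s, 1/2)$, which fixes the two ends of $\alpha_j$ on $\bdy D_i$ and $\bdy D_{i'}$. Each disk $D_i$ is a disk with finitely many marked boundary arcs (where the bands attach) and a chosen cone structure on $v_i$ through the extensions of the $\alpha_j$; it deformation retracts onto the union of these extended arcs by a straight--line cone--retraction toward $v_i$, performed so that it is the identity on each attaching arc. These two retractions agree on the shared arcs where bands meet disks, so they glue together to a deformation retraction of $S_\sigma$ onto the embedded graph.

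The only delicate point — and the main thing to get right — is the compatibility of the disk and band retractions along $\bdy D_i \cap B_j$; this is arranged by requiring the cone retraction of $D_i$ to be the identity on those attaching arcs, which is always possible since $D_i$ is a disk. With this in place, the retraction is continuous by the pasting lemma and is a strong deformation retraction onto a homeomorphic copy of $\mathbb{G}_\sigma$, so $\mathbb{G}_\sigma$ is a spine of $S_\sigma$. The same argument applies verbatim to $\mathbb{G}_A$ and $\mathbb{G}_B$ as spines of $S_A$ and $S_B$.
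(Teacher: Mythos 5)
Your proposal is correct and follows essentially the same approach as the paper: embed the vertices in the state disks and the edges through the half-twisted bands, then observe that the surface retracts onto this embedded graph. The paper states this more briefly, while you fill in the explicit piecewise deformation retraction, but the argument is the same.
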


\begin{proof}
By construction, $\mathbb{G}_\sigma$ has one vertex for every circle of
$s_\sigma$ (hence every disk in $S_\sigma$), and one edge for every
half--twisted band in $S_\sigma$. This gives a natural embedding of
$\mathbb{G}_\sigma$ into the surface, where every vertex is embedded
into the corresponding disk, and every edge runs through the
corresponding half-twisted band. This gives a spine for $S_\sigma$.
\end{proof}

\begin{lemma}
\label{lemma:sa-orientable}
The surface $S_\sigma$ is orientable if and only if
$\mathbb{G}_\sigma$ is bipartite.
\end{lemma}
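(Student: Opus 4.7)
The plan is to use the fact, from Lemma \ref{lemma:ga-spine}, that $\mathbb{G}_\sigma$ is a spine for $S_\sigma$, so that $S_\sigma$ deformation retracts onto $\mathbb{G}_\sigma$ and orientability can be tested on a regular neighborhood of the spine. Concretely, each vertex of $\mathbb{G}_\sigma$ corresponds to a disk $D_v$ (bounded by a state circle) lying in a reference plane, and each edge of $\mathbb{G}_\sigma$ corresponds to a half-twisted band $B_e$ joining the two adjacent disks through the corresponding crossing. I would verify once and for all that each band really does carry exactly one half-twist: this is immediate from the local picture at the crossing in Figure \ref{fig:splicing}, where the two strands of the band emerge from opposite sides of the projection plane. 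Orientability of $S_\sigma$ is then equivalent to being able to choose a continuously varying unit normal vector field on the union of disks and bands.

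For the forward direction, assume $\mathbb{G}_\sigma$ is bipartite, with a $2$-coloring of the vertex set into classes $V_0$ and $V_1$. I would orient each disk $D_v$ by declaring the normal to point to the positive side of the reference plane if $v \in V_0$ and to the negative side if $v \in V_1$. Along each band $B_e$, the half-twist exactly reverses the side of the plane to which the normal points as one traverses from one disk to the other. Since bipartiteness forces the endpoints of $e$ to lie in different color classes, the normal chosen on the second disk agrees with the normal obtained by parallel-transporting along $B_e$. This gives a globally consistent normal field and hence an orientation of $S_\sigma$.

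For the reverse direction, I would prove the contrapositive: if $\mathbb{G}_\sigma$ is not bipartite, then it contains a closed edge-path $v_1, e_1, v_2, e_2, \ldots, v_k, e_k, v_1$ of odd length $k$. The union $N := D_{v_1} \cup B_{e_1} \cup \cdots \cup D_{v_k} \cup B_{e_k}$ is a regular neighborhood of the corresponding loop in the spine, and as a surface it is a band over a circle carrying $k$ half-twists. Since $k$ is odd, $N$ is a M\"obius band, so its core has a non-orientable normal bundle inside $S_\sigma$; thus $S_\sigma$ itself is non-orientable. The main (minor) obstacle is verifying the half-twist count in a cycle: one must check that however the band $B_{e_i}$ was built, it contributes exactly one half-twist to the parallel transport of the normal, so the total twist along the cycle is $k$ mod $2$. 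Once this is confirmed from the construction of $S_\sigma$ at a crossing, both implications follow and the lemma is established.
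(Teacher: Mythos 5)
Your proof is correct and takes essentially the same route as the paper: both use the spine of Lemma \ref{lemma:ga-spine} to reduce orientability to consistency of a transverse orientation over disks joined by half-twisted bands, with bipartiteness (your explicit $2$-coloring versus the paper's inductive extension) giving consistency, and an odd cycle giving an orientation-reversing loop (your M\"obius band) in the non-bipartite case. No gaps to report.
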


\begin{proof}
It is well--known that a graph is bipartite if and only if all loops
have even length.

For the ``if'' direction, assume that $\mathbb{G}_\sigma$ is
bipartite. Then, we may construct a (transverse) orientation on
$S_\sigma$, as follows. First, pick a normal direction to one disk
(corresponding to one vertex of $\mathbb{G}_\sigma$). Then, extend
over half--twisted bands to orient every adjacent disk, and continue
inductively. This inductive construction of a tranverse orientation on
$S_\sigma$ will never run into a contradiction, precisely because
every loop in $\mathbb{G}_\sigma$ has even length. Thus $S_\sigma$ is
a two--sided surface in $S^3$, hence orientable.

For the ``only if'' direction, suppose $\mathbb{G}_\sigma$ is not
bipartite, hence contains a loop of odd length. By embedding
$\mathbb{G}_\sigma$ as a spine of $S_\sigma$, as in Lemma
\ref{lemma:ga-spine}, we see that this loop of odd length is
orientation--reversing on $S_\sigma$.
\end{proof}

Our primary focus will be on the all $A$--state and the $A$--state
surface $S_A$.  It will be helpful to isotope the state surface $S_A$
into a topologically convenient position.  Recall that by construction, the 
disks used to construct $S_A$ lie in the $3$--ball below the projection plane.  
Each of these disks can be thought of as
consisting of a thin annulus with outside boundary attached to the
state circle, and then a \emph{soup can}\index{soup can} attached to the inside
boundary of the annulus.  That is, a long cylinder runs deep under the
projection plane, with a disk at the bottom.  These soup cans will be
nested, with outer state circles bounding deeper, wider soup cans.
Finally, isotope the state circles and bands of the diagram onto the
projection plane, except at crossings of the diagram in which the
rectangular band runs through a small crossing ball coming out of the
projection plane.  When we have finished, aside from crossing balls,
the link diagram sits on the plane of projection, and the surface
$S_A$ lies below.

Consider the manifold created by cutting $S^3$ cut along (a regular neighborhood of) $S_A$.  We 
will refer to this manifold as $S^3 \cut S_A$\index{$S^3 \cut S_A$, or $S^3$ cut along $S_A$}, or $M_A$\index{$M_A = S^3 \cut S_A$} for short.  With $S_A$
isotoped into the position above, it now is a straightforward matter
to prove various topological conditions on $M_A$.

\begin{lemma}
  \label{lemma:handlebody}
  The manifold $M_A = S^3\cut S_A$ is homeomorphic to a handlebody.
\end{lemma}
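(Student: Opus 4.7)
The plan is to show that $M_A$ is the complement in $S^3$ of a standardly embedded handlebody, hence a handlebody itself. First, I would observe that $N(S_A)$ is itself a handlebody: by Lemma \ref{lemma:ga-spine}, $S_A$ deformation retracts to its spine $\GA$, so $N(S_A)$ retracts to $\GA$ as well, and a regular neighborhood of a graph in $S^3$ is always a handlebody.

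The key step is to exhibit an ambient isotopy of $S^3$ that takes $\GA$, in its embedding as spine of $N(S_A)$ (with vertices inside the soup-can disks and edges running through the half-twisted bands), onto a planar embedding $\Gamma$ sitting on the projection plane $P \cong S^2$. To construct this isotopy, I would push each vertex of $\GA$ vertically up its soup-can axis to a point of $P$ inside the corresponding state circle, and push each edge flat onto $P$ through its crossing location. Since the soup cans and crossing balls are mutually disjoint in the standard position of $S_A$ described above, these local moves combine into a global ambient isotopy of $S^3$.

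With $\Gamma$ sitting on $P$, cutting $S^3$ along $P$ yields two $3$-balls $B_+$ and $B_-$, and $S^3 \setminus N(\Gamma)$ is the union of $B_+$ and $B_-$ glued along the complementary disk regions of $\Gamma$ in $P$. Such a union of two $3$-balls glued along a collection of disks is a handlebody (of genus one less than the number of gluing disks). Since the ambient isotopy carries $N(S_A)$ to $N(\Gamma)$, the complements $M_A = S^3 \cut S_A$ and $S^3 \setminus N(\Gamma)$ are homeomorphic, so $M_A$ is a handlebody.

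The main obstacle is verifying that the ambient isotopy onto $P$ genuinely exists despite the half-twisted structure of the bands. A half-twist of a $2$-dimensional band affects only the framing of the $1$-dimensional edge of $\GA$ passing through it, and does not obstruct the existence of an isotopy of the edge itself onto a planar arc; moreover, since distinct crossing balls are pairwise disjoint, the flattening can be performed one crossing at a time, avoiding collisions between edges.
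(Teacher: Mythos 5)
Your overall strategy is sound, but the step carrying all the weight --- the ambient isotopy taking the embedded spine $\GA$ onto a planar graph --- is not justified by the reasons you give, and the explicit prescription you offer does not work in general. The prescription ``push each vertex up its soup-can axis and flatten each edge onto $P$ through its crossing location'' ignores the fact that the soup cans are \emph{nested}: when a state circle $C$ contains other state circles, the bottom disk of $C$'s can projects onto the whole planar region bounded by $C$, so the flattened vertex $v_C$ may land inside an inner state circle, and the flattened edges of its star sweep across the planar shadow of everything nested inside $C$. In three dimensions there is no collision only because $C$'s disk lies strictly \emph{below} the nested structure; once you flatten, the images of these edges cross the images of edges belonging to the inner circles, so the ``local moves'' do not combine into an isotopy and the target configuration is not even an embedded planar graph. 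The disjointness of soup cans and crossing balls, and the observation that a half-twist only changes framing, address collisions at crossings but are silent on exactly this nesting issue, which is the real crux. Note also that your claim is genuinely stronger than the lemma: a graph in $S^3$ with handlebody complement need not be isotopic to a planar graph (e.g.\ a handcuff graph formed by a tunnel-number-one knot and its tunnel has handlebody exterior), so ``spine isotopic into the plane'' cannot be waved through; it requires an argument using the specific structure of $S_A$, for instance re-routing the edges of $v_C$'s star, at depth, so that their projections stay in the complementary region of $s_A$ adjacent to $C$ before flattening. (A smaller looseness: you should say $N(S_A)$ \emph{is} a regular neighborhood of $\GA$ --- because $S_A$ collapses to its spine --- not merely that it retracts to $\GA$; this is what lets you replace $S^3 \cut S_A$ by the complement of $N(\GA)$ and then invoke uniqueness of regular neighborhoods.)

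For comparison, the paper's proof sidesteps the isotopy question entirely. It decomposes $N(S_A)$ as (neighborhood of the link) $\cup$ (neighborhoods of the crossing bands) $\cup$ (neighborhoods of the soup cans); the first two pieces deformation retract to the $4$--valent projection graph, which manifestly lies in the projection plane, so the complement of that part is a handlebody by exactly the two-balls-glued-along-disks observation you make. Adding the soup-can neighborhoods then amounts to cutting this handlebody along properly embedded disks (2--handles), and cutting a handlebody along disks again yields a handlebody. This avoids ever having to straighten the possibly deeply nested spine, which is precisely where your argument has its gap. Your approach can be completed, but only by supplying the missing routing/nesting argument, which is of comparable substance to the lemma itself.
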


\begin{proof}
By definition, the manifold $M_A$ is the complement of a regular
neighborhood of $S_A$ in $S^3$.  A regular neighborhood of $S_A$
consists of the union of a regular neighborhood of the link and a
regular neighborhood of the twisted rectangles, as well as regular
neighborhoods of each of the soup cans.  Note first that the union of
the regular neighborhood of the link and the rectangular bands deformation 
retracts to the projection graph of
the diagram, which is a planar graph, and so its complement is a
handlebody.  Next, when we attach to this a regular neighborhood of a
soup can, we are cutting the complement along a 2--handle.  Since the
result of cutting a handlebody along a finite number of non-separating 2--handles is
still a handlebody, the lemma follows.
\end{proof}

%%%%%%%%%%%%%%%%%%%%%%%%%%%%%%%%%%%%%%%%%%%%%%%%%%%%%%%%%%%%%%%%%
\section{Decomposition into topological balls}\label{subsec:decomp-balls}

We will cut $M_A$ along a collection of disks, to obtain a
decomposition of the manifold into a collection of topological balls.
In fact, we will eventually show this decomposition is actually a
decomposition of $M_A$ into ideal polyhedra, in the sense of the
following definition.

\begin{define}\label{def:polyhedra}
An \emph{ideal polyhedron}\index{ideal polyhedron} is a 3--ball with a
graph on its boundary, such that complementary regions of the graph on
the boundary are simply connected, and the vertices have been removed
(i.e. lie at infinity).
\end{define}

\begin{remark}
Menasco's work \cite{menasco:polyhedra} gives a decomposition of any
link complement into ideal polyhedra.  When the link is alternating,
the resulting polyhedra have several nice properties.  In particular,
they are checkerboard colored, with 4--valent vertices.  However, when
the link is not alternating, these properties no longer hold.  For alternating diagrams, our polyhedra will be exactly the same as Menasco's. More generally, we will
see that our polyhedral decomposition of $M_A$ also has a checkerboard coloring
and 4--valent vertices.
\end{remark}

There are two stages of the cutting.  For the first stage, we will take one disk for each
complementary region in the complement of the projection graph in $S^2$, with the boundary of the disk lying on $S_A$ and the link.  (Here we are using the assumption that our
diagram is connected when we assert that the complementary regions of its
projection graph are disks.)  Note that each region of the projection
graph corresponds to exactly one region of $H_A$, the graph of the
$A$--resolution.  Thus we may also refer to these disks as
corresponding to regions of the complement of $H_A$ in the projection
plane.

To form the disk that we cut along, we isotope the disk of the given
region by pushing it under the projection plane slightly, keeping its
boundary on the state surface $S_A$, so that it meets the link a
minimal number of times.  Since $S_A$ itself lies on or below the projection
plane, except in the crossing balls, we know we can push the disk
below the projection plane everywhere except possibly along the half-twisted rectangles at the crossings.  At each crossing met by the particular
region of $H_A$, the boundary of the region either runs past the
twisted rectangular band without entering it, in which case it can be
isotoped out of the crossing ball, or the boundary of the region runs
along the attached band.  In the latter case, the boundary comes into
the crossing from below the projection plane.  The crossing twists it
such that if it continued to follow the band through the state
surface, it would come out lying above the projection plane.  To avoid
this, isotope such that the boundary of the disk runs over the link
inside the crossing ball, and so exits the crossing ball with the disk
still under the projection plane.

After this isotopy, the result is one of the disks we cut along.  We
call such a disk a \emph{white disk}, %\index{white disk} 
 or \emph{white face}\index{white face}, indicative
of a checkerboard coloring we will give our polyhedral decomposition.
Notice the above construction immediately gives the following lemma.

\begin{lemma}
White disks meet the link only in crossing balls, and then only at
under-crossings. Additionally, white disks lie slightly under the
projection plane everywhere. \qed
\label{lemma:white-disk}
\end{lemma}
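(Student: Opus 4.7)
The plan is to directly verify the two assertions by unpacking the construction of white disks given in the paragraph preceding the lemma, rather than introducing any new machinery. Both statements are, in essence, properties built into the construction; the task is to check that the isotopy chosen there actually realizes them.

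First I would handle the second assertion. Recall the white disk began as the planar disk of a region of $S^2 \setminus H_A$, then was pushed slightly under the projection plane. The only potential obstruction to keeping it under the plane is at crossing balls, since $S_A$ coincides with the projection plane (up to pushing in) away from crossings, but within each crossing ball $S_A$ twists across the plane via a half-twisted band. Away from the crossing balls, pushing the planar disk down by a small $\varepsilon$ keeps its interior below the plane, and its boundary follows portions of $S_A$ that are also below the plane. Inside a crossing ball, one checks the two possible local models: either the region's boundary misses the twisted band (and the disk can be isotoped clear of the crossing ball while staying below the plane), or it runs along the band from below. In the latter case, letting the boundary continue along the band would force it to emerge above the plane; the construction instead routes the boundary across the under-strand of $K$ inside the crossing ball, so that the disk exits still beneath the plane. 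This handles every crossing incident to the region, so the disk lies under the plane everywhere.

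Next I would turn to the first assertion. The boundary of a white disk is contained in $S_A \cup K$ by construction. Outside the crossing balls, $K$ lies on the projection plane, while the disk lies strictly below it; so any intersection of the disk with $K$ must occur inside a crossing ball. Within a crossing ball, the only points where the disk touches $K$ are those introduced by the routing just described, namely the arcs where the boundary of the region passes over the under-crossing strand in order to remain below the plane. Consequently every intersection of the disk with $K$ is an under-crossing inside a crossing ball.

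The hard part, such as it is, is merely a sanity check that the local isotopy at each crossing ball can be performed independently of the choices at other crossings and yields an embedded disk. This follows because the original region is an embedded disk in $S^2$, the crossing balls are pairwise disjoint, and each local modification takes place inside its own crossing ball and a small neighborhood below it; the modifications therefore do not interfere and the resulting disk is embedded. Thus the lemma follows immediately from the construction.
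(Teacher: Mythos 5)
Your proof is correct and follows exactly the route the paper takes: the paper records this lemma as an immediate consequence of the white-disk construction (pushing the region's disk below the projection plane and routing its boundary over the under-strand inside each crossing ball), which is precisely what you verify. Your write-up simply makes explicit the case check at crossing balls and the embeddedness remark that the paper leaves implicit.
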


See Figure \ref{fig:disk} for an example.

\begin{figure}
	\includegraphics{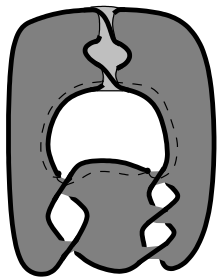}
	\caption{A white disk\index{white face!example} lying just below the
    projection plane, with boundary (dashed line) on underside of
    shaded surface.  Note this disk meets the link in exactly two
    points.}
\label{fig:disk}
\end{figure}

Now, some of the white disks will not meet the link at all.  These
disks are those corresponding to regions of the projection graph whose
boundaries never run through a crossing band.  Therefore, the
boundaries of such disks are isotopic to a state circle of $s_A$.
Hence they are isotopic to soup cans attached to form the state
surface.  We call these particular soup cans \emph{innermost
  disks}\index{innermost disk},
since they will not have any additional soup cans nested inside them.
Remove all white disks isotopic to innermost disks from consideration,
since they are isotopic into the boundary of $M_A$.

We are left with a collection of disks $\CalD$\index{$\CalD$, collection of white disks (faces)}, each
lying in $S^3$ with boundary on the state surface $S_A$ and on the
link $K$.  Cut along these disks.

\begin{lemma}
\label{lemma:3-ball}
Each component of $M_A \cut \CalD$ is homeomorphic to a 3--ball.
\end{lemma}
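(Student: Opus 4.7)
The plan is to identify the components of $M_A\cut \CalD$ explicitly using the projection plane $P$ as a stratifying device, and show there is one upper $3$-ball together with one lower $3$-ball per alternating sub-tangle of $D(K)$ (as forecast in the Introduction). After the isotopies described above, $S_A$ lies on or below $P$ except for twisted bands inside crossing balls, and the white disks of $\CalD$ lie just below $P$ with small excursions up through crossing balls over the under-strands. Thus $S^3\setminus (S_A\cup \CalD)$ splits naturally into an upper region lying mostly above $P$, and several lower regions contained in the soup-can chambers below $P$.

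For the upper region $U$: the part of $M_A$ above the white disks is essentially the half-ball $B^+ = S^3\cap \{z\geq 0\}$, minus neighborhoods of the over-strand arcs of $K$, together with thin slabs just below $P$ above each disk of $\CalD$. Although $B^+\setminus N(\text{over-strands})$ is on its own a handlebody of genus equal to the crossing number $c$, cutting along the portions of $\CalD$ that climb into crossing balls provides a compression disk for each over-strand handle. I would check this via an explicit isotopy that pushes each over-strand down to $P$ through its crossing ball, using the twisted $S_A$-band and the adjacent white disks as guides; this reduces $U$ to the ball $B^+$.

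For each lower region: each such region is bounded above by a subcollection of disks from $\CalD$, on the sides by soup cans of $S_A$, and meets under-strands of $K$ inside crossing balls. A Menasco-style flattening then applies: isotope each under-strand onto $P$ and radially retract the soup cans toward their state circles; the region deformation retracts onto a planar $2$-disk and so is a $3$-ball. The nesting structure of the soup cans identifies these lower chambers with the maximal alternating sub-tangles of $D(K)$.

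The main obstacle is the upper region: verifying that $\CalD$ and the bands of $S_A$ exactly cancel the $c$ handles introduced by removing the over-strand neighborhoods. The essential local input is that at each crossing ball, the two white disks whose boundaries climb over the under-strand, together with the twisted band itself, bound a $3$-ball in which the over-strand can be straightened onto $P$. Once this local model is verified, iterating over all crossings yields the global cancellation. A secondary point for the lower regions is to check that distinct alternating sub-tangles give genuinely distinct chambers; this is a direct consequence of the soup-can nesting and the fact that an innermost disk of $S_A$ separates the interior of its own soup can from every other chamber.
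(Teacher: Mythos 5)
Your upper--ball step contains a genuine gap, and it is located exactly at the point your proposal flags as the ``main obstacle.'' The disks of $\CalD$ enter crossing balls only at under-crossings and stay below the projection plane (Lemma \ref{lemma:white-disk}); they never come near the over-strand arcs, so the ``portions of $\CalD$ that climb into crossing balls'' cannot serve as compression disks for over-strand handles. In fact those handles are not present to begin with: at each crossing the half-twisted band of $S_A$ is attached along the over-strand and spans the crossing ball down to the soup-can disks, and since a regular neighborhood of $S_A$ is removed, any loop that would encircle the over-arc is already obstructed by the band --- no cutting along $\CalD$ is involved in this. What the disks of $\CalD$ actually do is cap the complementary regions of the projection graph: the handles of the handlebody $M_A$ (Lemma \ref{lemma:handlebody}) correspond to loops of the projection graph (equivalently, of $H_A$), and there is one disk of $\CalD$ per non-innermost region, a count which in general differs from the crossing number $c$ that your handle bookkeeping uses. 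This face-capping observation is precisely the paper's one-line argument for the upper piece --- each region of the projection graph is sealed either by an innermost soup can (already in $\bdy M_A$) or by a disk of $\CalD$ --- and it is the step your proposal replaces with the local claim that ``the two white disks together with the twisted band bound a $3$--ball in which the over-strand can be straightened onto $P$.'' As stated that claim does not make sense: the over-strand is part of $\partial S_A$ and has been removed, the white disks do not meet it, and the two white disks and the band do not by themselves bound a region. So the proposed mechanism for the key cancellation would fail as written.

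The lower-chamber step is closer in spirit to the paper but also logically incomplete: ``the region deformation retracts onto a planar $2$--disk and so is a $3$--ball'' is not a valid inference; contractibility alone does not yield a ball, and one needs an Alexander-type argument. The paper's route is simpler and avoids this: the soup cans and the disks of $\CalD$ are disks in the $3$--ball below the projection plane, and cutting a $3$--ball along a properly embedded disk yields $3$--balls, so iterating over these disks gives the claim for every lower component. Finally, the identification of the lower chambers with the maximal alternating sub-tangles of $D(K)$ is not needed for this lemma; it is the content of Lemmas \ref{lemma:lower-circles} and \ref{lemma:lower-alt}, which come afterwards.
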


\begin{proof}
Notice there will be a single component above the projection plane.
Since we have cut along each region of the projection graph, either by
cutting along a soup can or along one of the disks in $\CalD$, this
component must be homeomorphic to a ball.

Next, consider components which lie below the projection plane.  These
lie between soup can disks.  Since any disk cuts the 3--ball below the
projection plane into 3--balls, these components must also each be
homeomorphic to 3--balls. 
\end{proof}

\begin{define}
The single 3--ball of the decomposition which lies above the plane of
projection we call the \emph{upper 3--ball}\index{upper polyhedron}\index{upper 3--ball}.  All
3--balls below the 
plane of projection will be called \emph{lower 3--balls}\index{lower polyhedra}.
  \label{def:upper-lower}

\end{define}

%	  
%	One of our goals, in this chapter in the next, is to show that the $3$--balls are ideal polyhedra, in the sense of Definition \ref{def:polyhedra}. For the lower $3$--balls, this is Lemma \ref{lemma:lower-alt}; for the upper $3$--ball, this will be completed in Theorem \ref{thm:simply-connected}.

We now build up a combinatorial description of the upper and lower
3--balls.  The boundary of any 3--ball will be marked by
2--dimensional regions separated by edges and ideal vertices.  The
regions (faces) come from white disks and portions of the surface
$S_A$, which we shade.\index{shaded face}

\begin{notation}
In the sequel, we will use a variety of colors to label and distinguish the different shaded regions
on the boundary of the $3$--balls. All of these colored regions come from the surface $S_A$, and all of them are considered shaded. 
%	Throughout the remainder of the paper, we will be coloring these
%	shaded portions, coming from the surface $S_A$, using different
%	colors.  This will help us distinguish between them.  
See, for
example, Figure \ref{fig:lower-schematic} on page
\pageref{fig:lower-schematic}.
\end{notation}

Continuing the combinatorial description of the upper and lower
3--balls, edges on a 3--ball are components of intersection of white
disks in $\CalD$ with  (the boundary of a regular neighborhood of) $S_A$.  Each edge runs between strands of the
link.  As usual, each ideal vertex lies on the torus boundary of the tubular neighborhood of a link component (see
e.g. Menasco \cite{menasco:polyhedra}).

Note each edge bounds a white disk in $\CalD$ on one side, and a
portion of the shaded surface $S_A$ on the other side.  Thus, by
construction, we have a checkerboard coloring of the
2--dimensional regions of our decomposition.  Since the white regions
are known to be disks, showing that our 3--balls are actually
polyhedra amounts to showing that the shaded regions are also simply
connected.

In the process of showing these regions are simply connected, we will
build up a combinatorial description of how the white and shaded faces are super-imposed on 
the projection plane, and how these faces interact with the
planar graph $H_A$.  This combinatorial description will be useful in
proving the main results.

\begin{notation}\index{shaded face}
From here on, we will refer to both white and shaded regions of our
decomposition as \emph{faces}.  We do not assume that the shaded faces
are simply connected until we prove they are, in Theorem
\ref{thm:simply-connected}.
\end{notation}

Consider first the lower 3--balls.

\begin{lemma}
Let $R$ be a non-trivial component of the complement of $s_A$ in the projection plane. Then there is exactly one lower 3--ball corresponding to $R$.
The white faces of this 3--ball correspond to the regions in the complement of $H_A$ that are contained in $R$.
\label{lemma:lower-circles}
\end{lemma}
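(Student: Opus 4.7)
The plan is to explicitly identify the lower $3$--ball associated with a given non-trivial region $R$, using the structural decomposition of $B_-$ below the projection plane.

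First, I will observe that $S_A$ below the projection plane consists, away from the crossing balls, of disjoint soup cans, one per state circle, which together with the thin annuli on the plane form the disks of $S_A$ bounded by state circles. Since the state circles are disjoint simple closed curves on $\partial B_- = S^2$ partitioning it into disk regions in bijection with the components of the complement of $s_A$, cutting $B_-$ along the soup cans produces $3$--ball components $U_R$ in bijection with these regions (by induction on the number of state circles, since each properly embedded disk in a ball separates it into two sub-balls). Each $U_R$ is a ``column'' of $B_-$ bounded above by $R$ and on the sides and below by the soup cans of state circles on $\partial R$.

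Second, within the column $U_R$ of a non-trivial region $R$, I will locate the white disks coming from $\CalD$. Since every complementary region $P_i$ of $H_A$ contained in $R$ has at least one segment on its boundary (as $R$ contains segments, and segments separate adjacent $P_i$'s), each associated white disk $W_{P_i}$ meets the link at an under-crossing. Hence none of the $W_{P_i}$ are innermost, and they all remain in $\CalD$. The $W_{P_i}$ lie just below the plane, with boundaries on the soup can walls and on undersides of crossing bands, and collectively act as a ``ceiling'' dividing $U_R$ into an attic above and a body below. I will argue that the body is the unique lower $3$--ball $L_R$ corresponding to $R$, with white faces exactly $\{W_{P_1}, \ldots, W_{P_k}\}$, while the attic merges with the upper $3$--ball through the uncut portions of the projection plane inside $R$.

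The main technical obstacle will be justifying this last merging claim, which requires a careful analysis of $S_A \cap S^2$ to ensure that the attic is connected to the region above the plane through portions of $S^2$ disjoint from $S_A$. For a trivial $R$, the analogous picture degenerates: the single white disk is isotopic to the soup can and was removed from $\CalD$, so the entire column $U_R$ merges with the upper $3$--ball via the uncut plane; this is why trivial regions are correctly excluded from the statement. Once the merging argument is in place, uniqueness follows immediately: distinct non-trivial regions give columns separated by soup can walls, so their associated $L_R$ cannot coincide, and the white faces are exactly as claimed by construction.
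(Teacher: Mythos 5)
Your argument is correct and takes essentially the same approach as the paper: its (much terser) proof simply observes that the soup cans attached to the circles of $s_A$ cut the ball below the projection plane into the lower 3--balls, one for each non-trivial region, with the white faces as claimed by construction. Your additional analysis of the white-disk ``ceiling,'' the attic merging with the upper 3--ball through the uncut plane, and the degenerate trivial case just fills in details the paper leaves implicit.
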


Here, by a non-trivial component of the complement of $s_A$, we mean a
component which is not itself an innermost disk. 

\begin{proof}
The soup cans attached to the circles $s_A$ when forming $S_A$ cut the
3--ball under the projection plane into the lower 3--balls of the
decomposition.  We will have exactly one such component for each
non-trivial region of $s_A$.  Faces are as claimed, by construction.
\end{proof}

\begin{lemma}
Each lower 3--ball is an ideal polyhedron, identical to the
checkerboard polyhedron obtained by restricting to the alternating
diagram given by the subgraph of $H_A$ contained in a non-trivial
region of $s_A$.
\label{lemma:lower-alt}\index{lower polyhedra}
\end{lemma}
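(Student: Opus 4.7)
The plan is to explicitly identify the lower 3--ball below region $R$ with Menasco's checkerboard polyhedron for a specific alternating diagram $D_R$ built from the subgraph of $H_A$ inside $R$. This proceeds in three steps.

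First, I would construct the diagram $D_R$. Let $\Sigma_R$ denote the subgraph of $H_A$ contained in the closed region $\bar R$; it consists of the arcs of state circles forming $\partial R$ together with finitely many segments corresponding to crossings of $D$ whose $A$--resolution has at least one of its two outgoing arcs lying in $\partial R$. Replace each such segment with the crossing of $D$ that produced it, obtaining a $4$--valent diagram $D_R$ drawn in the disk $\bar R$. I would then verify that $D_R$ is alternating. The key observation is that every segment of $H_A$ inside $R$ is attached to state circles of $s_A$ only from the $R$--side, so each crossing of $D_R$ is an $A$--resolution with both resolved arcs on the side of the crossing facing into $R$. Once the crossings are restored, this uniform relative position forces an alternating over/under pattern with respect to the two--coloring of complementary regions induced by the state circles.

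Second, I would show that the state surface $S_A$, viewed only from the underside of the projection plane above $R$, matches the lower checkerboard surface of $D_R$. The shaded faces of the lower 3--ball come from portions of soup cans attached to state circles bounding $R$ (inner boundaries of these soup cans give innermost disks which have already been removed); these correspond exactly to the shaded regions in Menasco's decomposition of $D_R$ that hug the state circles of $D_R$, namely the boundary of $\bar R$. The half--twisted bands of $S_A$ at crossings whose segments lie in $R$ become the twisted bands of the checkerboard surface of $D_R$. Meanwhile, the white faces of the lower 3--ball are, by Lemma \ref{lemma:lower-circles}, the complementary regions of $H_A$ strictly inside $R$, which are precisely the bounded complementary regions of $D_R$ that receive the white color in the checkerboard coloring.

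Finally, I would invoke Menasco's construction directly. Menasco's decomposition applied to the alternating diagram $D_R$ cuts $S^3\setminus K_R$ along the projection plane and along soup cans attached to the outer state circles of $D_R$, yielding two ideal polyhedra; the lower one has white faces at bounded white regions of $D_R$, shaded faces at the outer state circles and at bounded shaded regions, $4$--valent ideal vertices at crossings of $D_R$, and edges where white and shaded faces meet under the projection plane. The identification produced in the previous step is a homeomorphism from our lower 3--ball to Menasco's lower polyhedron that preserves the checkerboard coloring, the edge set, and the ideal vertices, so our lower 3--ball inherits the structure of an ideal polyhedron with $4$--valent vertices.

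The main obstacle is the bookkeeping in the second step: I must check that the soup cans nested inside $\bar R$ (other than the ones bounding $R$ itself) contribute nothing extra to the boundary of the lower 3--ball corresponding to $R$, and that the half--twisted bands meet the soup cans along arcs in exactly the way Menasco's polyhedron requires. Both follow by carefully tracking how $S_A$ was isotoped to lie just below the projection plane except in the crossing balls, so the argument is more a verification of combinatorial matching than a new topological input; nevertheless it is the place where a sloppy treatment could fail to produce the claimed identification.
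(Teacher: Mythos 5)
Your argument is correct and reaches the same identification as the paper --- the lower $3$--ball is the checkerboard polyhedron of the alternating diagram obtained by restoring crossings at the segments of $H_A$ lying in $R$ --- but by a somewhat different mechanism. You construct the diagram $D_R$ first, check that it is alternating, match the cut-open pieces of $S_A$ over $R$ with its checkerboard surface, and then import the polyhedral structure wholesale from Menasco's decomposition \cite{menasco:polyhedra}. The paper works intrinsically instead: since every white disk of $\CalD$ lies just below the projection plane except at crossings, the only portions of the link visible from inside a lower ball are the under-crossing arcs at segments of $H_A$ inside $R$; these are therefore the ideal vertices, each is $4$--valent because the two white disks adjacent to such a segment each contribute two edges there, and the resulting boundary graph is connected. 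The paper then quotes the elementary fact that a connected $4$--valent planar graph is the projection of an alternating link and determines its checkerboard polyhedra. That route disposes of exactly the ``bookkeeping'' you flag as the main obstacle --- the visibility argument pins down the vertices, edges and faces directly, and no verification that $D_R$ is alternating is needed, since the checkerboard polyhedra depend only on the $4$--valent projection --- whereas your route buys an explicit diagram and an explicit surface-to-surface matching, which is geometrically transparent. Two small points to tighten in your write-up: the crossings contributing segments to $\Sigma_R$ are those whose segments have interior in $R$ (your criterion ``at least one resolved arc on $\partial R$'' would also capture crossings whose segments lie on the far side of a boundary circle of $R$), and your one-sentence claim that $D_R$ is alternating deserves the standard justification: every $A$--state circle of $D_R$ is a circle of $\partial R$ bounding a disk disjoint from $D_R$, so the $A$--channels at all crossings open into the same checkerboard color class, which is exactly the condition characterizing alternating diagrams.
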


\begin{proof}
Ideal edges of a lower 3--ball stretch from the link, across the state
surface $S_A$, to the link again, and bound a disk of $\CalD$ on one
side.  The disks in $\CalD$, along which we cut, block portions of the
link from view from inside the lower 3--ball.  In particular, because
each disk of $\CalD$ lies below the projection plane except at
crossings, and the link lies on the projection plane except at
crossings, the only parts of the link visible from inside a lower
3--ball correspond to crossings of the diagram.  That is, only small
segments of under-crossings of the link are visible from inside a lower
3--ball.  Since edges meet at the same ideal vertex if and only if
they meet the same strand of the link visible from below, edges of the
lower 3--ball will meet other edges at under-crossings of the link.
Notice that the only relevant under-crossings will be those which
correspond to segments of $H_A$ which lie inside the region $R$ in the
complement of $s_A$, as in Lemma \ref{lemma:lower-circles}.  All other
crossings are contained outside our 3--ball.

For each such under-crossing, note two disks of $\CalD$ meet at the
under-crossing.  These disks correspond to the two regions of $H_A$
adjacent to the segment of $H_A$ at that crossing, and they both meet
in two edges.  Thus each vertex is 4--valent.  Finally, the graph
formed by edges and ideal vertices must be connected, since the region
of the complement of $s_A$ is connected.  Hence we have a 4--valent,
connected graph on the plane, corresponding to a non-trivial subgraph
of $H_A$ contained in a single component of the complement of $s_A$.

Any connected, 4--valent graph on the plane corresponds to an
alternating link, and gives the checkerboard decomposition of such a
link.  Thus it is an honest polyhedron.  The vertices of the 4--valent
graph correspond to crossings of the alternating diagram.  Notice the
vertices of the 4--valent graph also came from crossings of our link
diagram.  Thus we have a correspondance between a lower polyhedron and
an alternating link with exactly the same crossings as in our subgraph
of $H_A$.
\end{proof}

By Lemma \ref{lemma:lower-alt}, we think of the lower polyhedra as
corresponding to the largest alternating pieces of our knot or link
diagram.

Schematically, to sketch a lower polyhedron, start by drawing a
portion of $H_A$ which lies inside a non-trivial region of the
complement of $s_A$.  Mark an ideal vertex at the center of
each segment of $H_A$.  Connect these dots by edges bounding white
disks, as in Figure \ref{fig:lower-schematic}.

\begin{figure}
	\includegraphics{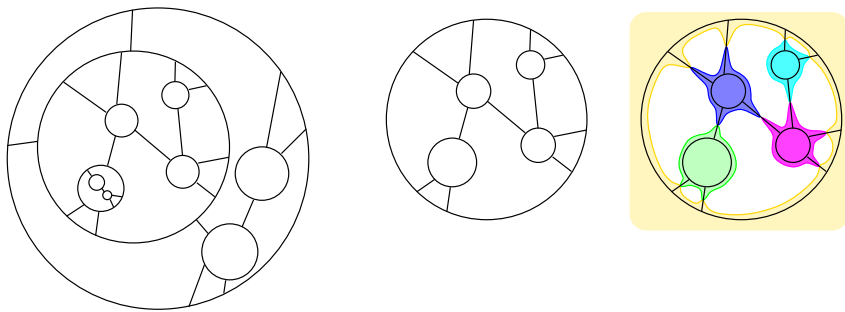}
\caption{Left to right:  An example graph $H_A$.  A subgraph
	corresponding to a region of the complement of $s_A$.  White and
	shaded faces of the corresponding lower polyhedron\index{lower polyhedra!example}.}
\label{fig:lower-schematic}
\end{figure}

\vskip 0.04in

Now we consider the upper 3--ball, or the single 3--ball lying above
the plane of projection.  Again, ideal edges on this 3--ball will meet
at ideal vertices corresponding to strands of the link visible from
inside the 3--ball.  However, the identification no longer occurs only at single crossings.  Still, we obtain the following.

\begin{lemma}
The upper 3--ball admits a checkerboard coloring, and all ideal
vertices are 4--valent.
\label{lemma:top}
\end{lemma}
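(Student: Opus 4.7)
The plan is to verify both claims directly from the construction of the upper 3--ball, exploiting that we already understand where the white disks of $\CalD$ sit relative to the projection plane.

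For the checkerboard coloring, the key observation is structural. By construction of $\CalD$, every edge on the boundary of the upper 3--ball arises as a component of the intersection of some white disk $D \in \CalD$ with (a neighborhood of) the surface $S_A$. Thus every edge has a white face on one side and a shaded face (a portion of $S_A$) on the other. This is precisely the statement that the natural 2--coloring of the 2--dimensional faces into white disks and shaded pieces of $S_A$ alternates across every edge, which is a checkerboard coloring.

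For the 4--valency of ideal vertices, I would first identify the arcs of $K$ that appear as ideal vertices of the upper 3--ball. By Lemma \ref{lemma:white-disk}, each disk in $\CalD$ meets $K$ only at under-crossings; hence the disks in $\CalD$ cut $K$ into a collection of arcs whose endpoints lie at under-crossings of $D$. Each such arc runs from one under-crossing to the next along a component of $K$, possibly traversing several over-crossings along the way (these all sit above the projection plane and so contribute no further subdivision of the arc from the viewpoint of the upper 3--ball).

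Then I would count the edges of the upper 3--ball meeting each such arc. At each endpoint of the arc there is an $A$--resolution segment of $H_A$, which is flanked by exactly two complementary regions of $H_A$ in the projection plane. Both of these regions carry white disks in $\CalD$, and each contributes one edge along $K$ at that endpoint. Since each arc has two endpoints and each endpoint contributes two edges, exactly $4$ edges meet each ideal vertex, giving 4--valency. The one point requiring care is to check that neither of the two adjacent white disks has been discarded as isotopic to an innermost disk; this is immediate, because both of these regions of $H_A$'s complement have an $A$--resolution segment (the one at the given under-crossing) on their boundary, so the corresponding white disk meets $K$ and hence is not innermost.
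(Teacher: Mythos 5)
Your checkerboard argument is fine and is essentially the paper's: every edge of the upper 3--ball is a component of intersection of a white disk of $\CalD$ with $S_A$, so each edge has a white face on one side and a shaded face on the other. Your identification of the ideal vertices (strands of $K$ between consecutive under-crossings, via Lemma~\ref{lemma:white-disk}) is also the paper's.

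The $4$--valency count, however, rests on a false local picture at an under-crossing. You assert that at each endpoint of the strand the two complementary regions of $H_A$ flanking the segment each contribute one edge to the given ideal vertex. In fact the two edges incident to the vertex at that end are two \emph{consecutive boundary edges of a single white disk}, namely the disk lying on the same side of the crossing as that end of the strand: its boundary runs over the under-strand exactly once inside the crossing ball, and the two boundary arcs adjacent to that meeting point both terminate at this vertex. The disk flanking the segment on the opposite side also has two edge endpoints at this crossing, but those terminate at the \emph{other} ideal vertex of the crossing, i.e.\ at the strand emerging on the far side of the over-strand. This is precisely what Figure~\ref{fig:under-crossing} records: $e_1$ and $e_2$ bound the same white disk and end at $v_1$, while $e_3$ and $e_4$ bound the other white disk and end at $v_2$. (Equivalently, around an ideal vertex the white faces wrap its two ends and the shaded faces, as tentacles, run along its two sides; in your picture the whites would sit along the sides, which contradicts the tentacle structure.) Your total of four comes out numerically right only because the four edge endpoints at a crossing split two-and-two between the two vertices under either description; but the claim ``each flanking region contributes one edge at that endpoint'' is false and is not argued, and without the correct local analysis nothing rules out, say, all four edge endpoints at a crossing landing on the same vertex, which would give valence eight. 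To repair the step, argue as the paper does: edges can only terminate at under-crossings by Lemma~\ref{lemma:white-disk}; at each under-crossing the white disk on each side meets $K$ once, its two adjacent boundary edges ending at the vertex that terminates on that side; since a vertex has two ends and no edge meets it in between, it is $4$--valent.
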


\begin{proof}
By shading the surface $S_A$ gray and disks of $\CalD$ white, we
obtain the checkerboard coloring.

Ideal vertices are strands of the link visible from inside the
3--ball.  Each strand of the link is cut off as it enters an
under-crossing.  Thus visible portions of the link from above will lie
between two under-crossings.  At each under-crossing, two edges bounding
a single disk meet the link on each side, as illustrated in Figure
\ref{fig:under-crossing}.  Thus these two edges will share an ideal
vertex with the two edges bounding a disk at the next under-crossing.
Since no other edges meet the link strand between under-crossings, the
vertex must be 4--valent.
\end{proof}

\begin{figure}
  \begin{center}
    \input{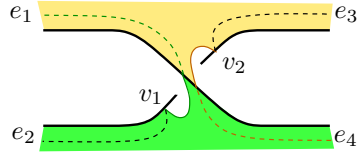}
  \end{center}
  \caption{Shown are portions of four ideal edges, terminating at
    under-crossings on a single crossing.  Ideal edges $e_1$ and $e_2$
    bound the same white disk and terminate at the ideal vertex $v_1$.
    Ideal edges $e_3$ and $e_4$ bound the same white disk and
    terminate at the ideal vertex $v_2$.}
  \label{fig:under-crossing}
\end{figure}

We give a description of the upper 3--ball by drawing the faces, ideal
edges, and ideal vertices of its boundary 
superimposed upon $H_A$, the graph of the $A$--resolution.  We will see
that the combinatorics of $H_A$ determine the combinatorics of this
upper 3--ball.  We will continue referring to it as the ``upper 3--ball'' until we prove in Theorem \ref{thm:simply-connected} that it is indeed a polyhedron.

\begin{notation}\label{notation:top-right}
The combinatorial picture of the upper 3--ball superimposed on the graph of
$H_A$ will be described by putting together local moves that occur at
each segment.  In order to describe a particular move at a particular
segment $s$, we will assume that the diagram has been rotated so that
$s$ is vertical.  Now there are two state circles meeting $s$, one at
the top of $s$ and one at the bottom of $s$.  Note the choice of top
and bottom is not well--defined, but depends on how we rotated $s$ to
be vertical.  However, the pair of edges of $H_A$, coming from state
circles, which meet the segment at the top--right and at the
bottom--left, is a well--defined pair.

Moreover, note that each crossing of the link diagram meets two shaded
faces, one on the top and one on the bottom of the crossing, when the
crossing is rotated to be as in Figure \ref{fig:splicing}.  See also
Figure \ref{fig:under-crossing}.  Given a choice of shaded face, and a
choice of crossing which that shaded face meets, this shaded face will
determine a well--defined state circle meeting the segment $s$ of
$H_A$ corresponding to that crossing.  We may rotate $H_A$ so that the
segment $s$ is vertical, and the state circle corresponding to our
chosen shaded face is on the top, as in Figure \ref{fig:tentacle}.
Once we have performed this rotation, the state circle on top of $s$,
as well as that on the bottom, and the four edges top--left,
top--right, bottom--left, and bottom--right, are now completely
well--defined.  
\end{notation}

The following description of the upper 3--ball, superimposed on the
graph $H_A$, is illustrated in Figures \ref{fig:tentacle},
\ref{fig:undercr-tentacles}, and \ref{fig:tentacle-mult}.

\begin{figure}
\includegraphics{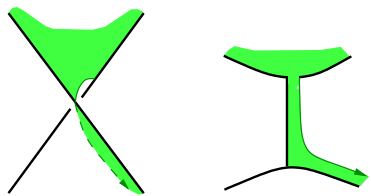}
\caption{Left: A tentacle\index{tentacle}\index{shaded face!tentacle} continues a shaded face in
the upper $3$--ball. Right: visualization of the tentacle on the graph
  $H_A$.}
\label{fig:tentacle}
\end{figure}

First, we discuss ideal vertices\index{upper 3--ball!ideal vertex} of
the upper 3--ball and how these 
appear on the graph $H_A$.  Each ideal vertex corresponds to a strand of
the diagram between two under-crossings, running over a (possibly
empty) sequence of over-crossings.  In terms of the graph $H_A$, for
any given segment $s$, rotate $H_A$ so that $s$ is vertical, as in
Notation \ref{notation:top-right}.  Now, draw two dots on the graph
$H_A$ on the edges at the top--right and bottom--left of $s$, near $s$.
Erase a tiny bit of state circle between the segment $s$ and each dot.
After this erasure is performed at all segments of $H_A$, the connected components that remain 
(which are piecewise linear arcs between a pair of dots), correspond to
ideal vertices. 
%	 In particular, with this erasing, connected
%	components remaining from $H_A$ are ideal vertices. 
 This erasing has
been done on the top--right of Figure \ref{fig:tentacle}, and for all
edges shown in the right side of Figure \ref{fig:tentacle-mult}.  It
has also been done on the top--right and bottom--left of Figure
\ref{fig:undercr-tentacles}.  

\begin{figure}
  \input{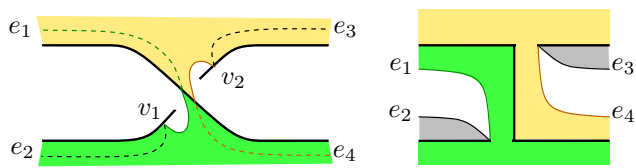}
  \caption{The shaded faces around the crossing of Figure
    \ref{fig:under-crossing}. (Note: For grayscale
    versions of this monograph, green faces will appear as dark gray,
    orange faces as lighter gray.)}
  \label{fig:undercr-tentacles}
\end{figure}

Every ideal edge of the upper $3$--ball\index{upper 3--ball!ideal edge} 
bounds a white disk on one side, and a shaded face on the other, and
starts and ends at under-crossings.  This ideal edge runs through a
crossing, for which we may assume the orientation is as in Figure
\ref{fig:splicing}, with the shaded face at the top of the crossing. Then, following
Notation \ref{notation:top-right},  this ideal edge
will run from the top--right of a crossing, through the crossing, and continues parallel to the link strand on the bottom--right. 
%	
%	Now, the crossing corresponds to a segment $s$ in the graph $H_A$,
%	which we may assume is vertical, with the corresponding shaded face at
%	the top of the segment $s$, as in Notation \ref{notation:top-right}.
Superimposed on $H_A$, the ideal edge will start at the top--right of a segment
$s$, run adjacent to $s$, and then continue horizontally parallel to the state circle at
the bottom of $s$.  
This is shown for a single edge in Figure
\ref{fig:tentacle}.  Thus the white face adjacent to this ideal edge
of the decomposition corresponds to the region of the complement of
$H_A$ to the right of this particular segment of $H_A$.

Note the shaded face\index{upper 3--ball!shaded face} adjacent to
this ideal edge is the same shaded face on top of the segment of
$H_A$, or from the top of the crossing.  It runs between the link and
the ideal edge, adjacent to the white face, until the ideal edge
terminates at another under-crossing.  We draw it on the graph $H_A$ as
shown in the right panel of Figure \ref{fig:tentacle}.  Figure \ref{fig:tentacle-mult}
shows multiple ideal edges.  Figure \ref{fig:undercr-tentacles} shows
a single segment with the two ideal edges that are adjacent to it.
Notice in that figure that when the green\footnote{Note: For
grayscale versions of this monograph, green will refer to the darker
gray shaded face, orange to the lighter gray.} 
shaded face is rotated to be 
on top, the green ideal edge runs from the top--right to the
bottom--right of the segment.  However, rotated as shown, the green
runs from the bottom--left to the top--left.  These are symmetric.

\begin{define}
A \emph{tentacle}\index{tentacle}\index{shaded face!tentacle} is defined to be the strip of shaded
face running from the top right of a segment of $H_A$, adjacent to the
link, along the bottom right state circle of the edge of $H_A$, as
illustrated in Figure \ref{fig:tentacle}.  Notice that a tentacle is
bounded on one side by a portion of the graph $H_A$, and on the other
side by exactly one ideal edge.
\label{def:tentacle}
\end{define}

Note that a tentacle will continue past segments of $H_A$ on the
opposite side of the state circle without terminating, spawning new
tentacles, but will terminate at a segment on the same side of the
state surface.  This is shown in Figure \ref{fig:tentacle-mult}.

\begin{figure}
  \begin{center}
    \input{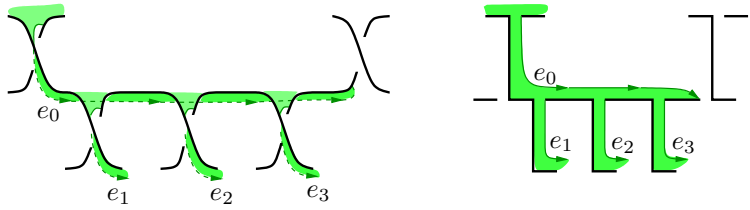}
  \end{center}
  \caption{Left: part of a shaded face in an upper $3$--ball. Right: the
    corresponding picture, superimposed on $H_A$.  The tentacle next to the ideal
    edge $e_0$ terminates at a segment on the same side of the state
    circle on $H_A$.  It runs past segments on the opposite side of
    the state circle, spawning new tentacles associated to ideal edges
    $e_1$, $e_2$, $e_3$.}
  \label{fig:tentacle-mult}
\end{figure}

%	Also note that a tentacle (or the ideal edge adjacent to a tentacle)
%	has a direction.

\begin{define}
A given tentacle is adjacent to a segment of $H_A$.  Rotate the
segment to be vertical, so that the tentacle lies to the right of this segment.  The
\emph{head}\index{head!of tentacle}\index{tentacle!head} of a tentacle
is the portion attached to the top right of the segment of $H_A$.  The
\emph{tail}\index{tail (of tentacle)}\index{tentacle!tail} is the part adjacent to
the lower right of the state circle. We think of the tentacle as directed from head to tail.

Alternately, if we think of ideal edges of the decomposition as
beginning at the top--right of a crossing, rotated as in Notation
\ref{notation:top-right}, and ending at the bottom--left, this orients each ideal edge.  Since each tentacle is bounded by an ideal
edge on one side, this in turn orients the tentacle. See Figure \ref{fig:tentacle-mult}.
\label{def:tentacle-direct}
\end{define}

Notice that for any segment, we will see the head of some tentacle at
the top right of the segment, and the head of another tentacle at the
bottom left of the segment.  In Figure \ref{fig:undercr-tentacles}, we
see the heads of two tentacles.  The orange
%	\footnote{Note: For
%	grayscale versions of this monograph, green will refer to the darker
%	gray shaded face, orange to the lighter.} 
one is on the top--right,
the green one on the bottom--left.  In addition, the tails of two
other tentacles are shown in gray in that figure.

\section{Primeness}\label{subsec:primeness}

Near the beginning of the last section, we stated that there were
two stages to our polyhedral decomposition.  The first stage is that
explained above, given by cutting along white disks corresponding to
complementary regions of $H_A$ in the projection plane.  This may not
cut $M_A$ into sufficiently simple pieces.  In many cases, we may
have to do some additional cutting to obtain polyhedra with the
correct properties.  This additional cutting is described in this
section.

Recall from Definition \ref{primetwist} that a link diagram is
\emph{prime}\index{prime!diagram} if any simple closed curve that
meets the diagram transversely in two points bounds a region of the
projection plane with no crossings.  By Lemma \ref{lemma:lower-alt}, the
polyhedra in our decomposition that lie below the projection plane
correspond to alternating link diagrams.  At this stage, these
diagrams may not all be prime.  We need to modify the polyhedral
decomposition so that polyhedra below the projection plane do, in
fact, correspond to prime alternating diagrams.

\begin{define}
The graph $H_A$ is \emph{non-prime}\index{non-prime!graph} if
there exists a state circle $C$ of $s_A$ and an arc $\alpha$ with both
endpoints on $C$ such that the interior of $\alpha$ is disjoint from
$H_A$ and $\alpha$ cuts off two non-trivial subgraphs when restricted
to the subgraph of $H_A$ corresponding to the region of $s_A$
containing $\alpha$.  The arc $\alpha$ is defined to be a
\emph{non-prime arc}\index{non-prime!arc}.

More generally, define a non-prime arc inductively as follows.
Suppose $\alpha_1, \dots, \alpha_n$ are non-prime arcs with endpoints
on the same state circle $C$.  Suppose there is a region $R$ of the
complement of $H_A \cup (\cup_{i=1}^n \alpha_i)$ and an arc $\beta$
embedded in that region with both endpoints on $C$ such that $\beta$
splits $R$ into two new regions, both containing state circles of
$s_A$.  Then $H_A \cup (\cup_{i=1}^n \alpha_i)$ is
\emph{non-prime}\index{non-prime!graph},
and $\beta$ is defined to be a \emph{non-prime arc}\index{non-prime!arc}.
\label{def:non-prime}
\end{define}

Figure \ref{fig:nonprime-ex} gives an example of a collection of
non-prime arcs.

\begin{figure}
  \begin{center}
    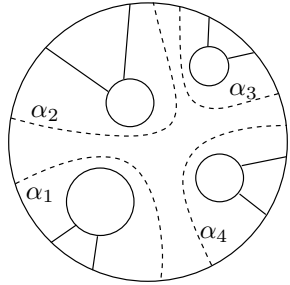
  \end{center}
  \caption{Arcs $\alpha_1$, $\alpha_2$, and $\alpha_3$ are all
    non-prime arcs.  However, $\alpha_4$ is not non-prime in $H_A \cup
    (\alpha_1 \cup \alpha_2 \cup \alpha_3)$, since there is a region
    in the complement of $H_A \cup (\alpha_1 \cup \dots \cup
    \alpha_4)$ which contains no state circles\index{non-prime!arc}.}
  \label{fig:nonprime-ex}
\end{figure}

We call this non-prime because the corresponding alternating diagram
of the polyhedron below the projection plane will no longer be a prime
alternating diagram in the presence of such an arc.

The arc $\alpha$ meets two ideal edges of a polyhedron below the
projection plane, and two ideal edges of the $3$--ball above the
projection plane.  Modify the polyhedral decomposition as follows: we
take our finger and push the arc $\alpha$ down against the soup can
corresponding to the state circle $C$\index{prime decomposition}.
That is, we surger along the disk bounded by $\alpha$ and an arc
parallel to $\alpha$ running along the soup can.  Topologically, this
divides the corresponding lower polyhedron into two, replacing two
shaded faces by one, and one white face by two.  No new ideal vertices
are added, but the two ideal edges met by the non-prime arc are
modified so that each runs from its original head to a neighborhood of
the non-prime arc, then parallel to the non-prime arc, then along the
tail of the other original ideal edge to where that ideal edge
terminates.

Combinatorially, this does the following.  In the lower polyhedron,
under the projection plane, cut the polyhedron into two polyhedra by
joining the ideal edges attached by the non-prime arc.  See Figure
\ref{fig:nonprime-lower}.  The lower polyhedra now correspond to
alternating links whose state circles contain $\alpha$.

\begin{figure}
\includegraphics{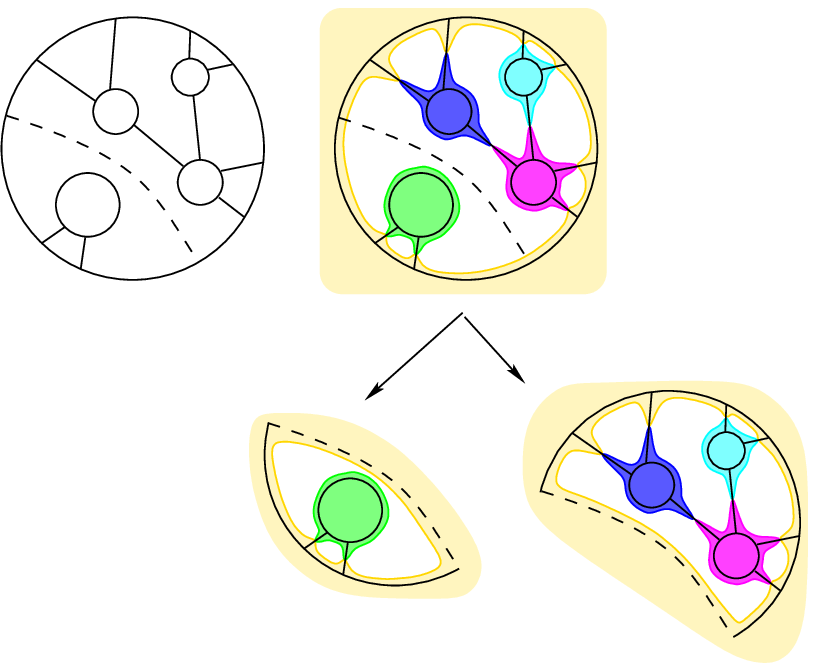}
\caption{Splitting a lower polyhedron\index{prime decomposition!example} into two along a non-prime arc.
  These give two polyhedral regions, defined in Definition
  \ref{def:polyhedral-region} on page \pageref{def:polyhedral-region}.}
\label{fig:nonprime-lower}
\end{figure}

On the boundary of the upper $3$--ball, connect tentacles at both endpoints of the
non-prime arc $\alpha$ by attaching a small regular neighborhood of
$\alpha$, for example as in Figure \ref{fig:nonprime-top}.  We call
this neighborhood of $\alpha$ connecting tentacles a \emph{non-prime
  switch}\index{non-prime!switch}\index{shaded face!non-prime switch}.  A priori, a non-prime switch might join two 
shaded faces into one, or else connect a shaded face to itself. (In fact, we will show in the next chapter that it does the former.)  A non-prime switch also reroutes ideal edges adjacent to the
connected tentacles to run adjacent to the non-prime arc.  Notice that
these two edges still have well defined orientations, although unlike
the case of tentacles, this does not give a direction to the non-prime
switch.

\begin{figure}
\includegraphics{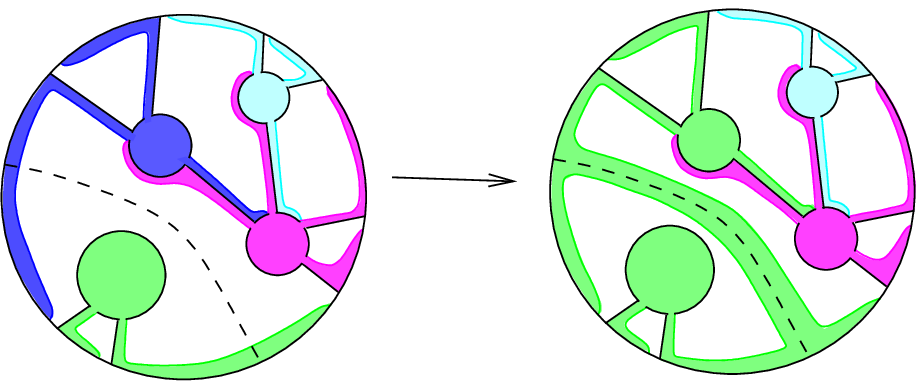}
\caption{Splitting the upper $3$--ball along a non-prime
  arc\index{prime decomposition!example}.}
\label{fig:nonprime-top}
\end{figure}

\begin{define}
Let $D$ be a diagram of a link, with $H_A$ the corresponding graph of
its $A$--resolution.  Let $\alpha_1, \dots, \alpha_n$ be a collection
of non-prime arcs for $H_A$ that is maximal, in the sense that each
$\alpha_i$ is a non-prime arc in $H_A \cup (\cup_{j=1}^{i-1}\alpha_j)$
and there are no non-prime arcs in $H_A \cup (\cup_{j=1}^n\alpha_j)$.
Cut $M_A$ into upper and lower 3--balls along disks $\CalD$, as in
Lemma \ref{lemma:3-ball}.  Then modify the decomposition by cutting
lower polyhedra along each non-prime arc $\alpha_i$, $i=1, \dots, n$,
as described above.  This decomposes $M_A$ into 3--balls, which we
continue to call upper and lower 3--balls.  We refer to the
decomposition as a \emph{prime decomposition} of $M_A$.\index{prime decomposition}
  \label{def:max-nonprime}
\end{define}

Notice that the choice of a maximal collection of non-prime arcs may
not be unique.  In fact, by appealing to certain results about
orbifolds, one can show that the pieces of the prime decomposition are
unique.  While we do not need this for our applications, the argument
is outlined in Remark
\ref{rem:prime-uniqueness} on page \pageref{rem:prime-uniqueness}.

\begin{define}
\label{def:prime}
We say a polyhedron is \emph{prime}\index{prime!polyhedron} if every pair of faces
meet along at most one edge.

Equivalently, we will see that any prime polyhedron admits no normal
bigons, as in Definition \ref{def:normal-bigon}.
\end{define}

In our situation, we also have the following equivalent notion of
\emph{prime}.  Recall that, by Lemma \ref{lemma:lower-alt}, each lower polyhedron corresponds to an alternating link diagram (which can be recovered from a sub-graph of $H_A$). The $4$--valent graph of the polyhedron is identical to the $4$--valent graph of the alternating diagram. Then the polyhedron will be prime if and only if the corresponding alternating diagram is prime, in the sense of Definition \ref{primetwist}. This is one motivation for the notion of prime polyhedra.

%	Notice that the lower polyhedra we are considering have 4--valent
%	ideal vertices, which gives a 4--valent graph on the boundary of the
%	3--ball corresponding to the polyhedron.  Any planar graph with 4--valent
%	vertices determines an alternating link diagram, by assigning
%	over/under crossing information appropriately at each vertex, and this
%	in turn determines a checkerboard ideal polyhedron whose edges and
%	ideal vertices come from edges and vertices of the original graph.
%	That is, the polyhedron of the alternating link diagram is identical
%	to the original lower polyhedron.  The diagram of the alternating link will
%	be prime if and only if the polyhedron is prime.  This is the
%	motivation for the term \emph{prime}.

The effect of the prime decomposition of $M_A$ is summarized in the
following lemma.

\begin{lemma}
A prime decomposition of $M_A$, along a maximal collection of
non-prime arcs $\alpha_1, \dots, \alpha_n$, has the following
properties:
\begin{enumerate}
\item\label{item:upper-lower} It decomposes $M_A$ into one upper and at least one lower
  3--ball.
\item\label{item:4-valent} Each 3--ball is checkerboard colored with 4--valent vertices.
\item\label{item:poly-regions} Lower 3--balls are in one to one correspondence with non-trivial
  complementary regions of $s_A \cup (\cup_{i=1}^n\alpha_i)$.
\item\label{item:poly-alt} All lower 3--balls are ideal polyhedra identical to the
  checkerboard polyhedra of an alternating link\index{lower polyhedra}, where the alternating
  link is obtained by taking the restriction of $H_A \cup
  (\cup_{i=1}^n\alpha_i)$ to the corresponding region of $s_A \cup
  (\cup_{i=1}^n\alpha_i)$, and replacing segments of $H_A$ with
  crossings (using the $A$--resolution).  
\item\label{item:lower-prime} The alternating diagram corresponding to each lower polyhedron is prime. Consequently, each lower polyhedron is itself prime.
\item\label{item:white-regions} White faces of the 3--balls correspond to regions of the
  complement of $H_A \cup (\cup_{i=1}^n\alpha_i)$.
\end{enumerate}
\label{lemma:nonprime-3balls}
\end{lemma}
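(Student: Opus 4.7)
The plan is to prove items (1)--(6) by starting from the checkerboard decomposition of $M_A$ produced by cutting along $\CalD$ alone, and then analyzing the effect of the non-prime surgeries by induction on the number of arcs added. Before any $\alpha_i$ is introduced, Lemma \ref{lemma:3-ball} gives (1), Lemma \ref{lemma:top} gives (2) and (6) for the upper 3--ball, Lemma \ref{lemma:lower-circles} gives (3), and Lemma \ref{lemma:lower-alt} gives (2) and (4) for the lower 3--balls. Only (5) may fail at this stage, and (3), (4), (6) need to be updated to reflect the arcs $\alpha_i$ as they appear.

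For the inductive step, I would show that cutting along a single new non-prime arc $\alpha_i$ preserves items (1)--(4) and (6) in the appropriate updated form. As described just before Definition \ref{def:max-nonprime}, $\alpha_i$ meets exactly two ideal edges of one lower 3--ball and two ideal edges of the upper 3--ball. Surgering along the disk bounded by $\alpha_i$ against the soup can of the state circle $C$ divides that lower 3--ball into two, joins the two lower ideal edges into a single edge on each side, replaces one white face by two, and merges two shaded faces into one. In the upper 3--ball the non-prime switch reroutes the two corresponding ideal edges through a small neighborhood of $\alpha_i$ without adding or removing ideal vertices. Hence 4--valency and the checkerboard coloring persist, giving (2); the two new lower 3--balls correspond to the two non-trivial regions of $s_A \cup (\cup_{j\le i}\alpha_j)$ into which $\alpha_i$ subdivides its containing region, giving (3); the same cut splits the associated 4--valent alternating diagram from Lemma \ref{lemma:lower-alt} into the two alternating diagrams associated to the new regions, giving (4); and the new white face produced by the surgery is exactly the new complementary region created by adding $\alpha_i$, giving (6).

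Item (5) then follows from the maximality of $\alpha_1, \dots, \alpha_n$. If some lower polyhedron corresponded to a non-prime alternating diagram, there would be a simple closed curve meeting that diagram transversely in two points with crossings on both sides. Using (4), this curve can be realised in the projection plane as a simple closed curve meeting $H_A \cup (\cup_{j=1}^n \alpha_j)$ in exactly two points on the bounding state circle $C$ of the region, so one of its two arcs would be a non-prime arc for $H_A \cup (\cup_{j=1}^n\alpha_j)$, contradicting maximality. The equivalence noted just before the statement of the lemma then gives primeness of each lower polyhedron.

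The main obstacle I expect is confirming the local surgery picture uniformly, namely that each non-prime arc really meets exactly two ideal edges on each side and that the changes to shaded faces (merging two into one in the lower ball, routing tentacles through a non-prime switch above) happen as described, regardless of which side of $C$ the arc lies on and regardless of how previous arcs have already subdivided the ambient region. This is essentially a bookkeeping check guided by Definition \ref{def:non-prime}, but it needs to be carried out carefully so that the inductive hypothesis applies to $\alpha_{i+1}$ viewed as a non-prime arc of the \emph{updated} graph $H_A \cup (\cup_{j\le i}\alpha_j)$.
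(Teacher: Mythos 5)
Your proposal is correct and takes essentially the same route as the paper: items (1)--(4) and (6) are verified by tracking how each non-prime surgery modifies the decomposition already established in Lemmas \ref{lemma:3-ball}, \ref{lemma:top}, \ref{lemma:lower-circles}, and \ref{lemma:lower-alt} (checkerboard coloring and 4--valence persist because no ideal vertices are created or destroyed, lower balls and white faces split along the arcs), and (5) comes from maximality of the arc collection. Your contradiction argument for (5) just spells out the paper's one-line appeal to maximality; the only caveat is that the two intersection points of the witnessing curve lie on a common boundary state circle of the region by a parity count (they need not lie on a single preferred ``bounding'' circle), after which the arc it cuts off is non-prime exactly as you say.
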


\begin{proof}
Cutting along non-prime arcs may slice lower 3--balls into multiple
pieces, but it will not subdivide the upper 3--ball.  Hence we still
have one upper and at least one lower 3--ball after a prime
decomposition, giving item \eqref{item:upper-lower}.

Note that ideal edges are modified by the prime decomposition, but each ideal edge
still bounds a white face on one side and a shaded face on the
other.  Hence the 3--balls are still checkerboard colored.  Moreover,
the prime decomposition does not affect any ideal vertices, and so
these remain 4--valent, as in Lemmas \ref{lemma:lower-alt} and
\ref{lemma:top}.  This gives item \eqref{item:4-valent}.

For item \eqref{item:poly-regions}, recall that before cutting along non-prime arcs, lower
3--balls corresponded to non-trivial regions of the complement of the
state circles $s_A$.  Now we cut these along non-prime arcs, splitting
them into regions corresponding to components of the complement of $s_A \cup
(\cup_{i=1}^n\alpha_i)$, as in Figure \ref{fig:nonprime-lower}.

Item \eqref{item:poly-alt} follows from Lemma \ref{lemma:lower-alt} and from the fact
that we cut along a maximal collection of non-prime arcs.  Lower
3--balls were known to be ideal polyhedra corresponding to alternating
links.  When we cut along non-prime arcs, we modify the diagrams of
these links by splitting into two along the non-prime arc.  Because
the collection of non-prime arcs is maximal, in the final result all
such diagrams will be prime, proving \eqref{item:lower-prime}.

Finally, before cutting along non-prime arcs, white faces corresponded
to non-trivial regions in the complement of $H_A$.  A non-prime arc
will run through such a region, with its endpoints on the same state
circle in the boundary of such a region.  Hence after cutting along a
non-prime arc, we have separated such a region into two.  Item \eqref{item:white-regions}
follows.
\end{proof}

At this stage, we have quite a bit of information about the lower $3$--balls: we know that each lower ball is an ideal polyhedron, and that it is prime. The same statements are true for the upper $3$--ball as well, although they are harder to prove. Proving these results for the upper $3$--ball is one of the main goals of the next chapter.

%%%%%%%%%%%%%%%%%%%%%%%%%%%%%%%%%%%%%%%%%%%%%%%%%%%%%%%%%%%%%%%%%
\section{Generalizations to other states}  \label{subsec:generalization} 
So far, we have described how to decompose the surface complement $M_A = S^3 \cut S_A$ into $3$--balls. By reflecting a $B$--adequate diagram to make it $A$--adequate, one could apply the same decomposition to $S^3 \cut S_B$.  In this section, we briefly describe how to generalize the
decomposition into $3$--balls to the much broader class
of $\sigma$--homogeneous states considered by Ozawa in \cite{ozawa}.

Given a state $\sigma$ of a link diagram $D(K)$, recall that $s_\sigma$
is a collection of disjointly embedded circles on the projection
plane.  We obtain a trivalent graph $H_\sigma$ by attaching
edges, one for each crossing of the original diagram $D(K)$. The edges
of $H_\sigma$ that come from crossings of the diagram are referred to
as \emph{segments}, and the other edges are portions of state circles.

\begin{figure}
  \includegraphics{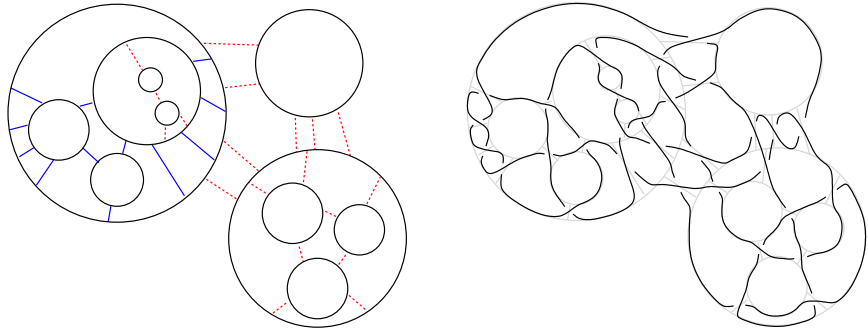}
  \caption{An example of a $\sigma$--homogeneous diagram (on the
    right) and its graph $H_\sigma$ (left).  Blue segments represent
    the $B$--resolution, red segments the $A$--resolution.  Note
    $\sigma$--homogeneity means each component in the complement of
    the state circles (black circles on left) has all segments of the
    same color.}
  \label{fig:sigma-homo}
\end{figure}

\begin{define}\label{def:sigma-homo}
 Given  a  state $\sigma$ of a link diagram $D(K)$, the
circles of $s_\sigma$ divide the projection plane
into components.  Within each such component, we have a collection of
segments coming from crossings of the diagram.  Label each segment $A$
or $B$, depending on whether the corresponding crossing is given an $A$ or
$B$--resolution in the state $\sigma$.  If all edges within each component have the same $A$ or $B$ label, we say that $\sigma$ is a \emph{homogeneous} state, and the diagram $D$ is
\emph{$\sigma$--homogeneous}\index{$\sigma$--homogeneous}.
%	A link admitting a
%	$\sigma$--homogeneous diagram is also called \emph{$\sigma$--homogeneous}  
See Fig. \ref{fig:sigma-homo} for an example.

Let $G_\sigma$ be the graph obtained by collapsing the circles of
$H_\sigma$ into vertices. If $G_\sigma$ contains no loop edges, we say that $\sigma$ is an \emph{adequate} state, and the diagram that gave rise to this state is  \emph{$\sigma$--adequate}\index{$\sigma$--adequate}. 
\end{define}

Let $\sigma$ be a homogeneous state of a link diagram $D(K)$, and let
$S_{\sigma}$ denote the corresponding state surface.  We let $M = S^3 \setminus K$
denote the link complement, and let
$M_\sigma := M\cut S_\sigma$ denote the path--metric closure of $M
\setminus S_\sigma$.  Note that $M_\sigma = (S^3\setminus K)\cut
S_\sigma$ is homeomorphic to $S^3\cut S_\sigma$, obtained by removing
a regular neighborhood of $S_\sigma$ from $S^3$.
As above, we will refer to $P=\bdy M_\sigma \cap \bdy M$ as the \emph{parabolic
  locus} of $M_\sigma$; it consists of annuli. 
%	  The remaining,
%	non-parabolic boundary $\bdy M_\sigma \setminus \bdy M$ is the unit
%	normal bundle of $S_\sigma$.

We cut $M_\sigma = S^3 \cut S_\sigma$ along disks, one for each region
of the complement of $H_\sigma$, excepting innermost circles of
$s_\sigma$.  We refer to these disks as \emph{white disks}, and we
denote the collection of such disks by $\CalD$.  This cuts $M_\sigma$
into 3--balls:  one \emph{upper} 3--ball lying above the plane of
projection, and multiple \emph{lower} 3--balls, one for each component
of $s_\sigma$.  On the surface of each 3--ball is a graph, with edges
coming from intersections of white disks and the surface $S_\sigma$,
dividing the surface of the 3--ball into regions:  \emph{white faces},
coming from the white disks, and \emph{shaded regions}, coming from
portions of the state surface $S_\sigma$.

%In fact, this decomposition into 3--balls has not used
%$\sigma$--homogeneity.  We can do the cutting for any state surface of
%any state $\sigma$.  However, we need $\sigma$--homogeneity to prove
%that the result gives ideal polyhedra: that the shaded regions are
%simply connected.

%Now we describe the combinatorics of these 3--balls.

Because the diagram $D(K)$ is $\sigma$--homogeneous, each lower 3--ball comes
from a sub-diagram of $D$ that consists of only $A$-- or only
$B$--resolutions.  Because this sub-diagram is contained in a single
non-trivial component of the complement of the state circles
$s_\sigma$, it is alternating. The proofs of Lemmas \ref{lemma:lower-circles} and \ref{lemma:lower-alt}
go through in the $\sigma$--homogeneous
setting, and we  immediately obtain analogous results for the lower
3--balls in this case.

\begin{lemma}\label{lemma:lower3ball}
Let $\sigma$ be a homogeneous state of a diagram $D(K)$. 
Let  $R$ be a non-trivial component of the
complement of $s_\sigma$ in the projection plane.  Then:
\begin{enumerate}
\item There is exactly one lower 3--ball corresponding to $R$.  Its
  white faces correspond to the regions in the complement of
  $H_\sigma$ that are contained in $R$.
\item Each lower 3--ball is an ideal polyhedron, identical to the
  checkerboard polyhedron obtained by restricting to the alternating
  diagram given by the subgraph of $H_\sigma$ contained in a
  non-trivial region of $s_\sigma$.  \qed
\end{enumerate}
\end{lemma}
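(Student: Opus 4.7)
The plan is to follow the two-stage strategy used for the all--$A$ case in Lemmas \ref{lemma:lower-circles} and \ref{lemma:lower-alt}, observing at each step that the only property needed is that the sub-diagram contained in a single complementary region $R$ of $s_\sigma$ uses only one type of resolution. This is exactly what $\sigma$--homogeneity supplies, so no new ingredients are required beyond a careful rewriting.

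For part (1), I would first isotope the state surface $S_\sigma$ into the convenient position described in Section \ref{subsec:decomp-balls}: the state circles $s_\sigma$ bound nested soup-can disks deep below the projection plane, and the half--twisted bands sit on the projection plane except inside crossing balls. Exactly as in the proof of Lemma \ref{lemma:3-ball}, the collection of soup cans attached to the circles of $s_\sigma$ cuts the lower half-space into components, one per non-trivial region of the complement of $s_\sigma$. The white disks from $\CalD$ that lie in a given such component are precisely those coming from regions of the complement of $H_\sigma$ contained in $R$. This gives the asserted bijection and describes the white faces of the resulting lower $3$--ball.

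For part (2), the key point is to identify the lower $3$--ball as the checkerboard polyhedron of an alternating diagram. Because $D$ is $\sigma$--homogeneous, every segment of $H_\sigma$ inside $R$ has the same $A$ or $B$ label. After possibly reflecting, I may assume they are all $A$--segments, so the sub-diagram obtained by replacing each segment of $H_\sigma\cap R$ with its crossing is alternating. The argument of Lemma \ref{lemma:lower-alt} now applies verbatim to $R$: from inside the lower $3$--ball, only the under-crossings arising from segments of $H_\sigma\cap R$ are visible, so ideal edges meet in fours at exactly those crossings. This produces a connected, $4$--valent planar graph identical to the projection graph of the alternating sub-diagram, and hence the lower $3$--ball is combinatorially the standard Menasco checkerboard polyhedron of that alternating link.

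The main obstacle is a conceptual rather than technical one: verifying that $\sigma$--homogeneity is \emph{exactly} the hypothesis needed to repeat the all--$A$ argument inside each $R$. The possibility to watch out for is that a region $R$ might contain crossings of mixed type, in which case the isotopy pushing $S_\sigma$ below the projection plane would twist some bands into the upper half-space, destroying the clean picture of soup cans plus bands and spoiling the identification of the lower $3$--ball with an alternating checkerboard polyhedron. Once one confirms that homogeneity rules this out, parts (1) and (2) are direct transcriptions of the earlier lemmas, which is why the excerpt justifies the claim by saying the earlier proofs ``go through.''
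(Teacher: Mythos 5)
Your proposal is correct and follows the same route as the paper: the paper's justification is precisely that, by $\sigma$--homogeneity, the sub-diagram inside each non-trivial region of $s_\sigma$ uses a single resolution (hence is alternating after a possible reflection), so the proofs of Lemmas \ref{lemma:lower-circles} and \ref{lemma:lower-alt} go through verbatim. Your added remark about why mixed resolutions in a single region would break the soup-can picture is consistent with, and not more than, what the paper's argument needs.
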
 

As above, we call $R$ a \emph{polyhedral region}.

The ideal edges of the upper 3--ball are given
by the intersection of white disks with the surface $S_\sigma$.  Since
each white disk is contained in a single polyhedral region $R$ in the
complement of the state circles $s_\sigma$, each crossing that the
white disk borders has been assigned the same resolution, $A$ or $B$,
by $\sigma$.  Thus the local description of these ideal edges is
identical to that in the all--$A$ or all--$B$ case.  In particular,
obtain the analogue of  Lemma \ref{lemma:top}: the upper 3--ball is
checkerboard colored, and all ideal vertices are 4-valent.  In the $\sigma$--homogeneous setting, 
the definitions of \emph{tentacles} and their \emph{head} and
\emph{tail} directions in the $\sigma$--homogeneous case, are completely analogous to
Definitions and \ref{def:tentacle} and  \ref{def:tentacle-direct}.

\begin{define}\label{def:tentacleh}
  For any segment of $H_\sigma$, rotate $H_\sigma$ so that the segment
  is vertical.  A \emph{tentacle} is defined to be the strip of shaded
  face running from the top of the segment, adjacent to the link,
  along the bottom state circle adjacent to this segment.  When the
  segment comes from a crossing with the $A$--resolution, the tentacle
  runs from the top to the right.  When the segment comes from a
  crossing with the $B$--resolution, the tentacle runs from the top
  and to the left.  A tentacle is bounded on one side by a portion of
  the graph $H_\sigma$, and on the other side by exactly one ideal
  edge.

  The \emph{head} of the tentacle is the portion attached to the top
  of the segment.  The tail is adjacent to the state circle.  We think
  of a tentacle as directed from head to tail.  
\end{define}

In the $\sigma$--homogeneous setting, some tentacles run in the
right--down direction (corresponding to the $A$--resolution) and some
in the left--down direction (corresponding to the $B$--resolution).
However, within any component of the complement of $s_\sigma$, all tentacles run in the
same direction.  Thus the only way to switch from right--down to
left--down, or vice versa, is to cross over a circle of $s_\sigma$.

In the upper 3--ball, we attach tentacles to tentacles across state
circles as shown in Figure \ref{fig:tentacle-mult}.  However, if the
state circle separates $A$--resolutions from $B$--resolutions, we attach
left--down tentacles to right--down tentacles, or vice-versa.  See Figure
\ref{fig:tentacles-homo}.

\begin{figure}
\input{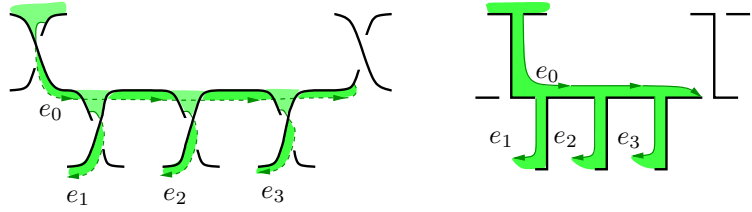}
\caption{Analogue of Figure \ref{fig:tentacle-mult}.  When resolutions
switch from all--$A$ to all--$B$ across a separating state circle of
$s_\sigma$, we attach left--down tentacles rather than right--down.}
\label{fig:tentacles-homo}
\end{figure}

Finally, to complete the decomposition, just as in the all--$A$ or
all--$B$ case we need to ensure primeness of the polyhedra.  To do so,
we add a maximal collection of non-prime arcs, defined exactly as in
Section \ref{subsec:primeness}, and then surger our polyhedra along
disks bounded by these arcs.  Because non-prime arcs connect a state
circle to itself, and therefore separate only all--$A$ or all--$B$
resolutions (by $\sigma$--homogeneity), all the discussion in Section
\ref{subsec:primeness} goes through without modification in the
$\sigma$--homogeneous case (except to replace all $A$'s with all $B$'s
if necessary, which does not affect the argument).

%% 3
\chapter{Ideal Polyhedra}\label{sec:polyhedra}
Recall that $M_A = S^3 \cut S_A$ is $S^3$ cut along the surface $S_A$.
In the last chapter, starting with a link diagram $D(K)$, we obtained
a prime decomposition of $M_A$ into 3--balls. One of our goals in this
chapter is to show that, if $D(K)$ is $A$--adequate (see Definition
\ref{def:reduced} on page \pageref{def:reduced}), each of these balls
is a checkerboard colored ideal polyhedron with 4--valent vertices.
This amounts to showing that the shaded faces on each of the 3--balls
are simply--connected, and is carried out in Theorem
\ref{thm:simply-connected}.

Once we have established the fact that our decomposition is into ideal
polyhedra, as well as a collection of other lemmas concerning the
combinatorial properties of these polyhedra, two important results
follow quickly.  The first is Proposition \ref{prop:no-normal-bigons},
which states that all of the ideal polyhedra in our decomposition are prime.  
The second is a new proof of Theorem
\ref{thm:incompress}, originally due to Ozawa \cite{ozawa}, that the
surface $S_A$ is essential
in the link complement if and only if the diagram of our link is
$A$--adequate.

All the results of this chapter generalize to $\sigma$--adequate, $\sigma$--homogeneous diagrams. We discuss this generalization in Section \ref{subsec:idealsigma}.

The results of this chapter will be assumed in the sequel.  To prove
many of these results, we will use the combinatorial structure of the
polyhedral decomposition of the previous chapter, in a method of proof
we call \emph{tentacle chasing}\index{tentacle chasing}.  This method
of proof, as well as many lemmas established here using this method,
will be used again quite heavily in parts of Chapters
\ref{sec:ibundle}, \ref{sec:epds}, \ref{sec:nononprime}, and
\ref{sec:montesinos}.  Therefore, the reader interested in those
chapters should read the tentacle chasing arguments carefully, to be
prepared to use such proof techniques later.  In particular, tentacle chasing 
methods form a crucial component in the proofs of our main results,
which reside in  Chapters
\ref{sec:spanning} and \ref{sec:applications} respectively.

However, a reader who is eager to get to the main theorems and their 
applications, and who seeks only a top-level outline of the proofs, may opt to 
survey the results of this chapter while taking the proofs on faith. The top-level proofs of the 
main results in Chapter \ref{sec:spanning} and the applications in Chapter \ref{sec:applications} 
will not make any direct reference to tentacle chasing.

%%%%%%%%%%%%%%%%%%%%%%%%%%%%%%%%%%%%%%%%%%%%%%%%%%%%%%%%%%%%%%%%%
\section{Building blocks of shaded faces}

To prove the main results of this chapter, first we need to revisit
our construction of shaded faces for the upper $3$--ball.  Shaded
faces in the upper $3$--ball are built of one of three pieces:
innermost disks, tentacles, and non-prime switches.  See Figure
\ref{fig:shaded-pieces}.  Recall that a tentacle is directed, starting
at the portion adjacent to the segment of $H_A$ (the head) and ending
where the tentacle terminates adjacent to the state circle (the tail).
This direction leads naturally to the definition of a \emph{directed
spine}\index{directed spine (for shaded face)}\index{shaded face!directed spine} for any shaded face on
the upper $3$--ball, as follows.  For each tentacle, take a directed
edge running through the core of the tentacle, with initial vertex on
the state circle to which the segment of the tentacle is attached, and
final vertex where the tentacle terminates, adjacent to the state
circle.  For each innermost disk, take a vertex.  Notice that
innermost disks are sources of directed edges of the spine, with one
edge running out for each segment adjacent to the disk, but no
directed edges running in.  A non-prime arc is also represented as a 
vertex of the spine, with two incoming edges and two outgoing edges.
This motivates the term \emph{non-prime switch}. 
See Figure \ref{fig:directed-spine}.

\begin{figure}
  \includegraphics{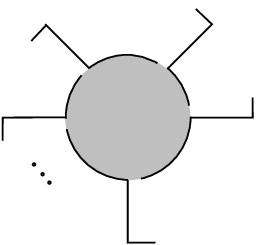} \hspace{.1in}
  \includegraphics{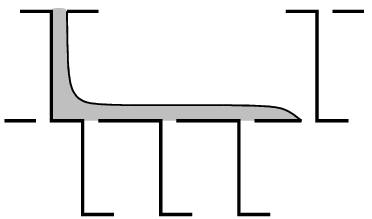} \hspace{.1in}
  \includegraphics{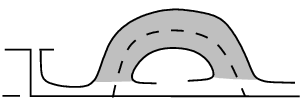}
  \caption{Building blocks of a shaded face\index{shaded face}: an innermost disk\index{innermost disk}\index{shaded face!innermost disk}, a
    tentacle\index{tentacle}\index{shaded face!tentacle}, and a non-prime switch\index{non-prime!switch}\index{shaded face!non-prime switch}.}
  \label{fig:shaded-pieces}
\end{figure}

In the language of directed spines, the statement that shaded faces are simply connected (Theorem \ref{thm:simply-connected}) can be rephrased to say 
that the directed spine of each shaded face is, in fact, a
directed tree.

\begin{figure}[h]
  \includegraphics{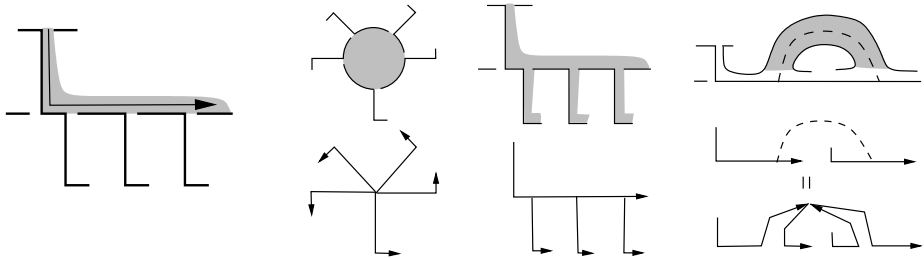}
  \caption{Far left: A directed spine\index{directed spine (for shaded face)} of a
    tentacle.  Left to right: Shown is how directed tentacles connect
    to an innermost disk, to another tentacle, across a non-prime
    switch.}
  \label{fig:directed-spine}
\end{figure}

\begin{define}
When an oriented arc running through a tentacle in a shaded face is
running in the same direction as that of the orientation above, or in
the same direction as the edge of the directed spine, we say the path
is running \emph{downstream}\index{downstream}.  When the oriented
path is running opposite the direction on the tentacle, we say the
path is running \emph{upstream}\index{upstream}.
\label{def:downstream-upstream}
\end{define}

Figure \ref{fig:directed-spine}, far left, shows an arc running
through a single tentacle in the downstream direction.  All the arrows
in the remainder of that figure point in the downstream direction.

\begin{define}\label{def:simple-face}
Suppose a directed arc $\gamma$, running through a shaded face of the
upper $3$--ball, has been homotoped to run monotonically through each
innermost disk, tentacle, and non-prime switch it meets.  Suppose
further that $\gamma$ meets any innermost disk, tentacle, and
non-prime switch at most once.  Then we say that $\gamma$ is \emph{simple
  with respect to the shaded face}\index{simple with respect to shaded face}.

Note that paths through the spine of a shaded face are simple if and
only if they are embedded on the spine.

We say that $\gamma$ is \emph{trivial} if it does not cross any state circles.

\end{define}

%%%%%%%%%%%%%%%%%%%%%%%%%%%%%%%%%%%%%%%%%%%%%%%%%%%%%%%%%%%%%%%%%
\section{Stairs and arcs in shaded faces}

The directions given to portions of shaded faces above lead to natural
directions on subgraphs of $H_A$.  One subgraph of $H_A$ that we will
see repeatedly 
is called a
right--down staircase.  

\begin{define}
A \emph{right--down staircase}\index{right--down staircase} is a
connected subgraph of $H_A$ determined by an alternating sequence of
state circles and segments of $H_A$, oriented so that every turn from
a state circle to a segment is to the right, and every turn from a
segment to a state circle is to the left.  (So the portions of state
circles and edges form a staircase moving down and to the right.)

In fact, right--down staircases could be named left--up, except that
the down and right follows the convention of Notation \ref{notation:top-right}.
\label{def:right-down-stair}
\end{define}

In this section, we present a series of highly useful lemmas that will
allow us to find particular right--down staircases in the graph $H_A$
associated with shaded faces.  These lemmas lead to the proof of
Theorem \ref{thm:simply-connected}, and will be referred to frequently
in Chapters \ref{sec:ibundle}, \ref{sec:epds}, \ref{sec:nononprime},
and \ref{sec:montesinos}.

\begin{lemma}[Escher stairs\index{Escher stairs lemma}]
  In the graph $H_A$ for an $A$--adequate diagram, the following are true:
  \begin{enumerate}
    \item\label{item:escher-loop} no right--down staircase forms a loop, and
    \item\label{item:escher-same} no right--down staircase has its top and bottom on the
      same state circle.
  \end{enumerate}
  \label{lemma:escher}
\end{lemma}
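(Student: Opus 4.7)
The plan is to prove both (1) and (2) by contradiction, extracting a $1$-edge loop in $\GA$ --- which by $A$-adequacy cannot exist --- from any hypothetical right-down staircase that closes into a cycle or whose top and bottom endpoints lie on a common state circle. Since a closed-loop staircase is the degenerate case of a staircase whose endpoints coincide on a single circle, it suffices to establish (2); then (1) follows.

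Suppose $T = (C_0, s_0, C_1, \ldots, s_{k-1}, C_k)$ is a right-down staircase with $C_0 = C_k =: C$. I would close $T$ into a simple closed curve $\gamma \subset S^2$ by adjoining an arc $\alpha \subset C$ joining its two endpoints, and let $D$ denote the complementary disk on the ``concave'' side of the right-down turning of $T$. Unpacking the conventions in Definition \ref{def:right-down-stair} together with Notation \ref{notation:top-right} yields the following key asymmetry: at every intermediate state circle $C_i$ (for $1 \leq i \leq k-1$), both neighboring segments $s_{i-1}$ and $s_i$ attach to the arc of $C_i$ in $T$ on the $D$-facing side of $C_i$. This one-sided attachment is the essential geometric content of ``right-down''.

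I would then induct on $k$. The base case $k = 1$ is immediate, since a lone segment $s_0$ with both endpoints on $C$ is already a $1$-edge loop in $\GA$, directly contradicting $A$-adequacy. For $k \geq 2$, select a state circle $C^*$ lying in $\overline{D}$ such that the disk $D^* \subset S^2$ bounded by $C^*$ on the ``inside'' of the state-circle nesting is contained in $\overline{D}$ and contains no other state circle of $s_A$; such a $C^*$ exists because all of $C_1, \ldots, C_{k-1}$ lie in $\overline{D}$, and one may choose an innermost one with respect to the nesting order restricted to $\overline{D}$. By the asymmetry above, if $C^* = C_i$ then at least one of $s_{i-1}, s_i$ sticks into $D^*$, and by innermostness of $C^*$ that segment must land back on $C^*$ itself --- producing a $1$-edge loop in $\GA$ and contradicting $A$-adequacy. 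The parallel case in which $C^*$ is strictly interior to $D$ is handled the same way, since any segment emerging from $C^*$ into $D^*$ is again forced to terminate on $C^*$.

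The main obstacle I anticipate is verifying in detail, from the definitions, the ``same $D$-side'' asymmetry at every intermediate $C_i$: this planar-combinatorial check is the geometric heart of the lemma, and once it is in hand the innermost argument together with the $A$-adequacy hypothesis finishes the proof. A secondary technicality is the correct choice of the innermost $C^*$ so that its inner disk $D^*$ is actually contained in $\overline{D}$, which is managed by inducting on the nesting depth of state circles in $\overline{D}$.
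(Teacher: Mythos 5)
Your pivotal claim --- that at every intermediate stair $C_i$ both adjacent segments $s_{i-1}$ and $s_i$ attach on the same ($D$-facing) side of $C_i$ --- is false, and in fact it is exactly backwards. Unwinding Definition \ref{def:right-down-stair}: the traversal arrives at $C_i$ along $s_{i-1}$ and turns \emph{left} onto $C_i$, then turns \emph{right} off $C_i$ onto $s_i$; orienting $C_i$ by the direction of travel, this puts $s_{i-1}$ locally on the left of $C_i$ and $s_i$ on the right. Since segments cannot cross state circles, $s_{i-1}$ and $s_i$ therefore lie in the two \emph{different} complementary disks of the full circle $C_i$ --- this is precisely why the staircase descends (each segment hangs below one stair and attaches above the next), and it is the fact that actually powers the lemma, because it forces each intermediate stair to separate its two neighbors unless some segment returns to the circle it started on (a $1$-edge loop, which is what $A$-adequacy forbids). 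Since you explicitly defer the verification of the same-side claim and build the innermost-circle step on it, the proof as proposed does not go through.

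Two further gaps remain even after substituting the correct opposite-sides statement. First, the assertion that all of $C_1,\ldots,C_{k-1}$ lie in $\overline{D}$ is unjustified: only their staircase arcs lie on $\gamma=\partial D$, and a priori the complementary portions of these circles can protrude outside $\overline{D}$ (or a circle may occur more than once in the staircase); controlling where these complementary arcs go is essentially the content of the lemma. Second, your fallback case in which the innermost circle $C^*$ is not a stair is vacuous: nothing forces any segment to emerge from $C^*$ into its empty inner disk $D^*$ (an innermost circle may have all of its segments attached on its other side), so no $1$-edge loop and no contradiction is produced there. For comparison, the paper also closes the staircase into a Jordan curve, but then argues that the portions of the stair circles interior to the loop must join up in pairs, takes an \emph{outermost} such connection, and concludes that two adjacent stairs belong to the same state circle, so the segment between them is a $1$-edge loop in $\GA$, contradicting $A$-adequacy. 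Alternatively, the opposite-sides fact gives a clean repair of your route that needs no empty innermost disk at all: either some segment has both endpoints on one circle, or each $C_i$ separates $C_{i-1}$ from $C_{i+1}$, so the disks of $C_i$ containing $C_{i+1}$ are strictly nested and $C_k$ lies in the interior of the disk bounded by $C_0$, contradicting $C_k=C_0$.
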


Cases \eqref{item:escher-loop} and \eqref{item:escher-same} of Lemma \ref{lemma:escher} are illustrated in
Figure \ref{fig:escher}.

\begin{figure}
  \begin{center}
    \begin{tabular}{ccccc}
      \eqref{item:escher-loop} & \includegraphics{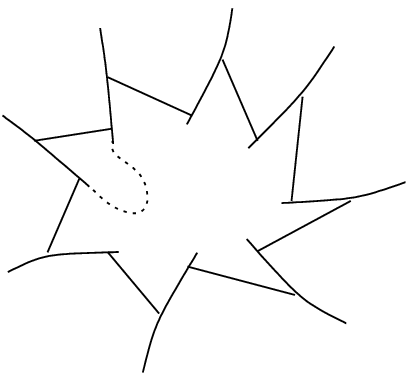} & \hspace{.2in} &
      \eqref{item:escher-same} &  \input{figures/loop-stairs.pstex_t}
    \end{tabular}
  \end{center}
  \caption{Left:  a right--down staircase forming a loop.  Right:  A
    single right--down staircase with its top and bottom connected to
    the same state circle.}
  \label{fig:escher}
\end{figure}

\begin{proof}
Suppose there exists a right--down staircase forming a loop.  Notice
that the staircase forms a simple closed curve in the projection
plane.  Each state circle of the staircase intersects that loop.
Because state circles are also simple closed curves, they must
intersect the loop an even number of times.  Because state circles
cannot intersect segments, each state circle within the loop must be
connected to another state circle within the loop.  There must be an
outermost such connection.  These two state circles will form adjacent
stairs, and connect within the loop.  But then the segment between
them gives a segment with both endpoints on the same state circle,
contradicting $A$--adequacy of the diagram, Definition
\ref{def:adequate} (page \pageref{def:adequate}).

Similarly, suppose a right--down staircase has its top and bottom on
the same state circle.  Then the staircase and this state circle forms
a loop, as above, and state circles that enter the loop must connect
to each other.  Again there must be some outermost connected pair.
This pair will be two adjacent stairs.  Again the segment between them
will then give a segment with both endpoints on the same state circle,
contradicting $A$--adequacy.
\end{proof}

Lemma \ref{lemma:escher} is the first place where we have used $A$--adequacy\index{$A$--adequate}\index{adequate diagram}. In fact, as the following example demonstrates, this hypothesis 
(or a suitable replacement, such as $\sigma$--adequacy) is crucial for both the lemma and for future results.

\begin{example}\label{ex:two-crossings}
Consider the unique connected, two-crossing diagram of a two-component unlink. This diagram is not $A$--adequate. Its graph $H_A$ features both a loop staircase (with two steps), and a one-step staircase with its top and bottom on the same state circle, violating both conclusions of Lemma \ref{lemma:escher}. 

The loop staircase also gives rise to a non-trivial loop in the directed spine of the (unique) shaded face. Thus the upper $3$--ball of this diagram is not a polyhedron. Therefore, all the proof techniques requiring a polyhedral decomposition will fail for this inadequate diagram.
\end{example}

\begin{define}\label{def:nonprime-halfdisk}
Every non-prime arc $\alpha_i$ has its endpoints on some state circle
$C$, and cuts a disk in the complement of $C$ into two regions, called
\emph{non-prime half-disks}\index{non-prime!half-disk}\index{half-disk}. 
\end{define}

The following lemma will help us deal with combinatorial behavior when
we encounter non-prime arcs.

\begin{lemma}[Shortcut lemma\index{Shortcut lemma}]
Let $\alpha$ be a non-prime arc with endpoints on a state circle $C$.
Suppose a directed arc $\gamma$ lies entirely on a single shaded face,
and is simple with respect to that shaded face, in the sense of
Definition \ref{def:simple-face}.  Suppose $\gamma$ runs across
$\alpha$ into the interior of the non-prime half-disk bounded by
$\alpha$ and $C$, and then runs upstream.  Finally, suppose that
$\gamma$ exits the interior of that half-disk across the
state circle $C$.  Then $\gamma$ must exit by following a tentacle
downstream (that is, it cannot exit running upstream).
\label{lemma:np-shortcut}
\end{lemma}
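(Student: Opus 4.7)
The plan is to argue by contradiction using the Escher stairs lemma (Lemma \ref{lemma:escher}). Suppose instead that $\gamma$ exits the half-disk across $C$ by following some tentacle $T_{\text{exit}}$ upstream. Let $\eta$ denote the reverse of $\gamma$ restricted to the half-disk. Then $\eta$ is a simple downstream-directed arc in the shaded face, starting at the exit point on $C$ and ending at the entry point on $\alpha$. My goal is to use $\eta$ to read off a right--down staircase in $H_A$ whose top and bottom state circles both lie on $C$, contradicting part \eqref{item:escher-same} of Lemma \ref{lemma:escher}.

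The first step of the staircase comes from the exit transition itself: the point where $\eta$ enters the half-disk corresponds to traversing some segment $s_{\text{exit}}$ with one endpoint on $C$ and the other on the state circle $C'$ along which $T_{\text{exit}}$ runs. This yields the step $C \to s_{\text{exit}} \to C'$. Following $\eta$ downstream from this point, it traverses $T_{\text{exit}}$ along $C'$, and each time $\eta$ enters a newly spawned tentacle via a segment $s_i$, the pair (state circle, segment) produces the next right--down step of the staircase. Since tentacle spawning always occurs at segments on the side of the state circle opposite to the current tentacle, the turns alternate right-from-circle, left-from-segment, which is precisely the definition of a right--down staircase in Definition \ref{def:right-down-stair}. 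Simplicity of $\gamma$ (hence of $\eta$) ensures no segment is traversed twice.

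The terminal end of $\eta$ lies on $\alpha$, which has both endpoints on $C$. The only tentacles incident to a non-prime switch at $\alpha$ are those adjacent to the endpoints of $\alpha$, and these tentacles run along $C$. So the final tentacle of $\eta$ before crossing the non-prime switch is a tentacle along $C$. Therefore the bottom state circle of the right--down staircase is $C$ as well. Together with the top state circle $C$ supplied by $s_{\text{exit}}$, this gives a right--down staircase with top and bottom on the same state circle, contradicting Lemma \ref{lemma:escher}\eqref{item:escher-same}. In the degenerate case where $\eta$ uses only the segment $s_{\text{exit}}$, we instead obtain a segment with both endpoints on $C$, violating $A$--adequacy directly.

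The main obstacle will be correctly handling any non-prime switches that $\eta$ may cross in the interior of the half-disk. Since each such switch follows a non-prime arc $\alpha'$ with both endpoints on a common state circle, crossing it does not change which state circle $\eta$'s current tentacle is running along; schematically, it acts as a horizontal shortcut that preserves the alternating state-circle/segment pattern of the staircase. A careful check is required to confirm that the staircase built from the segments encountered by $\eta$ is still a right--down staircase in $H_A$ after these shortcuts are ignored, and that no non-prime arc alone can short-circuit the staircase to make both endpoints land on $C$ without producing a genuine segment-based staircase that Lemma \ref{lemma:escher} rules out.
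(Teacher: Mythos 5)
Your core idea---read a right--down staircase off the arc and contradict Lemma \ref{lemma:escher}\eqref{item:escher-same}---is the right one, and it is essentially what the paper does in one of its cases. But there is a genuine gap at the very first step: you assert that the reversed arc $\eta$ is \emph{downstream-directed throughout} the half-disk. The hypotheses only control the direction of $\gamma$ at its two ends inside the half-disk: it runs upstream immediately after crossing $\alpha$, and (under the contradiction assumption) upstream at the moment it exits across $C$. In between, $\gamma$ may turn from upstream to downstream (at an innermost disk, which is a source, or at a non-prime switch) and back from downstream to upstream (at a nested non-prime switch). So $\eta$ is in general of mixed direction, the list of segments it meets need not form a right--down staircase, and the Escher stairs lemma does not apply to it. Your construction is valid only in the special case where $\gamma$ runs upstream for its entire stay in the half-disk.

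The paragraph you flag as ``the main obstacle'' is exactly where the proof lives, and the proposed fix does not work: a non-prime switch is harmless only when the arc runs \emph{around} it without crossing, or crosses and promptly continues in a way compatible with the staircase. The troublesome configuration is when the arc crosses a nested non-prime arc $\alpha_1$ (with endpoints on some circle $C_1$ inside the half-disk) and then runs upstream inside the nested half-disk: there it can climb an arbitrary staircase, and the ``horizontal shortcut'' picture breaks down. The paper handles precisely this by taking an \emph{innermost counterexample} (an induction on nesting of non-prime half-disks): one may assume $\gamma$ never crosses a nested non-prime arc and then exits that nested half-disk upstream. With that assumption, after crossing $\alpha$ the arc either runs upstream all the way---building a genuine right--down staircase whose bottom and top both lie on $C$, contradicting Escher stairs---or it turns downstream at some point, after which (since tentacles connect to tentacles only in the downstream direction, innermost disks are sources, and the innermost assumption rules out re-entering a nested half-disk and exiting upstream) it must keep running downstream until it crosses $C$ downstream, contradicting the assumed upstream exit. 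To repair your argument you need this innermost/induction step, or some equivalent mechanism controlling nested non-prime arcs; without it, the staircase you build from $\eta$ is not guaranteed to be one to which Lemma \ref{lemma:escher} applies.
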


\begin{proof}
Consider an innermost counterexample.  That is, if there exists a
counterexample, then there exists one for which $\gamma$ does not
cross any other non-prime arc and then run upstream when exiting the
non-prime half-disk bounded by $C$ and $\alpha$.  Consider the subarc
of $\gamma$ which runs from the point where it crosses $\alpha$ to the
point where it crosses $C$.  We will abuse notation slightly and call
this arc $\gamma$.

After crossing $\alpha$, the arc $\gamma$ is running upstream in a
tentacle adjacent to $C$.  Note that since we are assuming this is a
counterexample, it will not cross $C$ immediately, for to do so it
would follow a tentacle running downstream.  Additionally, it cannot
cross some other non-prime arc $\alpha_1$ with endpoints on $C$, for
because we are assuming this counterexample is innermost, it would
then exit the region bounded by $\alpha_1$ and $C$ running downstream,
contradicting our assumption that it crosses $C$ running upstream.
Finally, it may reach a non-prime arc $\alpha_1$ and run around it
without crossing, but then we are still running upstream on a tentacle
adjacent to $C$, so we may ignore this case.  

Hence the only possibility is that $\gamma$ crosses $\alpha$ and then
runs up the head of a tentacle with tail on $C$.  The head of this
tentacle is adjacent to a single step of a right--down stair.
Consider what $\gamma$ may do at the top of this stair.
\begin{enumerate}
\item It may continue upstream, following another tentacle.
\item It may change direction, following a tentacle downstream, or
  crossing a non-prime arc $\alpha_1$ with endpoints on $C_1$ and then
  (eventually) running downstream across $C_1$.
\item It may run over a non-prime switch without crossing the
  non-prime arc.
\end{enumerate}
By assumption (counterexample is innermost), it cannot run over a
non-prime arc $\alpha_1$ with endpoints on $C_1$ and (eventually)
cross $C_1$ running upstream.  Notice that if $\gamma$ enters an
innermost disk, it must leave the disk running downstream, case (2),
since an innermost disk is a source for edges of the directed spine.
Also, in case (3), $\gamma$ remains adjacent to the same state circle
before and after, and so we ignore this case.

In case (1), we follow $\gamma$ upstream to a new stair, and the same
options are again available for $\gamma$, so we may repeat the
argument.

We claim that $\gamma$ is eventually in case (2).  For, suppose not.
Then since $\gamma$ crosses $C$, and the graph $H_A$ is finite, by
following tentacles upstream we form a finite right--down staircase
whose bottom is on $C$, and whose top is on $C$ as well.  This contradicts
Lemma \ref{lemma:escher} (Escher stairs).

So eventually $\gamma$ must change direction, following a tentacle
downstream.  After following the tentacle downstream, $\gamma$ will be
adjacent to another state circle.  At this point, it may do one of two
things:
\begin{enumerate}
\item It may continue downstream through another tentacle, or by
  running through a non-prime arc first and then continuing
  downstream.
\item It may run over a non-prime switch without crossing the non-prime arc.
\end{enumerate}
Notice that these are the only options because first, no arc running
downstream can enter an innermost disk (because such a disk is a
source).  Second, by assumption (innermost) $\gamma$ cannot cross a
non-prime arc and then cross the corresponding state circle running
upstream.  Third, tentacles only connect to tentacles in a downstream
direction (Figure \ref{fig:directed-spine} center).  Again we ignore
case (2), as $\gamma$ will be adjacent to the same state circle before
and after running over the non-prime switch.

But since these are the only possibilities, $\gamma$ must continue
running downstream, and cannot change direction again to run upstream.
Thus $\gamma$ must exit $C$ by running over a tentacle in the
downstream direction.
\end{proof}

\begin{define}\label{def:tentacle-chasing}
The proof of the previous lemma involved following arcs through
oriented tentacles, keeping track of local possibilities.  We call
this proof technique \emph{tentacle chasing}\index{tentacle chasing}.
We will use it repeatedly in the sequel.
\end{define}

\begin{lemma}[Staircase extension\index{Staircase extension lemma}]
Let $\gamma$ be a directed arc lying entirely in a single shaded face,
such that $\gamma$ is simple with respect to the shaded face
(Definition \ref{def:simple-face}).  Suppose also that $\gamma$ begins
by crossing a state circle running downstream.  Suppose that every
time $\gamma$ crosses a non-prime arc $\alpha$ with endpoints on $C$
and enters the non-prime half-disk bounded by $\alpha$ and $C$, that
it exits that half-disk.  Then $\gamma$ defines a
right--down staircase such that every segment of the staircase is
adjacent to $\gamma$, with $\gamma$ running downstream. Moreover, the
endpoints of $\gamma$ lie on tentacles that are adjacent to the first
and last stairs of the staircase.
\label{lemma:staircase}
\end{lemma}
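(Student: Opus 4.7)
The plan is to prove the lemma by tentacle chasing, in the spirit of Lemma \ref{lemma:np-shortcut}. I would trace $\gamma$ forward from its starting point and record the alternating sequence of segments and state circles it visits, checking inductively that this sequence is a right-down staircase whose downstream direction matches that of $\gamma$.

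Since $\gamma$ begins by crossing a state circle $C_1$ running downstream, it enters a tentacle $T_1$ at a segment $s_1$ attached to $C_1$; this $s_1$ becomes the first stair. Now consider the possibilities as $\gamma$ progresses downstream within $T_1$. By simplicity of $\gamma$ on its shaded face and by the local structure of tentacles (Figure \ref{fig:tentacle-mult}), the only ways for $\gamma$ to continue without terminating are: (i) follow $T_1$ further along its state circle; (ii) cross the state circle of $T_1$ at a segment $s_2$ on the opposite side, spawning a new downstream tentacle $T_2$; or (iii) pass through a non-prime switch. In case (ii), $s_2$ is the next stair, and the tentacle convention (Notation \ref{notation:top-right}) guarantees that the turn from the state circle to $s_2$ is rightward while the turn from $s_2$ to the new state circle is leftward, so the extension is right-down in the sense of Definition \ref{def:right-down-stair}.

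For case (iii), suppose $\gamma$ crosses a non-prime arc $\alpha$ with endpoints on a state circle $C$ and enters the corresponding half-disk. By hypothesis, $\gamma$ must exit. A tentacle chasing argument inside the half-disk, closely parallel to the proof of Lemma \ref{lemma:np-shortcut} but with the roles of downstream and upstream interchanged, forces $\gamma$ to exit across $C$ via a downstream tentacle adjacent to $C$. Thus non-prime switches are compatible with the staircase: they contribute $\alpha$ as a bridge between stairs attached to $C$, and the downstream direction survives the crossing.

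Iterating these cases, $\gamma$ traces out an alternating sequence $s_1, s_2, \ldots$ of segments joined by arcs of state circles, each junction obeying the right--left turn pattern, so the sequence is a right-down staircase with every segment adjacent to $\gamma$. Lemma \ref{lemma:escher} (Escher stairs) guarantees that the staircase is embedded and finite, so the process terminates exactly at the two endpoints of $\gamma$, which then lie on the tentacles adjacent to the first and last stairs. The main obstacle is case (iii): the delicate part is verifying that, although $\gamma$ may wander through a non-prime half-disk, it must exit running downstream, so that the right-down staircase structure is preserved across each non-prime switch.
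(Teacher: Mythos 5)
Your overall strategy is the same as the paper's: trace $\gamma$ downstream, extend the staircase one tentacle at a time, and isolate the non-prime switch as the only delicate case. The one place your outline goes astray is precisely that delicate case. There is no need for (and no truth to) a version of Lemma \ref{lemma:np-shortcut} with ``the roles of downstream and upstream interchanged.'' If $\gamma$ continues downstream after crossing the non-prime arc $\alpha$, it simply remains adjacent to the same state circle $C$ and nothing needs proving (moreover such an arc, if it crosses $C$, does so downstream, so the interchanged statement would in fact be false). If instead $\gamma$ turns upstream after crossing $\alpha$, you are verbatim in the hypotheses of the Shortcut lemma, once you observe --- using simplicity, i.e.\ that $\gamma$ runs monotonically through each non-prime switch and meets it at most once --- that the hypothesized exit from the half-disk must be across $C$ rather than back across $\alpha$. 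So the correct move is a two-way sub-case split at the switch plus a direct citation of Lemma \ref{lemma:np-shortcut}, which is exactly what the paper does; your ``parallel argument with roles swapped'' would not produce a valid proof as described, though the conclusion you want from it is the right one.

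Two smaller points: in your case analysis you should also rule out $\gamma$ entering an innermost disk, which is immediate since innermost disks are sources of the directed spine and can only be entered running upstream; and termination of the iteration comes simply from the fact that $\gamma$ is a compact arc that ends (together with finiteness of $H_A$), not from Lemma \ref{lemma:escher}, which is not needed in this proof.
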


\begin{proof}
The arc $\gamma$ runs through a tentacle downstream.  The tentacle is
attached to a state circle at its head, is adjacent to a segment of
$H_A$, and then adjacent to a second state circle at its tail.  Form
the first steps of the right--down staircase by including the state
circle at the head, the segment, and the state circle at the tail.

Now we consider where $\gamma$ may run from here.  Note it cannot run
into an innermost disk, since each of these is a source (and so is
entered only running upstream).  Thus it must do one of the following:
\begin{enumerate}
\item It runs through another tentacle downstream.
\item It runs through a non-prime switch, without changing direction.
\item It runs through a non-prime switch, changing direction.  
\end{enumerate}

In case (1), we extend the right--down staircase by attaching the
segment and state circle of the additional tentacle.  If $\gamma$
continues, we repeat the argument with $\gamma$ adjacent to this new
state circle.

We ignore case (2), because $\gamma$ will remain adjacent to the same
state circle in this case, still running in the downstream direction.

In case (3), $\gamma$ is adjacent to a state circle $C$, then enters a
non-prime half-disk bounded by a non-prime arc and $C$.  By
hypothesis, $\gamma$ also exits that non-prime half-disk.  Since it
cannot exit along the non-prime switch, by hypothesis that $\gamma$
runs monotonically through non-prime switches and meets each at most
once, $\gamma$ must exit by crossing $C$.  Then Lemma
\ref{lemma:np-shortcut} implies that $\gamma$ exits by following a
tentacle downstream.  This tentacle will be adjacent to some segment
attached to $C$ and a new state circle attached to the other endpoint
of this segment.  Extend the right--down staircase by attaching this
segment and state circle to $C$.  See Figure \ref{fig:extend}.  If
$\gamma$ continues, we may repeat the argument.

\begin{figure}
  \begin{center}
    \begin{tabular}{ccc}
      \includegraphics{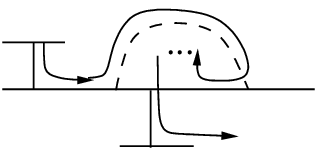} & $\rightarrow$ &
      \includegraphics{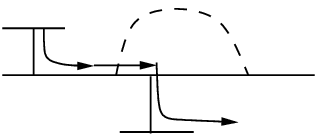}
    \end{tabular}
  \end{center}
  \caption{Extend a right--down staircase over a non-prime switch.}
  \label{fig:extend}
\end{figure}

After a finite number of repetitions, $\gamma$ must terminate, and we
have our extended right--down staircase as claimed in the lemma.
\end{proof}

The following is an immediate, highly useful consequence.  

\begin{lemma}[Downstream continues down, or Downstream lemma\index{Downstream lemma}]
Let $\gamma$ be as in Lemma \ref{lemma:staircase}.  Then $\gamma$
crosses the last state circle of the staircase by running downstream.
\qed
\label{lemma:downstream}
\end{lemma}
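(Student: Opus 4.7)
The plan is to extract this as a direct corollary of the Staircase extension lemma (Lemma \ref{lemma:staircase}), since essentially all of the work has already been done in establishing that lemma's conclusion. No new tentacle chasing is required; only bookkeeping about the staircase's endpoints.

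First, I would recall precisely what Lemma \ref{lemma:staircase} provides: starting from an arc $\gamma$ that initially crosses a state circle running downstream (and which exits every non-prime half-disk it enters), the lemma constructs a finite right--down staircase $\Sigma$ whose every step (segment together with an adjacent state circle) sits next to $\gamma$, with $\gamma$ running downstream along the tentacle adjacent to that step. The lemma also asserts that the two endpoints of $\gamma$ lie on tentacles adjacent to the first and last stairs of $\Sigma$.

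Next, I would identify ``the last state circle of the staircase'' as the state circle at the tail of the final tentacle $T$ traversed by $\gamma$, i.e.\ the state circle adjacent to the last step of $\Sigma$. By the conclusion of Lemma \ref{lemma:staircase}, $\gamma$ runs through $T$ in the downstream direction; equivalently, from its head on the preceding state circle to its tail on this last state circle. Since an arc crossing a state circle via a tentacle traversed downstream does so by definition in the downstream direction, the claim follows at once.

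The only point that deserves care is that $\gamma$ is not tacitly assumed to terminate exactly at the last state circle: the endpoint of $\gamma$ may lie slightly past the tail of $T$, still inside the same shaded face and adjacent to the same last state circle. But this does not affect the orientation at which $\gamma$ crosses that state circle, because the crossing itself occurs along $T$, which is oriented downstream. Thus the lemma reduces to reading off a piece of Lemma \ref{lemma:staircase}'s conclusion, and there is no genuine obstacle; the statement is packaged separately because it is the form in which the fact will be repeatedly invoked in later tentacle-chasing arguments.
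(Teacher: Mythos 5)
Your proposal is correct and matches the paper: the paper states this lemma as an immediate consequence of Lemma \ref{lemma:staircase} (Staircase extension) and gives no separate argument, exactly as you do by reading off that the final tentacle of the staircase is traversed downstream, so the last state circle is crossed in the downstream direction.
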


We can now prove a result, which is called the Utility lemma because
we will use it repeatedly in the upcoming arguments.

\begin{lemma}[Utility lemma\index{Utility lemma}]
\label{lemma:utility}
Let $\gamma$ be a simple, directed arc in a shaded face, which starts
and ends on the same state circle $C$. Then $\gamma$ starts by running
upstream from $C$, and then terminates at $C$ while running
downstream.

Furthermore, $\gamma$ cannot intersect $C$ more than two times.
\end{lemma}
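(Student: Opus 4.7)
The plan is to prove (i) by contradiction using the Staircase Extension and Escher Stairs Lemmas, to deduce (ii) from (i) by reversing the orientation of $\gamma$, and to obtain (iii) by splitting $\gamma$ at an interior intersection with $C$ and applying (i) and (ii) to each sub-arc.

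For (i), assume toward contradiction that $\gamma$ begins by running downstream from $C$. I first verify the hypothesis of Lemma \ref{lemma:staircase}: whenever $\gamma$ crosses a non-prime arc $\alpha$ into a half-disk $D$ bounded by $\alpha$ on a state circle $C'$, $\gamma$ exits $D$. Suppose not. Since $\gamma$ is simple, it crosses $\alpha$ only once, so the terminal endpoint of $\gamma$ lies in $\overline{D}$. If $C$ lies strictly inside $D$, then $\gamma$ would be forced to cross $\alpha$ twice (once to leave $D$, once to re-enter), violating simplicity; if $C$ lies outside $D$, then $\gamma$ is forced to exit $D$ to reach its endpoint, a direct contradiction. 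The remaining case $C = C'$ is handled by Lemma \ref{lemma:np-shortcut} together with a recursive argument applied to the portion of $\gamma$ inside $D$. Once the hypothesis is verified, Lemma \ref{lemma:staircase} produces a right-down staircase associated to $\gamma$. The initial tentacle of $\gamma$ has its head at $C$ (since $\gamma$ starts downstream there), placing $C$ at the top of the first stair; by Lemma \ref{lemma:downstream} applied at $\gamma$'s endpoint, the last state circle of the staircase is also $C$, which therefore sits at the bottom of the last stair. This contradicts Lemma \ref{lemma:escher}, part \eqref{item:escher-same}.

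Part (ii) follows by reversing the orientation of $\gamma$: the reversed arc $\gamma^{-1}$ is still simple with respect to the same shaded face, and it starts on $C$; by (i) applied to $\gamma^{-1}$, it begins upstream, which translates to $\gamma$ ending downstream at $C$. For (iii), if $\gamma$ were to meet $C$ in more than two points, choose an interior intersection $p$ and write $\gamma = \gamma_1 \cup \gamma_2$. Applying (i) and (ii) to these sub-arcs, $\gamma_1$ ends downstream in a tentacle $T_1$ with tail through $p$ and $\gamma_2$ starts upstream in a tentacle $T_2$ with tail through $p$; simplicity forces $T_1 \neq T_2$, and continuity at the transverse crossing $p$ places $T_1$ and $T_2$ on opposite sides of $C$. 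Concatenating the right-down staircase implicit in $\gamma_1$ with the right-down staircase obtained from the downstream portion of the reversal of $\gamma_2$ (again via Lemma \ref{lemma:staircase}) yields a combined staircase with both its top and bottom on $C$, contradicting Lemma \ref{lemma:escher}, part \eqref{item:escher-same}.

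The main obstacle I expect is the $C = C'$ case in part (i), where $\gamma$ could appear to terminate inside a half-disk whose boundary sits on $C$ itself. Establishing that $\gamma$ must in fact exit such a half-disk, via a tentacle-chasing argument inside $D$ that invokes Lemma \ref{lemma:np-shortcut} and the direction-preserving property of non-prime switches, is the most technical step of the proof.
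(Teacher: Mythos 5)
Your overall strategy is the same as the paper's (build a right--down staircase via Lemma \ref{lemma:staircase} and contradict Lemma \ref{lemma:escher}\eqref{item:escher-same}; reverse orientation for the terminal direction; split $\gamma$ at an intermediate intersection for the last claim), but two of your steps do not hold up, and they are exactly the delicate points. In part (i), your verification of the hypothesis of Lemma \ref{lemma:staircase} fails in case (a): when $C$ lies strictly inside the half-disk $D$ bounded by $\alpha$ and $C'$, you claim $\gamma$ is forced to cross $\alpha$ twice, but $\gamma$ could instead have left $D$ by crossing the state circle $C'$, which violates nothing. This unaddressed scenario is the real content of the argument: since $\gamma$ has been running downstream up to that point, any such exit crosses $C'$ downstream, and the portion of $\gamma$ from that crossing until it returns adjacent to $C'$ (at $\alpha$) produces, by Lemma \ref{lemma:staircase}, a right--down staircase from $C'$ back to $C'$, contradicting Lemma \ref{lemma:escher}. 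Your case $C=C'$ is only gestured at; note that there the hypothesis of Lemma \ref{lemma:staircase} can genuinely fail (the arc may terminate at $C$ from inside the half-disk), so the correct move is not to verify the hypothesis but to observe that the staircase already built before $\gamma$ reaches $\alpha$ ends adjacent to $C'=C$ (because $\alpha$ has both endpoints on $C$), so a $C$-to-$C$ staircase already exists and Escher stairs applies.

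In part (iii) you miss the immediate contradiction and substitute an unjustified one. Once the sub-arc from $x_1$ to $x_2$ is known to arrive at $C$ crossing downstream, continuity at the transverse crossing forces the continuation (your $\gamma_2$) to depart $C$ running downstream; but part (i) applied to $\gamma_2$, a simple arc with both endpoints on $C$, says it must depart upstream --- that is the entire proof, and indeed your own setup (``$\gamma_1$ ends downstream at $p$'' and ``$\gamma_2$ starts upstream at $p$'') is already incompatible with continuity. Your replacement argument, concatenating the staircase of $\gamma_1$ with the staircase of the reversed $\gamma_2$, does not produce a single right--down staircase with top and bottom on $C$: the two staircases meet $C$ at $p$ from opposite sides, each with its bottom on $C$ and its top on some other state circle, so Lemma \ref{lemma:escher} does not apply to their union as claimed.
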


\begin{proof}
First, suppose that $\gamma$ runs downstream from its first
intersection with $C$.  This will lead to a contradiction.

We begin by applying Lemma \ref{lemma:staircase} (Staircase extension)
to find a right--down staircase starting on $C$, such that $\gamma$
runs downstream, adjacent to each segment of the staircase. This
staircase will continue either until the terminal end of $\gamma$, or
until $\gamma$ crosses a non-prime arc $\alpha$ and enters (but does
not exit) a half-disk $R$ bounded by $\alpha$ and some
state circle $C'$.  But any such non-prime half-disk $R$ will not
contain the initial endpoint of $\gamma$ (else $\gamma$ would have
crossed $C'$ running downstream earlier, and we would have created a
right--down staircase from $C'$ to $C'$, contradicting Lemma
\ref{lemma:escher}), hence $R$ will not contain $C$ unless $C'=C$.
Because the final endpoint of $\gamma$ is on $C$, either no such
region $R$ exists, or $\alpha$ has both endpoints on $C$.  In either
case, we will have constructed a right--down staircase that starts and
ends on $C$, contradicting Lemma \ref{lemma:escher} (Escher stairs).
So $\gamma$ cannot run downstream from $C$.

Next, suppose that the terminal end of $\gamma$ meets $C$ running
upstream. Then we simply reverse the orientation on $\gamma$, and
repeat the above argument to obtain a contradiction.  Therefore,
$\gamma$ first runs upstream from $C$, then terminates on $C$ while
running downstream.

Finally, suppose that $\gamma$ meets $C$ more than twice. Let $x_1,
\ldots, x_n$ be its points of intersection with $C$. Applying the
above argument to the sub-arc of $\gamma$ from $x_1$ to $x_2$, we
conclude that $\gamma$ must arrive at $x_2$ while running
downstream. But then the sub-arc of $\gamma$ from $x_2$ to $x_3$
departs $C$ running downstream, which is a contradiction.
\end{proof}

Given the above tools, we are now ready to show that our decomposition
is into ideal polyhedra. The following is one of the main results of
this chapter.

\begin{theorem}
Let $D(K)$ be an $A$--adequate link diagram. Then, in the prime
decomposition of $M_A$, shaded faces on the 3--balls are all simply
connected.  This gives a decomposition of $M_A$ into checkerboard
colored ideal polyhedra with 4--valent vertices.
\label{thm:simply-connected}\index{upper 3--ball!actually a polyhedron}\index{directed spine (for shaded face)!is a tree}
\end{theorem}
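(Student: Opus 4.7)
The plan is to reduce the theorem to verifying that every shaded face of the upper $3$--ball is simply connected, and then to derive a contradiction from any nontrivial loop by applying the staircase machinery just established. By Lemma \ref{lemma:nonprime-3balls}, each lower $3$--ball is already an honest ideal polyhedron, checkerboard-colored with $4$--valent vertices, so all its faces (shaded included) are disks. By Lemma \ref{lemma:top} (which survives the prime decomposition, since cutting along non-prime arcs reroutes ideal edges but introduces no new ideal vertices), the upper $3$--ball is already checkerboard-colored with $4$--valent ideal vertices. The white faces are the disks of $\CalD$ (or their halves after surgering along non-prime arcs) and are simply connected by construction. Hence the whole theorem reduces to showing that every shaded face $F$ on the upper $3$--ball is simply connected.

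Since $F$ deformation retracts onto its directed spine --- built from innermost disks (vertices that are sources), tentacles (directed edges), and non-prime switches (valence-$4$ vertices with two incoming and two outgoing edges) --- simple connectivity of $F$ is equivalent to its spine being a tree. Suppose for contradiction that the spine contains a cycle. Because every innermost disk is a source, no cycle can traverse one; thus the cycle is realized by a simple closed directed curve $\gamma$ in $F$ consisting solely of tentacles run downstream and non-prime switches. Pick a basepoint of $\gamma$ lying in the interior of a tentacle, at a point where $\gamma$ crosses its underlying state circle downstream.

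Now I would apply Lemma \ref{lemma:staircase} (Staircase extension) to $\gamma$ starting at this basepoint. Its hypothesis --- that every entrance of $\gamma$ into a non-prime half-disk is matched by an exit --- is automatic in this setting: the complement of a non-prime half-disk in the projection plane is connected and $\gamma$ is closed, so $\gamma$ cannot be trapped inside any such region. The lemma then produces a right--down staircase in $H_A$ whose consecutive segments are precisely those adjacent to the tentacles traversed by $\gamma$. As $\gamma$ is closed, following it once around yields a staircase that closes back on its initial state circle, either as a genuine loop or with top and bottom meeting the same state circle --- both possibilities are forbidden by Lemma \ref{lemma:escher} (Escher stairs). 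This contradiction forces the spine of $F$ to be a tree, which is what was needed. I expect the main obstacle to be the non-prime switch bookkeeping: one must verify that each traversal of a switch by $\gamma$ correctly extends the staircase rather than interrupting it, and that the closed-loop property of $\gamma$ genuinely supplies the ``entry-exit'' hypothesis of Lemma \ref{lemma:staircase} despite the fact that $\gamma$ lives in the surface $F$ rather than literally in the projection plane.
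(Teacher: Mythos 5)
Your reduction (lower polyhedra are already fine by Lemma \ref{lemma:nonprime-3balls}, white faces are disks by construction, so everything comes down to showing the directed spine of each shaded face of the upper $3$--ball is a tree) is exactly the paper's set-up. The first genuine gap is your claim that a cycle in the spine "is realized by a simple closed directed curve $\gamma$ consisting solely of tentacles run downstream and non-prime switches." The justification you give — innermost disks are sources, so no cycle can traverse one — is incorrect: a cycle in the underlying undirected graph can perfectly well pass through a source by running \emph{upstream} along one outgoing edge and downstream along another, and it can likewise reverse direction where a tentacle spawns from another tentacle or at a non-prime switch. Directedness is not something you may assume; in fact the loops that must be excluded are typically \emph{not} downstream-directed (by Lemma \ref{lemma:utility}, an arc in a shaded face that leaves and returns to a state circle first runs upstream and then downstream). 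The paper's proof handles arbitrary loops: orient $\gamma$, observe that if it crosses a state circle $C$ it does so at least twice, split it into two consistently oriented sub-arcs $\gamma_1,\gamma_2$ with endpoints on $C$, and apply Lemma \ref{lemma:utility}: $\gamma_1$ must arrive at $C$ running downstream, hence $\gamma_2$ departs $C$ running downstream, contradicting the same lemma. Your Staircase-extension-plus-Escher argument only treats the hypothetical all-downstream loop, which is essentially a special case already subsumed in the proof of the Utility lemma, not the general statement you need.

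The second gap is the case in which $\gamma$ crosses no state circle at all. Your choice of basepoint presupposes that some tentacle of $\gamma$ crosses its underlying state circle downstream, but a non-trivial loop in a shaded face can stay adjacent to a single state circle $C$, running through a sequence of non-prime switches with endpoints on $C$ and never crossing $C$; then there is no staircase and Lemma \ref{lemma:escher} gives nothing. The paper rules this out using Definition \ref{def:non-prime} together with connectedness of the diagram: such a $\gamma$ would bound a region of the projection plane containing no state circles, contradicting the requirement that every non-prime arc in the (maximal) collection separates state circles on both sides. Finally, your assertion that the entry--exit hypothesis of Lemma \ref{lemma:staircase} is "automatic" for a closed loop is not quite right as stated, since the basepoint could itself lie inside a non-prime half-disk; this is minor compared with the two gaps above, but it would also need care in a complete argument.
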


\begin{proof}
By Lemma \ref{lemma:nonprime-3balls}, part \eqref{item:poly-alt}, the
lower $3$--balls are ideal polyhedra, with simply connected faces.
Hence, we need only consider the shaded faces on the upper $3$--ball.

We have constructed a spine for each shaded face on the upper
$3$--ball.  The shaded face will be simply connected if and only if
the spine is a tree.  Hence, we show the spine is a tree.

If the spine is not a tree, then there is a non-trivial embedded loop
$\gamma$ in the spine for the shaded face.  Since $\gamma$ is embedded
in the spine, any sub-arc is simple in the sense of Definition
\ref{def:simple-face}.

Now, suppose $\gamma$ crosses a state circle $C$.  Since $\gamma$ is a
simple closed curve, as is the state circle, $\gamma$ must actually
cross $C$ at least twice.  Then we can express $\gamma$ as the union
of two directed arcs $\gamma_1, \gamma_2$, with endpoints at $C$, such
that $\gamma_1, \gamma_2$ meet only at their endpoints. Suppose that
both arcs are directed along a consistent orientation of $\gamma$.
Then Lemma \ref{lemma:utility} (Utility lemma) says that $\gamma_1$
terminates at $C$ running downstream. This means that $\gamma_2$
starts at $C$ by running downstream, which contradicts the Utility
lemma.

So $\gamma$ never crosses a state circle.  Since $\gamma$ is
non-trivial, contained in a single shaded face, it must run over a
sequence of non-prime switches, all with endpoints on the same state
circle $C$.  When $\gamma$ runs from one non-prime switch into
another, it cannot meet any segments of $H_A$ coming out of $C$, else
the tentacle that $\gamma$ runs through would terminate ($\gamma$
would have to exit the shaded face).  But then $\gamma$ bounds a
region in the projection plane which contains no state circles, since
our diagram is assumed to be connected.  This contradicts the
definition of a collection of non-prime arcs, Definition
\ref{def:non-prime} on page \pageref{def:non-prime}: the last such arc
added to our collection divides a region of the complement of $H_A$
and the other non-prime arcs into two pieces, one of which does not
contain any state circles.  See Figure \ref{fig:nonprime-loop}.

\begin{figure}
  \includegraphics{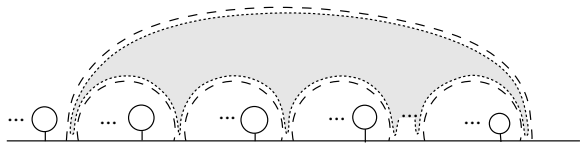}
  \caption{If $\gamma$ runs over a sequence of non-prime arcs, then
    $\gamma$ bounds a region (shown shaded above) containing no state
    circles, giving a contradiction.  Compare with Figure
    \ref{fig:nonprime-ex}.}
  \label{fig:nonprime-loop}
\end{figure}

So shaded faces are simply connected.  Since white faces are disks by definition, a prime decomposition
of $M_A = S^3\cut S_A$ is a decomposition into ideal polyhedra.  The
fact that it is 4--valent and checkerboard colored follows from Lemma
\ref{lemma:nonprime-3balls}.
\end{proof}

Recall that lower 3--balls are ideal polyhedra corresponding to
non-trivial complementary regions of $s_A \cup (\bigcup \alpha_i)$, where
the $\alpha_i$ form a maximal collection of non-prime arcs.

\begin{define}
A \emph{polyhedral region}\index{polyhedral region} is a complementary
region of $s_A \cup(\bigcup \alpha_i)$ on the projection plane. With
the convention that the ``projection plane'' is a $2$--sphere, it
follows that each polyhedral region is compact.
\label{def:polyhedral-region}
\end{define}

%% A non-prime half-disk may coincide with a polyhedral region, or may
%% contain multiple polyhedral regions.

\begin{lemma}[Parallel stairs\index{Parallel stairs lemma}]\label{lemma:enter-through-circle}\label{lemma:different-streams}\label{lemma:parallel-stairs}
Let $\sigma_1$ and $\sigma_2$ be simple, disjoint, directed arcs
through the spines of shaded faces $F_1$ and $F_2$. (These shaded
faces are allowed to coincide, so long as the $\sigma_i$ are
disjoint.) Suppose that both $\sigma_1$ and $\sigma_2$ begin at the
same state circle $C$, running downstream, and terminate in the same
polyhedral region $R$. Then the following hold.
\begin{enumerate}
\item\label{i:stairs-exist} There are disjoint right--down staircases
for the $\sigma_i$, such that $\sigma_1$ runs downstream along each
segment of the first staircase and $\sigma_2$ runs downstream along
each segment of the second staircase.  
\item\label{i:alltheway} The terminal endpoint of each $\sigma_i$ is
adjacent to the last step (state circle) of its staircase. 
\item\label{i:same-steps} The $j$-th step of the first staircase is on
the same state circle as the $j$-th step of the second staircase,
except possibly the very last step.
\item\label{i:no-white-face} The arcs $\sigma_1$ and $\sigma_2$ cannot
  terminate on the same white face.
\end{enumerate}
\end{lemma}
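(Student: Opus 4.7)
The plan is to prove (1)--(2) by direct application of the Staircase extension lemma, prove (3) by induction on step number using Escher stairs, and deduce (4) from the resulting parallel structure combined with a planarity argument in the spirit of Theorem \ref{thm:simply-connected}.

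For items (1) and (2), I would apply the Staircase extension lemma (Lemma \ref{lemma:staircase}) to each $\sigma_i$ in turn. Its hypothesis requires that every non-prime half-disk entered by $\sigma_i$ is also exited; this is automatic if the terminal endpoint of $\sigma_i$ lies outside such a half-disk, and if the endpoint lies inside one, I would split $\sigma_i$ at the last time it enters a half-disk containing its endpoint and apply the Shortcut lemma (Lemma \ref{lemma:np-shortcut}) together with a recursive application of Staircase extension inside the strictly smaller half-disk. Concatenating these partial staircases along the non-prime arcs yields a single right-down staircase along which $\sigma_i$ runs downstream. The Downstream lemma (Lemma \ref{lemma:downstream}) then guarantees that $\sigma_i$ crosses the last state circle of its staircase running downstream, so that its terminal endpoint is adjacent to the last step; disjointness of the two staircases follows immediately from disjointness of $\sigma_1, \sigma_2$ together with the fact that each staircase lies in a neighborhood of its corresponding arc.

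For item (3), I would induct on the step number $j$. Both staircases start on $C$ by hypothesis, so the first steps share a state circle. Assume inductively that steps $1, \ldots, j-1$ of the two staircases lie on a common sequence of state circles ending at $C_{j-1}$, so that the $j$-th segments $e^{(1)}_j$ and $e^{(2)}_j$ both have heads on $C_{j-1}$. Suppose for contradiction that their tails lie on distinct state circles. Because the $\sigma_i$ are planar, disjoint, and converge to the common polyhedral region $R$, the two staircases must re-meet on some shared state circle further downstream. Splicing the two staircases from step $j$ up to the point of reunion, together with an outermost pair of segments between common state circles, extracts a right-down staircase whose top and bottom lie on a common state circle (either $C_{j-1}$ or the circle of reunion), contradicting Escher stairs (Lemma \ref{lemma:escher}). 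The main obstacle is making the planar ``reunion'' picture precise: the key technical ingredient is an outermost-sub-staircase argument analogous to the one appearing inside the proof of Escher stairs itself, applied to the bounded planar region bracketed by the two staircases.

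For item (4), suppose for contradiction that the terminal tentacles of $\sigma_1$ and $\sigma_2$ are adjacent to a common white face $W$. By (3) the terminal segments both have heads on a common state circle $C_{n-1}$ (after truncating by at most one step if necessary). If these two segments coincide, then the terminal tentacles coincide and $\sigma_1$ meets $\sigma_2$, contradicting disjointness. Otherwise the two distinct terminal segments border $W$ from opposite sides, and concatenating $\sigma_1$, $\sigma_2$, an arc across $W$ joining the terminal endpoints, and a sub-arc of $C$ joining the initial endpoints produces a closed curve in the projection plane that bounds a sub-region containing no state circles, exactly as in the last step of the proof of Theorem \ref{thm:simply-connected}. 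This contradicts either the connectedness of the diagram or the maximality of the non-prime arc collection.
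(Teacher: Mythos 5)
The central gap is in your treatment of items (1)--(2). The hypothesis of Lemma \ref{lemma:staircase} (Staircase extension) -- that every non-prime half-disk entered through a non-prime arc is also exited -- is not something you may simply accommodate when it fails; it is precisely what must be \emph{proved} here, and proving it requires using both arcs together. The paper's argument takes a largest half-disk $R_1$ (bounded by $\alpha_1$ and $C_1$) that, say, $\sigma_1$ enters without exiting, observes that $\sigma_2$ must then cross $C_1$ twice (since $C$ and $\alpha_1$ lie on the same side of $C_1$, and $\sigma_2$ also ends in $R\subset R_1$), applies Lemma \ref{lemma:utility} to force an upstream crossing, and shows this upstream crossing can only happen if $\sigma_2$ has entered a strictly larger unexited half-disk $R_2\supsetneq R_1$, contradicting maximality. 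Your alternative -- splitting $\sigma_i$ at its last entry and ``recursing'' with the Shortcut lemma -- does not work: Lemma \ref{lemma:np-shortcut} only constrains an arc that \emph{exits} the half-disk across the state circle, and here by assumption there is no exit, so inside the half-disk the arc may first run upstream and your concatenation need not be a right--down staircase traversed downstream. Worse, allowing unexited half-disks destroys exactly the structure that conclusion \eqref{i:same-steps} needs: the paper's proof of \eqref{i:same-steps} is that the circles each arc crosses downstream form a nested chain and both arcs enter the common region $R$ by crossing a state circle downstream (``otherwise, $\sigma_i$ would enter a non-prime half-disk across a non-prime arc without exiting, and we have ruled out this possibility''), which forces the chains to coincide except at the final step.

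Your arguments for (3) and (4) also have problems. The induction in (3), as stated, would force \emph{every} step to coincide, including the last -- but the lemma (and Figure \ref{fig:prime-stairs}) explicitly allows the last steps to differ, so the inductive step must break down there; moreover the claim that the two staircases ``must re-meet on some shared state circle further downstream'' is unjustified, and the actual contradiction when intermediate steps differ is that the two terminal endpoints would lie in different complementary regions (nestedness/separation), not an Escher-stairs loop. In (4), the closed curve you build does \emph{not} bound a region free of state circles: the paper shows the bottom stair of the second staircase closes up into a state circle lying inside the loop, and the contradiction comes instead from the maximality of the non-prime arc system (Definition \ref{def:max-nonprime}): the arc in $W$ could be pushed to a non-prime arc on $C_{n-1}$, and some arc of the maximal collection would then separate the two bottom stairs into different regions, contradicting that both terminal endpoints lie on the single white face $W$. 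So the skeleton of your plan points at the right lemmas, but the key step ruling out unexited non-prime half-disks is missing, and (3) and (4) need the nestedness and maximality arguments rather than the ones you give.
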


\begin{proof}
Conclusions \eqref{i:stairs-exist} and \eqref{i:alltheway} will follow
from Lemma \ref{lemma:staircase} (Staircase extension), as soon as we
verify that this lemma applies to the entire length of $\sigma_1$ and
$\sigma_2$. That is, we need to check that each time $\sigma_i$ enters
a non-prime half-disk through a non-prime arc, it leaves that
half-disk.

Suppose, for a contradiction, that $\sigma_1$ enters some non-prime
half-disk through a non-prime arc, and does not leave it. All such
half-disks are ordered by inclusion.  Let $R_1$ be the
\emph{largest} such non-prime half-disk. Let $\alpha_1$ be the
non-prime arc through which $\sigma_1$ enters $R_1$, and let $C_1$ be
the state circle to which it is attached.  Since $\sigma_2$ also
terminates inside $R \subset R_1$, and is disjoint from $\sigma_1$, it
must cross into $R_1$ by crossing $C_1$.

Let $\gamma$ denote the portion of $\sigma_1$ from $C$ to
$\alpha_1$.  By Lemma \ref{lemma:staircase} (Staircase extension),
there is a right-down staircase corresponding to $\gamma$. Thus $C$ is
connected to $C_1$ by a sequence of segments, and adjacent to the last
such segment is a tentacle that meets the non-prime switch
corresponding to $\alpha_1$. Since the arc $\alpha_1$ is next to the
last stair, it is on the same side of $C_1$ as the stair.  It follows
that $C$ and $\alpha_1$ are on the same side of $C_1$.  Thus
$\sigma_2$ must actually cross $C_1$ twice, and by Lemma
\ref{lemma:utility} (Utility lemma), it does so first running
upstream, then running downstream.

But $\sigma_2$ left $C$ running downstream.  By Lemma
\ref{lemma:staircase} (Staircase extension),
% \ref{lemma:downstream} (Downstream), 
the only way $\sigma_2$ can later cross $C_1$ running upstream is if
$\sigma_2$ crossed over a non-prime arc $\alpha_2$ with endpoints on
$C_2$, where $\alpha_2$ separates $C_1$ from $C$.  Let $R_2$ be the
non-prime half-disk bounded by $\alpha_2$ and $C_2$ and containing $R_1$.  Since
$R_1 \subset R_2$, $\sigma_1$ must also enter $R_2$, and it must do so
by crossing $C_2$.  Since $\sigma_1$ enters $R_1$ through non-prime
arc $\alpha_1$ (and not through a state circle), we conclude that $R_1
\neq R_2$.

By applying to $\sigma_1$ the argument we used for $\sigma_2$ above,
we conclude that $\sigma_1$ must cross $C_2$ twice, first running
upstream and then downstream.  Again, $\sigma_1$ cannot run upstream
after leaving $C$ in the downstream direction, unless $R_2$ is
contained in a non-prime half-disk that $\sigma_1$ enters through a
non-prime arc.  But by construction, $R_1 $ is the largest such
half-disk, contradicting the strict inclusion $R_1
\subsetneq R_2$.  This proves
\eqref{i:stairs-exist}--\eqref{i:alltheway}.

\smallskip

To prove \eqref{i:same-steps}, let $C= C_0, C_1, \ldots, C_m$ be the
steps of the staircase of $\sigma_1$. Note that $\sigma_1$ runs
downstream across each $C_i$ (for $i = 0, \ldots, m-1$). Thus, by
Lemma \ref{lemma:utility} (Utility lemma), once $\sigma_1$ crosses a
circle $C_i$, it may not cross it again. In other words, $C_0, C_1,
\ldots, C_m$ are nested, and $\sigma_1$ runs deeper into this chain of
nested circles.

Similarly, let $C = D_0, D_1, \ldots, D_n$ be the steps of the
staircase of $\sigma_2$. Again, $\sigma_2$ runs downstream along $D_0,
\ldots, D_{n-1}$, and cannot cross these circles a second time. Thus
$D_0, \ldots, D_n$ are also nested.

By hypothesis, the terminal ends of $\sigma_1$ and $\sigma_2$ are in
the same polyhedral region $R$. By the above work, each $\sigma_i$
enters this region $R$ by crossing a state circle running
downstream. (Otherwise, $\sigma_i$ would enter a non-prime half-disk
across a non-prime arc without exiting, and we have ruled out this
possibility.)  Thus $\sigma_1$ enters $R$ by crossing $C_{m-1}$, while
$\sigma_2$ enters $R$ by crossing $D_{n-1}$. Since the $C_i$ are
nested, as are the $D_j$, the only way this can happen is if $m=n$,
and the stairs $C_j = D_j$ coincide for $j= 0, \ldots, n-1$.

\smallskip

For \eqref{i:no-white-face}, suppose that $\sigma_1$ and $\sigma_2$
terminate at the same white face $W$.  Then we can draw an arc $\beta$
entirely contained in $W$ which meets the ends of both $\sigma_1$ and
$\sigma_2$.  Recall that a white face corresponds to a region of the
complement of $H_A \cup (\bigcup \alpha_i)$.  Thus the arc $\beta$
corresponds to an arc, which we still denote $\beta$, in the
complement of $H_A \cup (\bigcup \alpha_i)$ which meets the final
segment of each right--down staircase on the right side of that
segment, when the staircases are in right--down position.  The two
staircases, the state circle at the top, and the arc $\beta$ form a
loop in the sphere on which the graph $H_A$ lies.  See Figure
\ref{fig:prime-stairs}.

\begin{figure}
  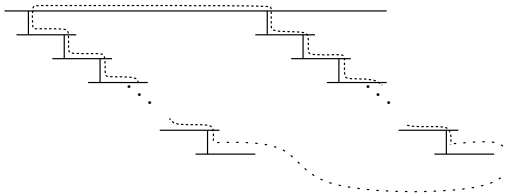
  \caption{There exists a closed curve in $H_A$ of the form of the
    dotted line above, where the arc with wider dots lies entirely in
    a region of the complement of $H_A \cup (\cup \alpha_i)$. }
  \label{fig:prime-stairs}
\end{figure}

By conclusion \eqref{i:same-steps}, all steps of the staircases,
except for the last, are on the same state circles.  Note that the
bottom stair $C_n$ on the left is not inside the shown bounded region
enclosed by the dotted curve $\beta$, but both ends of the bottom
stair $D_n$ on the right are inside the region enclosed by
$\beta$. Since $C_j = D_j$ for $j = 0, \ldots, n-1$, i.e.\ all stairs
but the last connect from left to right, the two ends of the bottom
right stair $D_n$ must connect to each other only (and to none of the
other state circles within the dotted curve), to form a state circle
that does not intersect the dotted line at all, but lies entirely
within it.

But then the arc $\beta$ can be pushed to have both endpoints lying on
the state circle $C_n-1$ just above the bottom segment.  It then gives a
non-prime arc.  By maximality of our polyhedral decomposition,
Definition \ref{def:max-nonprime}, there must be a collection of
non-prime arcs $\alpha_{j_1}, \dots, \alpha_{j_k}$ from our maximal
decomposition so that the collection $\beta \cup (\cup \alpha_{j_i})$
bounds no state circles in its interior.  But then one of these
$\alpha_{j_i}$ must separate the bottom stair on the left from the
bottom stair on the right.  This non-prime arc would separate the
bottom stairs into two distinct regions of the complement of $H_A \cup
(\cup \alpha_i)$, contradicting our assumption that $\beta$ lies in a
single such region.
\end{proof}

%%%%%%%%%%%%%%%%%%%%%%%%%%%%%%%%%%%%%%%%%%%%%%%%%%%%%%%%%%%%%%%%%

\section{Bigons and compression disks}

In an ideal polyhedral decomposition, any properly embedded essential
surface (with or without boundary) can be placed into normal form.
See, for example, Lackenby \cite{lackenby:volume-alt} or Futer and
Gu\'eritaud \cite{fg:arborescent}.
  
\begin{define}
A surface in \emph{normal form}\index{normal!form for a surface}
satisfies five conditions:
\begin{enumerate}
\item[(i)] its intersection with ideal polyhedra is a collection of
  disks;
\item[(ii)] each disk intersects a boundary edge of a polyhedron at
  most once;
\item[(iii)] the boundary of such a disk cannot enter and leave an
  ideal vertex through the same face of the polyhedron;
\item[(iv)] the surface intersects any face of the polyhedra in arcs,
  rather than simple closed curves;
\item[(v)] no such arc can have endpoints in the same ideal vertex of
  a polyhedron, nor in a vertex and an adjacent edge.
\end{enumerate}
\label{def:normal}
\end{define}

%	\begin{remark}
%	It is well--known that any essential, properly embedded surface, with
%	or without boundary, can be put into normal form.  See, for example,
%	\cite{fg:arborescent}.
%	\end{remark}

\begin{define}
\label{def:normal-disk}
A disk of intersection between a polyhedron and a normal surface is
called a \emph{normal disk}\index{normal!disk}. For example, a
\emph{normal bigon}\index{normal!bigon} is a normal disk with two
sides, which meets two distinct edges of its ambient polyhedron. Note
that in a checkerboard colored polyhedron, one face met by a normal
bigon must be white, and the other shaded.
\label{def:normal-bigon}
\end{define}

Recall that, in Definition \ref{def:prime}, we said that a polyhedron
is prime if each pair of faces meet along at most one edge. This is
equivalent to the absence of normal bigons.\index{prime!polyhedron}

%	Normal bigons are the simplest normal surfaces that can appear in the
%	ideal polyhedral decomposition of $M_A$.

Recall as well that our choice of a maximal collection of non-prime arcs may
not have been unique, as pointed out just after Defintion
\ref{def:max-nonprime}.  However, using the idea of normal bigons, one
can show that the prime polyhedral decomposition, obtained in Theorem
\ref{thm:simply-connected}, is unique.  Because the result is not
needed for our applications, we only outline the argument in the
remark below.  We point the reader to Atkinson \cite{atkinson:decomp} for more
details.

\begin{remark} \label{rem:prime-uniqueness}\index{prime decomposition}
One can see that the pieces of the prime decomposition are unique, as
follows.  We know, from Lemma \ref{lemma:lower-alt}, that the lower
$3$--balls are ideal polyhedra with $4$--valent ideal vertices. For
each lower polyhedron $P$, we may place a dihedral angle of $\pi/2$ on
each edge, and construct an orbifold $\mathcal{O}_P$ by doubling $P$
along its boundary.  $\mathcal{O}_P$ is topologically the $3$--sphere,
with singular locus the planar $1$--skeleton of $P$. Because we have
doubled a dihedral angle of $\pi/2$, every edge in the singular locus
has cone angle $\pi$.

There is a version of the prime decomposition for orbifolds, which
involves cutting $\mathcal{O}_P$ along \emph{orbifold spheres}, namely
2--dimensional orbifolds with positive Euler characteristic. Let $S$
be one such orbifold sphere. In our setting, because the singular
locus is a $4$--valent graph, $S$ must have an even number of cone
points.  Since the $1$--skeleton of $P$ is connected, the orbifold
sphere $S$ must intersect the singular locus, hence must have at least
two cone points, with angle $\pi$. Therefore, since each singular edge
has angle $\pi$, and $S$ has positive Euler characteristic, it must
have exactly two cone points.

Recall (e.g. from \cite{atkinson:decomp, petronio:decomp}) that the
prime decomposition of the orbifold $\mathcal{O}_P$ is equivariant
with respect to the reflection along $\bdy P$. Thus any orbifold
sphere $S$ is constructed by doubling a normal bigon in $P$.  Since
the prime decomposition of $\mathcal{O}_P$ is unique, and corresponds
to cutting $P$ along normal bigons, it follows that the decomposition
of $P$ along normal bigons is also unique.
\end{remark}

%%%%%%%%%%%%%%%%%%%%%%%%%%%%%%%%%%%%%%%%%%%%%%%%%%%%%%%%%%%%%%%%%

The following proposition shows that our earlier definition of
\emph{prime decomposition} along non-prime arcs actually results in prime polyhedra.   This, in turn, will be important in proving that the state
surface $S_A$ is essential in the link complement (Theorem
\ref{thm:incompress}).

\begin{prop}[No normal bigons\index{No normal bigons proposition}]
Let $D(K)$ be an $A$--adequate link diagram, and let $S_A$ be the all--$A$ state surface of $D$.
A prime decomposition of $S^3\cut S_A$ into 3--balls, as in Definition
\ref{def:max-nonprime}, gives polyhedra which contain no normal
bigons. In other words, every polyhedron is prime.
\label{prop:no-normal-bigons}
\end{prop}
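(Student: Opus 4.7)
The lower polyhedra are prime by Lemma~\ref{lemma:nonprime-3balls}\eqref{item:lower-prime}, so they contain no normal bigons, and the task reduces to the upper polyhedron. I will argue by contradiction: suppose the upper polyhedron contains a normal bigon meeting distinct ideal edges $e_1 \neq e_2$, with shaded arc $\alpha_s \subset S$ and white arc $\alpha_w \subset W$. Let $R_W$ denote the polyhedral region corresponding to $W$. Then each $e_i$ is the ideal edge of a tentacle $T_i \subset S$ whose body lies alongside a state-circle arc of $\partial R_W$, so both $T_1$ and $T_2$ sit in $R_W$ on the $R_W$-side of $\partial R_W$.

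The strategy is to produce two disjoint simple directed arcs $\sigma_1, \sigma_2$ through the spine of $S$ which both begin at a common state circle $C_0$ running downstream and both terminate in the polyhedral region $R_W$: $\sigma_1$ ends on $T_1$ and $\sigma_2$ ends on $T_2$, so each arc terminates on the white face $W$. The Parallel stairs lemma (Lemma~\ref{lemma:parallel-stairs}\eqref{i:no-white-face}) forbids this, giving the required contradiction.

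To produce $\sigma_1, \sigma_2$, I homotope $\alpha_s$ to a simple path in the directed spine of $S$, which by Theorem~\ref{thm:simply-connected} is a tree; this path is then unique, running from a point on $T_1$'s core to a point on $T_2$'s core. If the path contains a ``peak'' vertex $v$—one from which the two halves of the path depart via outgoing (downstream) edges—then $v$ is adjacent to a state circle $C_0$ and the two downstream halves of the path serve as $\sigma_1, \sigma_2$. In the degenerate case where the path is monotonically downstream from $T_1$ to $T_2$, it traces a single right-down staircase; closing this staircase with $\alpha_w \subset R_W$ yields a planar loop that, after simplification via the Shortcut lemma (Lemma~\ref{lemma:np-shortcut}) to handle any non-prime switches, can be shown to violate Lemma~\ref{lemma:escher} (Escher stairs).

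The main obstacle is the case analysis at the peak vertex $v$: depending on whether $v$ is an innermost disk (a source in the spine with outgoing edges only), a non-prime switch (where the Shortcut lemma governs how sub-arcs leave a non-prime half-disk), or a tentacle-to-tentacle junction at a state-circle corner, one must separately verify that both sub-arcs leave $C_0$ downstream and remain disjoint in the spine. A secondary complication occurs in the degenerate case where $T_1$ and $T_2$ are adjacent in the spine—for instance, consecutive tentacles along a single state-circle arc of $\partial R_W$—where the Utility lemma (Lemma~\ref{lemma:utility}) replaces Parallel stairs, applied to the extension of $\alpha_s$ across the shared state circle at both endpoints to obtain an arc starting and ending on a common circle in an orientation incompatible with the Utility lemma's conclusion.
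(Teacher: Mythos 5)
Your reduction to the upper polyhedron, the homotopy of the shaded arc $\gamma_s$ into the tree spine of Theorem \ref{thm:simply-connected}, and the target contradiction with Lemma \ref{lemma:parallel-stairs}\eqref{i:no-white-face} are all the same as the paper's; the gap is in how you manufacture the two arcs $\sigma_1,\sigma_2$. Splitting the spine path at a ``peak'' vertex only produces the hypotheses of Parallel stairs when the peak is an innermost disk. If the peak is a tentacle-to-tentacle junction (the head of one tentacle attached partway along another), then one of your two halves simply continues downstream along the ambient tentacle, running \emph{adjacent to} the state circle $C_0$ without ever crossing it at the start; if the peak is a non-prime switch, \emph{neither} half crosses $C_0$ at the start, since the outgoing spine edges at a switch are tails of tentacles running alongside $C_0$. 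So the verification you defer to the case analysis --- ``both sub-arcs leave $C_0$ downstream'' --- is false in exactly those cases, and Lemma \ref{lemma:parallel-stairs} cannot be invoked as stated. The missing idea is to organize the split around a state circle rather than a vertex of the spine: because both endpoints of $\gamma_s$ lie on the single white face $W$, hence in one polyhedral region, any state circle $C$ that $\gamma_s$ crosses must be crossed exactly twice; applying Lemma \ref{lemma:utility} to the middle sub-arc of $\gamma_s$ (from $C$ back to $C$) shows it leaves $C$ upstream and returns downstream, so the two \emph{outer} sub-arcs, oriented away from $C$, both begin by crossing $C$ downstream and terminate on $W$. Parallel stairs then applies directly; this is the paper's argument.

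Your degenerate case is also misdiagnosed. If the spine path is monotone downstream, it cannot cross any state circle at all: both endpoints lie in the same polyhedral region, so every state circle is crossed an even number of times, while Lemma \ref{lemma:utility} forbids a downstream-starting sub-arc from returning to the same circle. Hence there is no staircase to close up with $\alpha_w$, and no contradiction with Lemma \ref{lemma:escher} is available. The correct contradiction when $\gamma_s$ crosses no state circle is more elementary: $\gamma_s$ then stays in tentacles and non-prime switches adjacent to a single state circle, and the white arc of the bigon violates condition (ii) of Definition \ref{def:normal}. With these two repairs your argument collapses onto the paper's proof.
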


\begin{proof}%[Proof of Proposition \ref{prop:no-normal-bigons}]
Recall that by Lemma \ref{lemma:nonprime-3balls}, part
\eqref{item:lower-prime}, the lower polyhedra are prime. Since a
normal bigon is the obstruction to primeness, the lower polyhedra do
not contain any normal bigons.
 
Suppose, by way of contradiction, that there exists a normal bigon in
the upper polyhedron.  Then its boundary consists of two arcs, one,
$\gamma_s$ embedded in a shaded face, and one, $\gamma_w$ embedded on
a single white disk $W$.  Consider the arc $\gamma_s$ in the shaded face.
We may homotope this arc to lie on the spine of the shaded face.
Since the spine is a tree, by Theorem \ref{thm:simply-connected},
there is a unique embedded path between any pair of points on the
tree.  Hence $\gamma_s$ is simple with respect to the shaded face.

First, note that $\gamma_s$ must cross some state circle, for if not,
$\gamma_s$ remains on tentacles and non-prime switches adjacent to the
same state circle $C_0$, and so $\gamma_w$ contradicts part (ii) of the
definition of normal, Definition \ref{def:normal}. 

So $\gamma_s$ crosses a state circle $C$.  The endpoints of $\gamma_s$
are both on $W$, which means $\gamma_s$ crosses $C$ twice. If we cut
out the middle part of $\gamma_s$ (from $C$ back to $C$), we obtain
two disjoint sub-arcs from $C$ to $W$. If we orient these sub-arcs
away from $C$ toward $W$, Lemma \ref{lemma:utility} (Utility lemma)
implies they run downstream from $C$. Now, part
\eqref{i:no-white-face} of Lemma \ref{lemma:parallel-stairs} (Parallel
stairs) says that the ends of $\gamma_s$ cannot both be on $W$, which
is a contradiction.
\end{proof}

%%%%%%%%%%%%%%%%%%%%%%%%%%%%%%%%%%%%%%%%%%%%%%%%%%%%%%%%%%%%%%%%%
Recall that the state surface $S_A$ may not be orientable.  In this
case, Definition \ref{def:essential} on page \pageref{def:essential}
says that $S_A$ is \emph{essential} if the boundary $\widetilde{S_A}$
of its regular neighborhood is incompressible and
boundary--incompressible. Since $S^3 \cut \widetilde{S_A}$ is the
disjoint union of $M_A = S^3 \cut S_A$ and an $I$--bundle over $S_A$,
the computation of the guts is not affected by replacing $S_A$ with
$\widetilde{S_A}$.

\begin{theorem}[Ozawa]
Let $D$ be a (connected) diagram of a link $K$.  The surface $S_A$ is
essential in $S^3 \setminus K$ if and only if $D$ is $A$--adequate.
\label{thm:incompress}
\index{$A$--adequate}\index{$S_A$, all--$A$ state surface!is incompressible}
\end{theorem}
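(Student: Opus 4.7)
The proof splits into two directions. For the easier ``only if'' direction, if $D(K)$ is not $A$--adequate, then some segment $s$ of $H_A$ has both endpoints on a single state circle $C$, giving a $1$--edge loop in $\GA$. The arc $s$ together with one of the arcs of $C$ cut off by its endpoints bounds a disk in the projection plane; pushing a parallel copy of this disk slightly off the projection plane produces an explicit compressing or boundary-compressing disk for $\widetilde{S_A}$, whose boundary crosses the twisted band at the corresponding crossing and can be verified to be essential on $\widetilde{S_A}$ by a local analysis. This direction does not use the polyhedral machinery.

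For the main ``if'' direction, assume $D(K)$ is $A$--adequate. By Theorem \ref{thm:simply-connected}, $M_A$ admits a prime decomposition into checkerboard-colored ideal polyhedra with $4$--valent vertices, and by Proposition \ref{prop:no-normal-bigons} these polyhedra contain no normal bigons. The plan is to assume for contradiction that $\widetilde{S_A}$ is not essential, take an essential compressing or boundary-compressing disk $D$, put it in normal form with respect to the polyhedral decomposition as in Definition \ref{def:normal}, and choose it among all such disks so as to minimize the total number of arcs of $D \cap W$ as $W$ ranges over white faces.

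The key observation is that each ideal edge of a polyhedron separates a shaded face from a white face, while each ideal vertex lies on the parabolic locus. Hence for a compressing disk, $\bdy D \subset \widetilde{S_A}$ can neither cross an ideal edge (it would enter a white face) nor visit an ideal vertex (it would leave $\widetilde{S_A}$); within any polyhedron the arcs of $\bdy D$ therefore lie in single shaded faces. If $D$ meets no white face, then $D$ sits inside a single polyhedron and $\bdy D$ lies in a single shaded face, which is simply connected by Theorem \ref{thm:simply-connected}, contradicting essentiality. Otherwise, an outermost arc $\alpha$ of $D \cap W$ on $D$ cuts off a sub-disk $E \subset D$ whose interior avoids every white face, so that $E$ lies in a single polyhedron $P$ with $\bdy E = \alpha \cup \beta$, where $\alpha \subset W$ and $\beta \subset \bdy D$ lies in a single shaded face of $P$. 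Thus $E$ is a normal bigon in $P$, contradicting Proposition \ref{prop:no-normal-bigons}.

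The main obstacle I anticipate is the boundary-compression case, in which $\bdy D$ is permitted to travel along the parabolic locus and hence may pass through ideal vertices, so that the arc $\beta$ produced by the outermost argument could in principle span several shaded faces joined at ideal vertices. Handling this requires refining the outermost argument, either by simultaneously minimizing the number of arcs of $\bdy D$ on the parabolic locus, or by observing that a sub-disk $E$ that fails to be a normal bigon must still violate either Proposition \ref{prop:no-normal-bigons} or the simple-connectivity of shaded faces from Theorem \ref{thm:simply-connected}. A secondary technical point is to confirm that the disk produced from a $1$--edge loop in the ``only if'' direction has genuinely essential boundary on $\widetilde{S_A}$, rather than being boundary-parallel there.
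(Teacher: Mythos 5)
Your overall strategy --- normal form with respect to the polyhedral decomposition plus Proposition \ref{prop:no-normal-bigons}, applied via an outermost-arc argument --- is exactly the paper's, and your treatment of compressibility and of the ``only if'' direction essentially reproduces the paper's proof. The genuine gap is the boundary-compressibility half of the ``if'' direction, which you flag but do not resolve, and your second proposed fix fails as stated: the outermost sub-disk of a boundary-compression disk that contains the ideal vertex is a normal \emph{trapezoid} (one white arc, two shaded arcs, and an arc on the parabolic locus, as in Definition \ref{def:trapezoid}), and such a disk violates neither Proposition \ref{prop:no-normal-bigons} nor the simple connectivity of shaded faces --- trapezoids exist in abundance and are used throughout Chapter \ref{sec:ibundle}. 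The paper's actual argument is different: first one shows, using condition (v) of Definition \ref{def:normal}, that the parabolic arc $\alpha \subset \bdy E$ lies on a single ideal vertex of a single polyhedron (an outermost arc of $E \cap (\mbox{white faces})$ cutting off a portion of $\alpha$ would contradict one of the two clauses of (v)); then, since only one outermost piece can contain that vertex, the complementary disk $E \setminus E'$ contains no ideal vertex, and an outermost arc of its intersection with the white faces cuts off a genuine normal bigon, contradicting Proposition \ref{prop:no-normal-bigons}. Your first suggestion (also minimizing arcs on the parabolic locus) does not substitute for the condition-(v) step that pins $\alpha$ to one vertex, so some version of this argument still has to be supplied.

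Two smaller points. Before normal form even applies, you must place the disk in $M_A$: $S^3 \cut \widetilde{S_A}$ is the disjoint union of $M_A$ and an $I$--bundle over $S_A$, and a compressing or boundary-compressing disk contained in the $I$--bundle piece would be isotopic to a vertical or horizontal disk there, hence not a compression; your proposal skips this. Also, your ``key observation'' misstates the local picture: $\bdy D \subset \widetilde{S_A}$ may perfectly well cross ideal edges (those crossings are precisely the endpoints of the arcs of $D \cap W$), and crossing an ideal edge keeps $\bdy D$ on the shaded surface, passing between shaded faces of adjacent polyhedra rather than ``entering a white face.'' This slip does not damage your outermost argument --- what you actually use is that the shaded arc of an outermost piece has interior disjoint from the white faces, hence from the ideal edges, and (in the compressing case) avoids the parabolic locus, so it lies in a single shaded face --- but it should be justified that way.
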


\begin{proof}
If $D$ is not $A$--adequate, then there is an edge of $H_A$ meeting
the same state circle at each of its endpoints.  To form $S_A$, we
attach a twisted rectangle with opposite sides on a disk bounded by
that same state circle.  Note in this case, $S_A$ will be
non-orientable.  The boundary of a disk $E$ runs along $S_A$, over the
twisted rectangle, meets the knot at the crossing of the rectangle,
then continues along $S_A$ through the disk bounded by that state
circle.  This disk $E$ will give a boundary compression disk for
$\widetilde{S_A}$, as follows.  A regular neighborhood of $S_A$ will
meet $E$ in a regular neighborhood of $\partial E \cap S_A$.  Hence
$E \setminus N(\partial E \cap S_A)$ is a compression disk for
$\widetilde{S_A}$.

Now, suppose $D$ is $A$--adequate, and let $\widetilde{S_A}$ be the
boundary of a regular neighborhood of $S_A$.  This orientable surface
is the non-parabolic part of the boundary of $M_A$. If
$\widetilde{S_A}$ is compressible, a compressing disk $E$
has boundary on $\widetilde{S_A}$.  Since $S^3 \cut \widetilde{S_A}$ is
the disjoint union of an $I$--bundle over $S_A$ and $M_A$, the disk
$E$ must be contained either in the $I$--bundle or in $M_A$.  It
cannot be in the $I$--bundle, or in a neighborhood of
$\widetilde{S_A}$ it would lift to a horizontal or vertical disk,
contradicting the fact that it is a compression disk.  Hence $E$ lies
in $M_A$.

Put the compressing disk $E$ into normal form with respect to the
polyhedral decomposition of $M_A$.  The intersection of $E$ with white
faces contains no simple closed curves, so all intersections of $E$
and the white faces are arcs.  Consider an outermost disk.  This has
boundary a single arc on a white face, and a single arc on a shaded
face.  Hence it cuts off a normal bigon, which is a contradiction of
Proposition \ref{prop:no-normal-bigons} (No normal bigons).  So the
surface $\widetilde{S_A}$ is incompressible.

If $\widetilde{S_A}$ is boundary compressible, then a boundary
compression disk $E$ again lies in $M_A$ rather than the $I$--bundle.
Its boundary consists of two arcs, one on $\widetilde{S_A}$, which we
denote $\beta$, and one which lies on the boundary of $S^3\setminus K$
(the parabolic locus), which we denote $\alpha$.  Put $E$ in normal
form.  First, we claim the arc $\alpha$ on $\bdy (S^3\setminus K)$
lies in a single polyhedron on a single ideal vertex.  If not, it must
meet one of the white faces of the polyhedron.  Take an outermost arc
of intersection of the white faces with $E$ which cuts off a disk $E'$
whose boundary contains a portion of the arc $\alpha$.  Either $E'$
has an edge on a white face and an edge on $\alpha$, in which case the
surface $E$ contradicts the first part of condition (v) of the
definition of normal, or else $E'$ has an edge on a white face, an
edge on $\alpha$, and an edge on $S_A$.  In this case, $E$ contradicts
the second part of condition (v).  Hence $\alpha$ lies entirely within
one polyhedron.

Consider arcs of intersection of $E$ with white faces.  An outermost
such arc must contain an ideal vertex, or we get a normal bigon as
above, which is a contradiction.  But if $E'$ is outermost and $E'$
contains an ideal vertex, then $E\setminus E'$ is a disk which does
not contain an ideal vertex.  Again we get a contradiction looking at
the outermost arc of intersection of $E\setminus E'$ with white faces.
\end{proof}

%We close this section with two applications of Proposition
%\ref{prop:no-normal-bigons} and Theorem \ref{thm:incompress}.

\begin{lemma}\label{lemma:white-incompress}
Every white face of the polyhedral decomposition is boundary
incompressible in $M_A$.
\end{lemma}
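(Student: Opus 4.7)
The plan is to assume for contradiction that a white face $W$ admits a boundary compression disk $D \subset M_A$, and to derive a contradiction by localizing $D$ to a single polyhedron and examining its boundary. Write $\bdy D = \alpha \cup \beta$, where $\alpha$ is an essential arc in $W$ whose endpoints lie on the parabolic locus and $\beta \subset \bdy M$.

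First I would place $D$ in normal form with respect to the polyhedral decomposition, and then minimize $|D \cap W|$ among all normal boundary compression disks. Any interior arc of $D \cap W$ with both endpoints on $\alpha$ is removed by a standard innermost-disk surgery, using that $W$ is itself a disk. An interior arc with both endpoints on $\beta$ (or with mixed endpoints on $\alpha$ and $\beta$) splits $D$ into two sub-disks, at least one of which is itself a boundary compression disk for $W$ with strictly fewer intersections with $W$, contradicting minimality unless no such arc exists. After this reduction $D \cap W = \alpha$, so the interior of $D$ lies in a single polyhedron $P$ with $W \subset \bdy P$.

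Next I would examine the ideal vertices $v_1, v_2$ of $W$ containing the endpoints of $\alpha$. If $v_1 = v_2$, then $\alpha$ together with the sub-arc of $\bdy W \cap \bdy M$ between its endpoints bounds a sub-disk of $W$ whose boundary meets no ideal edge; this exhibits $\alpha$ as parallel in $W$ into the parabolic portion of $\bdy W$, contradicting essentiality of $\alpha$. Hence $v_1 \ne v_2$. However, since $D \subset P$, the arc $\beta$ is contained in $\bdy P \cap \bdy M$, which is a disjoint union of small disks, one neighborhood around each ideal vertex of $P$. An arc $\beta$ with endpoints at two distinct vertices cannot lie in such a disjoint union, giving the desired contradiction.

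The main obstacle is the minimization step. Arcs of $D \cap W$ whose endpoints both lie on $\alpha$ are eliminated by classical innermost-disk arguments, but arcs whose endpoints touch $\beta$ are themselves candidate boundary compression disks and require a careful cut-and-paste that preserves both the boundary compression property and the normality of $D$. Once this is in hand, the remaining combinatorial steps follow cleanly from Definition \ref{def:normal} and the structure of the parabolic locus in a single polyhedron.
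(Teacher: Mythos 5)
There is a genuine gap, in two places. First, your localization step does not work: after arranging $D \cap W = \alpha$, you conclude that the interior of $D$ lies in a single polyhedron, but the polyhedral decomposition is obtained by cutting along \emph{all} the white disks in $\CalD$, and you have only controlled the intersections of $D$ with the one face $W$. Nothing in your argument prevents $D$ from meeting other white faces, and those intersections are exactly the crux of the matter: an outermost arc of $D \cap (\mbox{white faces})$ in $D$ cuts off a sub-disk lying in a single polyhedron with one side on a white face and one side on a shaded face, i.e.\ a normal bigon, and ruling this out requires Proposition \ref{prop:no-normal-bigons} (hence $A$--adequacy). Your proposal never invokes primeness of the polyhedra or adequacy in any form, which is a structural red flag: a normal bigon adjacent to $W$ \emph{is} (after a small isotopy) a boundary compression disk for $W$, so boundary incompressibility of the white faces cannot be deduced by purely formal cut-and-paste. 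This is also why the paper's proof is short: place $E$ in normal form, and the outermost arc of intersection with the white faces immediately produces a forbidden normal bigon.

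Second, your framing of the compression disk is too restrictive. You take $\alpha$ to have endpoints on the parabolic locus and $\beta \subset \partial M$, and your final contradiction (that $\beta$ cannot join two distinct ideal vertices inside the disjoint vertex neighborhoods of $\partial P \cap \partial M$) depends essentially on $\beta$ lying in the parabolic locus. But boundary incompressibility in $M_A$ concerns arcs $\beta$ anywhere on $\partial M_A$, which includes the non-parabolic boundary $\widetilde{S_A}$; indeed, the place where this lemma is used (Lemma \ref{lemma:product-in-face}) produces a compression disk whose arc $\alpha$ has endpoints on ideal edges of $W$ and whose arc $\beta$ lies on the shaded surface, never touching the parabolic locus. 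So even if the localization were repaired, your argument would only exclude a special type of compression and not the one the paper actually needs. (A smaller issue: in the case $v_1 = v_2$ you assert the sub-disk of $W$ cut off by $\alpha$ meets no ideal edge, but an arc returning to the same ideal vertex can perfectly well enclose further edges and vertices of $W$, so inessentiality does not follow.)
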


\begin{proof}
If $E$ is a boundary compression disk for a white face, it can be
placed in normal form. Then, as above, $E$ must contain an outermost
normal bigon, which contradicts Proposition
\ref{prop:no-normal-bigons} (No normal bigons). 
\end{proof}

Recall that a link diagram $D$ is \emph{prime}\index{prime!diagram} if
any simple closed curve which meets the diagram transversely exactly
twice does not bound crossings on each side.  Theorem
\ref{thm:incompress} has the following corollary that shows that for
prime, non-split links, working with prime diagrams is not a
restriction. Starting in Chapter \ref{sec:epds}, we will restrict to
prime adequate diagrams.
 
\begin{corollary}\label{cor:primelinkadequate}
Suppose that $K$ is an $A$--adequate, non-split, prime link.  Then
every $A$--adequate diagram $D(K)$ without nugatory crossings is
prime.
\end{corollary}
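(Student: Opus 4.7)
The plan is to argue by contradiction. Suppose $D(K)$ is $A$--adequate without nugatory crossings, but is not prime in the diagrammatic sense. Then there exists a simple closed curve $\gamma$ on the projection sphere meeting $D$ transversely in two points, such that both complementary disks of $\gamma$ contain crossings of $D$.

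The first step is to lift $\gamma$ to a 2--sphere $\Sigma \subset S^3$ meeting $K$ transversely in two points, by capping off $\gamma$ with two disks, one slightly above and one slightly below the projection plane. This realizes $(S^3, K)$ as the union of two ball--arc pairs glued along $(\Sigma, \Sigma \cap K)$. Since $K$ is non-split, neither side of $\Sigma$ is a ball disjoint from $K$. Since $K$ is prime, the standard characterization of primeness for links forces at least one of the two resulting 2--string tangles to be trivial. Let $B$ denote the 3--ball on that side, so $K \cap B$ consists of two arcs that can be isotoped rel $\bdy$ onto $\Sigma$.

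Next, I would put $\Sigma$ in a position that minimizes $|\Sigma \cap \widetilde{S_A}|$. By Theorem \ref{thm:incompress}, $\widetilde{S_A}$ is incompressible and $\bdy$--incompressible in $S^3 \setminus K$, so any closed curve of intersection that is inessential on $\widetilde{S_A}$, and any arc of intersection that is $\bdy$--parallel on $\widetilde{S_A}$, can be removed by an isotopy of $\Sigma$ that preserves its isotopy class as a 2--sphere meeting $K$ in two points. After this minimization, every remaining curve and arc of $\Sigma \cap \widetilde{S_A}$ is essential on both surfaces; the expected outcome is that $\Sigma \cap S_A$ reduces to a single arc $\alpha$ joining the two points of $\Sigma \cap K$, lying close to the original $\gamma$.

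The last step is to extract a combinatorial contradiction from the trivial tangle on the $B$--side. With $\Sigma$ in minimal position, $S_A \cap B$ is a surface whose boundary consists of the two arcs of $K \cap B$ and the arc $\alpha$; together with the incompressibility of $\widetilde{S_A}$ and the triviality of the tangle in $B$, this should pin down $S_A \cap B$ to a very restricted combinatorial type. Translating back to the subgraph of $H_A$ lying in the disk bounded by $\gamma$ on the $B$--side, I would argue that the presence of crossings inside $B$ forces either a segment of $H_A$ with both endpoints on the same state circle (contradicting $A$--adequacy) or a nugatory crossing of $D$ (contradicting the hypothesis on $D$). Either outcome is the desired contradiction, so $D$ must be prime.

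The hard part is the final step: converting the purely topological fact that $(B, K\cap B)$ is trivial into a local diagrammatic defect. This is where the machinery of the chapter must do real work. I would expect to track $S_A \cap B$ through the prime polyhedral decomposition of Chapter \ref{sec:decomp}, using Proposition \ref{prop:no-normal-bigons} to control how $\Sigma$ and $S_A \cap B$ meet the white and shaded faces of the polyhedra lying under the $B$--side, and then applying tentacle-chasing lemmas such as the Escher stairs lemma (Lemma \ref{lemma:escher}) to rule out combinatorial configurations of $H_A$ in $B$ that are compatible with a trivial tangle but not with $A$--adequacy and the absence of nugatory crossings.
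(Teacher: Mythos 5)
There is a genuine gap: your entire argument hinges on the final step, and that step is never carried out. You reduce to a trivial tangle on one side of the sphere $\Sigma$ and then say you ``would expect'' to pin down $S_A \cap B$ via the polyhedral decomposition, Proposition \ref{prop:no-normal-bigons}, and tentacle chasing, forcing either a violation of $A$--adequacy or a nugatory crossing. But no mechanism is given for how triviality of the tangle translates into a statement about the subgraph of $H_A$ inside $\gamma$; the minimal-position claim that $\Sigma \cap S_A$ reduces to a single arc is also only asserted as ``the expected outcome.'' As written, the proof stops exactly where the work begins. There is also a setup slip: a sphere meeting $K$ transversely in two points cuts off ball--arc pairs (one-string tangles), not ``two arcs''/``2--string tangles''; this matters because the correct notion of primeness you need is that one side is an unknotted ball--arc pair.

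The missing idea is considerably more elementary than the normal-surface machinery you invoke. The paper stays entirely diagrammatic: the curve $\gamma$ exhibits $D$ as a connected sum $D_1 \# D_2$, and primeness of $K$ forces one summand, say $D_2$, to be an $A$--adequate diagram of the \emph{unknot}. The state surface splits along an arc of $\gamma$ into $S_1$ and $S_2 = S_A(D_2)$; by Theorem \ref{thm:incompress}, $S_2$ is essential in the unknot complement, hence a disk. Since $\GA(D_2)$ is a spine of $S_2$ (Lemma \ref{lemma:ga-spine}), it is a tree, so every edge separates and every crossing of $D_2$ is nugatory --- contradicting the hypothesis unless $D_2$ is crossing-free, i.e.\ unless $D$ is prime. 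If you want to salvage your approach, this is precisely the content you would need to reconstruct: identify the side of $\gamma$ corresponding to the unknotted summand, recognize its state surface as an essential spanning surface of the unknot (hence a disk), and convert ``disk'' into ``tree spine'' into ``all crossings nugatory.'' Without that chain, the tentacle-chasing plan is not a proof.
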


\begin{proof}
Suppose $D(K)$ is an $A$--adequate diagram of $K$ and let $\gamma$
denote a simple closed curve on the projection plane that intersects
$D(K)$ at exactly two points.  Now $\gamma$ splits $D(K)$ into a
connect sum of diagrams $D_1\#D_2$.  Since $K$ is prime, one of them,
say $D_1$, must be an $A$--adequate diagram of $K$, and $D_2$ must be
an $A$--adequate diagram of the unknot.  The state surface $S_A$
splits along an arc of $\gamma$ into surfaces $S_1$ and $S_2$, where
$S_i$ is the all--$A$ state surface of $D_i$, $i=1, 2$.  By Theorem
\ref{thm:incompress}, $S_2$ is incompressible, and thus it must be a
disk.  The graph $\GA(D_2)$ is a spine for $S_2$.  Since $S_2$ is a
disk, $\GA(D_2)$ is a tree.  But then each edge of $\GA(D_2)$ is
separating, hence each crossing is nugatory.  Since we assumed that
$D$ contains no nugatory crossings, $D_2$ must be embedded on the
projection plane.  Thus $D(K)$ is prime, as desired.
\end{proof}

The converse to Corollary \ref{cor:primelinkadequate} is open.  See
Problem \ref{problem:composite} in Chapter \ref{sec:questions}.

%%%%%%%%%%%%%%%%%%%%%%%%%%%%%%%%%%%%%%%%%%%%%%%%%%%%%%%%%%%%%%%%%

\section{Ideal polyhedra for $\sigma$--homogeneous diagrams}\label{subsec:idealsigma}

In this section, we show that the decomposition for
$\sigma$--homogeneous diagrams discussed in Section
\ref{subsec:generalization} becomes an ideal polyhedral decomposition
under the additional hypothesis of $\sigma$--adequacy. The
arguments are almost identical to the already-discussed case of $A$--adequate links. 
Thus our exposition here will
be brief, indicating only the cases where the argument calls for slight
modifications.

In the $\sigma$--homogeneous setting,
shaded faces decompose into portions associated with a directed spine.
An edge of the directed spine lies in each tentacle, and runs
adjacent to a segment and then along a state circle.  The only
difference now is that when we are in a polyhedral region for which
each resolution is the $B$--resolution, these directed edges run
left--down rather than right--down.  Innermost disks are still
sources, and non-prime arcs give rise to switches (non-prime
switches).  The resulting pieces are illustrated in Figure
\ref{fig:directed-spine-homo}, which should be compared to Figure
\ref{fig:directed-spine}.

\begin{figure}
  \includegraphics{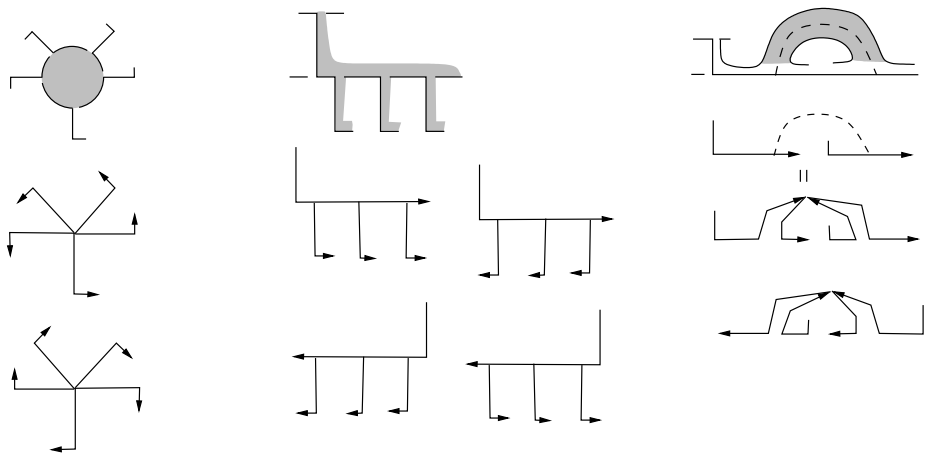}
  \caption{Building blocks of the directed spine of a shaded face, in a $\sigma$--homogeneous diagram.}
  \label{fig:directed-spine-homo}
\end{figure}

As before, when an oriented arc in a shaded face runs in the direction
of the directed spine, we say it is running \emph{downstream}.
Otherwise, it is running \emph{upstream}.  When such an arc has been
homotoped to run monotonically through each tentacle, innermost disk,
and non-prime switch, and to meet each at most once, we say it is
\emph{simple with respect to the shaded face}.

These definitions agree with Definitions \ref{def:downstream-upstream}
and \ref{def:simple-face}, modified to accommodate left--down edges.
Similarly, we have the following definition.

\begin{define}\label{def:staircaseh}
A \emph{staircase} is an alternating sequence of state circles and
segments.  The direction of the staircase is determined by the directions
of tentacles running along those staircases, which are determined by
the resolution.  Those of the $A$--resolution run ``right--down''.
Those of the $B$--resolution run ``left--down''.  All stairs in the
same component of $s_\sigma$ run in the same direction, by
$\sigma$--homogeneity.
\end{define}

It turns out that the existence of a directed staircase is all that is
needed for our main results.  ``Right--down-ness'' and
``left--down-ness'' are only peripheral, and the theory developed so far in
this chapter so far goes through without a problem.  Hence we may prove
the following analogue of Theorem \ref{thm:simply-connected}.

\begin{theorem}\label{thm:sigma-homo-poly}
Let $\sigma$ be an adequate, homogeneous state of a diagram $D$.  
Then the decomposition described above gives
a polyhedral decomposition of the surface complement $M_\sigma$ into
4--valent ideal polyhedra.
\end{theorem}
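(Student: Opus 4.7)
The plan is to follow the same strategy as the proof of Theorem \ref{thm:simply-connected}, but first establishing $\sigma$--homogeneous analogues of the tentacle chasing toolkit. Lemma \ref{lemma:lower3ball} already handles the lower $3$--balls: each one is an ideal checkerboard polyhedron with $4$--valent vertices coming from an alternating sub-diagram supported in a single polyhedral region. The upper $3$--ball is known (from Section \ref{subsec:generalization}) to be checkerboard colored with $4$--valent ideal vertices, so it remains only to verify that its shaded faces are simply connected, equivalently that each directed spine built from innermost disks, tentacles, and non-prime switches is a tree.

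The first step is to generalize the Escher stairs lemma (Lemma \ref{lemma:escher}). A staircase in the $\sigma$--homogeneous setting is a sequence of state circles and segments traversed by tentacles that all point in the same direction; by $\sigma$--homogeneity, each connected piece of such a staircase is confined to a single polyhedral region of the complement of $s_\sigma$, where all segments carry the same $A$ or $B$ label. In that region the diagram is locally alternating, and within a would-be closed loop (or a staircase with top and bottom on a common state circle) the innermost state-circle-to-state-circle chord would give a segment with both endpoints on one state circle. Since $\sigma$--adequacy precisely forbids $1$--edge loops in $\G_\sigma$, this yields the desired contradiction. With Escher stairs in hand, the proofs of the Shortcut lemma, Staircase extension lemma, and Utility lemma (Lemmas \ref{lemma:np-shortcut}, \ref{lemma:staircase}, \ref{lemma:utility}) go through verbatim: each is a local argument about where a simple directed arc can head next when it is downstream on a tentacle, at a non-prime switch, or crossing a state circle, and the local options (innermost disk as source, tentacle-to-tentacle in the downstream direction, non-prime switch) are the same in the $\sigma$--homogeneous case, with ``downstream'' now interpreted by the directed spine rather than by right--down orientation in the plane.

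With these lemmas in place, I would run the argument of Theorem \ref{thm:simply-connected} directly. Suppose the directed spine of some upper shaded face is not a tree; then it contains a simple embedded loop $\gamma$. If $\gamma$ crosses any state circle $C$, it must cross $C$ twice, yielding two sub-arcs each beginning and ending on $C$, with consistent orientations along $\gamma$. The Utility lemma forces one sub-arc to terminate at $C$ running downstream, while the other must depart $C$ running downstream, a contradiction. If instead $\gamma$ crosses no state circle, then it runs entirely through non-prime switches attached to a single state circle $C$ and bounds a disk in the projection plane containing no state circles; this contradicts the maximality in Definition \ref{def:max-nonprime} of the chosen collection of non-prime arcs, exactly as in the original proof (Figure \ref{fig:nonprime-loop}). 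Thus the spine is a tree, every shaded face is simply connected, and, combined with Lemma \ref{lemma:lower3ball} and the discussion in Section \ref{subsec:generalization}, we obtain the desired checkerboard $4$--valent ideal polyhedral decomposition of $M_\sigma$.

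The main technical point, which is where a little care is needed, is the generalized Escher stairs lemma: one has to check that a ``staircase'' in the generalized setting still lives in a piece of $H_\sigma$ where $\sigma$--adequacy bites. This is exactly why we insisted that within a single component of the complement of $s_\sigma$ all tentacles run in a common direction: direction changes only happen when crossing a state circle between components, so a staircase that tries to close up (either back on itself or back to its starting state circle) forces an innermost chord in a single component, and $\sigma$--adequacy rules that out. Everything else is bookkeeping already done in the $A$--adequate case.
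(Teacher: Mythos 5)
Your proposal is correct and follows essentially the same route as the paper: the paper likewise reduces everything to the lower-polyhedra statement plus simple connectivity of the upper shaded faces, asserts that the named tentacle-chasing lemmas (Escher stairs, Shortcut, Staircase extension, Downstream, Utility) generalize once ``right--down'' is replaced by the direction dictated by the state and $A$--adequacy by $\sigma$--adequacy, and then runs the proof of Theorem \ref{thm:simply-connected} verbatim with $H_A$ replaced by $H_\sigma$. Your extra gloss on why Escher stairs survives (direction changes occur only across state circles, and a closing staircase would force a segment with both endpoints on one state circle, contradicting $\sigma$--adequacy) is exactly the point the paper leaves implicit.
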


\begin{proof}
By $\sigma$--homogeneity, each lower polyhedron is identical to a
polyhedron in Menasco's decomposition of an alternating link, which
corresponds to the subgraph of $H_\sigma$ coming from a polyhedral
region.  As for the upper polyhedron, ideal vertices are 4--valent,
and white faces are simply connected.  We need to show that shaded
faces are simply connected in the $\sigma$--homogeneous case.  Each
shaded face deformation retracts to a directed spine, and we need to
show this spine is a tree.  The result follows from a sequence of
lemmas established in the previous sections concerning how these
directed graphs may be super-imposed on $H_\sigma$.  The proofs of
these lemmas work equally well when staircases run ``right--down'' and
``left--down,'' as they will when $A$ and $B$ resolutions are mixed.
What is key in all the proofs of these lemmas is that edges of the
graph corresponding to the shaded faces have a direction, and the
direction only changes in non--prime switches.  In addition, the
proofs repeatedly use the hypothesis that the state $\sigma$ defining
the graph $H_\sigma$ is adequate (recall Example
\ref{ex:two-crossings}). Hence the following technical lemmas
generalize without any modification of the proofs, except to remove
the words ``right--down'' and replace ``$A$--adequate'' with
``$\sigma$--adequate.''

\smallskip

\underline{Lemma \ref{lemma:escher} (Escher stairs):} No staircase
forms a loop, and no staircase has its top and bottom on the same
state circle.

\smallskip

\underline{Lemma \ref{lemma:np-shortcut} (Shortcut lemma):} If a
directed arc $\gamma$ in a shaded face runs across a non-prime arc
$\alpha$ with endpoints on a state circle $C$, and then upstream, the
arc $\gamma$ must exit the non-prime half-disk bounded by $\alpha$
and $C$ by running downstream across $C$.

\smallskip

\underline{Lemma \ref{lemma:staircase} (Staircase extension):} If
$\gamma$ runs downstream across a state circle, and every time
$\gamma$ crosses a non-prime arc with endpoints on a state circle $C$,
the arc $\gamma$ exits the non-prime half-disk bounded by $\alpha$
and $C$, then $\gamma$ defines a staircase such that $\gamma$ is
adjacent to each segment of the staircase, running downstream.

\smallskip

\underline{Lemma \ref{lemma:downstream} (Downstream):} For $\gamma$
as above, it must cross the last state circle of the staircase running
downstream.

\smallskip

\underline{Lemma \ref{lemma:utility} (Utility lemma):} Let $\gamma$ be
a simple, directed arc in a shaded face, which starts and ends on the
same state circle $C$. Then $\gamma$ starts by running upstream from
$C$, and then terminates at $C$ while running downstream.
Furthermore, $\gamma$ cannot intersect $C$ more than two times.
\smallskip

Now the proof of Theorem \ref{thm:simply-connected}  goes through
verbatim, only replacing $H_A$ with $H_\sigma$.  Hence the upper
polyhedron is also a 4--valent ideal polyhedron.
\end{proof}

Once we have a polyhedral decomposition of $M_\sigma$ for a
$\sigma$--adequate, $\sigma$--homogeneous diagram, we may use this to
generalize Proposition \ref{prop:no-normal-bigons} and Theorem
\ref{thm:incompress} in the setting of $\sigma$--adequate and
$\sigma$--homogeneous diagrams.

In order to do so, we need Lemma \ref{lemma:parallel-stairs} (Parallel
stairs). More specifically, we need part \eqref{i:no-white-face} of
that lemma, but we state the entire lemma for completeness.

\smallskip

\underline{Lemma \ref{lemma:parallel-stairs} (Parallel stairs):} Let
$\sigma_1$ and $\sigma_2$ be simple, disjoint, directed arcs through
the spines of shaded faces $F_1$ and $F_2$. (These shaded faces are
allowed to coincide, so long as the $\sigma_i$ are disjoint.) Suppose
that both $\sigma_1$ and $\sigma_2$ begin at the same state circle
$C$, running downstream, and terminate in the same polyhedral region
$R$. Then
\begin{enumerate}
\item There are disjoint staircases for the
  $\sigma_i$, such that $\sigma_1$ runs downstream along each segment
  of the first staircase and $\sigma_2$ runs downstream along each
  segment of the second staircase.
\item The terminal endpoint of each $\sigma_i$ is
  adjacent to the last step (state circle) of its staircase.
\item The $j$-th step of the first staircase is on
  the same state circle as the $j$-th step of the second staircase,
  except possibly the very last step. 
\item The arcs $\sigma_1$ and $\sigma_2$ cannot
  terminate on the same white face.
\end{enumerate}
 
\smallskip

As in the case of the $A$--adequate links, the proof constructs
staircases for $\sigma_1$ and $\sigma_2$, using Lemma
\ref{lemma:staircase} (Staircase extension).  Furthermore, the proof
of the (generalized) lemma requires $\sigma$--homogeneity, in that if
both arcs running downstream along the staircases end in tentacles
meeting the same white face, then at the bottom the arcs are both
either running in the right--down or the left--down direction, and we
obtain a diagram as in Figure \ref{fig:prime-stairs} or its
reflection.  That is, we obtain a sequence of stairs on the right and
the left, with bottom segments of the stairs connected by an arc
$\beta$ in the complement of $H_\sigma \cup (\bigcup \alpha_i)$ which
runs from the right side of one last segment to the right side of the
other, or from the left side of one last segment to the left side of
the other.  In either case, the argument of the proof of that lemma
will still imply that stairs connect left to right, excepting the two
bottom stairs, and that the arc $\beta$ can have its endpoints pushed
to the state circle just above both bottom stairs to give a non-prime
arc, contradicting maximality of our choice of a system of non-prime
arcs.  Then the proof of Proposition \ref{prop:no-normal-bigons} goes
through verbatim to give the following.

\begin{prop}[No Normal Bigons]\label{prop:no-normal-bigons-hom}
  Let $D(K)$ be a link diagram with an adequate, homogeneous state $\sigma$, and let $S_\sigma$ be the state
  surface of $\sigma$.  Then thee decomposition of $S^3 \cut S_\sigma$ as above
  gives polyhedra without normal bigons.  In other words,
  every polyhedron is prime. \qed
\end{prop}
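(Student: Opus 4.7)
The plan is to mimic the proof of Proposition \ref{prop:no-normal-bigons} essentially verbatim, invoking the $\sigma$--homogeneous analogues of the technical lemmas that have just been assembled. First, I would dispose of the lower polyhedra: by Lemma \ref{lemma:lower3ball}, each lower polyhedron in the $\sigma$--homogeneous decomposition is identical to a checkerboard polyhedron of a prime alternating diagram (after the non-prime surgery), and such polyhedra contain no normal bigons. Thus any hypothetical normal bigon $B$ would have to live in the upper polyhedron.

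Next, suppose for contradiction that $B$ is a normal bigon in the upper polyhedron, so that $\partial B = \gamma_w \cup \gamma_s$, where $\gamma_w$ is an embedded arc in a single white face $W$ and $\gamma_s$ is an embedded arc in a single shaded face $F$. By Theorem \ref{thm:sigma-homo-poly}, the directed spine of $F$ is a tree, so after homotoping $\gamma_s$ onto the spine it becomes simple with respect to $F$ in the sense of Definition \ref{def:simple-face} (as extended to the $\sigma$--homogeneous setting). If $\gamma_s$ crossed no state circle of $s_\sigma$, then it would remain entirely on tentacles and non-prime switches attached to a single state circle $C_0$; combined with $\gamma_w \subset W$, this would produce a normal disk meeting a single ideal edge twice, violating condition (ii) of Definition \ref{def:normal}. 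Hence $\gamma_s$ must cross some state circle $C$.

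Since both endpoints of $\gamma_s$ lie on $\partial W$, and a state circle is a simple closed curve, $\gamma_s$ must cross $C$ an even number of times, hence at least twice. Excise the middle portion of $\gamma_s$ between its first and last intersection with $C$; what remains is a pair of disjoint directed sub-arcs $\sigma_1,\sigma_2$ from $C$ to $W$, lying in $F$ (or possibly in two shaded faces, if the middle portion of $\gamma_s$ leaves $F$ and returns, which the tree structure of the spine forbids anyway). Orienting both sub-arcs away from $C$ toward $W$, the Utility lemma (in the $\sigma$--homogeneous form stated in the proof of Theorem \ref{thm:sigma-homo-poly}) forces each $\sigma_i$ to depart from $C$ running downstream. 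The two sub-arcs terminate on the same white face $W$, hence in the same polyhedral region. But conclusion (4) of the $\sigma$--homogeneous Parallel Stairs Lemma (stated just before Proposition \ref{prop:no-normal-bigons-hom}) prohibits precisely this: two disjoint simple directed arcs leaving a common state circle downstream and terminating in a common polyhedral region cannot terminate on the same white face. This contradiction completes the proof.

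I expect the main obstacle to be purely bookkeeping: verifying that every lemma used in the original argument (Utility, Parallel Stairs, Staircase Extension, Shortcut, Escher Stairs) has already been upgraded to accommodate mixed right--down and left--down staircases under the $\sigma$--adequacy, $\sigma$--homogeneity hypotheses. Since the preceding subsection has just recorded exactly these generalizations and confirmed that ``right--down-ness'' is peripheral to the proofs, the remaining work is essentially clerical, and no new geometric input beyond what is already in Theorem \ref{thm:sigma-homo-poly} and its supporting lemmas is required.
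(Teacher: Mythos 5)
Your proposal is correct and follows essentially the same route as the paper: the paper likewise disposes of the lower polyhedra via their identification with prime alternating checkerboard polyhedra, and then runs the proof of Proposition \ref{prop:no-normal-bigons} verbatim in the upper polyhedron, using the $\sigma$--homogeneous versions of the Utility lemma and part (4) of the Parallel stairs lemma established just beforehand. The only cosmetic difference is that you spell out the excision/orientation step explicitly, which matches the original argument in substance.
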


Finally, given these pieces, we obtain Theorem \ref{thm:incompress} 
in this setting, without modification to the proof.  The theorem is
originally due to Ozawa \cite{ozawa}.

\begin{theorem}[Ozawa]\label{thm:ozawa}
  Let $D$ be a (connected) diagram of a link $K$, such that $D$ is
  $\sigma$--homogeneous for some state $\sigma$.  The surface
  $S_\sigma$ is essential
%   incompressible and boundary incompressible 
   in $S^3
  \setminus K$ if and only if $D$ is $\sigma$--adequate. \qed
\end{theorem}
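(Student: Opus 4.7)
The plan is to mimic the proof of Theorem \ref{thm:incompress} essentially verbatim, using the generalizations of the underlying tools (polyhedral decomposition, no normal bigons) to the $\sigma$--homogeneous setting that were just established.

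For the ``only if'' direction, I would prove the contrapositive: assume $D$ is not $\sigma$--adequate, so $\mathbb{G}_\sigma$ contains a $1$--edge loop. This means some segment of $H_\sigma$ has both endpoints on the same state circle $C$. The $\sigma$--resolution at the corresponding crossing twists so that the half-twisted band attached to the disk bounded by $C$ makes $S_\sigma$ locally non-orientable along $C$. Just as in Theorem \ref{thm:incompress}, one can build an explicit disk $E$ whose boundary traverses the twisted band across the crossing and then runs along the disk bounded by $C$; removing a neighborhood of $\bdy E \cap S_\sigma$ yields a boundary compression disk for $\widetilde{S_\sigma}$. Hence $S_\sigma$ fails to be essential.

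For the ``if'' direction, suppose $D$ is $\sigma$--adequate and $\sigma$--homogeneous. Then Theorem \ref{thm:sigma-homo-poly} provides a decomposition of $M_\sigma = S^3 \cut S_\sigma$ into $4$--valent checkerboard ideal polyhedra, and Proposition \ref{prop:no-normal-bigons-hom} ensures that none of these polyhedra admit normal bigons. Since $S^3 \cut \widetilde{S_\sigma}$ is the disjoint union of $M_\sigma$ and a (possibly trivial) twisted $I$--bundle over $S_\sigma$, any compressing or boundary compressing disk $E$ for $\widetilde{S_\sigma}$ must lie in $M_\sigma$ (a disk in the $I$--bundle would lift to a horizontal or vertical disk and so could not be a genuine compressing disk). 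Place $E$ in normal form with respect to the polyhedral decomposition.

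The compressibility case is handled by looking at arcs of $E \cap (\text{white faces})$: an outermost such arc cuts off a normal disk bounded by a single white-face arc and a single shaded-face arc, i.e.\ a normal bigon, contradicting Proposition \ref{prop:no-normal-bigons-hom}. For boundary compressibility, one first shows that the parabolic arc $\alpha = \bdy E \cap \bdy(S^3 \setminus K)$ lies in a single ideal vertex of a single polyhedron; an outermost-arc argument using the two clauses of normality condition (v) rules out $\alpha$ crossing a white face. Then an outermost arc of $E \cap (\text{white faces})$ is either a normal bigon (contradiction) or contains the ideal vertex of $\alpha$, in which case its complement in $E$ is a subdisk in which another outermost arc must produce a normal bigon. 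Either way we reach a contradiction, so $\widetilde{S_\sigma}$ is both incompressible and boundary incompressible, i.e.\ $S_\sigma$ is essential.

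The main obstacle is really just bookkeeping: confirming that the normal surface machinery of Definition \ref{def:normal} and the outermost-disk reductions do not secretly use anything specific to the all--$A$ state (such as right--down directionality of tentacles), but only the prime $4$--valent polyhedral structure with checkerboard coloring and absence of normal bigons. Since these features are precisely what Theorem \ref{thm:sigma-homo-poly} and Proposition \ref{prop:no-normal-bigons-hom} deliver in the $\sigma$--homogeneous setting, no additional work beyond reciting the proof of Theorem \ref{thm:incompress} should be required.
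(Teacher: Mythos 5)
Your proposal is correct and is exactly the paper's route: the paper deduces Theorem \ref{thm:ozawa} by noting that, once Theorem \ref{thm:sigma-homo-poly} and Proposition \ref{prop:no-normal-bigons-hom} are in place, the proof of Theorem \ref{thm:incompress} (including the explicit boundary compression disk arising from a $1$--edge loop and the normal-form/outermost-bigon arguments) applies verbatim with $S_A$, $H_A$ replaced by $S_\sigma$, $H_\sigma$.
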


%% 4
\chapter{$I$--bundles and essential product disks}\label{sec:ibundle}
Recall that we are trying to relate geometric and topological aspects
of the knot complement $S^3\setminus K$ to quantum invariants and
diagrammatic properties.  So far, we have identified an
essential state surface $S_A$, and we
have found a polyhedral decomposition of $M_A = S^3\cut S_A$.  On the
one hand, the surface $S_A$ is known to have relations to 
 the Jones and colored Jones polynomials \cite{dasbach-futer...,
  dasbach-lin:head-tail, fkp:PAMS}.  On the other hand, the Euler characteristic
of the \emph{guts} of $M_A$, whose definition is recalled immediately
below, is known to have relations to the volume \cite{ast}.  As
mentioned in the introduction, we will see in Chapter
\ref{sec:applications} that the Euler characteristic of the guts of
$M_A$ forms a bridge between geometric and quantum invariants.
In this chapter, we take a first step toward computing this
Euler characteristic, using the polyhedral decomposition from Chapter
\ref{sec:polyhedra}.

By the annulus version of the JSJ decomposition\index{JSJ decomposition!characteristic submanifold}, there is a canonical
way to decompose $M_A= S^3\cut S_A$ along a collection of essential
annuli that are disjoint from the parabolic locus.  (In our case,
recall from Definition
\ref{def:cut} that the parabolic locus of $M_A$ will be the remnant of the
boundary of a regular neighborhood of $K$ in $M_A$.)  The JSJ decomposition yields two kinds of pieces: the
\emph{characteristic submanifold}\index{characteristic submanifold},
consisting of $I$--bundles\index{$I$--bundle} and Seifert fibered
pieces, and the \emph{guts}\index{guts}, which admit a hyperbolic
metric with totally geodesic boundary.  We consider the components of
the characteristic submanifold of $M_A$ which affect Euler
characteristic.  In this chapter, we show that such components
decompose into well--behaved pieces.  In particular, we show that they
are spanned by essential product disks (Definition \ref{def:epd})
which are each embedded in a single polyhedron of the polyhedral
decomposition of $M_A$ from Chapter \ref{sec:polyhedra}.  This is the
content of Theorem \ref{thm:epd-span}, which is the main result of the
chapter.

%%%%%%%%%%%%%%%%%%%%%%%%%%%%%%%%%%%%%%%%%%%%%%%%%%%%%%%%%%%%%%%%%
\section{Maximal $I$--bundles}\label{sec:Ibdls}
Let $B$ be a component of the characteristic submanifold of $M_A$; so
$B$ is either a Seifert fibered component or an $I$--bundle.  Our
first observation implies that only $I$--bundles can have non-trivial
Euler characteristic.

\begin{lemma}  \label{lemma:Ibdl}
Let $B$ be a component of the the characteristic submanifold of
$M_A$. Then $\chi(B) \leq 0$, and $B$ can come in one of two flavors:
\begin{enumerate}
\item If $\chi(B)<0$, then $B$ is an $I$--bundle. We call such
  components
\emph{non-trivial}\index{$I$--bundle!non-trivial component}\index{non-trivial component of $I$--bundle}.

\item If $\chi(B)=0$, then $B$ is a solid torus.
  We call these solid tori \emph{trivial}.
\end{enumerate}
\end{lemma}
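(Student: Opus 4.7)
The plan is to exploit Lemma \ref{lemma:handlebody}, which tells us that $M_A$ is a handlebody. Being a handlebody, $M_A$ is irreducible, atoroidal, and contains no closed essential surfaces of positive genus. The strategy is to use these constraints to pin down each of the two flavors of characteristic submanifold piece listed in the setup.

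For the Seifert fibered case, suppose $B$ is a Seifert fibered component with frontier a union of essential annuli in $M_A$ disjoint from the parabolic locus. Then $\bdy B$ is a union of tori and annuli. Any boundary torus of $B$ lies in $\bdy M_A$ or is an essential torus in $M_A$; since $M_A$ is a handlebody, the latter is impossible. A Seifert fibered space sitting inside a handlebody whose boundary meets $\bdy M_A$ only in annuli must in fact be a solid torus (its base orbifold is a disk with at most one exceptional fiber). In particular $\chi(B) = 0$, which establishes the ``Seifert fibered $\Rightarrow$ solid torus'' half of the statement.

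For the $I$--bundle case, write $B = F\, \tilde\times\, I$ for some compact surface $F$ (possibly non-orientable, possibly with boundary), so that $\chi(B) = \chi(F)$. I would rule out each surface with $\chi(F) > 0$ in turn. A closed base $F$ (sphere or projective plane) would make $B$ a closed submanifold of $M_A$, impossible since $M_A$ has boundary and is irreducible. A disk base would make $B$ a $3$--ball, which cannot be a characteristic submanifold component because its frontier annuli would be inessential. An annulus or Möbius band base makes $B$ a solid torus; by convention such a piece is recorded as a Seifert fibered (trivial) component and has $\chi(B) = 0$. All other surfaces have $\chi(F) < 0$, in which case $B$ is a \emph{non-trivial} $I$--bundle and $\chi(B) < 0$.

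The main obstacle is the mild overlap between the two flavors: an $I$--bundle over an annulus or Möbius band is also Seifert fibered, and one needs a convention to avoid double--counting. I would handle this by declaring that whenever a component admits both structures (i.e.\ is a solid torus), we list it on the Seifert fibered side; this is the only way to obtain the clean dichotomy $\chi(B)<0 \Rightarrow I$--bundle and $\chi(B)=0 \Rightarrow$ solid torus. With this convention in place, the two paragraphs above combine to give $\chi(B) \leq 0$ together with the desired classification.
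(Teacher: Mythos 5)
Your overall route is the same as the paper's: everything is driven by Lemma \ref{lemma:handlebody}, that $M_A$ is a handlebody, and your treatment of the $I$--bundle case (disk base forces compressible frontier annuli; annulus or M\"obius band base gives a solid torus; all other bases have $\chi<0$) is essentially the paper's, done slightly more explicitly. The problem is the Seifert fibered case, where your argument has a genuine gap. The dichotomy ``any boundary torus of $B$ lies in $\bdy M_A$ or is an essential torus in $M_A$'' is false as stated: a boundary torus of $B$ is typically neither --- it is a union of frontier annuli and annuli in $\bdy M_A$, so it is not a properly embedded torus at all, and even a torus in the interior need not be essential merely because it misses $\bdy M_A$ (it could be compressible). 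Having discarded this dichotomy, the decisive claim --- that a Seifert fibered piece of the characteristic submanifold of a handlebody, meeting $\bdy M_A$ in annuli, must be a solid torus with base orbifold a disk with at most one exceptional fiber --- is simply asserted in parentheses; but that claim \emph{is} the content of the Seifert case. The paper closes exactly this step group-theoretically: $\pi_1(M_A)$ is free, hence contains no $\ZZ\times\ZZ$ subgroup; since the frontier annuli of $B$ are essential, $B$ carries no essential tori (in particular no vertical torus over an essential loop in the base orbifold); the base orbifold has boundary, so having no essential loops forces it to be a disk with at most one cone point, whence $B$ is a solid torus and $\chi(B)=0$. Some version of this argument (or an equivalent ``no $\ZZ\times\ZZ$ in a free group'' argument) is what your proof is missing.

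Two smaller points. An $I$--bundle over a closed surface ($S^2$ or $\RR P^2$) is not a \emph{closed} submanifold --- it has nonempty horizontal boundary --- so your stated reason for excluding that case does not work; it is excluded instead because the frontier of a characteristic component consists of essential annuli (or because $\chi(B)\leq 0$, which the paper gets by cutting the handlebody along essential annuli). Finally, the closing paragraph about a ``convention'' for solid tori is unnecessary: the lemma only asserts what $B$ must be when $\chi(B)<0$ or $\chi(B)=0$, so the overlap between $I$--bundles over annuli/M\"obius bands and Seifert fibered solid tori causes no ambiguity and needs no resolution.
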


The reason for this terminology is that removing a solid torus $B$
does not affect the Euler characteristic of what remains. Thus, for
computing the Euler characteristic of the guts, one only needs to
worry non-trivial $I$--bundles.

\begin{proof}
Recall that by Lemma  \ref{lemma:handlebody}, $M_A$ is topologically a handlebody, 
with non-positive Euler characteristic. To find the characteristic submanifold, we cut $M_A$ along essential annuli. Thus every component of the complement of these annuli will again have non-positive Euler characteristic.

The component $B$ is either an $I$--bundle or a Seifert fibered
piece. If $B$ is an $I$--bundle, then $\chi(B)$ will vanish if and
only if $B$ is a solid torus (viewed as an $I$--bundle over an annulus
or M\"obius band).

Next, suppose $B$ is a Seifert fibered component.  Since $M_A$
is a handlebody, its fundamental group is free, and
 cannot contain a $\ZZ \times \ZZ$ subgroup.  On
the other hand, $B$ is a Seifert fibered $3$--manifold with boundary.
Its base orbifold $O$ must be an orbifold with boundary.  But $B$
cannot contain any essential tori, hence the orbifold $O$ cannot
contain any essential loops.  This is possible only if $O$ a disk with
at most one cone point, and $B$ is a solid torus, with zero Euler
characteristic.
\end{proof}

The main result of this chapter is that all the non-trivial components
of $I$--bundle can be found by studying \emph{essential product
  disks}.

\begin{define}  \label{def:epd}
An \emph{essential product disk}\index{essential product disk (EPD)}\index{EPD} (EPD for short) is a properly
embedded essential disk in $M_A$, whose boundary meets the parabolic
locus of $M_A$ twice. See Figure \ref{fig:EPD}.
\end{define}

\begin{figure}
\psfrag{S}{$S_A$}
\psfrag{P}{$P$}
 \includegraphics{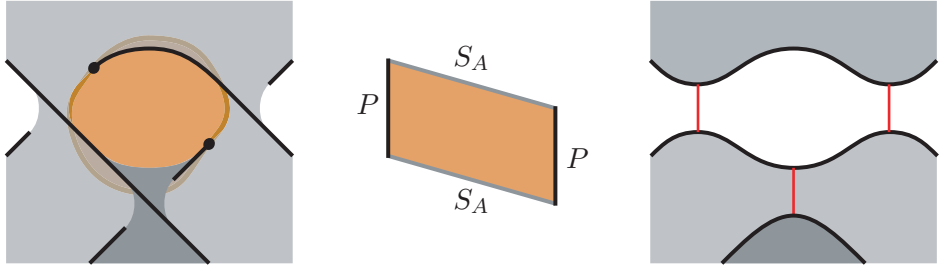}
\caption{Left: an EPD in $M_A$. 
Center: the product structure of the EPD. Right: the corresponding subgraph of $H_A$  contains  a 2-edge loop. In this example, the two segments come from different twist regions in the link diagram.}
\label{fig:EPD}
\end{figure}

Essential product disks play an important role in computing guts in
Lackenby's volume estimates for alternating
links \cite{lackenby:volume-alt}.  They will play an important role in
our setting as well.

Recall that $M_A$ is a handlebody, so certainly it admits a number of
compression disks. However, a compression disk for $M_A$ that is
disjoint from the parabolic locus would be a compression disk for
$S_A$; by Theorem \ref{thm:incompress}, such disks cannot exist.
Similarly, a compression disk for $M_A$ that meets the parabolic locus
only once would be a boundary compression disk for $S_A$; by Theorem
\ref{thm:incompress}, such disks also cannot exist.  Thus essential
product disks can be seen as the simplest compression disks for $M_A$.

Notice that a regular neighborhood of an essential product disk is an
$I$--bundle, and is thus contained in the characteristic submanifold  of
$M_A$.

\begin{define}\label{def:span}
Let $B$ be an $I$--bundle in the characteristic submanifold of $M_A$.
We say that a finite collection of disjoint essential product disks
$\{D_1, \dots, D_n \}$ \emph{spans}\index{spans} $B$ if $B \setminus
(D_1 \cup \dots \cup D_n)$ is a finite collection of prisms (which are
$I$--bundles over a polygon) and solid tori (which are $I$--bundles
over an annulus or M\"obius band).
\end{define}

Our main result in this chapter is the following theorem, which
reduces the problem of understanding the $I$--bundle in the
characteristic submanifold of $M_A$ to the problem of understanding
and counting EPDs in individual polyhedra. For instance, the EPD of Figure \ref{fig:EPD} is embedded in a lower polyhedron. In the following chapters
we will study such EPDs.

\begin{theorem}\label{thm:epd-span}\index{$I$--bundle!is spanned by EPDs}
Let $B$ be a non-trivial component of the characteristic submanifold
of $M_A$. Then $B$ is spanned by a collection of essential product
disks $D_1, \dots, D_n$, with the property that each $D_i$ is embedded
in a single polyhedron in the polyhedral decomposition of $M_A$.
\end{theorem}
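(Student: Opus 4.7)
My plan is to start with any finite spanning collection of EPDs for $B$, isotope it into normal form with respect to the polyhedral decomposition of Chapter \ref{sec:polyhedra}, and then iteratively reduce intersections with white faces by an outermost-disk argument. The main tools are Proposition \ref{prop:no-normal-bigons} (no normal bigons), Lemma \ref{lemma:white-incompress} (white faces are boundary-incompressible in $M_A$), and Theorem \ref{thm:incompress} (essentiality of $\widetilde{S_A}$).

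First I would verify that $B$ admits some finite spanning collection of EPDs. Since $B$ is a non-trivial $I$--bundle by Lemma \ref{lemma:Ibdl}, its base surface $\Sigma_B$ has negative Euler characteristic. The boundary components of $\Sigma_B$ correspond to vertical annuli of $\partial B$: some are parabolic (lying on the parabolic locus of $M_A$) and some are frontier annuli of the JSJ decomposition. By induction on the complexity of $\Sigma_B$, I would find a system of disjoint essential arcs with endpoints on parabolic boundary circles, cutting $\Sigma_B$ into polygons and annular regions; the preimages of these arcs under the $I$--bundle projection $B \to \Sigma_B$ form a spanning collection $\{D_1, \dots, D_n\}$ of EPDs for $B$.

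Next I would put this collection into normal form simultaneously, keeping the $D_i$ pairwise disjoint, and let $c$ denote the total number of arcs of intersection of $\bigcup_i D_i$ with all white faces. If $c > 0$, some EPD $D$ meets a white face $W$; an outermost arc $\gamma$ of $D \cap W$ cuts off a sub-disk $D' \subset D$ contained in a single polyhedron $P$. The boundary $\partial D$ is a cycle of four arcs $s_1, p_1, s_2, p_2$, alternating between shaded arcs on $\widetilde{S_A}$ and parabolic arcs. Writing $\partial D' = \gamma \cup \beta$ with $\beta \subset \partial D$ a connected sub-arc, $\beta$ contains $0$, $1$, or $2$ of the parabolic arcs $p_i$. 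In the first two cases, if $\gamma$ is essential in $W$ then $D'$ is a boundary-compression disk for $W$, contradicting Lemma \ref{lemma:white-incompress}; if $\gamma$ is inessential, an isotopy across $W$ reduces $c$. In the remaining case ($\beta$ contains both parabolic arcs), $D'$ is itself an EPD contained in the single polyhedron $P$: surgering $D$ along a parallel push-off of $D'$ replaces $D$ in the spanning collection by $D'$ together with a complementary EPD, both with strictly fewer white-face intersections. Iterating this reduction terminates, producing the required spanning collection.

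The main obstacle I anticipate is verifying that each surgery step preserves the spanning property of Definition \ref{def:span}, i.e., that the new collection still cuts $B$ into prisms and solid tori. This should follow from the rigidity of the $I$--bundle structure on $B$: in terms of the base surface $\Sigma_B$, the surgery corresponds to replacing an arc in the cut system by a shorter arc together with a complementary arc, which yields an equivalent decomposition of $\Sigma_B$ into polygons and annuli. Essentiality of the new disks should follow from Proposition \ref{prop:no-normal-bigons} combined with the essentiality of the original EPDs.
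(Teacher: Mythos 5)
The central gap is at the very start: you assume that the base surface $\Sigma_B$ has ``parabolic boundary circles,'' i.e.\ that the non-trivial component $B$ meets the parabolic locus at all. That assumption is precisely the hardest part of the theorem, and it is Step 1 of the paper's proof (Proposition \ref{prop:Ibdl-parabolic}). A priori, every vertical annulus of $\partial B$ could be a frontier annulus of the JSJ decomposition, disjoint from the parabolic locus; in that case your arc system on $\Sigma_B$ with endpoints on parabolic boundary does not exist, and indeed $B$ could contain no essential product disks whatsoever, so the induction never gets off the ground. Ruling this out is where most of the machinery is spent: the paper extracts a pair-of-pants product bundle $Q \times I \subset B$ (Lemma \ref{lemma:squares}), normalizes its three boundary annuli into unions of disjoint normal squares, and then uses the clockwise map (Lemma \ref{lemma:clockwise}), the intersection Lemma \ref{lemma:squares-int}, and the tentacle-chasing Proposition \ref{prop:square-compress} to force a parabolic compression, deriving a contradiction (three disjoint lateral squares of one prism that would have to be nested) if $B$ avoided the parabolic locus. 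Nothing in your outline substitutes for this. Your construction of the initial spanning family from an arc system on $\Sigma_B$ is fine and is essentially the paper's easy Step 2 (Proposition \ref{prop:some-epds-span}), but only after the parabolic boundary is known to be nonempty.

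The second gap is in the reduction of white-face intersections. In normal form, Proposition \ref{prop:no-normal-bigons} forces every arc of $D \cap W$ to run from one shaded side of $\partial D$ to the opposite side, so an outermost arc cuts off a normal trapezoid containing exactly one parabolic arc of $\partial D$; your zero-parabolic-arc case is already excluded by normality, and the two-parabolic-arc case does not occur for an outermost arc. In the remaining (real) case there is no contradiction with Lemma \ref{lemma:white-incompress}: a normal trapezoid is a legitimate normal disk, not a forbidden boundary-compression of $W$, and its white arc joins two distinct edges of $W$ without being removable by an isotopy (pushing it off would cross the parabolic locus). What one actually needs is that this trapezoid, together with the normal square or trapezoid glued to it across $W$, parabolically compresses at $W$, i.e.\ its white arc cuts off a single ideal vertex; that is Lemma \ref{lemma:trap-compress}, whose proof again transports both pieces into the same lower polyhedron via the clockwise map and applies Lemma \ref{lemma:squares-int} --- ``no normal bigons'' plus boundary-incompressibility of white faces is not sufficient. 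Note also that the operation which removes the intersection is a parabolic compression (splitting one EPD into two, which clearly preserves spanning, as the paper observes after Definition \ref{def:para-compress}), not the isotopy-or-surgery you describe, so the worry you raise about preserving the spanning property is resolved differently and more simply than your proposed base-surface argument.
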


The proof of the theorem will occupy the remainder of this
chapter. Before we give an outline of the proof, we need the following
definition. %, which appears in \cite{lackenby:volume-alt}.

\begin{define}
  A surface $S$ in $M_A$ is \emph{parabolically
    compressible}\index{parabolically compressible} if there
  is an embedded disk $E$ in $M_A$ such that:
  \begin{enumerate}
    \item[(i)] $E \cap S$ is a single arc in $\bdy E$;
    \item[(ii)] the rest of $\bdy E$ is an arc in $\bdy M_A$ that has
      endpoints disjoint from the parabolic locus $P$ of $M_A$ and
      that intersects $P$ in at most one transverse arc;
    \item[(iii)] 
 $E$ is not parallel into $S$ under an isotopy that keeps $E \cap S$ fixed and keeps $E \cap P$ on the parabolic locus.   
%	    $E \cap S$ is not parallel in $S$ to an arc in $\bdy
%	      S$ that contains at most one component of $S\cap P$.
  \end{enumerate}
  We say $E$ is a
  \emph{parabolic compression disk}\index{parabolic compression disk}.
  (See \cite[Figure 5]{lackenby:volume-alt}.) 
  \label{def:para-compress}
\end{define}

Definition \ref{def:para-compress} differs slightly from the corresponding definition in Lackenby's work \cite[Page 209]{lackenby:volume-alt}. Conditions (i) and (ii) are exactly the same, while our condition (iii) is slightly less restrictive.

If $D$ is an essential product disk and $E$ is a parabolic compression
for $D$, then compressing $D$ to the parabolic locus along $E$ will produce
a pair of new essential product disks, $D'$ and $D''$. See Figure
\ref{fig:para-compression}. Observe that if $D, D_1, \ldots, D_n$ span
an $I$--bundle $B$, then $D', D'', D_1, \ldots, D_n$ will span $B$ as
well. Thus we may perform parabolic compressions at will, without
losing the property that the disks in question span $B$.

\begin{figure}
  \includegraphics{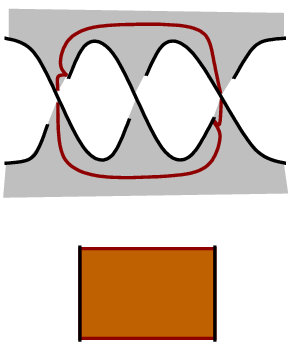}
  \hspace{.2in}
  \includegraphics{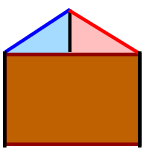}
  \hspace{.2in}
  \includegraphics{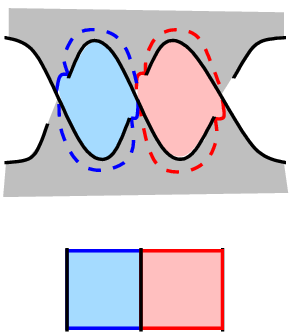}
  \caption{The EPD shown in the left panel parabolically compresses
  to the EPDs shown in the right panel.}
  \label{fig:para-compression}
\end{figure}

\begin{proof}[Top-level proof of Theorem  \ref{thm:epd-span}]

The argument has three main steps:

\begin{enumerate}
\item[{\underline {Step 1}:}]
Given a non-trivial component $B$ of the characteristic submanifold
of $M_A$ we show $B$ meets the parabolic locus
(Proposition \ref{prop:Ibdl-parabolic}).

\item[{\underline {Step 2}:}]
We show that if a component $B$ as above meets the parabolic locus, it is
spanned by essential product disks
(Proposition \ref{prop:some-epds-span}).

\item[{\underline {Step 3}:}]
We show that every essential product disk in $M_A$ parabolically
compresses to a collection of essential product disks, each of which
is embedded in a single polyhedron (Proposition \ref{prop:epds-poly}).
\end{enumerate}

Step 2 will be completed by straightforward topological argument.  On
the other hand, Steps 1 and 3 require a number of technical tools that
we will develop in the next sections.  Thus we postpone the proofs
of all three propositions until the end of the chapter.  Modulo these
propositions, the proof of Theorem \ref{thm:epd-span} is
complete.
\end{proof}

Here is the outline of the rest of the chapter.  Section
\ref{sec:normal-squares} uses normal surface theory to examine pieces
of the boundaries of $I$--bundles.  Section \ref{sec:para-compress}
uses tentacle chasing arguments to force parabolic compressions.  In
Section \ref{subsec:pfthmepd-span}, we put it all together to
finish the proof of Steps 1--3. In Section \ref{sec:gensigmahomo}, we discuss the (straightforward) 
extension to $\sigma$--adequate, $\sigma$--homogeneous diagrams.

%%%%%%%%%%%%%%%%%%%%%%%%%%%%%%%%%%%%%%%%%%%%%%%%%%%%%%%%%%%%%%%%%
\section{Normal squares and gluings}\label{sec:normal-squares}

In this section, we will consider properties of normal squares\index{normal!square}
(i.e. normal disks with four sides; see Definitions \ref{def:normal} and \ref{def:normal-disk}).
In Lemma \ref{lemma:squares}, we will see that normal squares arise
naturally as intersections of annuli, boundary components of our
characteristic submanifold, and our ideal polyhedra.  With this in
mind, we need to examine how normal squares glue across white faces of
the ideal polyhedra.  The results of this section, which are of a
somewhat technical nature, will be used to examine this gluing.

\begin{lemma}
Let $B_0$ be a component of the maximal $I$--bundle for $M_A$ with
negative Euler characteristic.  Then $B_0$ contains a product bundle
$Y=Q \times I$, where $Q$ is a pair of pants.  Moreover, when put into
normal form in a prime decomposition of $M_A$, the three annuli of
$\bdy Y$ are composed of disjointly embedded normal squares.
\label{lemma:squares}
\end{lemma}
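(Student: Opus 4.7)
The plan is to use the $I$--bundle structure of $B_0$ to extract a product sub-bundle $Y$ over a pair of pants, and then to isotope the three vertical boundary annuli of $Y$ into a fiber-preserving normal form in which each intersection with an ideal polyhedron is a normal square.

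First, I would invoke Lemma \ref{lemma:Ibdl} to conclude that $B_0$ is an $I$--bundle over a compact surface $F_0$ with $\chi(F_0) = \chi(B_0) \leq -1$. Consequently $F_0$ contains an embedded pair of pants $Q$ (one may take $Q = F_0$ when $\chi(F_0) = -1$). Since $Q$ is planar and orientable, the restriction of the $I$--bundle to $Q$ is the trivial product $Y = Q \times I$. Its boundary decomposes into two horizontal pants and three vertical annuli $A_1, A_2, A_3$, one over each boundary circle of $Q$. Each $A_i$ is a properly embedded annulus in $M_A$ with $\bdy A_i \subset \widetilde{S_A}$, disjoint from the parabolic locus, and the three annuli are pairwise disjoint in $M_A$.

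Next, I would isotope $B_0$ so that its $I$--bundle structure is compatible with the polyhedral decomposition: every $I$--fiber meets each polyhedron in at most one transverse sub-arc, with endpoints on shaded faces. Such a fiber-preserving normal form can be set up by standard straightening and isotopy arguments in the spirit of \cite{lackenby:volume-alt, fg:arborescent}, using Proposition \ref{prop:no-normal-bigons} (No normal bigons) to rule out bigon folds of fibers across white faces. Because each $A_i$ is by construction a union of $I$--fibers over $c_i \subset \bdy Q$, this isotopy automatically puts each $A_i$ into normal form in the sense of Definition \ref{def:normal}.

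In this fiber-preserving normal form, each component of $A_i \cap P$ for a polyhedron $P$ is a rectangle swept out by a sub-arc of $c_i$'s worth of fibers. Its boundary has two ``vertical'' sides --- arcs of $I$--fibers lying in white faces of $P$, where the fibers cross into adjacent polyhedra --- and two ``horizontal'' sides lying in shaded faces of $P$, where the fibers terminate on $\widetilde{S_A}$. By the checkerboard coloring this is precisely a normal square. Disjointness of the squares across the three annuli is immediate from disjointness of the $A_i$. The hard part will be establishing the fiber-preserving normal form itself: one must show that the $I$--fibers of $B_0$ can simultaneously be made transverse to every white face and meet every polyhedron in at most one sub-arc, without producing extraneous intersections or trapping a fiber in a bigon region. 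Once this normal form is in place, the square decomposition of $A_1 \cup A_2 \cup A_3$ follows immediately.
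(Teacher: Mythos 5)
The first half of your argument (extracting $Y = Q \times I$) is essentially sound, but the second half rests on a claim you have not proved and cannot simply cite: the existence of a ``fiber-preserving normal form'' in which every $I$--fiber of $B_0$ meets each polyhedron in at most one sub-arc. The normalization results in the literature (and the ones invoked in this paper, following Lackenby and Futer--Gu\'eritaud) apply to properly embedded essential \emph{surfaces}, not to $I$--bundles together with their fiber structures; straightening an entire bundle compatibly with the polyhedral decomposition is a substantially stronger statement, and you yourself flag it as ``the hard part'' without supplying an argument. This is exactly where the content of the lemma lives, so deferring it leaves the proof incomplete. The intended route is simpler and avoids the bundle structure entirely: put the union of the three vertical annuli of $\bdy Y$ into normal form as an embedded surface (standard), note that the checkerboard coloring forces every normal disk to have an even number of sides, and then kill the possibility of more than four sides as follows. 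If an arc of intersection of an annulus $A$ with a white face had both endpoints on the same boundary circle of $A$, it would cut off a disk in $A$, and an outermost such arc would produce a normal bigon in a single polyhedron, contradicting Proposition \ref{prop:no-normal-bigons}. Hence every white-face arc runs from one boundary circle of $A$ to the other, and all normal disks are squares. Your appeal to Proposition \ref{prop:no-normal-bigons} ``to rule out bigon folds of fibers'' gestures at this, but the outermost-arc argument on the annuli, not a statement about fibers, is what is actually needed.

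Two smaller points in the first half. Your parenthetical ``one may take $Q = F_0$ when $\chi(F_0) = -1$'' is false in general: a once-punctured torus or a once-punctured M\"obius band has $\chi = -1$ and is not a pair of pants; what is true (and what the paper records, including the non-orientable case) is that any compact surface with $\chi < 0$ contains an embedded pair of pants. Also, triviality of the $I$--bundle over $Q$ does not follow from $Q$ being ``planar and orientable'' — twisted $I$--bundles over orientable surfaces exist — but from the fact that the total space, being a submanifold of $S^3$, is orientable, and an $I$--bundle over an orientable base with orientable total space is a product. With these repairs and with the surface-normalization argument replacing the fiber-preserving one, your outline matches the paper's proof.
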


The proof of Lemma \ref{lemma:squares} should be compared to that of
\cite[Lemma 7.1]{agol:guts}.

\begin{proof}
Since $B_0$ is a $3$--dimensional submanifold of $S^3$, it must be
orientable.  Thus $B_0$ is either the product $I$--bundle over an
orientable surface $F$, or the twisted $I$--bundle over a
non-orientable surface $F$.  In either case, since $\chi(B_0) =
\chi(F) < 0$, $F$ contains a pair of pants $Q$. (In a non-orientable
surface, cutting a once--punctured M\"obius band along an
orientation--reversing closed curve produces a pair of pants.) The
$I$--bundle over the pair of pants $Q$ must be trivial, so $B_0$
contains a product bundle $Y = Q \times I$.

Consider the three annuli of $\bdy Y$.  We view the union of these
three annuli as a single embedded surface.  Move this surface into
normal form in the polyhedral decomposition of $M_A$, keeping the
surface embedded.  The annuli of $\bdy Y$ stay disjoint.  The
intersection of the annuli with the faces of the polyhedra cuts the
annuli into polygons, each of which must have an even number of edges
due to the checkerboard coloring of the polyhedra.

Consider an arc of intersection between the white faces and an annulus
$A \subset \bdy Y$. If this arc $\alpha$ starts and ends on the same
boundary circle of $A$, then $A$ cuts off a bigon disk. An outermost
such arc would cut off a normal bigon in a single polyhedron -- but by
Proposition \ref{prop:no-normal-bigons}, there are no normal
bigons. Thus the arc $\alpha$ must run from one boundary circle of $A$
to the other boundary circle. Because every arc of intersection
between $\bdy Y$ and the white faces is of this form, every normal
disk must be a square.
\end{proof}

While studying the checkerboard surfaces of alternating links,
Lackenby has obtained useful results by super-imposing normal squares in the
upper polyhedron onto normal squares in the lower polyhedron
\cite{lackenby:volume-alt}.  For alternating knots and links, the
$1$--skeleton of each polyhedron is the $4$--valent graph of the link
projection; thus there is a natural ``identity map'' from one
polyhedron to the other. Lackenby's method will also be useful for our
results, although we need to take some care defining maps between the
upper and lower polyhedra.

For each white face $W$, the disk $W$ appears as a face of the upper
polyhedron and exactly one lower polyhedron.  These two faces are
glued via the gluing map\index{gluing map}, which is just the reverse
of the cutting moves we did in Chapter \ref{sec:decomp} to form the
polyhedra.

\begin{define} \label{def:clockwise}
Let $W$ be a white face of the upper polyhedron $P$, and suppose that
$W$ has $n$ sides. For the purpose of defining continuous functions,
picture $W$ as a regular $n$--gon in $\RR^2$.  Let $W'$ be the face of
a lower polyhedron that is glued to $W$ in the polyhedral
decomposition.  Then we define a \emph{clockwise map}\index{clockwise map}
$\clock\co W \to W'$ to be the composition of the gluing map with a
$2\pi/n$ clockwise rotation. In other words, both the gluing map and
the clockwise map send $W$ to $W'$, but the two maps differ by one
side of the polygon.
\end{define}

Combinatorially, in the upper polyhedron, white faces are sketched
with edges on tentacles and non-prime switches, and with vertices
adjacent to a state circle at the top--right (bottom--left) at a
crossing, or segment of $H_A$, as in the right of Figure
\ref{fig:nonprime-top} on page \pageref{fig:nonprime-top}.  However in
the lower polyhedron, white faces are drawn with vertices in the
center of segments of $H_A$, as in Figure \ref{fig:nonprime-lower} on
page \pageref{fig:nonprime-lower}.  The gluing map gives the white
faces on the upper polyhedron a slight rotation counterclockwise,
moving a vertex adjacent to a segment of $H_A$ to lie at the center of
that same segment, and then maps the region on the upper polyhedron to
the corresponding region on the lower polyhedron by the identity.  See
Figure \ref{fig:clockwise}.  On the other hand, instead of rotating
counterclockwise in the upper polyhedron to put vertices at the
centers of segments of $H_A$, $\clock$ rotates clockwise to the
nearest adjacent edge in the clockwise direction.

\begin{figure}
  \input{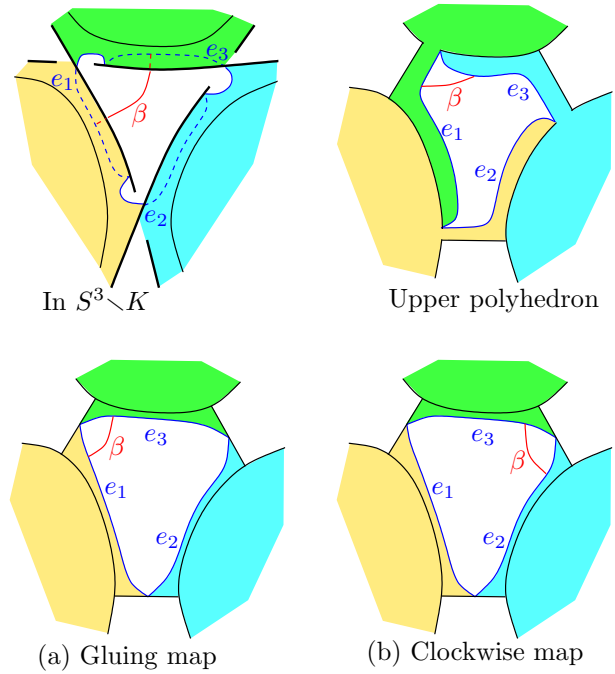}
  \caption{An example showing the image of an arc $\beta$ under (a) the gluing
    map, and (b) the clockwise map\index{clockwise map!example}.}
  \label{fig:clockwise}
\end{figure}

It is instructive to compare our setting with Menasco's polyhedral
decomposition of alternating links \cite{menasco:polyhedra}. In an
alternating diagram $D(K)$, the state surface $S_A$ is a (shaded)
checkerboard surface for $K$, and the union of all the white faces of
the polyhedra is the other (white) checkerboard surface $S_B$. If the
$1$--skeleta of both the top polyhedron and the bottom polyhedron are
identified with the $4$--valent graph of $D(K)$, then the gluing map
rotates all white faces counterclockwise and all shaded faces
clockwise. In other words, on all the white faces, the identity map
differs from the gluing map by a clockwise rotation. Furthermore, the
identity map is of course defined on the entire polyhedron, not just on
the white faces.

In our case, the clockwise map $\clock$ is an analogue of the identity
map, and also differs from the gluing map by a $2\pi/n$ clockwise
rotation. In keeping with the analogy, the domain of definition of
$\clock$ can be extended beyond the white faces (although not all the
way to the entire top polyhedron).

\begin{lemma}
\label{lemma:clockwise}
Let $U$ be a polyhedral region of the projection plane, that is, a
region of the complement of $s_A \cup (\cup_i \alpha_i)$. Let $W_1,
\ldots, W_n$ be the white faces in $U$, and let $P'$ be the lower
polyhedron associated to $U$. Then the clockwise map $\clock: W_1 \cup
\ldots \cup W_n \to P'$ has the following properties:
\begin{enumerate}
\item\label{cws:item1} If $x$ and $y$ are points on the boundary of
  white faces in $U$ that belong to the same shaded face of the upper
  polyhedron $P$, then $\clock(x)$ and $\clock(y)$ belong to the same
  shaded face of $P'$.

\item\label{cws:item2} Let $S \subset P$ be a normal square in the upper
  polyhedron, such that the white faces $V$, $W$ intersected by $S$
  belong to $U$. Let $\beta_v = S \cap V$ and $\beta_w = S \cap W$.
  Then the arcs $\clock(\beta_v)$ and $\clock(\beta_w)$ can be joined
  along shaded faces to give a normal square $S' \subset P'$, defined
  uniquely up to normal isotopy. We write $S' = \clock(S)$.

\item\label{cws:item3} If $S_1$ and $S_2$ are disjoint normal squares
  in $P$, all of whose white faces belong to $U$, then $S'_1$ and
  $S'_2$ are disjoint normal squares in $P'$.
\end{enumerate}
\end{lemma}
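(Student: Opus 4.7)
The three claims describe how combinatorial structures on the upper polyhedron $P$ carry over to a lower polyhedron $P'$ under $\clock$. My plan is to prove part (1) by a local verification at each combinatorial building block of a shaded face, then deduce parts (2) and (3) from part (1) together with the observation that $\clock$ acts as a rotation (hence homeomorphism) on each individual white face.

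For part (1), I would connect $x$ and $y$ by an embedded arc $\delta$ lying in the shaded face $F$ of $P$, and homotope $\delta$ to be simple with respect to $F$ in the sense of Definition \ref{def:simple-face}, so that $\delta$ meets each tentacle, innermost disk, and non-prime switch of $F$ at most once. It is enough to check the conclusion locally at each such piece and concatenate. For a single tentacle adjacent to a segment $s$ of $H_A$ with tail on a state circle $C$, the tentacle contributes a single ideal edge $e$ of $P$, whose two endpoints are ideal vertices (link strands) bordering the tentacle. Each of these endpoints is adjacent to a segment of $H_A$, and the clockwise rotation on the white face across $e$ sends them to the ideal vertices of $P'$ corresponding to those two segments. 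A direct check shows that both of these vertices belong to the shaded face of $P'$ that sits on the same side of $C$ as the tentacle itself. The analogous local check for an innermost disk (whose boundary meets only segments attached to one state circle) and for a non-prime switch (whose boundary meets the two segments flanking a non-prime arc) is essentially identical. Concatenating, every consecutive pair of endpoints along $\clock(\delta)$ lies in the same shaded face of $P'$, so $\clock(x)$ and $\clock(y)$ do as well.

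For part (2), the arcs $\clock(\beta_v)\subset V'$ and $\clock(\beta_w)\subset W'$ are well defined since both $V$ and $W$ lie in $U$. Their four endpoints pair up into the two shaded faces of $P$ that meet $S$; by part (1), each pair maps under $\clock$ into a single shaded face of $P'$. Connecting the matching pairs by embedded arcs in those two shaded faces produces a simple closed curve $\gamma$ on $\bdy P'$ that meets exactly four edges, and hence bounds a normal square $S'$ in $P'$. To verify normality (Definition \ref{def:normal}) one only needs to check condition (v), which follows because the endpoints $\clock(\beta_v)$ and $\clock(\beta_w)$ land in the interiors of edges of $P'$ rather than at ideal vertices. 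Uniqueness up to normal isotopy is immediate because shaded faces are disks (Theorem \ref{thm:simply-connected}), so the two connecting arcs are determined up to proper isotopy.

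For part (3), since $\clock$ restricted to any single white face is a rotation, the arcs $\clock(S_1\cap V)$ and $\clock(S_2\cap V)$ remain disjoint for every white face $V\subset U$. It remains to choose the shaded-face connecting arcs for $S_1'$ and $S_2'$ to be mutually disjoint. Because each shaded face of $P'$ is a disk and the eight relevant endpoints on that disk are already disjoint and correctly paired (by part (1) applied to the originally disjoint arcs on $P$), we may realize the four pairings by disjoint embedded arcs inside each shaded face. Hence $S_1'$ and $S_2'$ are disjoint. The main obstacle in this plan is the case analysis in part (1), especially the verification at non-prime switches, where the combinatorics of the clockwise shift interacts nontrivially with the surgery along non-prime arcs; the rest of the argument is largely topological bookkeeping once the local matching is established.
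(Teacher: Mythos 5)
The central gap is in your proof of part~\eqref{cws:item1}. Your plan is to join $x$ and $y$ by a simple arc $\delta$ in the shaded face $F$, verify the conclusion locally on each tentacle, innermost disk, and non-prime switch that $\delta$ meets, and concatenate. But $\clock$ is defined only on the white faces $W_1\cup\dots\cup W_n$ of the single region $U$, and $\delta$ will in general leave $U$: a shaded face of the upper polyhedron runs through many polyhedral regions, so an arc between two points of $\bdy F$ lying in $U$ may exit $U$ across a state circle, run up and down staircases through other regions, and re-enter $U$ later. For the building blocks traversed outside $U$ there is no associated shaded face of $P'$ at all, so the statement ``consecutive pieces lie in the same shaded face of $P'$'' is meaningless there (and $\clock(\delta)$ is not defined), so the concatenation collapses. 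The real content of \eqref{cws:item1} is a global fact: the arc must re-enter $U$ across the \emph{same} state circle $C_1$ through which it left, so that both $\clock(x)$ and $\clock(y)$ land on the one shaded face of $P'$ corresponding to $C_1\subset \bdy U$. The paper gets this by splitting $\gamma$ at its crossings of $C_1$ and invoking Lemma~\ref{lemma:utility} (Utility) together with Lemma~\ref{lemma:parallel-stairs}\eqref{i:same-steps} (Parallel stairs), which rest on $A$--adequacy (Escher stairs) and on the maximality of the non-prime arc collection. Your argument never appeals to $A$--adequacy or any substitute for this machinery --- note that the whole polyhedral theory fails for inadequate diagrams (Example~\ref{ex:two-crossings}) --- and your local tentacle check already presupposes that the tentacle's state circle bounds $U$, which is exactly what fails for the intermediate blocks.

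A secondary problem is in part~\eqref{cws:item3}: knowing that the eight endpoints in a shaded face $F'$ of $P'$ are ``disjoint and correctly paired'' does not by itself let you choose disjoint connecting arcs in the disk $F'$; you need the two pairs to be unlinked in the cyclic order on $\bdy F'$. That requires knowing that $\clock$ carries the clockwise order of the points $w,x,y,z$ around $\bdy F$ (where $S_1$ meets $F$ at $w,x$ and $S_2$ at $y,z$) to a consistent cyclic order around $\bdy F'$ --- this is precisely the step the paper supplies, and it is missing from your write-up. Parts~\eqref{cws:item2} and the remainder of~\eqref{cws:item3} otherwise follow the paper's route once \eqref{cws:item1} is in place.
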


\begin{proof}
For conclusion \eqref{cws:item1}, let $F$ be a shaded face of the
upper polyhedron $P$, and let $x$ and $y$ be points on $(\bdy F) \cap
U$.  Then $x$ and $y$ can be connected by an arc $\gamma$ running
through $F$, and we can make $\gamma$ simple with respect to $F$
(Definition \ref{def:simple-face}).  If the arc $\gamma$ is parallel
to an ideal edge $e$ of the upper polyhedron, then $x,y \in e$, hence
$\clock(x), \clock(y) \in \clock(e)$, and the conclusion holds.
Otherwise, the arc $\gamma$ must cross some state circle $C$, hence is
non-trivial.  Because both of its endpoints are in the same polyhedral
region, in fact $\gamma$ must cross $C$ twice, first running upstream
and then downstream by Lemma \ref{lemma:utility}.  Thus we can split
$\gamma$ into two disjoint arcs beginning at $C$, running downstream,
and terminating in the same polyhedral region.  By Lemma
\ref{lemma:parallel-stairs} (Parallel stairs), $\gamma$ must run up
and down a pair of right--down staircases, and by part
\eqref{i:same-steps} of that lemma, the first state circle $C_1$ that
$\gamma$ crosses when running from $x$ to $y$ must be the same as the
last state circle $C_1$.  Now $C_1 \subset \bdy U$ corresponds to a
shaded face $F'$ of the lower polyhedron $P'$ (see Figure
\ref{fig:nonprime-lower} on page \pageref{fig:nonprime-lower}). Thus
both $\clock(x)$ and $\clock(y)$ must lie on the boundary of $F'$.

For \eqref{cws:item2}, let $S \subset P$ be a normal square, such that
the white faces $V$, $W$ intersected by $S$ belong to $U$. Let
$u,x,y,z$ be the four points of intersection between $S$ and the edges
of $P$, such that $u,x \in \bdy V$ and $y,z \in \bdy W$. Then $x,y$
lie on the boundary of the same shaded face $F$. By conclusion
\eqref{cws:item1}, $\clock(x), \clock(y)$ belong to the same shaded
face $F' \subset P'$. Since $F'$ is simply connected by Theorem
\ref{thm:simply-connected}, $\clock(x)$ and $\clock(y)$ can be
connected by a unique isotopy class of arc in $F'$. Similarly,
$\clock(z)$ and $\clock(u)$ can be connected by a unique isotopy class
of arc in a shaded face of $P'$. These normal arcs in shaded faces
combine with the arcs $\clock(\beta_v) \subset \clock(V)$ and
$\clock(\beta_w) \subset \clock(W)$ to form a normal square $S'
\subset P'$, which is unique up to normal isotopy.
 
For conclusion \eqref{cws:item3}, let $S_1$ and $S_2$ be disjoint
normal squares in $P$, all of whose white faces belong to $U$. The
arcs of $S_1$ and $S_2$ that lie in white faces of $U$ are mapped
homeomorphically (hence disjointly) to white faces in $P'$. Thus it
remains to check that the arcs of $S_1'$ and $S_2'$ are also disjoint
in the shaded faces. Suppose that both $S_1$ and $S_2$ pass
through a shaded face $F$, disjointly. Then we can label points
$w,x,y,z$, in clockwise order around $\bdy F$, such that $S_1$
intersects $\bdy F$ at points $w,x$ and $S_2$ intersects $F$ at points
$y,z$. Then the four points $\clock(w), \clock(x), \clock(y),
\clock(z)$ are also arranged in clockwise order around a shaded face
$F'$ of $P'$. Thus $S_1'$ and $S_2'$ are disjoint in $F'$.
\end{proof}

One part of proving the main result in this chapter is to show that
certain normal squares in the upper polyhedron are parabolically
compressible. For that, we will map them to squares in the lower
polyhedra, using the clockwise map and Lemma \ref{lemma:clockwise},
and consider their intersections with certain normal squares in the
lower polyhedra.  We use the following lemma, which is due to Lackenby
\cite[Lemma 7]{lackenby:volume-alt}. We include the proof for
completeness.

\begin{lemma}
  \label{lemma:marcs}
  Let $P$ be a prime, checkerboard--colored polyhedron with 4--valent
  vertices. Let $S$ and $T$ be normal squares in $P$, which have been 
  moved by normal isotopy into a position that minimizes their
intersection number. Then $S$ and
  $T$ are either disjoint, or they have an
  essential intersection in two faces of the same color.
\end{lemma}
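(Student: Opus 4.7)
My plan is to argue by contradiction: assume $|S \cap T|$ is minimized among normal isotopies (all circles of intersection eliminated since $P$ is a ball), that $S \cap T \neq \emptyset$, and that every arc of $S \cap T$ has its two endpoints in faces of different colors. The aim is to produce a normal isotopy that strictly decreases $|S \cap T|$.

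First I would reduce to the case where each face $F$ of $P$ contains at most one point of $\bdy S \cap \bdy T$. Both $S \cap F$ and $T \cap F$ are single normal arcs with endpoints on edges of $F$; if they cobound a bigon in $F$, an innermost such bigon yields a normal isotopy of $S$ across $F$ that strictly reduces $|S \cap T|$. After this minimization, the $k$ arcs of $S \cap T$ have their $2k$ endpoints in $2k$ distinct faces of $P$.

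Next, fix any arc $\alpha$ of $S \cap T$ with endpoints $p_1 \in W$ (white) and $p_2 \in Sh$ (shaded) by hypothesis. Because the four sides of $\bdy S$ alternate WSWS in color, the sides carrying $p_1$ and $p_2$ are adjacent, so $\alpha$ divides $S$ into a one-corner sub-disk $D^S_1$ and a three-corner sub-disk $D^S_2$. Any further arc of $S \cap T$ inside $D^S_1$ would have both endpoints on $\beta^S_1 := \bdy D^S_1 \smallsetminus \alpha \subset W \cup Sh$, but those faces contain only $p_1$ and $p_2$, both already used by $\alpha$. Hence $\alpha$ is outermost in $S$ with disk $D^S_1$, and symmetrically outermost in $T$ with a one-corner disk $D^T_1$.

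The corner of $\beta^S_1$ lies on an edge of $P$ shared by $W$ and $Sh$, and likewise for $\beta^T_1$. Here primeness is essential: by Proposition \ref{prop:no-normal-bigons}, any two faces of $P$ share at most one edge, since a pair of shared edges would support a normal bigon with one side in each of $W$ and $Sh$. Both corners therefore lie on a common edge $e$. The embedded disk $D := D^S_1 \cup D^T_1$ now supports a disk-swap: set $S' = (S \smallsetminus D^S_1) \cup D^T_1$, pushed slightly off $T$. Its boundary $\beta^S_2 \cup \beta^T_1$ has four corners on four distinct edges, namely $e$ together with the three edges met by $\beta^S_2$ (distinct from $e$ because the original corner of $\bdy S$ on $e$ lay in $\beta^S_1$), and traverses four faces in alternating WSWS order; hence $S'$ is a normal square with $|S' \cap T| = |S \cap T| - 1$, contradicting minimality. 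The main obstacle is precisely this final verification that the disk-swap yields a bona fide normal square, which hinges on the primeness-forced identification of the two corner edges and is the only place where Proposition \ref{prop:no-normal-bigons} is invoked.
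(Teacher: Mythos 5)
Your proof is correct, but it takes a genuinely different route from the paper's. The paper (following Lackenby) argues entirely on the boundary sphere: sides of $S$ meet sides of $T$ at most once; if all four sides met, the squares would be parallel and could be pulled apart; hence there are $0$ or $2$ intersection points, and if the two points lay in faces of opposite colors, each complementary arc of $\bdy S$ and of $\bdy T$ would cross an odd number of edges, so one complementary region of $\bdy S \cup \bdy T$ would meet the edges exactly twice --- a bigon that primeness forbids from being normal, allowing the intersections to be removed. You instead run a three--dimensional outermost-arc/disk-swap argument: an arc $\alpha$ of $S\cap T$ with endpoints in opposite-colored faces cuts off one-corner disks on both squares, primeness (two faces share at most one edge) forces both cut-off corners onto the same edge $e$, and swapping the cut-off disks reduces the intersection. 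Both proofs use primeness in the same essential way; yours trades the paper's parity count for the outermost-disk exchange, at the cost of a longer setup. One point you should make explicit to close the argument: the minimality in the hypothesis is minimality within the \emph{normal isotopy classes} of $S$ and $T$, so producing some normal square $S'$ with fewer intersections is not yet a contradiction --- you need $S'$ normally isotopic to $S$. This does follow from what you proved: $\bdy S'$ crosses exactly the same four edges as $\bdy S$, in the same cyclic order (the corner you discard is $S$'s unique corner on $e$, by normality condition (ii), and the corner inherited from $\beta^T_1$ lies on $e$ as well), and a normal square in a polyhedron is determined up to normal isotopy by this edge data; but the sentence asserting it is missing. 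A cosmetic point: the fact you need is just Definition \ref{def:prime} (every pair of faces of a prime polyhedron meets along at most one edge), which is the stated hypothesis; Proposition \ref{prop:no-normal-bigons} concerns the specific polyhedra of the decomposition of $M_A$ and is not the right citation for an abstract prime polyhedron $P$.
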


Recall, from Definition \ref{def:prime} on page \pageref{def:prime}, that a polyhedron $P$ is
prime if it contains no normal bigons. By Proposition
\ref{prop:no-normal-bigons} on page \pageref{prop:no-normal-bigons}, our polyhedra are all prime.

\begin{proof}
The four sides of $S$ run through four distinct faces of the
polyhedron, as do the four sides of $T$.  A side of $S$ intersects a
side of $T$ at most once.  If all four sides of $S$ intersect sides of
$T$, then $S$ and $T$ are isotopic and can be isotoped off each other.
So $S$ and $T$ intersect at most three times.  However, $\bdy S$ and
$\bdy T$ form closed curves on the boundary of the polyhedron, hence
$S$ intersects $T$ an even number of times, so $0$ or $2$ times.  If
twice, suppose $S$ and $T$ intersect in faces of opposite color.  Then
each arc of $S\setminus T$ and $T\setminus S$ intersects the edges of
the polyhedron an odd number of times.  Hence one of the four
complementary regions of $S\cup T$ has two points of intersection with
the edges of the polyhedron in its boundary.  Because the polyhedron
is prime, this gives a bigon which cannot be normal, hence $S$ and $T$
can be isotoped off each other.
\end{proof}

This lemma has the following useful consequence, illustrated in Figure
\ref{fig:squares-int}.

\begin{lemma}
\label{lemma:squares-int} 
Let $P$ be a prime, checkerboard--colored polyhedron with 4--valent
vertices. Let $S$ and $T$ be normal squares in $P$, moved by normal
isotopy to minimize their intersection number.  Suppose that $S$ and
$T$ pass through the same white face $W$, and that the edges $S \cap
W$ and $T \cap W$ differ by a single rotation of $W$ (clockwise or
counterclockwise).  Then exactly one of the following two conclusions
holds:
\begin{enumerate}
\item\label{squares-int:1} Each of $S$ and $T$ cuts off a single ideal
  vertex in $W$, and $S$ and $T$ are disjoint.
\item\label{squares-int:2} Neither $S$ nor $T$ cuts off a single
  vertex in $W$. The two normal squares intersect in $W$ and in
  another white face $W'$.
\end{enumerate}
\end{lemma}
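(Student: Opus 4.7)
The plan is to distinguish the two cases according to whether $S\cap W$ cuts off a single ideal vertex of $W$, using Lemma~\ref{lemma:marcs} as the main tool to govern intersections outside $W$. Label the edges of $W$ cyclically as $e_1,\ldots,e_n$, with ideal vertex $v_k$ between $e_k$ and $e_{k+1}$ (mod $n$). After relabeling, let $S\cap W$ have endpoints on $e_i$ and $e_j$; the rotation hypothesis then places the endpoints of $T\cap W$ on $e_{i+1}$ and $e_{j+1}$. Since the cyclic distance $|j-i|\pmod n$ is preserved by the rotation, $S\cap W$ cuts off a single vertex (i.e., $j\equiv i\pm 1$) exactly when $T\cap W$ does. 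This shows the two conclusions are mutually exclusive.

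For conclusion \eqref{squares-int:2} (neither arc cuts off a vertex), the arc $S\cap W$ separates $W$ into two open subdisks whose closures contain $e_{i+1}$ and $e_{j+1}$ respectively. Hence $T\cap W$ must cross $S\cap W$ transversely, producing a nonempty intersection of $S$ and $T$ inside $W$. Lemma~\ref{lemma:marcs} then forces exactly two intersection points, lying in two faces of the same color; one lies in $W$, so the other must lie in a second white face $W'$, as desired.

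The harder case is conclusion \eqref{squares-int:1}, where both arcs cut off adjacent vertices $v_i$ and $v_{i+1}$. I would first observe that $S\cap W$ and $T\cap W$ can be made disjoint by normal isotopy: they share only the edge $e_{i+1}$, and placing $S$'s endpoint on $e_{i+1}$ close to $v_i$ and $T$'s close to $v_{i+1}$ separates them. The main obstacle is ruling out residual intersections of $S$ and $T$ in other faces. To do so, I would catalog the four faces met by each square: $S$ meets $W$, the opposite white face $W_S$ at $v_i$, and the shaded faces $F_1,F_2$ adjacent to $e_i, e_{i+1}$; analogously $T$ meets $W$, $W_T$, $F_2$, and a shaded face $F'_1$ adjacent to $e_{i+2}$. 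Primeness of the polyhedron (Proposition~\ref{prop:no-normal-bigons}) forces $F_1\neq F'_1$, since otherwise a single face would share both $e_i$ and $e_{i+2}$ with $W$. Thus $S$ and $T$ share at most one shaded face, ruling out two intersections in shaded faces; and the disjointness of $S\cap W$ from $T\cap W$, together with the parity of $|S\cap T|$ provided by Lemma~\ref{lemma:marcs}, rules out two intersections in white faces (the only candidates would require $W_S = W_T$ and an intersection in $W$ itself). Hence $|S \cap T| = 0$.
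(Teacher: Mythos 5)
Your treatment of the mutual exclusivity of the two cases and of conclusion \eqref{squares-int:2} is essentially the paper's argument and is fine. In case \eqref{squares-int:1} you take a slightly different route from the paper: instead of observing that an intersection would force the two $W$--arcs to be parallel and hence removable, you count the faces shared by $S$ and $T$ and use Lemma~\ref{lemma:marcs} to rule out two same-colored intersections. For a white face $W$ with at least three sides this works (modulo two cosmetic slips: the fourth white face of $S$ need not be incident to $v_i$, and primeness is a hypothesis of the lemma, via Definition~\ref{def:prime}, rather than a consequence of Proposition~\ref{prop:no-normal-bigons}, which only applies to the particular polyhedra of the decomposition).

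However, there is a genuine gap when $W$ is a bigon, a case allowed by the hypotheses and ubiquitous in this paper (bigon white faces arise from every twist region, and conclusion \eqref{squares-int:1} with $W$ a bigon is exactly the configuration of two parallel simple disks). If $W$ has only two edges, then $e_{i+2}=e_i$, so your claim that ``a single face would share both $e_i$ and $e_{i+2}$ with $W$'' does not contradict primeness, and in fact $F_1'=F_1$: the squares $S$ and $T$ pass through the \emph{same} two shaded faces. Your counting argument then no longer excludes the possibility that $S$ and $T$ meet once in each of $F_1$ and $F_2$, which is precisely the situation Lemma~\ref{lemma:marcs} permits. The paper closes this off uniformly: if $S$ and $T$ intersect, Lemma~\ref{lemma:marcs} puts the two intersection points in two shaded faces through which both squares pass; since, by primeness, $W$ meets each shaded face along a single edge, the arcs $S\cap W$ and $T\cap W$ must then run in parallel through $W$, so the intersections in the two shaded faces can be removed by isotopy, contradicting the assumption that the intersection number was minimized. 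You would need to add this (or an equivalent argument) to handle the bigon case; as written, your proof only establishes the lemma for white faces with at least three sides.
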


\begin{figure}
  \begin{center}
    \begin{tabular}{ccccc}
      \eqref{squares-int:1} & \includegraphics{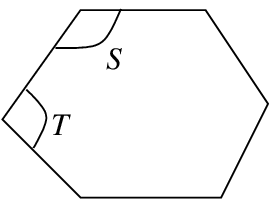}
      & \hspace{.2in} & \eqref{squares-int:2} &
      \includegraphics{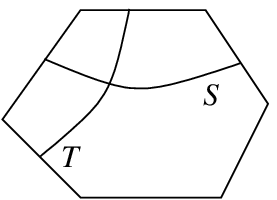}
    \end{tabular}
  \end{center}
  \caption{Cases \eqref{squares-int:1} and \eqref{squares-int:2} of
    Lemma \ref{lemma:squares-int} are illustrated for an example.
    Note if \eqref{squares-int:2} happens, $S$ and $T$ intersect in
    two white faces.}
  \label{fig:squares-int}
\end{figure}

\begin{proof}
First, suppose that $S$ cuts off a single ideal vertex in $W$.  Then
so does $T$.  Hence $S$ and $T$ do not intersect in $W$.  By Lemma
\ref{lemma:marcs}, we can conclude that if $S$ and $T$ intersect at
all, they intersect in two shaded faces, $F$ and $G$.  Since $W$ meets
each shaded face at only one edge, $S \cap W$ and $T \cap W$ must run
in parallel through $W$.  Thus their intersections in $F$ and $G$ can
be isotoped away, and $S$ and $T$ are disjoint.  This proves
\eqref{squares-int:1}.

Next, suppose that $S$ does not cut off a single ideal vertex in $W$.
Then neither does $T$.  Thus, since $S \cap W$ and $T \cap W$ differ
by a clockwise or counterclockwise rotation of $W$, they must have an
essential intersection.  By Lemma \ref{lemma:marcs}, they must also
intersect in another white face $W'$. This proves
\eqref{squares-int:2}.
\end{proof}

When case \eqref{squares-int:1} of Lemma \ref{lemma:squares-int}
holds, note that there is a parabolic compression of $S$, through $W$,
to the single ideal vertex of $W$ that it cuts off.  Similarly for
$T$.

\begin{define}
  Let $P$ be a truncated, checkerboard--colored ideal polyhedron. Then
  a \emph{normal trapezoid}\index{normal!trapezoid} in $P$ is a normal
  disk that passes through two shaded faces, one white face, and one
  truncated ideal vertex.
\label{def:trapezoid}
\end{define}

Trapezoids give the following analogue of Lemma \ref{lemma:squares-int}.

\begin{lemma}\label{lemma:trapezoid}
  Let $P$ be a prime, checkerboard--colored polyhedron with 4--valent
  vertices. Let $S$ be a normal square in $P$, and $T$ a normal
  trapezoid. Suppose that $S$ and $T$ pass through the same white face
  $W$, and that their arcs of intersections with $W$ differ by a
  single rotation (clockwise or counterclockwise). Then $S$ and $T$
  are disjoint, and each of $S$ and $T$ is parabolically compressible
  to an ideal vertex of $W$.
\end{lemma}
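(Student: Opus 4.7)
The proof will closely parallel Lemma \ref{lemma:squares-int}, with care taken to accommodate the parabolic side of $T$. The key initial observation is combinatorial: in the truncated polyhedron $P$, no edge is shared by two shaded faces, since each $4$--valent ideal vertex becomes a parabolic square separating its four adjacent faces into the checkerboard pattern. Consequently, the trapezoid $T$, having exactly one parabolic side at some truncated ideal vertex $v$, must have its four sides in the cyclic order parabolic, shaded, white, shaded; in particular, both endpoints of $\alpha_T := T \cap W$ lie on the pair of long ideal edges of $W$ incident to $v$, so $\alpha_T$ cuts off exactly the single ideal vertex $v$ of $W$. The single-rotation hypothesis then forces $\alpha_S := S \cap W$ to be supported on two adjacent long ideal edges of $W$, cutting off a single ideal vertex $v'$ adjacent to $v$.

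Next I would establish disjointness of $S$ and $T$, adapting the intersection-count argument of Lemma \ref{lemma:marcs}. Since $S$ has no parabolic side, the intersection $\bdy S \cap \bdy T$ lies entirely in non-parabolic faces, and $|\bdy S \cap \bdy T| \leq 1 + 2 = 3$; parity then forces this number to be $0$ or $2$. Because $\alpha_S$ and $\alpha_T$ are parallel arcs cutting off adjacent single vertices, they may be isotoped to be disjoint in $W$, so any remaining two intersections must lie in two distinct shaded faces. Choosing outermost subdisks of $S$ and $T$ on opposite sides of an arc of $S \cap T$ and gluing them along this arc would then produce a normal bigon in $P$, contradicting Proposition \ref{prop:no-normal-bigons} (No normal bigons). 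Hence $S$ and $T$ are disjoint.

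Finally, I would construct parabolic compression disks. For $S$: the arc $\alpha_S$, together with short arcs on the two shaded faces meeting $v'$ and one arc on the parabolic face at $v'$, bounds a disk $E_S$ in $P$ providing a parabolic compression of $S$ to $v'$, with $E_S$ meeting the parabolic locus in a single arc. For $T$: a disk $E_T$ in $P$ whose compression arc joins the two shaded sides of $T$, and whose return arc passes through those two shaded faces and across the parabolic face at $v$ on the side opposite $T$'s own parabolic side, provides the required parabolic compression of $T$ to $v$. The main obstacle I anticipate is verifying condition (iii) of Definition \ref{def:para-compress} for $E_T$: since $T$ already has a parabolic side at $v$, one must check carefully that $E_T$ is not isotopic into $T$ while keeping $E_T \cap T$ fixed and $E_T \cap P$ on the parabolic locus. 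This should follow by direct inspection, using the fact that $E_T$ lies on the side of $T$'s parabolic arc opposite from $\alpha_T$ within the parabolic face at $v$, so that any isotopy of $E_T$ into $T$ would have to push $E_T \cap P$ off the parabolic locus.
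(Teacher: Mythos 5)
Your opening combinatorial claim is where the argument breaks down: you assert that the trapezoid $T$, because it has one parabolic side at a truncated vertex $v$, must have the endpoints of $\alpha_T = T \cap W$ on the two edges of $W$ incident to $v$, so that $\alpha_T$ automatically cuts off the single ideal vertex $v$ of $W$. This is not true in general, and it is not what Definition \ref{def:trapezoid} gives you. The two shaded sides of $T$ start at the two shaded faces adjacent to $v$, but they may travel arbitrarily far through those shaded faces before closing up in a white face $W$ that need not be adjacent to $v$ at all; even when it is, the two edges of $W$ carrying the endpoints of $\alpha_T$ need not be adjacent in $\bdy W$. (For instance, a trapezoid can run from a crossing through the two shaded faces meeting there out to a distant white face bordered by both; such a disk is perfectly normal.) If every trapezoid cut off a single ideal vertex in its white face, it would be parabolically compressible for free, and both this lemma and Lemma \ref{lemma:trap-compress} would be nearly vacuous --- the fact that $T$ cuts off a single vertex of $W$ is the \emph{conclusion} to be proved, and it genuinely requires the hypothesis about $S$. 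Since your disjointness argument in the second paragraph (``$\alpha_S$ and $\alpha_T$ are parallel arcs cutting off adjacent single vertices'') and your compression disks in the third all rest on this claim, the proof as written does not go through. (Separately, gluing outermost subdisks of $S$ and $T$ along an arc of $S\cap T$ does not obviously yield a normal bigon; the actual mechanism in Lemma \ref{lemma:marcs} is an analysis of the complementary regions of $\bdy S \cup \bdy T$ on $\bdy P$.)

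The paper's proof takes a different and shorter route, which you may want to compare: pull $T$ off the ideal vertex to obtain a normal square $T'$ whose new side lies in a white face $W'$ adjacent to $v$ (in general $W' \neq W$), where by construction $T'$ cuts off a single ideal vertex. Then $S$ and $T'$ are two normal squares through $W$ whose arcs differ by a single rotation, so Lemma \ref{lemma:squares-int} applies; conclusion \eqref{squares-int:2} of that lemma would force an essential intersection of $S$ and $T'$ in a second common white face, which is impossible because $T'$ cuts off a single vertex in its other white face $W'$. Hence conclusion \eqref{squares-int:1} holds: $S$ and $T'$ are disjoint and each cuts off a single ideal vertex in $W$, and this transfers back to $S$ and $T$, giving both disjointness and the parabolic compressions. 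If you want to salvage your approach, you need some such argument that uses the rotation hypothesis on $S$ to force the single-vertex conclusion, rather than assuming it from the shape of $T$ alone.
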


\begin{proof}
Let $T'$ be a normal square obtained by pulling $T$ off an ideal
vertex, into a white face $W'$. Then $S$ and $T'$ are normal squares
that satisfy the hypotheses of Lemma \ref{lemma:squares-int}.  Because
$T'$ cuts off a single ideal vertex of $W'$, $S$ and $T'$ cannot
intersect in that white face.  Thus conclusion \eqref{squares-int:1}
of Lemma \ref{lemma:squares-int} holds: $S$ and $T'$ are disjoint, and
each one cuts off a single ideal vertex in $W$. Thus $S$ and $T$ are
also disjoint, and each one is parabolically compressible to an ideal
vertex of $W$.
\end{proof}

By applying Lemmas \ref{lemma:squares-int} and \ref{lemma:trapezoid}, we will show that many normal
squares are also parabolically compressible. See e.g.\  the proof of Lemma \ref{lemma:trap-compress} for a preview of the argument.

%%%%%%%%%%%%%%%%%%%%%%%%%%%%%%%%%%%%%%%%%%%%%%%%%%%%%%%%%%%%%%%%%
\section{Parabolically compressing normal
  squares}\label{sec:para-compress}

Results of Section \ref{sec:normal-squares} are enough to handle normal
squares with sides in the same polyhedral region.   Note this is all
that occurs for alternating knots, as in \cite{lackenby:volume-alt}.
In this section, we will use tentacle chasing arguments\index{tentacle chasing} to extend our
tools, so that we can deal with normal squares with their sides in
different polyhedral regions.  This is the content of the next
proposition, which is the main result in this section.

\begin{prop}
  \label{prop:square-compress}
  Let $S$ be a normal square in the upper polyhedron, with boundary
  consisting of arcs $\beta_v$, $\beta_w$ on white faces $V$ and $W$,
  and arcs $\sigma_1$, $\sigma_2$ on shaded faces.  Suppose that $V$
  and $W$ are in different polyhedral regions.  Finally, suppose that
  $S$ is glued to a normal square $T$ at $W$.  Then $S$ cuts off a
  single ideal vertex in $W$, hence is parabolically
  compressible at $W$.
\end{prop}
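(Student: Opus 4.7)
The plan is to set up and apply Lemma \ref{lemma:parallel-stairs} (Parallel Stairs) to the two shaded boundary arcs of $S$, using the presence of the normal square $T$ to constrain the local picture near $W$. Let $P_W$ denote the lower polyhedron glued to $W$, let $U_W, U_V$ be the polyhedral regions of $W$ and $V$ respectively (so $U_W \neq U_V$ by hypothesis), and let $p_1, p_2$ be the endpoints of $\beta_w$ on ideal edges $e_1, e_2$ of $\bdy W$. I orient $\sigma_1$ and $\sigma_2$ from $W$ toward $V$ and homotope each to be simple with respect to its shaded face.

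The first step is to arrange that $\sigma_1$ and $\sigma_2$ initiate right--down staircases from a common state circle on $\bdy U_W$. By Lemma \ref{lemma:lower-alt}, $P_W$ is the checkerboard polyhedron of the alternating sub-diagram associated to $U_W$, and its shaded faces correspond to boundary components of $U_W$ (portions of state circles or non-prime arcs). Since $T$ is a normal square in $P_W$ whose white sides lie in $W$ and a second face $W'$, its shaded arcs $\tau_1, \tau_2$ emerge from $p_1, p_2$ along two such boundary components; this pins the tentacles of the upper polyhedron at $e_1$ and $e_2$ next to specific state circles in $\bdy U_W$. Using the Staircase Extension Lemma (Lemma \ref{lemma:staircase}) together with the Shortcut Lemma (Lemma \ref{lemma:np-shortcut}) to handle non-prime switches, I would show that after possibly reversing orientations, both $\sigma_1$ and $\sigma_2$ leave $U_W$ by crossing a common state circle $C \subset \bdy U_W$ running downstream, and both terminate in $U_V$.

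With this setup, Lemma \ref{lemma:parallel-stairs} produces two parallel staircases for $\sigma_1$ and $\sigma_2$ whose steps coincide except possibly the last. Tracing the topmost step back to $W$, the initial tentacles of $\sigma_1$ and $\sigma_2$ are adjacent to the same state circle $C$ and to two adjacent segments of $H_A$ meeting $C$. This forces $e_1$ and $e_2$ to be consecutive edges of $\bdy W$ sharing a single ideal vertex $v$; equivalently, $\beta_w$ cuts off $v$ in $W$. A parabolic compression disk for $S$ at $W$ is then obtained by pushing $\beta_w$ across $v$ through the parabolic annulus adjacent to $v$. The main obstacle is the setup step: verifying that one can orient $\sigma_1$ and $\sigma_2$ so they exit $U_W$ downstream across a common state circle. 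Ruling out their initially running upstream or escaping through a cascade of non-prime switches requires careful tentacle chasing, for which the Utility Lemma (Lemma \ref{lemma:utility}) together with the Shortcut Lemma will be the essential tools.
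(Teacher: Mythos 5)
Your plan has a genuine gap at the setup step, and the gap is fatal to the whole strategy. You want to orient $\sigma_1$ and $\sigma_2$ from $W$ toward $V$ and arrange that \emph{both} exit the region of $W$ by crossing a common state circle $C$ running downstream, so that Lemma \ref{lemma:parallel-stairs} produces two parallel staircases. But the two shaded arcs of a normal square can never do this: by Lemma \ref{lemma:opp-sides} (Opposite sides), if $\sigma_1$ and $\sigma_2$ both meet a state circle $C$ separating $V$ from $W$, they do so in tentacles attached to segments on opposite sides of $C$, i.e.\ with a consistent orientation one crosses downstream and the other upstream. This is exactly the content of Lemma \ref{lemma:enter-RW}, which is the correct starting point: directed toward $W$, one arc enters $R_W$ downstream across $C_W$ and connects immediately to $\beta_w$, while the other enters upstream or across a non-prime arc. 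Moreover, even if your setup could be arranged, Lemma \ref{lemma:parallel-stairs} would not give you what you want --- its part (4) says two disjoint arcs running downstream from the same circle into the same polyhedral region cannot terminate on the same white face, so your configuration (both arcs ending on $V$, joined by $\beta_v$) would simply be contradictory, not a source of ``parallel staircases whose top step identifies $e_1$ and $e_2$ as consecutive.''

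The second problem is that your argument never genuinely uses the hypothesis that $S$ is glued to a normal square $T$ at $W$; ``pinning tentacles'' is not a substitute. In the paper's proof this hypothesis is exactly what rescues the hard case: when $\sigma_2$ does not connect immediately to $\beta_w$ (Lemma \ref{lemma:sigma_2-connect} pins down the shape of $R_W$), one applies the clockwise map $\clock$ to $\beta_w$, observes that its image bounds a normal \emph{trapezoid} $S'$ in the lower polyhedron whose white side differs from $T\cap W$ by a single rotation, and then Lemma \ref{lemma:trapezoid} forces both $S'$ and $T$ (hence $S$) to cut off a single ideal vertex of $W$. In the easy case, where $\sigma_2$ also connects immediately to $\beta_w$, the conclusion comes not from staircases but from the maximality of the collection of non-prime arcs: if the region cut off by $\beta_w$ contained any state circles, $\beta_w$ could be pushed to a non-prime arc missing from the maximal collection. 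Neither of these mechanisms appears in your proposal, so even repairing the orientation issue would leave the essential content of the proposition unproved.
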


Proposition \ref{prop:square-compress} is a crucial ingredient for the
proof of the main result of the chapter, Theorem \ref{thm:epd-span},
which is given in the next section.  Before we give the proof of the
proposition, we need to establish some technical lemmas.  We advise the
reader that only the statement of Proposition
\ref{prop:square-compress}, and not those of the intermediate
technical lemmas, are required for the proof of Theorem
\ref{thm:epd-span}.  Thus readers eager to get to the proof of the
main result of the chapter may, at this point, move to the next
section, on page \pageref{subsec:pfthmepd-span}, without loss of
continuity.  However, several of the technical lemmas in the remainder
of this section are repeatedly used in Chapter \ref{sec:epds}.

\begin{lemma}[Opposite sides\index{Opposite sides lemma}]
Let $S$ be a normal square with boundary arcs $\beta_v$ and $\beta_w$
on white faces $V$ and $W$, and arcs $\sigma_1$ and $\sigma_2$ on
distinct shaded faces.  Suppose $\sigma_1$ and $\sigma_2$ intersect
the same state circle $C$.  Then the intersections are in tentacles
attached to edges on opposite sides of $C$, and $C$ must separate $V$
and $W$.  
\label{lemma:opp-sides}
\end{lemma}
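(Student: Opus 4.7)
The plan is to first make both $\sigma_1$ and $\sigma_2$ simple with respect to their respective shaded faces, using Theorem~\ref{thm:simply-connected}, which says the directed spine of each shaded face is a tree. Then I would orient $\partial S$, obtaining induced orientations $\sigma_1\colon V \to W$ and $\sigma_2\colon W \to V$. Note $\sigma_1$ and $\sigma_2$ are disjoint, as they are different sides of the embedded normal square $S$. Since $\beta_v \subset V$ and $\beta_w \subset W$ lie in white faces, which are regions of the complement of $H_A \cup (\bigcup_i \alpha_i)$, they cannot meet any state circle. Thus every intersection of $\partial S$ with $C$ lies on $\sigma_1$ or $\sigma_2$. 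Moreover, a crossing of $C$ corresponds to traversing a spine edge between two tentacles attached to segments on opposite sides of $C$, and such a traversal happens in the downstream direction on the spine.

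To establish that $C$ separates $V$ and $W$, I would argue by contradiction. If $V$ and $W$ lie on the same side of $C$, then each $\sigma_i$ crosses $C$ an even number of times, and by hypothesis at least twice. Take the first crossing $p_i$ of each $\sigma_i$ with $C$, and let $\tilde \sigma_i$ denote the sub-arc of $\sigma_i$ from $p_i$ to its terminal endpoint (in $W$ for $i=1$ and in $V$ for $i=2$). Each $\tilde \sigma_i$ leaves $C$ running downstream into the side of $C$ opposite to $V, W$, and it must eventually re-cross $C$ before terminating. Applying the Staircase Extension Lemma~\ref{lemma:staircase}, the initial downstream portions of $\tilde \sigma_1$ and $\tilde \sigma_2$ determine disjoint right--down staircases. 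If these sub-arcs terminate in the same polyhedral region, then the Parallel Stairs Lemma~\ref{lemma:parallel-stairs} applies, and its conclusion~\eqref{i:no-white-face} says $\sigma_1$ and $\sigma_2$ cannot terminate on a common white face; a careful analysis of the possible polyhedral regions of termination---using the orientation, the fact that each $\tilde \sigma_i$ must re-cross $C$ to reach its endpoint, and the disjointness of $\sigma_1, \sigma_2$---would yield the desired contradiction.

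Given the separation, the opposite-sides conclusion follows by direct orientation analysis: with $V$ on side~$1$ and $W$ on side~$2$ of $C$, the oriented arc $\sigma_1\colon V \to W$ crosses $C$ from side~$1$ into side~$2$, so (in the downstream direction) it enters a tentacle attached to a segment on side~$2$, while $\sigma_2\colon W \to V$ crosses $C$ from side~$2$ into side~$1$ and enters a tentacle attached to a segment on side~$1$. These tentacles lie on opposite sides of $C$, as desired.

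The main obstacle is the precise application of the Parallel Stairs Lemma in the separation argument, particularly identifying the polyhedral region of termination of $\tilde \sigma_1$ and $\tilde \sigma_2$ and handling the subcases where $\sigma_i$ crosses $C$ more than twice. This will likely require delicate tentacle chasing, possibly using Lemma~\ref{lemma:escher} (Escher stairs) or the Utility Lemma~\ref{lemma:utility} on appropriately chosen sub-arcs to reduce to a configuration where Parallel Stairs directly produces a contradiction.
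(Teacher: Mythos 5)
Your overall toolbox is the right one (simplicity, the Utility lemma, Parallel stairs), but two of the key steps are asserted incorrectly, and they are exactly where the content of the lemma lies. First, in the separation argument, your claim that each $\tilde\sigma_i$ leaves $C$ at its \emph{first} crossing ``running downstream'' is backwards. If $V$ and $W$ lie on the same side of $C$, apply Lemma~\ref{lemma:utility} to the sub-arc of $\sigma_i$ between its first and last crossings of $C$ (that sub-arc starts and ends on $C$): it must depart $C$ running \emph{upstream}, return running downstream, and cross $C$ exactly twice. Indeed, if the first crossing were downstream, Lemmas~\ref{lemma:staircase} and~\ref{lemma:downstream} would prevent the arc from ever returning to $C$ (an Escher staircase). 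The contradiction therefore comes not from ``initial downstream portions'' but from the \emph{terminal} portions after the last crossing: directing both $\sigma_1$ and $\sigma_2$ away from $V$, each crosses $C$ for the last time running downstream and then terminates on the same white face $W$, which is exactly what part~\eqref{i:no-white-face} of Lemma~\ref{lemma:parallel-stairs} forbids. Note this needs both arcs oriented from $V$ to $W$; with your convention $\sigma_2\colon W\to V$, the two downstream tails end on \emph{different} white faces, so conclusion~\eqref{i:no-white-face} does not apply, and the ``careful analysis of polyhedral regions of termination'' you defer to is precisely the step that does not close as sketched.

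Second, in the opposite-sides step you assert that, with your orientations, each $\sigma_i$ crosses $C$ ``in the downstream direction.'' This is not automatic: an arc in a shaded face may cross a state circle running upstream or downstream, and nothing you have said rules out both arcs crossing in the same direction when directed consistently away from $V$ --- which would place both tentacle segments on the same side of $C$, contradicting the conclusion. Ruling this out is the substantive half of the lemma and requires its own argument: in the paper it is a second application of Lemma~\ref{lemma:parallel-stairs}, since if both arcs (directed away from $V$, after possibly exchanging $V$ and $W$) crossed $C$ downstream, one would again have two disjoint arcs running downstream from $C$ and terminating on the same white face. Once one crossing is upstream and the other downstream, the segments at the corresponding tentacle heads lie on opposite sides of $C$, giving the stated conclusion; your final orientation bookkeeping is fine only after that dichotomy has actually been established.
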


Recall again that arcs in a shaded face can only intersect state circles
at the heads of tentacles, adjacent to segments of $H_A$. (See Definitions
\ref{def:tentacle} and
\ref{def:tentacle-direct}, as well as  Figure \ref{fig:tentacle-mult}, on page
\pageref{fig:tentacle-mult}.)  Lemma \ref{lemma:opp-sides} (Opposite
sides) asserts that under the given hypotheses, $\sigma_1$ and
$\sigma_2$ run adjacent to heads of tentacles attached to $C$, but
adjacent to segments on opposite sides of $C$.

\begin{proof}
We will first show that $C$ must separate $W$ and $V$, and then that
when we direct $\sigma_1$ and $\sigma_2$ to run across $C$ away from
$V$ and toward $W$, one of $\sigma_1$, $\sigma_2$ runs upstream and
one runs downstream.  This will imply the result.

Suppose, by way of contradiction, that $C$ does not separate $V$ and
$W$, but that both lie on the same side of $C$.  Then both $\sigma_1$
and $\sigma_2$ must intersect $C$ twice.  Direct $\sigma_1$ and
$\sigma_2$ away from $V$.  We may assume each is simple with respect
to its shaded face.  Now Lemma \ref{lemma:utility} (Utility lemma)
implies that both arcs cross $C$ first running upstream, then running
downstream.  Consider the portion of the arcs running downstream.
These are both running downstream from $C$, connected at their ends by
$\beta_w$.  This contradicts part \eqref{i:no-white-face} of Lemma
\ref{lemma:parallel-stairs} (Parallel stairs).

Now, suppose $C$ separates $V$ and $W$, but $\sigma_1$ and $\sigma_2$
run in the same direction across $C$.  Then, switching $V$ and $W$ if
necessary, we may assume that both $\sigma_1$ and $\sigma_2$ run away
from $V$ across $C$ in the downstream direction.  Again we have arcs
$\sigma_1$ and $\sigma_2$ running downstream from $C$, connected at
their ends by $\beta_w$.  Again this contradicts Lemma
\ref{lemma:parallel-stairs} (Parallel stairs).  
\end{proof}

\begin{lemma}[Entering polyhedral region\index{Entering polyhedral region lemma}]
Let $S$ be a normal square with boundary consisting of arcs $\beta_v$
and $\beta_w$ on white faces $V$ and $W$, and arcs $\sigma_1$ and
$\sigma_2$ on shaded faces.  Suppose also that $V$ and $W$ are in
distinct polyhedral regions $R_V$ and $R_W$.  Then (up to relabeling),
when $\sigma_1$ and $\sigma_2$ are directed away from $V$ towards $W$,
we have the following: 
\begin{enumerate}
\item The arc $\sigma_1$ first enters $R_W$ through a state circle $C$
  running in the downstream direction, and immediately connects to
  $\beta_w$ (i.e., without intersecting any additional state circles
  or non-prime arcs).
\item The arc $\sigma_2$ first enters $R_W$ either through $C$ running
  upstream, or through a non-prime arc with both endpoints on $C$.  In
  any case, if $\sigma_2$ crosses $C$, then it must do so only once,
  running upstream.
\end{enumerate}
\label{lemma:enter-RW}
\end{lemma}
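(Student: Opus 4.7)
\medskip
\noindent\textbf{Proof plan.}
Direct $\sigma_1$ and $\sigma_2$ from $V$ toward $W$. Since $V\subset R_V$ and $W\subset R_W$ lie in different polyhedral regions, each arc must cross $\partial R_W$, which is composed of state circles in $s_A$ and non-prime arcs $\alpha_i$. Let $p_i\in\sigma_i$ denote the point at which $\sigma_i$ first enters $R_W$. My plan is to identify a distinguished state circle $C\subset\partial R_W$ associated with both entries and then to use Opposite Sides (Lemma \ref{lemma:opp-sides}), Parallel Stairs (Lemma \ref{lemma:parallel-stairs}), Utility (Lemma \ref{lemma:utility}), Shortcut (Lemma \ref{lemma:np-shortcut}), and Staircase Extension (Lemma \ref{lemma:staircase}) to control how the two arcs enter.

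First I would eliminate the possibility that \emph{both} arcs enter $R_W$ through a state circle while running downstream. If that were the case, then by Staircase Extension each sub-arc would continue along a right--down staircase inside $R_W$, and the endpoints would land on the same white face $W$, directly contradicting conclusion \eqref{i:no-white-face} of Parallel Stairs. Hence, up to relabeling, at most one arc, $\sigma_1$, first enters $R_W$ through a state circle $C$ running downstream. To see that $\sigma_1$ really does enter through a state circle (as opposed to through a non-prime arc only), I would invoke the Shortcut lemma: if $\sigma_1$ first crosses a non-prime arc $\alpha$ with endpoints on a state circle $C'$ into a non-prime half-disk and then exits that half-disk, it must exit through $C'$ while running downstream, so we may take $C=C'$. (If the non-prime half-disk lies inside $R_W$, then $\sigma_1$ effectively enters $R_W$ at $\alpha$, and the same state circle $C'$ plays the role of $C$.)

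For part (1), once $\sigma_1$ has crossed into $R_W$ downstream through $C$, Staircase Extension produces a right--down staircase on which $\sigma_1$ runs downstream. Because $R_W$ is a polyhedral region containing no state circles or non-prime arcs in its interior, the Downstream lemma (Lemma \ref{lemma:downstream}) forces $\sigma_1$ to proceed directly to $\beta_w\subset W$ without any further boundary crossings. For part (2), $\sigma_2$ must also intersect $C$ (either because its first entry into $R_W$ is at $C$, or because it later crosses $C$). Applying Opposite Sides to the pair $(\sigma_1,\sigma_2)$ at $C$ shows that their intersections with $C$ occur in tentacles whose segments lie on opposite sides of $C$; the analysis in the paragraph preceding Lemma \ref{lemma:opp-sides} then forces $\sigma_2$ to cross $C$ upstream. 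The Utility lemma caps the number of crossings of $C$ by $\sigma_2$ at two, and a second crossing would have to be downstream; but that would again spawn a parallel downstream staircase terminating at $W$, contradicting Parallel Stairs against $\sigma_1$. Hence $\sigma_2$ crosses $C$ at most once and only upstream. Finally, if $\sigma_2$'s first entry into $R_W$ is through a non-prime arc $\alpha$ rather than through $C$, the Shortcut lemma together with the Parallel Stairs obstruction (applied to the downstream exit through the state circle to which $\alpha$ is attached) forces the endpoints of $\alpha$ to lie on $C$.

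The main obstacle I expect is the bookkeeping around non-prime arcs and switches, particularly showing that when $\sigma_2$ enters $R_W$ through a non-prime arc $\alpha$, the state circle carrying the endpoints of $\alpha$ is the \emph{same} $C$ through which $\sigma_1$ enters; this requires a careful tentacle-chasing argument that combines Shortcut with Parallel Stairs to rule out two disjoint downstream staircases converging on $W$.
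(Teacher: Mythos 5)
Your toolkit is the right one, but there are two genuine gaps, both located exactly where you flag ``bookkeeping around non-prime arcs,'' and they are not mere bookkeeping. First, your claim that ``$\sigma_2$ must also intersect $C$'' is false in general. The paper's proof splits into two cases: either some state circle separates $V$ from $W$, or else $V$ and $W$ lie on the same side of every state circle and are separated only by a non-prime arc $\alpha$ with endpoints on a circle $C$ (chosen innermost with respect to $W$). In the second case it is $\sigma_1$ that crosses $C$ \emph{twice} (first upstream, then downstream, by Lemma \ref{lemma:utility}) and connects to $\beta_w$, while $\sigma_2$ never meets $C$ at all and enters only through the non-prime arc --- this is precisely why conclusion (2) of the lemma is stated as a disjunction. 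Your argument, which starts by locating a separating circle and applying Lemma \ref{lemma:opp-sides} at it, silently assumes the first case and has no mechanism for the second; your appeal to the Shortcut Lemma \ref{lemma:np-shortcut} does not repair this, since its hypotheses (crossing $\alpha$, then running upstream, then exiting the half-disk across $C$) are exactly what fails for the arc that stays inside the half-disk to reach $\beta_w$.

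Second, your justification of ``immediately connects to $\beta_w$'' in part (1) is insufficient, and the circle $C$ you produce may be the wrong one. After the downstream-entering arc crosses the closest separating circle $C_W$, it runs in a tentacle adjacent to a circle $C_4$ attached to $C_W$; the fact that the interior of $R_W$ contains no circles or non-prime arcs does not prevent the arc from crossing $C_4$ (ruled out only by Lemma \ref{lemma:utility}) or from crossing a non-prime arc on $C_4$ with $W$ inside. In that last subcase the paper reduces to its first case with $C_4$ playing the role of $C$, and the labels of $\sigma_1$ and $\sigma_2$ may swap: the arc that entered downstream through $C_W$ can end up being the one that enters $R_W$ through a non-prime arc, while the other arc is the one that crosses $C_4$ downstream and connects immediately to $\beta_w$. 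So identifying $C$ with ``the circle through which $\sigma_1$ first enters $R_W$'' and asserting a direct run to $\beta_w$ is not valid without this case analysis and relabeling; a correct proof needs the innermost-choice argument (or an equivalent induction) that your proposal only gestures at. As a smaller point, your opening step also applies Lemma \ref{lemma:parallel-stairs} to two arcs that a priori enter $R_W$ downstream through \emph{different} circles, whereas that lemma requires both arcs to leave the same circle downstream; one must first force the common circle (as the paper does via Lemma \ref{lemma:opp-sides} at a separating circle, or via Lemma \ref{lemma:utility} in the non-prime case).
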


\begin{proof}
Since $V$ and $W$ are in distinct polyhedral regions, they are either
on opposite sides of some state circle, or if they are not on opposite
sides of any state circle, they are separated by a non-prime arc
$\alpha$ with both endpoints on a state circle $C$.  In the latter
case, $C$ does not separate $V$ and $W$, nor does any state circle
contained inside the non-prime half-disk bounded by $\alpha$ and $C$
separate $V$ and $W$. We distinguish two cases.

\smallskip

{\underline{Case 1:}} Suppose that $V$ and $W$ are separated by a
non-prime arc $\alpha$ with both endpoints on some state circle $C$,
but that $C$ does not separate $V$ and $W$.  Suppose also that within
the non-prime half-disk bounded by $\alpha$ and $C$ that contains
$W$, no other state circle separates $V$ and $W$.  Without loss of
generality, we may suppose that $\alpha$ is innermost with this
property with respect to $W$, that is, that $\alpha$ is the non-prime
arc with this property closest to $W$.

Notice that one of $\sigma_1$, $\sigma_2$ must cross $C$, since the
arcs are on distinct shaded faces.  After relabeling, we may assume
$\sigma_1$ crosses $C$.  Since $V$ and $W$ are on the same side of
$C$, $\sigma_1$ must actually cross $C$ twice.  But then Lemma
\ref{lemma:utility} (Utility lemma) implies that it crosses $C$ first
running upstream, then running downstream.  So $\sigma_1$ crosses
running downstream when it enters $R_W$.

Since $\sigma_1$ is running downstream, it will be adjacent to some
state circle $C_4$ attached to $C$ by a segment of the graph $H_A$.
By assumption, $C_4$ does not separate $V$ and $W$.

Since $\sigma_1$ is going downstream, Lemma \ref{lemma:downstream}
(Downstream continues down) implies that it can only continue
downstream next, or cross a non-prime arc with $W$ inside, or exit the
shaded face immediately to $\beta_w$.  The first of these three
possibilities cannot happen: $\sigma_1$ cannot continue downstream,
else it crosses into $C_4$, and must cross back out, which is
impossible by Lemma \ref{lemma:utility} (Utility lemma).  The second
possibility also cannot hold by assumption:  $\alpha$ was assumed to
be innermost with respect to $W$.  Thus the only possibility is that
$\sigma_1$ exits the shaded face immediately to $\beta_w$.

If $\sigma_2$ also crosses $C$, then it must do so twice, since its
endpoints are not separated by $C$.  Lemma \ref{lemma:utility}
(Utility lemma) implies that $\sigma_2$ first crosses $C$ running
upstream, then crosses running downstream.  But now $\sigma_1$ and
$\sigma_2$ both cross $C$ running downstream, then have endpoints
attached at $\beta_w$.  This contradicts Lemma
\ref{lemma:parallel-stairs} (Parallel stairs).  So $\sigma_2$ does
not cross $C$, which implies it enters $W$ across a non-prime arc, as
desired.

\smallskip

{\underline{Case 2:}} Suppose that $V$ and $W$ are on opposite sides
of some state circle.  Then there is some such state circle which is
closest to $W$.  Call this state circle $C_W$.  The arcs $\sigma_1$
and $\sigma_2$ must both intersect $C_W$.  By Lemma
\ref{lemma:opp-sides} (Opposite sides), the intersections are in
opposite directions, when $\sigma_1$ and $\sigma_2$ are both directed
toward $W$.  Relabel, if necessary, so that $\sigma_1$ is the arc
running downstream toward $W$ across $C_W$.
  
Since $\sigma_1$ is running downstream, it will be adjacent to some
state circle $C_4$ attached to $C_W$ by a segment of the graph $H_A$.
Again since $\sigma_1$ is running downstream, Lemma
\ref{lemma:downstream} (Downstream continues down) implies that it can
only go downstream next, or cross a non-prime arc with $W$ inside, or
exit the shaded face to $\beta_w$.  The first possibility cannot
happen: $\sigma_1$ cannot continue downstream, else it crosses into
$C_4$, and must cross back out, which is impossible by Lemma
\ref{lemma:utility} (Utility lemma).  Suppose the second possibility
holds, that is that $\sigma_1$ crosses a non-prime arc $\alpha$ with
$W$ inside.  This non-prime arc $\alpha$ has both endpoints on $C_4$,
and $C_4$ does not separate $V$ and $W$ by choice of $C_W$.  Moreover,
within the non-prime half-disk bounded by $\alpha$ and $C_4$ which
contains $W$, no state circle can separate $V$ and $W$, again by
choice of $C_W$.  Thus if this second possibility holds, we are in
Case 1, with $C_4$ playing the role of $C$, and the lemma is true
(after relabeling $\sigma_1$ and $\sigma_2$ again).

The only remaining possibility is that $\sigma_1$ exits the shaded
face immediately to $\beta_w$ after crossing $C_W$.  This proves
statement (1) of the Lemma, with $C= C_W$.

Finally, the fact that $\sigma_2$ must cross $C$ running upstream, if
it crosses $C$ at all, follows immediately from the fact that
$\sigma_1$ crosses $C$ running downstream, and Lemma
\ref{lemma:opp-sides} (Opposite sides).
\end{proof}

\begin{lemma}
Let $S$ be a normal square with boundary arcs $\beta_v$ and $\beta_w$
on white faces $V$ and $W$, and arcs $\sigma_1$ and $\sigma_2$ on
shaded faces.  Suppose $V$ and $W$ are in distinct polyhedral regions
$R_V$ and $R_W$.  Let $C_W$ be the state circle in the conclusion of
Lemma \ref{lemma:enter-RW} (Entering polyhedral region), and relabel
if necessary so that $\sigma_1$ and $\sigma_2$ are as in the
conclusion of that Lemma, when directed away from $V$ towards $W$.
Suppose, moreover, that arc $\sigma_2$ does not immediately connect to
$\beta_w$ after it first enters $R_W$. Then, $\sigma_2$ runs across a
state circle $C_2$ running upstream, then eventually crosses $C_2$
again into $R_W$ running downstream, at which point it immediately
connects to $\beta_w$ (i.e. without crossing any other state circles).
\label{lemma:sigma_2-connect}
\end{lemma}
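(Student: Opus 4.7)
The plan is to trace $\sigma_2$ forward from the point $p$ where it first enters $R_W$ (as described in Lemma \ref{lemma:enter-RW}) until it terminates on $\beta_w$, using the tentacle chasing toolkit developed earlier in this chapter. By Lemma \ref{lemma:enter-RW}, the arc $\sigma_2$ enters $R_W$ at $p$ either by crossing $C_W$ upstream or by crossing a non-prime arc with both endpoints on $C_W$; in either case, $\sigma_2$ cannot cross $C_W$ again. Since by hypothesis $\sigma_2$ does not immediately connect to $\beta_w$ after $p$, I first show that $\sigma_2$ must later cross some state circle $C_2 \neq C_W$. If it did not, the subarc of $\sigma_2$ from $p$ to $\beta_w$ would lie entirely on tentacles and non-prime switches in the boundary of $R_W$; by the Shortcut lemma (Lemma \ref{lemma:np-shortcut}) and the tree structure of shaded spines given by Theorem \ref{thm:simply-connected}, this would force $\sigma_2$ to connect to $\beta_w$ immediately, contradicting the hypothesis. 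Let $q_1$ be the first crossing of $C_2$ after $p$.

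The main step is to show that $\sigma_2$ later crosses the same state circle $C_2$ a second time, and that after this second crossing it immediately terminates on $\beta_w$. To do this, I reverse the orientation on $\sigma_2$ and apply Lemma \ref{lemma:staircase} (Staircase extension) to its terminal portion: the reversed arc starts on $\beta_w \subset R_W$ and, since it must eventually leave $R_W$, it crosses some state circle $C_2'$ running downstream. Comparing $\sigma_1$ (which runs downstream from $C_W$ across a staircase to $\beta_w$) with the reversed tail of $\sigma_2$ (which runs downstream from $C_2'$ across another staircase to $\beta_w$) via Lemma \ref{lemma:parallel-stairs} (Parallel stairs) will force the two staircases to align in their final steps. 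Applying the Utility lemma (Lemma \ref{lemma:utility}) to $\sigma_2$ and the state circle $C_2$ will then give $C_2 = C_2'$ and pin down the direction of both crossings: first upstream at $q_1$, then downstream at the return.

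The principal obstacle is ruling out the many ways $\sigma_2$ could wander between its two crossings of $C_2$: it could in principle cross further state circles and return, or detour through a non-prime arc and re-enter $R_W$ in an unexpected way. The Utility lemma forbids any state circle from being crossed more than twice, and the Shortcut lemma combined with maximality of the non-prime collection (Definition \ref{def:max-nonprime}) will force any non-prime detour to exit downstream back across the same state circle, so no re-entry into $R_W$ can occur except across $C_2$. A minor subtlety is Case B of Lemma \ref{lemma:enter-RW}, where $\sigma_2$ enters $R_W$ across a non-prime arc on $C_W$ rather than across $C_W$ itself; the Shortcut lemma handles non-prime switches uniformly, so the argument proceeds in parallel with Case A.
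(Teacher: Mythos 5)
There is a genuine gap in your main step. You assert that the reversed arc, starting on $\beta_w$, ``since it must eventually leave $R_W$, crosses some state circle $C_2'$ running downstream.'' Nothing forces this, and in fact it is false in the very configuration you are trying to establish: if the original $\sigma_2$ makes its final crossing of $C_2$ into $R_W$ running downstream, then the reversed arc exits $R_W$ across $C_2$ running \emph{upstream}. Consequently Lemma \ref{lemma:staircase} (Staircase extension), whose hypothesis is that the arc begins by crossing a state circle downstream, cannot be applied to the reversed terminal portion. The appeal to Lemma \ref{lemma:parallel-stairs} (Parallel stairs) is also misplaced: that lemma requires the two directed arcs to begin at the \emph{same} state circle running downstream, whereas $\sigma_1$ starts at $C_W$ and your reversed tail at some other circle $C_2'$; and even if the hypotheses held, its conclusion (4) forbids the two arcs from terminating on the same white face --- yet both of your arcs end on $W$. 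In this chapter Parallel stairs is a contradiction-producing tool, not an alignment tool, and since $\sigma_1$ and the terminal piece of $\sigma_2$ genuinely do arrive downstream at the same white face $W$ from \emph{different} circles, no alignment statement of this kind is available. The subsequent claim that the Utility lemma ``gives $C_2 = C_2'$'' has no argument behind it, so the heart of the proof is missing.

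The paper argues in the forward direction from the first entry point into $R_W$, and you would need something like that chase. First, one rules out $\sigma_2$ crossing a non-prime arc after entering $R_W$: it would enter a non-prime half-disk, Lemma \ref{lemma:np-shortcut} (Shortcut) forces the exit across the bounding state circle running downstream, and then Lemma \ref{lemma:utility} (Utility) forbids the second crossing of that circle needed to return to $R_W$. (Your step 1 glosses over exactly this non-prime possibility when you reduce ``no state-circle crossing'' to ``immediate connection.'') The same Utility argument rules out a downstream crossing right after entry, so the only move left is an upstream crossing of some $C_2$; Utility then forces the return across $C_2$ to be downstream. Finally, the trichotomy coming from Lemma \ref{lemma:downstream} (continue downstream across the next circle $C_3$, cross a non-prime arc on $C_3$ separating off $W$, or terminate at $\beta_w$) is closed off: the first option contradicts Utility, and the second is impossible because such an arc would separate $W$ from $C_W$, while $C_W$ meets the boundary of $R_W$. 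Only immediate connection to $\beta_w$ remains, which is the statement.
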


Note that Lemmas \ref{lemma:enter-RW} and
\ref{lemma:sigma_2-connect} imply that if $\sigma_2$ does not
immediately connect to $\beta_w$, the region $R_W$ is of the form
shown in Figure \ref{fig:w-picture}.

Essentially, what these two lemmas say is that $\sigma_1$ only enters
the region $R_W$ once, to connect to $\beta_W$.  The arc $\sigma_2$,
on the other hand, may enter $R_W$, then leave and travel elsewhere,
but when it returns it will not leave again, but connect immediately
to $\beta_W$.

\begin{figure}
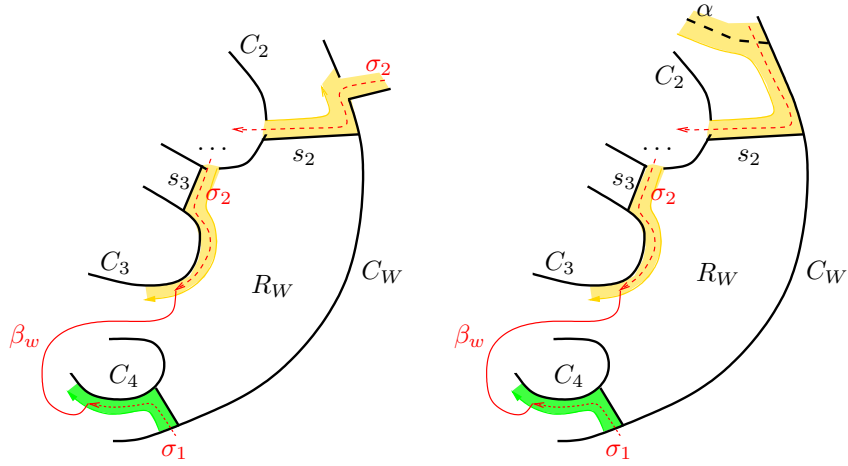

  \begin{center}
    \input{figures/w-picture.pstex_t}
    \hspace{.2in}
    \input{figures/non-prime-wpicture.pstex_t}
  \end{center}
  \caption{The region $R_W$ in the case that $\sigma_2$ does not
    immediately connect to $\beta_w$.  Left: Initially $\sigma_2$
    enters $R_W$ by running upstream.  Right: Initially $\sigma_2$
    enters $R_W$ along a non-prime arc.  In both cases, $\sigma_2$
    runs across a state circle $C_2$ and eventually re-enters $R_W$.}
  \label{fig:w-picture}
\end{figure}

\begin{proof}
If after entering $R_W$, $\sigma_2$ does not immediately meet
$\beta_w$, then it must cross a state circle or non-prime arc first.
It does not cross a non-prime arc, for if so, it would enter a
non-prime half-disk bounded by the non-prime arc and some state
circle $C$, so must exit this non-prime half-disk along $C$, and by
Lemma \ref{lemma:np-shortcut} (Shortcut lemma), must do so running
downstream.  Then $\sigma_2$ must cross $C$ again, to re-enter $R_W$,
but then Lemma \ref{lemma:utility} (Utility lemma) implies it must
first cross running upstream.  This is impossible.  So $\sigma_2$ does
not cross a non-prime arc on the boundary of $R_W$ between entering
$R_W$ and connecting to $\beta_w$.

Similarly, $\sigma_2$ cannot follow a tentacle downstream after
crossing into $R_W$, or as above it would not be able to re-enter
$R_W$.  The only other possibility is that $\sigma_2$ follows a
tentacle upstream, crossing into a state circle $C_2$.  Then
$\sigma_2$ must exit out of $C_2$.  Lemma \ref{lemma:utility} (Utility
lemma) implies that $\sigma_2$ exits $C_2$ in the downstream
direction.

Now $\sigma_2$ is running downstream, so will be on the tail of a
tentacle adjacent to a state circle $C_3$, attached to $C_2$ by a
segment of $H_A$.  Since $\sigma_2$ is running downstream, Lemma
\ref{lemma:downstream} (Downstream) implies it either continues
running downstream, crossing into $C_3$, or crosses a non-prime arc
$\alpha$ with $W$ on the opposite side, or exits the shaded face to
$\beta_w$.  The first possibility cannot hold: $\sigma_2$ cannot cross
$C_3$ running downstream, since it must cross out again to meet
$\beta_w$, and this condradicts Lemma \ref{lemma:utility} (Utility
lemma).

The seccond possibility will also lead to a contradiction.  If
$\sigma_2$ crosses a non-prime arc $\alpha$ with $W$ on the opposite
side, then $\alpha$ would have both endpoints on the state circle
$C_3$.  The non-prime half-disk containing $W$ bounded by $\alpha$
and $C_3$ is therefore separated from the region containing $C_W$.
But this is impossible: $C_W$ meets the boundary of $R_W$.  So the
second possibility cannot hold either.

The only remaining possibility is that $\sigma_2$ immediately connects
to $\beta_w$, as desired.
\end{proof}

We are now ready to give the proof of Proposition
\ref{prop:square-compress}: a normal square $S$ whose white faces are
in different polyhedral regions, which is glued to a normal square in
a lower polyhedron, must parabolically compress.

\begin{proof}[Proof of Proposition \ref{prop:square-compress}]
As usual, let $\sigma_1$ and $\sigma_2$ be the arcs of the square $S$
on the shaded faces.  By Lemma \ref{lemma:enter-RW} (Entering
polyhedral region), we may assume that $\sigma_1$ enters $R_W$, the
polyhedral region containing $W$, by crossing a state circle $C_W$ in
the downstream direction, and then immediately connects to $\beta_w$.
We may also assume that $\sigma_2$ enters either in the upstream
direction or across a non-prime arc.

\smallskip

\underline{Case 1:} Suppose first that $\sigma_2$ also meets $\beta_w$
immediately, without meeting any other boundary components of $R_W$.
Then the region $R_W$ and the arc $\beta_w$ will have the form of one
of the two graphs shown in Figure \ref{fig:head-nonprime},
corresponding to the two possibilities for $\sigma_2$.

\begin{figure}[h]
	\input{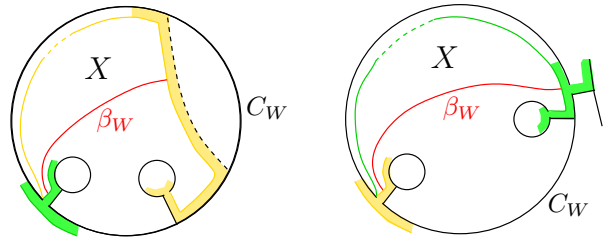}
  \caption{Regions $R_W$ in the case $\sigma_2$ connects immediately
    to $\beta_W$.}
  \label{fig:head-nonprime}
\end{figure}

If the region marked $X$ in the diagrams of Figure
\ref{fig:head-nonprime} contains state circles, then if we push the
endpoints of the arc $\beta_w$ to the state circle on the outside in
each diagram, we form a non-prime arc $\alpha$, bounding state circles
on either side.  This contradicts the maximality of our prime
decomposition, Definition \ref{def:max-nonprime}.  
Thus the diagrams in Figure \ref{fig:head-nonprime} must contain no
state circles in the regions marked $X$.  Then in both cases, the
tentacle running through $X$ around the interior of the outermost
state circle will terminate at the top of the tentacle where $\beta_w$
has its other endpoint, as illustrated.  Thus $\beta_w$ cuts off a
single ideal vertex of the white face $W$.  But then the portion of
the white disk bounded by these two shaded faces and the arc $\beta_w$
forms a parabolic compression disk for $S$, as desired.

\smallskip

\underline{Case 2:} Now suppose that $\sigma_2$ does not immediately
connect to $W$ after crossing $C_W$.  Then Lemma
\ref{lemma:sigma_2-connect} implies that the region $R_W$ is as shown
in Figure \ref{fig:w-picture}.
 Recall that by assumption, $S$ is glued to a square $T$ in the lower
polyhedron at $W$.  Apply the clockwise map to $\beta_w$, sending it
to an arc which differs from the arc of $T$ lying in $W$ by a
clockwise rotation.  The image of $\beta_w$ is shown in Figure
\ref{fig:beta-image}.

\begin{figure}
  \input{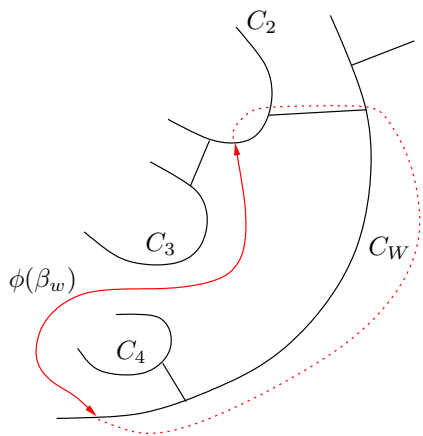}
  \caption{Image of $\beta_w$ under clockwise map $\phi\co W \to W$.
    Note there is an arc (dotted red) running through two state
    circles and a single segment of $H_A$ connecting its endpoints.
    All red arcs shown form a normal trapezoid in the lower
    polyhedron}
    \label{fig:beta-image}
\end{figure}

Notice in the lower polyhedron that there exists an arc through two
innermost disks (boundary components of $R_W$) adjacent to a single
segment of $H_A$ which connects the endpoints of the image of
$\beta_w$ under the clockwise map; see Figure \ref{fig:beta-image}.
In fact, this gives a normal trapezoid $S'$ contained in the lower
polyhedron with one of its sides on $W$, two sides on the two shaded
faces corresponding to the two innermost disks, and a side running
over the ideal vertex of the polyhedron corresponding to the center of
this edge of $H_A$.

Recall that we have the normal square $T$ in the lower polyhedron
with one side on the white face $W$, differing from the side of $S'$
on $W$ by a single clockwise rotation.  Lemma \ref{lemma:trapezoid}
implies that  $S'$ and $T$ are  parabolically compressible to an
ideal vertex of $W$. Thus, $S$ is parabolically compressible at
$W$.
\end{proof}

%%%%%%%%%%%%%%%%%%%%%%%%%%%%%%%%%%%%%%%%%%%%%%%%%%%%%%%%%%%%%%%%%

\section{$I$--bundles are spanned by essential product
  disks}\label{subsec:pfthmepd-span}

We can now complete the proof of Theorem \ref{thm:epd-span}.  Recall
from the beginning of the chapter that the proof of Theorem
\ref{thm:epd-span} required three steps, whose proofs we have
postponed until now.  The first step, Proposition
\ref{prop:Ibdl-parabolic}, relies on the following general lemma, which will also be needed in Chapter \ref{sec:spanning}.

\begin{lemma}[Product rectangle in white face]\label{lemma:product-in-face}\index{Product rectangle in white face lemma}
Let $B$ be an $I$--bundle in $M_A$, whose vertical boundary is
incompressible. Suppose that $B$ has been moved by isotopy to minimize
its intersections with the white faces. Then, for any white face $W$,
the intersection $B \cap W$ is a union of product rectangles whose
product structure comes from the $I$--bundle structure of $B$. In
other words, each component of $B \cap W$ has the form
$$D = \alpha \times I,$$
where $\alpha \times \{0 \}$ and $\alpha \times \{1 \}$ are sub-arcs
of ideal edges of $W$. 
\end{lemma}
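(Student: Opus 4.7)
My plan is to analyze a single connected component $D$ of $B \cap W$ and show that, after a further isotopy of $B$ that preserves its $I$--bundle structure and does not increase the complexity of $B \cap W$, the component $D$ becomes a vertical rectangle. First I would observe that $\partial D$ decomposes naturally into arcs on the horizontal boundary $\partial_h B$ and the vertical boundary $\partial_v B$. Since $\partial_h B \subset S_A \subset \partial M_A$ and $\partial W \subset \partial M_A$, whereas $\partial_v B$ lies on characteristic annuli in the interior of $M_A$ (or on the parabolic locus), each arc of $\partial D$ lying in $\partial W$ must be a sub-arc of an edge of $W$ contained in $\partial_h B$, while each arc of $\partial D$ lying in the interior of $W$ must lie on $\partial_v B$.

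The first step is to rule out any component of $\partial D$ lying entirely in the interior of $W$, i.e.\ a simple closed curve $C \subset \partial_v B \cap \mathrm{int}(W)$. Such a $C$ bounds a disk $E_W \subset W$. Since $\partial_v B$ is incompressible by hypothesis, either $C$ is essential in $\partial_v B$, making $E_W$ a compressing disk for $\partial_v B$ and contradicting incompressibility, or $C$ bounds a disk $E_B \subset \partial_v B$. In the latter case $E_W \cup E_B$ is a sphere in $M_A$, which bounds a ball by irreducibility of the handlebody $M_A$ (Lemma~\ref{lemma:handlebody}), and isotoping $E_B$ across this ball past $E_W$ strictly reduces the complexity of $B \cap W$, contradicting minimality. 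An analogous innermost argument, using again that $W$ is a disk and $\partial_v B$ is incompressible, shows that every component of $B \cap W$ is itself a disk with connected boundary alternating between sub-arcs of edges of $W$ and arcs on $\partial_v B$ in the interior of $W$.

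The main step, and the one I expect to carry the real content, is to promote the disk $D \subset B$ to be vertical with respect to the $I$--bundle projection $\pi\colon B \to F$. This is essentially a Waldhausen-type statement: an essential disk in an $I$--bundle whose vertical boundary is incompressible is isotopic, rel horizontal boundary, to a vertical disk $\alpha \times I$, where $\alpha \subset F$ is a properly embedded arc. Concretely, I would first isotope each arc of $\partial D$ on $\partial_v B$ to coincide with a fiber, using that $\partial_v B$ is a disjoint union of annuli and M\"obius bands foliated by fibers, and that an arc joining the two components of $\partial_h B \cap \partial_v B$ but not parallel to a fiber would produce either a compression or a boundary-compression of $\partial_v B$. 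Having arranged that $\partial D$ is vertical, one extends the isotopy through $B$ to make $D$ itself vertical. The technical obstacle is to ensure that this isotopy does not re-introduce intersections with $W$ or with other white faces; this can be arranged because the isotopy slides fibers along fibers and can be localized to a product neighborhood of $\partial_v B$, so the complexity of $B \cap W$ is unchanged. Once $D = \alpha \times I$, the sides $\alpha \times \{0\}$ and $\alpha \times \{1\}$ lie on $\partial_h B \cap W \subset \partial W$ and hence are sub-arcs of edges of $W$, giving the claimed product-rectangle structure.
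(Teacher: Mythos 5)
Your reduction to the case where each component $D$ of $B\cap W$ is a disk whose boundary alternates between sub-arcs of ideal edges of $W$ and arcs on $\partial_v B$ is fine, and it matches the first step of the paper's argument. The genuine gap is in your main step. You control the arcs of $\partial D$ on $\partial_v B$ only when they join the two components of $\partial_h B \cap \partial_v B$, and for those arcs incompressibility is not even needed (any spanning arc of an annulus is isotopic to a fiber). What you never exclude are arcs of $\partial_v B \cap W$ with both endpoints on the \emph{same} horizontal boundary circle: such an arc is inessential in the annulus or M\"obius band component of $\partial_v B$, so it produces no compression or boundary-compression of $\partial_v B$, and the hypothesis that the vertical boundary is incompressible is powerless against it. Until these arcs are ruled out, the boundary pattern of $D$ is not that of a vertical disk, so your Waldhausen-type statement cannot be applied ``rel horizontal boundary''; moreover that statement concerns \emph{essential} disks, while a component of $B\cap W$ need not be essential in $B$, and nothing in your argument excludes $D$ being a $2n$-gon with $n>2$ even after all of its sides on $\partial_v B$ have been straightened to fibers.

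The paper closes exactly this gap with a different tool: boundary-incompressibility of the white faces (Lemma \ref{lemma:white-incompress}, which rests on Proposition \ref{prop:no-normal-bigons}). If some arc $\beta_i \subset \partial D \cap \partial_v B$ had both endpoints on $Q\times\{1\}$, then $\beta_i$ is parallel to $Q\times\{1\}$ through the product $Q \times I$, and this parallelism gives a boundary compression disk for $W$ --- a contradiction; hence every such arc spans the bundle top to bottom. A second application of the same lemma (applied to an arc of $D$ joining $\alpha_1$ to $\alpha_3$) rules out $2n$-gons with $n > 2$, so $D$ is a quadrilateral and hence, after adjusting the product structure, a vertical rectangle; the non-orientable case is then handled by cutting $B$ along vertical M\"obius bands. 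So the decisive ingredient is a property of $W$ coming from the polyhedral decomposition, not a property of $\partial_v B$: as written, your proposal attributes the key exclusions to the wrong hypothesis and leaves the inessential-arc and hexagon cases open.
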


\begin{proof}
Suppose, first, that $B = Q \times I$ is a product $I$--bundle over an
orientable base $Q$. At the end of the proof, we will consider the
case of a non-orientable base.

Let $D$ be a component of $B \cap W$. Notice that $\bdy D$ cannot
contain any simple closed curves in the interior of $W$, because an
innermost such curve would bound a compression disk for $\bdy B$, and
can be removed by isotopy. Similarly, $\bdy D$ cannot contain an arc
from an ideal edge of $W$ to the same ideal edge, since an outermost
such arc can be removed by isotopy.

Truncate the ideal vertices of $W$, so that every ideal vertex becomes
an arc (parallel to the parabolic locus). Abusing notation slightly,
the portion of $D$ in this truncated face is still denoted $D$. Then,
by the above paragraph, $D$ must be a $2n$--gon, with sides $\alpha_1,
\beta_1, \ldots, \alpha_n, \beta_n$. Here, every $\alpha_i$ is a
sub-arc of an ideal edge of $W$ (and comes from the horizontal
boundary of $B$), while every $\beta_i$ is a normal arc that connects
distinct ideal edges of $W$ (and comes from the vertical boundary of
$B$).

We claim that every $\beta_i$ spans the product bundle $B = Q \times
I$ top to bottom. For, suppose for concreteness that both endpoints of
$\beta_1$ are on $Q \times \{ 1 \}$. Then $\beta_1 \subset W$ is
parallel to $Q \times \{ 1 \}$ through $Q \times I$, which gives a
boundary compression disk for the white face $W$. This contradicts
Lemma \ref{lemma:white-incompress} on page
\pageref{lemma:white-incompress}, proving the claim. Note that this
implies $n$ is even.

Next, we claim that $n = 2$. For, suppose for a contradiction that $n
> 2$. Then the sides $\alpha_1$ and $\alpha_3$ belong to the same (top
or bottom) boundary of $B$, say $Q \times \{ 1 \}$. There is an arc
$\gamma$ through the polygon $D \subset W$ that connects $\alpha_1$ to
$\alpha_3$. This arc is parallel to $Q \times \{ 1 \}$ through $Q
\times I$, which again gives a boundary compression disk,
contradicting Lemma \ref{lemma:white-incompress}.
We conclude that $D$ is a rectangle, with $\alpha_1, \alpha_2$
horizontal and $\beta_1, \beta_2$ vertical. Thus, after an appropriate
isotopy, $D$ is a vertical rectangle in the product structure on $B$.

\smallskip

Now, suppose that $B = Q \widetilde{\times} I$, where $Q$ is
non-orientable. Let $\gamma_1, \ldots, \gamma_n$ be a maximal
collection of disjoint, embedded, orientation--reversing loops in
$Q$. Then the $I$--bundle over each $\gamma_i$ is a M\"obius band
$A_i$. Furthermore, $Q \setminus (\cup \gamma_i)$ is an orientable
surface $Q_0$, such that $B_0 = B \setminus (\cup A_i)$ is a product
$I$--bundle over $Q_0$.
Let $W$ be a white face, and let $D$ be a component of $B \cap W$. By
the orientable case already considered, every component of $B_0 \cap
W$ is a product rectangle $\alpha \times I$. Also, $A_i \cap W$ is a
union of arcs, hence the regular neighborhood of each vertical
M\"obius band $A_i$ intersects $W$ in rectangular product strips. Each
of these strips respects the $I$--bundle structure of $B$. Thus $D$ is
constructed by joining together several product rectangles of $B_0
\cap W$, along product rectangles in the neighborhood of $A_i \cap
W$. Therefore, all of $D$ is a product rectangle, as desired.
\end{proof}

%	Each of these steps is complete in one of the
%	following three propositions.

\begin{prop}  \label{prop:Ibdl-parabolic}{\rm [Step 1]}
  Let $B$ be a non-trivial $I$--bundle of the characteristic
  submanifold of $M_A$.  Then $B$ meets the parabolic locus of $M_A$.
\end{prop}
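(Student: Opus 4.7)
The plan is a proof by contradiction using the normal-surface machinery developed in this chapter. Suppose $B$ is a non-trivial $I$--bundle component of the characteristic submanifold of $M_A$ with $B \cap P = \emptyset$. First, I would apply Lemma \ref{lemma:squares} to extract a product sub-bundle $Y = Q \times I \subset B$ over a pair of pants $Q$. The three vertical boundary annuli $A_1, A_2, A_3 \subset \bdy Y$ lie inside $B$, so they are disjoint from $P$; when put in normal form they consist of disjointly embedded normal \emph{squares} (and not trapezoids, since $\bdy Y$ misses $P$).

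The goal of the argument is to produce a parabolic compression that forces one of the $A_i$ to meet $P$, contradicting $B \cap P = \emptyset$. I would choose a normal square $S$ on some $A_i$ that lies in the upper polyhedron, with white arcs $\beta_v, \beta_w$ in white faces $V, W$, and split into two cases. If $V$ and $W$ lie in different polyhedral regions, then because $S$ is glued across $W$ to the adjacent square of $A_i$ in a lower polyhedron, Proposition \ref{prop:square-compress} applies and forces $\beta_w$ to cut off a single ideal vertex of $W$. The resulting parabolic compression then realizes a normal isotopy of $A_i$ onto $P$, giving the contradiction.

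If instead $V, W$ lie in the same polyhedral region, I would apply the clockwise map $\clock$ of Definition \ref{def:clockwise} and Lemma \ref{lemma:clockwise} to produce a virtual normal square $\clock(S)$ in the corresponding lower polyhedron. The adjacent genuine square $T \subset A_i$ in that lower polyhedron has $T \cap W$ differing from $\clock(S) \cap W$ by one rotation, so Lemma \ref{lemma:squares-int} gives two sub-cases: either each of $\clock(S), T$ cuts off a single ideal vertex of $W$ (in which case $T \subset A_i$ is parabolically compressible and $A_i$ again meets $P$), or else $\clock(S)$ and $T$ have essential intersection in a second white face $W'$, which permits one to iterate the analysis with a new square of $A_i$ accessed through $W'$. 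The edge case in which no $A_i$ has any square in the upper polyhedron reduces to normal annuli in prime alternating polyhedra, handled by Lackenby's techniques from \cite{lackenby:volume-alt}.

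The hard part will be managing the iteration in the second sub-case of Lemma \ref{lemma:squares-int}: the virtual square $\clock(S)$ is not itself a piece of $\bdy Y$, so the essential intersection at $W'$ must be turned into an essential intersection between two genuine squares of $\bdy Y$ before the analysis can repeat. I would use the tentacle-chasing lemmas of Section \ref{sec:para-compress}---particularly Lemma \ref{lemma:enter-RW} (Entering polyhedral region) and Lemma \ref{lemma:parallel-stairs} (Parallel stairs)---to produce the required combinatorial control, and a finiteness argument using the total normal-square complexity of $\bdy Y$ to guarantee that the iteration terminates.
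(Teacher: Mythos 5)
Your opening moves match the paper (Lemma \ref{lemma:squares} to extract $Y = Q \times I$, Proposition \ref{prop:square-compress} when the two white faces of a square lie in different polyhedral regions, Lemma \ref{lemma:squares-int} via the clockwise map otherwise), but there are two genuine gaps. First, a secondary one: parabolic compressibility of a square $S_i$ does not ``realize a normal isotopy of $A_i$ onto $P$.'' The annulus stays where it is; the correct deduction, which you need and the paper supplies, is that the parabolic compression disk is itself a product $I$--bundle whose fibering matches that of $S_i$, so by maximality of the characteristic submanifold it is absorbed into $B$, and hence $B$ meets the parabolic locus. Without that observation, compressibility of a boundary square gives you nothing about $B$ itself.

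The main gap is your treatment of the remaining case, where every square is parabolically incompressible. The second alternative of Lemma \ref{lemma:squares-int} gives an essential intersection between the \emph{virtual} square $\clock(S)$ and the genuine square $T$; since $\clock(S)$ is not a piece of $\bdy Y$, this contradicts nothing, and your proposed iteration ``through $W'$'' has no mechanism to terminate or to ever produce an intersection between two genuine (disjoint!) squares of $\bdy Y$ --- no complexity decreases. Note that a single annulus can perfectly well be parabolically incompressible (this is exactly what happens for trivial solid-torus components), so any square-by-square argument along one annulus cannot succeed; the non-triviality of $B$ must enter the endgame, not just the setup. The paper's actual argument at this point is different in kind: Lemma \ref{lemma:product-in-face} (Product rectangle in white face) shows $Y$ meets white faces in product rectangles, so $Y$ decomposes into prisms; because $Q$ is a pair of pants, some prism $Y_0$ has at least three lateral normal squares $T_1, T_2, T_3$; one maps these, together with the square $T_0$ glued to $T_2$ across $V$, into a single lower polyhedron by the clockwise map, where $T_1', T_2', T_3'$ remain disjoint by Lemma \ref{lemma:clockwise}(3). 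Lemma \ref{lemma:squares-int} then forces $T_0'$ to intersect each of $T_1', T_2', T_3'$ in both $V$ and $W$, which is impossible because the three disjoint squares must be parallel with $T_2'$ separating $T_1'$ from $T_3'$, while all three are lateral faces of the same contractible prism $Y_0$. Your proposal never invokes Lemma \ref{lemma:product-in-face} or the prism structure, and the tentacle-chasing lemmas you cite do not substitute for it.
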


\begin{proof}
Recall that since $B$ be is non-trivial we have $\chi(B)<0$.  By Lemma
\ref{lemma:squares}, $B$ contains a product bundle $Y=Q \times I$,
where $Q$ is a pair of pants.  Moreover, when put into normal form in
a prime decomposition of $M_A$, the three annuli of $\bdy Y$ are
composed of disjointly embedded normal squares.  Label the squares
$S_1, \dots, S_n$.  Observe each has the form of a product $S_i =
\gamma_i \times I$, where $\gamma_i$ is a sub-arc of $\partial Q$.

If some $S_i$ is parabolically compressible, observe that the
parabolic compression disk $E$ that connects $S_i$ to the parabolic
locus is itself a product $I$--bundle, which can be homotoped to have
product structure matching that of $S_i$.  Hence $E \subset B$, and
$B$ borders the parabolic locus, as desired.

If $Y$ passes through more than one lower polyhedron, some square
$S_i$ must pass through white faces in different polyhedral
regions. Thus, by Proposition \ref{prop:square-compress}, $S_i$ is
parabolically compressible. Hence, as above, $B$ borders the parabolic
locus.

For the rest of the proof, assume that every $S_i$ is parabolically
incompressible, and so $Y$ is entirely contained in the upper
polyhedron and exactly one lower polyhedron $P$.  This assumption will
lead to a contradiction.

Consider the intersections between $Y = Q \times I$ and the white faces.
By Lemma \ref{lemma:product-in-face}, each component of intersection 
is a product rectangle $\alpha \times I$, where $\alpha$
is an arc through the interior of $Q$.
Thus $Y$ intersects the individual polyhedra in a finite number of
prisms, each of which is the product of a polygon with an interval.
Vertical faces of the prism alternate between product rectangles on
white faces and normal squares $S_i$.  Let $Y_0 = Q_0 \times I$ be the
prism whose base polygon has the greatest number of sides.  Since $Q$
is a pair of pants, and has negative Euler characteristic, $Q_0$ must
have at least six sides, half of which lie on normal squares $S_i$.
See Figure \ref{fig:pants-prism}.

\begin{figure}
\begin{overpic}{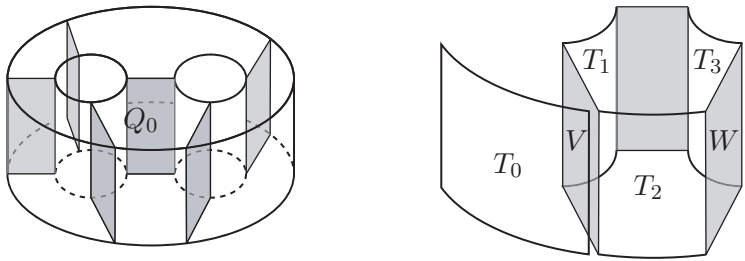}
\put(66, 12){$T_0$}
\put(78, 26){$T_1$}
\put(85, 9){$T_2$}
\put(93, 26){$T_3$}
\put(75.7, 15){$V$}
\put(95, 15){$W$}
\put(16, 18){$Q_0$}
\end{overpic}
\caption{Left: the product bundle $Q \times I$ for a pair of pants
  $Q$. Right: the prism $Q_0 \times I$.}
    \label{fig:pants-prism}
\end{figure}

Let $T_1, \dots, T_k$ denote the normal squares that bound $Y_0$,
listed in order.  By the above, $k\geq 3$.  Let $V$ be the white face
containing the rectangle of $Y_0$ between $T_1$ and $T_2$, and let $W$
be the white face containing the rectangle between $T_2$ and $T_3$.
Finally, let $T_0$ denote the normal square of $\bdy Y$ glued to $T_2$
at the white face $V$.  Thus each $T_j$ is one of the $S_i$,
relabeled.  Note that if $Y_0$ is contained in the upper polyhedron,
then so are $T_1, \dots, T_k$, but $T_0$ is contained in the lower
polyhedron.  Similarly, if $Y_0$ is in the lower polyhedron, then so
are $T_1, \dots, T_k$, but $T_0$ is in the upper polyhedron.

Using Lemma \ref{lemma:clockwise}, map all the $T_j$ to the lower
polyhedron, by the clockwise map.  If $T_0$ is in the upper polyhedron,
let $T_0'$ be its image under the clockwise map.  Otherwise, if $T_0$
is in the lower polyhedron, let $T_0' = T_0$.  Similarly, if $T_i$,
$1\leq i\leq k$ is in the upper polyhedron, let $T_i'$ be its image
under the clockwise map.  Otherwise, let $T_i' = T_i$.  Notice that
because $T_1, \dots, T_k$ are pairwise disjoint, Lemma
\ref{lemma:clockwise}(3) implies that $T_1', \dots, T_k'$ must also be
disjointly embedded in the lower polyhedron $P$.

Now, since $T_0$ is glued to $T_2$ across the white face $V$, $T_0'$
must differ from $T_2'$ by a single rotation of $V$.  Since we are
assuming that $T_0'$ and $T_2'$ cannot be parabolically compressible
at $V$, Lemma \ref{lemma:squares-int} implies $T_0'$ and $T_2'$ must
intersect, both in $V$ and in the other white face met by $T_2'$,
which recall is the face $W$.

\begin{figure}
  \input{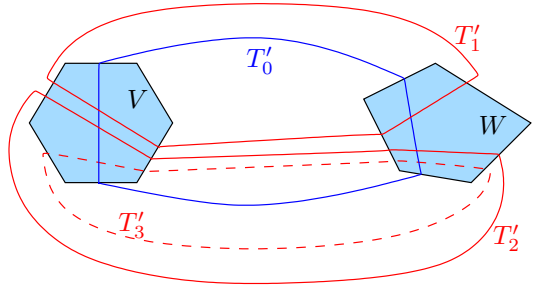}
  \caption{Normal squares in the lower polyhedron that occur in the
    proof of Proposition \ref{prop:Ibdl-parabolic}. }
  \label{fig:stripes}
\end{figure}

Now, $T_1'$ runs parallel to $T_2'$ through $V$.  Thus $T_0'$ must
also intersect $T_1'$, and again Lemma \ref{lemma:squares-int} implies
that $T_0'$ and $T_1'$ intersect in $W$.  Similarly, $T_3'$ runs
parallel to $T_2'$ through $W$, so Lemma \ref{lemma:squares-int}
implies that $T_0'$ intersects $T_3'$ in $W$ and in $V$.  But now,
$T_1'$, $T_2'$, and $T_3'$ are disjoint normal squares through $V$ and
$W$, so must be parallel, and in particular, $T_2'$ must separate
$T_1'$ and $T_3'$.  See Figure \ref{fig:stripes}.  On the other hand,
$T_1'$, $T_2'$ and $T_3'$ are all lateral faces of the same
contractible prismatic block $Y_0$.  This is a contradiction.
\end{proof}

The following proposition supplies Step 2 of the proof. We note that
the proof of the proposition is a straightforward topological argument
that doesn't use any of the machinery we have built.

\begin{prop}  \label{prop:some-epds-span}{\rm [Step 2]}
  Suppose $B$ is a non-trivial connected component of the maximal
  $I$--bundle of $M_A$, which meets the parabolic locus.  Then $B$ is
  spanned by essential product disks.
\end{prop}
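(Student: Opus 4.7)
My approach is to exploit the $I$--bundle structure of $B$ and reduce the problem to a standard cell decomposition of a surface with marked boundary. First I would identify $B$ as an $I$--bundle over a compact base surface $F$, where $\chi(F) = \chi(B) < 0$ (note $F$ may be non-orientable, in which case $B$ is a twisted $I$--bundle). The vertical boundary $\partial_v B$ is an $I$--bundle over $\partial F$, whose components are annuli. Each such annulus is either contained in the parabolic locus of $M_A$ (I will call the corresponding boundary circle of $F$ \emph{parabolic}) or it is an essential annulus in $M_A$ separating $B$ from another piece of the characteristic decomposition (\emph{non-parabolic}). The hypothesis that $B$ meets the parabolic locus guarantees at least one parabolic boundary circle of $F$.

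Next, I would construct a finite collection of pairwise disjoint, properly embedded essential arcs $\alpha_1, \ldots, \alpha_n$ in $F$, each with both endpoints on parabolic boundary circles, such that every component of $F \setminus (\alpha_1 \cup \cdots \cup \alpha_n)$ is either a disk (a polygon whose boundary alternates between arcs $\alpha_i$ and segments of parabolic $\partial F$) or an annulus, one of whose boundary components is a non-parabolic circle of $\partial F$. This is a standard cell decomposition of a surface with cusps: since $\chi(F) < 0$ and $F$ has at least one parabolic boundary, one can first cut off each non-parabolic boundary circle by an arc that separates an annular neighborhood of it (after reaching the non-parabolic circle from a parabolic one along an appropriate arc), and then subdivide the remaining pieces into polygons by an ideal triangulation with vertices at the parabolic boundary.

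Then I would lift the arcs to vertical disks $D_i \subset B$ via the $I$--bundle projection. Each $D_i$ is a rectangle whose two horizontal sides lie on $\partial_h B \subset \widetilde{S_A}$ and whose two vertical sides lie on $\partial_v B$ above the two endpoints of $\alpha_i$; by construction, these endpoints are on parabolic circles, so $\partial D_i$ meets the parabolic locus in exactly two arcs. Since $\alpha_i$ is essential in $F$ (not boundary parallel, not a loop), $D_i$ is incompressible and parabolically incompressible in $B$, hence essential in $M_A$ by incompressibility of $\widetilde{S_A}$ (Theorem \ref{thm:incompress}). Thus each $D_i$ is an EPD. Finally, cutting $B$ along $D_1 \cup \cdots \cup D_n$ yields the $I$--bundle over $F \setminus (\alpha_1 \cup \cdots \cup \alpha_n)$, i.e.\ a disjoint union of prisms over polygons and solid tori over annuli (or M\"obius bands in the twisted case). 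This matches Definition \ref{def:span}, so the collection spans $B$.

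The main obstacle is the decomposition step for the base surface $F$ when $F$ has non-parabolic boundary components: I must ensure that arcs can be drawn from the parabolic boundary to separate off each non-parabolic circle into its own annulus, and that the remaining pieces admit an ideal cell decomposition with vertices on parabolic boundary. Both are standard 2--manifold facts once the parabolic boundary is non-empty and $\chi(F)<0$, but care is needed to verify that the arcs are essential so that the lifted disks are genuine EPDs rather than boundary parallel disks.
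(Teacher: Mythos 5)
Your proposal is correct and takes essentially the same route as the paper: there, too, one realizes $B$ as an $I$--bundle over a base surface $F$ with $\chi(F)<0$ that meets the parabolic locus, fills $F$ with disjoint arcs having endpoints on the parabolic part of $\partial F$, and takes the vertical disks lying over these arcs as the spanning essential product disks, so that the complement consists of prisms (and solid tori). Your added care about separating off non-parabolic boundary circles into annuli and checking essentiality of the arcs merely fills in details that the paper handles implicitly by triangulating $F$ with edges on the parabolic locus and invoking Definition~\ref{def:span}.
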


\begin{proof}
Since $B$ is a $3$--dimensional submanifold of $S^3$, it is
orientable.  Hence $B$ is either the product $I$--bundle over an
orientable surface $F$, or the twisted $I$--bundle over a
non-orientable surface $F$.  In either case, $\chi(F)<0$.  Since $B$
meets the parabolic locus of $M_A$, so does $F$.
  
We may fill $F$ with disjoint edges with endpoints on the parabolic
locus of $F$, subdividing $F$ into disjoint triangles.  Now consider
the set of points lying over any such edge of the triangulation.  This
will be an essential product disk, meeting the parabolic locus of
$M_A$ at those points that lie over the endpoints of the edge, and
meeting $S_A$ elsewhere.  Remove all such essential product disks from
$B$.

In $F$, removing such arcs gives a finite collection of open disks.
The $I$--bundle over such a disk is a prism over a triangle, so we
have satisfied the definition of spanning, Definition \ref{def:span}.
\end{proof}

To complete the third and  final step of the proof of Theorem
\ref{thm:epd-span}, we need the following lemma.

\begin{lemma}  \label{lemma:trap-compress}
  Suppose $S$ is a normal disk in the upper polyhedron, glued to a
  normal disk $T$ in the lower polyhedron along a white face $W$.  If
  one of $S$ or $T$ is a normal trapezoid and the other is either a
  normal trapezoid or a normal square, then both $S$ and $T$
  parabolically compress at $W$.
\end{lemma}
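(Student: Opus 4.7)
The plan is to adapt the strategy of Lemma \ref{lemma:trapezoid} to the two-polyhedron setting, combining it with Proposition \ref{prop:square-compress}. The main move is to convert any trapezoid into a normal square by pulling it off its ideal vertex, then to apply the single-polyhedron tools to the resulting pair of squares. Without loss of generality, I will assume that $T$ is a trapezoid in the lower polyhedron, with $\beta_v^T$ on a truncated ideal vertex $v_T$ and with shaded sides on faces $F_1, F_2$.

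First I would pull $T$ across the rectangle of $v_T$ into the adjacent white face $W_T \neq W$ (the fourth face meeting $v_T$), producing a normal square $T'$ in the lower polyhedron. The arc $\beta_w^{T'}$ coincides with $\beta_w^T$ as a normal arc, and the new arc $T' \cap W_T$ cuts off the single ideal vertex $v_T$. If $S$ is also a trapezoid, I would perform the analogous pull-off in the upper polyhedron to produce a normal square $S'$ whose arc on an adjacent white face $W_S$ cuts off $v_S$; otherwise I simply set $S' = S$ and let $V$ denote its second white face. This reduces the problem to two normal squares $S'$ and $T'$ glued across $W$.

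Next, I would split into two cases depending on where the second white face of $S'$ lies. If this face lies in a polyhedral region different from the region of $W$, Proposition \ref{prop:square-compress} applies directly to the pair $(S', T')$ and gives that $S'$ cuts off a single ideal vertex in $W$, hence is parabolically compressible at $W$. Otherwise, both white faces of $S'$ lie in the same polyhedral region, and Lemma \ref{lemma:clockwise}(2) produces a normal square $\phi(S')$ in the lower polyhedron whose arc on $W$ differs from $\beta_w^{T'}$ by a single clockwise rotation. Applying Lemma \ref{lemma:squares-int} to $\phi(S')$ and $T'$ gives two alternatives: either (1) each cuts off a single ideal vertex in $W$, or (2) they intersect essentially in $W$ and in another common white face.

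The main obstacle is ruling out alternative (2). Since $T'$'s white faces are $W$ and $W_T$, the second common face must be $W_T = \phi(V)$; but then the essential intersection in $W_T$ occurs alongside the fact that $T' \cap W_T$ cuts off the single ideal vertex $v_T$. Here I would invoke Lemma \ref{lemma:marcs} at $W_T$: any essential intersection of two normal squares there forces a second intersection in a face of the same color, and tracing through the normal arc combinatorics (using that $T' \cap W_T$ separates a small region containing only $v_T$ from the rest of $W_T$) produces a contradiction with $\phi(S')$ being a normal square. Hence alternative (1) holds, so $\phi(S')$ cuts off a single ideal vertex in $W$, and applying $\phi^{-1}$ shows the same for $S'$. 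Reversing the pull-off isotopy (if one was performed) then yields a parabolic compression of $S$ at $W$. The symmetric argument, with the roles of $S$ and $T$ and upper/lower polyhedra interchanged, gives the compression for $T$, completing the proof.
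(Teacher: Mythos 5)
Your overall route is the paper's: pull any trapezoid off the parabolic locus to obtain squares $S'$, $T'$ glued at $W$, dispatch the case where the second white face of $S'$ lies in a different polyhedral region via Proposition \ref{prop:square-compress}, and otherwise transfer $S'$ to the lower polyhedron by the clockwise map and rule out conclusion (2) of Lemma \ref{lemma:squares-int} using the fact that the pulled-off square cuts off a single ideal vertex in its other white face (so no normal square can meet it essentially there, since that would force an endpoint on the truncated vertex). That core is exactly the paper's argument.

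However, your two appeals to symmetry are not justified as written. First, ``without loss of generality $T$ is the trapezoid'' is not a genuine WLOG: the upper and lower polyhedra play asymmetric roles, since Proposition \ref{prop:square-compress} and the clockwise map of Lemma \ref{lemma:clockwise} are formulated for squares in the \emph{upper} polyhedron. So the case where $S$ is the trapezoid and $T$ is a square is not covered by your argument as stated; it is handled by the same mechanism, but with $\clock(S')$ playing the role of the vertex-cutting square, after noting that the clockwise image of an arc cutting off a single ideal vertex again cuts off a single ideal vertex. This is why the paper phrases the key step uniformly as ``either $S''$ or $T'$ cuts off a single ideal vertex in a white face other than $W$.'' Second, your closing step --- getting the compression of $T$ by ``the symmetric argument with upper/lower interchanged'' --- is not available: no counterclockwise map from lower to upper is developed here, and a square in a lower polyhedron has both its white faces in a single polyhedral region, so the different-regions case has no lower analogue. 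It is also unnecessary: in your same-region case, conclusion (1) of Lemma \ref{lemma:squares-int} already says that $T'$ itself cuts off a single ideal vertex of $W$, and in the different-regions case $T$ is glued to $S$ along $W$, so it parabolically compresses at $W$ as soon as $S$ does. With these repairs your proposal coincides with the paper's proof.
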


\begin{proof}
For each of $S$ and $T$, define normal squares $S'$ and $T'$ in the
following way. If $S$ is a normal square, then let $S'=S$. If $S$ is a
trapezoid, then let $S'$ be the normal square obtained by pulling $S$
off the parabolic locus, and into a white face $V$. Similarly, if $T$
is a normal square, we let $T'=T$; otherwise, if $T$ is a trapezoid,
we define $T'$ to be the normal square obtained by pulling $T$ off the
parabolic locus.

Notice that the resulting squares $S'$ and $T'$ are glued to each
other at $W$.  By Proposition \ref{prop:square-compress}, if $V$ and
$W$ do not belong to the same region of $s_A \cup (\cup_i \alpha_i)$,
$S'$ is parabolically compressible at $W$.  If $S'$ parabolically
compresses at $W$, then so does $T'$ (because it is glued to $S'$ at
$W$).

Thus, we may assume that $V$ and $W$ belong to the same region of the
complement of $s_A \cup (\cup_i \alpha_i)$.  Then the entire boundary
of $S'$ can be mapped to the boundary of a square $S''$ in the lower
polyhedron containing $T'$, via the clockwise map of Definition
\ref{def:clockwise} and Lemma \ref{lemma:clockwise}.

By hypothesis, either $S$ or $T$ (or both) is a trapezoid. Thus either
$S''$ or $T'$ cuts off a single ideal vertex in a white face other
than $W$, hence $S''$ and $T$ do not intersect in any white face other
than $W$.  This means we must have conclusion \eqref{squares-int:1} of
Lemma \ref{lemma:squares-int}: $S''$ and $T'$ do not intersect at all,
and each of them cuts off an ideal vertex of $W$.  Therefore, both $S$
and $T$ are parabolically compressible at $W$.
\end{proof}

\begin{prop}  \label{prop:epds-poly} {\rm [Step 3]}
  Let $D$ be an essential product disk in $M_A$.  Then $D$
  parabolically compresses to a collection of essential product disks,
  each of which is embedded in a single polyhedron.
\end{prop}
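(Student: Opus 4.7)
The plan is to put $D$ into normal form and induct on $|D \cap \mathcal{W}|$, the total number of arcs of intersection of $D$ with the union $\mathcal{W}$ of all white faces. The base case $|D \cap \mathcal{W}| = 0$ is immediate: $D$ is contained in a single polyhedron.

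For the inductive step, the arcs of $D \cap \mathcal{W}$ cut $D$ into pieces; since $D$ is a disk, the adjacency graph of these pieces is a tree, and adjacent pieces lie in distinct polyhedra. I would first establish that every leaf of this tree is a normal trapezoid. A leaf $Q$ has a single white-face side $\alpha$, so $\bdy Q \cap \bdy D$ is a connected sub-arc $\beta$ of $\bdy D$ entirely contained in the polyhedron housing $Q$. Within one polyhedron the distinct shaded faces are disjoint, so a transition from one shaded face to another (without leaving the polyhedron) must cross a parabolic piece at some ideal vertex. If $\beta$ contained no parabolic arc of $\bdy D$, it would lie in a single shaded face, making $Q$ a normal bigon and contradicting Proposition~\ref{prop:no-normal-bigons}. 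Hence every leaf contains at least one of the two parabolic arcs of $\bdy D$, so the tree has at most two leaves. If some leaf contained both parabolic arcs, the other leaf would contain none, a contradiction; thus each leaf contains exactly one parabolic arc, flanked by two shaded sub-arcs. Since $\beta$ has only one parabolic arc available for transitions, each shaded sub-arc lies in a single shaded face, making $Q$ a normal trapezoid.

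Next I would show that every internal piece is a normal square. When $|D \cap \mathcal{W}| \geq 1$ the tree has at least two vertices and therefore at least two leaves, but the preceding bound forces exactly two, so the tree is a path and every internal piece has degree exactly two. Internal pieces contain no parabolic arcs of $\bdy D$, so their boundaries alternate white-shaded-white-shaded, with each shaded side in a single shaded face (again by the parabolic-arc budget). Hence each internal piece is a normal square.

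Let $Q$ be one of the two leaf trapezoids, glued across its white face $W$ to its neighbor $R$, which is either a normal square (when $R$ is internal) or the other leaf trapezoid (when $|D \cap \mathcal{W}| = 1$). Since $Q$ and $R$ lie in different polyhedra, Lemma~\ref{lemma:trap-compress} applies and produces a parabolic compression disk $E$ at $W$. Parabolically compressing $D$ along $E$ yields two EPDs $D'$ and $D''$ with $|D' \cap \mathcal{W}| + |D'' \cap \mathcal{W}| < |D \cap \mathcal{W}|$, and by the inductive hypothesis each of these parabolically compresses to a collection of EPDs in single polyhedra; concatenating these compressions gives the required decomposition of $D$. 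The main obstacle I expect is the structural analysis — in particular, verifying that a sub-arc $\beta$ of $\bdy D$ inside a single polyhedron cannot transit between distinct shaded faces without crossing a parabolic piece; this rests on understanding the local adjacencies of faces around truncated ideal vertices, combined with the fact that $\bdy D$ cannot enter white faces because those are interior to $M_A$.
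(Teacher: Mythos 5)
Your proof is correct and takes essentially the same route as the paper's: both use Proposition \ref{prop:no-normal-bigons} to force the white-face intersection arcs to split $D$ into a linear chain of two end trapezoids and intermediate squares (your tree/leaf count with the two parabolic arcs is just a more detailed phrasing of the paper's observation that each arc must run from one side of $D$ to the opposite side), and then repeatedly apply Lemma \ref{lemma:trap-compress} at an end white face, compressing and iterating/inducting.
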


\begin{proof}
Let $D$ be an essential product disk in $M_A$.  If $D$ is disjoint
from all white faces, then we are done: $D$ is contained in a single
polyhedron.

If $D$ meets white faces, then they split it into disks $S_1, \dots,
S_n$.  Because our polyhedra cannot contain any normal bigons (Lemma
\ref{prop:no-normal-bigons}), every arc of intersection between $D$
and a white face must run from one side of $D$ to the opposite side.
Thus $S_1$ and $S_n$ are normal trapezoids, and $S_2, \dots, S_{n-1}$
are normal squares.
Consider $S_1$ and $S_2$.  One of these is in the upper polyhedron,
and one in the lower.  They meet at a white face $W$.  Lemma
\ref{lemma:trap-compress} implies that both $S_1$ and $S_2$ must
parabolically compress at $W$.  So $D$ compresses to essential product
disks $D_1$ and $D_2$, where $D_1$ is the compressed image of $S_1$
and $D_2$ is the compressed image of $S_2\cup \dots \cup S_n$.  Repeat
this argument for the essential product disk $D_2$.  Continuing in
this manner, we see that $D$ parabolically compresses to essential
product disks, each in a single polyhedron.
\end{proof}

%%%%%%%%%%%%%%%%%%%%%%%%%%%%%%%%%%%%%%%%%%%%%%%%%%%%%%%%%%%%%%%%%

\section{The $\sigma$--adequate, $\sigma$--homogeneous
  setting}\label{sec:gensigmahomo}

All the results of this chapter also hold in the setting of the
ideal polyhedral decomposition for $\sigma$--adequate,
$\sigma$--homogeneous diagrams.  Here, we briefly discuss this
generalized setting.

Lemma \ref{lemma:squares} holds, and the proof requires no changes.
Hence in the characteristic $I$--bundle, we may find a product bundle
$Y=Q\times I$, where $Q$ is a pair of pants and the annuli of
$\partial Y$ are composed of embedded normal squares.

In Definition \ref{def:clockwise}, the \emph{clockwise map} was defined
to map faces of the upper polyhedron to faces of the lower.  Each of
our white faces in the $\sigma$--homogeneous case is contained in an
all--$B$ or all--$A$ polyhedral region.  In the latter case, we use
the same clockwise map as before.  As the map is identical, all the
results in that section hold.  In the all--$B$ case, rather than
mapping by one rotation in the clockwise direction, we need to map by one
rotation in the counter-clockwise direction.  However, the properties
in Lemma \ref{lemma:clockwise} will still hold in the all--$B$ case,
and the proof goes through without change.

We then have Lemma \ref{lemma:marcs}, which discusses the intersections of normal
squares in a checkerboard polyhedron. This lemma is due to Lackenby \cite[Lemma
  7]{lackenby:volume-alt}, and holds in complete generality. This
immediately implies Lemma \ref{lemma:squares-int}: two normal squares
with arcs in the same white face which differ by a single rotation,
will either each cut off a single ideal vertex in that face and not
intersect at all, or intersect nontrivially in both of their
corresponding white faces.  We also obtain Lemma
\ref{lemma:trapezoid}.

The results of Section \ref{sec:para-compress} will hold as
well.  A check through their proofs indicates that they require
the named lemmas from Chapter \ref{sec:polyhedra}, which we have shown to hold
in the $\sigma$--adequate, $\sigma$--homogeneous case.  In particular,
Proposition \ref{prop:square-compress} holds: A normal square whose
white faces are in different polyhedral regions, glued to a normal
square in a lower polyhedron, must parabolically compress.  More
particularly, it will cut off a single ideal vertex in the white face.
The proof of the proposition uses Lemmas \ref{lemma:enter-RW} and
\ref{lemma:sigma_2-connect}, as well as Lemma \ref{lemma:trapezoid},
which continue to hold.  There are two cases of the proof.  The second
uses the clockwise map.  In the case that the polyhedral region is
all--$B$, we must use the ``counter-clockwise map'' instead.
This requires reflecting the figures that illustrate the proof, but 
 the combinatorics of the situation will remain unchanged.

Finally, we step through the results of Section
\ref{subsec:pfthmepd-span}.  Lemma \ref{lemma:product-in-face}
(Product rectangle in white face) 
requires only Lemma \ref{lemma:white-incompress}, which follows
immediately from Proposition \ref{prop:no-normal-bigons-hom} (No
normal bigons) in the $\sigma$--adequate, $\sigma$--homogeneous case.
Hence it continues to hold in this setting.  Similarly, Proposition
\ref{prop:Ibdl-parabolic} holds. The proof applies verbatim, with the
sole modification that the ``clockwise map'' must be replaced by the 
``counter-clockwise map'' in an all--$B$ polyhedral region.

Proposition \ref{prop:some-epds-span} holds without change.
Lemma \ref{lemma:trap-compress} holds after replacing ``clockwise''
with ``clockwise or counter-clockwise'' in the proof. Finally, Proposition
\ref{prop:epds-poly} holds without change.

Thus all the results of this chapter hold for $\sigma$--adequate,
$\sigma$--homogeneous diagrams.  In particular, the following generalization of
Theorem \ref{thm:epd-span} 
reduces the problem of understanding the $I$--bundle 
of the characteristic submanifold of  $M_{\sigma}=S^3\cut S_{\sigma}$
to the problem of understanding
and counting EPDs in individual polyhedra.  

\begin{theorem}\label{thm:sigma-epd-span}
Let $D$ be a (connected) diagram of a link $K$, 
and let $\sigma$ be an adequate, homogeneous state of $D$.
Let $B$ be a non-trivial component of the characteristic submanifold
of $M_{\sigma} =S^3\cut S_{\sigma}$. Then $B$ is spanned by a collection of essential product
disks $D_1, \dots, D_n$, with the property that each $D_i$ is embedded
in a single polyhedron in the polyhedral decomposition of $M_{\sigma}$. \qed
\end{theorem}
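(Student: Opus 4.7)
The plan is to mirror the three-step structure of the proof of Theorem \ref{thm:epd-span}, which has already been prepared for by the technical generalizations discussed throughout Section \ref{sec:gensigmahomo}. That is, I would establish the following three statements in the $\sigma$--adequate, $\sigma$--homogeneous setting, each of which has a direct analogue in the $A$--adequate case:
\begin{enumerate}
\item[(i)] Every non-trivial component $B$ of the characteristic submanifold of $M_\sigma$ meets the parabolic locus (analogue of Proposition \ref{prop:Ibdl-parabolic}).
\item[(ii)] Any non-trivial component $B$ that meets the parabolic locus is spanned by essential product disks (analogue of Proposition \ref{prop:some-epds-span}).
\item[(iii)] Every EPD in $M_\sigma$ parabolically compresses to a collection of EPDs, each embedded in a single polyhedron of the polyhedral decomposition of $M_\sigma$ (analogue of Proposition \ref{prop:epds-poly}).
\end{enumerate}
Combining these three statements immediately yields the conclusion, exactly as in the original proof: start with $B$, invoke (i) to guarantee it meets the parabolic locus, use (ii) to produce a spanning set of EPDs, and then apply (iii) to each spanning EPD to replace it with EPDs each living in a single polyhedron.

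For (i), I would reproduce the argument of Proposition \ref{prop:Ibdl-parabolic} verbatim. The proof begins by extracting a product bundle $Y = Q \times I$ over a pair of pants $Q \subset B$ (the proof of Lemma \ref{lemma:squares} carries over without change, using Theorem \ref{thm:sigma-homo-poly} and Proposition \ref{prop:no-normal-bigons-hom}). If $Y$ crosses white faces in more than one polyhedral region, the generalized Proposition \ref{prop:square-compress} (valid by the discussion in Section \ref{sec:gensigmahomo}) forces a parabolic compression, and hence $B$ touches the parabolic locus. Otherwise, $Y$ lies in a single lower polyhedron and the upper polyhedron; the clockwise/counter-clockwise map of Lemma \ref{lemma:clockwise} --- whose statement and proof extend to both all--$A$ and all--$B$ polyhedral regions --- maps the boundary squares of a largest prismatic block to pairwise disjoint normal squares in the relevant lower polyhedron, and Lemma \ref{lemma:squares-int} applied to the face $V$ adjacent to $T_0'$ produces the same contradiction as before. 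For (ii), the argument of Proposition \ref{prop:some-epds-span} is purely topological: triangulate the base surface $F$ using arcs with endpoints on the parabolic locus, and observe that the $I$--bundle over each arc is an EPD. This requires no modification.

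The main obstacle is verifying (iii), i.e.\ reproducing Proposition \ref{prop:epds-poly}. Its proof relies on Lemma \ref{lemma:trap-compress}, which in turn depends on the clockwise map and on Proposition \ref{prop:square-compress}. In the $\sigma$--homogeneous setting, the ``clockwise'' direction must be replaced by a counter-clockwise rotation whenever the white face in question lies in an all--$B$ polyhedral region; the analogous statement of Lemma \ref{lemma:clockwise} for this case (discussed in Section \ref{sec:gensigmahomo}) provides the same combinatorial conclusions. Once Lemma \ref{lemma:trap-compress} is in hand, the inductive parabolic-compression argument of Proposition \ref{prop:epds-poly} goes through unchanged: an EPD $D$ meeting the white faces in arcs is cut into a sequence of normal disks $S_1, \ldots, S_n$ with $S_1, S_n$ trapezoids and $S_2, \ldots, S_{n-1}$ squares (using that the $\sigma$--homogeneous polyhedra have no normal bigons by Proposition \ref{prop:no-normal-bigons-hom}); each gluing white face then forces a parabolic compression, reducing the number of polyhedra crossed.

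The conceptual point is that nothing in the three-step structure of Theorem \ref{thm:epd-span} depends on the uniform ``right--down'' directionality of tentacles; what the arguments actually use is the existence of directed staircases, the No Normal Bigons property, the clockwise/gluing discrepancy of a single rotation, and the Parallel Stairs and Utility lemmas --- all of which have been carefully generalized in the earlier sections. Consequently, once (i)--(iii) are in place, the top-level proof of Theorem \ref{thm:sigma-epd-span} is identical to that of Theorem \ref{thm:epd-span}, and no new ideas are required beyond the orientation bookkeeping at each invocation of the clockwise map.
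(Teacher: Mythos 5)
Your proposal is correct and follows essentially the same route as the paper: Section \ref{sec:gensigmahomo} proves Theorem \ref{thm:sigma-epd-span} precisely by re-running the three propositions behind Theorem \ref{thm:epd-span}, with the only change being the substitution of the counter-clockwise map for the clockwise map in all--$B$ polyhedral regions (in Lemma \ref{lemma:clockwise}, Proposition \ref{prop:square-compress}, Proposition \ref{prop:Ibdl-parabolic}, and Lemma \ref{lemma:trap-compress}), exactly as you describe.
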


%% 5
\chapter{Guts and fibers}\label{sec:spanning}
This chapter contains one of the main results of the manuscript, namely a
calculation of the Euler characteristic of the guts of $M_A$  in Theorem \ref{thm:guts-general}.  The calculation will
be in terms of the number of essential product disks (EPDs) for $M_A$
which are \emph{complex}, as in Definition \ref{def:simple}, below.
In subsequent chapters, we will find bounds on the number of such EPDs
in terms of a diagram, for general and particular types of diagrams
(Chapters \ref{sec:epds}, \ref{sec:nononprime}, and
\ref{sec:montesinos}), and use this information to bound volumes, and
relate other topological information to coefficients of the colored
Jones polynomial (Chapter \ref{sec:applications}).

Recall that we have shown in Theorem \ref{thm:epd-span} that the
$I$--bundle of $M_A$ is spanned by EPDs, each of which is embedded in a single
polyhedron of the polyhedral decomposition.  (See Definitions
\ref{def:epd} and \ref{def:span} on page
\pageref{def:span} to recall the terminology.)  Thus to calculate the
Euler characteristic of the guts, we calculate the minimal number of
such a collection of spanning EPDs.  We will do this by explicitly
constructing a spanning set of EPDs with desirable properties (Lemmas
\ref{lemma:spanning-lower} and \ref{lemma:spanning-upper}).  In
Proposition \ref{prop:spanningset}, we will compute exactly how
redundant the spanning set is.  This leads to the Euler characteristic
computation in Theorem \ref{thm:guts-general}.  Along the way, we also
give a characterization of when the link complement fibers over $S^1$
with fiber the state surface $S_A$, in terms of the reduced state
graph $\GRA$, in Theorem \ref{thm:fiber-tree}.

\section{Simple and non-simple disks}

By Theorem \ref{thm:epd-span}, every non-trivial component in the
characteristic submanifold of $M_A$ is spanned by essential product
disks in individual polyhedra. Our goal is to find and count these
disks, starting with the lower polyhedra.

\begin{lemma} \label{lemma:EPDlower}
Let $D$ be an $A$--adequate diagram of a link in $S^3$.
Consider a prime polyhedral decomposition of $M_A = S^3\cut S_A$.  The
essential product disks embedded in the lower polyhedra are in
one--to--one correspondence with the 2--edge loops in the graph $\GA$.
\end{lemma}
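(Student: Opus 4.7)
The plan is to establish the bijection by constructing a map in each direction and checking it is well-defined, using the description from Lemma \ref{lemma:lower-alt} of each lower polyhedron as the checkerboard polyhedron of the prime alternating diagram obtained from a polyhedral region.

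For the forward direction, I would start with a 2-edge loop in $\GA$: two segments $s_1, s_2$ of $H_A$ whose endpoints lie on the same pair of state circles $C_1, C_2$ (with $C_1 \neq C_2$ by $A$-adequacy). Together with arcs of $C_1, C_2$, these segments cobound a bigon region $R$ in the complement of $s_A$. I first check that $s_1$ and $s_2$ lie in the same polyhedral region: if a non-prime arc $\alpha$ separated them, then $\alpha$ would lie in $R$ with endpoints on $C_1$ or $C_2$, but one side of $\alpha$ inside $R$ would contain no state circles, contradicting Definition \ref{def:non-prime}. Hence $s_1, s_2$ determine ideal vertices $v_1, v_2$ of a single lower polyhedron $P$, and both $v_1, v_2$ appear as corners of the shaded faces $F_1, F_2$ coming from $C_1, C_2$. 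I then form a properly embedded disk $D$ whose boundary consists of two parabolic arcs at $v_1$ and $v_2$ and two shaded arcs in $F_1$ and $F_2$ connecting them. This $D$ is an EPD, and it is essential because a compression of $D$ would yield a normal bigon in $P$, contradicting Proposition \ref{prop:no-normal-bigons}.

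For the backward direction, let $D$ be an EPD in a lower polyhedron $P$, isotoped to normal form. The key observation is that only shaded faces and truncated ideal vertices of $P$ lie in $\partial M_A$; the white faces are interior cutting disks. Therefore $\partial D$ is composed of arcs in shaded faces of $P$ alternating with parabolic arcs at truncated vertices. Since each edge of $P$ separates a white face from a shaded face, two shaded arcs of $\partial D$ cannot be joined directly across an edge; they must be joined through a parabolic arc. As $D$ meets the parabolic locus exactly twice, $\partial D$ has exactly two shaded arcs and two parabolic arcs. The two shaded arcs must lie in distinct shaded faces $F_1, F_2$ (else $D$ could be isotoped into $\partial M_A$, contradicting essentiality). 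Now $F_1, F_2$ correspond to state circles $C_1, C_2$, and $v_1, v_2$ correspond to segments $s_1, s_2$; the fact that both $v_1, v_2$ are corners of both $F_1, F_2$ forces each $s_i$ to connect $C_1$ to $C_2$. Thus $s_1, s_2$ give a 2-edge loop in $\GA$.

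To check bijectivity, note that distinct 2-edge loops produce EPDs whose parabolic arcs sit at different ideal vertices, hence they cannot be normally isotopic. Conversely, each EPD determines its two parabolic arcs and hence the corresponding pair of segments uniquely; the two shaded arcs connecting these vertices are determined up to normal isotopy because the shaded faces $F_1, F_2$ are simply connected by Theorem \ref{thm:simply-connected}.

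The main obstacle I expect is the backward direction, specifically the combinatorial analysis that forces $\partial D$ to have precisely the structure ``two parabolic + two shaded arcs,'' with no white-face arcs. Everything rests on correctly using the fact that white faces are interior to $M_A$, together with primeness of the polyhedron (Proposition \ref{prop:no-normal-bigons}) to exclude the degenerate case where the two shaded arcs lie in the same face.
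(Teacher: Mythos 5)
Your overall route is the same as the paper's, which is much terser: shaded faces of a lower polyhedron correspond to state circles, an ideal vertex shared by two shaded faces corresponds to a segment of $H_A$ (an edge of $\GA$) joining the corresponding circles, and the correspondence between EPDs and $2$--edge loops follows in both directions. Your backward direction (boundary of an EPD in a lower polyhedron consists of exactly two shaded arcs and two parabolic arcs, since white faces are interior to $M_A$) and your bijectivity check are in the spirit of what the paper asserts, and your decision to verify that the two segments of a $2$--edge loop lie in the same polyhedral region is a point the paper glosses over but which is genuinely needed for the map from loops to EPDs to be well defined.

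However, your justification of that last point is not correct as stated. You claim that a separating non-prime arc $\alpha$ would have to lie in the quadrilateral region $R$ cobounded by $s_1,s_2$ and arcs of $C_1,C_2$, and that one side of $\alpha$ inside $R$ would then contain no state circles. The second claim is false in general: the region between the two segments of a $2$--edge loop can certainly contain other state circles (picture an entire tangle sitting between the two crossings), so an arc inside $R$ with endpoints on $C_1$ is not automatically excluded by Definition \ref{def:non-prime}. The correct, and simpler, argument is that no such arc can separate $s_1$ from $s_2$ at all: a non-prime arc $\alpha$ has both endpoints on a single state circle $C$ and interior disjoint from $H_A$, so whichever of $C_1,C_2$ differs from $C$ is disjoint from the separating curve $\alpha\cup(\mbox{arc of }C)$ and lies entirely in one complementary half-disk; since $s_1$ and $s_2$ each have an endpoint on that circle and interiors disjoint from $s_A\cup\alpha$, both segments lie in that same half-disk. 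Two smaller points: the two shaded arcs of $\bdy D$ lie in distinct shaded faces not because of an isotopy-into-the-boundary argument but because $A$--adequacy forbids a segment with both endpoints on one state circle, so the two shaded faces at any ideal vertex are distinct; and the essentiality of the disk built from a $2$--edge loop should not be phrased as ``a compression of $D$ would yield a normal bigon'' (disks admit no compressions) --- rather, since $v_1\neq v_2$, neither side of $\bdy D$ on the polyhedron's boundary cuts off only a single ideal vertex, so $D$ is not parallel into $\bdy M_A$. With these repairs your argument is complete and coincides with the paper's.
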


\begin{proof}
By definition, an EPD in a lower polyhedron must run over a pair of
shaded faces $F$ and $F'$.  By Lemma \ref{lemma:nonprime-3balls} on
page \pageref{lemma:nonprime-3balls}, these shaded faces correspond to
state circles $C$ and $C'$.  Furthermore, every ideal vertex shared by
$F$ and $F'$ corresponds to a segment of $H_A$ between $C$ and $C'$,
or equivalently, to an edge of $\GA$ between $C$ and $C'$.  Since an
EPD must run over two ideal vertices between $F$ and $F'$, it
naturally defines a $2$--edge loop in $\GA$, whose vertices are the
state circles $C$ and $C'$.  In the other direction, the two edges of
a $2$--edge loop in $\GA$ define a pair of ideal vertices shared by
$F$ and $F'$, hence an EPD. Thus we have a bijection.
\end{proof}

Typically, we do not need \emph{all} the disks in the lower polyhedra
to span the $I$--bundle. We will focus on choosing disks that are as
simple as possible.

\begin{define}
\label{def:simple}
Let $P$ be a checkerboard--colored ideal polyhedron. An essential
product disk $D \subset P$ is called
\begin{enumerate}
\item \emph{simple}\index{simple EPD}\index{EPD!simple} if $D$ is the
  boundary of a regular neighborhood of a white bigon face of $P$,
\item \emph{semi-simple}\index{semi-simple EPD}\index{EPD!semi-simple}
  if $D$ parabolically compresses to a union of simple disks (but is
  not itself simple),
\item \emph{complex}\index{complex EPD}\index{EPD!complex} if $D$ is
  neither simple nor semi-simple.
\end{enumerate}
For example, in Figure \ref{fig:para-compression} on page
\pageref{fig:para-compression}, the disk on the left is semi-simple,
and the disks on the right are simple.
\end{define}

In certain special situations (for example, alternating diagrams
studied by Lackenby \cite{lackenby:volume-alt} and Montesinos diagrams
studied in Chapter \ref{sec:montesinos}), simple disks suffice to span
the $I$--bundle of $M_A$. In general, however, we may need to use
complex disks.

\begin{example}\label{example:bad}
Consider the $A$--adequate link diagram shown in Figure \ref{fig:bad},
left. The graph $H_A$ for the diagram is shown in the center of the
figure. Note that there are exactly $3$ polyhedral regions, hence
exactly $3$ lower polyhedra. In each polyhedral region, there is
exactly one 2--edge loop of $\GA$. Thus, by Lemma
\ref{lemma:EPDlower}, there are exactly $3$ EPDs in the lower
polyhedra. Two of these (in the innermost and outermost polyhedral
regions) are simple by Definition \ref{def:simple}, and may be
isotoped through bigon faces into the upper polyhedron.  However, the
green and orange\footnote{Note: For grayscale versions of this
monograph, green will refer to the darker gray shaded face, orange to
the lighter one.} shaded faces of the upper polyhedron shown in the
right panel of Figure \ref{fig:bad} meet in a total of $5$ ideal
vertices. Thus a minimum of $4$ EPDs are required to span the part of
the $I$--bundle contained in the upper polyhedron. This requires using
complex EPDs, for example the ones shown in red in Figure
\ref{fig:bad}.

\begin{figure}
\includegraphics[width=1.5in]{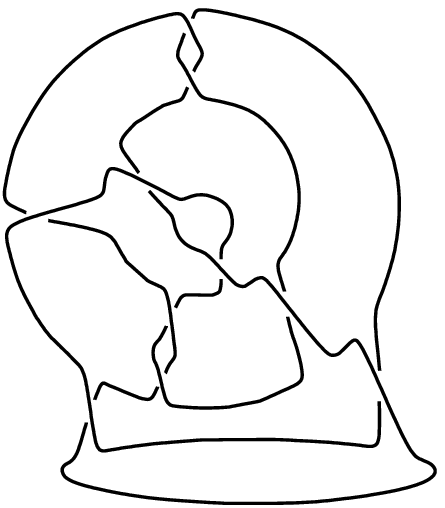}
\hspace{0.1in}
\includegraphics[width=1.5in]{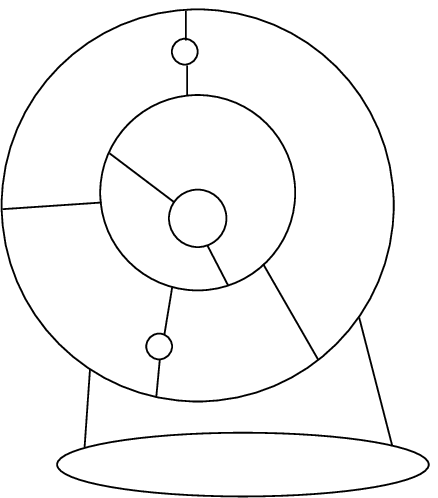}
\hspace{0.1in}
\includegraphics[width=1.5in]{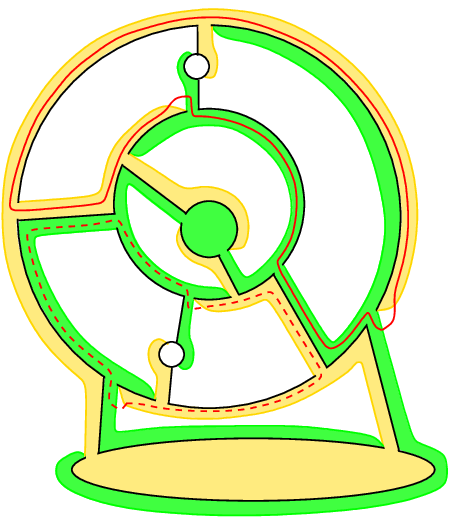}
\caption{Left: an $A$--adequate link diagram. Center: its graph
$H_A$. Right: shaded faces in the upper polyhedron, with two complex
EPDs shown in red.} 
\label{fig:bad}
\end{figure}

\begin{figure}
\includegraphics{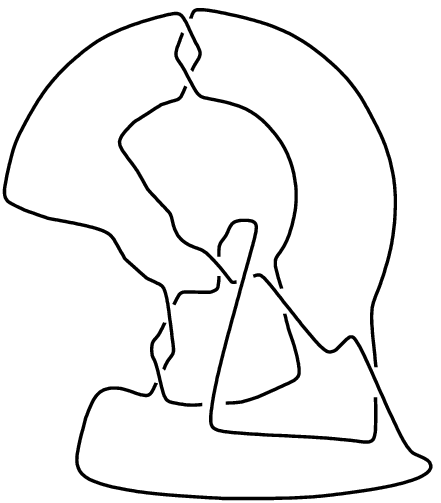}
\hspace{0.1in}
\includegraphics{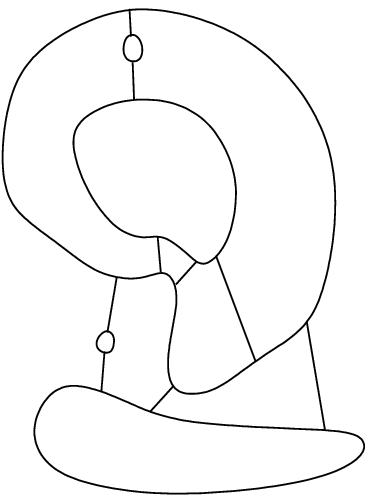}
\caption{Left: an alternate $A$--adequate diagram of the link in
Figure \ref{fig:bad}. Right: the graph $H_A$ for this diagram.} 
\label{fig:bad-fix}
\end{figure}

One feature of this example is that modifying the link diagram fixes
the problem. The modified link diagram in Figure \ref{fig:bad-fix} is
still $A$--adequate. This time, all the EPDs in the lower polyhedra
are simple or semi-simple. Furthermore, simple EPDs (isotoped across
bigon faces from the lower polyhedra into the upper) account for all
the ideal vertices in the upper polyhedron where an EPD may cross from
one shaded face into another. Thus, in the modified diagram, simple
EPDs suffice to span the $I$--bundle. This phenomenon of modifying a
diagram to remove complex EPDs is discussed again in
Chapter \ref{sec:questions}.
\end{example}

\begin{define}\label{def:pitos}
Let $D$ be an essential product disk in a polyhedron $P$. Since $P$ is
a ball, $D$ separates $P$ into two sides. We say that $D$ is
\emph{parabolically incompressible to one side}\index{parabolically incompressible to one side (\pitos)} (or \emph{\pitos} for
short) if all parabolic compression disks for $D$ lie on the same side
of $D$.

Note that simple disks, which have a bigon face to one side, are
automatically \pitos.
\end{define}

A convenient way to characterize \pitos \ disks is via the following
lemma.

\begin{lemma}\label{lemma:pitos}
Let $P$ be a checkerboard--colored ideal polyhedron, and let $F$ and
$G$ be shaded faces of $P$. Let $v_1, \ldots, v_n$ be the ideal
vertices at which $F$ meets $G$. Then
\begin{enumerate}
\item\label{item:vertex-ordering} If we label $v_1, \ldots, v_n$ such
  that the vertices are ordered consecutively around the boundary of
  $F$, for example according to a clockwise orientation on $\bdy F =
  S^1$, then $v_1, \ldots, v_n$ will also be ordered consecutively on
  $\bdy G$, but with the reverse orientation (counterclockwise).
\item\label{item:pitos-consec} With the consecutive ordering of
  \eqref{item:vertex-ordering}, an essential product disk, running
  through faces $F$ and $G$ and ideal vertices $v_i$ and $v_j$, is
  \pitos \ if and only if $j = i \pm 1 \pmod n$.
\item\label{item:pitos-simple} If $n \geq 3$ and every \pitos \ disk
  through faces $F$ and $G$ is simple, then $F$ and $G$ are the only
  shaded faces of $P$.
\end{enumerate}
\end{lemma}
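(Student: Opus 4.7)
The plan is to prove the three parts in sequence, each relying on the previous. Throughout, I work on $\bdy P = S^2$, using the fact (from Theorem \ref{thm:simply-connected}) that each shaded face is a disk.

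For part \eqref{item:vertex-ordering}, I would argue from the planarity of $\bdy P = S^2$. The shaded faces $F$ and $G$ are disjoint closed disks in $S^2$, coming near each other only at the ideal vertices $v_1, \ldots, v_n$, where the $4$--valent checkerboard structure places $F$ and $G$ diagonally opposite one another, separated by two white faces. At each $v_i$, the parabolic quadrilateral in the truncated polyhedron has sides on $F, W_1, G, W_2$ in cyclic order. Connecting the $F$--side and $G$--side across each such quadrilateral produces $n$ disjoint arcs in the planar annulus $S^2 \setminus \mathrm{int}(F \cup G)$ running from $\bdy F$ to $\bdy G$; the only way to embed such a non-crossing arc system into an annulus is with the cyclic orderings on the two boundary circles being reverses of one another, which is the claim.

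For part \eqref{item:pitos-consec}, let $D$ be an EPD through $F$, $G$, $v_i$, $v_j$, and write $\alpha_F = D \cap F$ and $\alpha_G = D \cap G$. These arcs split $F$ and $G$ each into two sub-disks, which, together with small parabolic arcs at $v_i$ and $v_j$, bound the two sides $B^\pm$ of $D$ in the ball $P$. Each remaining vertex $v_k$ lies on exactly one sub-disk of $F$, and by part \eqref{item:vertex-ordering} on the corresponding sub-disk of $G$ on the same side of $D$. The key observation is that $D$ is parabolically compressible on side $B^\epsilon$ precisely when some $v_k$ lies on that side: such a $v_k$ provides a parabolic compression disk (a small product disk near $v_k$ splitting $D$ into two EPDs, as in Figure \ref{fig:para-compression}), and conversely any parabolic compression disk on $B^\epsilon$ has its arc on $F \cup G$ passing through some parabolic arc where $F$ and $G$ come together, i.e.\ through some $v_k \in B^\epsilon$. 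Thus $D$ is \pitos\ iff all other $v_k$ lie on one side of $D$, which by the reversed consecutive ordering from part \eqref{item:vertex-ordering} is equivalent to $j \equiv i \pm 1 \pmod n$.

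For part \eqref{item:pitos-simple}, assume $n \geq 3$ and every \pitos\ disk through $F$ and $G$ is simple. By part \eqref{item:pitos-consec}, for each $i$ the \pitos\ disk $D_i$ through $v_i, v_{i+1 \pmod n}$ bounds a regular neighborhood of a white bigon face $W_i$. Each $W_i$ has $v_i$ and $v_{i+1}$ as its two ideal vertices, and borders $F$ along one edge and $G$ along the other. I assemble $F$, $G$, and the $n$ bigons $W_i$ into a subcomplex $\Sigma \subset \bdy P = S^2$ with $n$ vertices, $2n$ edges, and $n+2$ faces, giving $\chi(\Sigma) = n - 2n + (n+2) = 2 = \chi(S^2)$. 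Since $\Sigma$ is a connected closed sub-surface of $S^2$ with Euler characteristic $2$, it must equal all of $\bdy P$, so $F$ and $G$ are the only shaded faces of $P$. The main technical obstacle will be part \eqref{item:pitos-consec}: carefully setting up the bijection between parabolic compression disks on a given side of $D$ and intersection vertices on that side requires tracking the product structure of $D$ near the parabolic locus, but once established the remaining steps are essentially combinatorial and planar.
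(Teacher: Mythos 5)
Parts \eqref{item:vertex-ordering} and \eqref{item:pitos-consec} of your proposal are fine. For \eqref{item:vertex-ordering} you take a slightly different route from the paper: the paper runs a Jordan-curve argument using the boundary of an EPD through a consecutive pair of vertices, while you look at the non-crossing system of $n$ arcs (one per truncated vertex) in the annulus $S^2 \setminus \mathrm{int}(F \cup G)$; both are correct, and yours has the mild advantage of not needing any EPDs to exist. Your part \eqref{item:pitos-consec} is essentially the paper's argument, identifying parabolic compressions on a side of $D$ with the vertices $v_k$ on that side, at the same level of rigor as the paper.

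The gap is in part \eqref{item:pitos-simple}. Your cell count ``$n$ vertices, $2n$ edges, $n+2$ faces'' and the assertion that $\Sigma = F \cup G \cup \bigcup_i W_i$ is a \emph{closed} subsurface of $S^2$ presuppose that the only ideal vertices and edges on $\bdy F$ and $\bdy G$ are the $v_i$ and the bigon edges, i.e.\ that $F$ and $G$ are $n$--gons. That is exactly the content of the statement: a priori, between $v_i$ and $v_{i+1}$ the boundary of $F$ could run through additional edges and vertices (adjacent to other white and shaded faces), in which case $\Sigma$ has boundary, the count is wrong, and nothing follows. The missing step is supplied by the regular-neighborhood structure you mention but never use: since $D_i$ is the boundary of a regular neighborhood of the bigon $W_i$, the portion of $F$ inside $D_i$ is a product strip along the single edge $F \cap W_i$, so the arc of $\bdy F$ between $v_i$ and $v_{i+1}$ cut off on the bigon side is exactly that one edge; equivalently, at each vertex $v_j$ the two white faces are $W_{j-1}$ and $W_j$, which are distinct precisely because $n \geq 3$, so the two edges of $F$ at $v_j$ are bigon edges and $\bdy F$ closes up after the $n$ bigon edges (likewise for $G$). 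Note that your write-up never invokes $n \geq 3$, and the statement is false for $n = 2$ (two shared vertices with one bigon between them and arbitrary faces on the other side), so some such step cannot be avoided; it is exactly where the paper's proof does its work (``outside all these disks, there is an $n$--gon in $F$ containing no additional ideal vertices\dots, hence no additional faces''). Once this is added, your Euler-characteristic finish (or simply observing that a closed subsurface of $S^2$ is all of $S^2$) completes the proof.
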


\begin{proof}
We may identify $\bdy P$ with $S^2 \cong \RR^2 \cup \{ \infty \}$, in
such a way that $\infty$ falls in the interior of a white face. Then
the orientation on $\RR^2$ induces a (clockwise) orientation on the
boundary of every shaded face of $P$.

Let $n$ be the number of ideal vertices at which $F$ meets $G$. If $n
< 2$, then there are no EPDs through the pair of faces $F, G$, and the
claims of the lemma are trivial. Thus we may assume $n \geq 2$.

Order these vertices $v_1, \ldots, v_n$, clockwise around the boundary
of $F$.  For conclusion \eqref{item:vertex-ordering}, we claim that
the ideal vertices $v_1, \ldots, v_n$ are ordered counterclockwise
around the boundary of $G$.

Let $v_i$ and $v_{i+1}$ be vertices that are consecutive on $\bdy
F$. Then there is an essential product disk that runs through $F$ and
$G$, and meets exactly these ideal vertices. Let $\gamma$ be the
boundary of this disk. By the Jordan curve theorem, $\gamma$ cuts
$\RR^2$ into an inside and an outside region. Let $\alpha_F$ be the
oriented (clockwise) arc of $\bdy F$ from $v_i$ to $v_{i+1}$. Without
loss of generality, $\alpha_F$ lies inside $\gamma$.

Now, consider the portion of $G$ that lies inside $\gamma$. If this
portion of $G$ has any ideal vertices meeting $F$, they would have to
meet $F$ inside $\gamma$. But, by construction, the portion of $\bdy
F$ inside $\gamma$ is a single arc $\alpha_F$, without any additional
ideal vertices. Therefore, inside $\gamma$, $G$ has no vertices
meeting $F$. Hence, $v_i$ and $v_{i+1}$ must be consecutive from the
point of view of $G$. The orientation on the plane means that the
clockwise arc $\alpha_G \subset \bdy G$ that lies inside $\gamma$ must
run from $v_{i+1}$ to $v_i$. Thus the vertices $v_1, \ldots, v_n$ are
in counterclockwise order around $\bdy G$,
proving \eqref{item:vertex-ordering}.

\smallskip

For \eqref{item:pitos-consec}, observe that if an essential $D$ runs
through consecutive vertices $v_i$ and $v_{i+1}$, then all other
vertices shared by $F$ and $G$ are on the same side. Thus all
parabolic compressions of $D$ are on the same side, and $D$
is \pitos. Conversely, if $v_i$ and $v_j$ are not consecutive, then
there are parabolic compressions on both sides, and $D$ is not \pitos.

\smallskip

It remains to show \eqref{item:pitos-simple}.  By
Definition \ref{def:simple}, any simple disk $D$ through $F$ and $G$
is parallel to a white bigon face of $P$.  When $n \geq 3$, one
component of $P \setminus \bdy D$ contains an extra ideal vertex,
hence cannot be a bigon.  The bigon face must be on the other side,
which we call the \emph{inside} of $D$.  Thus, when $n \geq 3$ and
all \pitos \ disks through $F$ and $G$ are simple, the insides of
these disks are disjoint.

Under these hypotheses, we have mapped out the entire polyhedron $P$.
Inside each of the $n$ essential product disk is a white bigon face,
with no extra vertices.  Each essential product disk meets $F$ in an
arc and $G$ in an arc.  Thus outside all these disks, there is an
$n$--gon in $F$ containing no additional ideal vertices, meeting an
$n$--gon in $G$ containing no additional ideal vertices, where the
$n$--gons meet at their vertices.  Since there are no additional
vertices, there can be no additional faces, white or shaded.  Thus $F$
and $G$ are the only shaded faces in the polyhedron $P$.
\end{proof}

\section{Choosing a spanning set}
Next, we will construct a spanning set for the part of the $I$--bundle
that is contained in each individual polyhedron.  By
Definition \ref{def:span}, a collection of EPDs span the $I$--bundle
of $M_A$ if the complement of these disks in the $I$--bundle is a
union of prisms and solid tori.  Because our goal is to count the
Euler characteristic of the $I$--bundle, prisms and solid tori are
counted differently.  As we construct the spanning set, we will keep
track of the number of prisms created.

Recall that a lower polyhedron $P$ corresponds to a polyhedral region
of the diagram.  Let $e_A(P)$ be the number of segments of $H_A$
(equivalently, the number of edges of $\GA$) in this polyhedral
region, and $e'_A(P)$ be the number of reduced edges (after duplicates
are removed).  We may now choose a spanning set of EPDs for the
polyhedron $P$.

\begin{lemma}\label{lemma:spanning-lower}
Let $P$ be a lower polyhedron in the polyhedral decomposition of
$M_A$.  Then all the essential product disks in $P$ are spanned by a
particular spanning set $E_l(P)$\index{$E_l(P)$}, with the following properties:
\begin{enumerate}
\item\label{item:simple-belongs} Every simple disk in $P$ belongs to
  the spanning set $E_l(P)$.
\item\label{item:no-semi-simple} No disks in $E_l(P)$ are semi-simple.
  (Recall semi-simple disks are not simple by definition.)
\item\label{item:cardinality} The cardinality of $E_l(P)$ is
  $||E_l(P)|| = e_A(P) - e'_A(P) + \varepsilon_P$, where
  $\varepsilon_P$ is either $0$ or $1$.

\item\label{item:equivalences} The following are equivalent:
  \begin{enumerate}
  \item $P$ has exactly two shaded faces.
  \item All white faces of $P$ are bigons.
  \item $P \cut E_l(P)$ contains a prism over an ideal $n$--gon. 
  \item $e'_A(P) = 1$, and this single edge separates the graph $\GRA$. 
  \item $\varepsilon_P = 1$.
  \end{enumerate}
  
\end{enumerate}
\end{lemma}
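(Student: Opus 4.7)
The plan is to build the spanning set $E_l(P)$ explicitly and then verify properties (1)--(3) together with the chain of equivalences in (4). By Lemma~\ref{lemma:EPDlower}, EPDs in $P$ correspond bijectively to $2$--edge loops in $\GA$ restricted to the polyhedral region of $P$; combinatorially, each EPD is determined by a pair of shaded faces $F_r, G_r$ of $P$ and an unordered pair of ideal vertices shared by $F_r$ and $G_r$. For each such pair $(F_r, G_r)$ sharing $k_r \geq 2$ ideal vertices, Lemma~\ref{lemma:pitos} gives a cyclic ordering $v_1, \ldots, v_{k_r}$ around $\bdy F_r$ and identifies the \pitos EPDs through $(F_r, G_r)$ as exactly those through cyclically consecutive vertex pairs (yielding $k_r$ such disks when $k_r \geq 3$, and the unique disk through $(v_1, v_2)$ when $k_r = 2$).

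My construction of $E_l(P)$ proceeds pair by pair. For each pair with $k_r = 2$, I include the unique \pitos disk. For each pair with $k_r \geq 3$, I first include every simple disk through $(F_r, G_r)$, then augment with additional non-simple \pitos disks through consecutive vertex pairs until exactly $k_r - 1$ disks have been drawn from that pair. This is always possible, since Lemma~\ref{lemma:pitos}\eqref{item:pitos-simple} forbids all $k_r$ \pitos disks from being simple unless $P$ has only two shaded faces. The sole exception is precisely the case when $P$ has exactly two shaded faces $F, G$ sharing $k$ vertices: all $k$ \pitos disks are simple (each bounds one of the $k$ white bigons), property~(1) forces us to include every one of them, and I set $\varepsilon_P = 1$; otherwise $\varepsilon_P = 0$.

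Properties (1)--(3) follow directly. Property~(1) is immediate. For (3), the identity $e_A(P) - e'_A(P) = \sum_r (k_r - 1)$ matches the chosen count of disks plus $\varepsilon_P$. Spanning follows from the local observation that the $k_r$ consecutive \pitos disks between $F_r, G_r$ cut the $I$--bundle neighborhood of $F_r \cup G_r$ into $k_r$ prism pieces, and deleting any one of these disks merges two neighbors into a larger prism, still a prism. For property~(2), I would argue that any non-simple \pitos disk through consecutive $v_i, v_{i+1}$ cannot be semi-simple: a parabolic compression of such a disk must pass through another shared vertex $v_l$ on the opposite side, producing EPDs through $(v_i, v_l)$ and $(v_l, v_{i+1})$ with $v_l$ not cyclically adjacent to $v_i$ or $v_{i+1}$. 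By Lemma~\ref{lemma:pitos}\eqref{item:pitos-consec} and the bigon-face characterization of simple disks, neither compressed disk is simple, and iterating rules out semi-simplicity. This is the step I expect to be the main obstacle; the other verifications amount to bookkeeping on the list of \pitos disks.

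For the equivalences in (4), the implications (a)$\Leftrightarrow$(b)$\Leftrightarrow$(d) are local combinatorial statements about the polyhedral region: $P$ has exactly two shaded faces iff every segment lies between the same pair of state circles $C, C'$ (equivalently $e'_A(P) = 1$), and this single reduced edge separates $\GRA$ because the polyhedral region isolates the inside of the inner circle from the outside of the outer circle in the plane; in this configuration every white face is necessarily a bigon between two consecutive segments. The equivalence (a)$\Leftrightarrow$(e) is immediate from the construction: Lemma~\ref{lemma:pitos}\eqref{item:pitos-simple} forces all \pitos disks to be simple precisely when (a) holds, so property~(1) demands the extra disk and $\varepsilon_P = 1$. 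Finally (c)$\Leftrightarrow$(e) follows from the prism calculation above: when $\varepsilon_P = 1$, the $k$ simple disks cut $P$ into $k$ bigon prisms together with a prism over an ideal $k$-gon, whereas when $\varepsilon_P = 0$ every piece of $P \cut E_l(P)$ is either a bigon prism or lies inside a solid torus component of the characteristic submanifold.
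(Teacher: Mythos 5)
Your construction of $E_l(P)$ is essentially the paper's: for each pair of shaded faces $(F,G)$ sharing $k\geq 2$ vertices you keep all \pitos\ disks except one non-simple one (and keep all of them when $P$ has only two shaded faces, setting $\varepsilon_P=1$), and your treatment of (1), (3) and most of (4) runs along the same lines. The genuine gap is your argument for (2). You claim that a non-simple \pitos\ disk can never be semi-simple, on the grounds that any parabolic compression occurs at a shared vertex $v_l$ that is not cyclically adjacent to $v_i$ or $v_{i+1}$, so that neither resulting disk is simple. Both assertions fail: a compression can occur at $v_l=v_{i+2}$, producing the \pitos\ disk through the consecutive pair $(v_{i+1},v_{i+2})$, which may well be simple. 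In fact the general claim is false. If $F$ and $G$ meet exactly at the $n\geq 3$ crossings of a single twist region inside a polyhedron with additional shaded faces, then the \pitos\ disk through the two outermost crossings is non-simple (its far side is not a bigon), yet all its compressions lie on one side and a full compression across the interior crossings terminates in the $n-1$ simple bigon-parallel disks; so it \emph{is} semi-simple. This is precisely the situation of Figure \ref{fig:para-compression}, whose left-hand disk is declared semi-simple in Definition \ref{def:simple}.

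Your set $E_l(P)$ does satisfy (2), but the proof must use the disk you discarded, as the paper's does: exactly one \pitos\ disk per pair $(F,G)$ is omitted and it is non-simple, so if some non-simple $D$ in your set were semi-simple, its compressions (necessarily to the outside of $D$, since its two vertices are consecutive) would have to terminate in simple disks through consecutive pairs tiling the whole outside of $D$, hence would include the omitted non-simple disk --- a contradiction. A secondary soft spot is your claim that when $\varepsilon_P=0$ every piece of $P\cut E_l(P)$ is a bigon prism or lies in a solid torus of the characteristic submanifold; that is not accurate (pieces can contain guts). The correct argument, as in the paper, is that a prism with shaded horizontal faces would need its lateral faces to be \emph{all} $n$ of the \pitos\ disks through a single pair $(F,G)$, whereas only $n-1$ of them belong to $E_l(P)$.
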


\begin{proof}
We construct the spanning set as follows. For every pair of shaded
faces $F,G$ of the polyhedron $P$, let $E_{F,G}$ be the set of
all \pitos \ essential product disks that run through $F$ and $G$. If
the number of ideal vertices shared by $F$ and $G$ is $n$, then
$E_{F,G}$ will be non-empty precisely when $n \geq 2$.  By
Lemma \ref{lemma:pitos}, these disks are in 1--1 correspondence with
consecutive pairs of vertices $v_i, v_{i+1}$ shared by $F$ and $G$.

Now, we consider two cases.

\smallskip

\underline{Case 1:} $F$ and $G$ are the only shaded faces in $P$. In this
case, we let $E_l(P) = E_{F,G}$. 

Let us check the conclusions of the lemma.  Note that every simple
disk is \pitos, hence must belong to $E_l(P) = E_{F,G}$.  Conversely,
every disk in $E_{F,G}$ is \pitos, hence contains no vertices between
$F$ and $G$ on one side, hence contains no vertices at all on that
side, and thus can only be simple.  

Recall that the shaded faces $F$ and $G$ correspond to state circles
$C_F$ and $C_G$. Since these are the only shaded faces of $P$, then
$C_F$ and $C_G$ are the only state circles in the polyhedral region of
$P$.  All the edges of $\GA$ in the polyhedral region must connect
$C_F$ and $C_G$; there are $n = e_A(P)$ such edges total.  In the
reduced graph $\GRA$, these $n$ edges are identified to one, hence
$e'_A(P) = 1$.  Thus
$$||E_l(P)|| = n = e_A(P) - e'_A(P) + \varepsilon_P, \quad \mbox{where} \quad \varepsilon_P = 1.$$

In Case 1, all the conditions of \eqref{item:equivalences} will be
true.  The polyhedron $P$ has exactly two shaded faces $F$ and $G$,
and all the white faces are bigons parallel to the simple disks.
Cutting $P$ along the disks of $E_l(P)$ produces a prism over an ideal
$n$--gon.  We have already seen that $\varepsilon_P = e'_A(P) = 1$.
Finally, because every state circle in $S^2$ is separating, any path
in $H_A$ between state circles $C_F$ and $C_G$ must pass through the
polyhedral region of $P$, hence must use one of the $n$ edges that are
identified to one in $\GRA$.

\smallskip
\underline{Case 2:} $F$ and $G$ are not the only shaded faces in
$P$.

When $F$ and $G$ share $n$ vertices with $n\geq 2$, we will see that
we may remove one of the $n$ disks in $E_{F,G}$ to obtain a set
$E'_{F,G}$ of $(n-1)$ disks, which still span all the EPDs through
faces $F$ and $G$.  We make the choices as follows.  If $n=2$, then
the two disks $E_{F,G}$ both run through vertices $v_1$ and $v_2$, and
are parallel. So we may omit one.  If $n \geq 3$, then
Lemma \ref{lemma:pitos} implies that one of the disks in $E_{F,G}$ is
non-simple.  Thus we omit a non-simple disk.  Note that by
construction, one copy of each simple disk through faces $F$ and $G$
remains in $E'_{F,G}$.  Note further that there is a prism between all
disks of $E'_{F,G}$, so the removed disk is spanned by the remaining
ones.  Since all EPDs through faces $F$ and $G$ are spanned
by \pitos \ ones, the remaining set of $(n-1)$ disks spans all EPDs
through $F$ and $G$, as claimed.

When $F$ and $G$ share less than $2$ ideal vertices, $E_{F,G}$ is
empty, and for notational convenience we set $E'_{F,G}$ to be empty.  

Let us check that in the non-trivial cases, all disks in $E'_{F,G}$
satisfy conclusion \eqref{item:no-semi-simple}: that is, none of them
is semi-simple.  If $n=2$, there is nothing to check, because no
parabolic compressions are possible.  Thus, suppose that $n \geq 3$,
and we obtained $E'_{F,G}$ by omitting a non-simple disk in $E_{F,G}$.

Suppose for a contradiction that $D \in E'_{F,G}$ is not simple, but
parabolically compresses to simple disks.  Because the ideal vertices
$v_i, v_{i+1}$ met by $D$ are consecutive, the parabolic compression
must be to the \emph{outside} of $D$ --- that is, away from the arc of
$\bdy F$ that runs from $v_i$ to $v_{i+1}$. Any simple disks to which
$D$ compresses must be \pitos, hence belong to $E_{F,G}$. But one of
the disks to the outside of $D$ is not simple, contradicting the
hypothesis that $D$ compresses to simple disks to its outside.

Now, let $E_l(P)$ be the union of all the sets $E'_{F,G}$, as $(F,G)$
ranges over all unordered pairs of shaded faces in $P$.  We have
already checked that this spanning set satisfies
conclusions \eqref{item:simple-belongs}
and \eqref{item:no-semi-simple}.

Observe that the shaded faces $F$ and $G$ correspond to state circles
$C_F$ and $C_G$. If there are two or more edges of $\GA$ connecting
$C_F$ to $C_G$ (i.e., if $n \geq 2$), then some of these edges will be
removed as we pass to the reduced graph $\GRA$.  The number of edges
removed is exactly $n-1$, which is equal to the cardinality of
$E'_{F,G}$.  Thus
$$||E_l(P)|| = \sum_{(F,G)} || E'_{F,G}|| = 
e_A(P) - e'_A(P)
+ \varepsilon_P, \quad \mbox{where} \quad \varepsilon_P = 0.$$ 
The sum is over unordered pairs of shaded faces $(F,G)$.

In Case 2, all the conditions of \eqref{item:equivalences} will be
false.  By hypothesis, the polyhedron $P$ has more than two shaded
faces, hence some white face is not a bigon.  Since the polyhedral
region of $P$ has more than two state circles and is connected, this
region must contain more than one edge of $\GRA$.  The one non-trivial
statement in \eqref{item:equivalences} is that $P \cut E_l(P)$ cannot
contain a prism.

Suppose, for a contradiction, that $P \cut E_l(P)$ contains a prism
over an $n$--gon.  Then the top and bottom faces of this prism are on
shaded faces $F$ and $G$ of $P$, and the lateral faces are EPDs in
$E_l(P)$.  By construction, these lateral faces must belong to
$E'_{F,G}$.  But then one of these lateral EPDs must parabolically
compress to the remaining $(n-1)$ EPDs --- which is impossible, since
we removed the one redundancy in $E_{F,G}$ when constructing the set
$E'_{F,G}$.
\end{proof}

Lemma \ref{lemma:spanning-lower} has the following immediate
consequence.

\begin{lemma}\label{lemma:lower-combined}
Let $E_l$\index{$E_l$} be the union of all the spanning sets $E_l(P)$,
as $P$ ranges over all the lower polyhedra. Then every essential
product disk in one of the lower polyhedra is spanned by the disks in
$E_l$. The set $E_l$ contains all simple disks in the lower
polyhedra. Furthermore,
$$|| E_l || = e_A - e'_A + \nsep,$$
where $\nsep$\index{$n_{\rm sep}$} is the number of separating edges in
$\GRA$, and $\nsep$ is also equal to the number of prisms in the lower
polyhedra in the complement of $E_l$.
\end{lemma}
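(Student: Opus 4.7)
The plan is to assemble Lemma \ref{lemma:spanning-lower} across all lower polyhedra $P$. Conclusions~(1) and~(2) follow immediately: every essential product disk in a lower polyhedron belongs to a unique $P$ and is therefore spanned by $E_l(P)\subset E_l$, and every simple disk lies in some $E_l(P)\subset E_l$ by part~(1) of Lemma \ref{lemma:spanning-lower}.

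For the cardinality, summing the formula in part~(3) of Lemma \ref{lemma:spanning-lower} over lower polyhedra yields
\[
||E_l|| \;=\; \sum_P \bigl( e_A(P) - e'_A(P) + \varepsilon_P \bigr).
\]
Each segment of $H_A$ lies in a unique polyhedral region, so $\sum_P e_A(P) = e_A$. Since the parallel segments representing any given reduced edge of $\GRA$ all lie in a single polyhedral region, $\sum_P e'_A(P) = e'_A$.

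The core new step is to identify $\sum_P \varepsilon_P$ with $\nsep$. By the chain of equivalences in part~(4) of Lemma \ref{lemma:spanning-lower}, $\varepsilon_P = 1$ exactly when the polyhedral region of $P$ contains a unique reduced edge of $\GRA$ and that edge is separating in $\GRA$. This gives a natural map $P \mapsto e$ from $\{P : \varepsilon_P = 1\}$ to the separating edges of $\GRA$. Because each reduced edge of $\GRA$ lies in a unique polyhedral region, this map is injective. The main obstacle is surjectivity: I need to show every separating edge $e$ of $\GRA$ is the \emph{only} reduced edge in its polyhedral region. The plan for this is a contradiction argument using the maximality of the non-prime arc collection. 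If a polyhedral region $R$ contained $e$ together with another reduced edge $e'$, then one should be able either to build a non-prime arc on a shared state circle separating segments of $e$ from those of $e'$ (contradicting maximality of the non-prime arcs), or to exhibit a cycle of $\GRA$ containing $e$ by using the outerplanar embedding of segments inside the disk $R$ together with arcs along $\bdy R$ (contradicting the hypothesis that $e$ is separating).

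Finally, the prism count follows from the same equivalences in part~(4) of Lemma \ref{lemma:spanning-lower}: the manifold $P \cut E_l(P)$ contains a prism if and only if $\varepsilon_P = 1$, and in that case the prism is unique. Summing over $P$, the total number of prisms in the complement of $E_l$ in the lower polyhedra equals $\sum_P \varepsilon_P = \nsep$.
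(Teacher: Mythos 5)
Your overall route is the same as the paper's: sum Lemma \ref{lemma:spanning-lower} over the lower polyhedra. The spanning and simple-disk statements, and the identities $\sum_P e_A(P)=e_A$ and $\sum_P e'_A(P)=e'_A$, are handled correctly (your observation that all segments parallel to a given reduced edge lie in a single polyhedral region is indeed needed here, and is true because a non-prime arc, having both endpoints on one state circle, can never separate two segments joining the same pair of circles). You are also right that the prism count is just Lemma \ref{lemma:spanning-lower}\eqref{item:equivalences} again.

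The one point where your write-up is not yet a proof is exactly the step you flag: the identity $\sum_P \varepsilon_P = \nsep$ needs the statement that every separating edge of $\GRA$ is the \emph{unique} reduced edge of its polyhedral region; Lemma \ref{lemma:spanning-lower}\eqref{item:equivalences} only gives your injection, and the paper itself passes over this direction in silence. The statement is true, and the first branch of your plan (maximality of the non-prime arc collection, Definition \ref{def:max-nonprime}) is the right mechanism; the second branch should be dropped: arcs along $\bdy R$ are not edges of $\GRA$, so they cannot produce a cycle through $e$, and a polyhedral region is in general a planar region with several boundary circles, not a disk. To finish along your first branch: let $e$ join state circles $C,C'$, let $R$ be its region, and suppose $R$ carries a second reduced edge; by $A$--adequacy that edge involves a third state circle $E$ on $\bdy R$. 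Deleting the parallel class of $e$ splits $\GA$ into components $X\ni C$ and $Y\ni C'$; say $E\in X$. The set $C\cup C'\cup(\mbox{segments of }e)$ cuts $S^2$ into two disks and a cyclic family of ``rectangles,'' and $E$ lies in one such rectangle $Q$. Any connected piece of $H_A\cup(\bigcup\alpha_i)$ interior to $Q$ attaches, inside $\overline{Q}$, to $C$ or to $C'$ but not both (a connection to both would give a path in $\GA$ from $C$ to $C'$ avoiding the class of $e$, since non-prime arcs only join a circle to itself); the piece containing $E$ attaches to $C$. Hugging $\overline{Q}\cap C$ together with all interior pieces attached to it, the outer frontier of a small regular neighborhood is an embedded arc with both endpoints on $C$, interior in a single white face, with $E$ on one side and $C'$ on the other: a new non-prime arc, contradicting maximality. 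With this short argument in place of ``one should be able to,'' your proof is complete and, modulo this point (which the paper asserts implicitly), coincides with the paper's.
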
 

\begin{proof}
The properties that $E_l$ contains all simple disks and spans all the
EPDs in the lower polyhedra follow immediately from the same
properties of the constituent sets $E_l(P)$. To compute the
cardinality of $E_l$, it suffices to observe that the total number of
edges removed as we pass from $\GA$ to $\GRA$ is
$$e_A - e'_A = \sum_P e_A(P) - e'_A(P),$$
and the the total
contribution of the terms $\varepsilon_P$ in
Lemma \ref{lemma:spanning-lower} is exactly $\nsep$. By
Lemma \ref{lemma:spanning-lower}, these $\nsep$ edges are in one-to-one
correspondence with prisms in the lower polyhedra in the complement of
$E_l$.
\end{proof}

Finally, we choose a spanning set of EPDs for the upper polyhedron.

\begin{lemma}\label{lemma:spanning-upper}
Let $P$ denote the upper polyhedron in the decomposition of $M_A$.  Then
there exists a set $E_s \cup E_c$ of essential products disks embedded
in $P$, such that the following hold:
\begin{enumerate}
\item\label{item:all-simple} $E_s$\index{$E_s$} is the set of all
  simple disks in $P$.
\item\label{item:min-complex} $E_c$\index{$E_c$} consists of complex
  disks. Furthermore, $E_c$ is minimal, in the sense that no disk in
  $E_c$ parabolically compresses to a subcollection of $E_s \cup E_c$.
\item\label{item:sc-spans} The set $E_s \cup E_c$ spans the essential
  product disks in $P$.
\item\label{item:top-prism} The following are equivalent:
  \begin{enumerate}
  \item \label{ii:tree} $\GRA$ is a tree.
  \item \label{ii:bigons} Every white face is a bigon.
  \item  \label{ii:prism} $P \cut (E_s \cup E_c)$ contains exactly one prism.
  \item \label{ii:all-prisms} Every (upper or lower) polyhedron is a
    prism, with horizontal faces shaded and lateral faces white.
  \end{enumerate}
%	
%	\item\label{item:tentacle} For every pair of shaded faces in $P$,
%	denoted $F$ and $G$, and for every tentacle of $F$, at most two disks
%	of $E_s \cup E_c$ run through $F$ and $G$ and intersect this
%	tentacle. 

\end{enumerate}
\end{lemma}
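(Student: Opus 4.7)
The plan is to explicitly construct the spanning sets and then establish the equivalences of item (4) as the main content. For items (1)--(3): define $E_s$ as the set of all simple EPDs in $P$, one per white bigon face. Since any EPD in $P$ is determined up to normal isotopy by its pair of shaded faces and its pair of ideal vertices, there are only finitely many EPDs in $P$ up to isotopy. Construct $E_c$ by starting with the set of all complex EPDs in $P$ and iteratively discarding any disk that parabolically compresses to a subcollection of the remaining disks together with $E_s$, until minimality holds; the process terminates by finiteness. Any EPD in $P$ is simple, semi-simple, or complex by Definition \ref{def:simple}; simple disks lie in $E_s$, semi-simple disks parabolically compress to simple disks (hence to $E_s$), and complex disks parabolically compress to a subcollection of $E_s \cup E_c$ by the termination property of our iterative construction. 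This yields (1)--(3).

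For item (4), I would first prove the tree condition is equivalent to the all-prisms condition using Lemma \ref{lemma:spanning-lower}\eqref{item:equivalences}, then complete a cycle: all-prisms $\Rightarrow$ bigons $\Rightarrow$ one-prism $\Rightarrow$ all-prisms. For the tree condition implying all-prisms, $\GRA$ being a tree means every reduced edge is separating; combined with a maximal collection of non-prime arcs separating distinct twist regions within each polyhedral region, each lower polyhedral region then has exactly one reduced edge of $\GRA$ (namely $e'_A(P) = 1$), making each lower polyhedron a prism by Lemma \ref{lemma:spanning-lower}\eqref{item:equivalences}; dually, the upper polyhedron $P$ inherits a prism structure through the white bigon gluing. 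The reverse direction uses the same equivalence lemma to show every reduced edge is separating, forcing the connected graph $\GRA$ to be a tree. The all-prisms implication to bigons is immediate from the prism structure. For bigons implying one-prism, I apply Lemma \ref{lemma:pitos}\eqref{item:pitos-simple}: when all white faces of $P$ are bigons, any non-simple \pitos \ disk through two shaded faces would force those faces to meet three or more times, forcing $F$ and $G$ to be the only shaded faces; in the generic case this forces $E_c = \emptyset$, and cutting $P$ by the simple disks in $E_s$ leaves one prism after trivial bigon pieces are separated off. The one-prism-implies-all-prisms direction reverses via the dual gluing, propagating the prism structure from $P$ to the lower polyhedra.

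The main obstacle is showing that the tree condition implies prism structure for the upper polyhedron $P$: while the lower polyhedra are handled cleanly by Lemma \ref{lemma:spanning-lower}, proving that $P$ is also a prism when $\GRA$ is a tree requires careful analysis of the shaded face structure in $P$. In particular, one must show that the directed spines of the shaded faces collapse to trivial structure, so that $P$ has bigon white lateral faces arranged as a prism over its shaded horizontal faces. This will build on the tentacle-chasing methods developed in Chapter \ref{sec:polyhedra}, particularly the Escher stairs, Staircase extension, and Utility lemmas, to rule out non-trivial paths on shaded faces of $P$ when $\GA$ has no essential cycles beyond those collapsed by reducing to $\GRA$.
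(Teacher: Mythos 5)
The central gap is in item \eqref{item:top-prism}, and it traces back to how you build $E_c$. The paper's construction mirrors Lemma \ref{lemma:spanning-lower}: for each pair of shaded faces $F,G$ meeting at $n$ ideal vertices, one takes \emph{all} $n$ \pitos\ disks $E_{F,G}$ and, unless $F$ and $G$ are the only shaded faces of $P$, discards exactly one of them (a non-simple one, or one of the two parallel disks when $n=2$). This ``all-but-one per face pair'' structure is precisely what powers both the minimality claim \eqref{item:min-complex} and, crucially, the implication \eqref{ii:prism}$\Rightarrow$\eqref{ii:bigons}: a prism component of $P \cut (E_s \cup E_c)$ has lateral faces which are \pitos\ disks through a single pair $F,G$, and one of them parabolically compresses to the other $n-1$; this is impossible unless \emph{every} disk of $E_{F,G}$ was retained, which by construction happens only when $F,G$ are the only shaded faces, i.e.\ when every white face is a bigon. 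Your greedy pruning of the set of all complex disks does not have this property. Concretely, if $F$ and $G$ meet at exactly two ideal vertices and neither side of the corresponding $2$--edge loop bounds a white bigon, then the two parallel \pitos\ disks through those vertices admit no parabolic compressions at all (there are no other shared vertices to compress onto), so both are complex and neither is ever discarded by your procedure; the product region between them is then a prism in $P \cut (E_s \cup E_c)$ regardless of whether the other white faces of $P$ are bigons. Thus with your $E_c$ the equivalence \eqref{ii:prism}$\Leftrightarrow$\eqref{ii:bigons} is not established and may simply be false; the sentence ``the one-prism-implies-all-prisms direction reverses via the dual gluing'' does not engage with this. (The defect also propagates beyond this lemma: Definition \ref{def:ec} interprets $\|E_c\|$ as the \emph{smallest} number of complex spanning disks, and your set need not realize that minimum, since it keeps both members of such a parallel pair.)

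Two smaller points. For \eqref{item:sc-spans}, a complex disk discarded early in your iteration compresses to disks some of which are discarded later, so you need an induction on the discarding order (or some transitivity statement for parabolic compression); ``by the termination property'' is not an argument, and note also that the paper's notion of spanning includes discarding a disk parallel to a retained one, which is not a parabolic compression at all. For \eqref{ii:bigons}$\Rightarrow$\eqref{ii:prism}, your appeal to Lemma \ref{lemma:pitos}\eqref{item:pitos-simple} is backwards: the correct route (the paper's) is that all white faces being bigons forces $P$ to have exactly two shaded faces, hence $P$ is itself a prism with $E_c = \emptyset$, and cutting along $E_s$ leaves product pieces along the white bigons plus exactly one prism. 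Your reduction of the tree condition to $e'_A(P)=1$ in each lower region via maximality of the non-prime arcs is in the same spirit as the paper's terse treatment and can be filled in; but the fix for item \eqref{item:top-prism} genuinely requires either adopting the per-face-pair construction of $E_s \cup E_c$ or supplying a new argument for the prism characterization adapted to your set.
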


\begin{proof}
The construction is identical to the construction in Lemma
\ref{lemma:spanning-lower}. For every pair of shaded faces $F$ and
$G$, we let $E_{F,G}$ be the set of all \pitos\ disks that run through
$F$ and $G$. If $F$ and $G$ are the only shaded faces of the upper
polyhedron $P$, we let $E_s = E_{F,G}$. In this case, all white faces
of $P$ are bigons, and all disks in $E_{F,G}$ are simple. Hence, $E_c
= \emptyset$.

Alternately, if $F$ and $G$ are not the only shaded faces of $P$, we
proceed as in Case 2 of Lemma \ref{lemma:spanning-lower}. We prune the
set $E_{F,G}$ by one disk, while keeping all simple disks, to obtain
$E'_{F,G}$. As in the proof of Lemma \ref{lemma:spanning-lower}, no
disk in $E'_{F,G}$ is semi-simple. Then, we let $E_s \cup E_c$ be the
union of all sets $E'_{F,G}$ as $F,G$ range over the shaded faces of the
polyhedron $P$. This combined set is composed of simple disks in $E_s$
and complex disks in $E_c$.

In either case, we have constructed a set $E_s \cup E_c$ that
satisfies conclusions \eqref{item:all-simple} and
\eqref{item:sc-spans}.

To prove \eqref{item:min-complex}, observe that by construction, each
disk in $E_c$ is complex and \pitos.  Suppose, for a contradiction,
that some disk $D \in E_c$ parabolically compresses to other disks in
$E_s \cup E_c$. Then, $D$ would need to compress to the remaining
$(n-1)$ \pitos \ disks that share the same shaded faces $F$ and $G$
(where $n$ is the number of vertices at which $F$ and $G$ meet). But
by construction, the only scenario in which all $n$ disks of $E_{F,G}$
remain in $E_s \cup E_c$ is when all of these disks are simple, hence
$E_c = \emptyset$, which is a contradiction.

\smallskip 

It remains to prove the equivalent conditions of \eqref{item:top-prism}.

$\eqref{ii:tree} \Leftrightarrow \eqref{ii:bigons}$: The connected
graph $\GRA$ is a tree if and only if every edge separates. Hence,
this equivalence is immediate from Lemma
\ref{lemma:spanning-lower}\eqref{item:equivalences}.

\smallskip

$\eqref{ii:bigons} \Rightarrow \eqref{ii:all-prisms}$: Let $P_0$ be
any polyhedron in the decomposition, and suppose that every white face
of $P_0$ is a bigon. Then the white faces of $P_0$ must line up
cyclically end to end, and there are exactly two shaded faces. Since a
bigon shaded face is the product of an ideal edge with $I$, this
product structure extends over the entire polyhedron. Thus $P_0$ is a
prism whose horizontal faces are shaded and whose lateral faces are
white bigons.

\smallskip

$\eqref{ii:all-prisms} \Rightarrow \eqref{ii:prism}$: Suppose the top
polyhedron $P$ is a prism, whose lateral faces are white
bigons. Parallel to every bigon face of $P$ is a simple essential
product disk, and by property \eqref{item:all-simple}, each of these
simple EPDs is in the spanning set $E_s$. Thus $P \cut E_s$ consists
of a product region parallel to each white face, as well as a prism
component separated from all white faces.
%	This is a prism over an $n$--gon, where $n$ is
%	the number of white faces in $P$.

%	We remark that in this case, there are exactly two shaded faces in
%	$P$, hence $E_c = \emptyset$, and $P \cut (E_s \cup E_c) = P \cut E_s$
%	contains exactly one prism.
 
\smallskip

$\eqref{ii:prism} \Rightarrow \eqref{ii:bigons}$: Suppose that $P \cut
(E_s \cup E_c)$ contains a prism over an $n$--gon. Then the top and
bottom faces of this prism are on shaded faces $F$ and $G$ of $P$, and
the lateral faces are EPDs in $E_{F,G}$. Notice that one of these
lateral EPDs must parabolically compress to the remaining $(n-1)$
EPDs. This would be impossible if we removed one of the $n$ disks in
$E_{F,G}$ while passing to the reduced set $E'_{F,G}$. Thus every disk
of $E_{F,G}$ must belong to $E_s \cup E_c$, which means that $F$ and
$G$ are the only shaded faces in polyhedron $P$. Therefore, every
white face of $P$ is a bigon.  But since every white face of a lower
polyhedron is glued to $P$, all white faces must be bigons.
\end{proof}
 
\begin{define}\label{def:ec}\index{$E_c$}
The spanning set $E_c$ is defined in the statement of Lemma \ref{lemma:spanning-upper}. Equivalently, we may define its cardinality $||E_c||$ as follows: $||E_c||$ is the smallest number of complex disks required to span the $I$--bundle of the upper polyhedron. 

The equivalence of this alternative definition is proved in Lemma \ref{lemma:spanning-upper}\eqref{item:min-complex}.
\end{define}

Since the polyhedral decomposition is uniquely specified by the diagram $D(K)$ (see Chapter \ref{sec:decomp} and Remark \ref{rem:prime-uniqueness}), $||E_c||$ is a diagrammatic quantity, albeit one that is not easy to eyeball. In Chapter \ref{sec:nononprime}, we will bound the quantity $||E_c||$ in terms of simpler diagrammatic quantities, and in Chapter \ref{sec:montesinos}, we will prove that for most Montesinos links, $||E_c|| = 0$.

We also record the following property of the spanning set $E_s \cup
E_c$, which will be needed in Chapter \ref{sec:nononprime}.

\begin{lemma}\label{lemma:ec-tentacle}
Let $F$ and $G$ be shaded faces of the upper polyhedron $P$, and let
$E_s \cup E_c$ be the spanning set of Lemma \ref{lemma:spanning-upper}.
Then, for every tentacle of $F$, at most two disks of $E_s \cup E_c$
run through $F$ and $G$ and intersect this tentacle.
\end{lemma}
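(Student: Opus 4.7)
The plan is to count disks through $F$ and $G$ whose arcs on $F$ meet the tentacle $T$, using the fact that, by Lemma~\ref{lemma:spanning-upper}, all such disks belong to the pruned set $E'_{F,G}$ of \pitos\ disks. By Lemma~\ref{lemma:pitos}\eqref{item:pitos-consec}, their $F$-arcs connect pairs of ideal vertices that are consecutive in the cyclic order around $\partial F$ among vertices shared with $G$.

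First, I would pin down the combinatorial footprint of $T$ on $\partial F$. By Definition~\ref{def:tentacle}, a tentacle is bounded on one side by a single ideal edge $e_T$ and adjacent to the link on the opposite side. Its head and tail sit at under-crossings of $H_A$, and the link strand running along the top of $T$ between these under-crossings is uninterrupted (since any intermediate under-crossing would terminate the tentacle). Consequently, $\partial T \cap \partial F$ contains exactly one ideal edge $e_T$ together with at most three ideal vertices: the head corner $v_{\text{LH}}$, the tail corner $v_{\text{LT}}$, and the link-strand vertex $v_L$.

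The main step is to introduce a transversal arc $\tau$ embedded in $T$ that runs from the interior of $e_T$ across to the interior of $v_L$. Since $F$ is a disk by Theorem~\ref{thm:simply-connected}, the arc $\tau$ separates $F$ into two sub-disks $F_1$ and $F_2$, and divides $\partial F$ into two complementary arcs $\alpha_1,\alpha_2$. The shared vertices with $G$ therefore partition into two groups, and each group is contiguous in the cyclic order around $\partial F$. An $F$-arc of a disk in $E'_{F,G}$ crosses $\tau$ if and only if its two endpoints lie in opposite groups; because only two cyclic transitions occur between $\alpha_1$ and $\alpha_2$, at most two consecutive pairs of shared vertices can have this property. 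I would then argue that every disk of $E'_{F,G}$ whose arc genuinely meets the interior of $T$ must in fact cross $\tau$: since $F$ is simply connected, an arc whose endpoints both lie in a single $\alpha_i$ has a unique isotopy representative inside $F_i$, and this representative lies entirely outside the half of $T$ on the other side of $\tau$. Handling the three possible shared vertices on $\partial T\cap\partial F$ themselves amounts to checking that each is counted at most once by this transversal argument.

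The principal obstacle is verifying that the transversal $\tau$ really captures all intersections with $T$, i.e.\ that no disk of $E'_{F,G}$ slips by $\tau$ while still meeting $T$ non-trivially. This requires carefully placing $\tau$ with respect to the attachments of $T$ to neighboring pieces of the spine of $F$ (innermost disks, other tentacles, or non-prime switches), and appealing to the \pitos\ property together with Lemma~\ref{lemma:pitos}\eqref{item:vertex-ordering} to rule out configurations in which two consecutive shared vertices would lie on $\partial T\cap\partial F$ without the connecting arc crossing $\tau$. Once this is done, the bound of two follows immediately from the cyclic structure described above.
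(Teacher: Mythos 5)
Your proposal is correct and takes essentially the same route as the paper: cut the tentacle with a transversal arc, use that the shaded face is a disk so this arc partitions the ideal vertices shared by $F$ and $G$ into two contiguous blocks, and observe that the \pitos\ disks of $E_s \cup E_c$ join consecutive shared vertices, so at most two of them can straddle the partition (with the endpoint-on-a-shared-vertex case handled the same way). The ``obstacle'' you flag about disks meeting the tentacle without crossing the transversal is exactly what the paper dismisses by counting intersections only up to isotopy, so no additional argument is needed.
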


\begin{proof}
Let $\alpha$ be an arc that
cuts across a tentacle of $F$. Thus, from the point of view of $P$,
$\alpha$ is an arc from an ideal vertex $w$ to a point $x$ in the
interior of a side of $P$. 

Let $v_1, \ldots, v_n$ be the ideal vertices shared by $F$ and $G$,
labeled in order, as in
Lemma \ref{lemma:pitos}\eqref{item:vertex-ordering}.  The point
$x \in \bdy F \cap \alpha$ falls between a consecutive pair of
vertices $v_i$ that connect $F$ to $G$. Then no generality is lost in
assuming that $x$ lies on the oriented arc from $v_n$ to $v_1$.

Recall that all disks in $E_s \cup E_c$ are \pitos, and that by Lemma
\ref{lemma:pitos}, \pitos\ disks through $F$ and $G$ must meet
consecutive ideal vertices. The proof will be complete once we show
that $\alpha$ can only meet at most two such disks (up to isotopy).

If $w$ is not one of the vertices at which $F$ meets $G$, then it lies
between vertices $v_i$ and $v_{i+1}$. In this case, $\alpha$
partitions $\{v_1, \ldots, v_n\}$ into two subsets: namely, $\{v_1,
\ldots, v_i\}$ and $\{v_{i+1}, \ldots, v_n\}$. Any disk through $F$
and $G$ whose vertices belong to the same subset will be disjoint from
$\alpha$ (up to isotopy). Thus $\alpha$ can only intersect the two
disks that run from $v_i$ to $v_{i+1}$ and from $v_n$ to $v_1$.

If $w$ is one of the vertices $v_i$, then the argument is the same. In
this case, $\alpha$ can only intersect the disk that runs from $v_n$
to $v_1$.
\end{proof}

\section{Detecting fibers}

In this section, we prove that the Euler characteristic $\chi(\GRA)$
%% and hence the stabilized coefficient $\beta'_K$ of colored Jones
%% polynomials $J^n_K(t)$
detects whether $S_A$ is a fiber. See also Corollary
\ref{cor:beta-fiber} on page \pageref{cor:beta-fiber}.

\begin{theorem}\label{thm:fiber-tree}\index{fiber!detected by reduced graph $\GRA$}\index{$\GRA$, $\GRB$: reduced state graph!detects fiber surfaces}
Let $D(K)$ be any link diagram, and let $S_A$ be the spanning surface
determined by the all--$A$ state of this diagram. Then the following
are equivalent:
\begin{enumerate}
\item\label{item:tree} The reduced graph $\GRA$ is a tree.
\item\label{item:fiber} $S^3 \setminus K$ fibers over $S^1$, with fiber $S_A$.
\item\label{item:semi-fiber} $M_A = S^3 \cut S_A$ is an $I$--bundle over $S_A$.
\end{enumerate}
\end{theorem}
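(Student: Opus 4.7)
The plan is to prove the cycle $(1)\Rightarrow(2)\Rightarrow(3)\Rightarrow(1)$. First observe that each of the three conditions forces $D$ to be $A$--adequate, so the polyhedral decomposition of Chapter \ref{sec:polyhedra} applies: condition (1) directly excludes loops in $\GRA$ and hence in $\GA$; condition (2) makes $S_A$ a fiber, hence essential, so Theorem \ref{thm:incompress} forces $A$--adequacy; condition (3) makes $\widetilde{S_A}$ the horizontal boundary of an $I$--bundle, hence incompressible, again forcing $A$--adequacy via Theorem \ref{thm:incompress}. When $A$--adequacy fails, all three conditions are false and the statement is trivial.

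For $(1)\Rightarrow(2)$, I would first observe that $S_A$ is orientable. Every cycle in $\GA$ projects to a closed walk in $\GRA$ of the same length, and in a tree every closed walk has even length (each edge is a bridge, so it must be traversed equally often in each direction). Thus $\GA$ is bipartite, and by Lemma \ref{lemma:sa-orientable}, $S_A$ is orientable. Next, applying Lemma \ref{lemma:spanning-upper}\eqref{item:top-prism}, every polyhedron in the decomposition of $M_A$ is a prism whose horizontal faces are shaded and whose lateral faces are white bigons. Each such prism is tautologically an $I$--bundle over its shaded base face, and a gluing along a white bigon (a rectangle whose two horizontal sides lie on shaded faces and whose two vertical sides lie on the parabolic locus) identifies horizontal edges to horizontal edges and vertical edges to vertical edges, respecting the product direction. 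The pieces thus assemble into an $I$--bundle structure on $M_A$ over $S_A$, and since $S_A$ is orientable, $M_A \cong S_A \times I$. Re-gluing the two boundary copies of $S_A$ via the original gluing of Chapter \ref{sec:decomp} then exhibits $S^3\setminus K$ as a mapping torus of $S_A$, giving the fibration in (2).

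The implication $(2)\Rightarrow(3)$ is immediate: cutting a fibered manifold along a fiber produces the trivial $I$--bundle $S_A\times I$. For $(3)\Rightarrow(1)$, I would apply Lemma \ref{lemma:product-in-face} with $B = M_A$. The vertical boundary of $M_A$, viewed as an $I$--bundle, is the parabolic locus, which is incompressible in $M_A$ (parabolic annuli cannot bound disks in a handlebody complement of a non-split link). Hence, for any white face $W$, the intersection $M_A \cap W = W$ itself must be a product rectangle $\alpha\times I$ with $\alpha\times\{0\},\alpha\times\{1\}$ sub-arcs of ideal edges of $W$. This forces $W$ to be a bigon, with two ideal edge sides and two sides running across parabolic ideal vertices. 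Thus every white face is a bigon, and Lemma \ref{lemma:spanning-upper}\eqref{item:top-prism} yields $\GRA$ is a tree.

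The main obstacle will be the implication $(1)\Rightarrow(2)$: verifying carefully that the prism-by-prism product structures knit together into a globally consistent $I$--bundle structure on $M_A$, with base the surface $S_A$ and not some quotient of it. One must track the identifications induced by the gluing maps of the polyhedral decomposition and confirm that they preserve the horizontal/vertical decomposition on each white bigon. Once the $I$--bundle structure is in hand, producing the mapping torus of $S_A$ is routine. The reverse direction $(3)\Rightarrow(1)$ is by contrast quite clean, because Lemma \ref{lemma:product-in-face} already supplies the local rigidity of intersections with white faces.
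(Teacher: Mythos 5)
Your proposal is correct and follows essentially the same route as the paper: the cycle $(1)\Rightarrow(2)\Rightarrow(3)\Rightarrow(1)$, using bipartiteness of $\GRA$ plus Lemma \ref{lemma:sa-orientable} for orientability, Lemma \ref{lemma:spanning-upper}\eqref{item:top-prism} to turn the tree condition into prisms with bigon white faces whose product structures glue to an $I$--bundle, and Lemma \ref{lemma:product-in-face} to force all white faces to be bigons in the implication $(3)\Rightarrow(1)$. The gluing-consistency point you flag as the main obstacle is handled in the paper in one sentence, by observing that each white bigon face is itself an $I$--bundle with the same product structure as the adjacent prisms.
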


We would like to emphasize that the theorem applies to \emph{all}
diagrams. It turns out that each of \eqref{item:tree},
\eqref{item:fiber}, and \eqref{item:semi-fiber} implies that $D$ is
connected and $A$--adequate. The point of including condition
\eqref{item:semi-fiber} is that $S_A$ is never a \emph{semi-fiber}:
that is, $S_A$ cannot be a non-orientable surface that lifts to a
fiber in a double cover of $S^3 \setminus K$.

\begin{proof}
For $\eqref{item:tree} \Rightarrow \eqref{item:fiber}$, suppose that
$\GRA$ is a tree. Then $D$ must be connected because $\GRA$ is
connected. Also, since $\GRA$ contains no loops, $\GA$ must contain no
$1$--edge loops, hence $D$ is $A$--adequate. In particular, we have a
polyhedral decomposition of $M_A = S^3 \cut S_A$, and all the results
of the previous chapters apply to this polyhedral decomposition.

Since $\GRA$ is a tree, Lemma
\ref{lemma:spanning-upper}\eqref{item:top-prism} implies that every
polyhedron of the polyhedral decomposition is a prism, and every white
face is a bigon. Observe that a prism is an $I$--bundle over its base
polygon, and a white bigon face is also an $I$--bundle with the same
product structure. Thus the $I$--bundle structures of the individual
polyhedra can be glued along the bigon faces to obtain an $I$--bundle
structure on all of $M_A$.

Finally, since $\GRA$ is a tree, it is bipartite, hence $\GA$ is also
bipartite. Thus, by Lemma \ref{lemma:sa-orientable} on page
\pageref{lemma:sa-orientable}, $S_A$ is orientable. Since $S_A$ is an
orientable surface whose complement is an $I$--bundle, it must be a
fiber in a fibration over $S^1$.

\smallskip

The implication $\eqref{item:fiber} \Rightarrow
\eqref{item:semi-fiber}$ is trivial.

\smallskip

For $\eqref{item:semi-fiber} \Rightarrow \eqref{item:tree}$, suppose
that $M_A$ is an $I$--bundle over $S_A$. Thus, in particular, $S_A$ is
connected, hence $D$ is connected. Also, $S_A$ must be essential
% incompressible and boundary--incompressible 
in $S^3 \setminus K$. Thus, by Theorem
\ref{thm:incompress}, $D$ is $A$--adequate, and all of our polyhedral
techniques apply.

All white faces of the polyhedral decomposition are contained in
$M_A$. Thus, by Lemma \ref{lemma:product-in-face} (Product rectangle in white face), each white face is a product $\alpha
\times I$, where $\alpha \times \{0, 1\}$ are ideal edges. In other
words, every white face is a bigon. Thus, by Lemma
\ref{lemma:spanning-upper}, $\GRA$ is a tree.
\end{proof}

\begin{remark}
We have seen in Lemma \ref{lemma:nonprime-3balls} that each polyhedral
region corresponds to an alternating link diagram, whose all--$A$
surface is a checkerboard surface. Ozawa has observed that the state
surface $S_A$ is a Murasugi sum of these individual checkerboard
surfaces \cite{ozawa}; this was the basis of his proof that $S_A$ 
is essential (Theorem \ref{thm:incompress}). Now, a
theorem of Gabai \cite{gabai:natural, gabai:fibered} states that the
Murasugi sum of several surfaces is a fiber if and only if the
individual summands are fibers. Thus an alternate proof of Theorem
\ref{thm:fiber-tree} would argue by induction: here, the base case is
that of prime, alternating diagrams, and Gabai's result gives the
inductive step.

Restricted to prime, alternating diagrams, Theorem
\ref{thm:fiber-tree} says that the checkerboard surface $S_A$ is a
fiber if and only if $D(K)$ is a negative $2$--braid. (We are using
the convention that positive braid generators are as depicted in
Figure \ref{fig:3braid-generators} on page
\pageref{fig:3braid-generators}.)  This special case follows quickly
from a theorem of Adams \cite[Theorem 1.9]{adams:quasi-fuchsian}, and
can also be proved by applying Lemma \ref{lemma:product-in-face}  
 to Menasco's polyhedral
decomposition of alternating link complements
\cite{menasco:polyhedra}.

In fact, this line of argument extends to give a version of Theorem \ref{thm:fiber-tree} for state surfaces of $\sigma$--homogeneous states. See the recent paper by Futer \cite{futer} for a proof from this point of view.
\end{remark}

%%%%%%%%%%%%%%%%%%%%%%%%%%%%%%%%%%%%%%%%%%%%%%%%%%%%%%%%%%%%%%%%%
\section{Computing the guts}\label{sec:compute-guts}

To compute the guts of $M_A = S^3 \cut
S_A$, it suffices to take the spanning sets of the previous section,
count the EPDs in the spanning sets, and also count how many prisms
will occur in the complement of these disks. The counts work as
follows. 

\begin{prop}\label{prop:spanningset}
Every non-trivial component of the the characteristic submanifold of
$M_A$ is spanned by a collection $E_l \cup E_c$ of essential product
disks, such that
\begin{enumerate}
\item\label{item:el-count} The disks of $E_l$\index{$E_l$} are
  embedded in lower polyhedra, and $||E_l || = e_A - e'_A + \nsep$,
  where $\nsep$\index{$n_{\rm sep}$} is the number of separating edges in
  $\GRA$.

\item\label{item:ec-count} The disks of $E_c$\index{$E_c$} are
  embedded in the upper polyhedron. All these disks are
  complex. Furthermore, no disk in $E_c$ parabolically compresses to
  bigon faces and other disks in $E_c$.

\item\label{item:prism-count} After the characteristic submanifold is
  cut along $E_l \cup E_c$, the total number of prism pieces will be
  $\nsep + \chi_+(\GRA)$, where $\nsep$ is the number of separating
  edges in $\GRA$ and $\chi_+(\GRA) = \max \{ 0, \chi(\GRA) \}$ equals
  $1$ if $\GRA$ is a tree and $0$ otherwise.

\end{enumerate}
\end{prop}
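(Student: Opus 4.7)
The plan is to assemble the proposition directly from the lemmas that have already been proved in this chapter, with the main work being to verify that the spanning properties and prism counts survive the passage from ``polyhedron-by-polyhedron'' statements to a single statement about the characteristic submanifold of $M_A$.

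First I would invoke Theorem \ref{thm:epd-span} to reduce to spanning each non-trivial component $B$ by EPDs embedded in individual polyhedra. I would then take $E_l$ to be the set constructed in Lemma \ref{lemma:lower-combined}, and $E_c$ to be the set $E_c$ of Lemma \ref{lemma:spanning-upper}. Property \eqref{item:el-count} is then immediate from Lemma \ref{lemma:lower-combined}, and property \eqref{item:ec-count} is immediate from Lemma \ref{lemma:spanning-upper} once I observe that the simple disks of $E_s$ in the upper polyhedron are precisely parallel copies of bigon white faces; hence the condition ``$D \in E_c$ does not compress to a subcollection of $E_s \cup E_c$'' from Lemma \ref{lemma:spanning-upper}\eqref{item:min-complex} is the same as ``no compression to bigon faces and other disks in $E_c$.''

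The next step is to argue that $E_l \cup E_c$ really does span $B$, even though we have omitted the set $E_s$ of simple disks in the upper polyhedron. Given any EPD $D \subset B$, Proposition \ref{prop:epds-poly} parabolically compresses $D$ to a union of EPDs in individual polyhedra. Those lying in lower polyhedra are spanned by $E_l$ (Lemma \ref{lemma:lower-combined}); those lying in the upper polyhedron are spanned by $E_s \cup E_c$ (Lemma \ref{lemma:spanning-upper}). Each disk in $E_s$, however, is parallel to a bigon white face $W$ and to a corresponding simple disk $D_l \in E_l$ across $W$; hence it is parabolically compressible through $W$ to $D_l$. So every EPD is spanned by $E_l \cup E_c$ alone.

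For part \eqref{item:prism-count}, I would count prism pieces in the complement of $E_l \cup E_c$ separately in the lower and upper polyhedra. Lemma \ref{lemma:lower-combined} gives exactly $\nsep$ prism pieces coming from the lower polyhedra, one for each separating edge of $\GRA$. For the upper polyhedron, I claim the number of prism pieces is $\chi_+(\GRA)$, which equals $1$ when $\GRA$ is a tree and $0$ otherwise (since $\GRA$ is connected, $\chi(\GRA) \leq 1$, with equality precisely for trees). When $\GRA$ is a tree, Lemma \ref{lemma:spanning-upper}\eqref{item:top-prism} says every white face is a bigon and the upper polyhedron itself is a prism, which survives intact as a single prism component after cutting along $E_c = \emptyset$; its bigon boundary faces glue across to the $\nsep$ lower prisms to make the full $I$-bundle structure of $M_A$ visible as in Theorem \ref{thm:fiber-tree}. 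When $\GRA$ is not a tree, the same lemma says no prism appears in $P \cut (E_s \cup E_c)$, and since cutting along $E_l$ across a bigon face has the same local effect on the $I$-bundle as cutting along the corresponding disk of $E_s$, no prism appears in the upper-polyhedron part of $B \cut (E_l \cup E_c)$ either.

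The main obstacle will be the bookkeeping in the last paragraph: carefully justifying that the bigon faces of the upper polyhedron act as ``transparent'' gluings for the $I$-bundle so that cutting along $E_l$ (on the lower side) produces the same count of prism components as cutting along $E_s$ (on the upper side) would. The key fact I would use is that $E_s$ and the corresponding simple disks of $E_l$ cobound thin product slabs containing the bigon white faces, so these slabs can be absorbed into adjacent prism components without changing prism counts; once that observation is in place, the tally $\nsep + \chi_+(\GRA)$ follows by adding the two contributions.
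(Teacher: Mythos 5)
Your choice of spanning set, the use of Theorem \ref{thm:epd-span}, and the disposal of $E_s$ by compressing each simple upper disk across its parallel bigon face to the corresponding disk of $E_l$ are exactly the paper's argument, and your reading of conclusions \eqref{item:el-count} and \eqref{item:ec-count} off Lemma \ref{lemma:lower-combined} and Lemma \ref{lemma:spanning-upper}\eqref{item:min-complex} is correct. The gap is in conclusion \eqref{item:prism-count}, which is where the real work of the proof lives. Once $E_s$ is discarded, the pieces of $B \cut (E_l \cup E_c)$ are no longer confined to single polyhedra: they cross white faces, and not only the bigon ones, since the $I$--bundle $B$ meets an arbitrary white face in product rectangles. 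Your count tacitly assumes that the only way a piece can straddle the upper and lower polyhedra is through a bigon face flanked by an $E_s$--$E_l$ slab, so that slab absorption settles the tally. But nothing in your argument rules out a prism piece of the reduced cut that meets a non-bigon white face in a narrow product rectangle; such a piece would be invisible to the per-polyhedron counts of Lemmas \ref{lemma:spanning-lower} and \ref{lemma:spanning-upper} and would inflate the total beyond $\nsep + \chi_+(\GRA)$. Relatedly, the phrase ``the upper-polyhedron part of $B \cut (E_l \cup E_c)$'' is not well defined for exactly this reason.

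The paper closes this gap with a separate isotopy argument that you would need to supply. Let $R$ be any prism piece of $B \cut (E_l \cup E_c)$ meeting a white face $W$. By Lemma \ref{lemma:product-in-face} (Product rectangle in white face), each component of $R \cap W$ is a product rectangle $\alpha \times I$ with $\alpha \times \{0,1\}$ on ideal edges of $W$; since the lateral faces of $R$ are disks of $E_l \cup E_c$, each of which lies in a single polyhedron and is therefore disjoint from $W$, the arcs $\alpha \times \{0,1\}$ must run from parabolic locus to parabolic locus, so the rectangle fills all of $W$ and $W$ is forced to be a bigon. Since every bigon face is parallel to a disk of $E_l$, one can then isotope $R$ through $W$, replacing a lateral face in $E_l$ by the parallel disk of $E_s$ and reducing $|R \cap (\mbox{white faces})|$; inductively every prism piece is isotopic into a single polyhedron, where it is accounted for by the counts $\nsep$ (Lemma \ref{lemma:lower-combined}) and $\chi_+(\GRA)$ (Lemma \ref{lemma:spanning-upper}\eqref{item:top-prism}). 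Your slab-absorption remark is essentially the second half of this argument (the passage through bigon faces); what is missing is the first half, which shows prisms can only cross bigon faces in the first place. One further small caution: in the tree case the upper prism does not glue to the $\nsep$ lower prisms across the bigon faces (those are shielded by the $E_l$ disks); it glues to the thin slabs, which is why the count is $\nsep + 1$ rather than collapsing.
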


\begin{proof}
By Theorem \ref{thm:epd-span}, every non-trivial component of the
characteristic submanifold is spanned by EPDs in individual
polyhedra. We have constructed spanning sets for the individual
polyhedra in Lemmas \ref{lemma:spanning-lower} and
\ref{lemma:spanning-upper}; these sets are denoted $E_l$ in the lower
polyhedra and $E_s \cup E_c$ in the upper polyhedron. Note that by
construction, every white bigon face in the polyhedral decomposition
has a disk in $E_l$ parallel to it, as well as a disk in $E_s$
parallel to it. We do not need both of these parallel disks to span
the characteristic submanifold. Thus we may discard $E_s$, and
conclude that $E_l \cup E_c$ spans the characteristic submanifold.

Conclusion \eqref{item:el-count}, which counts the cardinality of
$E_l$, is a restatement of Lemma \ref{lemma:lower-combined}.
Conclusion \eqref{item:ec-count} is a restatement of Lemma
\ref{lemma:spanning-upper}\eqref{item:min-complex}.

%\smallskip

It remains to count the prism components cut off by $E_l \cup E_c$.
Recall that every white bigon face in the polyhedral decomposition has
a disk in $E_l$ parallel to it, as well as a disk in $E_s$ parallel to
it. Thus every prism cut off by $E_l \cup E_c$ is isotopic (through
white bigon faces) to a prism cut off by $E_l \cup (E_c \cup E_s)$. By
Lemma \ref{lemma:lower-combined}, the number of these prisms in the
lower polyhedra is equal to $\nsep$. By Lemma
\ref{lemma:spanning-upper}\eqref{item:top-prism}, the number of these
prisms in the upper polyhedron is $0$ or $1$, and is equal to
$\chi_+(\GRA)$. Thus the proof will be complete once we show that
every prism cut off by $E_l \cup E_c$ is isotopic into a single
polyhedron.

\begin{figure}
\psfrag{W}{$W$}
\psfrag{b}{$\bdy M_A$}
\psfrag{e}{EPD}
\includegraphics{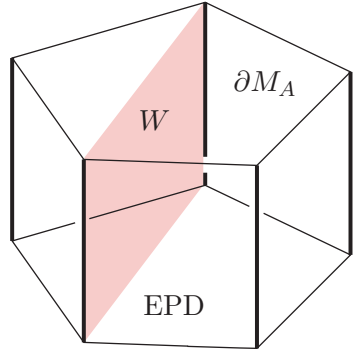}
\caption{A prism $R$, whose lateral faces are EPDs in the spanning
set.  The parabolic locus is in bold.  Any white face $W$ that
intersects $R$ must respect the product structure of $R$, hence is a
bigon face.}
\label{fig:ideal-prism}
\end{figure}

Let $R$ be a prism over an $n$--gon, cut off by $E_l \cup
E_c$. Suppose that there is a white face $W$ of the polyhedral
decomposition that intersects $R$ (otherwise we are done).  By Lemma
\ref{lemma:product-in-face}  (Product rectangle in white face), 
each component of $R \cap W$ is a product
rectangle $\alpha \times I$, whose top and bottom sides $\alpha \times
\{0,1 \}$ are sub-arcs of edges of $W$. But by construction, each
lateral face of $R$ is an EPD belonging to $E_l \cup E_c$, hence lies
in a single polyhedron and is disjoint from $W$. Thus $\alpha \times
\{0,1\}$ must be disjoint from the lateral EPDs, and must run from the
parabolic locus to the parabolic locus.  In other words, $\alpha
\times I$ fills up the entirety of the white face $W$, hence $W$ is a
bigon.  See Figure \ref{fig:ideal-prism}.

Recall that by Lemma \ref{lemma:lower-combined}, every simple disk in
the lower polyhedra belongs to $E_l$. Thus $W$ is parallel to a disk
of $E_l$, hence to a lateral face of the prism $R$. By isotoping $R$
through this white face $W$, we move a lateral face of $R$ from a disk
of $E_l$ to a parallel disk of $E_s$, while removing a component of
intersection with the white faces. Continuing inductively in this
fashion, we conclude that if $R$ was not already in a single
polyhedron, it can be isotoped into the top polyhedron. Thus $R$ was
already accounted for in the count of $\nsep + \chi_+(\GRA)$ prisms,
and the proof is complete.
\end{proof}

We can now prove the main theorem.
   
\begin{theorem} \label{thm:guts-general}\index{guts!in terms of $\chi(\GRA)$}\index{$\GRA$, $\GRB$: reduced state graph!relation to guts}\index{$E_c$!role in computing guts}
Let $D(K)$ be an $A$--adequate diagram, and let $S_A$ be the
essential spanning surface determined by this diagram. Then
$$\negeul( \guts(S^3 \cut S_A))= \negeul (\GRA) - || E_c||,$$
where $\negeul(\cdot)$ is the negative Euler characteristic as in
Definition \ref{def:neg-euler}, and where $|| E_c||$ is the smallest number of complex disks required to span the $I$--bundle of the upper polyhedron, as in Definition \ref{def:ec}.

%	 $|| \cdot ||$ denotes cardinality, and
%	$E_c$ is the set of complex disks in the upper polyhedron, as in Lemma
%	\ref{lemma:spanning-upper} and Proposition \ref{prop:spanningset}.

In particular, if every essential product disk in the upper polyhedron
is simple or semi-simple, then
  $$\negeul( \guts(S^3 \cut S_A))= \negeul(\GRA).$$
\end{theorem}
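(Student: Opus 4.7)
The strategy is to compare Euler characteristics of $M_A$, its characteristic submanifold $C$, and its guts, then express each using the diagrammatic data assembled in Proposition \ref{prop:spanningset}. Since the JSJ annuli have zero Euler characteristic, $\chi(M_A) = \chi(\guts(M_A)) + \chi(C)$. By Lemma \ref{lemma:Ibdl}, the solid-torus components of $C$ contribute $0$ while the non-trivial $I$--bundle components all have strictly negative $\chi$, and components of the guts also have non-positive Euler characteristic. Hence $\chi(M_A) = -\negeul(\guts(M_A)) + \chi(C)$, so the task reduces to computing $\chi(M_A)$ and $\chi(C)$ separately.

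To compute $\chi(M_A)$, I would observe that $\chi(\widetilde{S_A}) = 2\chi(S_A) = 2\chi(\GA)$ regardless of whether $S_A$ is orientable (for non-orientable $S_A$, $\widetilde{S_A}$ is the orientation double cover, and in either case $\GA$ is a spine for $S_A$ by Lemma \ref{lemma:ga-spine}). Since the parabolic locus consists of annuli glued to $\widetilde{S_A}$ along circles, the closed surface $\bdy M_A$ has Euler characteristic $2\chi(\GA)$, giving $\chi(M_A) = \chi(\GA)$.

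The heart of the argument is computing $\chi(C)$ via the spanning set $E_l \cup E_c$ of Proposition \ref{prop:spanningset}. Cutting a $3$--manifold along a single properly embedded disk raises $\chi$ by $1$; after cutting $C$ along all $||E_l \cup E_c||$ disks, Definition \ref{def:span} guarantees the pieces are prisms (with $\chi = 1$), solid tori arising from cutting non-trivial $I$--bundles (with $\chi = 0$), and the untouched solid-torus components of $C$ (also $\chi = 0$). Plugging in $||E_l|| = e_A - e'_A + \nsep$ and the prism count $\nsep + \chi_+(\GRA)$ from Proposition \ref{prop:spanningset},
$$\chi(C) = (\nsep + \chi_+(\GRA)) - (e_A - e'_A + \nsep + ||E_c||) = \chi_+(\GRA) - (e_A - e'_A) - ||E_c||.$$
Since $\GA$ and $\GRA$ share the same vertex set, $e_A - e'_A = \chi(\GRA) - \chi(\GA)$, and since $\GRA$ is connected, $\chi_+(\GRA) - \chi(\GRA) = \negeul(\GRA)$. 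Substituting back,
$$\negeul(\guts(M_A)) = -\chi(M_A) + \chi(C) = -\chi(\GA) + \negeul(\GRA) + \chi(\GA) - ||E_c|| = \negeul(\GRA) - ||E_c||,$$
as desired. The ``in particular'' clause is immediate, since Lemma \ref{lemma:spanning-upper}\eqref{item:min-complex} allows $E_c = \emptyset$ exactly when every EPD in the upper polyhedron is simple or semi-simple.

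The main obstacle has already been overcome in the machinery leading to Proposition \ref{prop:spanningset}: ensuring that $E_l \cup E_c$ genuinely spans the non-trivial $I$--bundle of $C$ (so that cutting really produces only prisms and solid tori, with no stray pieces of higher topological complexity) while simultaneously being economical enough that $||E_c||$ matches the minimal count of Definition \ref{def:ec}. Given this machinery, what remains is the bookkeeping arithmetic above, whose only mild subtlety is the orientability-independent identity $\chi(\widetilde{S_A}) = 2\chi(S_A)$ used when computing $\chi(M_A)$.
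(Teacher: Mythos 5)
Your proposal is correct and follows essentially the same route as the paper: additivity of Euler characteristic over the JSJ decomposition, the identity $\chi(M_A)=\chi(S_A)=\chi(\GA)$, and the disk/prism bookkeeping supplied by Proposition \ref{prop:spanningset}, with the minimality of $||E_c||$ deferred to Lemma \ref{lemma:spanning-upper}. The only cosmetic difference is that you derive $\chi(M_A)=\chi(\GA)$ via $\chi(\bdy M_A)=2\chi(\widetilde{S_A})/2=2\chi(S_A)$ rather than the paper's one-line appeal to Alexander duality; both are valid.
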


\begin{proof}
% By Lemma \ref{lemma:primelinkadequate}, the diagram $D(K)$ must be prime. 
Recall that the graph $\GA$ embeds as a spine for the surface
$S_A$. Thus, by Alexander duality, $M_A = S^3 \cut S_A$ has Euler
characteristic
\begin{equation}\label{eq:alexander}
\chi( M_A) \: = \:  \chi(S_A) \: = \: \chi(\GA).
\end{equation}

Recall that $M_A =
\guts(M_A) \cup CS(M_A)$, where $CS(M_A)$ is the characteristic submanifold of $M_A$, and the intersection $\guts(M_A) \cap
CS(M_A)$ consists of annuli. Thus their Euler characteristics sum to
the Euler characteristic of $M_A$.  Furthermore, by Lemma
\ref{lemma:Ibdl}, all the trivial components of the $I$--bundle are
solid tori glued along annuli, which do not contribute to the Euler
characteristic count. Therefore, if $B$ denotes the maximal
$I$--bundle in the characteristic submanifold,
\begin{equation}\label{eq:euler-submanifold}
\chi(\GA) \: = \:  \chi(M_A) \: = \: \chi(\guts(M_A)) + \chi(B).
\end{equation}  

By Proposition \ref{prop:spanningset}, the maximal $I$--bundle $B$ is
spanned by a collection $E_l \cup E_c$ of essential product
disks. Notice that cutting $B$ along a disk increases its Euler
characteristic by $1$. By Definition \ref{def:span}, we know that $B
\cut (E_l \cup E_c)$ consists of solid tori (Euler characteristic $0$)
and prisms (Euler characteristic $1$). Thus, by Proposition
\ref{prop:spanningset},
\begin{equation}\label{eq:euler-ibundle}
\begin{array}{r c l c l}
\chi(B)
& = & - || E_l \cup E_c || &+& (\mbox{number of prisms}) \\
& = & - (e_A - e'_A) - \nsep - || E_c || &+& (\nsep + \chi_+(\GRA)) \\
& = & \chi(\GA) - \chi(\GRA) - || E_c || &+& \chi_+(\GRA) \\
& =  &   \chi(\GA) + \negeul(\GRA) - || E_c ||. & & 
\end{array}
\end{equation}

Since every component of $\guts(M_A)$ is bounded by a hyperbolic
surface, we have $\negeul(\guts(M_A)) = - \chi(\guts(M_A))$. Thus
plugging the conclusion of \eqref{eq:euler-ibundle} into
equation \eqref{eq:euler-submanifold} gives
$$\negeul(\guts(M_A)) \: = \: - \chi(\guts(M_A)) \: = \: \negeul(\GRA) - || E_c ||,$$
which completes the proof.
\end{proof}

\begin{remark}\label{negative}
The reliance on the notation $\negeul(\cdot)$ in Theorem
\ref{thm:guts-general} is only necessary in the special case when
$\GRA$ is a tree and $\guts(M_A)$ is empty. On the other hand, when
$\guts(M_A) \neq \emptyset$, every component of it will have negative
Euler characteristic. Thus, when $\guts(M_A) \neq \emptyset$, the
conclusion of the theorem can be rephrased as
       $$\chi( \guts(M_A))= \chi (\GRA) - || E_c|| < 0.$$
\end{remark}
 \smallskip
 
\section{Modifications of the diagram}\label{sec:mod-diagram}

In this section we use Theorem \ref{thm:guts-general} to study the
effect on the guts of two well know link diagrammatic moves:
adding/removing crossings to a twist region, and taking planar cables.
Lemma \ref{lemma:remove-bigon} characterizes the effect on the guts of
adding/removing crossings to a twist region.  Planar cables, which are
of particular significance to us since they are used in the
calculation of the colored Jones polynomials \cite{lickorish:book},
are discussed in Corollary \ref{cor:cable}.

\begin{define}\label{def:long-short}
Let $R$ be a twist region of the diagram $D$, and suppose that $R$
contains $c_R > 1$ crossings. Consider the all--$A$ and all--$B$
resolutions of $R$.  One of the graphs associated to $D$, say $\GB$,
will inherit $c_R -1$ vertices from the $c_R -1$ bigons contained in
$R$. We say that this is the \emph{long resolution}\index{long resolution} of the twist 
region $R$. The other graph, say $\GA$, contains $c_R$ parallel edges
(only one of which survives in $\GRA$). This is the \emph{short
  resolution} of $R$. \index{short resolution}\index{twist region!long and short resolutions} See Figure \ref{fig:twist-resolutions}.
  
We say that the twist region $R$ is an \emph{$A$--region}\index{$A$--region} if the
all--$A$ resolution is the short resolution of $R$. In other words,
$R$ is an $A$--region if it contributes exactly one edge to $\GRA$.
\end{define}

\begin{figure}[ht]
\begin{center}
\input{figures/contribution.pstex_t}
\end{center}
\caption{Resolutions of a twist region $R$.
  \index{long resolution!example}\index{short resolution!example}
  This twist region is an $A$--region\index{$A$--region!example},
  because the all--$A$ resolution is short.}
\label{fig:twist-resolutions}
\end{figure}

\begin{lemma}\label{lemma:remove-bigon}
Let $D$ be an $A$--adequate link diagram, with spanning surface
$S_A(D)$ and the associated prime polyhedral decomposition of $S^3
\cut S_A(D)$. Let $\widehat{D}$ be the $A$--adequate diagram obtained
by removing one crossing in an $A$--region of $D$. (Note that this
operation very likely changes the link type.)

Then the effect of removing one crossing from an $A$--region is as
follows:
\begin{enumerate}
\item\label{item:GRA-isomorphic} The reduced graphs $\GRA(D)$ and
  $\GRA(\widehat{D})$ are isomorphic.
\item\label{item:EC-isomorphic} In the upper polyhedra of the
  respective diagrams, the spanning sets $E_c(D)$ and
  $E_c(\widehat{D})$ have the same cardinality.
\item\label{item:guts-isomorphic} The complements of the spanning
  surfaces $S_A(D)$ and $S_A(\widehat{D})$ have the same guts:
$$\negeul \,  \guts(S^3 \cut S_A(D)) = \negeul \, \guts(S^3 \cut
  S_A(\widehat{D})).$$
\end{enumerate}
\end{lemma}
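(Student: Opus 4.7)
The plan is to establish conclusions \eqref{item:GRA-isomorphic} and \eqref{item:EC-isomorphic} directly and then deduce \eqref{item:guts-isomorphic} immediately from Theorem \ref{thm:guts-general}, since both $\negeul(\GRA)$ and $\|E_c\|$ enter the formula $\negeul(\guts(M_A)) = \negeul(\GRA) - \|E_c\|$.

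For \eqref{item:GRA-isomorphic}, I would observe that the $A$--region $R$ contributes exactly $c_R$ parallel edges to $\GA$ between the same pair of state circles $C_1, C_2$, and that these parallel edges collapse to a single edge in $\GRA$. Removing one crossing from $R$ simply deletes one of the $c_R$ parallel edges, leaving $c_R - 1 \geq 1$ edges, which still collapse to the same single edge in the reduced graph. Hence $\GRA(D) \cong \GRA(\widehat{D})$ as abstract graphs. Along the way, no $1$--edge loop is introduced in $\GA(\widehat{D})$, confirming the asserted $A$--adequacy of $\widehat{D}$.

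For \eqref{item:EC-isomorphic}, I would compare the prime polyhedral decompositions of $M_A = M_A(D)$ and $\widehat{M}_A = M_A(\widehat{D})$ in a neighborhood of $R$. Outside of such a neighborhood, the graphs $H_A$, state circles, non-prime arcs, shaded faces, ideal edges, and directed spines all match. Inside $R$, the $c_R$ parallel segments bound $c_R - 1$ bigon white faces in both the upper polyhedron and the relevant lower polyhedron of $M_A$, while the corresponding count in $\widehat{M}_A$ is $c_R - 2$. The difference is a single bigon white face that, by the construction of spanning sets in Lemmas \ref{lemma:spanning-lower} and \ref{lemma:spanning-upper}, contributes a simple EPD belonging to $E_l$ and $E_s$ but never to $E_c$. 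I would then show that the two shaded faces of the upper polyhedron adjacent to $R$ persist with one fewer shared ideal vertex, and that the complex \pitos \ disks through any pair of shaded faces are naturally in bijection between $D$ and $\widehat{D}$: each complex disk in $D$ runs through a consecutive pair of ideal vertices not affected by the removal, and conversely each complex disk in $\widehat{D}$ extends to one in $D$. By Lemma \ref{lemma:spanning-upper}\eqref{item:min-complex}, this bijection yields $\|E_c(D)\| = \|E_c(\widehat{D})\|$.

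The main obstacle is verifying this bijection precisely. The subtle point is to ensure that no complex disk of $D$ degenerates into a simple or semi-simple disk in $\widehat{D}$ (which would decrease $\|E_c\|$ and violate \eqref{item:EC-isomorphic}). This will require a short tentacle--chasing argument confirming that the eliminated ideal vertex in $R$ is shared by the two adjacent bigons on both sides of the two shaded faces meeting at $R$, and is never one of the two consecutive vertices determining a complex \pitos\ disk in the spanning set $E_c(D)$. Once this is in place, \eqref{item:guts-isomorphic} is immediate from Theorem \ref{thm:guts-general}.
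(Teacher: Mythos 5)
Your parts \eqref{item:GRA-isomorphic} and \eqref{item:guts-isomorphic} are correct and follow the paper's route exactly (count parallel edges in the short resolution; then apply Theorem \ref{thm:guts-general}). The genuine gap is in \eqref{item:EC-isomorphic}: the whole content of that claim is the assertion that complexity of the spanning disks is preserved, and this is precisely the step you defer (``will require a short tentacle--chasing argument'') rather than prove. Moreover, the statement you propose to verify is not the right one and is false in general. Removing the crossing does not delete an ideal vertex that the disks of $E_c(D)$ avoid; rather, the two ideal vertices at the ends of the white bigon $B$ in the twist region merge into a single ideal vertex, and complex \pitos\ disks of $E_c(D)$ can perfectly well run through one of these two vertices --- any \pitos\ disk through the pair consisting of a bigon vertex and the next vertex of $F \cap G$ does so. So an argument aimed at showing that complex disks in the spanning set never meet these vertices cannot succeed, and your proposed bijection between ``disks through vertices not affected by the removal'' is not the correct correspondence.

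What closes the argument (and is how the paper proceeds) is to view the removal of the crossing as collapsing the white bigon face $B$, lying between two shaded faces $F$ and $G$ of the upper polyhedron, to a single ideal vertex. By Lemma \ref{lemma:spanning-upper}\eqref{item:all-simple}, the simple disk parallel to $B$ belongs to $E_s(D)$, not to $E_c(D)$, so its disappearance costs nothing. By Lemma \ref{lemma:pitos}, every other \pitos\ disk through $F$ and $G$ remains \pitos\ after the collapse: consecutive vertices stay consecutive once the two bigon vertices are identified. The collapse neither creates a new white bigon parallel to such a disk nor alters its parabolic compression structure, so complex disks stay complex and non-complex disks stay non-complex; in the remaining case where $F$ and $G$ are the only shaded faces of the upper polyhedron, $E_c = \emptyset$ both before and after. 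This gives $|| E_c(D) || = || E_c(\widehat{D}) ||$ directly, with no need to keep complex disks away from the bigon's vertices.
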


\begin{proof}
Following Definition \ref{def:long-short}, let $R$ be a twist region
of the diagram $D$ which has at least two crossings, and in which the
all--$A$ resolution is short. Then, removing one crossing from twist
region $R$ amounts to removing one segment from the graph
$H_A(D)$. All the state circles are unaffected, and the other segments
of $H_A$ are also unaffected. See Figure \ref{fig:remove-crossing}.

Recall that the vertices of $\GA$ are the state circles of $H_A$, and
the edges of $\GA$ are the segments of $H_A$. Thus the graphs $\GA(D)$
and $\GA(\widehat{D})$ have exactly the same vertex set, with $\GA(D)$
having one more edge in the short resolution of the twist region
$R$. Because duplicate edges of $\GA$ are identified together in
$\GRA$, the two reduced graphs $\GRA(D)$ and $\GRA(\widehat{D})$ are
isomorphic, proving \eqref{item:GRA-isomorphic}.

\smallskip

\begin{figure}
\includegraphics{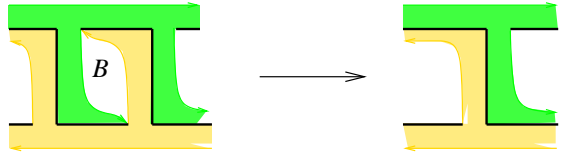}
\caption{The effect of removing a crossing from an $A$--region of $D$
  on the graph $H_A$ and the polyhedral decomposition. In the upper
  polyhedron, a bigon face $B$ between two shaded faces becomes
  collapsed to a single ideal vertex. }
\label{fig:remove-crossing}
\end{figure}

Now, consider the effect of removing a crossing from $R$ on the
polyhedral decomposition. A bigon in the twist region $R$ corresponds
to a white bigon face of the upper polyhedron in the polyhedral
decomposition of $D$. Let $F$ and $G$ be the two shaded faces that are
adjacent to this bigon $B$. As Figure \ref{fig:remove-crossing} shows,
removing one crossing from $R$ amounts to collapsing the bigon face
$B$ to a single ideal vertex.

Next, consider the essential product disks through faces $F$ and $G$
that form part of the spanning set $E_s(D) \cup E_c(D)$. By Lemma
\ref{lemma:spanning-upper}\eqref{item:all-simple}, the simple disk
parallel to bigon $B$ is part of the spanning set
$E_s(D)$. Furthermore, by Lemma \ref{lemma:pitos}, all other
\pitos\ disks through faces $F$ and $G$ remain \pitos\ if we collapse
$B$ to a single ideal vertex.

Recall that in the proof of Lemma \ref{lemma:spanning-upper}, we
considered two cases. If $F$ and $G$ are the only shaded faces in the
upper polyhedron, then $E_c = \emptyset$. This will remain true after
we remove one bigon face. Alternately, if $F$ and $G$ are not the only
shaded faces, then the contribution of these shaded faces to $E_s(D)
\cup E_c(D)$ consists of all \pitos\ disks through $F$ and $G$. The
property that a \pitos\ disk is complex will not change as we collapse
the bigon $B$. Thus $|| E_c(D) || = ||E_c (\widehat{D}) ||$, proving
\eqref{item:EC-isomorphic}.

\smallskip

Finally, \eqref{item:guts-isomorphic} follows immediately from
\eqref{item:GRA-isomorphic}, \eqref{item:EC-isomorphic}, and Theorem
\ref{thm:guts-general}.
\end{proof}
  
\begin{remark}
For alternating diagrams, Lackenby observed that there
is actually a homeomorphism from $\guts(S^3 \cut S_A(D))$ to
$\guts(S^3 \cut S_A(\widehat{D}))$, which carries parabolic locus to
parabolic locus. See \cite[Page 215]{lackenby:volume-alt}. This
statement holds in complete generality, including in our
setting. However, we will only need the equality of Euler
characteristics in Lemma
\ref{lemma:remove-bigon}\eqref{item:guts-isomorphic}.
\end{remark}

By combining Theorem \ref{thm:guts-general} and Lemma
\ref{lemma:remove-bigon} with Theorem \ref{thm:2edgeloop} on page
\pageref{thm:2edgeloop} (which will be proved in the next chapter), we
obtain the following corollary.

\begin{corollary}\label{cor:onlybigons}\index{guts!in terms of $\chi(\GRA)$}\index{$\GRA$, $\GRB$: reduced state graph!relation to guts}
Suppose that $D(K)$ is a prime, $A$--adequate diagram, such that for
each 2-edge loop in $\GA$ the edges belong to the same twist region of
$D(K)$.  Then
$$\negeul( \guts(M_A))=\negeul(\GRA).$$
\end{corollary}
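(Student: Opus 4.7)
The plan is to combine Theorem \ref{thm:guts-general} with the structural result Theorem \ref{thm:2edgeloop} about complex EPDs to show that, under the given hypothesis on $2$--edge loops, no complex disks are needed to span the $I$--bundle of the upper polyhedron. That is, the goal is to show $\|E_c\| = 0$, for then the desired identity is immediate from Theorem \ref{thm:guts-general}.

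The first step would be to simplify the diagram using Lemma \ref{lemma:remove-bigon}. Every $A$--region of $D$ contributes a family of parallel segments to $H_A$, i.e., a $2$--edge loop in $\GA$ whose edges all lie in the same twist region; by hypothesis, these are the only sort of $2$--edge loops we are forced to handle. By repeatedly removing crossings from $A$--regions, one obtains an $A$--adequate diagram $\widehat D$ in which every $A$--region contains exactly one crossing, and Lemma \ref{lemma:remove-bigon} guarantees that $\GRA$, the spanning set $E_c$, and $\negeul(\guts)$ are all unchanged under this reduction. The hypothesis persists under this simplification: in $\widehat{D}$, every $2$--edge loop of $\GA$ still has both edges in a single twist region, which now means the loop is degenerate in the sense that the two parallel edges of $\GA$ come from a single $A$--region now containing only one crossing, so in fact $\GA(\widehat D)$ has \emph{no} non-trivial $2$--edge loops at all.

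The second and main step would be to invoke Theorem \ref{thm:2edgeloop}, which (as described in the introduction) identifies the basic combinatorial building blocks of complex EPDs in the upper polyhedron: each such building block maps onto a $2$--edge loop of $\GA$. For $\widehat D$, there are no non-trivial $2$--edge loops available to support these building blocks, so every EPD in the upper polyhedron must in fact be simple or semi-simple. In the language of Lemma \ref{lemma:spanning-upper}, every \pitos\ disk through a pair of shaded faces parabolically compresses onto simple disks, so the minimal spanning set $E_s \cup E_c$ can be chosen with $E_c = \emptyset$, giving $\|E_c(\widehat D)\| = 0$. Together with Lemma \ref{lemma:remove-bigon}\eqref{item:EC-isomorphic}, this yields $\|E_c(D)\| = 0$.

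With $\|E_c\| = 0$ in hand, Theorem \ref{thm:guts-general} gives $\negeul(\guts(M_A)) = \negeul(\GRA)$, as desired. The main obstacle in executing this plan is carefully extracting from Theorem \ref{thm:2edgeloop} the statement that, in the absence of $2$--edge loops with edges in \emph{distinct} twist regions, no building block for a complex EPD can survive; this is where the tentacle-chasing machinery of Chapter \ref{sec:epds} must be brought in to rule out any ``hidden'' complex EPD not arising in the obvious way from such a $2$--edge loop, and to verify that the parabolic compressions produced by the building-block analysis land inside $E_s$ rather than producing new complex disks.
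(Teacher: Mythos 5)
Your proposal is correct and follows essentially the same route as the paper: remove the bigons in the $A$--regions via Lemma \ref{lemma:remove-bigon}, observe that the hypothesis then leaves $\GA(\widehat D)$ with no $2$--edge loops at all, invoke Theorem \ref{thm:2edgeloop} to get $E_c(\widehat D)=\emptyset$ and hence $||E_c(D)||=0$, and conclude with Theorem \ref{thm:guts-general}. The worry in your final paragraph is unnecessary: since $\GA(\widehat D)$ has no $2$--edge loops whatsoever, Theorem \ref{thm:2edgeloop} (via Corollary \ref{cor:epd-loop}) already excludes \emph{every} EPD in the upper polyhedron of $\widehat D$, so no additional tentacle chasing is needed to rule out hidden complex disks.
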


\begin{proof}
Let $D$ be as in the statement of the corollary, and let $\widehat{D}$
be the diagram that results from removing \emph{all} bigons in the
$A$--regions of the diagram $D$. Applying Lemma
\ref{lemma:remove-bigon} inductively, we conclude that this removal of
bigons does not affect either the reduced graph $\GRA$ or spanning set
$E_c$. Also, since every 2-edge loop of $\GA(D)$ belongs to a single
twist region, the removal of bigons also removes all 2-edge
loops. Thus $\GRA(D) = \GRA(\widehat{D}) = \GA(\widehat{D})$.

By Theorem \ref{thm:2edgeloop}, every essential product disk in the
upper polyhedron of $\widehat{D}$ must run over tentacles adjacent to
the segments of a 2-edge loop of $\GA(\widehat{D})$. But by
construction, there are no 2-edge loops in $\GA(\widehat{D})$. Thus,
by Lemma \ref{lemma:remove-bigon}, $E_c(D) = E_c(\widehat{D}) =
\emptyset$.  Therefore, according to the formula of Theorem
\ref{thm:guts-general}, $\negeul( \guts(M_A))=\negeul(\GRA).$
\end{proof}

Given a diagram $D=D(K)$ of a link $K$, and a number $n\in {\NN}$, let
$D^n$\index{$D^n$, the $n$--cabling of a diagram} denote the the
$n$--cabling\index{cabling} of $D$ using the
blackboard framing.  If $D$ is $A$--adequate then $D^n$ is
$A$--adequate for all $n\in {\NN}$.  Furthermore, the Euler
characteristic of the reduced all--$A$ graph corresponding to $D^n$,
is the same as that of the reduced all--$A$ graph corresponding to
$D$. That is,
$$\chi(\GRA (D^n)) =\chi(\GRA(D)),$$
for all $n\geq 1$ \cite[Chapter 5]{lickorish:book}.
We have  the following:

\begin{corollary} \label{cor:cable}
Let $D:=D(K)$ be an $A$--adequate diagram, of a link $K$. Let
$D^n$ denote the $n$--cabling of $D$ using the blackboard framing, and
let $S^n_A$ be the all--$A$ state surface determined by $D^n$. Then
$$\negeul (\guts(S^3 \cut S^n_A))+ || E_c(D^n)||= \negeul (\GRA(D)),$$
for every $n\geq 1$.  Here $\negeul(\cdot)$, $|| \cdot ||$ and
$E_c(D^n)$ are the quantities of the statement of Theorem
\ref{thm:guts-general} corresponding to $D^{n}$.
\end{corollary}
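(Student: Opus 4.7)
The plan is to deduce the corollary directly from Theorem \ref{thm:guts-general} together with the two invariance facts about cabling recalled immediately before the statement: namely, that $A$--adequacy is preserved under blackboard--framed cabling, and that $\chi(\GRA(D^n))=\chi(\GRA(D))$ for all $n\ge 1$ (both cited from Lickorish \cite[Chapter 5]{lickorish:book}).

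First, I would apply Theorem \ref{thm:guts-general} to the cabled diagram $D^n$, which is legitimate because $D^n$ is itself $A$--adequate. This gives immediately
\[
\negeul(\guts(S^3 \cut S_A^n)) \;=\; \negeul(\GRA(D^n)) - \|E_c(D^n)\|,
\]
which after transposing the $\|E_c(D^n)\|$ term is the desired equality, \emph{except} with $\negeul(\GRA(D^n))$ on the right-hand side rather than $\negeul(\GRA(D))$.

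The remaining step is to replace $\negeul(\GRA(D^n))$ by $\negeul(\GRA(D))$. Since $D$ is connected (our standing convention on link diagrams), the graph $\GA(D)$ is connected, and hence so is its quotient $\GRA(D)$; the same reasoning applies to $D^n$. For a connected graph $Y$, the quantity $\negeul(Y)$ of Definition \ref{def:neg-euler} depends only on $\chi(Y)$: it equals $0$ if $\chi(Y)=1$ (i.e.\ $Y$ is a tree) and equals $-\chi(Y)$ if $\chi(Y)\le 0$. Consequently, the cited equality $\chi(\GRA(D^n))=\chi(\GRA(D))$ upgrades to $\negeul(\GRA(D^n))=\negeul(\GRA(D))$, completing the proof.

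There is no serious obstacle here; the corollary is essentially a repackaging of Theorem \ref{thm:guts-general} via the cabling invariance of the reduced state graph's Euler characteristic. The only points that warrant explicit mention in the write-up are the preservation of $A$--adequacy under cabling (so that Theorem \ref{thm:guts-general} applies) and the passage from $\chi$ to $\negeul$ via connectedness of $\GRA$.
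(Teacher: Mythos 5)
Your proposal is correct and follows essentially the same route as the paper: apply Theorem \ref{thm:guts-general} to the $A$--adequate cabled diagram $D^n$ and then invoke $\chi(\GRA(D^n))=\chi(\GRA(D))$ from \cite[Chapter 5]{lickorish:book}. Your extra remark that connectedness of $\GRA$ lets one pass from equality of $\chi$ to equality of $\negeul$ is a small detail the paper leaves implicit, but the argument is the same.
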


\begin{proof}
% Since $D$ is prime, $D^n$ is prime. 
By Theorem \ref{thm:guts-general}, 
we have
$$\negeul(\guts(S^3 \cut S^n_A))+ || E_c(D^n)||= \negeul
(\GRA(D^n)).$$ Since $\chi(\GRA (D^n)) = \chi(\GRA(D))$, for all
$n\geq 1$, the result follows.
\end{proof}

It is worth observing that by Corollary \ref{cor:cable} and Theorem \ref{thm:guts-general},
\begin{eqnarray*}
\negeul (\guts(S^3 \cut S^n_A))+ || E_c(D^n)|| & = &
\negeul( \guts(S^3 \cut S_A))+ || E_c(D)|| \\
& = & \negeul (\GRA(D)),
\end{eqnarray*}
for every $n\geq 1$. Thus the left-hand side is independent of $n$. It is worth asking whether the summands $\negeul (\guts(S^3 \cut S^n_A))$ and $|| E_c(D^n)||$ are also independent of $n$; see Question \ref{quest:cableques} in Chapter \ref{sec:questions}.\index{guts!stability under cabling}\index{cabling!effect on guts}

In fact, by Lemma \ref{lemma:stabilized} on page \pageref{lemma:stabilized}, the quantity $\negeul (\GRA(D))$ is
actually an invariant of the link $K$; it is independent of the
$A$--adequate diagram.  However, Example
\ref{example:bad} on page \pageref{example:bad} demonstrates that
$||E_c(D)||$ (and thus $\negeul (\guts(S^3 \cut S_A))$) does, in
general, depend on the diagram used: Figure \ref{fig:bad} shows a diagram $D$ with $|| E_c(D)||\neq 0$, while
Figure \ref{fig:bad-fix} shows a
different diagram $D'$ of the same link with $|| E_c(D')||= 0$.  We
will revisit this discussion in Chapter \ref{sec:questions}.\index{$E_c$!dependence on diagram}

%%%%%%%%%%%%%%%%%%%%%%%%%%%%%%%%%%%%%%%%%%%%%%%%%%%%%%%%%%%%%%%%%
\section{The $\sigma$--adequate, $\sigma$--homogeneous setting}\label{subsec:spanningsigma}

The results of this chapter extend immediately to $\sigma$--adequate,
$\sigma$--homo\-geneous states, using only the fact that the polyhedral
decomposition in this case cuts $M_\sigma$ into checkerboard polyhedra 
(Theorem
\ref{thm:sigma-homo-poly}).  This is because the proofs in this
section use only normal surface theory specific to checkerboard
polyhedra, and nothing dependent on tentacles or choice of
resolution.  

In particular, Lemma \ref{lemma:EPDlower} holds, and its proof needs
no change, using the fact that lower polyhedra still correspond to
checkerboard polyhedra of alternating links.  Definitions
\ref{def:simple} and \ref{def:pitos}, as well as Lemma
\ref{lemma:pitos}, are all stated (and proved) for any checkerboard
colored ideal polyhedron.  Lemmas \ref{lemma:spanning-lower} and
\ref{lemma:lower-combined} concern only lower polyhedra, which we know
correspond to ideal polyhedra of alternating links in the
$\sigma$--homogeneous case.  Hence their proofs will hold in this 
general setting.  Lemmas \ref{lemma:spanning-upper} and
\ref{lemma:ec-tentacle}, concerning upper polyhedra, use only
properties of checkerboard ideal polyhedra, hence these lemmas still hold if we replace $\GRA$
with $G_\sigma'$.  Similarly, Theorem
\ref{thm:fiber-tree} will immediately generalize to the
$\sigma$--adequate, $\sigma$--homogeneous setting. The proof uses Lemma \ref{lemma:spanning-upper} and Proposition \ref{lemma:product-in-face} (Product rectangle in white face), and we have seen that these results hold in the $\sigma$--adequate, $\sigma$--homogeneous case. Thus the proof of Theorem \ref{thm:fiber-tree} applies verbatim to give the following general result.

\begin{theorem}\label{thm:fibers-homo}
Let $D(K)$ be a link diagram, and let $S_\sigma$ be the state surface
of a homogeneous state $\sigma$.  Then the following are equivalent.
\begin{enumerate}
\item\label{item:tree-homo} The reduced graph $G_\sigma'$ is a tree.
\item\label{item:fiber-homo} $S^3 \setminus K$ fibers over $S^1$ with fiber
$S_\sigma$. 
\item\label{item:Ibdl-homo} $M_\sigma = S^3 \cut S_\sigma$ is an $I$--bundle over
$S_\sigma$. \qed
\end{enumerate}
\end{theorem}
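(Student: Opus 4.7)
The plan is to mirror the proof of Theorem \ref{thm:fiber-tree}, checking at each step that the ingredients used there have been generalized to the $\sigma$--adequate, $\sigma$--homogeneous setting in Section \ref{subsec:spanningsigma}. I would prove the three implications in the cyclic order $(1) \Rightarrow (2) \Rightarrow (3) \Rightarrow (1)$, with the middle implication being immediate from the definitions.

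For $(1) \Rightarrow (2)$: Assume $G_\sigma'$ is a tree. I would first observe that connectedness of $G_\sigma'$ forces $D$ to be connected, and the absence of loops in $G_\sigma'$ forces $\mathbb{G}_\sigma$ to have no $1$--edge loops, so $D$ is $\sigma$--adequate; thus Theorem \ref{thm:sigma-homo-poly} gives us the polyhedral decomposition of $M_\sigma$. Applying the $\sigma$--homogeneous analogue of Lemma \ref{lemma:spanning-upper} (whose validity is recorded in Section \ref{subsec:spanningsigma}), we get that every polyhedron is a prism whose horizontal faces are shaded and whose lateral faces are white bigons. Each such prism is an $I$--bundle over its base polygon, and because the bigon faces respect this product structure, the individual $I$--bundles glue across the white bigons to give a global $I$--bundle structure on $M_\sigma$ over $S_\sigma$. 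Finally, since $G_\sigma'$ is a tree it is bipartite, hence so is $\mathbb{G}_\sigma$; therefore, by Lemma \ref{lemma:sa-orientable} (which is stated for an arbitrary state $\sigma$), $S_\sigma$ is orientable. An orientable surface whose complement is a (necessarily trivial) $I$--bundle is a fiber of a fibration over $S^1$.

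For $(3) \Rightarrow (1)$: Assume $M_\sigma$ is an $I$--bundle over $S_\sigma$. In particular, $S_\sigma$ is connected, so $D$ is connected, and the $I$--bundle structure forces $S_\sigma$ to be essential in $S^3 \setminus K$. By Ozawa's theorem (Theorem \ref{thm:ozawa}), this implies that $D$ is $\sigma$--adequate, so the full polyhedral decomposition and its associated machinery apply. Since all white faces are contained in $M_\sigma$, Lemma \ref{lemma:product-in-face} (which holds in the $\sigma$--adequate, $\sigma$--homogeneous case) tells us each white face is a product rectangle $\alpha \times I$ with $\alpha \times \{0,1\}$ on ideal edges --- that is, every white face is a bigon. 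Invoking the $\sigma$--homogeneous version of Lemma \ref{lemma:spanning-upper} once more, the equivalence of conditions there gives that $G_\sigma'$ is a tree.

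The main obstacle will not be any individual step of the argument itself, since the structure is identical to that of Theorem \ref{thm:fiber-tree}. Rather, the work is entirely in verifying that each lemma invoked --- notably the analogues of Lemma \ref{lemma:spanning-upper}, Lemma \ref{lemma:product-in-face}, Lemma \ref{lemma:sa-orientable}, and Ozawa's essentiality theorem --- is already available for homogeneous states. Since these generalizations are precisely what Sections \ref{subsec:idealsigma}, \ref{sec:gensigmahomo}, and \ref{subsec:spanningsigma} establish, the argument proceeds without essential modification, and the proof of Theorem \ref{thm:fiber-tree} can be transcribed with $\GRA$ replaced by $G_\sigma'$, $\GA$ by $\mathbb{G}_\sigma$, and $S_A$ by $S_\sigma$ throughout.
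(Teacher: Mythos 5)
Your proposal is correct and is essentially the paper's own argument: the paper proves Theorem \ref{thm:fibers-homo} by observing that the proof of Theorem \ref{thm:fiber-tree} applies verbatim once the needed ingredients (the analogues of Lemma \ref{lemma:spanning-upper}, Lemma \ref{lemma:product-in-face}, Ozawa's essentiality theorem, and Lemma \ref{lemma:sa-orientable}, which is already stated for arbitrary states) are in place for $\sigma$--adequate, $\sigma$--homogeneous diagrams. Your step-by-step transcription, including the cyclic order of implications and the verification of each generalized lemma, matches that approach exactly.
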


In particular, Theorem \ref{thm:fibers-homo} implies the classical result, due to Stallings \cite{stallings:fibered}, that homogeneous closed braids are fibered, with fiber the Seifert surface $S_\sigma$ associated to the Seifert state $\sigma$.

The results of Section \ref{sec:compute-guts} will also extend
immediately to the
$\sigma$--adequate, $\sigma$--homogeneous setting.  In particular, every non-trivial
component of the characteristic submanifold of $M_\sigma$ is spanned
by a collection $E_l \cup E_c$ of EPDs, with the properties of
Proposition \ref{prop:spanningset}, with $\sigma$ replacing $A$ in the
appropriate places.  In the proof of Proposition \ref{prop:spanningset}, one would need to use
Theorem \ref{thm:sigma-epd-span} in place of Theorem \ref{thm:epd-span}.
Theorem \ref{thm:guts-general} also generalizes to this setting, and
we obtain
$$\chi_{-}(\guts(S^3 \cut S_\sigma)) = \chi_{-}(\G'_\sigma)-||E_c||.$$

As for Section \ref{sec:mod-diagram}, in the case of a
$\sigma$--adequate, $\sigma$--homogeneous diagram, analogous to an
$A$--region, we define a twist region $R$ to be a $\sigma$--region if
its $\sigma$--resolution gives the short resolution of $R$.  In other
words, $R$ contributes exactly one edge to $\G'_\sigma$.  With this
modification, Lemma \ref{lemma:remove-bigon} will hold, with the same
proof, replacing $A$--adequate with $\sigma$--adequate,
$\sigma$--homogeneous in the statement, as well as $S_A$ with
$S_\sigma$, $A$--region with $\sigma$--region, and $\GRA$ with
$\G'_\sigma$.

However, this is where we stop.  Corollary \ref{cor:onlybigons}
requires results from Chapter \ref{sec:epds}, which we have not analyzed
in the $\sigma$--adequate, $\sigma$--homogeneous case.  
It is entirely possible that the results of Chapter \ref{sec:epds} will generalize, 
but we leave this analysis for a future time.

%	Corollary
%	\ref{cor:cable} uses the fact that the Euler characteristic of the
%	reduced all--$A$ graph corresponding to a cable is the same as that of
%	the reduced all--$A$ graph corresponding to the original diagram.
%	This fact is not necessarily true for a general state
%	$\sigma$.  
% 
%	JSP: Is this true? I made it up.
%	DF: I think the cabling will work, but it's fine to stop here.

%% 6
\chapter{Recognizing essential product disks}\label{sec:epds}
Theorem \ref{thm:guts-general} reduces the problem of computing the
Euler characteristic of the guts of $M_A$ to counting how many complex
EPDs are required to span the $I$--bundle of the upper polyhedron. Our
purpose in this chapter is to recognize such EPDs from the structure
of the all--$A$ state graph $\GA$.  The main result is Theorem
\ref{thm:2edgeloop}, which describes the basic building blocks for
such EPDs.  Each corresponds to a 2--edge loop of the graph $\GA$.

The proofs of this chapter require detailed tentacle chasing
arguments, and some are quite technical.  To assist the reader, we
break the proof of Theorem \ref{thm:2edgeloop} into four steps, and
keep a running outline of what has been accomplished, and what still
needs to be accomplished.  The tentacle chasing does pay off, for by
the end of the chapter we obtain a mapping from any EPD to one of only
seven possible sub-graphs of $H_A$.  By investigating the occurrence
of such subgraphs, we are able to count complex EPDs in large classes
of link complements.  Two such classes are studied in detail in
Chapters \ref{sec:nononprime} (links with diagrams without non-prime
arcs) and \ref{sec:montesinos} (Montesinos links).  Together with the
results of Chapter \ref{sec:spanning}, these give applications to
guts, volumes, and coefficients of the colored Jones polynomials.

\section{$2$--edge loops and  essential product disks}

To find essential product disks in the upper polyhedron, we will
convert any EPD into a normal square and use machinery developed
in Chapter \ref{sec:ibundle}.

\begin{lemma}[EPD to oriented square\index{EPD to oriented square lemma}]
Let $D$ be a prime, $A$--adequate diagram of a link in $S^3$, with
prime polyhedral decomposition of $M_A = S^3\cut S_A$.  Suppose there
is an EPD embedded in $M_A$ in the upper polyhedron.  Then the
boundary of the EPD can be pulled off the ideal vertices to give a
normal square in the polyhedron with the following properties.
\begin{enumerate}
\item Two opposite edges of the square run through shaded faces, which
  we label green and orange.\footnote{Note: For grayscale versions of
    this chapter, the figures will show green faces as darker gray,
    orange faces as lighter gray.}
\item The other two opposite edges run through white faces, each
  cutting off a single vertex of the white face.
\item The single vertex of the white face, cut off by the white edge,
  is a triangle, oriented such that in counter--clockwise order, the
  edges of the triangle are colored orange--green--white.
\end{enumerate}
With this convention, the two white edges of the normal square cannot
lie on the same white face of the polyhedron.
\label{lemma:EPDtoSquare}
\end{lemma}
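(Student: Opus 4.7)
The plan is to take the EPD $D$, whose boundary consists of two arcs on shaded faces of the upper polyhedron $P$ (coming from $\partial D \cap S_A$) and two arcs on the parabolic locus (each meeting a single ideal vertex of $P$). I would push each boundary arc on the parabolic locus slightly off its ideal vertex into one of the two adjacent white faces, so that it becomes a short arc cutting off a triangular corner of that white face. Since every ideal vertex of $P$ is 4--valent by Lemma \ref{lemma:top}, each cut-off corner is a triangle with one white edge and two edges of $P$. A direct check of the conditions of Definition \ref{def:normal} shows that the resulting disk $D'$ is a normal square whose four sides alternate shaded--white--shaded--white, establishing properties (1) and (2).

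To show the two shaded arcs of $D'$ lie on distinct shaded faces, suppose for a contradiction that both lie on the same face $F$. Since $F$ is simply connected by Theorem \ref{thm:simply-connected}, the two shaded arcs, together with two appropriate sub-arcs of $\partial F$ joining their endpoints, cobound a sub-disk $R \subset F$. Then $D' \cup R$ is a sphere inside the ball $P$, bounding an inner ball $B$, which gives a parallelism of $D'$ into $\partial P$. Combined with the incompressibility of $S_A$ (Theorem \ref{thm:incompress}), this parallelism translates into an isotopy of the original EPD $D$ into $\bdy M_A$, contradicting the essentiality of $D$.

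For the orientation convention of property (3), the choice of which of the two adjacent white faces to push into is free at each ideal vertex, as is the labeling of the two (now distinct) shaded faces as green and orange. Once the labels are fixed, the cyclic order of colors around each cut-off triangle, taken with respect to the fixed orientation of $\partial P \cong S^2$, is determined by the direction of the push. By pushing into the appropriate white face at each of the two ideal vertices, one can arrange both triangles to read orange--green--white in counter-clockwise order.

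The hardest step I expect is the final claim that the two white edges of $D'$ lie on distinct white faces. If both white arcs lay on a common white face $W$, the strategy is to cut $W$ along a properly embedded arc joining the two white arcs and analyze the resulting sub-disks. An outermost argument should produce a normal bigon in $P$, contradicting Proposition \ref{prop:no-normal-bigons} (No Normal Bigons). Alternatively, one may apply Lemma \ref{lemma:squares-int} to $D'$ and a parallel copy whose arc on $W$ differs by a single rotation, obtaining either a parabolic compression of $D$ contradicting essentiality, or a forbidden intersection pattern. Making this precise is the main technical obstacle, as it requires careful bookkeeping of how the two arcs in $W$ interact with the shaded faces adjacent to the cut-off corners, together with the $\pitos$ analysis from Chapter \ref{sec:ibundle}.
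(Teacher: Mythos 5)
Your construction of the normal square and the handling of the orientation convention (properties (1)--(3)) follow the paper's approach: push each arc crossing an ideal vertex into one of the two adjacent white faces, and at each vertex choose the one of the two (oppositely oriented) pushes that yields the orange--green--white triangle. But the final claim --- that the two white edges cannot lie on the same white face --- is exactly the part you leave unproven (``the strategy is \ldots should produce \ldots making this precise is the main technical obstacle''), and this is the substantive content of the lemma. The paper's argument is short but specific: if both white arcs lay in a single white face $W$, then, since $W$ is simply connected, they can be joined by an arc in $W$, and since the green shaded face is simply connected (Theorem~\ref{thm:simply-connected}), their endpoints can also be joined by an arc running through the green face. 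The union of these two arcs is a closed curve on $\bdy P$ meeting the edges of the polyhedron exactly twice and \emph{separating the two ideal vertices} of the square; this separation is what rules out the trivial configurations, so the curve gives a normal bigon, contradicting Proposition~\ref{prop:no-normal-bigons}. Your first strategy (``cut $W$ along an arc joining the two white arcs and run an outermost argument'') is missing both key ingredients: the companion arc through the shaded face, and the separation of the two ideal vertices; moreover there is no family of intersection arcs here to which an outermost/normalization argument applies. Your alternative via Lemma~\ref{lemma:squares-int} does not work either: a parallel copy of $D'$ has its arc on $W$ isotopic to that of $D'$, not differing by a single rotation, and that lemma is tailored to squares arising from the gluing/clockwise map, which is not the situation at hand.

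A secondary problem is your added argument that the two shaded arcs lie in distinct shaded faces. As written it fails: $\bdy D'$ contains the two white arcs as well as the two shaded arcs, while $\bdy R$ consists of the shaded arcs together with sub-arcs of $\bdy F$, so $D' \cup R$ is not a closed sphere and the parallelism you invoke does not follow; the appeal to incompressibility of $S_A$ is also not connected to the conclusion. (The paper simply takes as given that an EPD runs through two shaded faces and does not argue distinctness inside this lemma, so this step is not needed to establish the stated conclusions, but if you include it the argument must be repaired.)
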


\begin{proof}
The EPD runs through two shaded faces, green and orange, and two ideal
vertices. Any ideal vertex meets two white faces.  Thus we may push an
arc running over an ideal vertex slightly off the vertex to run
through one of the adjacent white faces instead.  Note that there are
two choices of white face into which we may push the arc, giving
oppositely oriented triangles.  For each vertex, we choose to push in
the direction that gives the triangle oriented as in the statement of
the lemma.

Finally, to see that the two white edges of the normal square do not
lie on the same white face, we argue by contradiction.  Suppose the
two white edges do lie on the same white face.  Then white faces are
simply connected, so we may run an arc from one to the other through
the white face.  Since the shaded faces are simply connected, we may
run an arc through the green face meeting the white edges of the
square at their boundaries.  Then the union of these two arcs gives a
closed curve which separates the two ideal vertices.  This contradicts
Proposition \ref{prop:no-normal-bigons} (No normal bigons).
\end{proof}

As in Chapter \ref{sec:ibundle}, call the arcs of the normal square
which lie on white faces $\beta_W$ and $\beta_V$.

We will use Lemma \ref{lemma:EPDtoSquare} (EPD to oriented square) to
prove Theorem \ref{thm:2edgeloop}, which is the main result of this
chapter.  Before we state the theorem we need two definitions.  For
the first, note that the portion of a tentacle adjacent to a segment
has a natural product structure, homeomorphic to the product of the
segment and an interval.  In particular, the center point $p$ of the
segment defines a line $p \times I$ running across the tentacle.

\begin{define} 
We say that an arc through the tentacle runs
\emph{adjacent}\index{adjacent arc (to segment)} to a segment $s$ if
it runs transversely exactly once through the line $p \times I$, where
$p$ is the center of the segment, and the portion of the tentacle
adjacent to the segment is homeomorphic to $s \times I$.
\label{def:tentacle-adjacent}
\end{define}

\begin{define}\label{def:zig-zag}
Recall that an ideal vertex in the upper polyhedron is described on
the graph $H_A$ by a connected component of the knot, between two
undercrossings.  Such vertices will be right--down staircases,
containing zero or more segments of $H_A$.  A
\emph{zig-zag}\index{zig-zag} is defined to be one of these ideal
vertices.
\end{define}

\begin{theorem} \label{thm:2edgeloop}\index{two-edge loop}\index{EPD!determines two-edge loop}
Let $D(K)$ be a prime, $A$--adequate diagram of a link $K$ in $S^3$,
with prime polyhedral decomposition of $M_A = S^3\cut S_A$.  Suppose
there is an essential product disk embedded in $M_A$ in the upper
polyhedron, with associated normal square of Lemma
\ref{lemma:EPDtoSquare} (EPD to oriented square).  Then there is a
2--edge loop in $\GA$ so that the normal square runs over tentacles
adjacent to segments of the 2--edge loop.

Moreover, the normal square has one of the types $\mathcal{A}$ through
$\mathcal{G}$ shown in Figure \ref{fig:casesABCDEFG}.
\end{theorem}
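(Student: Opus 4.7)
The plan is to use the normal square $S$ guaranteed by the EPD-to-oriented-square lemma as our combinatorial handle, and to let the structure of tentacles and non-prime switches on $H_A$ dictate the rest. Label its four arcs $\beta_V, \sigma_1, \beta_W, \sigma_2$ in cyclic order, with $\sigma_1$ on the green shaded face, $\sigma_2$ on the orange shaded face, and $\beta_V, \beta_W$ each cutting off exactly one ideal vertex in white faces $V$ and $W$. Because each ideal vertex of the upper polyhedron is a zig-zag along the diagram, the single vertex cut off by $\beta_V$ (resp.\ $\beta_W$) sits in a unique segment $s_V$ (resp.\ $s_W$) of $H_A$, and the green/orange tentacles in which $\sigma_1,\sigma_2$ end are precisely the four tentacles attached to the heads and tails of these two segments. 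The target 2--edge loop will be $\{s_V, s_W\}$ once we have shown that $s_V$ and $s_W$ share the same pair of endpoint state circles.

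First I would set up notation and make $\sigma_1,\sigma_2$ simple with respect to their shaded faces (using that shaded faces are trees, Theorem~\ref{thm:simply-connected}), so that all the tentacle-chasing lemmas from Chapter~\ref{sec:polyhedra} become available. The key reduction is to show that the green and orange faces must meet \emph{both} at $s_V$ and at $s_W$, forcing these two segments to connect the same pair of state circles $C_1, C_2$. To do this, I would direct $\sigma_1$ from $\beta_V$ toward $\beta_W$ and apply the Utility lemma, the Downstream lemma, and Parallel stairs (Lemma~\ref{lemma:parallel-stairs}) exactly as in the proof of Proposition~\ref{prop:square-compress}: if $V$ and $W$ lie in different polyhedral regions, the Entering polyhedral region lemma pins down how each $\sigma_i$ enters the region of $W$, and shows the two incoming tentacles must be attached to a common pair of state circles. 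If $V$ and $W$ lie in the same polyhedral region, the Parallel stairs lemma (part (c) and (d)) forces the right--down staircases that $\sigma_1$ and $\sigma_2$ trace to have matching steps, again yielding a common pair of state circles at the head and tail. In either case $s_V, s_W$ join $C_1$ to $C_2$, so $\{s_V, s_W\}$ is a 2--edge loop of $\GA$, and the normal square runs over tentacles adjacent to both of its segments.

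Once that structural statement is proved, the classification into types $\mathcal{A}$--$\mathcal{G}$ is a finite case analysis on the local picture near the 2--edge loop. The distinguishing data are: (i) whether $s_V$ and $s_W$ lie in the same polyhedral region or are separated by a non-prime arc (and how many such switches $\sigma_1$ or $\sigma_2$ must traverse); (ii) on which sides of $C_1, C_2$ the two segments sit, i.e.\ whether the two tentacles attached at a common state circle emanate to the same side or opposite sides; and (iii) whether $\sigma_1$ or $\sigma_2$ actually crosses a state circle or instead stays adjacent to one. Combining these choices with the orange--green--white orientation convention from the EPD-to-oriented-square lemma rules out most configurations (either by producing an Escher staircase, by contradicting primeness via No normal bigons, or by producing a parabolic compression that would make the square cut off a vertex in a third white face). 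The surviving configurations are exactly the seven pictured in Figure~\ref{fig:casesABCDEFG}.

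The main obstacle is the sheer size of the case analysis in the second paragraph: one has to rule out every local combinatorial possibility for how the two shaded arcs can travel between $s_V$ and $s_W$ without introducing a forbidden loop, a normal bigon, or an additional ideal vertex inside the square. The tools to rule these out are already in hand (Escher stairs, Shortcut, Staircase extension, Utility, Parallel stairs, Opposite sides, Entering polyhedral region), so no new machinery is required; the challenge is organizing the case analysis cleanly enough that the seven surviving types $\mathcal{A}$--$\mathcal{G}$ emerge in a transparent way rather than as a brute force enumeration.
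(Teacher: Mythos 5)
There is a genuine gap, and it is at the very first structural step. You assume that the ideal vertex cut off by $\beta_V$ (resp.\ $\beta_W$) determines a unique segment $s_V$ (resp.\ $s_W$) of $H_A$, that $\sigma_1,\sigma_2$ end in the four tentacles attached to these two segments, and that the desired $2$--edge loop will be $\{s_V,s_W\}$ once one shows these segments join the same pair of state circles. None of this is true in general. An ideal vertex of the upper polyhedron is a zig-zag containing zero or more segments, and in the actual conclusion of the theorem the $2$--edge loop typically does \emph{not} consist of segments at the vertices: in types $\mathcal{A}$, $\mathcal{B}$, $\mathcal{G}$ a vertex is a (possibly long) zig-zag whose far end merely touches one state circle of the loop, with $\sigma_1,\sigma_2$ running the whole length of the zig-zag before reaching $\beta_W$; in types $\mathcal{E}$, $\mathcal{F}$, $\mathcal{G}$ the vertices are separated from the loop by non-prime arcs. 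So the statement you are trying to prove in your second paragraph (green and orange meet at both $s_V$ and $s_W$, forcing common endpoint circles) is simply false, and the lemmas you cite do not deliver it: Lemma \ref{lemma:enter-RW} only controls how the $\sigma_i$ first enter the region of $W$ and says nothing that would identify the endpoint circles of $s_V$ with those of $s_W$; Proposition \ref{prop:square-compress} requires the square to be glued to a second normal square across $W$, a hypothesis you do not have for a single EPD in the upper polyhedron; and in the same-region case, Lemma \ref{lemma:parallel-stairs}\eqref{i:no-white-face} \emph{forbids} two downstream arcs from the same circle terminating on the same white face, so it cannot be used to produce ``matching staircases'' ending at $\beta_W$.

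This also explains why your proposed case analysis cannot recover the seven types: if $s_V$ and $s_W$ formed a $2$--edge loop they would automatically lie in one polyhedral region, so the non-prime-arc configurations $\mathcal{E}$--$\mathcal{G}$ could never arise from your setup. The paper's proof is organized quite differently. When $V$ and $W$ lie in the same polyhedral region, it applies the clockwise map to transfer the square into a lower polyhedron and invokes Lemma \ref{lemma:EPDlower}, where EPDs are classified by $2$--edge loops; when they lie in different regions, it does not look for a loop at the vertices but instead runs Steps 2--4: constrain the local picture at $\beta_V,\beta_W$ (Proposition \ref{prop:step2}), grow the full right--down staircase of a vertex zig-zag with $\sigma_1,\sigma_2$ on either side (Lemmas \ref{lemma:first-stair}, \ref{lemma:next-steps}, \ref{lemma:staircase-to-vertex}), and finally analyze the outermost separating non-prime arc (Lemma \ref{lemma:nonprime-inside}); the $2$--edge loop is extracted along the way, typically via Lemma \ref{lemma:adjacent-loop} (Adjacent loop) at whatever point an arc runs upstream and returns, which may be far from either ideal vertex. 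To repair your approach you would need to abandon the identification of the loop with the vertices' segments and instead follow the arcs along their whole course, which is essentially the paper's argument.
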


\begin{figure}
  \begin{tabular}{l}
    $\mathcal{A}$ \\
    \includegraphics{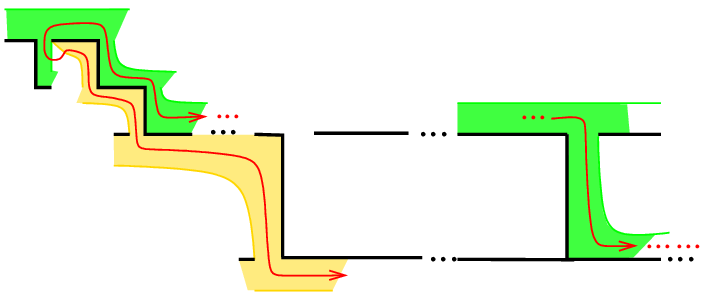} \\
    $\mathcal{B}$ \\
    \includegraphics{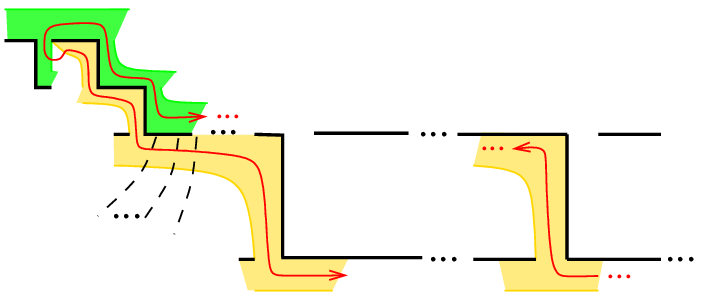} \\
  \end{tabular}

  \vspace{.1in}

  \begin{tabular}{ll}
    $\mathcal{C}$ & $\mathcal{D}$ \\
  \includegraphics{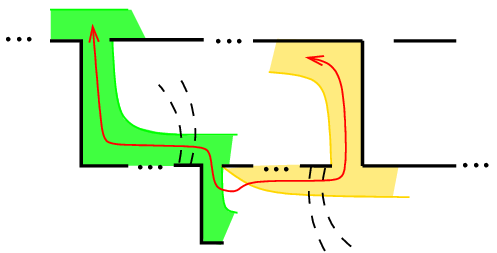} &
  \includegraphics{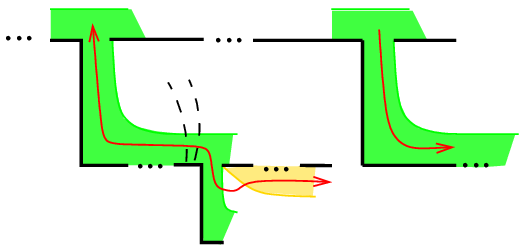} \\
  \end{tabular}

  \vspace{.1in}

  \begin{tabular}{l}
    $\mathcal{E}$ \\
  \includegraphics{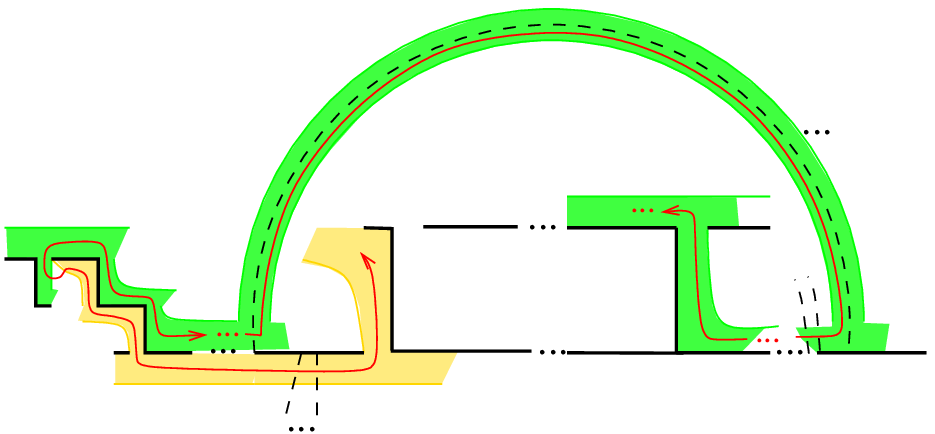}  \\
  \begin{tabular}{ll}
    $\mathcal{F}$ & $\mathcal{G}$ \\
  \includegraphics{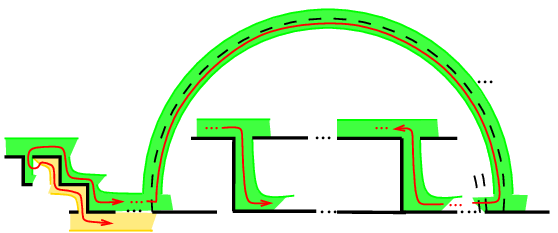} &
  \includegraphics{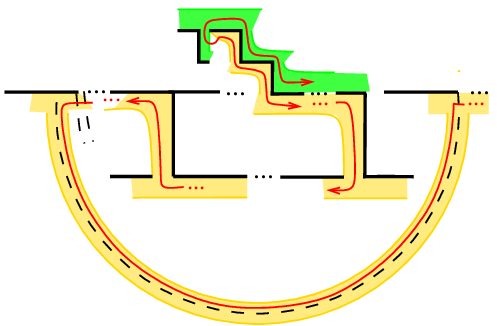} \\
  \end{tabular}
  \end{tabular}
  \caption{Building blocks of EPDs in top polyhedron.  (Note: For grayscale
    versions of this monograph, green faces will appear as dark gray,
    orange faces as lighter gray.)
    \index{EPD!Types $\mathcal{A}$ through $\mathcal{G}$}
    \index{Types $\mathcal{A}$ through $\mathcal{G}$ (of EPDs)}
    }
  \label{fig:casesABCDEFG}
\end{figure}

Before we proceed with the proof of Theorem \ref{thm:2edgeloop}, which
will occupy the remainder of this chapter, we describe the properties
and features of the types $\mathcal{A}$ through $\mathcal{G}$ in some
detail.  We want to emphasize that the colors have been selected so
that colors and orientations at vertices must be exactly as described,
or exactly as shown in Figure \ref{fig:casesABCDEFG}.  This is a
consequence of the choice of orientation in Lemma
\ref{lemma:EPDtoSquare} (EPD to oriented square).

\begin{enumerate}
\item[$\mathcal{A}$]\label{typeA}
%	   \index{EPD!Types $\mathcal{A}$ through $\mathcal{G}$}
%	    \index{Types $\mathcal{A}$ through $\mathcal{G}$ (of EPDs)}
  The square runs through distinct shaded faces adjacent to the two
  segments. One vertex of the EPD is a zig-zag (possibly without
  segments) with one end on one of the state circles met by the two
  segments of the loop, and the arcs of the normal square both run
  adjacent to this zig-zag along its length, meeting at a white face
  at its end.

\item[$\mathcal{B}$]\label{typeB} The square runs through the same
  (orange) shaded face adjacent to the two segments.  One vertex of
  the EPD is a zig-zag with one end on one of the state circles met by
  the two segments as before, with the arcs of the normal square
  running adjacent to this zig-zag along its length, meeting at a
  white face at its end.
  
\item[$\mathcal{C}$]\label{typeC} The boundary of the EPD runs through
  distinct shaded faces adjacent to two segments, as in type
  $\mathcal{A}$ above, and as in that case the vertex has an end on
  one of the two state circles met by the two segments.  However, in
  this case the vertex is on the opposite side of the 2--edge loop,
  and so the boundary of the EPD at the vertex does not run adjacent
  to the zig-zag of the vertex, but immediately runs into a white
  face.

\item[$\mathcal{D}$]\label{typeD} The boundary of the EPD runs through
  the same shaded face adjacent to the two segments, as in type
  $\mathcal{B}$ above, and meets a vertex on one of the two state
  circles, but the vertex is on the opposite side as that in type
  $\mathcal{B}$.  With colors and orientations chosen, this forces the
  square to run through two green tentacles, whereas in type
  $\mathcal{B}$ it must run through two orange tentacles.

\item[$\mathcal{E}$]\label{typeE} The boundary of the EPD runs through
  distinct shaded faces adjacent to two segments, which are separated
  from one of the vertices by a non-prime arc.

  After running downstream adjacent to one of the segments (inside
  non-prime arc in figure shown), the boundary of the EPD immediately
  crosses at least one non-prime arc with endpoints on the same state
  circle as the segment.  On the other side of these separating
  non-prime arcs, the boundary of the EPD runs directly to one of the
  vertices.

\item[$\mathcal{F}$]\label{typeF} Identical to type $\mathcal{E}$,
  only the 2--edge loop runs through two green faces rather than
  distinct colors.

\item[$\mathcal{G}$]\label{typeG} Similar to type $\mathcal{F}$, only
  the 2--edge loop runs through two orange faces.  Because the faces
  are orange, the zig-zag vertex (which still may contain no
  segments), is adjacent to the opposite side of a state circle
  meeting both segments of the 2--edge loop.
%	   \index{EPD!Types $\mathcal{A}$ through $\mathcal{G}$}
%	    \index{Types $\mathcal{A}$ through $\mathcal{G}$ (of EPDs)}
\end{enumerate}

\begin{remark}
In the statement of Theorem \ref{thm:2edgeloop}, we require the
diagram to be prime, as in Definition \ref{def:prime-diagram}. We have not used the hypothesis of prime diagrams until now, but it will be crucial going forward.  In
fact, Theorem \ref{thm:2edgeloop} does not hold for diagrams that are not prime.
\index{prime!diagram}  For
example, the connected sum of two left--handed trefoils is not prime,
and its all--$A$ state graph $\GA$ has no 2--edge loops. See Figure
\ref{fig:connsum-trefoil}.  But, if $\Sigma$ is the sphere along which
we performed the connect sum, then $\Sigma \cut S_A$ is an essential
product disk in $S^3 \cut S_A$.  One may check that this EPD is
isotopic into the upper polyhedron (indeed, by Lemma
\ref{lemma:EPDlower} on page \pageref{lemma:EPDlower}, there are no
EPDs in the lower polyhedra).
\end{remark}

\begin{figure}
  \includegraphics{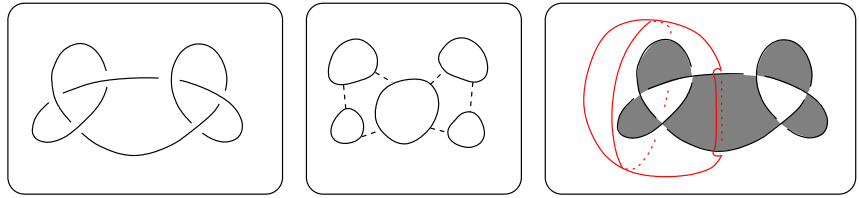}
  \caption{Left: The connect sum of two left--handed trefoils.
    Middle: The all--$A$ state.  Note there are no 2--edge loops.
    Right: The state surface $S_A$ with EPD shown in red.}
  \label{fig:connsum-trefoil}
\end{figure}

Before embarking on the proof of Theorem \ref{thm:2edgeloop}, we
record the following corollary of the theorem, which removes the
dependence on the pulling--off procedure of Lemma
\ref{lemma:EPDtoSquare}.

\begin{corollary}\label{cor:epd-loop}
%% \marginpar{DF: this (not Theorem \ref{thm:2edgeloop}) is the result we should be stating in the intro.}
  Let $D(K)$ be a prime, $A$--adequate diagram of a link $K$ in $S^3$,
  with prime polyhedral decomposition of $M_A = S^3\cut S_A$. Let $E$
  be an essential product disk embedded in the upper polyhedron of
  $M_A$. Then $\bdy E$ runs over tentacles adjacent to segments of a
  2--edge loop, of one of the types $\mathcal{A}$ through
  $\mathcal{G}$ shown in Figure \ref{fig:casesABCDEFG}.
\end{corollary}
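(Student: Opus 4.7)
The plan is to deduce the corollary directly from Theorem~\ref{thm:2edgeloop} by observing that the pushing-off procedure of Lemma~\ref{lemma:EPDtoSquare} only modifies $\bdy E$ in arbitrarily small neighborhoods of the two ideal vertices it passes through.

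First, I would apply Lemma~\ref{lemma:EPDtoSquare} (EPD to oriented square) to the essential product disk $E$. This produces a normal square $S$ whose boundary $\bdy S$ agrees with $\bdy E$ outside small neighborhoods of the two ideal vertices traversed by $\bdy E$: at each such vertex, the arc of $\bdy E$ is pushed slightly into one of the two adjacent white faces, cutting off a single vertex triangle oriented so that the edges read orange--green--white in counterclockwise order. The green and orange shaded faces that contain the two ``horizontal'' arcs of $S$ coincide with those that contain the two shaded-face arcs of $\bdy E$, and the tentacles these arcs traverse are identical.

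Next, I would apply Theorem~\ref{thm:2edgeloop} to the normal square $S$. The theorem provides a $2$--edge loop in $\GA$ such that $S$ runs over tentacles adjacent to its two segments, and classifies $S$ as one of the types $\mathcal{A}$ through $\mathcal{G}$ of Figure~\ref{fig:casesABCDEFG}. Each of these types is described combinatorially in terms of which shaded faces the square traverses, which tentacles it runs through, whether certain arcs run adjacent to zig-zag vertices or across non-prime arcs, and on which side of the $2$--edge loop the zig-zag vertices lie. All of these features are visible on the shaded-face arcs of the boundary and on its passage through the ideal vertices (the zig-zags of Definition~\ref{def:zig-zag}), so they depend only on data that is common to $\bdy E$ and $\bdy S$.

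Finally, I would transfer the conclusion back to $\bdy E$. Since the pushing-off does not alter which tentacles are traversed, $\bdy E$ runs over tentacles adjacent to the segments of the same $2$--edge loop given by Theorem~\ref{thm:2edgeloop}. Since the descriptions of types $\mathcal{A}$--$\mathcal{G}$ are phrased in terms of the tentacle-level combinatorics that $\bdy E$ and $\bdy S$ share, $\bdy E$ falls into the same type as $S$. The only potential obstacle is that Lemma~\ref{lemma:EPDtoSquare} allows a choice of push-off direction at each ideal vertex, so that several normal squares could be associated to $E$; however, the corollary's conclusion refers only to the tentacles traversed and the type of the resulting picture, both of which are determined by the shaded-face data of $\bdy E$ and are therefore independent of the choice made in Lemma~\ref{lemma:EPDtoSquare}. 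This completes the plan.
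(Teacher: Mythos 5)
Your proposal is correct and follows essentially the same route as the paper: pull $\bdy E$ off the ideal vertices via Lemma~\ref{lemma:EPDtoSquare}, apply Theorem~\ref{thm:2edgeloop} to the resulting normal square, and then observe that undoing the push-off (which only alters the boundary near the ideal vertices, i.e.\ along the zig-zags) does not change which tentacles adjacent to the $2$--edge loop are traversed, so $\bdy E$ inherits the same type $\mathcal{A}$--$\mathcal{G}$ picture.
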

 
\begin{proof}
The essential product disk $E$ may be pulled off the ideal vertices of
the upper polyhedron $P$, as in Lemma \ref{lemma:EPDtoSquare} (EPD to
oriented square). By Theorem \ref{thm:2edgeloop}, the resulting normal
square $S$ must run over a $2$--edge loop, as in Figure
\ref{fig:casesABCDEFG}. We may recover $E$ from $S$ by pulling the
segments of $\bdy S$ in the white faces back onto the idea vertices of
the polyhedron $P$.
  
Recall, from Definition \ref{def:zig-zag}, that an ideal vertex of $P$
is seen as a zig-zag (right--down staircase) on the graph $H_A$. Thus,
after we pull $\bdy S$ back onto the ideal vertices, the disk $E$ will
cross from one shaded face into the other at some point of the
zigzag. Performing this operation in panels $\mathcal{A}$ through
$\mathcal{G}$ in Figure \ref{fig:casesABCDEFG}, we see that $\bdy E$
still runs over tentacles adjacent to segments of a 2--edge loop.
\end{proof}

%%%%%%%%%%%%%%%%%%%%%%%%%%%%%%%%%%%%%%%%%%%%%%%%%%%%%%%%%%%%%%%%%
\section{Outline and first step of proof}

As mentioned above, the proof of Theorem \ref{thm:2edgeloop} requires
a significant amount of tentacle chasing\index{tentacle chasing}, which is done in the
remainder of this chapter.  The reader who wishes to avoid tentacle
chasing for now may move on to Chapter \ref{sec:montesinos} and
continue reading from there.  Chapter \ref{sec:nononprime}, which is
independent from Chapters \ref{sec:montesinos},
\ref{sec:applications}, and \ref{sec:questions}, will also involve
tentacle chasing, and requires results from Section
\ref{subsec:near-vertices} below.

In addition to tentacle chasing, the proof of Theorem
\ref{thm:2edgeloop} requires the analysis of several cases.  In each
case, we show either there is a 2--edge loop of the proper form, or
that we can further restrict the diagram.  Thus as the proof
progresses, we are left with more and more restrictions on the
diagram, until we analyze a handful of special cases to finish the
proof of the theorem.  The proof follows four basic steps, which we
summarize as follows.
\begin{enumerate}
\item[{\underline {Step 1}:}] Prove Theorem \ref{thm:2edgeloop} holds
  if $\beta_V$ and $\beta_W$ lie in the same polyhedral region, where
  recall $\beta_V$ and $\beta_W$ denote the sides of the normal square
  which lie on white faces.
\item[{\underline {Step 2}:}] If $\beta_V$ and $\beta_W$ are not in
  the same polyhedral region, then prove the theorem holds, or
  $\beta_V$ and $\beta_W$ run through a portion of the diagram of one
  of two particular forms near $\beta_V$, $\beta_W$.  These are
  illustrated in Figure \ref{fig:VW-forms} on page
  \pageref{fig:VW-forms}.
\item[{\underline {Step 3}:}] Prove the theorem holds, or the normal
  square runs along both sides of a zig-zag --- one of the vertices of
  the EPD --- and then into a non-prime arc separating $\beta_V$ and
  $\beta_W$.
\item[{\underline {Step 4}:}] Analyze behavior inside a first
  separating non-prime arc.
\end{enumerate}
\smallskip

Step 1 of the proof is treated in the next lemma, which shows that
Theorem \ref{thm:2edgeloop} holds if $\beta_V$ and $\beta_W$ lie in
the same polyhedral region.

\begin{lemma}[Step 1]
  Suppose we have a prime, $A$--adequate diagram with prime polyhedral
  decomposition, and an EPD intersecting white faces $V$ and $W$ in
  arcs $\beta_v$ and $\beta_w$, respectively.  If $V$ and $W$ are in
  the same polyhedral region, then there is a 2--edge loop of type
  $\mathcal{B}$.  In particular, Theorem \ref{thm:2edgeloop} holds in
  this case.
\label{lemma:VW-samepoly}
\end{lemma}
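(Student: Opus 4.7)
The plan is to convert the EPD into a normal square, push it into the lower polyhedron associated to the shared polyhedral region, extract a 2--edge loop there via Lemma \ref{lemma:EPDlower}, and then read the resulting 2--edge loop back as being of type $\mathcal{B}$.

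First I would apply Lemma \ref{lemma:EPDtoSquare} (EPD to oriented square) to replace the EPD by a normal square $S$ in the upper polyhedron with boundary $\beta_v \cup \sigma_1 \cup \beta_w \cup \sigma_2$, in which the triangular cut-offs at $\beta_v$ and $\beta_w$ are oriented with counterclockwise color order orange--green--white. Let $R$ denote the polyhedral region containing both $V$ and $W$, and $P'$ the corresponding lower polyhedron. Homotope $\sigma_1$ and $\sigma_2$ so that each is simple with respect to its ambient shaded face.

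The main technical step is to show $\sigma_1, \sigma_2$ remain in the closure of $R$. I would argue by contradiction. If $\sigma_1$ crosses a state circle $C$ bounding $R$, then because both endpoints of $\sigma_1$ lie in $\overline{R}$ and $V, W$ are not separated by any state circle, $\sigma_1$ must cross $C$ twice; Lemma \ref{lemma:utility} (Utility) then forces it to cross first upstream and then downstream. The downstream return of $\sigma_1$, paired either with $\sigma_2$ or with an earlier segment of $\sigma_1$ starting downstream from $C$, sets up the hypotheses of Lemma \ref{lemma:parallel-stairs} (Parallel stairs), and part \eqref{i:no-white-face} of that lemma yields a contradiction. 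Lemma \ref{lemma:opp-sides} (Opposite sides) rules out both arcs hitting the same $C$, and Lemma \ref{lemma:np-shortcut} (Shortcut) handles the analogous excursions through non-prime arcs, so in the end neither arc leaves $\overline{R}$.

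Once $\sigma_1, \sigma_2$ are confined to $\overline{R}$, the clockwise map $\clock$ of Lemma \ref{lemma:clockwise} is defined on all of $S$, so $S' := \clock(S)$ is a normal square in $P'$. The orange--green--white orientation ensures that $\clock(\beta_v)$ and $\clock(\beta_w)$ each cut off a single ideal vertex of $V$, respectively $W$, regarded as faces of $P'$. Parabolically compressing $S'$ at these two single-vertex corners produces an essential product disk $D' \subset P'$. By Lemma \ref{lemma:EPDlower}, $D'$ corresponds to a 2--edge loop in $\GA$ whose edges $s_1, s_2$ lie in $R$ and join a common pair of state circles $C_1, C_2$.

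Finally, I would verify that the original EPD has type $\mathcal{B}$. Inverting $\clock$ by one click shows that $\sigma_1, \sigma_2$ run through tentacles adjacent to $s_1$ and $s_2$; the fixed orange--green--white orientation forces the two tentacles to be part of the \emph{same} orange shaded face, rather than two distinct faces, placing us in panel $\mathcal{B}$ rather than $\mathcal{A}$ of Figure \ref{fig:casesABCDEFG}. The common ideal vertex produced by the parabolic compression is then visible in the upper polyhedron as a zig-zag with one endpoint on $C_1$ or $C_2$, along whose two sides $\sigma_1$ and $\sigma_2$ run, exactly as in type $\mathcal{B}$. The hardest part will be the tentacle chasing in paragraph two, namely systematically closing off every way $\sigma_1$ or $\sigma_2$ could escape $\overline{R}$, since this is where the hypothesis that $V$ and $W$ share a polyhedral region gets fully exploited via Utility, Opposite sides, and Parallel stairs.
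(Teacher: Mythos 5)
Your overall skeleton (normal square, clockwise map, Lemma \ref{lemma:EPDlower} in the lower polyhedron, read the loop back as type $\mathcal{B}$) matches the paper's, but the step you single out as the main technical one is both unnecessary and not established. Lemma \ref{lemma:clockwise}(2) only requires the two \emph{white} faces $V,W$ to lie in the same polyhedral region: the shaded sides of $\clock(S)$ are simply re-drawn inside the (simply connected) shaded faces of the lower polyhedron, so you never need $\sigma_1,\sigma_2$ to stay in $\overline{R}$. In fact they typically do not: in the type-$\mathcal{B}$ configuration the arc running adjacent to both segments of the loop crosses the state circle at the top of the loop, first upstream and then downstream, which is exactly the round trip the Utility lemma permits. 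Your attempt to forbid such excursions via Lemma \ref{lemma:parallel-stairs}\eqref{i:no-white-face} does not get off the ground: $\sigma_2$ need not cross the circle $C$ at all, and the portion of $\sigma_1$ preceding the excursion does not begin at $C$ running downstream, so the hypotheses of Parallel stairs (two disjoint arcs leaving $C$ downstream and terminating on the same white face) are never met.

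There are two further gaps. First, $S'=\clock(S)$ need not be an essential product disk: even though both white arcs cut off single ideal vertices with the prescribed orientation, $S'$ can be inessential, which happens precisely when one of $\beta_v,\beta_w$ lies in a white bigon face; the paper treats this as a separate case (the bigon itself then supplies the type-$\mathcal{B}$ loop), and your proposal omits it. Second, your closing step, that ``inverting $\clock$ by one click shows $\sigma_1,\sigma_2$ run through tentacles adjacent to $s_1$ and $s_2$,'' does not follow: the clockwise map only records the white arcs, so the shaded arcs of $S'$ carry no information about the actual route of $\sigma_1,\sigma_2$ in the upper polyhedron. This is where the real work lies. Knowing from the lower-polyhedron picture that the endpoints of $\beta_v,\beta_w$ sit at the tails of the orange tentacles adjacent to the two segments, one chases tentacles to show each $\sigma_i$ runs upstream through that tentacle: a downstream crossing of a state circle is impossible because the arc would then have to cross back, contradicting the Utility lemma, and a crossing of a non-prime arc is impossible because the arc, being simple, could only exit the resulting half-disk downstream across the state circle, giving the same contradiction. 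That argument, done after the lower-polyhedron loop is located rather than before, is what pins down type $\mathcal{B}$.
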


\begin{proof} 
Apply the clockwise map.  Lemma \ref{lemma:clockwise} implies that we
may join the images of $\beta_v$ and $\beta_w$ into a square $S'$ in
the lower polyhedron.  Because each of $\beta_v$, $\beta_w$ cuts off a
single ideal vertex in the upper polyhedron, each will cut off a
single ideal vertex in the lower polyhedron, with the same orientation
as in the upper polyhedron, and thus this new square $S'$ either is
inessential, or can be isotoped to an essential product disk.
We will treat the two cases separately.

\smallskip

{\underline{Case 1:}}
$S'$ is isotoped to an essential product disk in a lower polyhedron.
Then Lemma \ref{lemma:EPDlower} implies that the disk runs over two
segments of $H_A$ corresponding to a 2--edge loop in the lower
polyhedron. This loop must come from a 2--edge loop in the upper
polyhedron.  In the lower polyhedron, the image of the arc $\beta_W$
must be the arc on the left of Figure \ref{fig:VW-samepoly-1},
adjacent to the segment on the left.  Similarly, the image of
$\beta_V$ must be adjacent to the segment on the right.  The preimages
of these arcs are shown on the right of Figure
\ref{fig:VW-samepoly-1}.  Note that the dashed lines on the portion of
the graph of $H_A$ on the right are to indicate that the ideal edge
may run over non-prime switches between its head and tail, which will
not affect the argument.

For both vertices, there is an (orange) tentacle shown whose tail
meets the vertex, which runs upstream adjacent to a segment.  We will
show that $\sigma_1$ and $\sigma_2$ run upstream through these
tentacles, and so the 2--edge loop is of type $\mathcal{B}$ of Theorem
\ref{thm:2edgeloop}.

\begin{figure}
  \includegraphics{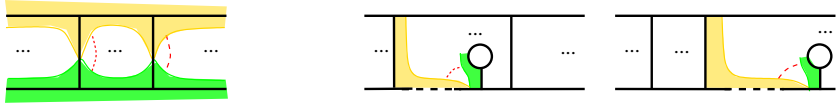}
  \caption{Left: Red (dotted and dashed) arcs show images of $\beta_w$
    under the clockwise map.  Right: preimages of arcs on left.}
  \label{fig:VW-samepoly-1}
\end{figure}

Suppose first that $\sigma_1$ or $\sigma_2$ crosses a state circle
running downstream.  Because its other endpoint is on the opposite
side of that state circle, it must cross the state circle again.  But
this contradicts Lemma \ref{lemma:utility} (Utility lemma), as it
first crossed running downstream.

Next suppose that $\sigma_1$ (or $\sigma_2$) crosses a non-prime arc.
Again since its other endpoint is on the opposite side of the half-disk
bounded by the non-prime arc and the segment of state circle between
its endpoints, $\sigma_1$ (or $\sigma_2$) must cross back out of this
non-prime half-disk.  Since $\sigma_1$ ($\sigma_2$) is assumed to be simple, it may
only exit the region by running downstream across the state circle.
As in the previous paragraph, this leads to a contradiction to the
Utility lemma.

Thus the arcs $\sigma_1$ and $\sigma_2$ must run adjacent to the
segment of the 2--edge loop, as desired. This finishes the proof Case
1.

\smallskip

{\underline{Case 2:}} $S'$ is inessential in the lower polyhedron.  By
choice of orientation on our vertices, the only way $S'$ can be
inessential is if both of its white arcs cut off the same vertex with
opposite orientation.  Thus one of the arcs, say $\beta_V$, cuts off
vertices to both sides, and thus lies in a bigon face.  Hence there is
a 2--edge loop in the upper polyhedron, and tracing back through the
clockwise map as above, we conclude that the boundary of the EPD
encircles the bigon, and the 2--edge loop is of type $\mathcal{B}$
again. 
\end{proof}

%%%%%%%%%%%%%%%%%%%%%%%%%%%%%%%%%%%%%%%%%%%%%%%%%%%%%%%%%%%%%%%%%
%% Step 2
\section{Step 2: Analysis near vertices}\label{subsec:near-vertices}
In this section, we will complete Step 2 of the outline given
earlier.  The main result here is Proposition \ref{prop:step2}, which
shows that either Theorem \ref{thm:2edgeloop} holds, or the polyhedral
region near the arcs $\beta_W$ and $\beta_V$ have a very particular
form.  Before we can state this result, we need two auxiliary lemmas
concerning directed arcs in shaded faces.

\begin{lemma}[Adjacent loop\index{Adjacent loop lemma}]
  Let $\sigma$ be a directed simple arc contained in a single shaded
  face, adjacent to a state circle $C$ at a point $p$ on $C$.  Suppose
  $\sigma$ runs upstream across a state circle $C'$ after leaving $p$,
  but then eventually continues on to be adjacent to $C$ again at a
  new point $p'$.  Then $\sigma$ must run adjacent to two distinct
  segments of $H_A$ connecting $C$ to $C'$.
  \label{lemma:adjacent-loop}
\end{lemma}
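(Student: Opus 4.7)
The strategy is to apply the Utility Lemma (Lemma \ref{lemma:utility}) to $\sigma$ with respect to both state circles $C$ and $C'$, and then use the Staircase Extension Lemma (Lemma \ref{lemma:staircase}) to identify two distinct segments of $H_A$ directly connecting $C$ and $C'$.

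First I would establish that $\sigma$ crosses each of $C$ and $C'$ exactly twice. Since $\sigma$ is adjacent to $C$ at $p$, the tentacle containing $\sigma$ near $p$ has its tail on $C$; by passing to a maximal initial subarc adjacent to $C$ at $p$, I may arrange that the departure from this initial region immediately crosses $C$ at some segment $s_1$. An analogous observation near $p'$, combined with the reasoning used in the proof of the Utility Lemma to determine the direction of arrival, yields a second crossing $s_2$ of $C$ with $\sigma$ running downstream. Applying the Utility Lemma to $C$ then forces these to be the only two crossings, and in particular $s_1 \neq s_2$ because $\sigma$ is simple and each crossing lies in a distinct tentacle. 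Similarly, since $\sigma$ begins and ends adjacent to $C$ (hence on a single side of $C'$), it crosses $C'$ an even number of times; combined with the upstream crossing $t_1$ given by the hypothesis, the Utility Lemma gives a unique second crossing $t_2$, which must be downstream.

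The core of the argument is to show that the other endpoint of $s_1$ (besides the one on $C$) lies on $C'$, and analogously for $s_2$. Consider the sub-arc of $\sigma$ from just after $s_1$ to just before $t_1$; its reverse begins by crossing $C'$ downstream, so by the Staircase Extension Lemma this reversed sub-arc determines a right-down staircase whose top stair is $C'$ and whose bottom stair is the other endpoint $D_1$ of $s_1$. A symmetric application, using the sub-arc of $\sigma$ from just after $t_2$ to just before $s_2$, produces a second right-down staircase from $C'$ to the other endpoint $D_2$ of $s_2$. If either $D_1 \neq C'$ or $D_2 \neq C'$, then the corresponding staircase has intermediate stairs; the Parallel Stairs Lemma (Lemma \ref{lemma:parallel-stairs}) applied to the two staircases would force matching intermediate steps, and concatenating the two staircases with $\sigma$'s passage across $C'$ (from $t_1$ to $t_2$) would then create a loop right-down staircase in $H_A$, contradicting the Escher Stairs Lemma (Lemma \ref{lemma:escher}). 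Hence $D_1 = D_2 = C'$, yielding two distinct segments $s_1, s_2$ of $H_A$ connecting $C$ to $C'$ to which $\sigma$ is adjacent.

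The hardest part of the argument will be the careful handling of non-prime switches in $\sigma$'s passage. When $\sigma$ enters a non-prime half-disk bounded by a non-prime arc, the Shortcut Lemma (Lemma \ref{lemma:np-shortcut}) must be invoked to guarantee that $\sigma$ exits by running downstream across the corresponding state circle, thereby verifying the hypothesis of the Staircase Extension Lemma on each of the sub-arcs above. I expect that by absorbing these excursions into shortcut-lemma invocations, the overall combinatorial structure of the proof --- two reversed staircases meeting at $C'$ with a loop obstruction supplied by Escher Stairs --- remains intact.
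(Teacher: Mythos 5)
There is a genuine gap, and it lies at the very start of your setup: you conflate ``adjacent to $C$'' with ``crossing $C$.'' The hypothesis says $\sigma$ runs in a tentacle alongside $C$ at $p$ and $p'$; it never asserts that $\sigma$ crosses $C$, and in the configuration of the lemma it does not. When an arc that is adjacent to $C$ runs upstream, it climbs the head of a tentacle through a segment whose bottom endpoint is on $C$ and whose top endpoint is on the circle being crossed --- so the departure from $p$ crosses $C'$, not $C$. Consequently the two ``crossings of $C$ at segments $s_1$ and $s_2$'' that your first paragraph manufactures, and to which you then apply the Utility Lemma (Lemma \ref{lemma:utility}) with respect to $C$, simply are not provided by the hypotheses. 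In fact the first segment of the conclusion comes essentially for free: since $\sigma$ is adjacent to $C$ when it runs upstream across $C'$, that upstream crossing is already adjacent to a segment joining $C$ to $C'$. The entire content of the lemma is the second segment: by the Utility Lemma applied to $C'$, the arc must recross $C'$ running downstream, adjacent to a segment from $C'$ to some circle $C''$, and one must prove $C''=C$. The paper does this by a short local case analysis (the arc cannot cross $C''$ downstream by Lemma \ref{lemma:utility}; it cannot enter and exit a non-prime half-disk off $C''$ by Lemma \ref{lemma:np-shortcut}; and if it entered such a half-disk without exiting, then $p'$, hence $C$, would lie inside while $C'$ lies outside, impossible because a segment joins $C$ and $C'$). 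Distinctness of the two segments then follows from simplicity of $\sigma$.

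Your proposed replacement for this step --- reversing sub-arcs, applying Staircase Extension (Lemma \ref{lemma:staircase}), and deriving a contradiction from Parallel Stairs (Lemma \ref{lemma:parallel-stairs}) and Escher Stairs (Lemma \ref{lemma:escher}) --- does not close the gap as written. Staircase Extension requires that the arc exit every non-prime half-disk it enters, and the Shortcut Lemma does not ``guarantee'' such an exit; it only constrains the direction of the exit if one occurs, so this hypothesis would need a separate argument tied to the location of the endpoints. Parallel Stairs requires two disjoint arcs that leave the same circle downstream and terminate in the same polyhedral region, neither of which you verify for your reversed sub-arcs, and its conclusions (matching intermediate steps, not ending on the same white face) do not visibly produce the ``loop right--down staircase'' you invoke to contradict Escher Stairs. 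I recommend discarding the crossings-of-$C$ framework and arguing directly about what $\sigma$ can do immediately after its downstream return across $C'$, as in the paper.
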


Lemma \ref{lemma:adjacent-loop} is illustrated in Figure
\ref{fig:adjacent-loop}.

\begin{figure}
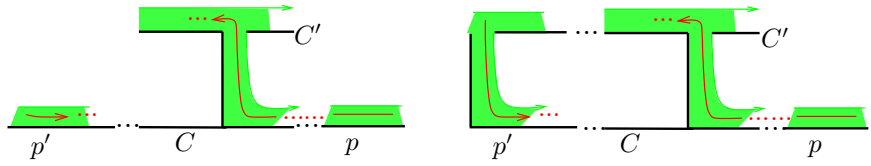

  \begin{center}
    \input{figures/lemma5-6-hypoth.pstex_t}
    \hspace{.2in}
    \input{figures/lemma5-6-conclusion.pstex_t}
  \end{center}
  \caption{Left:  Hypothesis of Lemma \ref{lemma:adjacent-loop}.
    Right:  Conclusion of the lemma:  There is a 2--edge loop and
    $\sigma$ runs adjacent to both segments of the loop.}
  \label{fig:adjacent-loop}
\end{figure}

\begin{proof}
Since $\sigma$ runs from $C$, through $C'$, and eventually back to
$C$, it must cross $C'$ twice.  Lemma \ref{lemma:utility} (Utility
lemma) implies it first crosses $C'$ running upstream, adjacent to
some segment connecting $C$ and $C'$, then runs downstream.  When it
runs downstream, it must run adjacent to a segment connecting $C'$ to
some state circle $C''$.  We show $C''$ must be $C$.  Then, since
$\sigma$ is simple, the two segments connecting $C$ to $C'$ must be
distinct, and we have the result.

Suppose $C''$ is not $C$.  Because $\sigma$ must run adjacent to $C$
further down the directed arc, $\sigma$ must leave $C''$.  Recall that
the only possibilities are that $\sigma$ runs over a non-prime switch
or runs downstream across $C''$.  If downstream across $C''$, then it
must cross $C''$ again.  Lemma \ref{lemma:utility} (Utility lemma)
implies this is impossible, as it is running downstream for the first
crossing.  If $\sigma$ runs over a non-prime switch without crossing
into the half-disk bounded by the non-prime arc, then on the opposite
side it is adjacent to $C''$ again, and we have no change.  If it
crosses into the non-prime half-disk bounded by $C''$ and the non-prime arc and
exits out again, then Lemma \ref{lemma:np-shortcut} (Shortcut) implies
that it exits running downstream across $C''$, which again gives a
contradiction.

So the only remaining possibility is that $\sigma$ crosses into the
non-prime half-disk 
%bounded by $C''$ and a non-prime arc 
and does not exit out
again.  This means $C$ must be contained in this half-disk.  But $C'$ is
on the opposite side, since $\sigma$ leaves the region containing $C'$
when crossing the non-prime arc.  This is impossible: A segment
connects $C$ to $C'$, hence $C$ and $C'$ must be on the same side of
the non-prime arc.  So $C''$ must equal $C$.
\end{proof}

\begin{lemma}
Suppose there are arcs $\sigma_1$ and $\sigma_2$ in distinct shaded
faces in the upper polyhedron, but that each runs adjacent to points
(in a neighborhood of points) $p_1$ and $p_2$ on the same state circle
$C$.  Then either
\begin{enumerate}
\item at least one of $\sigma_1$ or $\sigma_2$ runs upstream across
  some other state circle and Lemma \ref{lemma:adjacent-loop} applies;
  or
\item both arcs remain adjacent to the same portion of $C$ between
  $p_1$ and $p_2$.
\end{enumerate}
\label{lemma:adjacent-smooth}
\end{lemma}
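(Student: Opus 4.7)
The approach is to prove the contrapositive: assume neither $\sigma_1$ nor $\sigma_2$ runs upstream across any other state circle, and show both arcs must remain adjacent to the same portion of $C$ between $p_1$ and $p_2$. I would direct each arc along the natural tentacle orientation from $p_1$ toward $p_2$ and then analyze its behavior using the structural lemmas developed earlier in this chapter, most notably Lemma \ref{lemma:staircase} (Staircase extension), Lemma \ref{lemma:downstream} (Downstream), and Lemma \ref{lemma:utility} (Utility), together with Lemma \ref{lemma:np-shortcut} (Shortcut) to handle non-prime arcs.

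Under the contrapositive hypothesis, any crossing of a state circle by $\sigma_i$ must be downstream, and by Lemma \ref{lemma:utility}, once $\sigma_i$ crosses any state circle downstream it cannot return across that circle. Applied to $C'=C$, this rules out $\sigma_i$ crossing $C$, because $\sigma_i$ must be adjacent to $C$ at both $p_1$ and $p_2$ and therefore cannot leave the side of $C$ it started on. Applied to any other state circle $C'$ attached to the portion of $C$ between $p_1$ and $p_2$, a downstream crossing would trap $\sigma_i$ strictly inside $C'$, while the point of second adjacency on $C$ lies outside that nested region; this yields a contradiction. Hence between $p_1$ and $p_2$, neither arc crosses any state circle at all.

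It then remains to analyze what the arcs can do while staying adjacent to $C$. Each $\sigma_i$ must be composed of tentacles whose tails lie on $C$, together possibly with non-prime switches and excursions into non-prime half-disks attached to $C$. If $\sigma_i$ crosses a non-prime arc $\alpha$ into a half-disk $R$, it must eventually exit (since its far endpoint is on $C$ outside $R$), and if it runs upstream within $R$, then Lemma \ref{lemma:np-shortcut} forces it to exit by crossing $C$ downstream — but this downstream crossing of $C$ has already been excluded. Thus every such excursion is short: the arc either slips over a non-prime switch without actually crossing the non-prime arc, or it enters and leaves the half-disk in a way that keeps it adjacent to the same segment of $C$. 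Since $\sigma_1$ and $\sigma_2$ now both trace adjacent-to-$C$ paths from $p_1$ to $p_2$ that neither cross any state circle nor double back, they must both follow the same portion of $C$, proving conclusion (2).

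The main technical obstacle I anticipate is the bookkeeping around non-prime switches and half-disks, together with the case distinction of whether $\sigma_1$ and $\sigma_2$ are adjacent to the same side or opposite sides of $C$. In the opposite-sides case the two arcs run through tentacles whose tails point in opposite rotational directions along $C$, so the argument must be run symmetrically on each side, ensuring that the ``same portion of $C$'' referred to in conclusion (2) is the same arc of the state circle $C\subset S^2$ as seen from either side. Most of the proof will therefore be a careful tentacle-chasing verification that every structural possibility admitted by Lemmas \ref{lemma:staircase}, \ref{lemma:downstream}, \ref{lemma:utility}, and \ref{lemma:np-shortcut} is consistent with conclusion (2) once upstream crossings are ruled out.
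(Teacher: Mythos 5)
Your reduction to the case where neither arc crosses any state circle is broadly fine (modulo the secondary point below), but the final step is a genuine gap: from ``both arcs trace adjacent-to-$C$ paths from $p_1$ to $p_2$ that cross no state circle,'' you assert that ``they must both follow the same portion of $C$,'' and this is precisely the content of conclusion (2), not a consequence of the staircase/Utility/Shortcut machinery. The circle $C \setminus \{p_1,p_2\}$ has two complementary arcs, and since $\sigma_1$ and $\sigma_2$ lie on opposite sides of $C$ at $p_1$ and $p_2$, nothing you have invoked prevents $\sigma_1$ from running along one portion of $C$ (on its side) while $\sigma_2$ runs along the other portion (on the other side): this happens exactly when segments attach to $C$ only from $\sigma_2$'s side along the first portion and only from $\sigma_1$'s side along the second, and neither arc ever crosses a state circle or runs upstream. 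What rules this out is the standing hypothesis (in force throughout this chapter) that the diagram is \emph{prime}, which your argument never uses. The paper's proof is exactly this: join the two sub-arcs by small arcs crossing $C$ at $p_1$ and $p_2$ to obtain a simple closed curve meeting $H_A$ (hence the link diagram) exactly twice; primeness forces one side of this curve to contain no crossings, so one of the two portions of $C$ has no segments and hence no non-prime arcs attached, so a single tentacle runs along each side of that portion, and simple connectivity of the shaded faces forces $\sigma_1$ and $\sigma_2$ into those two tentacles. Without primeness the statement is false (compare the remark following Theorem \ref{thm:2edgeloop}), so any correct proof must use it somewhere; your proposed route has no place where it enters.

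A secondary, fixable issue: your contrapositive hypothesis only excludes upstream crossings of \emph{other} state circles, so a crossing of $C$ itself (necessarily downstream, by Lemma \ref{lemma:utility}) is not excluded a priori, and ``cannot leave the side of $C$ it started on'' is asserted rather than proved. After crossing $C$ downstream at a segment, an arc is immediately adjacent to $C$ from the other side at the head of the new tentacle, so ruling out a return to adjacency with $C$ elsewhere requires an argument, e.g.\ Lemma \ref{lemma:staircase} together with Lemma \ref{lemma:escher} (a right--down staircase cannot have its top and bottom on $C$), or one simply absorbs this case into the primeness argument above as the paper implicitly does. Likewise, ``trapped strictly inside $C'$'' needs the observation that $C$ lies on the side of $C'$ from which the arc approached, so that adjacency to $C$ at $p_2$ would force a second crossing of $C'$, contradicting Lemma \ref{lemma:utility}. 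These can be patched, but the missing use of primeness in the final step is the essential defect of the proposal.
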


In other words, Lemma \ref{lemma:adjacent-smooth} says that if neither
$\sigma_1$ nor $\sigma_2$ cross a state circle between $p_1$ and
$p_2$, then they cannot run over non-prime switches, either.  They
can, in fact, intersect a single endpoint of a non-prime arc.  But
they cannot run adjacent to both endpoints of a non-prime arc.

\begin{proof}
If one of $\sigma_1$, $\sigma_2$ crosses another state circle between
points $p_1$ and $p_2$ there is nothing to prove.  Suppose that
neither $\sigma_1$ nor $\sigma_2$ cross another state circle between
points $p_1$ and $p_2$; then each is embedded in the complement of the
graph $H_A$.  Form a simple closed curve meeting $H_A$ exactly twice
by connecting the portions of $\sigma_1$ and $\sigma_2$ between $p_1$
and $p_2$ with small arcs crossing $C$ at $p_1$ and $p_2$.  Replacing
segments of $H_A$ with crossings, this gives a simple closed curve in
the diagram of the link meeting the link transversely exactly twice.
Because the diagram is assumed to be prime, the curve must contain no
crossings on one of its sides.  Since each side contains a portion of
$C$ between $p_1$ and $p_2$ in $H_A$, one of those portions must not
be connected to any segments of $H_A$.  Because non-prime arcs are
required to bound segments on both sides, this means there are no
non-prime arcs attached to this portion of $C$ as well.  Then a single
tentacle runs adjacent to each side of this portion of $C$, and
because shaded faces are simply connected, $\sigma_1$ must run through
one, and $\sigma_2$ must run through the other.
\end{proof}

%%%%%%%%%%%%%%%%%%%%%%%%%%%%%%%%%%%%%%%%%%%%%%%%%%%%%%%%%%%%%%%%%

We are now ready to state and prove the main result of this
section.

\begin{prop} 
With the hypotheses of Theorem \ref{thm:2edgeloop}, either 
\begin{enumerate}
\item the conclusion of the theorem is true and we have a 2--edge loop
  of type $\mathcal{A}$, $\mathcal{B}$, $\mathcal{C}$, or
  $\mathcal{D}$, or
\item the polyhedral regions containing $\beta_V$ and $\beta_W$ are of
  one of two forms, shown in Figure \ref{fig:VW-forms}.
\end{enumerate}
In both cases in the figure, $\sigma_2$ immediately leaves the
polyhedral region, either through a non-prime arc, or by crossing some
state circle.
\label{prop:step2}
\end{prop}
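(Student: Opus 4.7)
The plan is to exploit the machinery already developed in Chapter~\ref{sec:ibundle} for the proof of Proposition~\ref{prop:square-compress}, adapted to our slightly different goal. By Lemma~\ref{lemma:VW-samepoly} (Step 1) we may assume $V$ and $W$ lie in distinct polyhedral regions $R_V$ and $R_W$, so Lemma~\ref{lemma:enter-RW} (Entering polyhedral region) applies. After relabeling if necessary, $\sigma_1$ first enters $R_W$ by crossing a state circle $C_W$ running downstream and then immediately joining $\beta_w$, while $\sigma_2$ first enters $R_W$ either by crossing $C_W$ running upstream or across a non-prime arc attached to $C_W$. The argument then splits on whether $\sigma_2$ connects to $\beta_w$ without leaving $R_W$ again, or not.

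If $\sigma_2$ leaves $R_W$ before connecting to $\beta_w$, then Lemma~\ref{lemma:sigma_2-connect} pins down the local structure around $\beta_w$ exactly: it must be one of the two configurations of Figure~\ref{fig:w-picture}, corresponding to the two sub-cases of how $\sigma_2$ originally entered. These are precisely the two forms listed in Figure~\ref{fig:VW-forms}, and in either one $\sigma_2$ immediately leaves $R_W$ through $C_W$ or through a non-prime arc, establishing the final sentence of the proposition. No further work is needed in this case.

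If instead $\sigma_2$ connects to $\beta_w$ immediately, then the two endpoints of $\beta_w$ sit on tentacles whose heads are attached to a common state circle on $\bdy R_W$, in one of the two pictures of Figure~\ref{fig:head-nonprime}. Maximality of the system of non-prime arcs (exactly as in Case~1 of the proof of Proposition~\ref{prop:square-compress}) forces the region marked $X$ in that figure to contain no state circles, so that the two heads are attached to $C_W$ by a pair of segments of $H_A$. I would then invoke Lemma~\ref{lemma:adjacent-smooth} to rule out any crossing or non-prime switch between these tentacles, and Lemma~\ref{lemma:adjacent-loop} to recognize the configuration as a \emph{genuine} 2-edge loop in $\GA$ along which $\sigma_1$ and $\sigma_2$ run adjacent. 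Finally, the orientation convention of Lemma~\ref{lemma:EPDtoSquare} (orange--green--white counter-clockwise at each cut-off vertex) fixes which of the green/orange shaded faces borders the loop on each side and on which side of the loop the zig-zag vertex sits; enumerating the four sign combinations (same-face vs.\ different-face, same-side vs.\ opposite-side) yields exactly the four types $\mathcal{A}$, $\mathcal{B}$, $\mathcal{C}$, $\mathcal{D}$.

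The main obstacle is the second case: verifying that the local picture of Figure~\ref{fig:head-nonprime} really does produce a bona fide 2-edge loop of one of the named types, rather than merely a pair of parallel adjacent tentacles. The two crucial subtleties are (a) ruling out extra state circles inside the region $X$ by appealing to maximality of the non-prime arc system (so that otherwise one could add a further non-prime arc separating them), and (b) keeping careful track of how the pulling-off choice made in Lemma~\ref{lemma:EPDtoSquare} constrains the colors of the tentacles carrying $\sigma_1$ and $\sigma_2$ and the side of the loop on which the zig-zag vertex appears. It is exactly this color--orientation bookkeeping that rules out any exotic configuration and forces the type into the list $\mathcal{A}$--$\mathcal{D}$.
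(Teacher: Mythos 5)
There is a genuine gap: your case split does not line up with what actually happens, and the key step that decides between conclusions (1) and (2) is missing. You assert that when $\sigma_2$ leaves $R_W$ and later re-enters, Lemma~\ref{lemma:sigma_2-connect} already produces the two forms of Figure~\ref{fig:VW-forms} and ``no further work is needed''; but Figure~\ref{fig:w-picture} is not Figure~\ref{fig:VW-forms}. In that case the generic outcome is a $2$--edge loop, not option (2): one must examine how the two ideal edges $E_1$ (along which $\sigma_1$ meets $\beta_w$) and $E_2$ (along which $\sigma_2$ meets $\beta_w$) fit head-to-tail at the single ideal vertex cut off by $\beta_w$. If the head of $E_1$ meets the tail of $E_2$ one is forced to have $C_W=C_3$ and the segments $s_2,s_3$ give a loop of type $\mathcal{B}$; if the head of $E_2$ meets the tail of $E_1$ one gets $C_2=C_4$ and either a type $\mathcal{A}$ loop ($s_1\neq s_2$) or, only after invoking Lemma~\ref{lemma:adjacent-loop} (which can still produce a type $\mathcal{D}$ loop) and Lemma~\ref{lemma:adjacent-smooth}, the \emph{left} form of Figure~\ref{fig:VW-forms}. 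None of this edge-orientation bookkeeping appears in your argument, yet it is exactly what distinguishes (1) from (2).

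Symmetrically, your treatment of the case where $\sigma_2$ connects to $\beta_w$ immediately is wrong: you claim it always yields a loop of type $\mathcal{A}$--$\mathcal{D}$, but when the head of $E_1$ meets the tail of $E_2$ (both tentacles at $\beta_w$ having tails on $C_W$) no $2$--edge loop is forced at all --- this is precisely the \emph{right} form of Figure~\ref{fig:VW-forms}, i.e.\ conclusion (2). Moreover, in the sub-case where both edges run adjacent to the \emph{same} segment ($s_1=s_2$) the correct resolution is a contradiction with primeness of the diagram (a loop meeting the diagram twice with crossings on both sides), and primeness is never used in your proposal. Finally, the configuration of Figure~\ref{fig:head-nonprime} and the ``region $X$ has no state circles'' argument belong to Case~1 of Proposition~\ref{prop:square-compress}, where the goal is to show $\beta_w$ cuts off a single vertex; here that is automatic from Lemma~\ref{lemma:EPDtoSquare}, and importing that argument does not supply the missing identifications ($C_W=C_3$ or $C_2=C_4$, $s_1$ versus $s_2$) on which the whole classification into types $\mathcal{A}$--$\mathcal{D}$ versus the two forms of Figure~\ref{fig:VW-forms} rests.
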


\begin{proof}
By Lemma \ref{lemma:VW-samepoly}, we may assume that $\beta_V$ and
$\beta_W$, in white faces $V$ and $W$ respectively, are in distinct
polyhedral regions.  Then Lemma \ref{lemma:enter-RW} (Entering
polyhedral region) implies that if we direct $\sigma_1$ toward
$\beta_W$, it first enters the region containing $W$ running
downstream across a state circle, which we denote $C_W$, while
$\sigma_2$ enters the region of $W$ either running upstream across
$C_W$, or across a non-prime arc.  In either case, $\sigma_1$ connects
immediately to $\beta_w$, that is, without crossing any additional
state circles or non-prime arcs.

\begin{figure}
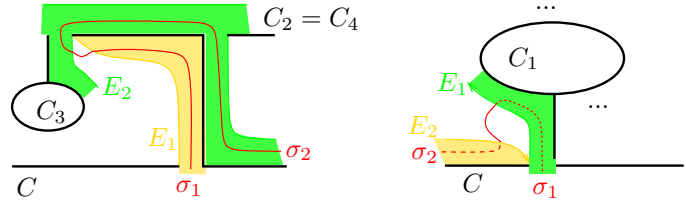

  \begin{center}
    \input{figures/hE2-tE1-s2indirect-2.pstex_t}
    \hspace{.5in}
    \input{figures/hE1-tE2-s2direct.pstex_t}
  \end{center}
  \caption{Conclusion of step 2 in the proof of Theorem
    \ref{thm:2edgeloop}: Either there is a 2--edge loop, or each
    polyhedral region containing $\beta_V$, $\beta_W$ is one of these
    two forms, with the specified colors.}
\label{fig:VW-forms}
\end{figure}

Let $E_1$ be the ideal edge of the polyhedral decomposition on which
$\sigma_1$ meets $\beta_w$.  Note $E_1$ is a directed edge, with its
head on $C_W$ and tail on some other state circle connected to $C_W$ by a
segment.  Let $E_2$ denote the ideal edge on which $\sigma_2$ meets
$\beta_w$.  Since $\beta_w$ cuts off a single ideal vertex, either the
head of $E_2$ meets the tail of $E_1$, or vice versa.  We must
consider both cases.
\smallskip

{\underline{Case 1:}} Suppose $\sigma_2$ connects immediately to
$\beta_W$ upon entering the polyhedral region of $W$, that is, without
crossing any additional state circles.  As noted above, the ideal edge
$E_1$ has its head on $C_W$, runs adjacent to a segment which we
denote $s_1$ connecting $C_W$ to a state circle $C_1$, then has its
tail on $C_1$.  The arc $\sigma_1$ runs across $C_W$ and adjacent to
$s_1$. There are two subcases to consider.

\smallskip

{\underline{Subcase 1a:}} The head of $E_2$ meets the tail of $E_1$.
Then the head of $E_2$ must also lie on the state circle $C_1$.  Since
$\sigma_2$ connects immediately to $\beta_w$ by assumption, and since
$\sigma_2$ is adjacent to $C_W$ when it enters the region of $W$ (by
Lemma \ref{lemma:enter-RW} (Entering polyhedral region)), $E_2$ has
its tail on $C_W$ and thus $E_2$ runs adjacent to a segment $s_2$
connecting $C_1$ and $C_W$.  We may isotope $\beta_w$ to cut off a
very small portion of the white face $W$, forcing $\sigma_2$ to run
adjacent to $s_2$.  See Figure \ref{fig:hE2-tE1-s2direct}, left.

\begin{figure}
  \begin{center}
  \input{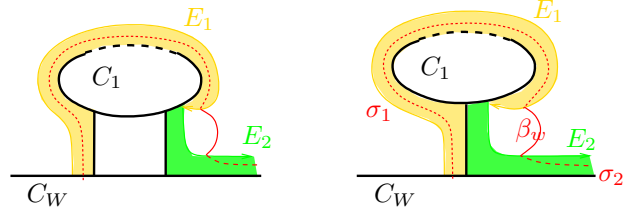}
  \end{center}
  \caption{Case: Head of $E_2$ meets tail of $E_1$, $\sigma_2$ runs
    directly to $\beta_w$.  On left: obvious 2--edge loop.  Right:
    nugatory crossing contradicts prime.  The dashed line on the state
    circle indicates that there may be non-prime switches.}
  \label{fig:hE2-tE1-s2direct}
\end{figure}

Now, provided $s_1 \neq s_2$, we have found two segments connecting
$C_W$ and $C_1$ with the boundary of the EPD running adjacent to both,
through distinct shaded faces on the segments.  Note that in this
case, the boundary of the EPD is of type $\mathcal{A}$ of the
statement of the theorem.  Thus option (1) in the statement of the
proposition holds.

So suppose $s_1=s_2$, so that we don't pick up this 2--edge loop.  We
will now show this leads to a contradiction to the fact that the
diagram is prime.  See Figure \ref{fig:hE2-tE1-s2direct}, right.  Form
a loop in $H_A$ by following $\sigma_1$ from the point where it is
adjacent to the segment $s_1$ to $\beta_w$, then following $\beta_w$
to $\sigma_2$, then following $\sigma_2$ to the point where it is
adjacent to the segment $s_2$.  Since $s_1=s_2$, connect these into a
loop by drawing a line through this segment connecting the endpoints.
Call the loop $\gamma$.  Because $\sigma_1$ connects immediately to
$\beta_w$ without running through additional state circles, $\gamma$
is embedded in the complement of $H_A$, except where it crosses the
segment $s_1=s_2$.  Replace all segments of $H_A$ with crossings of
the diagram, and push $\gamma$ slightly off the crossing of the
segment.  The result is a loop meeting the diagram twice transversely
with crossings on both sides, contradicting the fact that the diagram
is prime.  

\smallskip

{\underline{Subcase 1b:}} The head of $E_1$ meets the tail of $E_2$.
The head of $E_1$ lies on $C_W$.  Hence the tail of $E_2$ must also
lie on $C_W$.  The general form of $\beta_w$ and the ends of the arcs
$\sigma_1$ and $\sigma_2$ where they connect to $\beta_w$ is on the
right of Figure \ref{fig:VW-forms}.  Thus option (2) in the statement
of the proposition holds for $W$.

\smallskip

{\underline{Case 2:}} Suppose that $\sigma_2$ does not immediately
connect to $\beta_w$.  Lemma \ref{lemma:sigma_2-connect} implies that
$\sigma_2$ crosses upstream into some state circle $C_2$, hence
adjacent to some segment $s_2$ connecting $C_W$ and $C_2$, then out of
$C_2$ again running downstream, hence adjacent to some segment $s_3$
connecting $C_2$ and some state circle $C_3$, as in Figure
\ref{fig:w-picture}, on page \pageref{fig:w-picture}.  At this point,
$\sigma_2$ immediately meets $\beta_w$, without crossing any
additional state circles or non-prime arcs.  Hence the edge $E_2$ has
its head on $C_2$ and its tail on $C_3$.

\begin{figure}
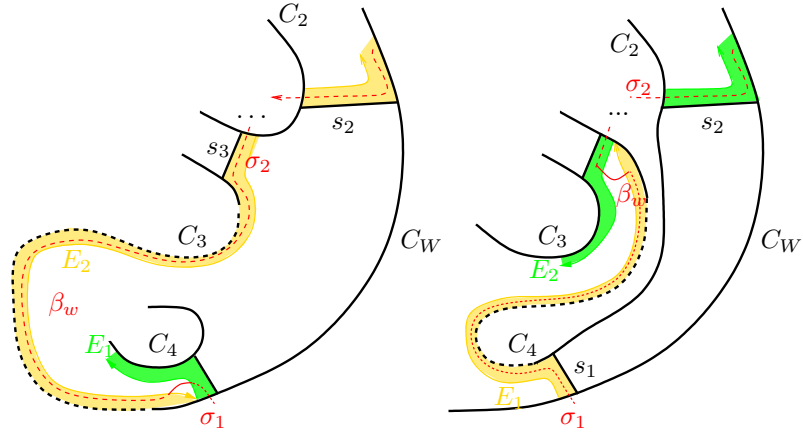

  \begin{center}
    \input{figures/c=c3.pstex_t}
    \hspace{.1in}
    \input{figures/c4=c2.pstex_t}
  \end{center}
  \caption{Possibilities for $R_W$ when met by an essential product
    disk.}
  \label{fig:ci=cj}
\end{figure}

\smallskip

{\underline{Subcase 2a:}} The head of $E_1$ meets the tail of $E_2$.
Recall that the head of $E_1$ is on $C_W$, and the tail of $E_2$ is on
$C_3$.  In order for these edges to meet in this way, we must have
$C_W=C_3$.  But now, we have a segment $s_2$ connecting $C_W$ to
$C_2$, and $\sigma_2$ runs adjacent to this segment as it runs
upstream into $C_2$.  We also have a segment $s_3$ connecting $C_2$ to
$C_3=C_W$, and $\sigma_2$ runs adjacent to this segment as it runs
downstream out of $C_2$ to meet $\beta_w$.  In this case, $s_2$ cannot
equal $s_3$, since $\sigma_2$ is assumed to be simple.  So $s_2$ and
$s_3$ form the two segments giving the desired 2--edge loop.  This
case is shown on the left of Figure \ref{fig:ci=cj}, where again the
dashed line on the state circle $C_3=C_W$ indicates that there may be
non-prime switches.  Note this is of type $\mathcal{B}$ in Theorem
\ref{thm:2edgeloop}. Thus option (1) in the statement of the proposition
holds for $W$.

\smallskip

{\underline{Subcase 2b:}} The head of $E_2$ meets the tail of $E_1$.
The tail of $E_1$ is on some state circle $C_4$ connected to $C_W$ by
a segment $s_1$, which $\sigma_1$ runs adjacent to.  Since the head of
$E_2$ is on $C_2$, $C_2$ must equal $C_4$.  Then we have a segment
$s_1$ connecting $C$ to $C_4=C_2$, with $\sigma_1$ adjacent to $s_1$,
and a segment $s_2$ connecting $C_W$ to $C_2=C_4$, with $\sigma_2$
adjacent to $s_2$.  Provided $s_1 \neq s_2$, this gives the desired
result.  This is shown on the right in Figure \ref{fig:ci=cj}.  Note
in this case our loop is of type $\mathcal{A}$ in the statement of
Theorem \ref{thm:2edgeloop}.

Suppose $s_1 = s_2$.  In this case, we will find a 2--edge loop of
type $\mathcal{D}$ stacked on the opposite side of $C_2=C_4$ from the arc
$\beta_W$, or show that our diagram is as on the right of Figure
\ref{fig:VW-forms}.  

Now, we are assuming $s_1=s_2$.  Consider the circle $C_2=C_4$.  Both
$\sigma_1$ and $\sigma_2$ are adjacent to this state circle at the
point where the segment $s_1=s_2$ meets it.  Additionally, by
shrinking $\beta_w$, we see that both $\sigma_1$ and $\sigma_2$ are
adjacent again to it at the point where $E_1$ meets $E_2$.  So Lemmas
\ref{lemma:adjacent-smooth} and \ref{lemma:adjacent-loop} imply either
that $\sigma_2$ runs adjacent to distinct segments forming a 2--edge
loop in $\GA$ --- note such a loop will be of type $\mathcal{D}$,
since the arc $\beta_W$ is on the opposite side of a state circle
meeting both segments of the loop --- or there are no segments
attached to $C_2=C_4$ between these points of adjacency, and
$\sigma_1$ and $\sigma_2$ run through tentacles adjacent to the state
circle.  In this latter case, we are on the left in Figure
\ref{fig:VW-forms}.

\smallskip

In all cases we have shown that either (1) or (2) of the statement of
the proposition is true for $W$. Since the argument is symmetric with
respect to the two faces $V$ and $W$, the proposition follows.
\end{proof}

%%%%%%%%%%%%%%%%%%%%%%%%%%%%%%%%%%%%%%%%%%%%%%%%%%%%%%%%%%%%%%%%%
%% Step 3:  Staircases.
\section{Step 3: Building staircases}
By Proposition \ref{prop:step2}, we may assume the polyhedral regions
containing $V$ and $W$ each look like one of the diagrams of Figure
\ref{fig:VW-forms}.  We have two vertices, with corresponding arcs
$\beta_V$ and $\beta_W$, and two corresponding state circles $C_V$ and
$C_W$, respectively, with $C$ playing the role of $C_V$, $C_W$, in
Figure \ref{fig:VW-forms}.  (Note that we may have $C_V=C_W$.)
Consider first $\beta_W$.  In Figure \ref{fig:VW-forms}, the arc
$\sigma_1$ crosses $C_W$ running downstream toward $\beta_W$.  The arc
$\sigma_2$, if it crosses $C_W$ at all, must do so running upstream
toward $\beta_W$.

Now direct $\sigma_2$ away from $\beta_W$.  If it crosses $C_W$, it
does so running downstream.  We will use Lemma \ref{lemma:downstream}
(Downstream) to build a staircase of $\sigma_2$ away from $\beta_W$.

\begin{lemma}[Building the first stair of a zig-zag]
Suppose $C_W \neq C_V$, and that $\sigma_2$, directed away from $C_W$,
crosses $C_W$ running downstream.  Then either
\begin{enumerate}
\item the conclusion of Theorem \ref{thm:2edgeloop} holds and we have
  a 2--edge loop, or
%%  of type
%% A, B, C, D or F; or
\item $\beta_W$ is as on the left of Figure \ref{fig:VW-forms}, and
  $\sigma_1$ and $\sigma_2$ run parallel to the same two segments on
  either side of $C_W$, both of which are part of the same vertex, cut
  off by $\beta_W$.
\end{enumerate}
\label{lemma:first-stair}
\end{lemma}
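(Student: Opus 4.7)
Plan: The strategy is to use the Staircase extension and Adjacent loop machinery from Chapter~\ref{sec:polyhedra} to trace the arcs $\sigma_1$ and $\sigma_2$ in detail near $C_W$, and to compare the segments of $H_A$ to which they are adjacent as they cross $C_W$. By the hypotheses inherited from Proposition~\ref{prop:step2}, $\sigma_1$ directed toward $\beta_W$ enters the polyhedral region $R_W$ by crossing $C_W$ downstream, so it is adjacent to some segment $s_1$ on the $R_W$-side of $C_W$ at the moment of crossing. By the current hypothesis, $\sigma_2$ directed away from $\beta_W$ crosses $C_W$ downstream, so it is adjacent to some segment $s_2$ on the opposite side of $C_W$. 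The idea is that $s_1$ and $s_2$ are candidates for the first stair of the zig-zag vertex cut off by $\beta_W$; the lemma says that either they genuinely fit that role, or we can produce a 2--edge loop from a mismatch.

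First I will apply Lemma~\ref{lemma:staircase} (Staircase extension) to $\sigma_2$ directed away from $\beta_W$ starting at $C_W$. This produces a right--down staircase whose top stair is $C_W$ and whose first segment is $s_2$. If $\sigma_2$ ever enters a non-prime half-disk across a non-prime arc and fails to exit, Lemma~\ref{lemma:np-shortcut} (Shortcut) combined with Lemma~\ref{lemma:utility} (Utility) will either abort the staircase after the first stair (which is still enough for what follows) or produce a contradiction, because $C_V \neq C_W$ prevents $\sigma_2$ from being trapped on its way to $\beta_V$. Once the first stair of $\sigma_2$ is in hand, I will locate the point of $C_W$ where $s_2$ is attached, and compare it with the point where $s_1$ is attached.

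Now the main case split. If $s_1$ and $s_2$ share a common endpoint on $C_W$, then together they form a single right--down staircase stair passing through $C_W$: $s_1$ goes inward into $R_W$ and $s_2$ goes outward. Since $\sigma_1$ meets $\beta_W$ immediately after crossing $s_1$, the ideal vertex cut off by $\beta_W$ is precisely the zig-zag containing this stair, and $\sigma_1, \sigma_2$ are the arcs in the two tentacles adjacent to $s_1$ and $s_2$ on opposite sides of $C_W$. This is conclusion (2). In the complementary case $s_1$ and $s_2$ attach at distinct points of $C_W$, the vertex cut off by $\beta_W$ must run along $C_W$ between these points, so the arc $\sigma_1 \cup \beta_W \cup \sigma_2$ is adjacent to $C_W$ at two distinct points. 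Lemma~\ref{lemma:adjacent-smooth} shows that either some sub-arc crosses a state circle (in which case Lemma~\ref{lemma:adjacent-loop} (Adjacent loop) produces two distinct parallel segments out of $C_W$, hence a 2--edge loop of $\GA$), or else no state circle is crossed, in which case we have parallel tentacles on $C_W$ and primeness of $D$ forces a configuration already identified as a 2--edge loop. A final check of the shaded-face colors (green/orange) and the orientation conventions from Lemma~\ref{lemma:EPDtoSquare} (EPD to oriented square) classifies the resulting loop as one of the types $\mathcal{A}$, $\mathcal{B}$, $\mathcal{C}$, or $\mathcal{D}$ in Theorem~\ref{thm:2edgeloop}.

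The hardest part will be the tentacle chasing in the presence of non-prime switches, where the Shortcut lemma and the hypothesis $C_V \neq C_W$ must be combined with care to rule out the possibility that $\sigma_2$'s staircase curls back without ever completing its first stair cleanly. A secondary subtlety is verifying that the orientations forced by Lemma~\ref{lemma:EPDtoSquare} always select the correct type $\mathcal{A}$--$\mathcal{D}$ from Figure~\ref{fig:casesABCDEFG}; I expect this to reduce to a finite diagrammatic verification once the two cases above are set up.
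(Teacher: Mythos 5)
Your proposal compares the wrong pair of segments, and this creates a genuine gap. You take $s_1$ to be the segment adjacent to which $\sigma_1$ enters the region of $W$ (on the $\beta_W$--side of $C_W$) and $s_2$ to be the segment adjacent to which $\sigma_2$ exits (on the far side of $C_W$). These two segments attach $C_W$ to \emph{different} state circles on different sides, so they can never themselves form a $2$--edge loop of $\GA$, and in the trivalent graph $H_A$ they cannot literally ``share an endpoint'' on $C_W$ either, so your dichotomy is not well posed. The crucial step that is missing is the paper's claim that $\sigma_1$ must also interact with the far side of $C_W$: letting $C$ be the state circle at the bottom of $\sigma_2$'s exit segment $s_2$, one shows (using Lemma~\ref{lemma:opp-sides} when $C$ separates $\beta_W$ from $\beta_V$, and Lemma~\ref{lemma:utility} together with the structure of non-prime half-disks otherwise) that $\sigma_1$, directed toward $\beta_W$, crosses $C$ running downstream; then Lemma~\ref{lemma:staircase} and an $A$--adequacy/Parallel-stairs argument force $\sigma_1$'s staircase from $C$ back to $C_W$ to consist of a \emph{single} segment $s_1$ joining $C$ to $C_W$. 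Only after this are $s_1$ and $s_2$ two segments between the same pair of circles, so that $s_1\neq s_2$ genuinely produces a $2$--edge loop, while $s_1=s_2$ (via Lemma~\ref{lemma:adjacent-smooth}) yields conclusion (2), in which \emph{both} arcs run parallel to the \emph{same} two segments on either side of $C_W$. Your ``shared endpoint'' case only places each arc next to one segment and so does not establish this stronger statement.

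Two further points are not handled. First, your $2$--edge-loop production in the ``distinct attachment points'' case leans on Lemma~\ref{lemma:adjacent-loop}, but that lemma needs an arc in a single shaded face that leaves $C_W$, runs upstream across another circle, and returns adjacent to $C_W$; you have not shown any such excursion exists, whereas the paper's proof explicitly conditions on whether $\sigma_1$ or $\sigma_2$ makes an upstream excursion before crossing $C_W$ (producing loops of types $\mathcal{B}$, $\mathcal{D}$, or $\mathcal{F}$ when they do). Second, conclusion (2) asserts that $\beta_W$ has the form on the \emph{left} of Figure~\ref{fig:VW-forms}; the paper shows the right-hand form always leads to conclusion (1), because there the coincidence $s_1=s_2$ contradicts primeness of the diagram and so forces a type-$\mathcal{C}$ loop. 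Your argument never distinguishes the two forms, so in the right-hand configuration it would assign conclusion (2) to a situation that in fact cannot occur. As written, the plan therefore does not prove the lemma; it needs the Opposite sides/Utility claim about the circle $C$, the single-segment staircase for $\sigma_1$, and the primeness analysis separating the two forms of $\beta_W$.
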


\begin{proof}
The arc $\sigma_2$ runs downstream across $C_W$, through a tentacle
which is then adjacent to some state circle $C$.  Say the tentacle has
its head adjacent to a segment $s_2$ connecting $C_W$ and $C$.  Note
that $C$ might equal $C_V$.

\smallskip

\textbf{Claim:} The arc $\sigma_1$ must cross $C$ running upstream,
when running from $\beta_W$ to $\beta_V$.

{\em{Proof of Claim:}} If $C$ separates $\beta_W$ and $\beta_V$, then
$\sigma_2$ and $\sigma_1$ must cross $C$.  Lemma
\ref{lemma:downstream} (Downstream) implies that $\sigma_2$ crosses in
the downstream direction.  Lemma \ref{lemma:opp-sides} (Opposite
sides) implies that $\sigma_1$ must cross $C$ in the upstream
direction, as claimed.

Now suppose $C$ does not separate $\beta_W$ and $\beta_V$.  Because
$\sigma_2$ is running downstream, if it crosses $C$ it does so running
downstream, and Lemma \ref{lemma:utility} (Utility) implies it cannot
cross back, contradicting the fact that $C$ does not separate.  So in
this case, $\sigma_2$ does not cross $C$.  

Hence, either $\sigma_2$ terminates in the arc $\beta_V$, without
crossing any non-prime arcs, or $\sigma_2$ must cross into a non-prime
half-disk through a non-prime arc with endpoints on $C$, without exiting the half-disk.  In
the first case, the region of $V$ is as on the right of Figure
\ref{fig:VW-forms}, with $\sigma_2$ matching the labels in that
figure, since $C_V \neq C_W$.  Then notice $\sigma_1$ crosses $C$.
Similarly, in the case that $\sigma_2$ enters a non-prime half-disk without exiting,
$\beta_V$ is inside the half-disk bounded by $C$ and the non-prime arc,
and so $\sigma_1$ must cross into this half-disk as well, and because the
non-prime tentacle belongs to the shaded face of $\sigma_2$,
$\sigma_1$ must cross through a tentacle running through $C$.  In
either case, $\sigma_1$ crosses $C$.  Since $C$ does not separate
$\beta_V$ and $\beta_W$, in fact $\sigma_1$ must cross $C$ twice,
first running upstream, then running downstream, by Lemma
\ref{lemma:utility} (Utility). This finishes the proof of the claim.

\smallskip

To continue with the proof of the lemma, we change the direction of
$\sigma_1$, so it is running across $C$ in the downstream direction,
when oriented from $\beta_V$ to $\beta_W$.  We may then apply Lemma
\ref{lemma:downstream} (Downstream) to $\sigma_1$, directed toward
$\beta_W$, for note it will run downstream across $C$, and eventually
downstream across $C_W$, exiting out of every non-prime half-disk
along the way.  Hence Lemma \ref{lemma:staircase} (Staircase extension)
implies that $\sigma_1$ defines a right--down staircase between $C$
and $C_W$, with $\sigma_1$ running adjacent to each connecting
staircase in the segment.

Arguing by A--adequacy of the diagram, similar to the proof of Lemma
\ref{lemma:parallel-stairs}, the staircase of $\sigma_1$ consists of a
single segment $s_1$, connecting $C$ and $C_W$.  Recall that
$\sigma_2$ runs adjacent to a segment $s_2$, also connecting $C$ and
$C_W$.  We will argue that either $s_1\neq s_2$ and option (1) holds,
or $s_1=s_2$ and we are in option (2).

\smallskip

{{\underline {Case 1:}}} Suppose that $\beta_W$ is as on the right of
Figure \ref{fig:VW-forms}.  Direct $\sigma_2$ away from $\beta_W$.  If
$\sigma_2$ runs upstream across any other state circle before running
downstream across $C_W$, then Lemma \ref{lemma:adjacent-loop} will
imply that there is a 2--edge loop of type $\mathcal{B}$.  Hence we
assume $\sigma_2$ does not run upstream across another state circle
from the point where it leaves $\beta_W$ to the point where it crosses
$C_W$.

Similarly, consider $\sigma_1$ running (downstream) towards $\beta_W$
from $s_1$ to cross $C_W$: If it runs upstream between $s_1$ and
$C_W$, then Lemma \ref{lemma:adjacent-loop} implies that there is a
2--edge loop of type $\mathcal{D}$.

So suppose $\sigma_2$ does not run upstream between $\beta_W$ and
crossing $C_W$, and suppose that $\sigma_1$ does not run upstream
between leaving $s_1$ and crossing $C_W$.  Then $\sigma_1$ and
$\sigma_2$ are both adjacent to $C_W$ at $\beta_W$.  We claim that
$s_1$ cannot equal $s_2$.  In this case, $s_1$ and $s_2$ and the
portions of the boundary of the EPD adjacent to them form a 2--edge
loop of type $\mathcal{C}$ and we are in option (1) of the statement
of the lemma.  Suppose, on the contrary, that $s_1 = s_2$.  Then $s_1,
s_2$ are also adjacent to $C_W$ where this segment attaches to $C_W$.
Lemma \ref{lemma:adjacent-smooth} implies that both $\sigma_1$ and
$\sigma_2$ must run along $C_W$ between these two points.  But note
that $\sigma_1$ and $\sigma_2$ must run in opposite directions.  Hence
the loop following $\sigma_2$ on one side of $C_W$, following
$\sigma_1$ on the other side, connecting where these are adjacent into
a loop, becomes a loop in the diagram meeting the diagram twice,
bounding crossings on either side.  This contradicts the fact that the
diagram is prime.  Thus $s_1\neq s_2$ as desired.

\smallskip

{{\underline {Case 2:}}} Suppose that $\beta_W$ is as on the left of
Figure \ref{fig:VW-forms}.  If $\sigma_2$, directed from $W$ to $V$,
runs upstream before crossing $C_W$, then Lemma
\ref{lemma:adjacent-loop} implies there is a 2--edge loop of type
$\mathcal{F}$.  If $\sigma_1$, directed from $V$ to $W$, runs upstream
between leaving $s_1$ and crossing $C_W$, then there is a 2--edge loop
of type $\mathcal{D}$.  If $s_1 \neq s_2$, then there is a 2--edge
loop of type $\mathcal{A}$.  If none of these three things happen,
then $s_1=s_2$, and as before, Lemma \ref{lemma:adjacent-smooth}
implies that $\sigma_1$ and $\sigma_2$ both run adjacent to $C_W$
between the point where $s_1=s_2$ and the segment on the opposite side
of $C_W$ where the two arcs run adjacent on opposite sides.  In this
case, the segment $s_1=s_2$ is part of the same vertex as $\beta_W$,
as claimed in the statement of the lemma.
\end{proof}

The previous lemma is the first step in creating a maximal right--down
staircase for the vertex corresponding to $\beta_W$.  The next lemma
gives the full staircase of a zig-zag.

\begin{lemma}[Full staircase\index{Full staircase lemma}]
Suppose $C_W \neq C_V$, and $\sigma_1$ and $\sigma_2$ are directed
from $\beta_W$ to $\beta_V$, and that $\sigma_2$ crosses $C_W$ running
downstream.  Then, either
\begin{enumerate}
\item the conclusion of Theorem \ref{thm:2edgeloop} holds; or
\item $\beta_W$ is as on the left of Figure \ref{fig:VW-forms}, and
  the vertex of $\beta_W$ forms a right--down staircase, with
  $\sigma_1$ and $\sigma_2$ adjacent on either side.
\end{enumerate}
In case (2) the staircase is maximal, in the sense that at the
bottom of the right--down staircase, $\sigma_2$ either crosses $C_V$,
or crosses over a non-prime arc $\alpha$ with endpoints on some state
circle $C$, and does not exit the corresponding half-disk.  In the latter
case, the arc $\sigma_1$ also crosses into this half-disk, first running
upstream across $C$ then running downstream into the half-disk bounded by
$\alpha$ and $C$.  Additionally, $\sigma_1$ does not cross any other
state circles between its two crossings of $C$.
\label{lemma:next-steps}
\end{lemma}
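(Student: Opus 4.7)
The plan is to prove the lemma by induction on the number of stairs, the base case being Lemma \ref{lemma:first-stair} and the inductive step being essentially a repetition of the argument from that lemma applied one level lower.

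\textbf{Base case.} Apply Lemma \ref{lemma:first-stair} directly to the hypothesis. This either yields a 2-edge loop (conclusion (1) of the present lemma), or places us in option (2) of Lemma \ref{lemma:first-stair}: $\beta_W$ has the form on the left of Figure \ref{fig:VW-forms}, there is a single segment $s_0 = s_1 = s_2$ with $\sigma_1, \sigma_2$ running adjacent on opposite sides, and this segment joins $C_W$ to a new state circle $C_1$. This is the first stair.

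\textbf{Inductive step.} Suppose inductively that we have built a right-down staircase $C_W = C_0, C_1, \ldots, C_k$ of segments $s_0, \ldots, s_{k-1}$, with $\sigma_1, \sigma_2$ adjacent on opposite sides of each stair, and with $\sigma_2$ next to $C_k$ running downstream. Tracking $\sigma_2$ further, Lemmas \ref{lemma:downstream} and \ref{lemma:staircase} (Staircase extension) show that the next move of $\sigma_2$ is one of: (i) cross $C_k$ downstream into a new tentacle; (ii) reach $\beta_V$, which by the shape of Figure \ref{fig:VW-forms} applied to $V$ forces $C_k = C_V$; or (iii) cross a non-prime arc $\alpha$ with endpoints on some state circle $C$ (reached across non-prime switches without crossing state circles) and remain in the bounded half-disk, since by Lemma \ref{lemma:np-shortcut} (Shortcut) and Lemma \ref{lemma:utility} (Utility), $\sigma_2$ cannot enter and then exit such a half-disk when it is running downstream heading toward $\beta_V$. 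In case (i), I would rerun the entire argument of Lemma \ref{lemma:first-stair} with $C_k$ in place of $C_W$: the downstream crossing of $\sigma_2$ forces, via Lemma \ref{lemma:opp-sides} (Opposite sides), that $\sigma_1$ crosses $C_k$ upstream; Lemma \ref{lemma:staircase} then builds a one-segment staircase for $\sigma_1$, which is one segment long by $A$-adequacy (Lemma \ref{lemma:escher}, Escher stairs). If this $\sigma_1$-segment differs from the new $\sigma_2$-segment, we obtain a 2-edge loop of one of the types $\mathcal{A}$--$\mathcal{G}$ (which type depending on the coloring of the shaded faces); otherwise, Lemma \ref{lemma:adjacent-smooth} combined with primeness of the diagram forces $\sigma_1, \sigma_2$ to wrap around $C_{k+1}$ exactly as required, extending the staircase by one stair.

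\textbf{Termination and the non-prime case.} The induction must terminate because $H_A$ is a finite graph and by Lemma \ref{lemma:escher} no right-down staircase can loop back on itself. In case (ii) we are done. In case (iii), we verify the claims about $\sigma_1$. Since $\beta_V$ lies inside the half-disk bounded by $\alpha$ and $C$, $\sigma_1$ must also enter this half-disk. It cannot enter across $\alpha$, since $\sigma_1$ and $\sigma_2$ lie on distinct shaded faces and $\sigma_2$ already occupies the tentacle on the relevant side of the non-prime switch; so $\sigma_1$ enters by crossing $C$. Lemma \ref{lemma:opp-sides} applied at the bottom of the staircase forces $\sigma_1$ to cross $C$ in the upstream direction. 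Once inside the region containing the half-disk, Lemma \ref{lemma:utility} prevents $\sigma_1$ from crossing $C$ a third time, so it must subsequently run downstream and enter the $\alpha$-bounded half-disk as claimed. Finally, if $\sigma_1$ crossed any other state circle between its two crossings of $C$, it would produce, together with the staircase already constructed, a right-down staircase forming a loop or returning to $C$ --- forbidden by Lemma \ref{lemma:escher}.

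\textbf{Main obstacle.} The inductive step itself is essentially a rerun of Lemma \ref{lemma:first-stair}, but the delicate part will be the terminal non-prime case (iii): extracting precise geometric information about $\sigma_1$'s route into the $\alpha$-half-disk using Opposite sides, Utility, and Escher stairs. Care is also required to rule out detours of $\sigma_1$ or $\sigma_2$ through non-prime switches at intermediate stairs that might produce an alternative 2-edge loop not of the listed types or, conversely, break the adjacency structure.
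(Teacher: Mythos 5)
Your skeleton (start from Lemma \ref{lemma:first-stair}, extend the shared staircase stair by stair, then analyze the terminal non-prime case) is close in spirit to the paper's argument, but several load-bearing steps are wrong or missing. First, your case (iii) rests on the claim that a downstream-running $\sigma_2$ which crosses a non-prime arc cannot exit the resulting half-disk; this is false, and in fact contradicts Lemma \ref{lemma:np-shortcut} and case (3) of the proof of Lemma \ref{lemma:staircase}, where an arc enters such a half-disk and exits by crossing the state circle running downstream. The enter-and-exit scenario is exactly the ``upstream detour'' between consecutive stairs, and it is not a nuisance to be ruled out: it is where the 2--edge loops of types $\mathcal{F}$ and $\mathcal{G}$ of Theorem \ref{thm:2edgeloop} come from. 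The paper handles it in the inductive step by applying Lemma \ref{lemma:adjacent-loop} whenever $\sigma_2$ (resp.\ $\sigma_1$) runs upstream between being adjacent to the stair circle at one segment and at the next, which yields alternative (1). You only flag this as ``care is required,'' so your induction has a genuine hole precisely at the step that generates two of the seven loop types.

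Second, the terminal non-prime case is handled incorrectly. You invoke Lemma \ref{lemma:opp-sides} to conclude that $\sigma_1$ crosses $C$ upstream, but that lemma requires both arcs to cross $C$, and here $\sigma_2$ crosses $\alpha$ and never crosses $C$. The paper argues instead that $C$ does not separate $V$ from $W$ (the half-disk lies on the same side of $C$ as the staircase), so $\sigma_1$ must cross $C$ exactly twice, and Lemma \ref{lemma:utility} forces the order upstream-then-downstream, the second crossing landing inside the half-disk; your picture of $\sigma_1$ ``entering by crossing $C$'' once is geometrically off. Finally, your last claim --- that an extra state-circle crossing of $\sigma_1$ between its two crossings of $C$ is ``forbidden by Lemma \ref{lemma:escher}'' --- is not true: such a crossing is possible and produces no Escher-stairs configuration (2--edge loops are allowed in $A$--adequate diagrams; only 1--edge loops are excluded). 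The correct resolution, and the paper's, is that Lemma \ref{lemma:adjacent-loop} then yields a 2--edge loop of type $\mathcal{B}$, i.e., alternative (1), rather than a contradiction. Relatedly, when the $\sigma_1$- and $\sigma_2$-segments at a new stair are distinct, you defer the identification of the loop to ``one of the types $\mathcal{A}$--$\mathcal{G}$ depending on the coloring''; conclusion (1) requires actually matching one of the seven configurations (the paper gets type $\mathcal{A}$ there), so as written that step is not verified.
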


The form of the graph $H_A$ in case (2) of Lemma
\ref{lemma:next-steps} (Full staircase) is illustrated in Figure
\ref{fig:stair-enter-nonprime}.

\begin{figure}
  \input{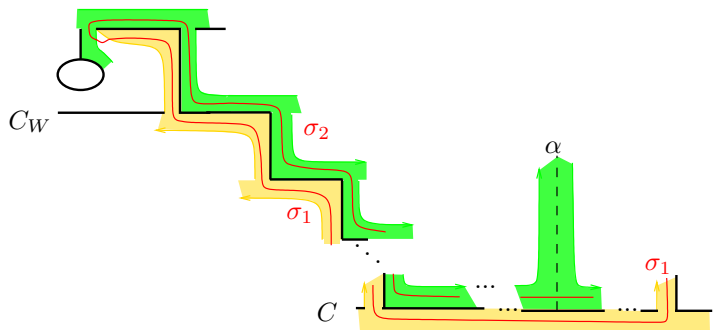}
  \caption{Lemma \ref{lemma:next-steps}: either we have desired
    2--edge loop, or the graph $H_A$ is as shown.}
  \label{fig:stair-enter-nonprime}
\end{figure}

\begin{proof}
By Lemma \ref{lemma:first-stair}, we may assume that $\beta_W$ is as
on the left of Figure \ref{fig:VW-forms}, and $\sigma_1$ and
$\sigma_2$ run parallel to the same segments on either side of
$\beta_W$, which are both part of the vertex at $\beta_W$.

\smallskip

\textbf{Claim:}  For $i=1,2$,
 $\sigma_i$ defines a right--down staircase,
with $\sigma_i$ running adjacent to each segment of the staircase.

{\em{Proof of Claim:}} We may apply Lemma \ref{lemma:downstream}
(Downstream) to the arc $\sigma_2$, directed away from $\beta_W$.
This lemma implies that $\sigma_2$ defines a right--down staircase,
with $\sigma_2$ running adjacent to each segment of the staircase in a
downstream direction, either until $\sigma_2$ crosses a non-prime arc
without exiting the half-disk it bounds with a state circle $C$, or
crosses $C_V$ and runs to $\beta_V$.

If $\sigma_2$ crosses $C_V$, then $C_V$ separates $V$ and $W$, so
$\sigma_1$ must also cross $C_V$, running upstream when directed to
$V$.  If $\sigma_2$ crosses a non-prime arc without exiting the half-disk
it bounds with $C$, then note $V$ must lie inside this half-disk, so
$\sigma_1$ must cross into this half-disk.  Moreover, $C$ cannot separate
$V$ and $W$, so $\sigma_1$ actually must cross $C$ twice, by Lemma
\ref{lemma:utility} (Utility), first running upstream, then downstream
when directed toward $V$.

In either case, $\sigma_1$ crosses the last state circle $C$ of the
staircase of $\sigma_2$ running upstream, directed toward $V$.  Change
the direction on $\sigma_1$.  It runs downstream across $C$, and
downstream across $C_W$, and must cross out of any non-prime half-disks 
between these.  So Lemma \ref{lemma:downstream}
(Downstream) implies that $\sigma_1$ defines a right--down staircase,
with $\sigma_1$ running adjacent to each segment of the
staircase. This finishes the proof of the claim.

\smallskip

To continue with the proof of the lemma we note that adequacy implies
that the segments of the staircases defined by $\sigma_1$ and by
$\sigma_2$ must actually run between the same sequence of state
circles.

Recall that we know that the first segment of the staircase, on the
other side of $C_W$ from $\beta_W$ , is shared by both $\sigma_1$ and
$\sigma_2$.  Suppose we have shown that $\sigma_1$ and $\sigma_2$ run
adjacent to the first $k$ stairs of a right--down staircase forming
the vertex of $\beta_W$, and that $\sigma_2$ runs to a $(k+1)$-st
step.  We will show the theorem holds at this step.

The arc $\sigma_2$ runs from the top of the $(k+1)$-st step,
somewhere, then downstream adjacent to the segment of the step.  If
$\sigma_2$ runs upstream first, before running downstream, then Lemma
\ref{lemma:adjacent-loop} implies that a 2--edge loop of type
$\mathcal{F}$ occurs.  Similarly, when directed downstream, the arc
$\sigma_1$ runs adjacent to the segment of the $(k+1)$-st step,
somewhere, and then downstream adjacent to the $k$-th step.  If it
runs upstream between the two segments, then Lemma
\ref{lemma:adjacent-loop} will imply there is a 2--edge loop, this
time of type $\mathcal{G}$.  In both cases option (1) holds.

Now assume neither $\sigma_1$ nor $\sigma_2$ runs upstream between the
segments of these steps.  

If the segments of $\sigma_1$ and $\sigma_2$ at this $(k+1)$-st step
are distinct, then we have a 2--edge loop of type $\mathcal{A}$; again
option (1) holds.

If the segments are not distinct, then Lemma
\ref{lemma:adjacent-smooth} implies that $\sigma_1$ and $\sigma_2$
both run adjacent to the state circle of this step between the two
segments of the step.  Thus this step is a continuation of the same
vertex corresponding to $\beta_W$.

By induction, either (1) holds or $\sigma_1$, $\sigma_2$ share every
segment of the staircase.

Finally, suppose the staircase ends with $\sigma_2$ entering into the
non-prime half-disk bounded by a non-prime arc and the state circle $C$, without
exiting.  We have already seen that $\sigma_1$ must also cross $C$ in
this case, first upstream and then downstream.  Suppose $\sigma_1$
crosses additional state circles between these two crossings of $C$.
Then Lemma \ref{lemma:adjacent-loop} implies that there is a 2--edge
loop of type $\mathcal{B}$ on the underside of the state circle $C$.
\end{proof}

\begin{lemma}
  Suppose $\sigma_2$ runs across $C_W$ in the downstream direction,
  out of every non-prime half-disk that it enters, and
  terminates with $\sigma_2$ crossing $C_V$.  Then the conclusion of
  Theorem \ref{thm:2edgeloop} holds.
\label{lemma:staircase-to-vertex}
\end{lemma}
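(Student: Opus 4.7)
The plan is to reduce to a ``matched staircase'' scenario and then produce a 2--edge loop. First, I will apply Lemma \ref{lemma:next-steps} (Full staircase). Since the hypothesis of the current lemma excludes the non-prime termination case, either the conclusion of Theorem \ref{thm:2edgeloop} already holds (and we are done), or $\beta_W$ has the form on the left of Figure \ref{fig:VW-forms}, and the zig-zag vertex at $\beta_W$ forms a complete right--down staircase $C_W = C_0, C_1, \ldots, C_n = C_V$ with segments $s_0, \ldots, s_{n-1}$; here $\sigma_2$ runs downstream adjacent to each $s_i$ while $\sigma_1$ runs upstream on the opposite side, both directed from $\beta_W$ to $\beta_V$.

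Next, I will carry out the symmetric analysis at $V$. By Lemma \ref{lemma:opp-sides} (Opposite sides), since $\sigma_2$ crosses $C_V$ downstream when directed toward $\beta_V$, the arc $\sigma_1$ crosses $C_V$ upstream in that direction; equivalently, $\sigma_1$ crosses $C_V$ downstream when directed away from $\beta_V$. Thus the roles of $\sigma_1$ and $\sigma_2$ at $V$ are swapped relative to $W$. Reapplying Lemma \ref{lemma:first-stair} at $V$ with these swapped roles yields either the conclusion of Theorem \ref{thm:2edgeloop} (and we are done), or $\beta_V$ also has the form on the left of Figure \ref{fig:VW-forms}, with $\sigma_1$ and $\sigma_2$ running adjacent to a pair of segments $t$ and $s_n$ attached to $C_V$ on opposite sides, both belonging to the zig-zag vertex at $\beta_V$; here $s_n$ is the segment on the $\sigma_2$-side along which $\sigma_2$ connects immediately to $\beta_V$.

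The main obstacle will be the remaining case, in which the zig-zag vertices of $\beta_W$ and $\beta_V$ are joined along $C_V$. On the $\sigma_1$-side of $C_V$, the arc $\sigma_1$ arrives via the tentacle adjacent to $s_{n-1}$ (the last segment of $W$'s staircase) and also runs adjacent to $t$ (the first segment of $V$'s vertex). If $t \neq s_{n-1}$, then $\sigma_1$ passes through two distinct tentacles on the same side of $C_V$; by Lemma \ref{lemma:adjacent-smooth} combined with Lemma \ref{lemma:adjacent-loop} (Adjacent loop), $\sigma_1$ must trace a pair of segments forming a 2--edge loop of one of the required types, and we are done. If instead $t = s_{n-1}$, then $s_n$ and $s_{n-1}$ are two distinct segments attached to $C_V$ on opposite sides, both belonging to the zig-zag vertex at $\beta_V$. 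In this final subcase, I will construct a planar loop in $S^2$ threaded through $\sigma_1$, $\sigma_2$ near $\beta_V$ and the two segments $s_n$, $s_{n-1}$, and invoke a primeness-of-diagram argument analogous to the one closing Lemma \ref{lemma:parallel-stairs}(\ref{i:no-white-face}): this forces $s_n$ and $s_{n-1}$ to share their endpoint opposite $C_V$, yielding the desired 2--edge loop of type $\mathcal{A}$.
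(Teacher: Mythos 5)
Your overall strategy (Full staircase at the $W$ end, then an analysis at the $V$ end closed off by adjacent-loop and primeness arguments) is in the right spirit, but the treatment of the $V$ end has a genuine gap. You claim that, after swapping the roles of $\sigma_1$ and $\sigma_2$, Lemma \ref{lemma:first-stair} shows $\beta_V$ is again of the \emph{left} form of Figure \ref{fig:VW-forms}. This is exactly the configuration the argument must rule out, not use: the two forms in Figure \ref{fig:VW-forms} come with \emph{fixed} colors dictated by the orientation convention of Lemma \ref{lemma:EPDtoSquare} (counterclockwise orange--green--white at each vertex), so they are not symmetric under exchanging $\sigma_1$ and $\sigma_2$, and you cannot simply re-cite Lemma \ref{lemma:first-stair} with the arcs swapped. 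In fact, since the EPD meets only one orange and one green face, having both vertices of the same form would force the green face to lie adjacent to both sides of a single state circle, which Lemma \ref{lemma:escher} (Escher stairs) forbids; the correct conclusion is that $\beta_W$ is the left form and $\beta_V$ is the \emph{right} form. Your subsequent case division ($t$ versus $s_{n-1}$) is built on the excluded configuration, so it does not cover the actual situation, and it also omits the possibility that $\sigma_2$ runs upstream through a non-prime switch before crossing $C_V$, which is precisely where the type $\mathcal{F}$ loop arises.

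The endgame also does not work as stated. In the case $t \neq s_{n-1}$, Lemma \ref{lemma:adjacent-loop} only produces a 2--edge loop if the arc runs \emph{upstream across another state circle} between its two adjacencies to $C_V$, and Lemma \ref{lemma:adjacent-smooth} concerns two arcs meeting the same two points; neither applies merely because $\sigma_1$ passes two distinct segments attached to $C_V$, whose far endpoints need not lie on a common circle. Worse, in your final subcase the segments $s_n$ and $s_{n-1}$ are attached to $C_V$ on \emph{opposite} sides, so they can never connect the same pair of state circles and hence can never form a 2--edge loop of type $\mathcal{A}$; no primeness argument can "force" them to share an endpoint. The correct use of primeness here (as in the actual proof) is negative: after both arcs leave the shared last segment on the far side of $C_V$, either $\sigma_1$ runs upstream again after crossing $C_V$ (Adjacent loop gives type $\mathcal{B}$), or $\sigma_2$ runs upstream via a non-prime switch before crossing $C_V$ (type $\mathcal{F}$), or neither does, in which case one obtains a simple closed curve meeting the diagram exactly twice with crossings on both sides, contradicting primeness of $D$ — that case is eliminated outright rather than converted into a loop.
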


\begin{proof}
By Proposition \ref{prop:step2} we reduce to the case
that $\beta_W$ and $\beta_V$ are as in Figure \ref{fig:VW-forms}.  The
colors on these figures are fixed, given our choice of direction in
which to pull $\beta_W$ and $\beta_V$ off their corresponding vertices
(Lemma \ref{lemma:EPDtoSquare}, EPD to oriented square).

This means that both vertices cannot be of the same form in that
figure, or a green\footnote{In grayscale versions of this
  monograph, green will appear darker gray, orange lighter gray.}
shaded face would lie adjacent to both sides of the 
same state circle, which is impossible by Lemma
\ref{lemma:escher} (Escher stairs).

Thus one vertex, $\beta_W$ say, is as on the left of Figure
\ref{fig:VW-forms}, and the other vertex, $\beta_V$, is as on the
right.  Relabel so $\sigma_1$ runs through the orange face and
$\sigma_2$ runs through the green.

By Lemma \ref{lemma:next-steps}, we reduce to the case that either
$C_W = C_V$, or both $\sigma_1$ and $\sigma_2$ run adjacent to either
side of a maximal right--down staircase from $C_W$ to $C_V$,
corresponding to the vertex of $\beta_W$.
In both cases, when $\sigma_1$ and $\sigma_2$
are directed toward $C_V$, they run adjacent to the same segment which
meets $C_V$ on the opposite side of that containing $\beta_V$: if
$C_W=C_V$, then $\sigma_1$ and $\sigma_2$ are adjacent to the segment shown
on the left in Figure \ref{fig:VW-forms}; otherwise they are adjacent to
the last segment of the staircase from $C_W$ to $C_V$.

After leaving this segment, $\sigma_1$ and $\sigma_2$
split up and run to $\beta_V$.  Thus the two are adjacent
to each other and to $C_V$ in two distinct points: at a segment on one
side of $C_V$, and at $\beta_V$ on the other side.

If $\sigma_1$ crosses $C_V$ (running upstream) then runs upstream
again before meeting $\beta_V$, Lemma \ref{lemma:adjacent-loop}
implies there will be a 2--edge loop of type $\mathcal{B}$.
Similarly, if $\sigma_2$ runs upstream before crossing $C_V$, then
there will be a 2--edge loop, and this must be of type $\mathcal{F}$,
as $\sigma_2$ is running downstream to $C_V$, so must pass through a
non-prime switch to run upstream.

If neither $\sigma_1$ nor $\sigma_2$ run upstream between the point
where they leave the segment of the zig-zag of $\beta_W$ and the point
where they meet again at $\beta_V$, then a simple closed curve crosses
the knot diagram at the base of the zig-zag and at $\beta_V$, and
nowhere else, following $\sigma_1$ on one side and $\sigma_2$ on the
other, and encircles crossings on both sides.  This contradicts the
hypothesis that the diagram is prime.
\end{proof}

%%%%%%%%%%%%%%%%%%%%%%%%%%%%%%%%%%%%%%%%%%%%%%%%%%%%%%%%%%%%%%%%%
%% Step 4:  Non-prime arcs.
\section{Step 4: Inside non-prime arcs}
At this point in the proof of Theorem \ref{thm:2edgeloop}, we either
have the conclusion of the theorem, or we have specialized to cases
where the form of the graph $H_A$ is very restricted.  In particular,
\begin{itemize}
\item  $\beta_V$ and $\beta_W$ must be in distinct polyhedral
regions (Step 1);

\item the graph near $\beta_W$ and $\beta_V$ must be of one of the two
  forms shown in Figure \ref{fig:VW-forms} (Step 2);

\item $\sigma_2$ runs down a (possibly empty) maximal right--down
  staircase and across a non-prime arc, as in Figure
  \ref{fig:stair-enter-nonprime} (Step 3).
\end{itemize}

To finish the proof, we need to analyze what happens to the EPD when
$\beta_V$ and $\beta_W$ are separated by a non-prime arc $\alpha$.

%% \begin{lemma}
%%   Suppose $\beta_V$ and $\beta_W$ are separated by a non-prime arc
%%   $\alpha$, with the arc $\sigma_2$, say, crossing $\alpha$.  Suppose
%%   $\alpha$ is outermost of all such arcs, with respect to $\beta_W$.
%%   That is, $\alpha$ is the first such non-prime arc crossed by
%%   $\sigma_2$ when directed toward $\beta_V$.  Finally, suppose after
%%   crossing $\alpha$ that $\sigma_2$ does not run upstream.  Then we
%%   have the conclusion of Theorem \ref{thm:2edgeloop}.
%% \label{lemma:nonprime-noupstream}
%% \end{lemma}

\begin{lemma}
Suppose $\beta_V$ and $\beta_W$ are separated by a non-prime arc
$\alpha$, with the arc $\sigma_2$, say, crossing $\alpha$.  Suppose
$\alpha$ is outermost\index{outermost non-prime arc} among all such arcs, with respect to $\beta_W$.
That is, $\alpha$ is the first such non-prime arc crossed by
$\sigma_2$ when directed toward $\beta_V$.  Then we have the
conclusion of Theorem 
\ref{thm:2edgeloop}.
\label{lemma:nonprime-inside}
\end{lemma}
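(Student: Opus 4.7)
The plan is to re-run, inside the non-prime half-disk, the analysis that Steps 1--3 carried out in the original upper polyhedron. Let $H$ denote the non-prime half-disk bounded by $\alpha$ and the sub-arc of its state circle $C$ connecting the endpoints of $\alpha$. By Lemma \ref{lemma:next-steps}, $\sigma_2$ enters $H$ across $\alpha$ at the bottom of its right--down staircase, and $\sigma_1$ enters $H$ by first crossing $C$ upstream and then crossing back downstream into $H$ without meeting any state circle in between. Since $\alpha$ is outermost with respect to $\beta_W$, the arc $\sigma_2$ cannot immediately cross another non-prime arc with endpoints on $C$ and exit back out (that would, by the Shortcut lemma, force it across $C$ running downstream, contradicting maximality of the staircase just built). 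Thus both $\sigma_1$ and $\sigma_2$ are now resident inside $H$, with $\beta_V$ inside $H$ as well.

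First I would show that the local picture near the entry of $\sigma_1, \sigma_2$ into $H$ mirrors one of the two configurations of Figure \ref{fig:VW-forms}, by running the proof of Proposition \ref{prop:step2} verbatim: the arc $\sigma_1$ enters downstream adjacent to some segment attached to $C$, the arc $\sigma_2$ enters either directly or after a further upstream excursion, and the two ideal edges on which they meet $\beta_V$ connect head to tail in exactly the ways enumerated there. The only new feature is that one of the ``state circles'' bounding the region is actually the non-prime arc $\alpha$; but all the tentacle-chasing lemmas (Utility, Shortcut, Downstream, Staircase extension, Escher stairs) treat non-prime arcs and state circles symmetrically. Next, I would apply Lemma \ref{lemma:first-stair} and the Full staircase lemma to extend downward toward $\beta_V$. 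Either one of the steps produces a 2-edge loop of type $\mathcal{A}$--$\mathcal{D}$, or $\sigma_1$ and $\sigma_2$ run along opposite sides of a right--down staircase all the way to $C_V$, at which point Lemma \ref{lemma:staircase-to-vertex} provides a 2-edge loop.

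Since the resulting 2-edge loop lies inside $H$ while $\beta_W$ lies outside $\alpha$, the loop is separated from $\beta_W$ by the non-prime arc $\alpha$. Reading off the colors of the two shaded faces through which the loop runs (forced by the oriented-square convention of Lemma \ref{lemma:EPDtoSquare}), the loop must be of type $\mathcal{E}$ (distinct colors), $\mathcal{F}$ (both green), or $\mathcal{G}$ (both orange). The main obstacle is bookkeeping: one has to verify that the recursive application of Steps 2--3 inside $H$ never requires $\sigma_1$ or $\sigma_2$ to leave $H$ in order to meet its endpoint on $\beta_V$, and that no degenerate 2-edge loop (with coincident segments) arises, for which the outermost choice of $\alpha$ and primeness of the diagram are both used. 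If $\sigma_2$ encounters a further non-prime arc inside $H$ before reaching $\beta_V$, one inducts on the number of separating non-prime arcs between $\beta_W$ and $\beta_V$, with the present lemma as the inductive step.
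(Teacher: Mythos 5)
There is a genuine gap. Your central move is to ``re-run Steps 1--3 verbatim inside the half-disk $H$,'' justified by the claim that the tentacle-chasing lemmas ``treat non-prime arcs and state circles symmetrically.'' That claim is false: the downstream/upstream structure is defined by tentacles attached to state circles, and a non-prime switch carries no direction (this is stated explicitly when non-prime switches are introduced), while the Utility, Downstream, Shortcut and Staircase extension lemmas constrain crossings of \emph{state circles}, not of non-prime arcs. More structurally, the configuration inside $H$ is not an instance of the setup that Proposition \ref{prop:step2}, Lemma \ref{lemma:first-stair} and Lemma \ref{lemma:next-steps} analyze: those arguments are anchored at a white-face arc ($\beta_W$) where $\sigma_1$ and $\sigma_2$ meet in an ideal vertex with the orientation fixed by Lemma \ref{lemma:EPDtoSquare}, and their hypotheses (e.g.\ ``$\sigma_2$, directed away from $C_W$, crosses $C_W$ running downstream'') are extracted from that vertex. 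Inside $H$ there is only one ideal vertex, $\beta_V$; at the other end the two arcs enter through different boundary pieces ($\sigma_1$ across $C$, $\sigma_2$ across $\alpha$) and do not meet at a vertex, so the cited lemmas do not apply as stated and your proposed induction has no configuration matching the original base case. In effect you assume the recursive structure rather than establishing it.

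What the paper actually does at this point is a direct dichotomy on whether $\sigma_2$ runs upstream after crossing $\alpha$, and the steps you defer to ``bookkeeping'' are the substance of the proof. If $\sigma_2$ does not run upstream, a color/orientation clash (the arc $\alpha$ would have to lie in an orange face by the shape of the region of $\beta_V$, but in a green face whenever the zig-zag of $\beta_W$ is nonempty) forces the zig-zag of $\beta_W$ to be empty, and then $\bdy E$ yields a curve meeting the diagram exactly twice at the two crossings of $C$ by $\sigma_1$, contradicting primeness --- so this case produces no loop at all. If $\sigma_2$ does run upstream across some $C_1$, the paper uses Lemma \ref{lemma:adjacent-loop} to get a type $\mathcal{F}$ loop, or else a careful argument via Lemmas \ref{lemma:opp-sides}, \ref{lemma:downstream} and \ref{lemma:parallel-stairs} to show that $\sigma_1$ also runs adjacent to a segment from $C$ to $C_1$, giving a type $\mathcal{E}$ loop when the two segments are distinct and a second primeness contradiction when they coincide. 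None of this is produced by your outline. (A smaller point: your classification ``separated from $\beta_W$ by $\alpha$, hence type $\mathcal{E}$, $\mathcal{F}$ or $\mathcal{G}$'' misreads the types, which require $\bdy E$ to cross the separating non-prime arc(s) immediately after the loop and run directly to a vertex; this is less serious only because any of the seven types would satisfy the theorem.)
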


\begin{proof}
We break the proof into two cases:  first, that $\sigma_2$ does not
run upstream after crossing $\alpha$, and second, that it does run
upstream.

\smallskip

{\underline{Case 1:}} Suppose $\sigma_2$ does not run upstream after
crossing $\alpha$.  Now suppose, by way of contradiction, that the
conclusion of Theorem \ref{thm:2edgeloop} is not true.  We will find a
contradiction to primeness of the diagram.

Since $\sigma_2$ does not run upstream after crossing $\alpha$, it
will not run downstream either, for to run downstream would be to
cross the state circle $C$ out of the non-prime half-disk bounded by  
$\alpha$, contradicting the hypotheses.  
Therefore, after crossing $\alpha$, $\sigma_2$ must run directly to
$\beta_V$ without crossing any additional state circles.  We know the
graph $H_A$ must have one of the forms of Figure \ref{fig:VW-forms},
and that $\sigma_2$ cannot cross an additional state circle after
entering the region of $\beta_V$, hence $\beta_V$ must be as on the
right of that figure, so $\alpha$ is an arc in an orange
face.
% \footnote{Recall that in grayscale versions of this monograph,
%  orange will appear lighter gray, green darker gray.} 

Next, Lemma \ref{lemma:next-steps} (full staircase) implies that on
the opposite side of $\alpha$, in the region containing $\beta_W$,
$\sigma_1$ and $\sigma_2$ run adjacent to the same (possibly empty)
right--down staircase corresponding to the vertex of $\beta_W$.
However, notice that if the staircase is non-empty, then $\beta_W$
must have the form of the left of Figure \ref{fig:VW-forms}, and the
colors must be as in Figure \ref{fig:stair-enter-nonprime}.  That is,
$\alpha$ is an arc in a green face.  But in the previous paragraph, we
argued that $\alpha$ is in an orange face.  This is a contradiction.
So the zig-zag of $\beta_W$ must be empty, and $\beta_W$ must have the
form of the right of Figure \ref{fig:VW-forms}.  Note that this
implies that $\sigma_2$ meets no state circles on either side of the
non-prime arc $\alpha$.  See Figure \ref{fig:CW=CV-nosep1}.

\begin{figure}
  \includegraphics{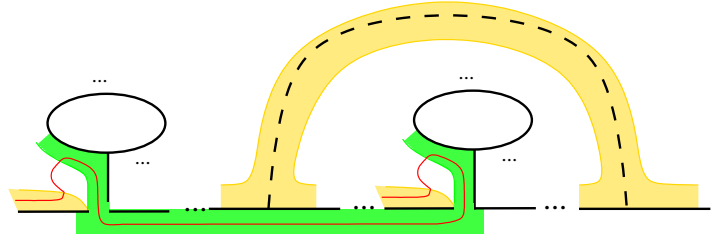}
  \caption{The contradiction in Lemma \ref{lemma:nonprime-inside},
    Case 1:  $\beta_W$ and $\beta_V$ both have the form of the right
    of Figure \ref{fig:VW-forms}, and $\sigma_2$ meets no state circles.}
  \label{fig:CW=CV-nosep1}
\end{figure}

By Lemma \ref{lemma:next-steps}, $\sigma_1$ crosses $C$ twice, but
meets no state circles between these crossings.  Then the boundary of
the EPD gives a simple closed curve in the diagram which meets the
diagram exactly twice, once each time $\sigma_1$ crosses $C$.  This
contradicts the fact that the diagram is prime.

%% \begin{lemma}
%% As in the statement of Lemma \ref{lemma:nonprime-noupstream}, suppose
%% $\beta_V$ and $\beta_W$ are separated by a non-prime arc $\alpha$,
%% which is outermost with respect to $\beta_W$.  Then we have the
%% conclusion of Theorem \ref{thm:2edgeloop}.
%% \label{lemma:nonprime-inside}
%% \end{lemma}

\smallskip

%% \begin{proof}
{\underline{Case 2:}}  The arc $\sigma_2$ does run upstream after
crossing $\alpha$, say across some state circle $C_1$.

If $\sigma_2$ runs back to $C$ from $C_1$, then Lemma
\ref{lemma:adjacent-loop} (Adjacent loop) implies there is a 2--edge
loop of type $\mathcal{F}$.

If not, then we claim that the theorem holds or $\sigma_1$ must also
run adjacent to a segment connecting $C$ and $C_1$.  This can be seen
as follows.  First, if $C_1$ separates $\beta_V$ and $\beta_W$, then
$\sigma_1$ must also cross $C_1$.  Since $\sigma_1$ is running
downstream (by Lemma \ref{lemma:opp-sides} (Opposite sides)),
Lemma \ref{lemma:downstream} (Downstream) implies that it must run
adjacent to a segment from $C$ to $C_1$, as desired.  If $C_1$ does
not separate, then $\sigma_2$ must cross it twice, the second time
running downstream to some $C''$.  If $C''=C$, then we must have a
2--edge loop of type $\mathcal{F}$.  If $C'' \neq C$, consider
$\sigma_1$.  It runs downstream across $C$, along a segment connecting
$C$ to some $C'$.  If $C'\neq C_1$, then $C'=C''$, else we could not
connect ends of $\sigma_1$ and $\sigma_2$ at $\beta_V$. But now,
$\sigma_1$ and $\sigma_2$ cannot cross $C'=C''$, or we would build two
staircases from $C'$ contradicting Lemma \ref{lemma:parallel-stairs}
(Parallel stairs).  On the other hand, $\beta_V$ cannot lie on
$C'=C''$, since it would lie at the tails of two tentacles, which do
not meet at a vertex.  The only possibility is $C_1=C'$, as desired.

Thus $\sigma_1$ and $\sigma_2$ both run adjacent to segments from $C$
to $C_1$ inside $\alpha$.  If these segments are distinct, we have a
2--edge loop of type $\mathcal{E}$.

If not, we will show we have a contradiction to the fact that the
diagram is prime.  By assumption, $\sigma_1$ and $\sigma_2$ are
adjacent to the vertex corresponding to $\beta_W$ just outside
$\alpha$ on $C$, and they meet no additional state circles outside
$\alpha$.  If they are also adjacent to the same segment inside
$\alpha$, then we may form a loop in the diagram meeting $C$ twice,
meeting no other state circles, by following $\sigma_1$ on one side
and $\sigma_2$ on the other.  This will descend to a loop in the
diagram enclosing curves on both sides, meeting the diagram just
twice, contradicting the fact that the diagram is prime.
\end{proof}

\begin{proof}[Completion of the proof of Theorem \ref{thm:2edgeloop}]
As discussed in the begin\-ning of this section, Steps 1 through 3
imply the theorem in all cases where $\beta_W$, $\beta_V$ are not
separated by a non-prime arc.  Recall that by Step 3, either
$\sigma_2$ runs directly from $\beta_W$ across a non-prime arc
separating $V$ and $W$, or we have $\sigma_1$ and $\sigma_2$ adjacent
to a maximal right--down staircase of the vertex of $\beta_W$, as in
Figure \ref{fig:stair-enter-nonprime}. By Lemma
\ref{lemma:next-steps}, in the latter case $\sigma_2$ must also cross
a non-prime arc separating the $\beta_W$ and $\beta_V$.  Hence, in all
cases, there is a non-prime arc that separates $\beta_W$, $\beta_V$.
Now we pass to an outermost such non-prime arc, and apply Lemma
\ref{lemma:nonprime-inside} to obtain the conclusion.
\end{proof}

%% 7
\chapter{Diagrams without non-prime arcs}\label{sec:nononprime}
In this chapter, which is independent from the remaining chapters, we
will restrict ourselves to $A$--adequate diagrams $D(K)$ for which the
polyhedral decomposition includes no non-prime arcs or switches.  In
this case, one can simplify the statement of Theorem
\ref{thm:guts-general} and give an easier combinatorial estimate for
the guts of $M_A$.  This is done in Theorem \ref{thm:guts-nononprime},
whose proof takes up the bulk of the chapter.  

\begin{define}\label{def:ma}
In the $A$--adequate diagram $D(K)$, let $b_A$\index{$b_A$: number of bigons in $A$--regions} denote the
number of bigons in the $A$--regions of the diagram. (Recall Definition \ref{def:long-short} on page \pageref{def:long-short} for the notion of an $A$--region.) In other words,
in Figure \ref{fig:twist-resolutions}, $b_A$ is the number of bigons
in twist regions where the $A$--resolution is short.
Define \index{$m_A$}\index{$A$--region}\index{short resolution}
$$m_A = e_A - (e'_A + b_A).$$

Since each bigon of $b_A$ corresponds to a redundant edge of the graph
$G_A$, the quantity $m_A$ is always non-negative.
\end{define}

Note that the quantity $m_A$ counts the number of distinct segments of
$H_A$ that connect the same state circles, excepting those segments
that come from twist regions and bound simple rectangles in $H_A$. In other words, $m_A = 0$ precisely when every $2$--edge loop in $\GA$ has edges belonging to the same twist region, as in Corollary \ref{cor:onlybigons} on page \pageref{cor:onlybigons}.

The main result of this chapter extends the simple diagrammatic statement of Corollary \ref{cor:onlybigons} to a context where the corollary does not directly apply.

\begin{theorem} \label{thm:guts-nononprime}\index{guts!in terms of $\chi(\GRA)$}\index{$\GRA$, $\GRB$: reduced state graph!relation to guts}\index{$m_A$!role in estimating guts}
Let $D(K)$ be a prime, $A$--adequate diagram, and let $S_A$ be the
essential spanning surface determined by this diagram. Suppose
that the polyhedral decomposition of $M_A = S^3\cut S_A$ includes no
non-prime arcs; that is, no further cutting was required in
Section \ref{subsec:primeness}.  Then
$$ \negeul (\GRA) - 8 m_A \: \leq \: \negeul ( \guts(M_A)) \: \leq \: \negeul(\GRA),$$
where the lower bound is an equality if and only if $m_A = 0$.
\end{theorem}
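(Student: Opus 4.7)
The plan is to apply Theorem \ref{thm:guts-general}, which gives $\negeul(\guts(M_A)) = \negeul(\GRA) - \|E_c\|$, so that the two claimed inequalities translate to $0 \leq \|E_c\| \leq 8m_A$. The upper bound on $\negeul(\guts(M_A))$ is then immediate from $\|E_c\| \geq 0$, and the equality characterization reduces to showing that $\|E_c\| = 0$ when $m_A = 0$ and $\|E_c\| < 8 m_A$ when $m_A > 0$. The case $m_A = 0$ is exactly the hypothesis of Corollary \ref{cor:onlybigons}: $m_A = 0$ says that every redundant parallel segment of $H_A$ arises as a bigon in an $A$--region, i.e.\ every 2--edge loop of $\GA$ lies inside a single twist region. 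The corollary then yields $\|E_c\| = 0$, matching $\negeul(\GRA) - 8 \cdot 0$.

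For the lower bound in the general case I would invoke Theorem \ref{thm:2edgeloop}: every disk $D \in E_c$ in the upper polyhedron determines a 2--edge loop of $\GA$ and has one of the Types $\mathcal{A}$ through $\mathcal{G}$ in Figure \ref{fig:casesABCDEFG}. Since, by hypothesis, the polyhedral decomposition involves no non-prime arcs, Types $\mathcal{E}$, $\mathcal{F}$, $\mathcal{G}$ (which each use a non-prime arc or switch) are excluded, leaving only Types $\mathcal{A}$--$\mathcal{D}$. Moreover, the 2--edge loop associated to a complex disk cannot consist of two segments lying in the same $A$--region twist region: in that case the two segments cobound a white bigon face, and by Lemma \ref{lemma:remove-bigon} (or a direct isotopy across the bigon) any EPD through them is parabolically compressible to a simple disk, contradicting complexity. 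Hence each $D \in E_c$ is attached to a 2--edge loop whose two segments belong to distinct twist regions; the number of such loops is controlled by $m_A$, since each of these loops contributes a redundant segment not counted by $b_A$.

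Next I would bound how many disks of $E_c$ can use a fixed such 2--edge loop $\ell$. Each of the two segments of $\ell$ is adjacent to exactly two tentacle heads (top--right and bottom--left, in the sense of Notation \ref{notation:top-right}), giving four tentacles total along $\ell$; Corollary \ref{cor:epd-loop} shows that every EPD attached to $\ell$ must run through two of these tentacles. Applying Lemma \ref{lemma:ec-tentacle} to the two shaded faces of any particular Type $\mathcal{A}$--$\mathcal{D}$ configuration bounds by two the number of disks of $E_c$ meeting each of these tentacles, yielding at most $2 \cdot 4 = 8$ disks of $E_c$ per 2--edge loop. Summing over the at most $m_A$ relevant 2--edge loops gives $\|E_c\| \leq 8 m_A$, and hence $\negeul(\guts(M_A)) \geq \negeul(\GRA) - 8 m_A$. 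For the strict inequality when $m_A > 0$, I would observe that the color and orientation rules of Lemma \ref{lemma:EPDtoSquare} (EPD to oriented square) force any two EPDs through the same tentacle pair to share shaded faces of matching colors; a case--check against the four Type $\mathcal{A}$--$\mathcal{D}$ patterns shows that the maximum of $8$ across a single loop cannot be simultaneously realized, so the inequality is strict as soon as there is at least one non-twist-region 2--edge loop.

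I expect the main obstacle to be the bookkeeping in the last paragraph, specifically the passage from ``redundant segment not counted by $b_A$'' to ``distinct 2--edge loop in $\GA$''. Several redundant segments can share a state-circle pair and be arranged into a web of 2--edge loops, so care is needed to associate at most one loop per unit of $m_A$ without double counting, and to combine this with the per-loop bound. A clean way to organize this is to group segments by state-circle pair $(C_1,C_2)$, note that each such pair contributes at most one redundant segment to $m_A$ after subtracting bigons, and then invoke Lemma \ref{lemma:ec-tentacle} once per pair; the color constraints from Figure \ref{fig:casesABCDEFG} then enforce the strict inequality.
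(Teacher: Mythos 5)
Your reduction is the right one: by Theorem \ref{thm:guts-general} everything comes down to $0 \leq \|E_c\| \leq 8m_A$, the upper bound on guts is immediate, and the case $m_A = 0$ is exactly Corollary \ref{cor:onlybigons} (via Lemma \ref{lemma:remove-bigon}), all of which matches the paper. The gap is in your counting of $\|E_c\|$. You bound the number of disks of $E_c$ per 2--edge loop and then multiply by ``at most $m_A$ relevant 2--edge loops,'' but that loop count is false. Fix a pair of state circles $C,C'$ joined by segments lying in $m = m(C,C')$ distinct twist regions: this pair contributes $m-1$ to $m_A$ (your closing claim that each pair contributes ``at most one redundant segment to $m_A$ after subtracting bigons'' miscomputes $m_A$, since $e - 1 - b_A = m - 1$), whereas the number of 2--edge loops whose two segments lie in distinct twist regions between $C$ and $C'$ grows like $\binom{m}{2}$. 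So a per-loop bound times the honest loop count gives roughly $8\binom{m}{2}$, which exceeds $8(m-1)$ as soon as $m \geq 3$, and capping the loop count at $m_A$ has no justification. There is also a smaller misuse of Lemma \ref{lemma:ec-tentacle}: it bounds disks through a \emph{fixed} color pair per tentacle, so before multiplying by tentacles you must bound the number of color pairs a single loop can support; this is exactly what the paper's Lemmas \ref{lemma:nononprime-3not1}, \ref{lemma:nononprime-3not2}, and \ref{lemma:atmost4} establish (at most two color pairs, hence at most four disks per loop, not eight).

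What closes the gap in the paper is an ingredient your proposal never uses: the disks of $E_c$ are \emph{disjoint}. After reducing to bricks with segments in distinct twist regions (Proposition \ref{prop:2edgeloop-nononprime} and Lemma \ref{lemma:atmost4-twist}, the latter via bigon removal), one forms, for each pair $(C,C')$, the planar surface $S(C,C')$ — a sphere with $m(C,C')$ holes built from the two disks and the $m$ twist bands. The boundaries of the disks of $E_c$ running between $C$ and $C'$ are disjoint simple closed curves in $S(C,C')$; at most four of them can be isotopic there (Lemma \ref{lemma:atmost4-twist}); and a sphere with $m$ holes carries at most $2m-3$ isotopy classes of disjoint essential simple closed curves. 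Hence at most $4(2m-3) < 8(m-1)$ disks per pair, and summing over pairs gives $\|E_c\| \leq 8 m_A$, with strictness whenever some pair has $m \geq 2$, i.e.\ whenever $m_A > 0$. In particular the strict inequality falls out of the arithmetic $4(2m-3) < 8(m-1)$; your proposed color/orientation case-check for strictness is both unsubstantiated and unnecessary once the counting is done this way.
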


To derive Theorem \ref{thm:guts-nononprime} from Theorem
\ref{thm:guts-general} on page \pageref{thm:guts-general}, it suffices
to bound the number $|| E_c ||$ of complex disks required to span the 
$I$--bundle of the upper polyhedron. (See Definition \ref{def:ec} on page \pageref{def:ec}.)
%	
%	, where $E_c$ is the set of
%	complex EPDs in the upper polyhedron, chosen in Lemma
%	\ref{lemma:spanning-upper} on page \pageref{lemma:spanning-upper}.
%
Note that by Theorem \ref{thm:2edgeloop} on page
\pageref{thm:2edgeloop}, each disk $D \in E_c$ must run along a
$2$--edge loop of $\GA$. If this loop corresponds to a single twist
region, as in Corollary \ref{cor:onlybigons} on page
\pageref{cor:onlybigons}, then a disk corresponding to this loop
cannot be complex. In the following argument, we will bound $|| E_c
||$ in terms of $m_A$, where $m_A$ accounts for the loops that
\emph{do not} correspond to twist regions.

Before diving into the proof of Theorem \ref{thm:guts-nononprime}, we give a sample application. 

\begin{example}
Lickorish and Thistlethwaite introduced the notion of a \emph{strongly alternating tangle} \index{strongly alternating tangle} \cite{lick-thistle}. This is an alternating tangle $T$, such
that both its numerator and denominator closures are alternating,
prime, reduced diagrams. 
(See  Definition
\ref{def:num-denom} on page \pageref{def:num-denom} for the notions of numerator, denominator, and tangle sum.)\index{numerator closure}\index{denominator closure}
 A \emph{semi-alternating diagram}\index{semi-alternating diagram} $D$ is the numerator closure of the tangle sum $T_1 + T_2$, where each $T_i$ is strongly alternating but their sum $T_1 + T_2$ is non-alternating.
Lickorish and Thistlethwaite observed that these diagrams are both $A$-- and $B$--adequate.

If $D$ is a twist-reduced, strongly alternating diagram, there is exactly one state circle $C$ of $s_A(D)$ that runs through both tangles $T_1$ and $T_2$. In the all--$A$ resolution of $T_1$ (resp. $T_2$), this state circle appears as a pair of arcs along the north and south (resp. east and west) of the tangle. Then,
a $2$--edge loop in $\GA(D)$ can take one of two forms. The two edges of this loop either belong to a single twist region (in which case they do not contribute to $m_A$), or else they form a \emph{bridge}\index{bridge} of two edges that spans the tangle north to south, or east to west. (See Figure \ref{fig:neg-tangle} on page
\pageref{fig:neg-tangle} for an example.) The quantity $m_A$ is then exactly equal to the number of bridges in the tangles. Thus, applied to a semi-alternating diagram, Theorem \ref{thm:guts-nononprime} has the simpler formulation
$$\negeul (\GRA) - 8 \, (\mbox{number of bridges in $\GA$}) \: \leq \: \negeul ( \guts(M_A)) \: \leq \: \negeul(\GRA).$$
%	
%	Recall that a \emph{strongly alternating tangle} \index{strongly alternating tangle} is an alternating tangle $T$ such
%	that both its numerator and denominator closures (Definition
%	\ref{def:num-denom}, page \pageref{def:num-denom}) are alternating,
%	prime, reduced diagrams.  In the terminology of \cite{fkp:conway}, a
%	\emph{bridge} in the all $A$--resolution of $T$ is a state circle
%	together with two edges running West--East or North--South.  The
%	situation is illustrated in Figure \ref{fig:tangle-resolutions} on
%	page \pageref{fig:tangle-resolutions}: in the left panel we have two
%	West--East bridges while in the right panel we have one North--South
%	bridge.
%	
%	Let $T_1$ be a strongly alternating tangle whose $A$--resolution has
%	$b_1$ North--South bridges and let $T_2$ be a strongly alternating
%	tangle whose $A$--resolution has no West--East bridges. Consider the
%	$A$--adequate diagram $D$ obtained as the numerator closure of the sum
%	$T_1+T_2$.  For ease of exposition suppose that $D$ represents a
%	knot. Then the 2--edge loops of $\GA$ with edges not belonging to the
%	same twist region of $D$ are in one--to--one correspondence with the
%	bridges of $T_1$.  Compare Figure \ref{fig:neg-tangle} on page
%	\pageref{fig:neg-tangle}.  In this case we have
%	$m_A=e_A- e_A - (e'_A + b_A)=(2b_1+b_A)-(b_1+b_A)=b_1$,
%	and Theorem \ref{thm:guts-nononprime} gives
%	$$\negeul ( \guts(M_A)) \geq \negeul (\GRA) - 8 b_1.$$
\end{example}

\section{Mapping EPDs to $2$--edge loops}

Recall that Theorem \ref{thm:2edgeloop} shows that every EPD in the
upper polyhedron determines a normal square of one of seven types
($\mathcal{A}$ through $\mathcal{G}$), as shown in Figure
\ref{fig:casesABCDEFG}.  Under the hypothesis that the polyhedral
decomposition includes no non-prime arcs or switches, we will simplify
these seven cases to three (see Figure \ref{fig:nononprime-types}).

\begin{define}
A \emph{brick}\index{brick} is a pair of segments $s, s'$ of the graph
$H_A$ that connect the same state circles $C_0$ and $C_1$.

Note that a closed curve in the projection plane consisting of
segments $s$ and $s'$, as well as parallel arcs of $C_0$ and $C_1$
from $s$ to $s'$, is topologically a rectangle. This is the origin of
the term \emph{brick}. We will always depict bricks with state circles
horizontal and segments of $H_A$ vertical, as in Figure
\ref{fig:nononprime-types}.

Note as well that the segments $s$ and $s'$ split the annular region
between $C_0$ and $C_1$ into two rectangular components. We will say
that tentacles adjacent to $s$ and $s'$ are on the \emph{same
  side}\index{same side (of a brick)} of
the brick if these tentacles lie in the same rectangular component,
and that these tentacles are on \emph{opposite sides}
\index{opposite sides (of a brick)} of the brick if they belong to
different components.
\end{define}

\begin{define}
Any EPD meets exactly two distinct shaded faces.  We assign each
shaded face a unique color.
A \emph{color pair}\index{color pair (of an EPD)} of an EPD is a
choice of two distinct shaded faces met by a single EPD.
\end{define}

By Lemma \ref{lemma:spanning-upper} on page
\pageref{lemma:spanning-upper}, any tentacle caries at most two EPDs
with the same color pair.

\begin{prop}\label{prop:2edgeloop-nononprime}\index{EPD!determines two-edge loop}
Let $D(K)$ be a prime, $A$--adequate diagram of a link in $S^3$ with
prime polyhedral decomposition of $S^3\cut S_A$ such that the
polyhedral decomposition contains no non-prime arcs.  Let $E$ be an
EPD embedded in the upper polyhedron, with associated normal square of
Lemma \ref{lemma:EPDtoSquare} (EPD to oriented square), and denote the
color pair of $E$ by orange--green\footnote{Note: For grayscale
  versions of this monograph, orange faces in the figures will appear
  light gray, and green ones will appear darker gray.}, with the
orientation convention of Lemma \ref{lemma:EPDtoSquare} (EPD to
oriented square).  Then we may associate $E$ with a brick in $H_A$ of
one of the following forms.
\begin{enumerate}
\item\label{type:nononprime-1} The normal square of $E$ runs through
  distinctly colored tentacles adjacent to the segments of the brick
  (hence one orange, one green tentacle), with these tentacles lying
  on the same side of the brick.
\item\label{type:nononprime-2} The normal square of $E$ runs through
  two orange colored tentacles adjacent to the segments of the brick,
  necessarily on opposite sides of the brick, and in addition, a green
  tentacle is adjacent to one of the two segments.
\item\label{type:nononprime-3} The normal square of $E$ runs through
  two orange colored tentacles adjacent to the segments of the brick,
  necessarily on opposite sides of the brick.  Moreover, the arcs of
  $\bdy E$ in these orange tentacles run to the tail of the orange
  tentacles on the state circle $C_0$ of the brick, there meet a
  vertex, and then run into green tentacles and across the state
  circle $C_0$.
\end{enumerate}
\end{prop}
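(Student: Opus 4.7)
The plan is to deduce Proposition \ref{prop:2edgeloop-nononprime} directly from Theorem \ref{thm:2edgeloop} by invoking the hypothesis of no non-prime arcs to eliminate three of the seven cases, and then regrouping the four remaining cases into the three types claimed.

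First, I would apply Lemma \ref{lemma:EPDtoSquare} to replace $E$ by a normal square $S$ with the orange-green-white triangle convention at each vertex, and then invoke Theorem \ref{thm:2edgeloop} to produce a two-edge loop in $\GA$, whose two segments $s,s'$ connect a common pair of state circles $C_0, C_1$. These two segments, together with the arcs of $C_0$ and $C_1$ joining them, form the brick; the two complementary rectangular regions are its two sides. Theorem \ref{thm:2edgeloop} also classifies $\bdy S$ as one of Types $\mathcal{A}$ through $\mathcal{G}$ in Figure \ref{fig:casesABCDEFG}. By inspection of that figure, Types $\mathcal{E}$, $\mathcal{F}$, $\mathcal{G}$ explicitly require $\bdy S$ to cross a non-prime arc separating the brick segments from one of the zig-zag vertices, and such an arc does not exist in the decomposition by hypothesis. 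This reduces the analysis to Types $\mathcal{A}, \mathcal{B}, \mathcal{C}, \mathcal{D}$.

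Next, I would group these four remaining cases as follows. In Types $\mathcal{A}$ and $\mathcal{C}$, Figure \ref{fig:casesABCDEFG} shows $\bdy S$ running through one orange tentacle adjacent to $s$ and one green tentacle adjacent to $s'$. In both cases the white arcs $\beta_V, \beta_W$ live in a single rectangular component of the brick complement (the one containing the vertex in $\mathcal{A}$, or the one opposite the vertex in $\mathcal{C}$), and since the two tentacles traversed by $\bdy S$ are connected across white faces by those arcs, they must lie on the same side of the brick. This is precisely Type \eqref{type:nononprime-1}. In Type $\mathcal{B}$, both tentacles of $S$ at the segments are orange, so by the color convention they must lie on opposite sides of the brick; the zig-zag vertex provides a green tentacle adjacent to one of the two segments on the opposite side, giving Type \eqref{type:nononprime-2}. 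In Type $\mathcal{D}$, both tentacles at the segments share a color and, by the vertex position, lie on opposite sides of the brick; the orientation convention forces $\bdy S$ to run downstream to the tail of these tentacles on the common circle $C_0$, turn at the vertex, and continue into tentacles of the other color crossing $C_0$. Relabeling the color pair so that the segment-tentacles are orange produces Type \eqref{type:nononprime-3}.

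The main obstacle, and the part requiring the most care, is the side-of-brick bookkeeping: one must verify in each figure of \ref{fig:casesABCDEFG} that the two tentacles carrying $\bdy S$ lie on the side prescribed by the proposition, and that Types $\mathcal{A}$ and $\mathcal{C}$ really do collapse into a single type (rather than Type $\mathcal{C}$ sneaking into \eqref{type:nononprime-2} or \eqref{type:nononprime-3}). This reduces to checking that the oriented triangle convention of Lemma \ref{lemma:EPDtoSquare} together with the rigidity of tentacle head/tail directions at a segment pin down the colors at both brick segments uniquely. A minor subtlety is that in Type $\mathcal{A}$ the zig-zag vertex may be degenerate (no intermediate segments), but this does not affect the side analysis since the white-face arc $\beta$ still determines a unique side. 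Once these picture-level checks are in place, the proposition follows from Theorem \ref{thm:2edgeloop} with no further arguments.
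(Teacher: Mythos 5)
Your reduction to Types $\mathcal{A}$--$\mathcal{D}$ (discarding $\mathcal{E},\mathcal{F},\mathcal{G}$ because they require non-prime arcs) and your treatment of Types $\mathcal{A}$ and $\mathcal{C}$ as giving type \eqref{type:nononprime-1} match the paper. But the handling of Types $\mathcal{B}$ and $\mathcal{D}$ has a genuine gap: you treat the correspondence as a picture-level relabeling of the loop already supplied by Theorem \ref{thm:2edgeloop}, whereas in these two cases the brick of the Proposition is in general \emph{not} that loop, and extra tentacle chasing is needed to locate it. For Type $\mathcal{B}$, the claim that ``the zig-zag vertex provides a green tentacle adjacent to one of the two segments'' is not automatic. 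One must condition on whether the green arc leaves the top-left vertex downstream or upstream: in the downstream case the green adjacency follows only after invoking Lemma \ref{lemma:downstream} and the orientation convention (the green arc cannot terminate at the tail of a green tentacle, and cannot cross $C_0$ to the left of the loop without cutting off the orange arc); in the upstream case (zig-zag empty) the original brick may carry no green tentacle at all, and the paper instead produces a \emph{different} brick, between $C_0$ and the next circle $C'$, which turns out to be of type \eqref{type:nononprime-1} or \eqref{type:nononprime-3}.

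For Type $\mathcal{D}$ the problem is worse. With the orientation convention of Lemma \ref{lemma:EPDtoSquare} the two tentacles adjacent to the loop are \emph{green}, and you cannot ``relabel the color pair'': orange and green play asymmetric roles fixed by the counterclockwise orange--green--white ordering at each vertex, and type \eqref{type:nononprime-3} specifically requires the segment-adjacent tentacles to be orange with the arcs meeting a vertex at their tails on $C_0$ --- which is precisely what has to be proved, not assumed. The paper's argument here is a three-step chase: first $\sigma_2$ is forced to run upstream along some segment $s_1$ to a circle $C_1$ (downstream or immediate termination would create a vertex of the wrong orientation or reuse an already-occupied tail); then either $\sigma_1$ terminates, in which case Lemma \ref{lemma:adjacent-loop} supplies a second segment $s_2$ and the \emph{new} brick $\{s_1,s_2\}$ is of type \eqref{type:nononprime-3}, or $\sigma_1$ runs downstream along $s_2$, giving type \eqref{type:nononprime-1} when $s_2\neq s_1$; and when $s_2=s_1$ one must iterate, using primeness and finiteness of $H_A$, eventually landing in type \eqref{type:nononprime-1} or \eqref{type:nononprime-2}. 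None of this is captured by your inspection-and-relabel step, so as written the proof of the Proposition is incomplete for Types $\mathcal{B}$ and $\mathcal{D}$.
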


The three possibilities are illustrated in Figure
\ref{fig:nononprime-types}.

\begin{figure}
  \begin{tabular}{ccc}
  \includegraphics{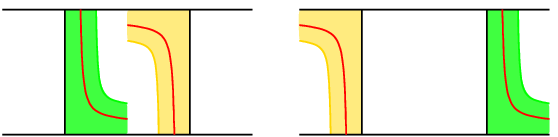}
  & \hspace{.2in} & \includegraphics{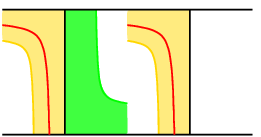} \\
  Type \eqref{type:nononprime-1}, inside or outside & &
  Type \eqref{type:nononprime-2}\\

  \vspace{.1in} & & \\
  
  \input{figures/nononprime-type3.pstex_t} & & \\
  Type \eqref{type:nononprime-3} & &
  \end{tabular}
  \caption{When there are no nonprime arcs, each EPD can be associated
  with a 2--edge loop of
  type \eqref{type:nononprime-1}, \eqref{type:nononprime-2},
  or \eqref{type:nononprime-3}, illustrated.}
\label{fig:nononprime-types}
\end{figure}

\begin{proof}
This follows from an analysis of the normal squares of types
$\mathcal{A}$ through $\mathcal{G}$ in the conclusion of Theorem
\ref{thm:2edgeloop}.  Notice that the normal squares in types
$\mathcal{E}$, $\mathcal{F}$, and $\mathcal{G}$ include non-prime arcs
as essential portions of the diagram, so none of these can occur in
the setting at hand.

Consider first the normal squares of types $\mathcal{A}$ and
$\mathcal{C}$, illustrated in Figure \ref{fig:casesABCDEFG} on page
\pageref{fig:casesABCDEFG}.  Note that the boundary of the EPD in
these cases runs in tentacles of distinct colors adjacent to the
2--edge loop.  Moreover, note that when we close off the 2--edge loop
to form a brick, these two distinguished tentacles are on the same
side of the brick.  Hence we have type \eqref{type:nononprime-1} in
these cases.

\smallskip

Next consider type $\mathcal{B}$.  The zig-zag at the top left of the
figure showing type $\mathcal{B}$ in Figure \ref{fig:casesABCDEFG} is
schematic, to represent the fact that there may be 0 or more segments
in that zig-zag.  If there are 0 segments in the zig-zag, the arc of
the EPD in the green face may run either upstream or downstream from
the top left.  To prove this proposition, when we have a 2--edge loop
of type $\mathcal{B}$, we need to condition on whether the arc of the
EPD in the green, top left, runs upstream or downstream from this
point.

Suppose first that it runs downstream.  Then by Lemma
\ref{lemma:downstream} (Downstream), it must run downstream until it
terminates.  Notice that by our orientation convention (Lemma
\ref{lemma:EPDtoSquare}, EPD to oriented square), the arc cannot
terminate in the tail of a green tentacle.  Hence the arc $\sigma_1$
in the green must cross the top state circle $C_0$ in the brick of the
2--edge loop before it terminates.  However, notice $\sigma_1$ cannot
cross $C_0$ to the left of the left--most segment of our 2--edge loop,
else it would force the orange segment to terminate, cutting off the
arc in the orange.  Thus the green terminates to the right of that
left--most segment.  This means that the tentacle adjacent to the
right of that left--most segment must be green, and our brick is of
type \eqref{type:nononprime-2} in the statement of the proposition.

Next suppose that we have a 2--edge loop of type $\mathcal{B}$, but
our arc in the top left in the green runs upstream rather than down,
adjacent to a segment $s_1$.  Then we have zero segments in the
zig-zag vertex at the top left of type $\mathcal{B}$, and the state
circle at the top of the brick, call it $C_0$, is connected by $s_1$
to some other state circle $C'$.  Now, consider the arc $\sigma_2$ of
the EPD in the orange tentacle on the right, oriented so that it is
running downstream toward $C_0$.  Either $\sigma_2$ must run
downstream across $C_0$, adjacent to a segment $s_2$ connecting $C_0$
and $C'$, or the arc $\sigma_1$ in the green tentacle must eventually
run downstream to meet $C_0$.  In the first case, we obtain a brick
between $C_0$ and $C'$, with the EPD running over distinctly colored
tentacles on the same side of the brick, and we have a brick of type
\eqref{type:nononprime-1} in the statement of the proposition.  In the
second case, the arc $\sigma_2$ terminates at a vertex on $C_0$, and
we have a brick of type \eqref{type:nononprime-3}.  This finishes the
proof in the case that our 2--edge loop coming from Theorem
\ref{thm:2edgeloop} is of type $\mathcal{B}$.

\smallskip

It remains to show that the proposition holds when our 2--edge loop
is of type $\mathcal{D}$, shown in Figure \ref{fig:casesABCDEFG} on
page \pageref{fig:casesABCDEFG}.  The argument in this case requires
three steps, illustrated in Figure \ref{fig:new-d}.

\begin{figure}
\input{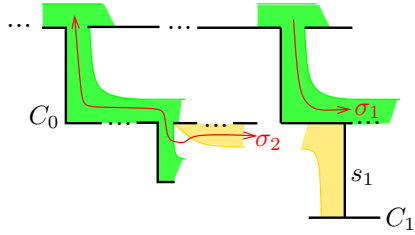}
\caption{Tentacles of an EPD of type $\mathcal{D}$, in the absence of
  non-prime arcs.}
\label{fig:new-d}
\end{figure}

\smallskip

\noindent \underline{Step 1:} Consider the arc $\sigma_2$ that lies in
an orange tentacle at the bottom of Figure \ref{fig:new-d}. We claim
that $\sigma_2$ runs upstream.  To prove this claim, we need to show
that $\sigma_2$ cannot run downstream, or terminate.

Suppose $\sigma_2$ runs downstream, across the state circle $C_0$ in
the bottom of Figure \ref{fig:new-d}. Then, observe that the arc
$\sigma_1$ in the green tentacle on the right of the figure is running
downstream, on the opposite sides of $C_0$.  Since $\sigma_1$ can only
continue downstream until it terminates (by Lemma
\ref{lemma:downstream} (Downstream)), it must terminate immediately
and connect to $\sigma_2$ at an ideal vertex.  But such a vertex would
be oriented green--orange--white (counter--clockwise), contradicting
the orientation convention of Lemma \ref{lemma:EPDtoSquare} (EPD to
oriented square).

Next, suppose that $\sigma_2$ terminates immediately, rather than
running upstream.  In that case, the arc $\sigma_1$ must run
downstream across $C_0$ and immediately connect to meet the tail of
$\sigma_2$ at a vertex.  But this is impossible: the orange tentacle
has only one tail, and this tail already forms a portion of the other
vertex of the EPD, as illustrated in the figure.  This proves the
claim: $\sigma_2$ must run upstream, adjacent to some segment $s_1$
connecting $C_0$ to a state circle $C_1$.

\smallskip

\noindent \underline{Step 2:} Now, consider the arc $\sigma_1$ lying
in the green tentacle on the right of Figure \ref{fig:new-d}.  This
arc must run downstream across $C_0$ by orientation reasons, but it
might either terminate immediately on the opposite side of $C_0$, or
continue adjacent to a segment running to $C_1$.

If $\sigma_1$ terminates, then it meets an orange arc just on the
opposite side of $C_1$.  Lemma \ref{lemma:adjacent-loop} (Adjacent
loop) implies that $\sigma_2$ runs adjacent to some segment $s_2$
connecting $C_0$ to $C_1$, and $s_1$ and $s_2$ form a 2--edge loop of
type \eqref{type:nononprime-3} of the proposition.

Next, suppose that the arc $\sigma_1$ runs downstream.  Then Lemma
\ref{lemma:utility} (Utility) implies that it runs adjacent to a
segment $s_2$ connecting $C_0$ to $C_1$.  If $s_1$ and $s_2$ are
distinct segments, then they form a 2--edge loop of type
\eqref{type:nononprime-1} as in the statement of the proposition, with
arcs of the normal square running on the same side of the brick, in
tentacles of distinct color. This proves the proposition in the case
where $s_1$ and $s_2$ are distinct segments.

\smallskip

\noindent \underline{Step 3:} Suppose that $s_1$ and $s_2$ are the
same segment. Then we will repeat the argument as above and eventually
end up with a brick of type \eqref{type:nononprime-1} or
\eqref{type:nononprime-2}, using induction and finiteness of the graph
$H_A$, as well as primeness.

First, we claim the arc in the orange must again run upstream, for if
it runs downstream we pick up a vertex of the wrong orientation, and
if it terminates, then we get a contradiction to primeness: the arc in
the orange connects across the bottom state circle to the arc in the
green, and they form a loop meeting that state circle just once more,
which gives a loop meeting the diagram twice with crossings on each
side.  Hence the orange arc runs upstream, say adjacent to a segment
$s$.

The green arc either runs downstream, or terminates.  If it
terminates, it meets the tail of another orange tentacle, and 
Lemma \ref{lemma:adjacent-loop} (Adjacent loop) implies that the
orange runs down this tentacle, adjacent to some other segment $s'$.
The segments $s$ and $s'$ form a 2--edge loop.  Notice in this case
that the green must terminate to the right of the segment $s$, else
would cut off the orange tentacle.  Thus the segment $s$ must have a
green tentacle adjacent to it on its right, and we have type
\eqref{type:nononprime-2}.  

Suppose the green arc runs downstream rather than terminating.  Then
it does so by running adjacent to a segment $s'$.  Again $s$ and $s'$
form the desired brick of the proposition, of type
\eqref{type:nononprime-1} if they are distinct.  If not, repeat
verbatim the argument above, starting from the beginning of Step 3.
By induction, we eventually get the brick given by the proposition.
\end{proof}

\section{A four--to--one mapping}

For each essential product disk $E$ in the upper polyhedron,
Proposition \ref{prop:2edgeloop-nononprime} gives a mapping from $E$
to some brick of $H_A$, with the $E$ running through tentacles as in
type \eqref{type:nononprime-1}, \eqref{type:nononprime-2}, or
\eqref{type:nononprime-3}.  Thus, when the EPDs are selected from the
spanning set $E_c$ of Lemma \ref{lemma:spanning-upper} on page
\pageref{lemma:spanning-upper}, Proposition
\ref{prop:2edgeloop-nononprime} gives a function
\begin{equation*}
f: E_c \to \{ \mbox{bricks of type \eqref{type:nononprime-1},
\eqref{type:nononprime-2}, or \eqref{type:nononprime-3}} \}.
\end{equation*}
The goal of this section is to show that the function $f$ is at most
four--to--one.

\begin{define}\label{def:brick-support}
We say that a brick (a pair of segments between the same state circles
of $H_A$) \emph{supports}\index{support (of brick)} an essential
product disk $E \in E_c$ if the function $f$ above maps $E$ to the
given brick.
 \end{define}

\begin{lemma}\label{lemma:nononprime-3not1}
A single brick of $H_A$ cannot support both an EPD of type
\eqref{type:nononprime-1} in one color pair and an EPD of type
\eqref{type:nononprime-3} in a different color pair.
\end{lemma}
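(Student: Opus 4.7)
The plan is to suppose for contradiction that a brick $B$ with segments $s_1, s_2$ between state circles $C_0$ and $C_1$ supports both an EPD $E_1$ of type \eqref{type:nononprime-1} with color pair $\{F, F'\}$ and an EPD $E_3$ of type \eqref{type:nononprime-3} with color pair $\{G, G'\}$ (with $G$ the orange face common to both tentacles of $E_3$ and $G'$ the green face), and then deduce that $\{F,F'\} = \{G,G'\}$. I would first record the four tentacles of $H_A$ adjacent to $B$: one on each side of each segment, giving two tentacles inside $B$ (at $s_1$ with head on $C_0$, and at $s_2$ with head on $C_1$) and two outside. Among these four, exactly two have tails on $C_0$: the inside tentacle at $s_2$ and the outside tentacle at $s_1$. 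By Proposition \ref{prop:2edgeloop-nononprime}\eqref{type:nononprime-3} applied to $E_3$, these are the two orange tentacles, so both lie in the shaded face $G$.

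Next I would apply Proposition \ref{prop:2edgeloop-nononprime}\eqref{type:nononprime-1} to $E_1$, whose two tentacles lie on a common side of $B$; after a symmetric reflection, I may assume this side is the inside. Then $E_1$ uses the inside tentacle at $s_1$ together with the inside tentacle at $s_2$. Since the color of a tentacle is an intrinsic property of the shaded face to which it belongs, the inside tentacle at $s_2$ carries color $G$, forcing $G \in \{F,F'\}$. After relabeling, $F = G$, and $F'$ equals the color of the inside tentacle at the top of $s_1$. The remaining task is to show $F' = G'$, which will contradict the assumption that $\{F,F'\} \neq \{G,G'\}$.

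The core of the argument is to identify the ideal vertex at which the arc $\bdy E_3$ leaves the inside orange tentacle and enters a green tentacle in $G'$. Following the tail of the inside orange tentacle along $C_0$, it terminates at the first segment attached to $C_0$ from below inside $B$; the main obstacle is ruling out an intermediate such segment $s^*$ lying between $s_1$ and $s_2$. I plan to use Lemma \ref{lemma:escher} (Escher stairs), primeness of $D(K)$, and the standing hypothesis that the polyhedral decomposition contains no non-prime arcs, combined with simple connectivity of shaded faces from Theorem \ref{thm:simply-connected}, to show that the presence of any such $s^*$ would prevent the green face $G'$ of $E_3$ from simultaneously containing the green tentacles required at both the inside and outside vertices of type \eqref{type:nononprime-3}. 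Once the orange tail of $E_3$ is forced to terminate at the top corner of $s_1$ inside $B$, this vertex coincides with the head of the inside tentacle at $s_1$, and the green tentacle that $\bdy E_3$ enters across the adjacent link strand is exactly that inside tentacle, whose color is $F'$. Hence $G' = F'$, completing the contradiction. The outside case for $E_1$ is handled symmetrically, using the tail of the outside orange tentacle of $E_3$ at the top corner of $s_2$ outside $B$.
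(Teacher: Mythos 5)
There is a genuine gap at the pivotal step of your argument. Your plan reduces everything to showing that the tails of the two orange tentacles of $E_3$ terminate at the brick's own segments (so that the vertex of $E_3$ sits at the head of the inside tentacle at $s_1$, forcing $G'$ to be the face of that tentacle), and you propose to rule out an intermediate segment $s^*$ attached to $C_0$ between $s_1$ and $s_2$ by claiming its presence ``would prevent the green face $G'$ from simultaneously containing the green tentacles required at both vertices of type \eqref{type:nononprime-3}.'' That claim is unjustified and, as far as I can see, false. Whether the two green entries of $E_3$ lie in a common shaded face is governed by the structure on the \emph{far} side of $C_0$, not by segments attached to $C_0$ on the brick side: for instance, if no segment meets $C_0$ from the far side, then a single shaded face (an innermost disk, or one long tentacle) runs along all of that side of $C_0$, and both green entries of a type \eqref{type:nononprime-3} disk automatically lie in it no matter how many segments $s^*$ hang off $C_0$ inside or outside the brick. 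So the tail of the inside orange tentacle may well stop at $s^*$ rather than at $s_1$, in which case your identification $F'=G'$ (face of the inside tentacle at $s_1$ equals the green face of $E_3$) has no local justification, and the contradiction never arrives. The entire difficulty of the lemma is concentrated exactly in this configuration, and your appeal to Escher stairs, primeness, absence of non-prime arcs, and simple connectivity is a placeholder rather than an argument. (A smaller issue: the ``symmetric reflection'' reducing to the inside case is not available, since reflection reverses the right--down handedness of the $A$--resolution and $E_3$ is anchored to $C_0$; and in the outside case the tail of the outside orange tentacle need not be at $s_2$ for the same reason as above.)

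For comparison, the paper's proof does not attempt to identify the faces of $E_1$ locally at the brick. Instead it observes that, because $E_1$ avoids the green face, each of its two shaded arcs must cross $C_0$ just to the right of the point where the corresponding arc of $E_3$ meets $C_0$, so the intersection points of $\bdy E_1$ and $\bdy E_3$ with $C_0$ interleave. The green arc of $E_3$ crosses $C_0$ exactly twice (Lemma \ref{lemma:utility}), hence separates the two tentacles of $E_1$ adjacent to $s$ and $s'$; then Lemma \ref{lemma:downstream} forces one arc of $E_1$ to run downstream to a white face $W$ on one side of the green face, while Lemmas \ref{lemma:parallel-stairs} and \ref{lemma:utility} pin down the last crossing of $C_0$ by the other arc of $E_1$ and show it cannot reach $W$ without crossing the green face, a contradiction. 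Some argument of this global, separation-type flavor (or an intersection argument via Lemma \ref{lemma:marcs}) is what your proposal is missing.
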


\begin{proof}
Let $s$, $s'$ be the segments of the brick. Let $E_3$ be the EPD of
type \eqref{type:nononprime-3}.  For ease of exposition, we will
assume that the shaded faces of $E_3$ are colored green and orange,
with the orientation given by Lemma \ref{lemma:EPDtoSquare} (EPD to
oriented square). Hence, the tentacles of $E_3$ look identical to
those in Figure \ref{fig:nononprime-types}, type \eqref{type:nononprime-3}.

Let $E_1$ be the EPD of type \eqref{type:nononprime-1}, also supported
by the brick of $s$ and $s'$. Note that since each of $s$ and $s'$ is
adjacent to an orange tentacle, one of the shaded faces through which
$E_1$ runs must be orange. Say that the color pair of $E_1$ is
orange--blue, with the blue tentacle adjacent to segment $s'$.

Now, the orange tentacles adjacent to $s, s'$ terminate with their
tails on the heads of green tentacles on the state circle $C_0$.
Since $E_1$ does not meet a green face, the arc $\sigma_1$ of $E_1$
running through the orange tentacle adjacent to $s$ must run
downstream across $C_0$ at a segment attached to $C_0$, to the right
of the point where the orange tentacle terminates in a tail.  Since
the EPD $E_3$ runs through the tail of this tentacle, the arc of $E_1$
running through the same tentacle as the arc of $E_3$ must cross $C_0$
to the right of the point where that arc of $E_3$ crosses it.

On the other hand, the blue tentacle that $E_1$ runs through, adjacent
to a segment $s'$, must be on the right of $s'$.  Hence the arc
$\tau_1$ of $E_1$ running through the blue tentacle crosses $C_0$ to
the right of the arc of $E_3$ running adjacent to that same segment
$s'$. See Figure \ref{fig:3not1}.

\begin{figure}
\input{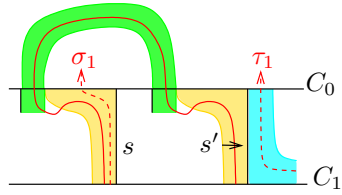}
\caption{When a single brick supports both a disk $E_1$ of type
  \eqref{type:nononprime-1} and a disk $E_3$ of type
  \eqref{type:nononprime-3}, their intersections with a state circle
  $C_0$ must interleave. This will imply that $E_1$ must cut across
  the green (darker shaded) face containing $E_3$, which is a
  contradiction.}
\label{fig:3not1}
\end{figure}

We conclude that $E_1$ and $E_3$ intersect state circle $C_0$ at
interleaving points. We will see that this interleaving implies that
the orange--blue disk $E_1$ must intersect the green shaded face,
which will give a contradiction.

Let $\rho$ be the arc of $E_3$ in the green face. We know, from Figure
\ref{fig:nononprime-types} \eqref{type:nononprime-3}, that $\rho$
crosses $C_0$ at two places, adjacent to segments $s$ and $s'$. If we
orient $\rho$ from $s$ to $s'$, then it crosses $C_0$ first going
upstream, then going downstream. By the Utility Lemma
\ref{lemma:utility}, these are the only intersections of $\rho$ with
$C_0$. Thus the green face separates the two tentacles of $E_1$
adjacent to segments $s$ and $s'$.
 
Now, consider the intersections between state circle $C_0$ and arcs
$\sigma_1, \tau_1$ of $E_1$.  As we have seen, the arc $\sigma_1$ of
$E_1$ crosses $C_0$ running downstream.  By Lemma
\ref{lemma:downstream} (Downstream), $\sigma_1$ keeps running
downstream until it meets a white face $W$. Now, orient $\tau_1$
toward face $W$, and consider the last time that it crosses $C_0$
before reaching face $W$. By Lemma \ref{lemma:parallel-stairs} 
(Parallel stairs), $\tau_1$ must cross $C_0$ running upstream.  But
we already know a place where $\tau_1$ crosses $C_0$, namely in the
tentacle to the right of segment $s'$. By the Utility Lemma
\ref{lemma:utility}, $\tau_1$ cannot cross $C_0$ twice running
upstream. Thus $\tau_1$ must cross $C_0$ to the right of $s'$ and
continue to white face $W$. This is a contradiction, since $W$ lies on
the other side of the green face from $s'$.
\end{proof}

\begin{lemma}\label{lemma:nononprime-3not2}
Suppose that a brick formed by segments $s$ and $s'$ supports an EPD
$E_2$ of type \eqref{type:nononprime-2}, as well as an EPD $E_3$ of
type \eqref{type:nononprime-3} in a different color pair than that of
$E_2$.  Then $E_2$ and $E_3$ must run through all four tentacles
adjacent to the brick.  In particular, this will happen only if the
two segments are adjacent to tentacles of the same two colors.
\end{lemma}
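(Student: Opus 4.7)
The plan is to track the colors of the four tentacles adjacent to the brick formed by $s$ and $s'$. By Proposition~\ref{prop:2edgeloop-nononprime}, I will label the color pair of $E_3$ as $\{A,B\}$, where the boundary of the normal square runs through two tentacles of color $A$ on opposite sides of the brick (one adjacent to each segment) and $B$ is the color of the face reached by crossing $C_0$. Similarly, I label the color pair of $E_2$ as $\{A',B'\}$, where two tentacles of color $A'$ sit on opposite sides of the brick and a tentacle of color $B'$ is adjacent to one of the segments. A simple count then shows that $E_3$ visits exactly the two $A$-tentacles adjacent to the brick, while $E_2$ visits three tentacles: two of color $A'$ and one of color $B'$. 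Together they visit at most four tentacles adjacent to the brick, and they visit exactly four precisely when $A \neq A'$.

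The crux of the argument is therefore to show $A \neq A'$. I would suppose for contradiction that $A = A'$. Then $E_2$ and $E_3$ share the two main tentacles of color $A$ (one on each segment, on opposite sides of the brick), and $E_2$'s extra $B'$-tentacle is adjacent to one segment on the side opposite to the $A$-tentacle on that segment. Because $\{A,B\} \neq \{A,B'\}$, we must have $B \neq B'$. The next step is to trace the boundary arcs of $E_2$ and $E_3$ through the shared shaded face of color $A$: the vertices of $E_3$ lie on $C_0$ where the two $A$-tentacles meet the $B$-face across $C_0$, whereas the vertices of $E_2$ lie at the two endpoints of the $B'$-tentacle inside the brick. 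Using tentacle chasing in the spirit of Lemmas~\ref{lemma:adjacent-loop} and~\ref{lemma:adjacent-smooth}, together with the hypothesis that the polyhedral decomposition contains no non-prime arcs, I expect to produce either a closed curve meeting the diagram transversely twice and enclosing crossings on both sides (contradicting primeness of $D(K)$), or a parabolic compression of $E_2$ onto $E_3$ plus simple disks (contradicting the minimality of $E_c$ from Lemma~\ref{lemma:spanning-upper}).

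Granting $A \neq A'$, the remaining conclusions follow quickly. Since each tentacle has a uniquely determined color, the two $A$-tentacles used by $E_3$ and the two $A'$-tentacles used by $E_2$ must all be distinct, exhausting the four tentacles adjacent to the brick. The extra $B'$-tentacle of $E_2$ is one of these four, so it must coincide with one of $E_3$'s $A$-tentacles adjacent to the same segment, forcing $B' = A$. Thus the four tentacles around the brick carry only the two colors $A$ and $A'$, with exactly one tentacle of each color adjacent to each segment, and the disks $E_2$ and $E_3$ together visit all four tentacles.

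The main obstacle is establishing the inequality $A \neq A'$. The counting argument in the final paragraph is routine once this step is in hand, but ruling out $A = A'$ appears to require a careful tentacle-chasing analysis that combines the hypothesis of no non-prime arcs with the minimality properties of the spanning set $E_c$ and the primeness of $D(K)$.
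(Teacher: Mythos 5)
Your reduction of the lemma to a single claim---that the brick-adjacent tentacles of $E_2$ and $E_3$ carry different colors---has the right skeleton, and your closing count is essentially the one needed. But the crux is exactly the step you do not prove: you only write that you ``expect'' tentacle chasing to produce either a curve violating primeness of the diagram or a parabolic compression violating minimality of $E_c$, and neither mechanism is carried out nor is it clear either would succeed. In particular, no parabolic compression of $E_2$ is ever exhibited, and no explicit curve meeting the diagram twice with crossings on both sides is constructed; as it stands the proposal asserts the key inequality rather than establishing it. This is a genuine gap.

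For comparison, the paper closes this gap without either of your proposed mechanisms. Suppose $E_2$ runs through the same two tentacles as $E_3$; normalize so that $E_3$ is orange--green, positioned as in Figure \ref{fig:nononprime-types}\eqref{type:nononprime-3}, and $E_2$ is orange--blue. The type \eqref{type:nononprime-3} structure says each of these orange tentacles terminates with its tail at a green tentacle on $C_0$, and $\bdy E_3$ crosses $C_0$ exactly at those termination points. Since $E_2$ never meets the green face, its arcs in the very same orange tentacles cannot stop there: they must continue downstream across $C_0$ past the termination points. Hence the points of $\bdy E_2 \cap C_0$ and $\bdy E_3 \cap C_0$ interleave along $C_0$, which forces the arcs of $E_2$ and $E_3$ in the shared orange face to intersect essentially. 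Lemma \ref{lemma:marcs} (whose primeness hypothesis is supplied by Proposition \ref{prop:no-normal-bigons}) then says the two normal squares must intersect in a second face of the same color, i.e.\ in another shaded face---impossible, since orange is the only shaded face the color pairs orange--blue and orange--green share. So primeness enters only through the absence of normal bigons in the polyhedron, and minimality of $E_c$ is never needed. Once this step is in place, your final paragraph goes through; note only that your intermediate claim that a type \eqref{type:nononprime-2} disk ``visits'' the $B'$-tentacle overstates the definition (type \eqref{type:nononprime-2} asserts merely that such a tentacle is adjacent to one of the segments), though this does not affect the deduction that $B'=A$.
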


\begin{proof}
As in the proof of Lemma \ref{lemma:nononprime-3not1}, we will assume
that the color pair of $E_3$ is green--orange, and that the brick of
$E_3$ is positioned exactly like the brick of Figure
\ref{fig:nononprime-types} \eqref{type:nononprime-3}, with identical
colors.

Suppose for a contradiction that $E_2$ also runs through orange
tentacles in that brick, but the color pair of $E_2$ is blue--orange.
Consider the state circle $C_0$.  Each orange tentacle of the brick
terminates with its tail at a green tentacle on $C_0$.  Because $E_2$
does not meet the green face, each of the arcs of $\bdy E_2$ in the
orange tentacles must run downstream across $C_0$, to the right of the
point where the orange tentacle terminates.  Thus, as in the proof of
Lemma \ref{lemma:nononprime-3not1}, we conclude that the intersection
points of $\bdy E_2 \cap C_0$ must interleave with the points of of
$\bdy E_3 \cap C_0$.

Let $\sigma$ be the arc of $E_3$ in the orange face. The interleaving
intersections with $C_0$ mean that two points of $E_1 \cap C_0$ lie on
opposite sides of $\sigma$. Thus the arc of $E_1$ in the orange face
must intersect $\sigma \subset E_3$. By Lemma \ref{lemma:marcs} on
page \pageref{lemma:marcs}, it follows that (the normal square of)
$E_1$ must also intersect (the normal square of) $E_3$ in another
shaded face. This contradicts the hypothesis that $E_1$ is
orange--blue while $E_3$ is orange--green.
\end{proof}

\begin{lemma}\label{lemma:atmost4}
Let $D(K)$ be a prime, $A$--adequate diagram with polyhedral
decomposition with no non-prime arcs.  Then any 2--edge loop in $H_A$
supports at most four EPDs in the spanning set $E_c$, from at most two
color pairs.
\end{lemma}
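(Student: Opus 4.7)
The plan is to combine the classification of Proposition \ref{prop:2edgeloop-nononprime} with the tentacle bound of Lemma \ref{lemma:ec-tentacle} and the exclusion results of Lemmas \ref{lemma:nononprime-3not1}--\ref{lemma:nononprime-3not2}. Fix a brick $B$ with segments $s, s'$ joining state circles $C_0$ and $C_1$. This brick has four adjacent tentacles, one on each side of each segment, which I will denote $T_{s,L}, T_{s,R}, T_{s',L}, T_{s',R}$. By Proposition \ref{prop:2edgeloop-nononprime}, any EPD $E \in E_c$ supported by $B$ has its associated normal square running through exactly one tentacle adjacent to $s$ and exactly one adjacent to $s'$, chosen from these four.

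First, I would show that at most two distinct color pairs arise among the EPDs of $E_c$ supported by $B$. Recall that the color pair is the unordered pair of shaded faces containing the two supporting tentacles. Type \eqref{type:nononprime-1} uses same-side tentacle pairs $(T_{s,L}, T_{s',L})$ or $(T_{s,R}, T_{s',R})$, while Types \eqref{type:nononprime-2} and \eqref{type:nononprime-3} use opposite-side pairs $(T_{s,L}, T_{s',R})$ or $(T_{s,R}, T_{s',L})$, in each case with both relevant tentacles orange. When only Type \eqref{type:nononprime-1} occurs, or only Types \eqref{type:nononprime-2} and \eqref{type:nononprime-3} occur, at most two distinct tentacle pairs appear and hence at most two color pairs. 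For the mixed occurrences, Lemma \ref{lemma:nononprime-3not1} forces any simultaneous Type \eqref{type:nononprime-1} and Type \eqref{type:nononprime-3} at $B$ to share a color pair, while Lemma \ref{lemma:nononprime-3not2} either forces Type \eqref{type:nononprime-2} and Type \eqref{type:nononprime-3} at $B$ to share a color pair or else pins down the colors of all four tentacles to just two values. Running through these subcases keeps the color-pair count at most two.

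Second, for each color pair $(F, G)$ actually arising at $B$, Proposition \ref{prop:2edgeloop-nononprime} determines exactly which tentacle of $F$ adjacent to $B$ the normal square of such an EPD must traverse. Consequently, the EPDs of color pair $(F, G)$ supported by $B$ all pass through a single common tentacle of $F$, and Lemma \ref{lemma:ec-tentacle} bounds the number of such disks in $E_s \cup E_c$ by two. Combining at most two color pairs with at most two disks each yields the desired bound of four EPDs.

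The main obstacle, in my view, lies in the mixed cases of the first step, where a single brick can a priori support EPDs of several types and the interactions among them must be ruled out systematically using the fine combinatorial information encoded in Lemmas \ref{lemma:nononprime-3not1} and \ref{lemma:nononprime-3not2}, together with the specific structure of Types \eqref{type:nononprime-1}--\eqref{type:nononprime-3} illustrated in Figure \ref{fig:nononprime-types}. A secondary subtlety in the second step is to verify that all EPDs of a fixed color pair, supported by a fixed brick, really pass through a common tentacle of $F$ rather than spreading across the two tentacles of $F$ that may both lie adjacent to $B$; this, however, should follow directly from the tentacle-pair prescriptions supplied by Proposition \ref{prop:2edgeloop-nononprime}.
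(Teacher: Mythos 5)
Your proposal is correct and takes essentially the same route as the paper: first bound the number of color pairs supported by the brick by two, using the classification of Proposition \ref{prop:2edgeloop-nononprime} together with Lemmas \ref{lemma:nononprime-3not1} and \ref{lemma:nononprime-3not2}, and then invoke Lemma \ref{lemma:ec-tentacle} to get at most two disks of $E_c$ per color pair. The only difference is organizational: the paper runs the first step as a case analysis on whether the four tentacles adjacent to the brick carry two, three, or four distinct colors, rather than on which of the types \eqref{type:nononprime-1}--\eqref{type:nononprime-3} occur.
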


\begin{proof}
Any EPD involves a color pair.  Recall that Lemma
\ref{lemma:ec-tentacle} on page \pageref{lemma:ec-tentacle} implies
that for a fixed color pair, at most two EPDs in $E_c$ between those
colors run over a given segment.  We will show that any brick of $H_A$
can support EPDs in at most two color pairs. Then, the result will
follow from Lemma \ref{lemma:ec-tentacle}.

Denote the segments of the 2--edge loop by $s_1, s_2$.  Note that if
$s_1, s_2$ support EPDs of types \eqref{type:nononprime-1} or
\eqref{type:nononprime-2}, then the color pairs of these EPDs are
determined by the colors of the tentacles adjacent to $s_1$ and $s_2$.
For type \eqref{type:nononprime-3}, the colors of adjacent tentacles
determine one of the two colors in the pair, and the other is
determined by the color of the tentacles meeting the tails of the
tentacles of the first color.

Consider the tentacles adjacent to $s_1, s_2$.  There are four such
tentacles --- one on each side of each segment.  There may be two,
three, or four distinct colors for these tentacles.

Suppose first there are four distinct colors.  Then the 2--edge
loop may only support EPDs of type \eqref{type:nononprime-1} (not
\eqref{type:nononprime-2}, not \eqref{type:nononprime-3}).  Since a
normal square of type \eqref{type:nononprime-1} lies on one side of a
brick of $s_1$ and $s_2$, and since any such brick separates the
tentacles into the same pairs inside/outside, there are at most two
color pairs in this case.

Now suppose there are three distinctly colored tentacles adjacent to
$s_1$ and $s_2$.  We may have an EPD of type \eqref{type:nononprime-2}
or \eqref{type:nononprime-3}, but not both in distinct color pairs, by
Lemma \ref{lemma:nononprime-3not2}.  If there is an EPD of type
\eqref{type:nononprime-3}, then Lemma \ref{lemma:nononprime-3not1}
implies there are none of type \eqref{type:nononprime-1} of distinct
color pairs, hence the only EPDs possible are of the same color pair
of the EPD of type \eqref{type:nononprime-3}.

If there are three distinctly colored tentacles adjacent to $s_1$ and
$s_2$, and we have an EPD of type \eqref{type:nononprime-2}, then all
color pairs must come from the colors adjacent to the two segments.
Label these colors orange, green, and blue, with the colors of the
pair of tentacles of the same color labeled orange.  Potentially, we
might have three color pairs: green--orange, blue--orange, and
green--blue.  However, note that green and blue tentacles must be on
opposite sides of the brick of $s_1$ and $s_2$.  Since they are
distinct colors, only and EPD of type \eqref{type:nononprime-1} could
run through them, but since they are not on the same side of the
brick, that is impossible.  Thus there are only two color pairs in
this case.

Finally, suppose there are only two distinct colors of tentacles
adjacent to $s_1$ and $s_2$, say green and orange.

If there is an EPD of type \eqref{type:nononprime-3}, then it
determines a color pair and there can be no EPD of type
\eqref{type:nononprime-1} with a distinct color pair by Lemma
\ref{lemma:nononprime-3not1}, nor of type \eqref{type:nononprime-2}
through the same tentacles adjacent to $s_1$ and $s_2$ but with a
distinct color pair, by Lemma \ref{lemma:nononprime-3not2}.  There
might be an EPD of type \eqref{type:nononprime-2} and a distinct color
pair which uses the other tentacles, but in that case, Lemma
\ref{lemma:nononprime-3not2} implies there cannot be another of type
\eqref{type:nononprime-3}, hence there are only two possible color
pairs.

Similarly, if we have two EPDs of type \eqref{type:nononprime-3} and
distinct color pairs, then they must use distinct tentacles of the
brick, and there can be no other types of EPDs with distinct color
pairs.

If there is no EPD of type \eqref{type:nononprime-3}, then all EPDs
must be of types \eqref{type:nononprime-1} and
\eqref{type:nononprime-2}, for which the colors in the color pair are
the colors adjacent to the segments $s_1$ and $s_2$.  Hence in this
case, there is just one color pair. 
\end{proof}

\section{Estimating the size of $E_c$}

Now we complete the proof of Theorem \ref{thm:guts-nononprime}.
Lemma \ref{lemma:atmost4} has the following immediate consequence.

\begin{lemma}\label{lemma:atmost4-twist}
Let $e_1, \ldots, e_n$ and $f_1, \ldots f_m$ be edges of $\GA$
(equivalently, segments of the graph $H_A$), all of which connect the
same pair of state circles $C$ and $C'$. Suppose that the $e_i$ belong
to the same twist region, and that the $f_j$ belong to the same twist
region.  Then the collection of \emph{all} $2$--edge loops of the form
$\{ e_i, f_j \}$ supports a total of at most four EPDs in the spanning
set $E_c$.
\end{lemma}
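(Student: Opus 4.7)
The plan is to reduce to Lemma \ref{lemma:atmost4} by collapsing bigons. The key tool will be Lemma \ref{lemma:remove-bigon}, which tells us that removing a crossing from an $A$--region preserves both the reduced graph $\GRA$ and the cardinality of the spanning set $E_c$.

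Here is the rough outline. First, I observe that the $n$ edges $e_1,\ldots,e_n$ all connect the same pair of state circles $C, C'$ and belong to a single twist region; this forces them to be the short (all--$A$) resolution of that twist region, so the region is an $A$--region, and similarly for the twist region containing $f_1,\ldots,f_m$. The successive pairs $\{e_i,e_{i+1}\}$ (and likewise $\{f_j,f_{j+1}\}$) bound bigon white faces of $\GA$, which correspond to the $n-1$ and $m-1$ edges removed when passing to $\GRA$. Next, I would apply Lemma \ref{lemma:remove-bigon} inductively, stripping off one bigon at a time within each of the two twist regions, to produce a new diagram $\widehat{D}$ in which these twist regions contain a single crossing each; the corresponding edges collapse to one edge $\widehat{e}$ and one edge $\widehat{f}$. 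By Lemma \ref{lemma:remove-bigon} parts \eqref{item:GRA-isomorphic} and \eqref{item:EC-isomorphic}, this operation preserves $\GRA$ and the cardinality of the spanning set $E_c$, and in fact (from the proof of that lemma) gives a natural bijection between the \pitos\ disks of $D$ and those of $\widehat{D}$ running through the same pair of shaded faces, preserving the property of being complex.

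The second step is to follow what happens to the $2$--edge loops under this collapse. Under the bijection, any disk in $E_c(D)$ supported by a loop $\{e_i, f_j\}$ corresponds to a disk in $E_c(\widehat{D})$ supported by the unique loop $\{\widehat{e}, \widehat{f}\}$: the tentacles adjacent to the edges $e_i$ all merge into the tentacles adjacent to $\widehat{e}$ after bigon collapse, and similarly for $f_j$. In particular, the entire collection of loops $\{e_i, f_j\}$ (as $i, j$ vary) collapses to the single loop $\{\widehat{e}, \widehat{f}\}$. Conversely, any complex disk supported by $\{\widehat{e},\widehat{f}\}$ in $\widehat{D}$ pulls back to a complex disk supported by some $\{e_i, f_j\}$ in $D$. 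By Lemma \ref{lemma:atmost4} applied to $\widehat{D}$, the loop $\{\widehat{e}, \widehat{f}\}$ supports at most four EPDs in $E_c(\widehat{D})$. Transporting back via the bijection, the total number of EPDs in $E_c(D)$ supported by loops of the form $\{e_i, f_j\}$ is at most four.

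The main obstacle is verifying cleanly that the bijection between $E_c(D)$ and $E_c(\widehat{D})$ given by Lemma \ref{lemma:remove-bigon} really does send disks supported (in the sense of Definition \ref{def:brick-support}) by a loop $\{e_i,f_j\}$ to disks supported by $\{\widehat{e},\widehat{f}\}$, and not to some unrelated brick. This amounts to re-examining the bigon-collapsing move: a bigon face $B$ between two parallel segments collapses to a single ideal vertex, and the function $f$ of Proposition \ref{prop:2edgeloop-nononprime} is continuous with respect to such collapses, in the sense that the brick supporting the image disk is exactly the image of the brick supporting the original disk. This is essentially a local check at each bigon collapse, using the tentacle descriptions in Figure \ref{fig:nononprime-types}; once it is verified, the counting argument is immediate.
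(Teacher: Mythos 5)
Your proposal is correct and follows essentially the same route as the paper: pass to the diagram $\widehat{D}$ obtained by collapsing the bigons in the $A$--regions, invoke Lemma \ref{lemma:remove-bigon} for the correspondence between complex disks of $E_c(D)$ and $E_c(\widehat{D})$ (with disks adjacent to the $e_i$, $f_j$ corresponding to disks adjacent to the single edges $\widehat{e}$, $\widehat{f}$), and then apply Lemma \ref{lemma:atmost4} to $\widehat{D}$. The "obstacle" you flag is handled in the paper at the same level of detail, namely by the observation built into Lemma \ref{lemma:remove-bigon} that the bigon collapse merges the relevant tentacles while preserving which shaded faces and segments a given complex disk runs along.
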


\begin{proof}
Let $\widehat{D}$ be the diagram obtained from $D$ by removing all but
one crossing from every $A$--region of $D$. Under this operation,
$e_1, \ldots, e_n$ become the same edge $e$ of $\GA(\widehat{D})$, and
$f_1, \ldots f_m$ become the same edge $f$ of
$\GA(\widehat{D})$. Furthermore, by Lemma \ref{lemma:remove-bigon} on
page \pageref{lemma:remove-bigon}, there is a one--to--one
correspondence between complex disks of $E_c(D)$ and the complex disks
of $E_c(\widehat{D})$. In particular, a complex EPD in
$E_c(\widehat{D})$ that runs through a tentacle adjacent to the single
edge $e$ corresponds to a complex EPD in $E_c(\widehat{D})$ that runs
through a tentacle adjacent to one of the $e_i$, and similarly for the
$f_j$.

Thus, applying Lemma \ref{lemma:atmost4} to the diagram $\widehat{D}$
gives the desired result for $E_c(D)$.
\end{proof}

Now Theorem \ref{thm:guts-nononprime} will follow immediately from
Theorem \ref{thm:guts-general} on page \pageref{thm:guts-general} and
the following lemma.

\begin{lemma}\index{$m_A$!role in estimating guts}
Let $E_c$ be the spanning set of Lemma \ref{lemma:spanning-upper} on
page \pageref{lemma:spanning-upper}, and let $m_A$ be the diagramatic
quantity defined in Definition \ref{def:ma} on page
\pageref{def:ma}. Then, in the absence of non-prime arcs,
$$0 \: \leq \: ||E_c|| \: \leq \: 8 \, m_A,$$
with equality if and only if $m_A = 0$.
\end{lemma}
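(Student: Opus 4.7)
The plan is to combine the classification of complex EPDs in Proposition \ref{prop:2edgeloop-nononprime} with the counting bounds in Lemmas \ref{lemma:atmost4} and \ref{lemma:atmost4-twist} to control $||E_c||$ in terms of $m_A$. First, I would apply Lemma \ref{lemma:remove-bigon} inductively to reduce to a diagram $\widehat{D}$ in which every $A$--region contains a single crossing. Under this reduction $b_A = 0$ and both $||E_c||$ and $m_A$ are unchanged, so $m_A = e_A - e'_A$ counts exactly the duplicate edges of $\GA(\widehat{D})$. Partitioning the edges of $\GA(\widehat{D})$ according to the pair of state circles $(C,C')$ they connect gives a clean decomposition $m_A = \sum (r_{CC'} - 1)$, where $r_{CC'}$ denotes the number of edges between $C$ and $C'$ and the sum runs over pairs with $r_{CC'} \geq 1$.

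Next, by Theorem \ref{thm:2edgeloop} and Proposition \ref{prop:2edgeloop-nononprime}, every disk in $E_c$ is supported by a brick, that is, a pair of segments of $H_A$ joining the same two state circles. I would therefore assign each disk of $E_c$ to the pair $(C,C')$ hosting its brick, and reduce the problem to the following per-pair estimate: for each $(C,C')$, the number of disks of $E_c$ supported by bricks between $C$ and $C'$ is at most $8(r_{CC'} - 1)$. Summing then yields $||E_c|| \leq 8 m_A$.

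The main obstacle will be the per-pair estimate when $r_{CC'}$ is large. For $r_{CC'} \leq 4$ the crude bound from Lemma \ref{lemma:atmost4-twist} (at most $4$ EPDs per brick, hence at most $4 \binom{r_{CC'}}{2}$ EPDs per pair) already satisfies $4\binom{r_{CC'}}{2} \leq 8(r_{CC'} - 1)$. For $r_{CC'} \geq 5$ the number of potential bricks grows quadratically, so one must show that most of them support no disks of $E_c$. The natural strategy is to charge each EPD to the two tentacles it occupies among the $2r_{CC'}$ tentacles adjacent to the segments between $C$ and $C'$: by Lemma \ref{lemma:ec-tentacle} each tentacle carries at most two disks of $E_c$ in a fixed color pair, by Lemma \ref{lemma:atmost4} each brick supports disks of at most two color pairs, and the three structural types of Proposition \ref{prop:2edgeloop-nononprime} constrain which tentacle combinations can actually occur. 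Together these constraints should produce a tentacle-based bound of order $4 r_{CC'}$ EPDs per pair, which is $\leq 8(r_{CC'} - 1)$ whenever $r_{CC'} \geq 2$.

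Finally, for the equality statement: if $m_A = 0$, then every $2$--edge loop of $\GA$ sits in a single twist region, and Corollary \ref{cor:onlybigons} gives $||E_c|| = 0 = 8m_A$. Conversely, if $m_A > 0$, some pair $(C,C')$ has $r_{CC'} \geq 2$; already the crude bound $4\binom{r_{CC'}}{2}$ is strictly smaller than $8(r_{CC'} - 1)$ in this range (with the largest gap at $r_{CC'}=2$, where $4 < 8$), so the inequality $||E_c|| \leq 8 m_A$ is strict whenever $m_A > 0$.
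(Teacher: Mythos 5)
There is a genuine gap at the heart of your argument: the per-pair estimate in the case you label $r_{CC'}\geq 5$ (which is really the general case) is never proved. You write that Lemma \ref{lemma:ec-tentacle}, Lemma \ref{lemma:atmost4}, and the three types of Proposition \ref{prop:2edgeloop-nononprime} ``should produce a tentacle-based bound of order $4r_{CC'}$,'' but these ingredients do not obviously combine to give a bound that is linear in $r_{CC'}$. Lemma \ref{lemma:ec-tentacle} bounds the disks through a given tentacle \emph{per color pair}, and for a tentacle adjacent to a segment between $C$ and $C'$ the second color of a supported disk can vary with the other segment of the brick, so the number of relevant color pairs through one tentacle may itself grow like $r_{CC'}$; naive tentacle-charging then still gives a quadratic count. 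The paper closes exactly this gap by a different mechanism: it uses the fact that the disks of $E_c$ are \emph{disjoint}, so their boundaries restricted to the region between $C$ and $C'$ are disjoint essential simple closed curves in the planar surface $S(C,C')$ (disks bounded by $C$ and $C'$ joined by one band per twist region), a sphere with $m(C,C')$ holes. Such curves fall into at most $2m(C,C')-3$ isotopy classes, isotopy in $S(C,C')$ corresponds to running through the same pair of twist regions, and Lemma \ref{lemma:atmost4-twist} caps each class at four disks; hence at most $4(2m(C,C')-3) < 8(m(C,C')-1)$ disks per pair, and summing gives $||E_c||\leq 8 m_A$. Note also that the paper works with twist regions $m(C,C')$ directly and does not need your preliminary reduction to $\widehat{D}$ (that reduction is the content of Lemma \ref{lemma:atmost4-twist} itself); your reduction is harmless but does not supply the missing linear bound.

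A secondary, smaller error: your strictness argument for $m_A>0$ is not correct as stated. At $r_{CC'}=4$ your crude bound gives $4\binom{4}{2}=24=8(r_{CC'}-1)$, so it does not yield strict inequality there, and for $r_{CC'}\geq 5$ you have no bound at all, hence no strictness either. In the paper strictness is automatic from the per-pair estimate, since $4(2m-3)=8m-12<8m-8=8(m-1)$ whenever a pair actually contributes (i.e. $m(C,C')\geq 2$), which happens precisely when $m_A>0$. The $m_A=0$ direction of your equality discussion (via Corollary \ref{cor:onlybigons}) is fine.
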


\begin{proof}
Consider a pair of state circles $C, C'$ of $\GA$, which are connected
by at least one edge. There are $e(C,C')$ edges of $\GA$ connecting
these circles, which belong to $m(C, C')$ twist regions. Then the
number of bigons between $C$ and $C'$ is $b_A(C,C')$, where
\begin{equation}\label{eq:bigon-count}
b_A(C, C') = e(C, C') - m(C, C').
\end{equation}

Associated to the pair of circles $C$ and $C'$, we construct a planar
surface $S(C,C')$, contained in the projection sphere $S^2$. Take the
disjoint disks in $D^2$ whose boundaries are $C$ and $C'$, and connect
these disks by $m(C, C')$ rectangular bands --- with each band
containing the segments of the corresponding twist
region. Topologically, $S(C,C')$ is a sphere with $m(C,C')$ holes.

Let $D \in E_c$ be an essential product disk that runs through
tentacles between $C$ and $C'$. Then $\bdy D$ is a simple closed curve
in $S(C, C')$. Now, the conclusion of Lemma \ref{lemma:atmost4-twist}
can be rephrased to say that at most four distinct EPDs of $E_c$
running through tentacles between $C$ and $C'$ can have boundaries
that are isotopic in $S(C,C')$. This is because isotopy in $S(C,C')$
is exactly the same equivalence relation as running through tentacles
in the same pair of twist regions.

Recall that a sphere with $m(C,C')$ holes contains at most
$2m(C,C')-3$ isotopy classes of disjoint essential simple closed
curves. Since the disks in $E_c$ are disjoint, and since at most four
of these disks can can have boundaries that are isotopic in $S(C,C')$,
we conclude that there are at most
\begin{equation}\label{eq:sphere-curves}
4 (2m(C, C')-3) \: < \: 8 ( m(C,C') - 1)
\end{equation}
disks in $E_c$ that run through tentacles between $C$ and $C'$. (Note
that if $m(C,C') = 1$, i.e.\ all segments between $C$ and $C'$ belong
to the same twist region, then the left side of
\eqref{eq:sphere-curves} is negative.  But in this case, all EPDs
running through tentacles between $C$ and $C'$ must be simple or
semi-simple, and cannot belong to $E_c$. Thus the estimate is
meaningful precisely when a 2--edge loop between $C$ and $C'$
contributes to $E_c$.  Meanwhile, the right side of
\eqref{eq:sphere-curves} is always non-negative when $C$ and $C'$ are
connected in $\GA$.)

Summing over all pairs of state circles $C, C'$ that are connected by
at least one edge of $\GA$, we obtain

$$
\begin{array}{r c l l}
|| E_c ||
& \leq & 8 \sum_{C,C'} ( m(C,C') - 1) & \mbox{ by \eqref{eq:sphere-curves}} \\
& = & 8 \sum_{C,C'} ( e(C,C') - b_A(C,C') - 1) & \mbox{ by \eqref{eq:bigon-count} } \\
& = & 8 \left( \sum_{C,C'}  e(C,C') - \sum_{C,C'} b_A(C,C') - \sum_{C,C'} 1  \right) & \\
& = & 8 \left( e_A - b_A - e'_A \right) \\
& = & 8\, m_A & \mbox{ by Def \ref{def:ma}}.
\end{array}
$$

Notice that the inequality is sharp precisely when the estimate of
\eqref{eq:sphere-curves} applies at least once in a non-trivial way,
i.e.\ when $m_A > 0$.
\end{proof}

%% 8
\chapter{Montesinos links}\label{sec:montesinos}
In this chapter, we study state surfaces of Montesinos links, and
calculuate their guts.  Our main result is
Theorem \ref{thm:monteguts}.  In that theorem, we show that for every
sufficiently complicated Montesinos link $K$, either $K$ or its mirror
image admits an $A$--adequate diagram $D$ such that the quantity $|| E_c||$ of
Definition \ref{def:ec} vanishes. Then, it will follow
that $\negeul( \guts(M_A))= \negeul(\GRA)$.

\section{Preliminaries}
We begin by reviewing some classically known facts. A reference for
this material is, for example, Burde--Zieschang \cite[Chapter
12]{burde-zieschang:knots}.
A \emph{rational tangle}\index{rational tangle}\index{tangle!rational} is a pair $(B,L)$
where $B$ is a $3$--ball and $L$ is a pair of arcs in $B$ that are
isotopic to $\bdy B$, with the isotopies following disjoint disks in
$B$. Note that a rational tangle is unique up to homeomorphism of
pairs. Also, a rational tangle $(B,L)$ contains a unique compression
disk that separates the two arcs of $L$.

A \emph{marked}\index{rational tangle!marked}\index{marked rational tangle} rational tangle is an embedding of $(B,L)$ into $\RR^3$, with
$B$ being embedded into the regular neighborhood of a unit square in
$\RR^2$ (called a \emph{pillowcase}) and the four
endpoints of $L$ sent to the four corners of the pillow. For
concreteness, we label these four corners NW, NE, SE, and SW.  A marked
rational tangle specifies a planar projection of $K$ to the unit
square, uniquely up to Reidemeister moves in the square.

Marked rational tangles are in 1--1 correspondence with
\emph{slopes}\index{slope of rational tangle} in $\QQ \cup \{ \infty \}$. This can be
seen in several ways. First, it is well--known that isotopy classes of
essential simple closed curves in a $4$--punctured sphere are
parametrized by $\QQ \cup \{\infty \}$. Thus a rational number
determines the slope of a compression disk in the tangle, and this
disk determines an embedding of the tangle up to isotopy. A more
concrete way to specify the correspondence is to picture the the
pillowcase boundary of $B$ as constructed from the union of two
Euclidean squares. Then, a rational slope $q$ specifies a Euclidean
geodesic that starts from a corner and travels with Euclidean slope
$q$. There are exactly two disjoint arcs with this slope; their union
is $L$.

We adopt the standard convention that rational tangles of slope $0$
and $\infty$ have crossing--free diagrams, and a rational tangle of
slope $1$ projects to a single positive crossing. See Figure
\ref{fig:tangles}.

\begin{figure}%[bh]
\psfrag{1}{$1$}
\psfrag{0}{$0$}
\psfrag{i}{$\infty$}
\begin{center}
\includegraphics{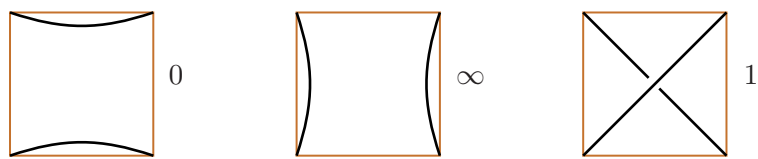}
\end{center}
\caption{Marked rational tangles of slope $0$, $\infty$, and $1$.}
\label{fig:tangles}
\end{figure}

Given marked rational tangles $T_1, T_2$ of slope $q_1, q_2$, one may
form a new tangle, called the \emph{sum}\index{sum of tangles} of
$T_1$ and $T_2$, by joining the NE corner of $T_1$ to the NW corner of
$T_2$ and the SE corner of $T_1$ to the SW corner of $T_2$. One may
check that if $q_i \in \ZZ$ for either $i=1$ or $i=2$, the result is
again a rational tangle of slope $q_1 + q_2$. This is called a
\emph{trivial}\index{sum of tangles!trivial} sum. Otherwise, if $q_i
\notin \ZZ$, the sum tangle will not be rational.

\begin{define}\label{def:num-denom}
For any tangle diagram $T$ with corners labeled NW, NE, SE, and SW,
the \emph{numerator closure}\index{numerator closure} of $T$ is
defined to be the link diagram obtained by connecting NW to NE and SW
to SE by simple arcs with no crossings. The \emph{denominator
  closure}\index{denominator closure} of $T$ is defined to be the
diagram obtained by connecting NW to SW, and NE to SE by simple arcs
with no crossings.

Given marked rational tangles $T_1, \ldots, T_r$, a \emph{Montesinos
  link}\index{Montesinos link} is constructed by taking the numerator
closure of the sum $T_1 + \ldots + T_r$. We also call this the
\emph{cyclic sum}\index{cyclic sum} of $T_1, \ldots, T_r$. See Figure
\ref{fig:Montesinos}. In particular, if $r$ is the number of rational
tangles used, then the Montesinos link $K$ is determined by an
$r$--tuple of slopes $q_1, \ldots, q_r \in \QQ \cup \{ \infty \}$. To
avoid trivial sums, we always assume $q_i \notin \ZZ$ for all $i$.
\end{define}

\begin{figure}[h]
\begin{center}
\includegraphics{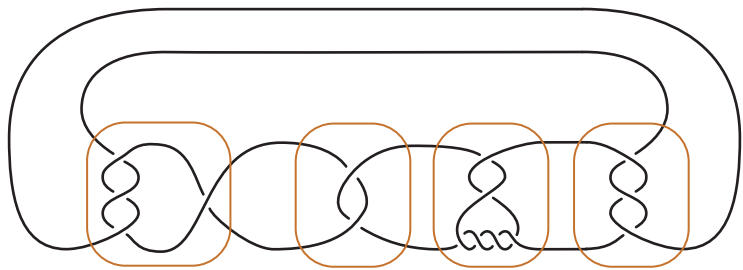}
\end{center}
\caption{A Montesinos knot constructed from rational tangles of slope
  $4/3$, $1/2$, $4/7$, $-1/3$. This diagram is not reduced, according
  to Definition \ref{def:monte-reduced}.}
\label{fig:Montesinos}
\end{figure}

A cyclic sum of two rational tangles is a two--bridge link. Since
two--bridge links are alternating, the guts of the checkerboard
surfaces in this case are known by Lackenby's work
\cite{lackenby:volume-alt}, or equivalently by Corollary \ref{cor:onlybigons}.  Thus we assume that $r \geq 3$. In
addition, since summing with a tangle of slope $\infty$ produces a
composite or split link, and we are interested in prime links, we also
prohibit tangles of slope $\infty$.

Note, in Figure \ref{fig:Montesinos}, that a cyclic permutation of the
tangles $T_1, \ldots, T_r$ produces the same diagram, up to isotopy in
$S^2$. Furthermore, because every rational tangle admits a
rotationally symmetric diagram (Figure \ref{fig:euc-algorithm}, left),
reversing the order of $T_1, \ldots, T_r$ also does not affect the
link. The following theorem of Bonahon and Siebenmann \cite[Theorem
  12.28]{burde-zieschang:knots} implies that the converse is also
true: dihedral permutations of the tangles are essentially the only
moves that will produce the same link.

\begin{theorem}[Theorem 12.28 of  \cite{burde-zieschang:knots}]
\label{thm:monte-classification}
Let $K$ be a Montesinos link obtained as a cyclic sum of $r \geq 3$
rational tangles whose slopes are $q_1, \ldots, q_r \in \QQ \setminus
\ZZ$. Then $K$ is determined up to isomorphism by the the rational
number $\sum_{i=1}^r q_i$ and the vector $((q_1 \mod 1), \ldots, (q_r
\mod 1))$, up to dihedral permutation.
\end{theorem}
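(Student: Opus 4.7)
My plan is to follow the classical argument of Bonahon via the double branched cover. The idea is that $K$ is a Montesinos link of the given form if and only if its double branched cover $\Sigma_2(K)$ is a Seifert fibered space whose invariants encode the slopes $q_i$; then the classification of Seifert fibered spaces (due to Seifert, Orlik, Waldhausen) reduces the classification of Montesinos links to a combinatorial identification of Seifert invariants, modulo the orientation--reversing symmetry of the link.

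First I would compute $\Sigma_2(K)$ explicitly. For a single marked rational tangle $T_i$ of slope $q_i$, the double cover of the pillowcase branched over the four marked points is a torus, and the double cover of the $3$--ball branched over $T_i$ is a solid torus $V_i$ whose meridian slope, measured in the natural framing on the boundary torus coming from the pillowcase, is $q_i$. Gluing the $V_i$ cyclically corresponds to gluing solid tori to the planar surface $F\times S^1$, where $F$ is a disk with $r$ holes; the result is a Seifert fibered manifold with base orbifold $S^2$ and $r$ exceptional fibers whose Seifert pairs $(\alpha_i,\beta_i)$ are (up to normalization) the reduced numerator/denominator pairs of the fractional parts of $q_i$. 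The integer part of each $q_i$ is absorbed into a change of section, and the sum of integer parts contributes to the rational Euler number $e(\Sigma_2(K))=-\sum q_i$ of the fibration.

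Next I would invoke the classification of Seifert fibered spaces: $\Sigma_2(K)$ is determined up to fiber--preserving homeomorphism by the unordered tuple $\{(\alpha_i,\beta_i \bmod \alpha_i)\}$ together with the Euler number $e$. Because $r\ge 3$ and none of the slopes is integral, the base orbifold has negative orbifold Euler characteristic (or is small but still has a unique Seifert fibration by a theorem of Waldhausen/Orlik), so fiber--preserving homeomorphism coincides with homeomorphism of the underlying $3$--manifold. Thus the pair $\bigl(e,\{q_i \bmod 1\}\bigr)$ is a complete invariant of $\Sigma_2(K)$, and hence, via equivariance of the covering involution, of $K$ itself up to orientation--preserving homeomorphism of pairs.

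Finally I would upgrade from unordered to dihedral equivalence and from oriented to unoriented links. The cyclic ordering on $\{q_i \bmod 1\}$ is recovered by the cyclic order in which the exceptional fibers sit over the boundary curve of a fundamental domain of the base orbifold, and the dihedral ambiguity comes from (i) cyclic relabelling of the tangles (a rotation of the diagram) and (ii) reversing the cyclic order, which corresponds to the orientation--reversing involution of $\Sigma_2(K)$ induced by reflecting $S^2\subset S^3$ through the plane of the diagram. The main obstacle is the last paragraph: one must check carefully that no further coincidences arise, i.e.\ that two Montesinos links with distinct dihedral equivalence classes really yield distinct $\Sigma_2(K)$ (or distinct pairs $(\Sigma_2(K),\tau)$ in the small base cases). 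This requires ruling out the exceptional identifications that occur for lens spaces and for Seifert fibrations over $S^2$ with at most two exceptional fibers, which is precisely why the hypotheses $r\ge 3$ and $q_i\notin\ZZ$ are needed; handling the borderline cases (for instance, base orbifold $S^2(2,2,n)$) is the delicate part and is where one must appeal to Bonahon's original geometric analysis rather than purely algebraic invariants.
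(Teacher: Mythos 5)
First, note that the paper does not prove this statement at all: it is quoted verbatim as Theorem 12.28 of Burde--Zieschang (the Bonahon--Siebenmann classification) and used as a black box, so there is no in-paper argument to compare yours against. Judged on its own terms, your sketch follows the standard strategy (double branched cover, Seifert fibered structure, uniqueness of the fibration), but it has a genuine gap at its central step.

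The gap is the claim that the pair $\bigl(e,\{q_i \bmod 1\}\bigr)$, being ``a complete invariant of $\Sigma_2(K)$,'' determines $K$ ``via equivariance of the covering involution.'' The Seifert data of the double cover is an \emph{unordered} multiset, whereas the theorem retains the \emph{cyclic order} of the fractional parts up to dihedral symmetry; for $r \geq 4$ these are strictly different pieces of information. For instance, the pretzel links $P(2,3,5,7)$ and $P(2,5,3,7)$ are non-isotopic by the theorem (their slope vectors are not dihedrally equivalent), yet their double branched covers are homeomorphic Seifert fibered spaces with identical invariants and Euler number. So the homeomorphism type of $\Sigma_2(K)$ \emph{cannot} determine $K$, and your attempted fix --- reading off the cyclic order from ``the boundary curve of a fundamental domain of the base orbifold'' --- is not well defined from the manifold alone, since homeomorphisms of the base $S^2$ permute the cone points arbitrarily. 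What actually carries the cyclic order is the covering involution (equivalently, the orbifold $(S^3,K)$ with its fibered structure, whose base is a disk with cyclically ordered corner points), and the real content of the completeness direction is that this involution, respectively the equivariant Seifert fibration, is unique up to conjugacy. That rigidity statement is exactly the hard theorem of Bonahon--Siebenmann (or requires Meeks--Scott/Waldhausen-type equivariance results); invoking only the Orlik--Seifert--Waldhausen classification of the covering manifold, as you do, does not yield it. Separately, the other direction --- that links with the same sum and dihedrally equivalent fractional vectors are in fact isotopic --- is a diagrammatic argument (sliding integer twists around the cycle, rotating and flipping the diagram) which your covering-space setup never addresses, though it is elementary.
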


Once consequence of Theorem \ref{thm:monte-classification} is that
$r$, the number of rational tangles used to construct $K$, is a link
invariant. This number is called the \emph{length}\index{length (of a Montesinos link)}\index{Montesinos link!length} of $K$. In this
framework, two--bridge links are Montesinos links of length $2$.

For a rational number $q$, the integer vector $[a_0, a_1, \ldots,
  a_n]$ is called a \emph{continued fraction expansion}
\index{continued fraction expansion} of $q$ if
$$ q \: = \: a_0 + 
\cfrac{1}{a_1 + 
\cfrac{1}{a_2 +
\cfrac{1}{\ddots + 
\cfrac{1}{a_n}}}} \:.$$
This continued fraction expansion specifies a tangle diagram, as
follows. Moving from the outside of the unit square toward the inside,
place $a_0$ positive crossings in a horizontal band, followed by $a_1$
crossings in a vertical band, etc., until the final $a_n$ crossings in
a (vertical or horizontal) band connect all four strands of the
braid. The convention is that positive integers correspond to positive
crossings and negative integers to negative crossings. The integer
$a_0$ will be $0$ if $\abs{q}<1$, but all subsequent $a_i$ are
required to be nonzero.
A continued fraction expansion where all nonzero $a_j$ have the same
sign as $q$ determines an alternating diagram of the tangle. See
Figure \ref{fig:euc-algorithm}, where both the alternating diagram and
the continued fraction are constructed via a Euclidean algorithm.

\begin{figure}
\psfrag{35}{$ \cfrac{3}{5} $}
\psfrag{123}{$= \cfrac{1}{1 + \cfrac{2}{3}} $}
\psfrag{112}{$= \cfrac{1}{1 + \cfrac{1}{1 + \cfrac{1}{2}}} \, . $}
\begin{center}
\includegraphics{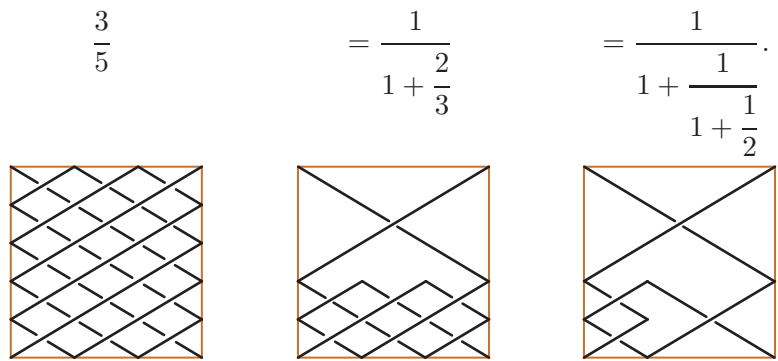}
\end{center}
\caption{Performing the Euclidean algorithm on $p/q$ produces a
  continued fraction expansion and an alternating tangle
  diagram. Here, $3/5 = [0,1,1,2]$.}
\label{fig:euc-algorithm}
\end{figure}

\begin{define}\label{def:monte-reduced}
Let $K$ be a Montesinos link of length $r \geq 3$, obtained as the
cyclic sum of tangles $T_1, \ldots, T_r$ of slope $q_1, \ldots,
q_r$. A diagram $D(K)$ is called a \emph{reduced Montesinos diagram}\index{reduced Montesinos diagram}\index{Montesinos link!reduced diagram}
if it is a cyclic sum of diagrams of the $T_i$, and both of the
following hold:
\begin{enumerate}
\item Either all $q_i$ have the same sign, or $0 < \abs{q_i} < 1$ for
  all $i$.
\item The diagram of $T_i$ comes from a constant--sign continued
  fraction expansion of $q_i$.  If the sign is positive, we say $T_i$
  is a \emph{positive tangle}.\index{positive tangle}\index{tangle!positive, negative}
  Otherwise, $T_i$ is
  a \emph{negative tangle}.\index{negative tangle}
\end{enumerate}
Note that $D(K)$ will be an alternating diagram iff all $q_i$ have the
same sign.
\end{define}

It is an easy consequence of Theorem \ref{thm:monte-classification}
that every (prime, non-split, non-2--bridge) Montesinos link has a
reduced diagram. For example, if $q_i <0$ while $q_j > 1$, one may add
$1$ to $q_i$ while subtracting $1$ from $q_j$. By Theorem
\ref{thm:monte-classification}, this does not change the link
type. Continuing in this fashion will eventually satisfy condition
$(1)$ of the definition.

The significance of reduced diagrams lies in the result, due to
Lickorish and Thistlethwaite, that for prime, non-split Montesinos
links of length $r \geq 3$ the crossing number of $K$ is realized by a
reduced diagram \cite{lick-thistle}.  The proof of this result makes
extensive use of adequacy. In particular, they make the following
observation.

\begin{lemma}[Lickorish--Thistlethwaite \cite{lick-thistle}]\label{lemma:monte-adequate}
Let $D(K)$ be a reduced Montesinos diagram with $r>0$ positive tangles
and $s>0$ negative tangles. Then $D(K)$ is $A$--adequate iff $r \geq
2$ and $B$--adequate iff $s \geq 2$. Since $r+s \geq 3$ in a reduced
diagram, $D$ must be either $A$--adequate or $B$--adequate.
\end{lemma}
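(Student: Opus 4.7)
The plan is to analyze the state graph $\GA$ (and by symmetry $\GB$) of a reduced Montesinos diagram locally, tangle-by-tangle and then at the cyclic gluings, to pinpoint exactly when a 1-edge loop (a segment of $H_A$ with both endpoints on the same state circle) can occur. By replacing $K$ with its mirror if needed (which swaps positive and negative tangles and swaps $A \leftrightarrow B$), it suffices to prove the $A$-adequacy half of the statement. Note that if all tangles have the same sign then $D$ is alternating and reduced with no nugatory crossings, hence both $A$- and $B$-adequate by the classical theory; thus we may assume $0 < |q_i| < 1$ for all $i$.

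First, I would handle each tangle internally. Each $T_i$ comes from a constant-sign continued fraction $[0, a_1, \ldots, a_n]$, so the tangle diagram itself is alternating with no nugatory crossings. Consequently, inside the unit square of $T_i$, the graph $H_A$ already has no 1-edge loop: any such loop would already appear in the (alternating) denominator closure of $T_i$. So every potential 1-edge loop must be created by the gluing pattern of state circles across different tangles.

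Next, I would describe the boundary behavior of $s_A$ on each $T_i$. For a positive tangle ($0 < q_i < 1$), the outermost twist region is a horizontal band of positive crossings whose $A$-resolution is the short (horizontal) one; one checks that the arcs of $s_A$ that reach the pillowcase boundary of $T_i$ consist of a short arc joining NW to NE (the top state-circle arc) and a short arc joining SW to SE (the bottom state-circle arc), and these two arcs belong to two distinct state circles of $s_A$ restricted to $T_i$. For a negative tangle ($-1 < q_i < 0$), the $A$-resolution of each outermost crossing is the long (vertical) one; I would show that the boundary arcs of $s_A$ in $T_i$ run NW-to-SW and NE-to-SE, producing the opposite boundary pattern. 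These two local pictures are the only two boundary patterns that can occur in a reduced diagram.

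Now for the cyclic sum. The NE corner of $T_i$ is joined to the NW corner of $T_{i+1}$ and SE to SW; combining the boundary patterns above, the state circles of $s_A(D)$ are obtained by concatenating local NW–NE or NW–SW chords along the cyclic arrangement. I would then show, by walking around this cyclic chord diagram:
\begin{enumerate}
\item If $r \geq 2$, then between any two consecutive positive tangles the top and bottom ``short'' arcs create independent state circles, and tracing any crossing $c$ inside any $T_i$ shows its two segment endpoints land on distinct global state circles. Hence no 1-edge loop occurs and $D$ is $A$-adequate.
\item If $r = 0$, every tangle is negative; following the top strand from NW of $T_1$ one returns to NW of $T_1$ after going around all tangles, so the ``top'' arc of each negative tangle belongs to a single global state circle $C$. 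The outermost crossing of any $T_i$ has both segment-endpoints on $C$, producing a 1-edge loop.
\item If $r = 1$, with say $T_1$ positive and $T_2, \ldots, T_r$ negative, the short top arc NW–NE of $T_1$ attaches, through the cyclic sum, to the vertical chords of each negative tangle, and these concatenate into a single state circle $C$. Then an outermost crossing in any adjacent negative tangle $T_2$ or $T_r$ has both endpoints on $C$, again giving a 1-edge loop.
\end{enumerate}

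The main obstacle will be making the case $r=1$ fully rigorous: one must verify that the chord concatenation really does close up into a single state circle visiting both endpoints of some specific segment, which requires keeping track of which side of each tangle the ``outermost'' crossing of the continued fraction sits on. I expect the cleanest way is to record the boundary pattern of each $T_i$ as an unoriented pairing of $\{\mathrm{NW},\mathrm{NE},\mathrm{SE},\mathrm{SW}\}$ (horizontal for positive, vertical for negative), then read off the state circles of $s_A(D)$ from the resulting chord diagram on a cycle of $r+s$ vertices, and finally check directly that one of these circles hits both ends of an outermost segment precisely when the number of ``horizontal'' tangles is at most one. Combined with the internal step, this yields the stated equivalence $A$-adequate $\iff r \geq 2$; the $B$-adequate statement follows by mirror symmetry, and the last assertion is immediate since $r + s \geq 3$.
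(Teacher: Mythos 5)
Your overall plan --- resolve each tangle, record which corners its two non-closed arcs of $s_A$ join, and then read off the state circles of the cyclic sum from a chord diagram --- is the argument the paper has in mind (the lemma is quoted from Lickorish--Thistlethwaite, and the paper only remarks that Figures \ref{fig:tangle-resolutions} and \ref{fig:tangle-decomp} yield a quick proof). The genuine gap is that your two local pictures are mirrored, and the global tracing does not survive the swap. For a reduced diagram with $0<|q_i|<1$ the outermost band of each tangle is the \emph{vertical} band of $a_1$ crossings (since $a_0=0$, not a horizontal band as you assert), and in the all-$A$ state it is the \emph{positive} tangles whose two boundary arcs run along the east and west sides (short resolution, with the $a_1$ outermost segments joining those two arcs), while the \emph{negative} tangles have their arcs along the north and south. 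This is exactly what produces the region structure of Lemma \ref{lemma:monte-poly-regions} and Figure \ref{fig:tangle-decomp} (all positive tangles in one complementary region of $s_A$; each negative block enclosed by one state circle running along its north and south), and it is what makes the lemma true: when $r=1$ the east and west arcs of the unique positive tangle lie on a single state circle and each of its outermost crossings is a $1$--edge loop, while for $r\geq 2$ those arcs lie on two distinct ``gap'' circles. With your assignment (positive: NW--NE and SW--SE; negative: NW--SW and NE--SE), an honest tracing yields the mirror characterization ``$A$--adequate iff $s\geq 2$'': for $r=3,\ s=1$ your picture forces $1$--edge loops at the outermost crossings of the unique negative tangle, and for $r=1,\ s\geq 2$ it produces no $1$--edge loop at all. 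Your cases (2) and (3) already betray the inconsistency: case (2) invokes a ``top arc'' of a negative tangle that your stated local picture does not have, and the case (3) claim that an outermost crossing of an adjacent negative tangle has both endpoints on your circle $C$ fails under your own picture once $s\geq 2$, since its second endpoint lies on the circle shared with the next negative tangle.

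Separately, even after the local pictures are interchanged, one step is missing. For the tangles whose two boundary arcs do land on a common state circle (on the $A$--side these are the negative tangles, whose north and south arcs both lie on the circle enclosing their block), you must rule out a \emph{single} segment of $H_A$ joining those two arcs; such a segment is exactly what an integer-slope tangle would create, and excluding it is where the hypothesis $0<|q_i|<1$ does real work (for slopes in $(-1,-1/2]$ one gets instead the harmless $2$--edge loops of Lemma \ref{lemma:monte-2edge-loop}(2), as in Figure \ref{fig:neg-tangle}). Your ``internal'' step only treats segments with both endpoints on the same component of $s_A\cap T_i$, via alternating closures, so it never addresses this configuration; in that step you should also choose the closure with care, since for example the denominator closure of a slope-$1/2$ tangle has nugatory crossings. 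With the two local resolutions swapped and this spanning-segment check added, your chord-diagram argument does give the lemma.
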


Note that if $r=0$ or $s=0$, then $D(K)$ is an alternating diagram,
which is both $A$-- and $B$--adequate. Thus it follows from Lemma
\ref{lemma:monte-adequate} that every Montesinos link is $A$-- or
$B$--adequate. This turns out to be enough to determine the crossing
number of $K$.

In constructing an alternating tangle diagram from a continued
fraction, we had a number of choices, as follows. The integer $a_1$
corresponds to $a_1$ positive crossings in a vertical band --- which
can be at the top or bottom of the band. Similarly, the second integer
$a_2$ corresponds to crossings in a horizontal band --- which can be
at the left or right of the band. For example, in Figure
\ref{fig:euc-algorithm}, the first crossing was chosen to go on the
top of the tangle, and the second crossing was chosen on the left side
of the horizontal band. Reversing these choices still produces a
reduced diagram. However, for our analysis of $I$--bundles and guts,
we will prefer a particular choice.

\begin{define}\label{def:admissible-tangle}
Let $T$ be a rational tangle of slope $q$, where $0 < \abs{q} < 1$. If
$q>0$, we say that an alternating diagram $D(T)$ is
\emph{admissible}\index{admissible} if all the crossings in a vertical
band are at the top of the band, and all the crossings in a horizontal
band are on the right of the band. If $q<0$, we say that an
alternating diagram $D(T)$ is \emph{admissible}\index{admissible} if
all the crossings in a vertical band are at the top of the band, and
all the crossings in a horizontal band are on the left of the
band. For example, the diagram in Figure \ref{fig:euc-algorithm} is
admissible.

A reduced Montesinos diagram $D(K)$ is called
\emph{admissible}\index{admissible}\index{Montesinos link!admissible diagram}
if the sub-diagram $D(T_i)$ is admissible for every tangle of slope $0
< \abs{q_i} < 1$.
\end{define}

For the rest of this chapter, we will assume that $D(K)$ is a reduced,
admissible, Montesinos diagram. This assumption does not restrict the
class of links under consideration, because every reduced diagram can
be made admissible by a sequence of flypes.  We also remark that the
placement of crossings in vertical and horizontal bands implies that
every reduced, admissible diagram is also twist--reduced (see
Definition \ref{primetwist} on page \pageref{primetwist}).

Our goal is to understand the guts of $M_A = S^3
\cut S_A$ corresponding to a reduced, admissible diagram.

\begin{theorem}\label{thm:monteguts}\index{Montesinos link!guts of state surface}\index{guts!for Montesinos links}\index{$\GRA$, $\GRB$: reduced state graph!relation to guts}
Suppose $K$ is a Montesinos link with a reduced admissible diagram
$D(K)$ that contains at least three tangles of positive slope.  Then
$D$ is $A$--adequate, and
$$\negeul( \guts(M_A))=\negeul(\GRA).$$
\end{theorem}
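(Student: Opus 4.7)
The plan is to derive this from the general structure theorem in two stages. The first stage is immediate: since $D(K)$ has at least three, and in particular at least two, tangles of positive slope, Lemma \ref{lemma:monte-adequate} tells us that $D$ is $A$--adequate. Hence the essential spanning surface $S_A$, the prime polyhedral decomposition of $M_A$, and the full machinery of Chapters \ref{sec:polyhedra}--\ref{sec:epds} all apply. By Theorem \ref{thm:guts-general}, the identity
\[
\negeul(\guts(M_A)) \;=\; \negeul(\GRA) - \|E_c\|
\]
reduces the theorem to showing that $\|E_c\|=0$, i.e.\ that the $I$--bundle of the upper polyhedron can be spanned entirely by simple and semi-simple EPDs.

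The second stage is the detailed combinatorial analysis. First I would describe explicitly the graph $H_A$ for a reduced admissible Montesinos diagram: in each positive rational tangle $T_i$ the $A$--resolution is ``short,'' so the continued-fraction expansion of $q_i$ produces parallel segments joining the same pair of state circles (giving bigons in $H_A$ and duplicate edges in $\GA$); in each negative tangle with $-1<q_j<0$, the $A$--resolution is ``long'' and produces a staircase pattern in $H_A$ that is entirely embedded, contributing no 2--edge loops internally. Using admissibility, one identifies exactly where non-prime arcs are needed: they appear only at the junctions where tangles are summed, and each such non-prime arc separates one tangle from its neighbor inside the outermost state circle of $s_A$.

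Next I would invoke Theorem \ref{thm:2edgeloop}: any EPD $E$ in the upper polyhedron is carried by a 2--edge loop of $\GA$, of one of the seven types $\mathcal{A}$--$\mathcal{G}$. The 2--edge loops of $\GA$ come in two flavors: \emph{internal loops}, whose two segments belong to a single twist region inside one positive tangle, and hypothetical \emph{cross-tangle loops}, whose segments lie in different tangles. For internal loops the associated EPD is simple (it bounds a bigon face), so contributes nothing to $E_c$; this handles the types $\mathcal{A}$--$\mathcal{D}$ that live inside a single polyhedral region. For the types $\mathcal{E}$, $\mathcal{F}$, $\mathcal{G}$ involving a non-prime arc, I would use tentacle chasing (following the Shortcut, Staircase, Utility and Parallel Stairs lemmas of Chapter \ref{sec:polyhedra}) to show that, in an admissible Montesinos configuration with at least three positive tangles, any such EPD must parabolically compress through a bigon in one of the tangles into a union of simple disks. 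Equivalently, one shows that no cross-tangle 2--edge loop supports a \pitos \ disk that fails to parabolically compress to simple ones --- the key point being that the admissibility convention forces each positive tangle to contribute its twist-region bigons on the side of the state circle facing the non-prime arc, so any candidate complex EPD is pushed across those bigons.

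The main obstacle, and where the bulk of technical work will live, is the tentacle-chasing verification for types $\mathcal{E}$--$\mathcal{G}$: one must trace the shaded arcs of such an EPD across the non-prime arc separating two positive tangles (or a positive and a negative tangle) and check that the admissibility of the diagram, combined with the presence of at least three positive tangles, rules out a \pitos\ complex configuration. Once this case analysis is complete, the minimality of $E_c$ (Lemma \ref{lemma:spanning-upper}\eqref{item:min-complex}) yields $\|E_c\|=0$, and the theorem follows from Theorem \ref{thm:guts-general}.
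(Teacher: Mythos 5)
Your first stage is exactly the paper's: $A$--adequacy follows from Lemma \ref{lemma:monte-adequate}, and Theorem \ref{thm:guts-general} reduces everything to showing $\|E_c\|=0$. The gap is in the second stage, where your description of the combinatorics of $H_A$ is wrong in two ways that would send the case analysis after the wrong configurations. First, by Lemma \ref{lemma:monte-poly-regions} \emph{all} positive tangles lie in a single prime polyhedral region, and non-prime arcs occur only between consecutive negative tangles inside a negative block; there are no non-prime arcs ``at the junctions where tangles are summed'' in general, and none adjacent to a positive tangle, so the proposed tentacle chase ``across the non-prime arc separating two positive tangles (or a positive and a negative tangle)'' concerns a configuration that never arises. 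Second, and more seriously, your classification of $2$--edge loops is backwards. By Lemma \ref{lemma:monte-2edge-loop}, once the diagram has at least three positive tangles the only $2$--edge loops are (a) loops whose edges lie in a single twist region and (b) loops whose \emph{two segments both lie inside a single negative tangle} of slope $-1<q\le -1/2$, spanning it north to south through the innermost disk $I$ of Figure \ref{fig:neg-tangle}; genuinely ``cross-tangle'' (east--west) loops occur only when there are exactly two positive tangles, which the hypothesis excludes. Your claim that negative tangles contribute no internal $2$--edge loops, with the dangerous loops being cross-tangle ones, therefore omits precisely the hard case.

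That omitted case is where essentially all of the work in the paper lives: Proposition \ref{prop:monte-main} first disposes of twist-region loops via Lemma \ref{lemma:remove-bigon} (collapsing bigons leaves $\|E_c\|$ unchanged --- note these EPDs are simple \emph{or semi-simple}, not automatically simple, and types $\mathcal{A}$--$\mathcal{D}$ are not confined to single twist regions), and then shows that any EPD over a north--south bridge loop must pass through the innermost disk $I$ (Lemma \ref{lemma:mont-innermost-color}), after which its two ideal vertices can sit in only five kinds of locations; ruling out a complex disk requires the structural lemmas on shaded faces (Lemma \ref{lemma:monte-stair-length}, the Head Locator Lemma \ref{lemma:head-locator}, Lemmas \ref{lemma:mont-topandbottom} and \ref{lemma:mont-outsidenegative}) and a lengthy enumeration of vertex-type combinations, each compressing to bigons. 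None of this is reachable from the case structure you set up, and the heuristic you offer in its place (admissibility forcing a compression ``across those bigons'' at tangle junctions) does not engage the actual obstruction, so as written the proof has a genuine gap rather than a different but complete route.
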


Note that the $A$--adequacy of $D$ follows immediately from Lemma
\ref{lemma:monte-adequate}.  Similarly, if $D(K)$ contains at least
three tangles of negative slope, then it is $B$--adequate and
$$\negeul( \guts(M_B))=\negeul(\GRB).$$

Recall that in Theorem \ref{thm:guts-general} on page
\pageref{thm:guts-general}, we have expressed $\negeul( \guts(M_A))$
in terms of the negative Euler characteristic $\negeul(\GRA)$ and the
number $||E_c||$ of complex disks required to span the $I$--bundle 
of the upper polyhedron.
% cardinality of a certain spanning set $E_c$ of complex disks in the
% upper polyhedron of the decomposition of $M_A$. 
 Thus, to prove
Theorem \ref{thm:monteguts}, it will suffice to show that $||E_c|| = 0$.  
If $D(K)$ is alternating and twist--reduced, all
$2$--edge loops in $\GA$ belong to twist regions, hence the result
follows by Corollary \ref{cor:onlybigons}. Thus, for the rest of the
chapter, we will assume that $D(K)$ is non-alternating; that is,
$D(K)$ contains at least three tangles of positive slope and at least
one tangle of negative slope.

We will prove the desired statement at the end of the chapter, in
Proposition \ref{prop:monte-main}.  In turn, Proposition
\ref{prop:monte-main} relies on knowing a lot of detailed information
about the structure of shaded faces in the upper polyhedron, along
with their tentacles.  The next section is devoted to compiling this
information.

\section{Polyhedral decomposition}
In this section we will describe the polyhedral decomposition of
$M_A$, in the case of Montesinos diagrams.  Then, we prove several
tentacle--chasing lemmas about the shape of shaded faces in the upper
polyhedron.

\begin{lemma}\label{lemma:monte-poly-regions}
Suppose that a non-alternating diagram $D(K)$ is the cyclic sum of
positive slope tangles $P_1, \ldots, P_r$ and negative slope tangles
$N_1, \ldots, N_s$. Here, the order of the indices indicates that
$P_1$ is clockwise (i.e. west) of $P_2$, but does not give any
information about the position of $P_i$ relative to any $N_j$. Then
the polyhedral regions of the projection plane are as follows:
\begin{enumerate}
\item There is one polyhedral region containing all of the positive
  tangles. The lower polyhedron of this region corresponds to the
  alternating diagram obtained from a cyclic sum of $P_1, \ldots,
  P_r$.
\item Every negative tangle $N_j$ corresponds to its own polyhedral
  region. The lower polyhedron of this region has a diagram coinciding
  with the alternating diagram of the denominator closure of $N_j$.
\end{enumerate}
\end{lemma}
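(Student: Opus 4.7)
The plan is to describe the all-$A$ state circles $s_A(D)$ within each rational tangle, identify the complementary regions on the projection plane, and then apply Lemma~\ref{lemma:lower-alt} to recognize the resulting lower polyhedra.

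The key input is a local analysis, tangle by tangle. For an admissible positive tangle $P_i$ with $0 < q_i < 1$, every crossing is positive and the all-$A$ resolution of each twist region is the short one, contributing a bundle of parallel segments to $H_A$. By induction on the length of the continued fraction $[0,a_1,\ldots,a_n]$, I would establish two structural facts: (a) in $s_A$ the NW--NE and SW--SE boundary strands lie on state-circle arcs that travel along the north and south boundaries of the pillowcase $P_i$; and (b) no state circle separates the interior of $P_i$ from the exterior of its pillowcase. In contrast, for an admissible negative tangle $N_j$ with $-1 < q_j < 0$, every crossing is negative and every twist region receives the long $A$-resolution, producing chains of bigons; the parallel induction shows that $s_A$ contains a state circle $C_j$ that travels along the pillowcase boundary of $N_j$ and seals off its interior from the rest of the diagram.

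Once these local pictures are in hand, the global structure is immediate: the circles $C_j$ partition the projection sphere into the interiors of the $s$ negative tangles together with one exterior region that contains all of the positive tangles, joined to one another along the common north/south state-circle arcs produced in (a). After verifying that no non-prime arcs are required in the prime decomposition of Section~\ref{subsec:primeness}---which follows from the fact that each non-integer rational tangle is prime and that the cyclic sum of prime tangles along a shared state circle admits no separating two-point arc---these $s+1$ regions are exactly the polyhedral regions of $D$. By Lemma~\ref{lemma:lower-alt}, the lower polyhedron inside $C_j$ corresponds to the alternating denominator closure of $N_j$, and the lower polyhedron of the exterior region corresponds to the alternating Montesinos diagram obtained as the cyclic sum of just $P_1,\ldots,P_r$.

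The main obstacle will be the structural induction describing $s_A$ inside a rational tangle of arbitrary continued-fraction length, since admissible diagrams alternate horizontal and vertical twist regions and one must carefully track how state circles evolve when a twist region is peeled off; the base case and inductive step look slightly different depending on whether the outermost band is horizontal or vertical and on the sign of the tangle. A secondary technical issue is ruling out non-prime arcs that could subdivide the positive region: one must verify that no arc with endpoints on a single shared state circle can separate two nontrivial groups of positive tangles, which uses the admissibility hypothesis and the assumption that each $q_i \notin \ZZ$.
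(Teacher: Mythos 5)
Your outline (describe $s_A$ tangle by tangle, identify the complementary regions, then apply Lemma \ref{lemma:lower-alt}) is the same strategy the paper uses, but the local picture you build it on is wrong at the crucial point. In each rational tangle the two non-closed arcs of $s_A \cap T$ hug only \emph{one} pair of opposite sides of the pillowcase, leaving the other pair open; for a negative tangle the open sides are the ones facing its east and west neighbours in the cyclic sum (in Figure \ref{fig:neg-tangle} the state circle runs only along the north and south of the tangle). There is therefore no state circle ``traveling along the pillowcase boundary of $N_j$'' that seals it off: the curve enclosing a negative tangle is completed by arcs lying inside the \emph{adjacent} tangles, and when the adjacent tangle is itself negative there is no separating circle between them at all. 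Consequently the complementary regions of $s_A$ are the positive region together with one region for each \emph{negative block} (maximal string of consecutive negative tangles), as in Figure \ref{fig:tangle-decomp} --- so your Step 2 can only produce one region per block, not one per tangle. (A smaller symptom of the same misreading: it is not true that every twist region of an admissible negative tangle receives the long $A$--resolution; the bands alternate between the two directions, only one of which is long, and the short ones are exactly the $A$--regions that later give the type (1) loops of Lemma \ref{lemma:monte-2edge-loop}.)

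The fatal step is then your plan to ``verify that no non-prime arcs are required.'' Non-prime arcs are not an obstacle to be excluded here; they are precisely what conclusion (2) needs. In the second stage of the decomposition (Section \ref{subsec:primeness}, Definition \ref{def:max-nonprime}) one inserts a non-prime arc across the corridor between every pair of adjacent negative tangles, and only after this cutting does each $N_j$ acquire its own polyhedral region, whose lower polyhedron is the denominator closure of $N_j$. These non-prime switches inside negative blocks are used repeatedly later in the chapter (for instance in Lemma \ref{lemma:monte-stair-length} and in the classification of EPDs), so an argument that purports to show they do not exist would be proving a false statement whenever the diagram contains two consecutive negative tangles --- a configuration the lemma explicitly allows. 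The repair is to carry out your induction so that it records which pair of sides of each tangle is open, deduce from this the block structure of the complementary regions of $s_A$, and then invoke (rather than rule out) a maximal collection of non-prime arcs to split each block into its individual negative tangles; the positive region, by contrast, genuinely requires no non-prime arcs.
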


Recall that the numerator and denominator closures of a tangle, as
well as the cyclic sum of several tangles, are defined in Definition
\ref{def:num-denom}.

\begin{proof}
Consider the way in which the state circles of $s_A$ intersect the
individual tangles. For each tangle $T$, its intersection with $s_A$
will contain some number of closed circles, along with exactly two
arcs that connect to the four corners of $T$. When $T$ is a rational
tangle, one can easily check that the non-closed arcs of $s_A \cap T$
will run along the north and south sides of $T$ if its slope is
positive, and along the east and west sides of $T$ if its slope is
negative. See Figure \ref{fig:tangle-resolutions}.

\begin{figure}
\begin{center}
\includegraphics{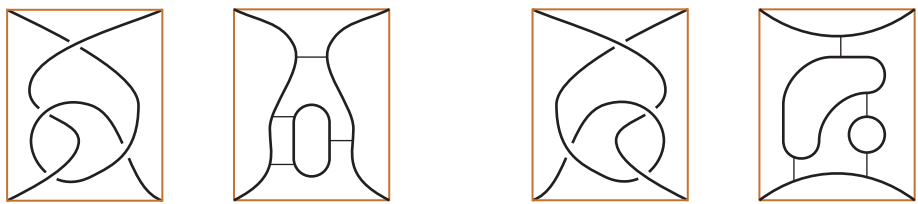}
\end{center}
\caption{Left: A positive tangle and its $A$--resolution. Right: A
  negative tangle and its $A$--resolution.}
\label{fig:tangle-resolutions}
\end{figure}

Now, recall that the polyhedral decomposition described in Chapter
\ref{sec:decomp} proceeds in two steps. In the first step, we cut
$M_A$ along white faces; the resulting polyhedra correspond to the
(non-trivial) regions in the complement of the state circles $s_A$.
Here, all of the positive tangles are grouped into the same
complementary region. On the other hand, every maximal consecutive
sequence of negative tangles $N_i, \ldots, N_{i+k}$ defines its own
complementary region.  See Figure \ref{fig:tangle-decomp}. We call
such a maximal string of negative tangles a \emph{negative
  block}\index{negative block}.

\begin{figure}
\begin{center}
\psfrag{p}{$\oplus$}
\psfrag{m}{$\ominus$}
\includegraphics[width=5in]{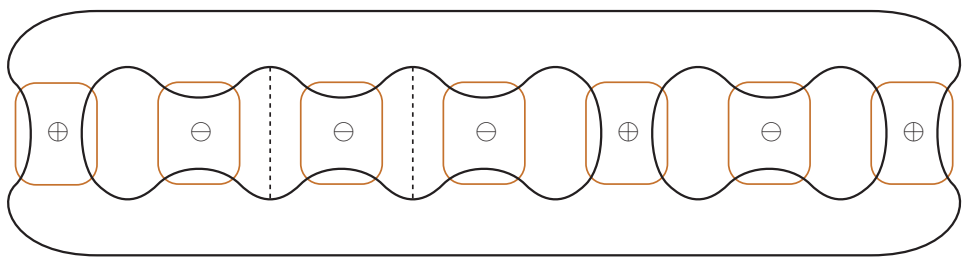}
\end{center}
\caption{Polyhedral regions of $D$ correspond to individual negative
  tangles, as well as the sum of all positive tangles. Dashed lines
  are non-prime arcs. A maximal string of consecutive negative
  tangles, called a \emph{negative block}\index{negative block!example}, is
  shown on the left.}
\label{fig:tangle-decomp}
\end{figure}

The second step of the polyhedral decomposition is the cutting along
non-prime arcs. The region containing all the positive tangles is
prime, and does not need to be cut further. On the other hand, if
negative tangles $N_i$ and $N_{i+1}$ are adjacent in $D$, they will be
separated by a non-prime arc $\alpha$. Thus, at the end of this second
step, every negative tangle corresponds to its own lower polyhedron.

The claimed correspondence between these lower polyhedra and
alternating link diagrams is a direct consequence of Lemma
\ref{lemma:nonprime-3balls} on page \pageref{lemma:nonprime-3balls}.
\end{proof}

\begin{remark}
Examining Figure \ref{fig:tangle-decomp}, along with the shape of
individual positive tangles in Figure \ref{fig:tangle-resolutions},
gives a quick proof of Lemma \ref{lemma:monte-adequate}.
\end{remark}

\begin{lemma}\label{lemma:monte-tentacle}
Let $D(K)$ be a reduced, admissible, non-alternating, $A$--adequate
Montesinos diagram.  Let $s$ be a segment of the graph $H_A$
constructed from $D$. Consider the two tentacles that run along
$s$. Then
\begin{enumerate}
\item\label{item:terminate-now} At least one of these two tentacles
  will terminate immediately downstream from $s$.
\item\label{item:terminate-corner} Both tentacles terminate
  immediately downstream from $s$, unless $s$ is adjacent to the NE or
  SW corner of a positive tangle, or the NW or SE corner of a negative
  tangle.
\end{enumerate}
\end{lemma}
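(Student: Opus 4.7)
The plan is to prove both parts of the lemma by a combinatorial analysis of the local structure of $H_A$ around an arbitrary segment $s$, using the admissibility of the Montesinos diagram. I will work throughout with positive tangles; the negative case follows by the obvious reflection symmetry (exchanging N/S with E/W and NE/SW with NW/SE). Recall from Figure \ref{fig:tangle-resolutions} and Lemma \ref{lemma:monte-poly-regions} that in a positive tangle, the arcs of $s_A$ exiting the tangle lie along its N and S boundaries, so state circles thread horizontally across the tangle, and segments of $H_A$ (coming from crossings) are attached to these state circles from above and below.

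The key structural claim I would establish is: in an admissible diagram of a positive tangle $T$ of slope $p/q \in (0,1)$, along any horizontal state circle $C$ in the interior of $T$, the segments attached to $C$ from above and from below are packed without intervening gaps, so that consecutive segments along $C$ alternate in the following sense: reading left to right along $C$, after any segment attached from above one immediately meets either a segment attached from below or the NE corner of $T$, and after any segment attached from below one immediately meets either a segment attached from above or the SW corner of $T$. I would prove this claim by induction on the depth $n$ of the continued fraction expansion $[0,a_1,\ldots,a_n]$ of the slope. The base case $n=1$ is a simple twist region, where the packing is evident. For the inductive step, an admissible tangle of slope $[0,a_1,\ldots,a_n]$ is obtained from one of slope $[0,a_2,\ldots,a_n]$ (up to a tangle-inverting reflection) by appending an outermost band of $a_1$ crossings at the top or right; admissibility pins down the position of this band and forces the new segments to line up flush against the existing state circles, preserving the packing property.

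Given the structural claim, conclusion \eqref{item:terminate-corner} follows immediately: at any segment $s$ whose bottom state circle $C_b$ and top state circle $C_t$ both lie in the interior of $T$, the right tentacle of $s$ (which runs along $C_b$ to the right) meets an adjacent segment attached from below and terminates at once, and symmetrically the left tentacle terminates at once against an adjacent segment attached to $C_t$ from above. The exceptional segments are precisely those whose tentacle exits $T$ before meeting another segment, and by the structural claim this occurs only when the right tentacle passes through the NE corner or the left tentacle passes through the SW corner. Conclusion \eqref{item:terminate-now} then follows because at most one of the two tentacles at $s$ can escape in this way: the NE corner is available only to the right tentacle, and the SW corner only to the left.

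The main obstacle will be verifying the inductive packing argument cleanly across the transitions between the alternating vertical and horizontal layers of the continued fraction construction, where a priori a gap could open along a state circle. The admissibility conditions (vertical bands at the top, horizontal bands at the right for positive tangles) are precisely the hypotheses needed to rule this out, but this requires careful bookkeeping at each inductive step. Closed interior state circles, which appear for tangles of sufficient depth, must also be handled; the same inductive packing argument applies to them with minor modifications at the innermost layer.
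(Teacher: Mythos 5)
Your proof of conclusion (1) contains a genuine logical gap at exactly the delicate point. You argue that at most one tentacle can escape because ``the NE corner is available only to the right tentacle, and the SW corner only to the left,'' but this does not preclude the bad configuration in which \emph{both} tentacles escape, one through each corner. That happens precisely when a single segment $s$ is adjacent to both corners, i.e.\ when $s$ spans the tangle from one side to the other; and ruling this out requires the hypotheses you never invoke. In the paper's argument, a positive tangle spanned by one segment adjacent to both exceptional corners would be a single crossing of slope $\pm 1$, contradicting reducedness, while a segment spanning a negative tangle from north to south would create a one-edge loop, contradicting $A$--adequacy. Relatedly, the negative case is not an ``obvious reflection'' of the positive case: reflecting a diagram interchanges the all--$A$ and all--$B$ resolutions, and within the fixed all--$A$ state positive and negative tangles are genuinely different --- the non-closed arcs of $s_A$ run along the east and west sides of a positive tangle but along the north and south sides of a negative one (see Figure \ref{fig:tangle-decomp} and Figure \ref{fig:neg-tangle}) --- which is exactly why the exceptional corners are NE/SW in one case and NW/SE in the other. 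Your global picture of a positive tangle (horizontal state circles with segments attached from above and below) is inconsistent with this, so the corner bookkeeping cannot be taken on faith.

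The ``packing'' claim that carries your whole argument is also both unproven and misdirected. As stated it concerns closed state circles in the interior of $T$, yet its conclusion refers to the NE and SW corners of $T$, which lie on the boundary arcs of $s_A\cap T$, not on any interior circle; the exceptional tentacles in the lemma are precisely those running along these boundary arcs (or out of the tangle along a side shared with the rest of the diagram), which your claim does not control. Moreover, the proposed induction does not transport the claim: peeling off the outermost band of the continued fraction relates $T$ to a tangle of inverted slope, i.e.\ a $90^\circ$--rotated tangle, and your statement is direction-specific (above/below, left-to-right, NE/SW), so the inductive hypothesis is not of the form you need, and ``up to a tangle-inverting reflection'' again confuses $A$-- with $B$--resolutions. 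Finally, none of this heavy structure is needed: the paper's proof is a short case analysis on the two endpoint circles of $s$. If an endpoint circle is contained in $T$ it is innermost (the tangle diagram is alternating), so every segment attached to it lies on the tentacle's side and the tentacle dies at the first segment it meets; if it is not contained in $T$, it is one specific side of the tangle (east/west for positive, north/south for negative, by Figure \ref{fig:tangle-resolutions}), and a tentacle turning along that side terminates at the next segment attached to it unless $s$ is the segment adjacent to the corresponding corner; segments spanning a tangle are then handled by reducedness and $A$--adequacy as above.
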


\begin{proof}
Let $C$ and $C'$ be state circles connected by $s$. Note that $s$ is
contained in some rational tangle $T$. Suppose, as a warm-up, that
that $C$ is an innermost circle entirely contained in $T$. Then the
tentacle that runs downstream toward $C$ will terminate immediately
after reaching $C$, and conclusion \eqref{item:terminate-now} is
satisfied. We store this observation for later.

To prove the lemma, we consider three cases, conditioned on whether
$C$ and/or $C'$ are entirely contained in $T$.

\smallskip

\noindent \underline{Case 1:} $C$ and $C'$ are entirely contained in
$T$.  Then, by the above observation, the tentacles that run
downstream to both $C$ and $C'$ terminate immediately. Thus
\eqref{item:terminate-now} and \eqref{item:terminate-corner} both
hold, proving the lemma in this case.

\smallskip

\noindent \underline{Case 2:} $C$ is entirely contained in $T$ but
$C'$ is not. By the above observation, the tentacle that runs
downstream toward $C$ will terminate immediately.  Also, Figure
\ref{fig:tangle-resolutions} shows that a state circle $C'$ that is
not entirely contained in $T$ must constitute the north or south side
of a negative tangle, or else the east or west side of a positive
tangle.

Suppose, for example, that $T$ is a positive tangle and that $C'$
forms the east side of it. Then the tentacle running from $s$ toward
$C'$ will turn north along $C'$. Hence, if there is another segment of
$H_A$ that meets the east side of $T$, further north than $s$, then
the tentacle will need to terminate immediately downstream from
$s$. The only way that this tentacle can continue running downstream
along $C'$ is if $s$ is adjacent to the NE corner of the tangle.

By the same argument, if $T$ is a positive tangle and $C'$ is the west
side of $T$, then the tentacle that runs downstream toward $C'$ will
have to terminate immediately after $s$, unless $s$ is adjacent to the
SW corner of the tangle. Similarly, if $T$ is a negative tangle and
$C'$ is its north or south side, then the tentacle that runs
downstream toward $C'$ will have to terminate immediately after $s$,
unless $s$ is adjacent to the NW or SE corner of the tangle. This
proves the lemma in Case 2.

\smallskip

\noindent \underline{Case 3:} neither $C$ nor $C'$ are contained in
$T$. Then, by the hypotheses of the lemma, these sides of the tangle
must be connected by a segment $s$ of $H_A$. If $T$ is a negative
tangle, then the north and south sides of $T$ belong to the same state
circle. Thus any segment $s$ spanning $T$ north to south would violate
$A$--adequacy. So $T$ must be a positive tangle, whose west side is
$C$ and whose east side is $C'$.

If $s$ is not adjacent to the NE corner of tangle $T$, then the
argument above implies that the tentacle running downstream toward
$C'$ will terminate immediately downstream from $s$. Similarly, the
tentacle running downstream toward $C$ will terminate, unless $s$ is
adjacent to the SW corner of $T$. The only situation in which neither
of these tentacles terminate immediately after $s$ is the one where
$s$ is simultaneously adjacent to the NE and SW corners of the
tangle. But then the tangle $T$ contains a single crossing, and has
slope $+1$, violating Definition \ref{def:monte-reduced} of a reduced
diagram. This proves the lemma.
\end{proof}

\begin{lemma}\label{lemma:monte-stair-length}
Let $D(K)$ be as in Lemma \ref{lemma:monte-tentacle} and let $\gamma$
be a simple arc in a shaded face that starts in an innermost disk of
$H_A$. (See Definition \ref{def:simple-face} on page
\pageref{def:simple-face}.) Then the course of $\gamma$ must satisfy
one of the following:
\begin{enumerate}
\item\label{item:terminate-2steps} $\gamma$ terminates after running
  downstream along 2 or fewer segments of $H_A$.
\item\label{item:terminate-3steps} $\gamma$ terminates after running
  downstream along 3 segments of $H_A$, where the middle segment spans
  a positive tangle east to west.
\item\label{item:run-into-head} $\gamma$ runs downstream along one
  tentacle, through a non-prime switch, and then upstream along one
  tentacle into another innermost disk. In this case, both innermost
  disks belong to consecutive negative tangles.
\end{enumerate}
\end{lemma}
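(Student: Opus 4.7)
\medskip

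\noindent\textbf{Proof plan.}
The plan is to follow $\gamma$ forward step by step and use Lemma~\ref{lemma:monte-tentacle} as the controlling restriction at every segment. Since $\gamma$ starts in an innermost disk, which is a source of the directed spine of its shaded face, $\gamma$ must begin by flowing downstream into some tentacle $T_1$ adjacent to a segment $s_1$. By the Downstream lemma (Lemma~\ref{lemma:downstream}), after traversing $T_1$ the arc either terminates on a white face, continues downstream into a new tentacle $T_2$ whose head is attached to $s_1$'s lower state circle, or passes through a non-prime switch. I would treat these cases in turn.

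First I would handle the purely downstream case. By Lemma~\ref{lemma:monte-tentacle}\eqref{item:terminate-now}--\eqref{item:terminate-corner}, $\gamma$ can only continue past $s_1$ into a second tentacle if $s_1$ sits at one of the privileged corners (NE or SW of a positive tangle, NW or SE of a negative tangle). Re-applying Lemma~\ref{lemma:monte-tentacle} at $s_2$ gives the same alternative: $\gamma$ either stops (giving case~\eqref{item:terminate-2steps}), or $s_2$ also lies at such a corner. Then I would argue, by inspecting the $A$-resolution of an admissible rational tangle as in Figure~\ref{fig:tangle-resolutions}, that the only way two consecutive corner-segments can line up is when the tentacle has entered the NE corner of a positive tangle, crossed along a segment spanning the tangle east-to-west, and is about to exit from the SW corner. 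A third application of Lemma~\ref{lemma:monte-tentacle} at $s_3$ then forces $\gamma$ to terminate, yielding case~\eqref{item:terminate-3steps}. I expect the geometric verification of this admissible-tangle zig-zag to be the main obstacle, since it requires a careful check that no longer staircase can be formed within or across rational tangles under the admissibility hypothesis; Lemma~\ref{lemma:escher} (Escher stairs) will be the key $A$-adequacy input ruling out longer loops.

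Finally I would treat the case where $\gamma$ meets a non-prime switch. By Lemma~\ref{lemma:monte-poly-regions} (and Figure~\ref{fig:tangle-decomp}), the non-prime arcs introduced by the decomposition lie precisely between consecutive negative tangles inside a negative block. So a non-prime switch encountered by $\gamma$ sits on the shared state circle $C$ of two consecutive negative tangles $N_j,N_{j+1}$. Once $\gamma$ crosses the non-prime arc into the adjacent half-disk, the Shortcut lemma (Lemma~\ref{lemma:np-shortcut}) forces it to exit by running downstream across $C$; but a direct application of Lemma~\ref{lemma:monte-tentacle} inside the neighboring negative tangle shows no downstream tentacle can continue, so the only way to produce a simple arc is to switch direction and run upstream from the non-prime switch into an innermost disk of $N_{j+1}$. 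This gives case~\eqref{item:run-into-head}. The bookkeeping obstacle here will be to rule out $\gamma$ crossing several non-prime switches in sequence or re-entering the original negative tangle; simplicity of $\gamma$ together with the Utility lemma (Lemma~\ref{lemma:utility}) should make short work of both.
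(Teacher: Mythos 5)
Your overall strategy---chasing $\gamma$ segment by segment, using Lemma \ref{lemma:monte-tentacle} at each step and the negative-block structure at non-prime switches---is the same as the paper's, but the step you defer as ``the main obstacle'' is exactly where the proof lives, and the tool you propose for it is the wrong one. Lemma \ref{lemma:escher} (Escher stairs) only forbids staircases that close up or return to the same state circle; it says nothing about the \emph{length} of a descending staircase, so it cannot by itself bound $\gamma$ to three segments. What actually stops the arc is tangle-level structure: (i) since every slope satisfies $0 < \abs{q_i} < 1$ in a reduced diagram, no single segment spans a negative tangle north to south, so any segment entering a negative tangle from its north or south side ends on an innermost circle, where the tentacle terminates; and (ii) by Lemma \ref{lemma:monte-poly-regions} the polyhedral region containing all the positive tangles has no non-prime arcs, so the only way to continue downstream from the NE or SW corner of a positive tangle is to enter a negative tangle from its north or south---and then (i) kills the arc. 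These two facts, not $A$--adequacy via Escher stairs, are what force termination. They also show your picture of the three-step case is off: if the initial innermost disk lies in a positive tangle, $\gamma$ terminates after at most two segments, so the three-segment case necessarily starts in a \emph{negative} tangle (exiting through its SE or NW corner), the middle segment spans a positive tangle east to west and is adjacent to its NE (resp.\ SW) corner, and the third segment enters the next negative tangle from the north (resp.\ south) and stops at an innermost circle; your ``enter at the NE corner, exit at the SW corner of one positive tangle'' configuration is not what occurs.

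A secondary problem is the non-prime-switch case: you invoke the Shortcut lemma (Lemma \ref{lemma:np-shortcut}) to ``force'' $\gamma$ to exit the half-disk downstream across $C$. That lemma's \emph{hypothesis} is that the arc exits the half-disk; it cannot force an exit, and in conclusion \eqref{item:run-into-head} the arc does not exit at all---it runs upstream and ends in an innermost disk of the adjacent negative tangle. The correct bookkeeping is simply that after the switch $\gamma$ either runs downstream and terminates immediately (still within case \eqref{item:terminate-2steps}) or runs upstream into the innermost disk of the consecutive negative tangle (case \eqref{item:run-into-head}). Your conclusion for this case is right, but the cited mechanism is not, and the downstream-and-terminate possibility should be acknowledged rather than excluded.
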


See Figure \ref{fig:directed-spine} on page
\pageref{fig:directed-spine} for a review of tentacles and non-prime
switches. One way to summarize the conclusion of the lemma is that, in
the special case of admissible Montesinos diagrams, right--down
staircases in $H_A$ have at most 3 stairs.

\begin{proof}
By hypothesis, $\gamma$ starts in an innermost disk of $H_A$. Let
$s_1$ be the segment of $H_A$ along which $\gamma$ runs out of this
innermost disk. If $s_1$ is not adjacent to the NE or SW corner of a
positive tangle, or the NW or SE corner of a negative tangle, then
Lemma \ref{lemma:monte-tentacle} implies $\gamma$ must terminate
immediately, and conclusion \eqref{item:terminate-2steps} holds.

Suppose $s_1$ belongs to a positive tangle $T_1$, and is adjacent to
its NE corner. By Lemma \ref{lemma:monte-poly-regions}, the polyhedral
region containing $T_1$ does not have any non-prime arcs, hence
$\gamma$ cannot enter a non-prime switch. Thus the only way $\gamma$
can continue running downstream is by entering a negative tangle $T_2$
along a segment $s_2$ on the north side of $T_2$. As we have already
seen, a negative tangle cannot be spanned north to south by a single
segment (otherwise, it would have integer slope and violate Definition
\ref{def:admissible-tangle}). Thus $s_2$ connects to an innermost
circle, and $\gamma$ must terminate after two segments.

Similarly, if $s_1$ is adjacent to the SW corner of a positive tangle
$T_1$, then $\gamma$ can only continue running downstream by entering
a negative tangle $T_2$ along a segment $s_2$ in on the south side of
$T_2$. After this, $\gamma$ must terminate after two segments, and
conclusion \eqref{item:terminate-2steps} holds.

Suppose $s_1$ belongs to a negative tangle $T_1$, and is adjacent to
its SE corner. After running along $s_1$, $\gamma$ can enter a
non-prime switch, or continue running downstream through tentacles.
Suppose, first, that $\gamma$ enters a non-prime switch.  All the
non-prime arcs of $D(K)$ separate consecutive negative tangles, as in
Figure \ref{fig:tangle-decomp}.  After entering a non-prime switch
from the SE corner of $T_1$, $\gamma$ can run downstream to the NE
corner of $T_1$ and terminate, run downstream to the SW corner of the
next negative tangle $T_2$ and terminate, or run upstream to the NW
corner of the negative tangle $T_2$.  After this, $\gamma$ is forced
to run upstream into an innermost disk, as in conclusion
\eqref{item:run-into-head}.

Next, suppose that after running along $s_1$, $\gamma$ continues
downstream through tentacles.  Thus $\gamma$ must enter a positive
tangle $T_2$ through a segment $s_2$ on the west side of $T_2$.  If
$s_2$ leads to an innermost disk, then $\gamma$ must terminate after
two steps.  Alternately, if $s_2$ spans the positive tangle $T_2$ east
to west but is not adjacent to the NE corner of $T_2$, then $\gamma$
must also terminate after two steps.  Finally, if $s_2$ spans the
positive tangle $T_2$ east to west and is adjacent to the NE corner of
$T_2$, then we can repeat the analysis of positive tangles at the
beginning of the proof (with $s_2$ playing the role of $s_1$).  In
this case, $\gamma$ must terminate after at most three steps.  If
there are indeed 3 steps, then the middle segment $s_2$ spans a
positive tangle from west to east, and conclusion
\eqref{item:terminate-3steps} holds.

Finally, suppose $s_1$ belongs to a negative tangle $T_1$, and is
adjacent to its NW corner.  Then we argue as above, with all the
compass directions reversed, and reach the same conclusions.
\end{proof}

\begin{lemma}[Head locator\index{Head locator lemma}]\label{lemma:head-locator}
Let $D(K)$ be a reduced, admissible, $A$--adequate Montesinos diagram
that is non-alternating.  If a shaded face in the upper polyhedron of
the polyhedral decomposition of $M_A$ meets an innermost disk (has a
head) in a positive tangle, then it meets no other innermost disks
elsewhere.  If a shaded face meets an innermost disk in a negative
tangle, then it may meet another innermost disk in the same negative
block, but it will not meet any innermost disk in any other negative
block.
\end{lemma}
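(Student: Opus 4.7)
The plan is to show that an arc through the directed spine of a shaded face that connects two innermost disks must have the very constrained form appearing in case (3) of Lemma \ref{lemma:monte-stair-length}, and then to extract the geographic conclusion directly from that case.

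First, suppose that a shaded face $F$ in the upper polyhedron meets two distinct innermost disks $D_1$ and $D_2$. By Theorem \ref{thm:simply-connected}, the directed spine of $F$ is a tree; hence there is a unique embedded path $\gamma$ in this spine from $D_1$ to $D_2$. Embeddedness makes $\gamma$ simple with respect to $F$ in the sense of Definition \ref{def:simple-face}. Because each innermost disk is a source of the directed spine (every adjacent edge of the spine is outgoing, as recorded just before Definition \ref{def:downstream-upstream}), $\gamma$ must leave $D_1$ by running downstream and must arrive at $D_2$ by running upstream along one of the outgoing edges at $D_2$.

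Now apply Lemma \ref{lemma:monte-stair-length} to $\gamma$ viewed as starting at $D_1$. The three cases of that lemma are exhaustive. In cases (1) and (2), the arc runs only downstream along at most three segments and then terminates without ever reversing direction, and so it cannot reach another innermost disk, since reaching a source requires arriving along one of its outgoing edges (i.e.\ upstream). Therefore case (3) must apply, which simultaneously forces $\gamma$ to have the exact shape ``one downstream tentacle, one non-prime switch, one upstream tentacle into another innermost disk,'' and forces $D_1$ and $D_2$ to belong to two consecutive negative tangles of $D(K)$.

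Both claims of the lemma follow immediately. If $D_1$ lies in a positive tangle, the conclusion of case (3) cannot apply, so no second innermost disk of $F$ can exist; indeed, by Lemma \ref{lemma:monte-poly-regions} the positive-tangle polyhedral region contains no non-prime arcs, and the non-prime arcs elsewhere in $D(K)$ separate only consecutive negative tangles within a single negative block (see Figure \ref{fig:tangle-decomp}). If instead $D_1$ lies in a negative tangle $N_i$, then $D_2$ must lie in a negative tangle consecutive to $N_i$, hence in the same negative block, and case (3) cannot bridge across distinct negative blocks for the same reason. The principal obstacle has already been dispatched inside Lemma \ref{lemma:monte-stair-length}; what remains here is only the short observation that the tree structure of the spine, combined with the source property of innermost disks, upgrades a classification of simple arcs into a statement about where the heads of a single shaded face can live.
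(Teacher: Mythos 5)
Your overall strategy (reduce to the unique simple spine path between two heads and classify it) is reasonable, but there is a genuine gap in the key step: you apply Lemma \ref{lemma:monte-stair-length} to the \emph{entire} path $\gamma$ from $D_1$ to $D_2$ and conclude that case (3) must describe all of $\gamma$, hence that $D_1$ and $D_2$ lie in \emph{consecutive} negative tangles. That conclusion is too strong, and in fact false: a single shaded face can have heads in non-consecutive tangles of the same negative block, joined by a chain that runs downstream out of one head, through a non-prime switch, upstream into the head of the adjacent tangle, then downstream out of that head, through the next switch, and so on (this is exactly the configuration the paper exploits later, e.g.\ the chain of innermost disks in the bottom panel of Figure \ref{fig:case4-innermostv} and the type-(5) analysis in the proof of Proposition \ref{prop:monte-main}). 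Such a path is simple in the sense of Definition \ref{def:simple-face} (it passes monotonically through the intermediate innermost disks), yet its full course fits none of the three cases of Lemma \ref{lemma:monte-stair-length} read literally. The trichotomy of that lemma is only established for the course of an arc up to its termination or its \emph{first} arrival at another innermost disk; its proof never considers continuing past such a disk. So your exhaustiveness claim, and the inference ``therefore $D_2$ is consecutive to $N_i$,'' are not justified when $\gamma$ passes through intermediate heads.

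The gap is repairable, and the repaired argument is close in spirit to yours: apply Lemma \ref{lemma:monte-stair-length} to each successive leg of $\gamma$ between consecutive heads it visits. Starting from a head in a positive tangle, the first leg terminates after at most three downstream segments without meeting a non-prime switch or another head (there are no non-prime arcs in the positive polyhedral region, by Lemma \ref{lemma:monte-poly-regions}), which gives the first claim. Starting from a head in a negative tangle, each leg either terminates or crosses a single non-prime switch into the head of an adjacent tangle of the \emph{same} negative block; inducting along the legs shows all heads of the face lie in that block, which is the second claim. Note that the paper's own proof is much shorter and bypasses Lemma \ref{lemma:monte-stair-length} entirely: a shaded face can reach a second head only by reversing direction, direction reversals in a shaded face occur only at non-prime switches, and all non-prime arcs of a reduced admissible Montesinos diagram lie inside negative blocks.
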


See Figure \ref{fig:tangle-decomp} for the notion of a negative
block. Recall as well that a \emph{head}\index{head!of shaded face}\index{shaded face!head}\index{shaded face!innermost disk} of a shaded face
is an innermost disk, as in Figure \ref{fig:shaded-pieces} on page
\pageref{fig:shaded-pieces}.

\begin{proof}
A shaded face meets more than one innermost disk only when it runs
through a non-prime switch.  Since all non-prime arcs in our diagrams
lie inside of negative blocks, only innermost disks inside the same
negative block can belong to the same shaded face.
\end{proof}

\begin{lemma}\label{lemma:mont-topandbottom}
Let $D(K)$ be a reduced, admissible, non-alternating Montesinos
diagram with at least three positive tangles.  Then tentacles of the
same shaded face in the upper polyhedron of the polyhedral
decomposition of $M_A$ cannot run across the north and the south of
the outside of a single negative block.
\end{lemma}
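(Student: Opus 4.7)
The plan is to argue by contradiction: suppose a shaded face $F$ in the upper polyhedron contains tentacles adjacent to both the external north boundary and the external south boundary of a single negative block $B$. Since, by Theorem \ref{thm:simply-connected}, the spine of $F$ is a tree, I can pick a simple directed arc $\gamma$ on this spine whose endpoints lie in a tentacle $t_N$ along the external north of $B$ and a tentacle $t_S$ along the external south of $B$. The goal is to derive a contradiction by tracking how $\gamma$ must travel from $t_N$ to $t_S$ and by using the severe structural constraints on shaded faces in Montesinos diagrams established in the preceding lemmas of this chapter.

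First I would set up the geometry of state circles near $B$. Using Figure \ref{fig:tangle-resolutions} together with Lemma \ref{lemma:monte-poly-regions}, the external north and external south of $B$ lie in the single polyhedral region containing all positive tangles, but belong to \emph{distinct} state circles $C_N$ and $C_S$: within each negative tangle of $B$, the $A$-resolution sends state arcs east-west, so the NW–NE points along the top (and SW–SE along the bottom) of the block are never joined internally by an $A$-state arc, and any connection between them must detour all the way around the cyclic Montesinos diagram through at least two of the $r\ge 3$ positive tangles. Consequently, the tentacles $t_N$ and $t_S$ are attached to different state circles, and $\gamma$ must cross some state circle.

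Next I would run the standard tentacle-chasing machinery on $\gamma$. After orienting $\gamma$ from $t_N$ to $t_S$, applying Lemma \ref{lemma:utility} (Utility), Lemma \ref{lemma:downstream} (Downstream), and Lemma \ref{lemma:staircase} (Staircase extension) allows me to associate a right--down staircase to any downstream run of $\gamma$. The key quantitative input is Lemma \ref{lemma:monte-stair-length}: every such staircase in an admissible Montesinos diagram has at most three stairs, and any stair of length exactly three must have its middle segment spanning a positive tangle east-to-west. I would also use Lemma \ref{lemma:head-locator}, which prevents $\gamma$ from entering innermost disks in more than one tangle region (and certainly from entering innermost disks on both sides of $B$). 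These facts together mean $\gamma$ cannot meander freely through the diagram: it either stays adjacent to a small handful of state circles around $B$, or enters a single negative tangle in $B$ and is then immediately forced to terminate.

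To finish, I would split the argument into the two geometric ways $\gamma$ might travel from $C_N$ to $C_S$: either \emph{through} the block $B$, or \emph{around} it via the polyhedral region of positive tangles. In the first case, once $\gamma$ crosses $C_N$ downstream into a tangle of $B$, Lemma \ref{lemma:monte-stair-length} forces it to terminate before it can reach $C_S$, and any attempt to re-emerge from $B$ on the opposite side is blocked either by Lemma \ref{lemma:utility} or by Lemma \ref{lemma:monte-tentacle}, which shows at most one of the two tentacles along a segment continues past it. In the second case, $\gamma$ must circle around the east or west of $B$ and traverse the polyhedral region of the positive tangles; applying Lemma \ref{lemma:parallel-stairs} (Parallel stairs) to subarcs of $\gamma$ running downstream from $C_N$ and $C_S$, and using the hypothesis of at least three positive tangles (so that any ``around'' route involves nontrivial staircases in two distinct positive tangles), I would build a right--down staircase whose top and bottom lie on a common state circle, contradicting Lemma \ref{lemma:escher} (Escher stairs). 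The main obstacle will be the careful case analysis in this ``around'' case: handling how $\gamma$ can cross back and forth between the positive-tangle region and negative blocks other than $B$, and ensuring that each such excursion either extends the staircase or is ruled out by the Shortcut lemma (Lemma \ref{lemma:np-shortcut}) or by primeness of the diagram.
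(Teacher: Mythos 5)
There is a genuine gap, and it occurs at the very first structural step. You assert that the outside north and the outside south of the negative block $B$ lie on \emph{distinct} state circles $C_N$ and $C_S$, on the grounds that any connection between them would have to travel through at least two positive tangles. This is false for the all--$A$ state of a reduced Montesinos diagram: each negative block is enclosed by a \emph{single} state circle, whose arcs form the north of the block, the east side of the positive tangle directly to the west, the south of the block, and the west side of the positive tangle directly to the east (this is exactly the picture of Lemma \ref{lemma:monte-poly-regions} and Figure \ref{fig:tangle-decomp}, and it is used explicitly in the proof of Lemma \ref{lemma:monte-2edge-loop}, where the north and south sides of a negative tangle are observed to belong to the same circle $C'$). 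The connection between north and south detours around only \emph{one} flanking positive tangle, not two. Consequently your conclusion that ``$\gamma$ must cross some state circle,'' and the resulting dichotomy (through the block versus around the cyclic diagram), does not get off the ground: with $C_N=C_S$, an arc in a single shaded face joining the two tentacles need not cross any state circle a priori, and the Escher-stairs contradiction you hope to extract in the ``around'' case is never actually constructed — that case is exactly the geometric configuration that really occurs, and it is not ruled out by staircase considerations. The ``through'' case and the meandering excursions are also left at the level of an outline, so even setting aside the false premise the argument is not complete.

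The missing idea is a directionality statement about where the relevant tentacles originate, which is what the paper's (much shorter) proof uses. Because tentacles are directed (Definition \ref{def:tentacle-direct}), a tentacle running across the outside north of a negative block has its head at a segment in the positive tangle immediately to the \emph{west} of the block, so by Lemma \ref{lemma:head-locator} the innermost disk (head) of its shaded face lies in that western positive tangle or in the negative block directly to the west; symmetrically, a tentacle across the outside south originates from the \emph{east}, so its face has its head in the eastern positive tangle or the negative block to the east. Since the diagram has at least three positive tangles, the eastern and western positive tangles (and likewise the eastern and western negative blocks) are distinct, so a single shaded face cannot contain both tentacles without violating Lemma \ref{lemma:head-locator}. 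If you want to repair your argument, you should replace the ``distinct circles'' claim by this head-location analysis; as written, the proposal does not establish the lemma.
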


\begin{proof}
Note that by Lemma \ref{lemma:monte-adequate} $D(K)$ is $A$--adequate.
The tentacle across the north of the outside of a negative block runs
from a segment in the positive tangle directly to the west of that
negative block. See Figure \ref{fig:tangle-decomp}. Hence, by Lemma
\ref{lemma:head-locator} (Head locator), its head is either inside
that positive tangle, or, if it came from a segment running east to
west in that positive tangle, its head will be inside the negative
block directly to the west.

Similarly, the tentacle across the south of the outside of a negative
block runs from a segment in the negative tangle directly to the east
of that negative block, hence has its head inside the positive tangle
directly to the east, or the negative block directly to the east.

Note that the positive tangles directly to the east and west cannot
agree, by the assumption that our diagram has at least three positive
tangles. Similarly, the negative blocks to the east or west cannot
agree.  Hence the conclusion follows.
\end{proof}

\begin{lemma}\label{lemma:mont-outsidenegative}
Suppose that $D(K)$ is a reduced, admissible Montesinos diagram with
at least three positive tangles that is not alternating.  Any tentacle
in the polyhedral decomposition of $M_A$ running over the outside of a
negative block cannot have a head inside that negative block.
\end{lemma}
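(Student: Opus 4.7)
The plan is to follow the directed spine of the shaded face containing $T$ upstream to locate a head, and then invoke Lemma \ref{lemma:head-locator} (Head locator) to conclude that no head of the face can lie inside the negative block $B$. Throughout, I appeal to the explicit structure of the $A$--resolution inside positive and negative tangles (Figure \ref{fig:tangle-resolutions}) and the polyhedral region structure given by Lemma \ref{lemma:monte-poly-regions}.

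Let $T$ be a tentacle running over the outside of the negative block $B$, and let $F$ be the shaded face containing $T$. After possibly reflecting the diagram (which carries the role of north to south), I may assume that $T$ runs along the state circle $C$ that forms the north boundary of $B$. Observe that $C$ does not stay inside $B$: it extends westward into the positive tangle $P_W$ immediately west of $B$ (and eastward into $P_E$), because positive tangles have state circles running north--south along their east and west sides while the north and south of a negative block are composed of such vertical circles from the adjoining positive tangles. The head of $T$ is attached to a segment $s$ on $C$, and the direction in which $T$ travels (along the north of $B$) forces $s$ to lie in $P_W$, adjacent to the east side of $P_W$.

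The next step is to trace $F$ upstream from $T$ to locate an innermost disk (head) of $F$. By Lemma \ref{lemma:monte-stair-length}, right--down staircases in $H_A$ have at most three segments, so the upstream trace terminates quickly. Moreover, $P_W$ and $B$ lie in distinct polyhedral regions of the projection plane (Lemma \ref{lemma:monte-poly-regions}), and the only non-prime arcs of the decomposition lie within negative blocks. Hence an upstream trace from $T$, which starts inside $P_W$, cannot enter $B$: it either terminates in an innermost disk of $P_W$, or, in the single case allowed by Lemma \ref{lemma:monte-stair-length}\eqref{item:terminate-3steps} in which the middle segment spans a positive tangle east to west, continues through the non-prime switches of the negative block directly west of $P_W$ and terminates in an innermost disk there. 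In either case, the head of $F$ reached by this upstream trace lies strictly to the west of $B$, and in particular does not lie in $B$.

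Finally, apply Lemma \ref{lemma:head-locator} (Head locator). The head located above lies either in the positive tangle $P_W$ or in the negative block immediately west of $P_W$. In the first case, Head locator says $F$ has no other heads anywhere, so no head of $F$ lies in $B$. In the second case, any other head of $F$ must lie in that same western negative block, which is separated from $B$ by the positive tangle $P_W$; in particular, no head of $F$ lies in $B$. Either way, the conclusion of the lemma holds. The main subtlety is the case analysis in locating $s$ and in ruling out non-prime excursions during the upstream trace; this is where the hypotheses of admissibility and of having at least three positive tangles enter, ensuring that $P_W$ and the block west of $P_W$ are genuinely distinct from $B$.
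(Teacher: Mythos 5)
Your proof follows essentially the same route as the paper's: locate the head of the tentacle at a segment of the positive tangle adjacent to the block, conclude that the innermost disk(s) of its shaded face lie in that positive tangle or in the negative block beyond it, and finish with Lemma \ref{lemma:head-locator}. The paper's own proof is exactly this two-step argument (leaning on the analysis already made in the proof of Lemma \ref{lemma:mont-topandbottom}), stated separately for the two sides: for a tentacle along the north of the block the head lies to the \emph{west}, and for one along the south it lies to the \emph{east}.

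The one step of yours that does not survive scrutiny is the opening reduction ``after possibly reflecting the diagram.'' A reflection is not a symmetry of the all--$A$ machinery: mirroring the diagram interchanges $A$-- and $B$--resolutions, so the right--down tentacle conventions and every direction-dependent fact you invoke fail to transfer to the reflected picture. The north/south asymmetry is genuine (west for north, east for south, as above), which is precisely why the paper does not reduce one case to the other by reflection. The slip is easily repaired: either repeat your north argument verbatim for the south side with east and west exchanged, or use the symmetry given by a $180^{\circ}$ rotation of the projection sphere, which preserves the $A$--resolution and swaps north/south together with east/west. With that fix, and modulo minor imprecisions (for instance, the upstream trace may also terminate at an innermost disk lying \emph{between} two positive tangles, which is equally harmless since Lemma \ref{lemma:head-locator} only needs that this head is not inside a negative block other than the one to the west), your argument is correct and coincides with the paper's.
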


\begin{proof}
The head of a tentacle running over the outside of a negative block
lies in the positive tangle or the negative block directly to the
west, in case the tentacle runs across the north, or in the positive
tangle or negative block directly to the east, in case the tentacle
runs across the south.  Then the result follows from Lemma
\ref{lemma:head-locator} (Head locator).
\end{proof}

\section{Two-edge loops and essential product disks}
Next, we study EPDs in the upper polyhedron of the polyhedral
decomposition of $M_A$.  Recall that, by Corollary \ref{cor:epd-loop},
every EPD in the upper polyhedron $P$ must run through tentacles
adjacent to a 2--edge loop in $\GA$.  In Lemma
\ref{lemma:monte-2edge-loop} below, we show that these 2--edge loops
can be classified into three different types.  Most of our attention
will be devoted to particular type of 2--edge loop, depicted in Figure
\ref{fig:neg-tangle}.

\begin{lemma}\label{lemma:monte-2edge-loop}
Let $D(K)$ be a reduced, admissible, $A$--adequate, non-alternating  
Montesinos diagram. Let $C, C'$ be a pair of state circles of
$s_A$. These circles are connected by multiple segments of $H_A$
(corresponding to a two-edge loop of $\GA$) if and only if one of the
following happens:
\begin{enumerate}
\item\label{item:bigon-loop} $C$ and $C'$ co-bound one or more bigons
  in the short resolution of a twist region, which is entirely
  contained in a tangle. See Figure \ref{fig:twist-resolutions}.
\item\label{item:neg-loop} $C$ is contained inside a negative tangle
  $N_i$ of slope $-1 < q \leq -1/2$, and is connected by segments of
  $H_A$ to the state circle $C'$ that runs along the north and south
  of $N_i$. See Figure \ref{fig:neg-tangle}.
\item\label{item:pos-loop} There are exactly two positive tangles
  $P_1$ and $P_2$, and $C,C'$ are the state circles that run along the
  east and west sides of these tangles.
\end{enumerate}
\end{lemma}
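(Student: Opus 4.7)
The proof is a structural case analysis, driven by the behavior of the $A$--resolution inside and across rational tangles. I split it along two axes: (a) the direction of implication, and (b) whether the two segments lie in a single tangle or in different tangles.

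For the easy ``if'' direction, I would verify each of \eqref{item:bigon-loop}, \eqref{item:neg-loop}, \eqref{item:pos-loop} by inspection of the $A$--resolution. Case \eqref{item:bigon-loop} is immediate from Figure \ref{fig:twist-resolutions} and Definition \ref{def:long-short}: the $c_R$ bigons in a short twist region produce $c_R$ parallel segments between the same two state circles. Case \eqref{item:neg-loop} is checked by drawing the admissible diagrams of negative tangles of slope $-1/2$, $-2/3$, $-3/4, \ldots$ (i.e.\ $[0,-2], [0,-1,-2], [0,-1,-1,-2], \ldots$) and observing that the innermost circle of such a tangle is joined to the surrounding circle by two (or more) segments. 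Case \eqref{item:pos-loop} is checked by tracing the outer state circles across the diagram: with exactly two positive tangles, both the east and west sides of $P_1$ are parts of the same state circle as the east and west sides of $P_2$, and these two state circles are joined by a segment inside each of $P_1$ and $P_2$.

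For the ``only if'' direction, suppose segments $s_1,s_2$ both connect state circles $C,C'$. I would split into cases based on how the $s_i$ distribute among tangles.

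\textbf{Case I: both $s_i$ lie in the same tangle $T$.} If both $s_i$ sit inside a single twist region of $T$, then they bound a bigon of the short resolution, giving \eqref{item:bigon-loop}. Otherwise I would inductively analyze $T$ via its continued fraction $[0,a_1,\ldots,a_n]$. For a positive tangle, admissibility forces the $A$--resolution of each band to produce nested, disjoint state circles at every scale except within a single twist region; $A$--adequacy (Lemma~\ref{lemma:monte-adequate}) rules out any residual double segment, so no new loop arises. For a negative tangle, the same inductive analysis shows that a double segment not inside a single twist region can only come from the innermost band joining to the tangle's outer state circle, which happens precisely for slopes $-1<q\leq -1/2$, yielding \eqref{item:neg-loop}.

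\textbf{Case II: $s_1, s_2$ lie in different tangles $T_1, T_2$.} Then each of $C$ and $C'$ meets both $T_1$ and $T_2$, so each must be a state circle that passes through more than one tangle. By the description of $s_A$ in Figure \ref{fig:tangle-resolutions}, the only state circle arcs that exit a tangle are: the north/south arcs of positive tangles and the east/west arcs of negative tangles. Tracing how these arcs concatenate cyclically (using Lemma~\ref{lemma:monte-poly-regions}), I would show that any state circle running through two distinct tangles must be one of the outer circles that alternate along the north/south of positive tangles and the east/west of negative tangles. The combinatorial constraint that \emph{two distinct} such outer circles both enter $T_1$ and $T_2$ and are each joined by a segment inside $T_1$ and inside $T_2$ can only occur when $T_1$ and $T_2$ are both positive and no other positive tangles are present --- otherwise the circles in question are forced to be identified (after cycling past a third positive tangle), contradicting $C\ne C'$. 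This yields exactly Case~\eqref{item:pos-loop}.

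The main obstacle is the Case II analysis: one must be careful in tracking how the ``outer'' state circles weave around the cycle of tangles, since the identification pattern depends on both the parity of the number of positive tangles and the way that negative tangles reroute the arcs east/west versus north/south. Once this bookkeeping is done cleanly, the three listed configurations are forced, and the lemma follows.
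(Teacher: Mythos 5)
Your overall strategy is the same as the paper's (an ``if'' direction by inspection, then a case analysis over tangles using the continued--fraction structure, with the slope computation pinning down $-1<q\leq -1/2$ and a counting argument forcing exactly two positive tangles), but as written there are two genuine problems. First, you have the $A$--resolution of the tangles backwards: in Case II you assert that the arcs of $s_A$ exiting a tangle are the north/south arcs of \emph{positive} tangles and the east/west arcs of \emph{negative} tangles, which contradicts conclusions \eqref{item:neg-loop} and \eqref{item:pos-loop} of the very lemma you are proving (and Lemma \ref{lemma:monte-tentacle}): positive tangles have their east and west sides on state circles, while the north and south of a negative tangle lie on one and the same circle, which is what makes each negative block its own polyhedral region. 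This is not cosmetic. The paper's different--tangle case hinges on the fact that a single segment spanning a negative tangle north to south would be a loop edge of $\GA$, contradicting $A$--adequacy, so only east--west spans of positive tangles can occur; under your swapped convention that obstruction disappears, and your claim that only two positive tangles can produce a loop across distinct tangles is left as an unexecuted ``I would show,'' with bookkeeping you yourself flag as delicate.

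Second, your Case I (both segments in one tangle, not in one twist region) is carried by the asserted claim that admissibility forces ``nested, disjoint state circles at every scale except within a single twist region''; that is essentially the conclusion restated, not an argument. The paper handles this case differently and more cheaply: if $C$ and $C'$ are circles inside the tangle (or meet the same boundary arc), one draws a loop through the two regions they bound meeting the projection only at the two crossings in question, so the crossings are twist--equivalent; since a reduced admissible Montesinos diagram is twist--reduced, they co--bound bigons, giving conclusion \eqref{item:bigon-loop}. If you prefer your band--by--band induction on the continued fraction, you must actually run it (in particular for two closed circles inside a positive tangle joined by segments from different bands); otherwise invoke twist--reducedness as above. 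Note also that the paper organizes the converse by how $C,C'$ sit relative to the tangle containing a segment (both inside, one inside, neither inside), which makes the use of $A$--adequacy in the ``one inside'' subcase (east and west of a positive tangle on one circle forces a single positive tangle) transparent; your split by segment location is workable but must still address that subcase explicitly.
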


\begin{proof}
If the circles $C,C'$ satisfy one of the conditions of the lemma, it
is easy to see that they will be connected by two or more segments of
$H_A$. To prove the converse, suppose that $C$ and $C'$ are connected
by two segments of $H_A$. Each of these segments corresponds to a
crossing of the diagram $D$, and belongs to a particular rational
tangle. We consider several cases, conditioned on how $C$ and $C'$
intersect this common tangle.

First, suppose that $C$ and $C'$ are both closed loops inside a tangle
$T$.  Since the tangle diagram is alternating, each of these state
circles is innermost. Thus there is a loop in the projection plane
that runs through the regions bounded by $C$ and $C'$, and intersects
the projection of $K$ exactly at the two crossings where $C$ meets
$C'$. Then these crossings are twist--equivalent. Since a reduced,
admissible Montesinos diagram must be twist--reduced, the two
crossings are connected by one or more bigons. Thus conclusion
\eqref{item:bigon-loop} holds.

Next, suppose that $C$ is entirely contained in a tangle $T$, and that
$C' \cap T$ consists of one or two arcs.  If the two segments of $H_A$
connect $C$ to the same arc on the side of a tangle, then again the
corresponding two crossings are twist--equivalent, and must be
connected by one or more bigons.  Thus conclusion
\eqref{item:bigon-loop} holds.  If the two segments of $H_A$ connect
$C$ to opposite sides of the tangle, then $C'$ must contain both of
those arcs (east and west in the case of a positive tangle, north and
south in the case of a negative tangle).  We investigate this
possibility further.

If $T = P_i$ is a positive tangle, then Figure \ref{fig:tangle-decomp}
shows that the state circle on the east side of $P_i$ also runs along
the west side $P_{i+1}$, but is disjoint from every other positive
tangle. Thus the east and west sides of $P_i$ belong to the same state
circle only if there is exactly one positive tangle --- but this
violates $A$--adequacy, by Lemma \ref{lemma:monte-adequate}.

If $T$ is a negative tangle, then Figure \ref{fig:tangle-decomp} shows
that the north and south sides of $T$ will indeed belong to the same
state circle $C'$. Now, let $q<0$ be the slope of $T$, and let
$[a_0,a_1, \ldots, a_n]$ be the continued fraction expansion of
$q$. Since the diagram $D$ is reduced, we have $q \in (-1,0)$, hence
$a_0 = 0$. Moving from the boundary of the tangle inward, the first
crossings will be $\abs{a_1}$ negative crossings in a vertical band.
If $\abs{a_1} \geq 3$, or if $\abs{a_1} = 2$ and $\abs{a_2} > 0$, then
the north and south sides of $T$ will not connect to the same state
circle in $T$.  Note that these conditions on $a_1$ and $a_2$ are
describing exactly the rational numbers $\abs{q} < 1/2$. On the other
hand, if $-1 < q \leq -1/2$, there is exactly one circle $C$ that
connects to the north and south sides of the tangle. This state circle
is the boundary of an innermost disk $I$ depicted in Figure
\ref{fig:neg-tangle}. Thus conclusion \eqref{item:neg-loop} holds.

\begin{figure}
 \input{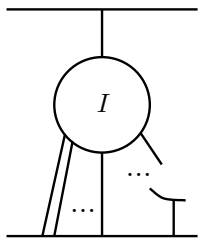}
 \caption{The form of a negative tangle with a 2--edge loop of type
   \eqref{item:neg-loop} of Lemma \ref{lemma:monte-2edge-loop}.  There
   is exactly one segment at the north, and one or more parallel
   segments on the south--west.  The south--east of the tangle may or
   may not have additional segments and state circles.
  The segment on the north of innermost disk $I$, plus a segment on the south $I$, forms a \emph{bridge}\index{bridge} from the north to the south of the tangle.}
 \label{fig:neg-tangle}
\end{figure}

Finally, suppose that neither $C$ nor $C'$ is entirely contained in a
single tangle. Then each segment of $H_A$ that connects $C$ to $C'$
must span all the way from the north to the south sides of a tangle,
or from the east to the west. Since the diagram $D(K)$ is reduced and
non-alternating, every tangle has slope $\abs{q_i} < 1$, hence no
single edge of $H_A$ can span a tangle from north to south. The
remaining possibility is that each segment of $H_A$ that connects $C$
to $C'$ spans a positive tangle east to west. If these two segments
lie in the same tangle $T$, the corresponding crossings are
twist--equivalent, hence conclusion \eqref{item:bigon-loop} holds. If
these two segments lie in two different positive tangles $P_1$ and
$P_2$, and the east and west sides of these tangles belong to the same
state circles $C, C'$, then $P_1$ and $P_2$ must be the only positive
tangles in the diagram. Thus conclusion \eqref{item:pos-loop} holds.
\end{proof}

Lemma \ref{lemma:monte-2edge-loop}, combined with Corollary
\ref{cor:epd-loop}, will allow us to find and classify EPDs in the
polyhedral decomposition for Montesinos links. Looking over the
conclusions of Lemma \ref{lemma:monte-2edge-loop}, we find that
two-edge loops of type \eqref{item:bigon-loop} are very standard, and
easy to deal with using Lemma \ref{lemma:remove-bigon}. Loops of type
\eqref{item:pos-loop} will be ruled out once we assume that $D(K)$ has
at least three positive tangles. Thus most of our effort is devoted to
studying EPDs that run over a two-edge loop of type
\eqref{item:neg-loop}.

It is worth taking a closer look at negative tangles that support a
two-edge loop of \eqref{item:neg-loop}.  The $A$--resolution of such a
negative tangle is illustrated in Figure \ref{fig:neg-tangle}.  Note
in particular that there is exactly one segment connecting the
innermost disk $I$ to the outside of the negative block at the north
of the tangle.  On the south, one or more parallel segments connects
the innermost disk to the outside of the negative block on the south,
and these segments are all at the south--west of the diagram.  The
portion of the diagram on the south--east can have additional state
circles and edges, or not.  The 2--edge loop runs over the single
segment in the north and one of the parallel strands in the
south--west.

\begin{lemma}\label{lemma:mont-innermost-color}
Let $N_i$ be a negative tangle in a reduced, $A$--adequate, Montesinos
diagram $D(K)$, with at least three positive tangles. Let $E$ be an
essential product disk in the upper polyhedron, which runs over a
2--edge loop of that spans $N_i$ north to south, as in Figure
\ref{fig:neg-tangle}. Then $\bdy E$ must run through the innermost
disk $I$ shown in Figure \ref{fig:neg-tangle}. Furthermore, $\bdy E$
must run adjacent to the 2--edge loop through at least one tentacle
whose head is the innermost disk $I$.
\end{lemma}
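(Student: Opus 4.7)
The plan is to reduce the statement to showing that at least one of the two shaded faces traversed by $\partial E$ is the shaded face $F_I$ sourced by the innermost disk $I$; then the ``tentacle with head $I$'' assertion is immediate, and since $F_I$ contains the disk $I$ itself, $\partial E$ passes through $I$.

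First I would set up the combinatorial picture. By Corollary \ref{cor:epd-loop}, $\partial E$ runs through tentacles adjacent to the two segments of the 2--edge loop; by hypothesis and Lemma \ref{lemma:monte-2edge-loop}\eqref{item:neg-loop}, these are the unique north segment $s_1$ and one of the parallel southwest segments $s_2$ of Figure \ref{fig:neg-tangle}. For each $s_j$ there are exactly two adjacent tentacles in the upper polyhedron: an \emph{inner tentacle} with head adjacent to $C=\partial I$ (lying in the shaded face $F_I$ sourced by $I$), and an \emph{outer tentacle} with head adjacent to $C'$ (whose shaded face is sourced outside $N_i$, by Lemma \ref{lemma:mont-outsidenegative}). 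The desired conclusion is that at least one of the two tentacles used by $\partial E$ is an inner tentacle.

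Next I would argue by contradiction, assuming both tentacles used by $\partial E$ are outer; call them $T_1$ (at $s_1$) and $T_2$ (at $s_2$). Since $T_1$ extends across the north of $N_i$ to the outside of the negative block and $T_2$ extends across the south, Lemma \ref{lemma:mont-topandbottom} implies $T_1$ and $T_2$ lie in \emph{distinct} shaded faces. Moreover, by Lemma \ref{lemma:monte-tentacle}\eqref{item:terminate-corner}, since $s_1$ is in the middle of the north side of $N_i$ (not at the NW or SE corner), both tentacles at $s_1$ terminate immediately; in particular, $T_1$ has a very short tail along $\partial I$, a fact I would exploit in the case analysis.

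I would then appeal to Theorem \ref{thm:2edgeloop} and run through the types $\mathcal{A}$--$\mathcal{G}$. Types $\mathcal{B}$ and $\mathcal{D}$ require the two loop-adjacent arcs of $\partial E$ to lie in the \emph{same} shaded face, which is immediately ruled out by Lemma \ref{lemma:mont-topandbottom}. Types $\mathcal{E}$, $\mathcal{F}$, $\mathcal{G}$ require $\partial E$ to cross a non-prime arc separating a vertex from the loop; but by Lemma \ref{lemma:monte-poly-regions}, all non-prime arcs in $D(K)$ separate consecutive negative tangles in a negative block, so no such non-prime arc is located between the segments of our loop and either side of it, ruling these types out. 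The residual cases are $\mathcal{A}$ and $\mathcal{C}$, where $\partial E$ has a zig-zag vertex on one of $C$ or $C'$ with both arcs running adjacent to this zig-zag. A zig-zag on $C = \partial I$ would force both adjacent arcs into $F_I$, contradicting the ``outer only'' assumption. For a zig-zag on $C'$, I would trace $\sigma_1$ downstream from $T_1$'s terminating tail along $\partial I$ using Lemma \ref{lemma:staircase} (Staircase extension) and Lemma \ref{lemma:monte-stair-length}: the immediate termination of $T_1$ and the absence of non-prime arcs inside $N_i$ leave no way for $\sigma_1$ to reach a vertex on $C'$ without first re-entering $F_I$, yielding the contradiction.

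The main obstacle is the last bullet, the case analysis for types $\mathcal{A}$ and $\mathcal{C}$ when the zig-zag vertex is posited to live on $C'$. Here one must use the short-tail conclusion of Lemma \ref{lemma:monte-tentacle}, the staircase-length bound of Lemma \ref{lemma:monte-stair-length}, and the head-location constraint of Lemma \ref{lemma:head-locator} (Head locator) together to rule out any route by which the arcs emanating from $T_1, T_2$ could close up at a common zig-zag on $C'$ without detouring through $F_I$ or crossing a non-existent non-prime arc. Every other case is immediate from a single earlier lemma.
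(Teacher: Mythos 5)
Your high-level setup (the inner/outer dichotomy at each segment, Lemma \ref{lemma:mont-topandbottom} to force distinct faces, Lemma \ref{lemma:head-locator} to locate heads) is the right raw material, but the contradiction you run is routed through the classification $\mathcal{A}$--$\mathcal{G}$ of Theorem \ref{thm:2edgeloop}, and this does not apply where you use it. That theorem (and Corollary \ref{cor:epd-loop}) produces \emph{some} 2--edge loop, arising from its own construction, with respect to which the pulled-off square has one of the seven local forms; it does not assert that the picture near an arbitrary 2--edge loop over which $\bdy E$ happens to run --- in particular the north--south loop of $N_i$ given in the hypothesis --- matches one of those templates. So the case analysis is unjustified at the outset. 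Even granting it, your dismissal of types $\mathcal{E}$, $\mathcal{F}$, $\mathcal{G}$ is incorrect: in a reduced Montesinos diagram the non-prime arcs have their endpoints precisely on the outer circle $C'$ of the negative block (they separate consecutive negative tangles; see Figure \ref{fig:tangle-decomp}), so a non-prime arc can separate the segments of this loop from a vertex of $E$ --- this is exactly the configuration that occurs in cases (4) and (5) of the proof of Proposition \ref{prop:monte-main}. Finally, the case you yourself flag as the main obstacle (a zig-zag vertex on $C'$) is left as a sketch, with no argument for the key claim that $\sigma_1$ must ``re-enter $F_I$.''

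What the proposal misses is the short, decisive consequence of Lemma \ref{lemma:monte-tentacle} that the paper's proof turns on: the outer tentacle at either segment terminates \emph{immediately} upon reaching $I$ (since $C=\bdy I$ is innermost), so if $\bdy E$ runs adjacent to a segment through the outer tentacle, that arc is forced to end at an ideal vertex located at $I$, and hence $E$ meets $I$; in particular one of the two shaded faces of $E$ is the face with head $I$, regardless of which tentacles are used. This gives the first assertion directly, with no appeal to Theorem \ref{thm:2edgeloop}. The second assertion then follows in one step: if both loop tentacles were of ``toward-$I$'' (outer) type, they would both have to lie in the single remaining shaded face of $E$, whose head is therefore attached to the outside of the negative block both on the north and on the south, contradicting Lemma \ref{lemma:mont-topandbottom}. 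Your argument never establishes the first step independently, and without it the assumption ``both tentacles outer'' leads to two distinct faces neither of which is $F_I$ --- which by itself is not yet a contradiction --- so the proof does not close as written.
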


\begin{proof}
By Lemma \ref{lemma:monte-tentacle}, all tentacles that run downstream
toward $I$ must terminate upon reaching $I$. Thus, each time $\bdy E$
runs along a segment that connects $I$ to the north or south sides of
the tangle, it must either pass through the tentacle that runs
downstream out of $I$ (hence, through the innermost disk $I$), or
through a tentacle that runs downstream toward $I$ (hence, into
$I$). In either case, the disk $E$ must run through $I$.

Next, suppose for a contradiction that on both the north and south
sides of the tangle, $\bdy E$ runs in tentacles that terminate at
$I$. Since $\bdy E$ intersects only two shaded faces, one of which has
a head at $I$, the two tentacles running toward $I$ from the north and
south must belong to the same shaded face. But then the heads of these
two tentacles are both attached to the outside of the negative block,
one on the north and one on the south, and so the negative block must
have tentacles of this same color both on the north and on the south.
This contradicts Lemma \ref{lemma:mont-topandbottom}.  Therefore,
$\bdy E$ must run downstream out of $I$, either on the north or on the
south (or both).
\end{proof}

\section{Excluding complex disks}

We are now ready to prove the following proposition. As remarked
earlier, the statement that $||E_c || = 0$ and the upper polyhedron
contains no complex disks, combined with Theorem
\ref{thm:guts-general}, immediately implies the non-alternating case
of Theorem \ref{thm:monteguts}.

\begin{prop}\label{prop:monte-main}
Suppose that $D(K)$ is a reduced, admissible, non-alter\-nating
Montesinos link diagram with at least three positive tangles.  Then
every EPD in the upper polyhedron is either parallel to a white bigon
face (simple), or parabolically compresses to bigon faces
(semi-simple). In particular, in the terminology of Lemma
\ref{lemma:spanning-upper}, $E_c = \emptyset$.
\end{prop}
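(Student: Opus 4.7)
The plan is to use the reduction from Corollary \ref{cor:epd-loop} together with the classification of two-edge loops given by Lemma \ref{lemma:monte-2edge-loop}. Given any EPD $E$ in the upper polyhedron, Corollary \ref{cor:epd-loop} says that $\bdy E$ runs over tentacles adjacent to a two-edge loop in $\GA$, and by Lemma \ref{lemma:monte-2edge-loop} that loop is of one of three types. Type \eqref{item:pos-loop} requires exactly two positive tangles, so the hypothesis of at least three positive tangles immediately rules it out. This reduces the problem to loops of type \eqref{item:bigon-loop} (bigon loops inside a twist region) and loops of type \eqref{item:neg-loop} (loops spanning a negative tangle $N_i$ of slope $q \in (-1, -1/2]$, as in Figure \ref{fig:neg-tangle}).

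For type \eqref{item:bigon-loop} loops, the two segments of the loop sit inside a single twist region, so every white face between them is a bigon. The EPD parallel to such a bigon is simple by Definition \ref{def:simple}, and any EPD running over two non-adjacent parallel segments parabolically compresses through the intermediate bigons onto simple disks. Thus every EPD associated to a type \eqref{item:bigon-loop} loop is simple or semi-simple.

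The substantive case is type \eqref{item:neg-loop}. Here I would first invoke Lemma \ref{lemma:mont-innermost-color}: $\bdy E$ must pass through the innermost disk $I$ pictured in Figure \ref{fig:neg-tangle}, and at least one of the two arcs of $\bdy E$ adjacent to the 2-edge loop runs through a tentacle whose head is at $I$. Because there is only one segment on the north of $I$, Lemma \ref{lemma:monte-tentacle}\eqref{item:terminate-now} forces that tentacle to terminate immediately at $I$; hence a tentacle with head at $I$ adjacent to the loop can only lie on the south-west side. Combining this with the seven building blocks $\mathcal{A}$--$\mathcal{G}$ of Theorem \ref{thm:2edgeloop}, I would eliminate types $\mathcal{E}$, $\mathcal{F}$, $\mathcal{G}$ by observing that all non-prime arcs in a Montesinos diagram separate consecutive negative tangles (Lemma \ref{lemma:monte-poly-regions}), whereas the whole 2-edge loop under consideration is contained in the single tangle $N_i$; so no non-prime switch can participate. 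For each of the remaining types $\mathcal{A}$--$\mathcal{D}$, I would tentacle-chase, using Lemma \ref{lemma:monte-tentacle} (most tentacles terminate after one segment) and Lemma \ref{lemma:monte-stair-length} (staircases have length $\leq 3$, with length $3$ requiring a positive east-west spanning segment in the middle, which is unavailable inside $N_i$), together with Lemmas \ref{lemma:mont-topandbottom} and \ref{lemma:mont-outsidenegative}, to confine $\bdy E$ to the small neighborhood of $I$ and the parallel south-west segments. In this confined region, $\bdy E$ must cobound a product region with the boundary of a white bigon face lying between two south-west parallel segments, so $E$ is either simple (parallel to such a bigon) or parabolically compresses to simple disks.

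The main obstacle will be the systematic exclusion of types $\mathcal{A}$--$\mathcal{D}$ in the type \eqref{item:neg-loop} setting. Types $\mathcal{C}$ and $\mathcal{D}$ in particular require checking that the associated zig-zag vertex cannot extend out of the tangle $N_i$, which will be the most delicate part; I expect the restriction on staircase length in Lemma \ref{lemma:monte-stair-length} together with the color constraint from Lemma \ref{lemma:mont-innermost-color} to collapse each case onto a configuration already covered by the bigon analysis. Once this is done, every EPD in the upper polyhedron is simple or semi-simple, so the minimal spanning set $E_c$ of Lemma \ref{lemma:spanning-upper} is empty, and the proposition follows.
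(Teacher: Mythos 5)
Your opening reduction matches the paper's: Corollary \ref{cor:epd-loop} plus Lemma \ref{lemma:monte-2edge-loop}, with the three-positive-tangle hypothesis killing type \eqref{item:pos-loop}, and Lemma \ref{lemma:mont-innermost-color} forcing $\bdy E$ through the innermost disk $I$. But the core of your argument for type \eqref{item:neg-loop} loops rests on a false claim. You assert that no non-prime switch can participate because ``the whole 2--edge loop under consideration is contained in the single tangle $N_i$.'' The non-prime arcs of a Montesinos diagram have their endpoints on precisely the state circle $C'$ that runs along the north and south of $N_i$ --- the same circle that forms one vertex of your 2--edge loop. Consequently $\bdy E$ can, and in general does, run downstream out of $I$, across a non-prime arc, and into an innermost disk of an adjacent negative tangle in the same negative block; it can also run downstream into the adjacent positive tangle, across an east--west segment there, and onward to the next negative block. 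These are exactly the vertex locations (4), (5), (2), (3) in the paper's case analysis, so your expectation that tentacle-chasing ``confines $\bdy E$ to a small neighborhood of $I$ and the south-west segments'' is wrong, as is your dismissal of the configurations involving non-prime switches. The actual work of the proof --- and what is missing from your proposal --- is the enumeration of the possible locations of the two ideal vertices of $E$ (on $I$, on the outside of the negative block, on the next negative block, on an innermost disk of an adjacent negative tangle, or beyond a chain of 2--edge loops handled by induction on the length of the negative block), followed by an analysis of all pairs of such locations using the Head Locator Lemma \ref{lemma:head-locator} together with Lemmas \ref{lemma:mont-topandbottom} and \ref{lemma:mont-outsidenegative} to show that in every admissible combination $\bdy E$ encloses only bigons. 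The disk is not confined; rather, wherever it wanders, the head constraints force a parabolic compression to bigons.

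There is also a smaller gap in your treatment of type \eqref{item:bigon-loop} loops. Running over tentacles adjacent to two segments of a single twist region does not place the ideal vertices of $E$ at that twist region, so ``compresses through the intermediate bigons onto simple disks'' does not follow from what you have said; an EPD could run adjacent to such a loop in mid-course and still be complex for all your argument shows. The paper sidesteps this by invoking Lemma \ref{lemma:remove-bigon}: deleting all bigons in $A$--regions leaves $||E_c||$ unchanged, and in the resulting diagram the only 2--edge loops are of type \eqref{item:neg-loop}, so the bigon-loop case is absorbed into the main one. You would either need that reduction or an honest vertex-location argument for the twist-region case as well.
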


\begin{proof}
Let $E$ be an essential product disk in the upper polyhedron. By
Corollary \ref{cor:epd-loop}, $\bdy E$ must run over tentacles
adjacent to a 2--edge loop in $\GA$. With the hypothesis that $D(K)$
has at least three positive tangles, Lemma
\ref{lemma:monte-2edge-loop} implies that every 2--edge loop in $\GA$
is of type \eqref{item:bigon-loop} or \eqref{item:neg-loop}.

Type \eqref{item:bigon-loop} loops correspond to crossings in a single
twist region, in which the all--$A$ resolution is the short resolution
(see Figure \ref{fig:twist-resolutions} on page
\pageref{fig:twist-resolutions}). Note that by Lemma
\ref{lemma:remove-bigon}, removing all the bigons in the $A$--twist
regions does not affect the number $|| E_c ||$ of complex disks in the
spanning set of the upper polyhedron. But if all bigons in
$A$--regions are removed, the only remaining two--edge loops will be
of type \eqref{item:neg-loop}, spanning a negative tangle north to
south. Thus, if we can show that every EPD running over a type
\eqref{item:neg-loop} loop is simple or semi-simple, it will follow
that the same conclusion holds for type \eqref{item:bigon-loop} loops
as well.

For the remainder of the proof, we assume that $E$ is an essential
product disk, such that $\bdy E$ runs over tentacles adjacent to a
2--edge loop that spans a negative tangle north to south, as in Figure
\ref{fig:neg-tangle}. Then, by Lemma \ref{lemma:mont-innermost-color},
$\bdy E$ must run through an innermost disk $I$, as in Figure
\ref{fig:neg-tangle}. We will show that $E$ is either parallel to a
bigon face, or parabolically compressible to a collection of bigon
faces.

Following the setup of Chapter \ref{sec:epds}, color the shaded faces
met by $E$ orange and green,\footnote{Note: For versions of the
  monograph in grayscale, orange faces will appear in the figures as
  light gray, green as darker gray.} so that the shaded face containing the
innermost disk $I$ is green. As in Lemma \ref{lemma:EPDtoSquare} (EPD
to oriented square), we may pull $\bdy E$ off the shaded faces,
forming a normal square. Note that under the orientation convention of
Lemma \ref{lemma:EPDtoSquare} (EPD to oriented square), any arc of the
normal square on a white face cuts off a vertex at the head of a green
tentacle and at the tail of an orange tentacle.

We consider the possible locations of the ideal vertices of
$E$. Equivalently, we consider the possible white faces into which
$\bdy E$ has been pulled.  Using Lemma \ref{lemma:monte-stair-length},
we may enumerate the possible locations for an ideal vertex of $E$.
These are shown in Figures \ref{fig:green-enumerate123},
\ref{fig:green-enumerate4}, and \ref{fig:green-enumerate5}.

\begin{enumerate}
\item\label{green-monte:case1} An ideal vertex of $\bdy E$ may appear
  on the innermost disk of Figure \ref{fig:neg-tangle} itself. This
  means the vertex is at the head of a tentacle running north or south
  out of the innermost disk $I$.
  
  \smallskip
  
\item\label{green-monte:case2} An ideal vertex of $\bdy E$ may appear
  at the head of a tentacle running out of the negative block
  containing $I$, if $\bdy E$ runs downstream from $I$ to the next
  adjacent positive tangle.
  
  \smallskip

\begin{figure}
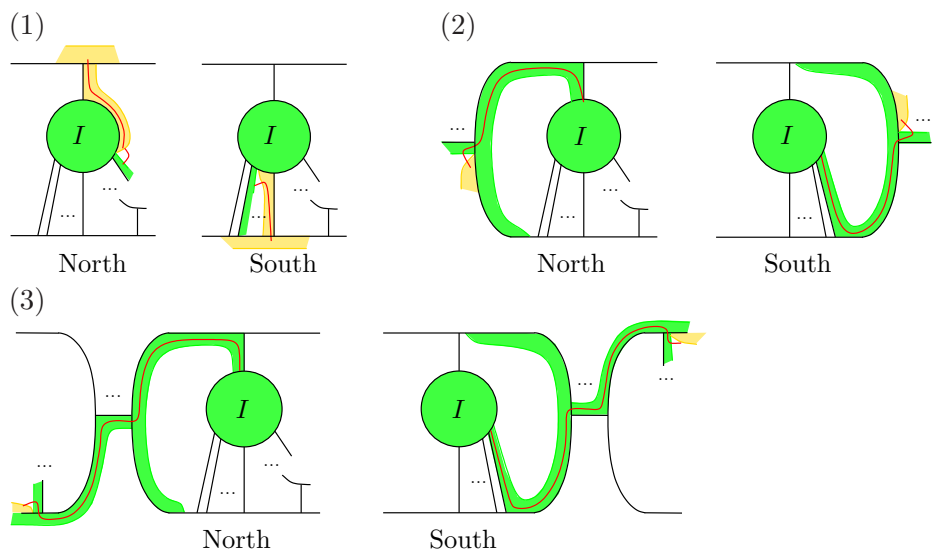

 \begin{center}
 \begin{tabular}{lcl}
   \eqref{green-monte:case1} & \hspace{.2in} & \eqref{green-monte:case2} \\
 \input{figures/neg-tangle-green1.pstex_t} & &
 \input{figures/neg-tangle-green2.pstex_t} \\
 \multicolumn{3}{l}{\eqref{green-monte:case3}} \\
 \multicolumn{3}{l}{\input{figures/neg-tangle-green3.pstex_t}}
 \end{tabular}
 \end{center}

 \caption{Ideal vertices of an EPD, of types
   \eqref{green-monte:case1}, \eqref{green-monte:case2}, and
   \eqref{green-monte:case3}.  In each type, the boundary of the EPD,
   shown in red, and can run north or south from the innermost disk.
   Both cases are shown.}
 \label{fig:green-enumerate123}
\end{figure}

\item\label{green-monte:case3} If $\bdy E$ runs downstream from $I$ to
  the next adjacent positive tangle, then across a segment spanning
  the positive tangle east to west, and then downstream across the
  outer state circle of the next negative block, the vertex may appear
  on the next adjacent negative block.

\smallskip

\begin{figure}
 \begin{tabular}{l}
 \eqref{green-monte:case4} \\
 \input{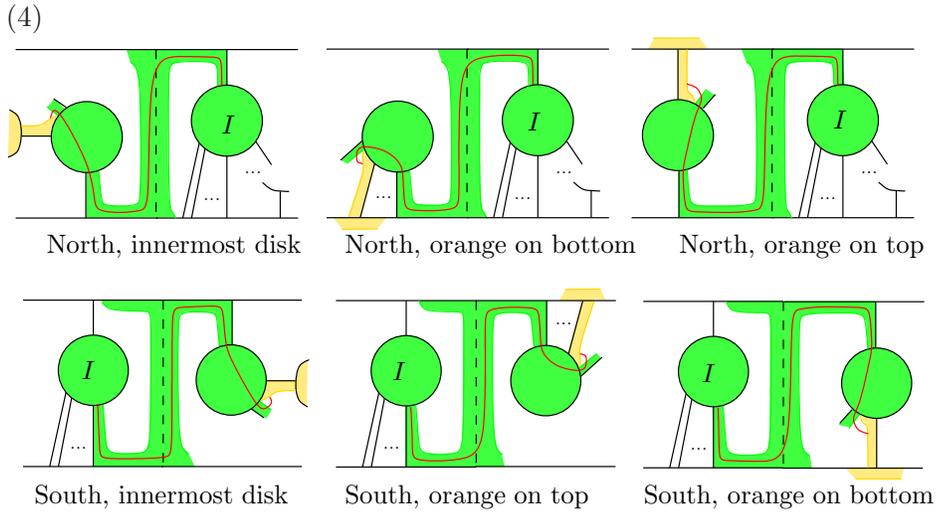}
 \end{tabular}
 \caption{Ideal vertices of an EPD, of type \eqref{green-monte:case4}.
   Note the boundary of the EPD, shown in red, can run north or south.
   The orange tentacle it meets can either come from an innermost
   disk, from the south, or from the north of the negative block.  All
   possibilities are shown.}
 \label{fig:green-enumerate4}
\end{figure}

\item\label{green-monte:case4} If $\bdy E$ runs downstream from $I$,
  across a non-prime arc, and then upstream, then it will run through
  an innermost disk in the next adjacent negative tangle.  The vertex
  may appear on this innermost disk.  Note the corresponding orange
  tentacle will either come from an innermost disk inside the negative
  tangle, or from a tentacle across the north or south of the negative
  block.  All three of these possibilities are shown in Figure
  \ref{fig:green-enumerate4}.

\begin{figure}
 \begin{tabular}{l}
   \eqref{green-monte:case5} \\
   \input{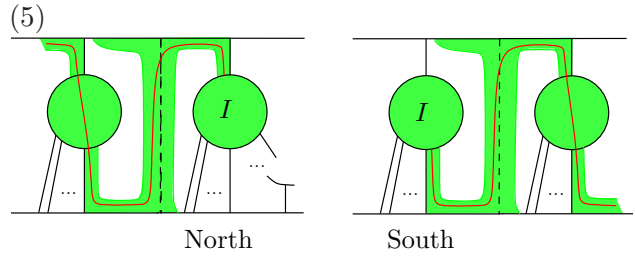}
 \end{tabular}
 \caption{Ideal vertices of an EPD, of type \eqref{green-monte:case5}.
   The boundary of the EPD, shown in red, can run north or south.
   Both cases are shown.}
 \label{fig:green-enumerate5}
\end{figure}

\smallskip

\item\label{green-monte:case5} If $\bdy E$ runs across a non-prime
  arc, then upstream into an innermost disk $J$, and the vertex does
  not appear on this innermost disk, then $\bdy E$ must run downstream
  again from $J$. In fact, to obtain the correct orientation near an
  ideal vertex of $E$, its boundary must run downstream for at least
  two stairs in a staircase starting at $J$. Thus, by Lemma
  \ref{lemma:monte-stair-length}, $\bdy E$ runs along a second 2--edge
  loop, with an innermost disk at $J$. (See Figure
  \ref{fig:green-enumerate5}.) Exiting this 2--edge loop, one of the
  vertices of types (\ref{green-monte:case2}),
  (\ref{green-monte:case3}), (\ref{green-monte:case4}), or
  (\ref{green-monte:case5}) must occur.  Note that between the pairs
  of 2--edge loops is a collection of bigons.  We will handle this
  last type by induction on the number of negative tangles in a
  negative block.
\end{enumerate}

Now, each EPD has two vertices.  From the green innermost disk $I$,
$\bdy E$ runs through the green shaded face in two directions (north
and south) toward these two vertices.  Each must be one of the above
enumerated types.  We consider all the combinations of these types of
vertices.

\smallskip

{\underline{ Type (\ref{green-monte:case1}) and type
    (\ref{green-monte:case1})}:} This combination cannot happen. For
if both vertices lie on the given innermost disk, then $\bdy E$ does
not run through any green tentacles exiting the innermost disk, which
contradicts Lemma \ref{lemma:mont-innermost-color}.

\smallskip

{\underline{ Type (\ref{green-monte:case1}) and type
    (\ref{green-monte:case2})}:} In this case, type
(\ref{green-monte:case2}) implies the negative tangle containing the
innermost disk $I$ is to the far west or far east of the negative
block, and the green tentacle leaves this negative tangle and wraps
around to the side of the adjacent positive tangle.  Note that
although $\bdy E$ meets a vertex here, the tentacle itself must
continue until it terminates at the same negative tangle, forming a
white bigon face. See Figure \ref{fig:case1-2}.

Next, note that since one of the vertices is of type
(\ref{green-monte:case1}), a tentacle across the outside of the given
negative block must be orange (either top or bottom, depending on
whether the EPD runs through an orange tentacle of the 2--edge loop on
the north or south).  Since the other vertex, which lies on the
outside of the negative block, must also meet an orange tentacle,
Lemma \ref{lemma:head-locator} (Head Locator) implies the only
possibility is that the other vertex meets the same orange tentacle,
and $\bdy E$ runs through this orange tentacle connecting the
vertices.  In this case, $\bdy E$ bounds only bigon(s), and $E$ is
parabolically compressible to bigons.

\begin{figure}
 \input{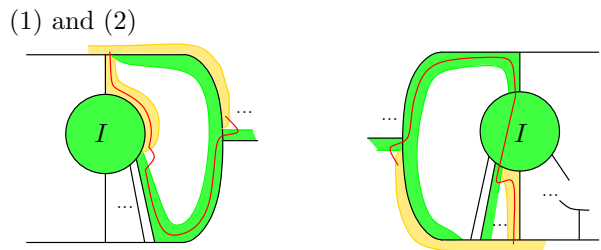}
 \caption{Combining vertices of type \eqref{green-monte:case1} and
   \eqref{green-monte:case2}. Left: type \eqref{green-monte:case1}
   north and \eqref{green-monte:case2} south; right: type
   \eqref{green-monte:case1} south and \eqref{green-monte:case2}
   north.}
 \label{fig:case1-2}
\end{figure}

\smallskip

{\underline{ Type (\ref{green-monte:case1}) and type
    (\ref{green-monte:case3})}:} This case cannot occur by the Head
Locator Lemma \ref{lemma:head-locator}; for, the head of the orange in
type (\ref{green-monte:case1}) would lie in the positive tangle or
negative block adjacent to one side, and the head of the orange in
type (\ref{green-monte:case3}) would lie in the negative block to the
opposite side.

\smallskip

{\underline{ Type (\ref{green-monte:case2}) and type
    (\ref{green-monte:case2})}:} In this case, tentacles from the
north and south of the negative tangle run along the sides of the
positive tangles to the east and west of the negative block.  Lemma
\ref{lemma:head-locator} (Head Locator) puts serious restrictions on
the diagram from here. (See Figure \ref{fig:case2-2}.)  In particular,
by Lemma \ref{lemma:head-locator}, any head(s) of the orange face must
lie in the same positive tangle, or in the same negative block.  Note
that when the vertex on the west of the negative block is of this
type, then the orange tentacle it meets must either

\begin{enumerate}[(a)]
\item run across the south of the given negative block, if the vertex
  is at the very south-east of the positive tangle, and have its head
  in the positive tangle or negative block to the east, or
\item come from an innermost disk in that positive tangle to the west
  of our negative block, or
\item come from an innermost disk inside the negative block to the
  west of our original negative block.
\end{enumerate}

Three similar options hold for the east.  Lemma
\ref{lemma:head-locator} (Head locator) implies that only one of two
possibilities may occur: on the west, the vertex lies at the
south--east tip of the positive tangle, and the orange head(s) are in
the positive tangle or negative block to the east, or on the east the
vertex lies at the north--west tip of the positive tangle, and all
orange head(s) are in the positive tangle or negative block to the
west.  We argue that $E$ parabolically compresses to bigons.  Both
cases are similar; we go through the case that the head of the orange
lies to the east (Figure \ref{fig:case2-2}, left).

First, note that since each innermost disk of a positive tangle has a
distinct color, the two segments attached to orange tentacles that
meet vertices of $\bdy E$ must have a head in the same orange
innermost disk in the positive tangle.  Thus the segment at the
south--west of the positive tangle to the east, which must have orange
on one side, must be attached to the same state circles as the segment
near the vertex on the east of the negative block.  Then these
segments are twist--equivalent, hence bound a chain of bigons Hence,
$\bdy E$ must run from the vertex on the west of the negative block,
through the tentacle across the south, up the segment at the
south--west of the positive tangle to the east, then encircle bigons,
and connect to the vertex on the east of the negative block.  The
diagram must be as shown in Figure \ref{fig:case2-2}, and the EPD is
parabolically compressible to bigons.

\begin{figure}
 \input{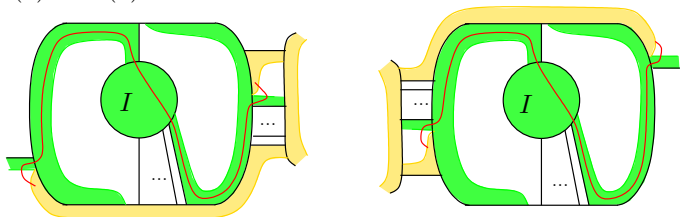}
 \caption{Possibilities for type \eqref{green-monte:case2} and
   \eqref{green-monte:case2}.}
 \label{fig:case2-2}
\end{figure}

\smallskip

{\underline{ Type (\ref{green-monte:case2}) and type
    (\ref{green-monte:case3})}:} The appearance of a type
(\ref{green-monte:case3}) vertex forces the head(s) of the orange
shaded face to lie in the negative block adjacent to one side, which,
just as above in the case (\ref{green-monte:case2}) and
(\ref{green-monte:case2}), puts restrictions on the portion of the
diagram with the vertex of type (\ref{green-monte:case2}).  The
argument is similar if the orange innermost lies to the east or to the
west.  For ease of explanation, we go through the case that the orange
lies to the east.  The result is illustrated in Figure
\ref{fig:case2-3}.

\begin{figure}
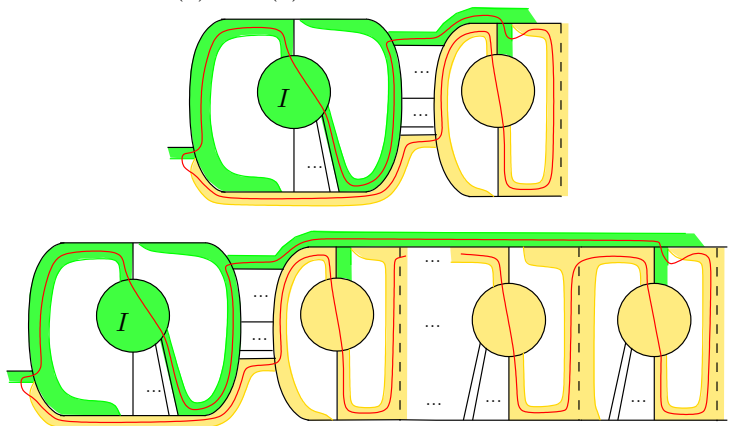

\begin{center}
\input{figures/green-monte-2and3.pstex_t}\\
\vskip 0.1in
\input{figures/green-monte-2and3long.pstex_t}
\end{center}
\caption{Possibilities for types \eqref{green-monte:case2} north and
 \eqref{green-monte:case3} south.  Note there are similar
 possibilities for \eqref{green-monte:case2} south and
 \eqref{green-monte:case3} north.}
\label{fig:case2-3}
\end{figure}

In particular, in this case the orange tentacle of vertex
(\ref{green-monte:case2}) must run across the south of the outside of
the negative block, and across a segment spanning the next (eastward)
positive tangle from east to west.  Further, there must be an orange
tentacle just inside the adjacent negative block to the east, adjacent
to this positive tangle.  This tentacle comes from an innermost disk
$J$ in the first negative tangle in this negative block.  The other
vertex, of type (\ref{green-monte:case3}) meets a tail of another
orange tentacle inside this negative block, at the top. Thus, the
innermost disk at the head of this other tentacle either agrees with
$J$, or is connected to $J$ by a sequence of non-prime switches.  If
$J$ is the head of both tentacles, then they must bound bigons between
them, and $E$ parabolically compresses to bigons, as in Figure
\ref{fig:case2-3}.  If $J$ is not the head of both tentacles, then
there is a sequence of 2--edge loops of the type shown in Figure
\ref{fig:green-enumerate5}, this time with an orange innermost disk,
and $E$ running across each loop in a string separated by non-prime
arcs.  Again this bounds bigons, and parabolically compresses to
bigons.

\smallskip

{\underline{ Type (\ref{green-monte:case3}) and type
    (\ref{green-monte:case3})}:} This case cannot occur.
%	For, the
%	vertex of type (\ref{green-monte:case3}) meets an orange tentacle with
%	its head inside the negative block immediately adjacent to one side or
%	the other, depending on whether the boundary of the EPD runs from the
%	given green innermost disk toward the north or south of the negative
%	tangle when running toward the vertex.  
Two vertices of this type would require orange heads in both negative
blocks to the east and west of the given negative block, contradicting
Lemma \ref{lemma:head-locator} (Head locator).

\medskip

Next, consider vertices of type (\ref{green-monte:case4}): the vertex
lies on an innermost disk in the adjacent negative tangle.  Again we
analyze the possible locations for orange heads from the other vertex,
using Lemma \ref{lemma:head-locator} (Head Locator).

\smallskip

{\underline{ Type (\ref{green-monte:case1}) and type
    (\ref{green-monte:case4})}:} In this case, an orange tentacle
meeting a vertex of type (\ref{green-monte:case1}) must run over the
outside of the negative block, so Lemmas \ref{lemma:mont-topandbottom}
and \ref{lemma:mont-outsidenegative} imply that the innermost disk in
the adjacent negative tangle must meet an orange tentacle connected to
the same side of the outside of the given negative block.  This
implies $\bdy E$ will bound a sequence of bigons, and hence $E$
parabolically compresses to bigons.  See Figure
\ref{fig:case4-innermostv}, left.

\begin{figure}
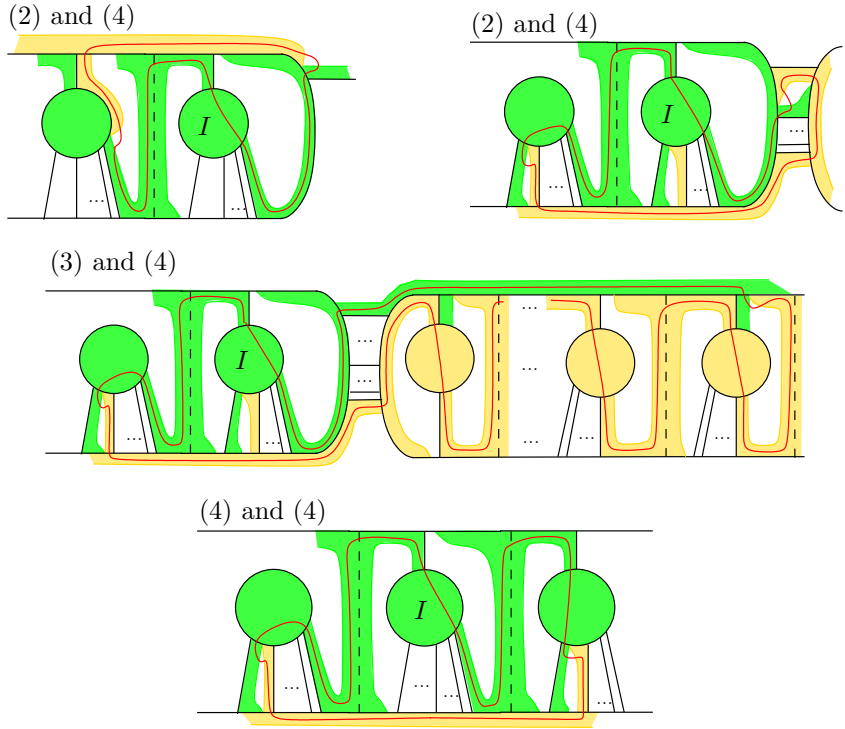

 \begin{center}
   \begin{tabular}{ccc}
     \input{figures/green-monte-4and2a.pstex_t} & \hspace{.3in} &
     \input{figures/green-monte-4and2b.pstex_t} \\
   \end{tabular}
 \end{center}
 \input{figures/green-monte-4and3.pstex_t} \\
 \vskip 0.15in
 \input{figures/green-monte-4and4.pstex_t}
 \caption{All possibilities for type (\ref{green-monte:case4}) north.}
 \label{fig:case4-innermostv}
\end{figure}

\smallskip

{\underline{ Type (\ref{green-monte:case2}) and type
    (\ref{green-monte:case4})}:} Here, the vertex of type
(\ref{green-monte:case2}) meets an orange tentacle on the outside of
the negative block. Thus Lemmas \ref{lemma:mont-topandbottom} and
\ref{lemma:mont-outsidenegative} imply that the orange tentacle
meeting the vertex of type (\ref{green-monte:case4}) must have its
head attached to the outside of the negative block.  Then, $\bdy E$
must run through this outside tentacle toward the vertex of type
(\ref{green-monte:case2}).  If the tentacle terminates at the vertex
of type (\ref{green-monte:case2}), then $\bdy E$ encloses only
bigons. Otherwise, the tentacle has its head attached to another state
circle.  In this case, the argument is the same as the one in case
(\ref{green-monte:case2}) and (\ref{green-monte:case2}) above.  The
orange tentacle meeting the other vertex is also attached to this
state circle, and we have a parabolic compression to bigons.

\smallskip

{\underline{ Type (\ref{green-monte:case3}) and type
    (\ref{green-monte:case4})}:} In this case, the head of an orange
tentacle meeting the vertex of type (\ref{green-monte:case3}) must be
in the next adjacent negative block. Thus, by the Head Locator Lemma
\ref{lemma:head-locator}, the orange tentacle meeting the vertex of
type (\ref{green-monte:case4}) must have its head inside the same
adjacent negative block.  Then, an argument similar to that in case
(\ref{green-monte:case2}) and (\ref{green-monte:case3}) implies that
$\bdy E$ encircles only bigons. Compare Figure \ref{fig:case2-3} to
Figure \ref{fig:case4-innermostv}.

\smallskip

{\underline{ Type (\ref{green-monte:case4}) and type
    (\ref{green-monte:case4})}:} Here, Lemmas
\ref{lemma:mont-topandbottom}, and \ref{lemma:mont-outsidenegative}
imply that the orange tentacles meeting the two vertices cannot come
from the north on one side and the south on the other, or the north or
south on one side and an innermost disk on the other.  Neither can
both orange tentacles come from innermost disks in distinct negative
tangles, for those tangles will be separated by green non-prime
switches, not orange connecting switches.  The only remaining
possibility is that the orange tentacles come from outside the
negative block on the same side.  In this case, $\bdy E$ bounds
bigons, as desired.  See Figure \ref{fig:case4-innermostv}.

\medskip

{\underline{ Type (\ref{green-monte:case5}) cases:}} Recall that in
type (\ref{green-monte:case5}), an arc of $\bdy E$ runs over a
non-prime arc and upstream, but the next vertex does not lie on an
innermost disk in this negative tangle.  Then $\bdy E$ must continue
downstream out of this innermost disk. Thus we have another 2--edge
loop as in Figure \ref{fig:neg-tangle}, and the options from Lemma
\ref{lemma:monte-stair-length} imply that from here, $\bdy E$ cannot
meet a vertex immediately, so its path toward a vertex is one of type
(\ref{green-monte:case2}), (\ref{green-monte:case3}),
(\ref{green-monte:case4}), or (\ref{green-monte:case5}).  By induction
on the number of negative tangles in a negative block, there will be
some finite number of consecutive 2--edge loops corresponding to
instances of case (\ref{green-monte:case5}), but eventually $\bdy E$
will run to a vertex of types (\ref{green-monte:case2}),
(\ref{green-monte:case3}), or (\ref{green-monte:case4}).  Note that
between these 2--edge loops from type (\ref{green-monte:case5}), we
have only bigon faces.  Combining the above arguments with these
additional bigon faces, we find that in all cases $\bdy E$ encloses
only bigons.

This phenomenon is illustrated in the bottom panel in Figure
\ref{fig:case4-innermostv}.  Thinking of the middle green innermost
disk $I$ as the innermost disk of the representative 2--edge loop, we
argued that this figure arose by combining vertices of type
(\ref{green-monte:case4}) and (\ref{green-monte:case4}).  However, if
we think of the right--most innermost green disk as the innermost disk
of our representative 2--edge loop, then this figure illustrates a
vertex of type (\ref{green-monte:case1}) (bottom right), from which
$\bdy E$ runs over a second 2--edge loop to the west, which is type
(\ref{green-monte:case5}), followed by a vertex of type
(\ref{green-monte:case4}).  More generally, we could have $n$ negative
tangles as in the middle of the bottom panel of Figure
\ref{fig:case4-innermostv}, strung end to end.  Between such tangles,
$\bdy E$ bounds only bigons.

\smallskip

This completes the enumeration of cases. For every combination of
ideal vertices, $E$ parabolically compresses to bigons. Thus there are
no complex EPDs in the upper polyhedron. This completes the proof of
Proposition \ref{prop:monte-main}, hence also the proof of Theorem
\ref{thm:monteguts}.
\end{proof}

\chapter{Applications}\label{sec:applications}
In this chapter, we will use the calculations of $ \guts(S^3\cut S_A)$
obtained in earlier chapters to relate the geometry of $A$--adequate
links to diagrammatic quantities and to Jones polynomials. In Section
\ref{sec:volume-apps}, we combine Theorem \ref{thm:guts-general} with
results of Agol, Storm, and Thurston \cite{ast} to obtain bounds on
the volumes of hyperbolic $A$--adequate links. A sample result is
Theorem \ref{thm:positive-volume}, which gives tight diagrammatic
estimates on the volumes of positive braids with at least $3$
crossings per twist region. The gap between the upper and lower bounds
on volume is a factor of about $4.15$.
 
In Section \ref{sec:monte-volume}, we apply these ideas to Montesinos
links, and obtain diagrammatic estimates for the volume of those
links. Again, the bounds are fairly tight, with a factor of $8$
between the upper and lower bounds.
 
In Section \ref{sec:jones-apps}, we relate the quantity
$\negeul(\guts(S^3\cut S_A))$ to coefficients of the Jones and colored
Jones polynomials of the link $K = \partial S_A$. One sample
application here is Corollary \ref{cor:beta-fiber}: for $A$--adequate
links, the next-to-last coefficient $\beta'_K$ detects whether a
state surface is a fiber in $S^3 \setminus K$.  Finally, in Section
\ref{sec:jones-volume}, we synthesize these ideas to obtain relations
between the Jones polynomial and volume. As a result, the volumes of
both positive braids and Montesinos links can be bounded above and
below in terms of these coefficients.

\section{Volume bounds for hyperbolic links}\label{sec:volume-apps}

Using Perelman's estimates for volume change under Ricci flow with
surgery, Agol, Storm, and Thurston \cite{ast} have obtained a
relationship between the guts of an essential surface $S \subset M$
and the hyperbolic volume of the ambient $3$--manifold $M$.  The
following result is an immediate consequence of \cite[Theorem
  9.1]{ast}, combined with work of Miyamoto \cite[Proposition 1.1 and
  Lemma 4.1]{miyamoto}.\index{hyperbolic volume}\index{volume|see{hyperbolic volume}}

\begin{theorem}\label{thm:ast-estimate}\index{guts!give volume estimates}\index{hyperbolic volume!bounded below by guts}
Let $M$ be finite--volume hyperbolic $3$--manifold, and let $S \subset
M$ be a properly embedded essential surface.  Then
$$\vol(M) \:\geq \: v_8\, \negeul(\guts(M\cut S)),$$
where $v_8 = 3.6638...$ is the volume of a regular ideal octahedron.
\end{theorem}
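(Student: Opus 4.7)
The plan is to derive this inequality as an immediate consequence of the two results cited in the statement, namely Theorem 9.1 of Agol--Storm--Thurston \cite{ast} and Proposition 1.1 / Lemma 4.1 of Miyamoto \cite{miyamoto}. The strategy is first to pass from $\vol(M)$ to the volume of the guts endowed with its canonical hyperbolic metric with totally geodesic boundary, and then to bound that volume below in terms of Euler characteristic.

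First, I would invoke the Agol--Storm--Thurston theorem. Since $S$ is essential in the hyperbolic manifold $M$, the JSJ decomposition of $M \cut S$ produces the characteristic submanifold (consisting of $I$--bundles and Seifert fibered pieces) and the guts, each component of which, by Thurston's geometrization for Haken manifolds, admits a complete, finite-volume hyperbolic metric in which $\bdy \guts \setminus \bdy M$ is totally geodesic. Theorem 9.1 of \cite{ast}, whose proof is based on Perelman's monotonicity of volume under Ricci flow with surgery (applied to a doubling construction), then yields
$$\vol(M) \: \geq \: \vol(\guts(M \cut S)),$$
where the right-hand side refers to the hyperbolic volume with respect to the totally geodesic boundary metric just described.

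Second, I would apply Miyamoto's lower bound on the volume of a hyperbolic $3$--manifold with totally geodesic boundary. For each connected component $N$ of $\guts(M \cut S)$, Proposition 1.1 and Lemma 4.1 of \cite{miyamoto} give
$$\vol(N) \: \geq \: v_8 \, \negeul(N),$$
with $v_8$ the volume of the regular ideal octahedron; the constant is sharp, as it is attained by manifolds decomposed into such octahedra. Since both volume and $\negeul$ are additive over components (with solid torus Seifert pieces contributing zero to $\negeul(\guts)$ by convention), summing over the components of $\guts(M \cut S)$ gives $\vol(\guts(M\cut S)) \geq v_8 \, \negeul(\guts(M \cut S))$, and chaining with the inequality from the previous paragraph yields the stated bound.

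The main thing to verify carefully is that Miyamoto's estimate applies to each guts component as-is: the components are compact $3$--manifolds with non-empty totally geodesic boundary (the portion of $\bdy \guts$ coming from $S$) together with a parabolic locus coming from $\bdy M$, and one must check that his argument handles the mixed boundary situation, or alternatively that the parabolic locus can be absorbed without affecting the inequality. Assuming this has been verified (as is standard in the follow-up literature on \cite{ast}), the chain of inequalities above is immediate and the theorem follows.
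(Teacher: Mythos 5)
Your proposal is correct and follows exactly the route the paper intends: the paper offers no independent argument, stating only that the theorem is an immediate consequence of Theorem 9.1 of Agol--Storm--Thurston combined with Miyamoto's Proposition 1.1 and Lemma 4.1, which is precisely your chain $\vol(M) \geq \vol(\guts(M \cut S)) \geq v_8\, \negeul(\guts(M \cut S))$. Your closing caveat about Miyamoto's bound applying to guts components with cusps/parabolic locus is also handled by the cited results, so nothing further is needed.
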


\begin{remark}
By \cite{ast} and work of Calegari, Freedman and Walker \cite{cfw},
the inequality of Theorem \ref{thm:ast-estimate} is an equality
precisely when $S$ is \emph{totally geodesic} and $M \cut S$ is a
union of regular ideal octahedra. We will not need this stronger
statement.
\end{remark}

In general, it is hard to effectively compute the quantity
$\negeul(\guts(M\cut S))$ for infinitely many pairs $(M, S)$.  To 
date, there have only been a handful of results  computing the
guts of essential surfaces in an infinite family of manifolds: see e.g.\ 
\cite{agol:guts, kuessner:guts, lackenby:volume-alt}.
In particular, Lackenby's computation of the guts of checkerboard surfaces
of alternating links  \cite[Theorem 5]{lackenby:volume-alt}
enabled him to estimate the volumes of these link
complements directly from a diagram. See \cite[Theorem
  1]{lackenby:volume-alt} and \cite[Theorem 2.2]{ast}.

In the $A$--adequate setting, we have the following volume estimate.

\begin{theorem}\label{thm:volume}\index{hyperbolic volume!in terms of $\chi(\GRA)$}
\index{$\GRA$, $\GRB$: reduced state graph!relation to volume}
 Let $D=D(K)$ be a prime
$A$--adequate diagram of a hyperbolic link $K$.  Then
$$\vol(S^3 \setminus K) \: \geq \: v_8\, (\negeul(\GRA)-|| E_c||),$$
where $\negeul(\GRA)$ and 
$|| E_c||$  are as in the statement of Theorem \ref{thm:guts-general}
and  $v_8 = 3.6638...$ is the volume of a regular ideal octahedron.
\end{theorem}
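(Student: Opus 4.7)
The plan is to combine three ingredients that are already available from the excerpt: Ozawa's essentiality theorem (Theorem \ref{thm:incompress}), the Agol--Storm--Thurston volume estimate (Theorem \ref{thm:ast-estimate}), and the guts formula (Theorem \ref{thm:guts-general}). The argument is essentially a straight concatenation, so the proof will be quite short; the main conceptual point is to make sure all three results apply to the same surface under the stated hypotheses.

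First I would verify essentiality. Since $D(K)$ is $A$--adequate, Theorem \ref{thm:incompress} (Ozawa) guarantees that the all--$A$ state surface $S_A$ is essential in $S^3 \setminus K$, in the sense of Definition \ref{def:essential}. When $S_A$ is orientable, this means $S_A$ itself is incompressible and boundary--incompressible; when $S_A$ is non-orientable, one works instead with $\widetilde{S_A}$, the boundary of a regular neighborhood of $S_A$, which is a two-sided essential surface. In either case, one obtains an essential (two-sided) surface $S \subset S^3 \setminus K$ with the property that $S^3 \cut S$ contains $M_A = S^3 \cut S_A$ as a component, the other components (in the non-orientable case) being an $I$--bundle over $S_A$, which contributes nothing to $\negeul(\guts)$.

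Next I would apply Theorem \ref{thm:ast-estimate} to the hyperbolic manifold $M = S^3 \setminus K$ and the essential surface $S$ constructed above. This gives
\[
\vol(S^3 \setminus K) \;\geq\; v_8 \,\negeul\bigl(\guts((S^3 \setminus K)\cut S)\bigr).
\]
By the preceding paragraph, the $I$--bundle components of $(S^3 \setminus K)\cut S$ do not contribute to $\negeul(\guts(\cdot))$, so the right-hand side equals $v_8\,\negeul(\guts(M_A))$.

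Finally, I would invoke Theorem \ref{thm:guts-general}, which asserts that
\[
\negeul(\guts(M_A)) \;=\; \negeul(\GRA) - \lVert E_c\rVert.
\]
Substituting this into the previous inequality yields the desired bound
\[
\vol(S^3 \setminus K) \;\geq\; v_8\bigl(\negeul(\GRA) - \lVert E_c\rVert\bigr).
\]
There is no serious obstacle here; the only mild subtlety is the bookkeeping in the non-orientable case, which is handled by the observation (already recorded in the chapter's introduction and in Definition \ref{def:essential}) that replacing $S_A$ by $\widetilde{S_A}$ does not change the guts computation. Note also that the right-hand side is meaningful (and non-trivial) precisely when $\negeul(\GRA) > \lVert E_c\rVert$; when this quantity is non-positive the inequality is vacuous, since hyperbolic volume is non-negative.
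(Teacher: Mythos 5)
Your proposal is correct and follows essentially the same route as the paper: apply the Agol--Storm--Thurston estimate (Theorem \ref{thm:ast-estimate}) to the essential surface $S_A$ in $S^3 \setminus K$ and then substitute the guts formula of Theorem \ref{thm:guts-general}. Your extra care about the non-orientable case via $\widetilde{S_A}$ is exactly the bookkeeping the paper disposes of through Definition \ref{def:essential} and the remark preceding Theorem \ref{thm:incompress}, so there is nothing missing.
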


\begin{proof}
We will apply Theorem \ref{thm:ast-estimate} to the essential surface
$S_A$ and the 3--manifold $S^3\setminus K$. Since $S^3\cut S_A$ is
homeomorphic to $(S^3\setminus K)\cut S_A$, we have
$$\vol(S^3 \setminus K) \: \geq \: v_8\, \negeul(\guts(S^3\cut S_A)) \: =\: \negeul(\GRA)-|| E_c||,$$
where the equality comes from Theorem \ref{thm:guts-general}.
The result now follows.
\end{proof}

Theorem \ref{thm:volume} becomes particularly effective in the case
where $|| E_c||=0$. For example, this will happen when every $2$--edge
loop in the state graph $\GA$ comes from a single twist region of the
diagram $D$.

\begin{corollary}\label{cor:no2edge-volume}
Let $D(K)$ be a prime, $A$--adequate diagram of a hyperbolic link $K$,
such that for each 2--edge loop in $\GA$, the edges belong to the same
twist region of $D(K)$.  Then
$$\vol(S^3 \setminus K) \: \geq \: v_8 \, (\negeul(\GRA)).$$
\end{corollary}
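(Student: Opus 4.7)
The plan is to combine two results already established in the paper: Corollary \ref{cor:onlybigons} on the guts of $S_A$ for diagrams whose $2$--edge loops all come from single twist regions, and the general volume bound in Theorem \ref{thm:volume} (which itself rests on Theorem \ref{thm:ast-estimate} of Agol--Storm--Thurston). Under the given hypotheses, Corollary \ref{cor:onlybigons} applies directly and gives the equality $\negeul(\guts(M_A)) = \negeul(\GRA)$. In the notation of Theorem \ref{thm:guts-general}, this is equivalent to asserting that the correction term satisfies $\|E_c\| = 0$.

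With that observation in hand, the proof amounts to a one--line substitution. Namely, Theorem \ref{thm:volume} states
\[
\vol(S^3 \setminus K) \: \geq \: v_8\, (\negeul(\GRA) - \|E_c\|),
\]
and plugging in $\|E_c\| = 0$ immediately yields
\[
\vol(S^3 \setminus K) \: \geq \: v_8\, \negeul(\GRA),
\]
as claimed. Alternatively, one can apply Theorem \ref{thm:ast-estimate} directly to the essential surface $S_A$ (which is essential by Theorem \ref{thm:incompress} since $D(K)$ is $A$--adequate), using the homeomorphism $(S^3 \setminus K) \cut S_A \cong S^3 \cut S_A = M_A$, and then invoke Corollary \ref{cor:onlybigons} to replace $\negeul(\guts(M_A))$ by $\negeul(\GRA)$.

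Since both ingredients are already in place, there is no serious obstacle; the only thing worth verifying is that the hypotheses of Corollary \ref{cor:onlybigons} match those of the present corollary, which they do verbatim (primeness, $A$--adequacy, and the assumption that every $2$--edge loop of $\GA$ has both edges in a single twist region). Note also that hyperbolicity of $K$ is used only to ensure that $\vol(S^3 \setminus K)$ is defined as a finite hyperbolic volume, so that Theorem \ref{thm:ast-estimate} applies.
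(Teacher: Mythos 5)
Your proposal is correct and follows exactly the paper's own argument: the paper proves this corollary by citing Theorem \ref{thm:volume} together with Corollary \ref{cor:onlybigons}, which is precisely your substitution of $\|E_c\|=0$ into the general volume bound. The alternative phrasing via Theorem \ref{thm:ast-estimate} applied directly to $S_A$ is just an unwinding of the same chain of results, so there is nothing to add.
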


\begin{proof}
This follows immediately from Theorem \ref{thm:volume}
and Corollary \ref{cor:onlybigons}.
\end{proof}

\begin{remark}
If $D=D(K)$ is a prime reduced alternating link diagram, then the
hypotheses of Corollary \ref{cor:no2edge-volume} are satisfied by both
the state graphs $\GA$ and $\GB$. Thus Corollary
\ref{cor:no2edge-volume} gives lower bounds on volume in terms of both
$\negeul(\GRA)$ and $\negeul(\GRB)$. By averaging these two lower
bounds, one recovers Lackenby's lower bound on the volume of
hyperbolic alternating links, in terms of the twist number $t(D)$
\cite[Theorem 2.2]{ast}.
\end{remark}

\begin{figure}%[h]
\psfrag{s}{$\sigma_1$}
\psfrag{t}{$\sigma_2$}
\begin{center}
\includegraphics{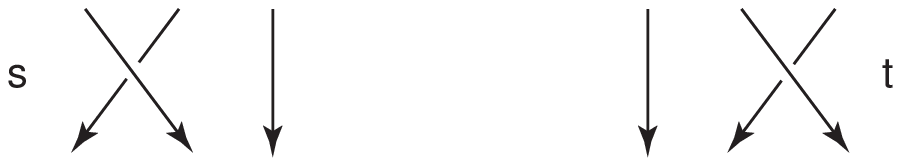}
\end{center}
\caption{The generators $\sigma_1$ and $\sigma_2$ of the 3--string braid group.\index{braid!generators ($\sigma_1, \ldots, \sigma_{n-1}$)}}
\label{fig:3braid-generators}
\end{figure}

Corollary \ref{cor:no2edge-volume} also applies to certain closed
braids\index{braid}.

\begin{define}\label{def:positive-braid}
Let $B_n$ denote the braid group on $n$ strings. The elementary braid 
generators are denoted $\sigma_1,
\ldots, \sigma_{n-1}$ (see
Figure \ref{fig:3braid-generators} for the case $n=3$).  A braid
$b=\sigma_{i_1}^{r_1}\sigma_{i_2}^{r_2} \cdots \sigma_{i_k}^{r_k}$ is
called \emph{positive}\index{braid!positive, negative}\index{positive braid} if
all the exponents $r_j $ are positive, and
\emph{negative} if all the exponents $r_j$ are negative.
\end{define}

Suppose that $D_b$ is the closure of a positive braid $b\in B_n$. Then
it follows immediately that the diagram $D_b$ is $B$--adequate. In
fact, the reduced graph $\GRB$ is a line segment with $n$
vertices. Thus, by Theorem \ref{thm:fiber-tree}, the state surface
$S_B$ is a fiber for $S^3 \setminus K$. (This recovers a classical
result of Stallings \cite{stallings:fibered} and Gabai
\cite{gabai:fibered}.) In particular, $S^3 \cut S_B$ is an
$I$--bundle, and does not contain any guts. On the other hand, under
stronger hypotheses about the exponents $r_j$, one can get non-trivial
volume estimates from the guts of the other state surface $S_A$.

\begin{theorem}\label{thm:positive-volume}\index{hyperbolic volume!of positive braids}\index{positive braid!hyperbolic volume}\index{braid!hyperbolic volume}\index{hyperbolic volume!in terms of twist number}\index{twist number!and hyperbolic volume}
Let $D = D_b$ be a diagram of a hyperbolic link $K$, obtained as the
closure of a positive braid $b=\sigma_{i_1}^{r_1}\sigma_{i_2}^{r_2}
\cdots \sigma_{i_k}^{r_k}$. Suppose that $r_j \geq 3$ for all $1\leq
j\leq k$; in other words, each of the $k$ twist regions in $D$
contains at least $3$ crossings.  Then
$$\frac{2 v_8}{3} \, t(D) \: \leq \:\vol(S^3 \setminus K) \: < \: 10v_3(t(D)-1),$$ 
where $v_3 = 1.0149...$ is the volume of a regular ideal tetrahedron  
and $v_8 = 3.6638...$ is the volume of a regular ideal octahedron. 
\end{theorem}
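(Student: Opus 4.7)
The plan is to prove the two bounds separately, using the guts machinery of the preceding chapters for the lower bound, and an augmented-link argument for the upper bound.

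For the lower bound, I would first check that any positive braid diagram with $r_j \geq 2$ is $A$--adequate, since within each twist region the all--$A$ resolution produces a chain of bigons and no segment of $H_A$ begins and ends on the same state circle. Under the stronger hypothesis $r_j \geq 3$, each chain has at least two bigons, and a brief case analysis shows $\GA$ has no two-edge loops at all: within a twist region the edges form a simple chain, while two edges from different twist regions cannot connect the same pair of vertices, because the bigon endpoints are distinct. Thus the hypotheses of Corollary \ref{cor:no2edge-volume} are vacuously satisfied (equivalently, one can invoke Theorem \ref{thm:guts-nononprime} with $m_A = 0$ since there are no bigons in $A$--regions for a positive braid), and one obtains
\[
  \vol(S^3 \setminus K) \;\geq\; v_8 \,\negeul(\guts(M_A)) \;=\; v_8 \, \negeul(\GRA).
\]
The remaining ingredient is the combinatorial lower bound $\negeul(\GRA) \geq \tfrac{2}{3}\,t(D)$ for hyperbolic positive braid closures with $r_j \geq 3$. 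Writing $v_A = (c-t) + B$, where $B$ counts the non-bigonal (``big'') state circles, a vertex--edge count gives $\negeul(\GRA) = t(D) - B$, so the estimate is equivalent to $B \leq \tfrac{1}{3}\,t(D)$. I would prove this by a tentacle--chasing analysis on the structure of $\GA$ around each big circle: each big circle is incident to a prescribed number of twist region outer edges, and hyperbolicity rules out the configurations (essentially connected sums and cabling patterns) in which too few big circles carry too many twist regions.

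For the upper bound, I would augment the diagram by adjoining a small unknotted crossing circle around each of the $t(D)$ twist regions, and then removing pairs of crossings to reduce each twist region to $0$ or $1$ crossings. The resulting augmented link $L$ has $t(D)$ additional components, and its complement is hyperbolic with an explicit ideal polyhedral decomposition. A theorem of Agol--Storm--Thurston (extending Lackenby's alternating volume estimate) gives $\vol(S^3 \setminus L) \leq 10\,v_3\,(t(D) - 1)$. Since $S^3 \setminus K$ is obtained from $S^3 \setminus L$ by Dehn filling the meridians of the crossing circles (and possibly performing generalized Dehn surgery along the twist region longitudes), Thurston's theorem that hyperbolic Dehn filling strictly decreases volume yields
\[
  \vol(S^3 \setminus K) \;<\; \vol(S^3 \setminus L) \;\leq\; 10\,v_3\,(t(D) - 1).
\]

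The main obstacle is the lower bound on $\negeul(\GRA)$, specifically showing that the number of big $A$--state circles $B$ satisfies $B \leq t(D)/3$. A naive count falls short for short braids with few strands, so hyperbolicity must be used in a nontrivial way. I expect the cleanest route is a structural argument that identifies each big circle with a ``block'' of the braid built out of at least three twist regions, using the requirement $r_j \geq 3$ to ensure that the polyhedral decomposition actually sees the full complexity of each twist region through its tentacle configuration. The augmentation side of the proof is comparatively routine, being a packaging of Agol--Storm--Thurston with Dehn filling monotonicity.
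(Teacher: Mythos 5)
Your overall architecture matches the paper's: the upper bound is the Agol--D.~Thurston augmented-link estimate (the paper simply cites the appendix of \cite{lackenby:volume-alt}; your sketch of augmenting the twist regions and using Dehn filling monotonicity is the standard proof of that result), and the lower bound is Corollary \ref{cor:no2edge-volume} plus the combinatorial estimate $\negeul(\GRA)\geq \tfrac{2}{3}t(D)$. However, there is a genuine gap: the combinatorial estimate is precisely the heart of the lower bound, and you do not prove it --- you reduce it correctly to $B\leq t(D)/3$ for the number $B$ of non-bigon (``wandering'') state circles, but then only say you ``expect'' a structural argument identifying each big circle with a block of at least three twist regions. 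That expectation is the theorem; as stated your proposal leaves it unestablished, and your proposed mechanism (``hyperbolicity rules out connected sums and cabling patterns'') is not an argument.

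For comparison, the paper's Lemma \ref{lemma:positive-twist} proves $B\leq t(D)/3$ by a concrete count using the braid position: with respect to the $S^1$-valued height function, every segment of $H_A$ is vertical, so the number of critical points on a wandering circle equals the number of edges of $\GA$ it meets; each twist region (long $A$-resolution) contributes exactly two critical points lying on wandering circles; and a claim, proved using primeness of the diagram, shows every wandering circle has at least six critical points (two critical points would make it a bigon, four forces the specific three-strand configuration of Figure \ref{fig:braid-contradiction}, which yields a loop meeting the diagram twice with crossings on both sides). Hence $6B\leq 2t(D)$. Note also that hyperbolicity enters only through primeness of the diagram, via Lemma \ref{lemma:prime-hyp-braid} and Corollary \ref{cor:primelinkadequate} (no nugatory crossings since $r_j\geq 2$); you never verify that $D$ is prime, which is needed both as a hypothesis of Corollary \ref{cor:no2edge-volume} and in the six-critical-points claim. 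Finally, your parenthetical that one could ``equivalently'' invoke Theorem \ref{thm:guts-nononprime} with $m_A=0$ would additionally require checking that the decomposition has no non-prime arcs, which you do not do; the route through Corollary \ref{cor:no2edge-volume} (equivalently Corollary \ref{cor:onlybigons}) avoids this.
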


Recall that $t(D)$\index{$t(D)$|see{twist number}} denotes the \emph{twist
  number}\index{twist number}: the number of twist regions\index{twist region} in the diagram $D$.  Observe that the multiplicative
constants in the upper and lower bounds differ by a rather small
factor of $4.155...$ .

The proof of Theorem \ref{thm:positive-volume} will require two lemmas.

\begin{lemma}\label{lemma:prime-hyp-braid}
$D = D_b$ be a diagram of a hyperbolic link $K$, obtained as the
  closure of the positive braid $b =
  \sigma_{i_1}^{r_1}\sigma_{i_2}^{r_2} \cdots \sigma_{i_k}^{r_k}$,
  where $k \geq 2$. Then
\begin{enumerate} 
\item\label{item:hyp-prime} If $K$ is hyperbolic and $r_j \geq 2$ for all $j$, then $D$ is a prime, $A$--adequate diagram.
\item\label{item:prime-hyp} If $D$ is a prime diagram and $r_j \geq 6$, for all $j$, then $K$ is hyperbolic.
\end{enumerate}
\end{lemma}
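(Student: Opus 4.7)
The plan is to prove the two parts separately. Part (\ref{item:hyp-prime}) will follow from direct combinatorial analysis of the all--$A$ state combined with Corollary \ref{cor:primelinkadequate}, while part (\ref{item:prime-hyp}) will be established by passing to a fully augmented link and invoking Dehn filling results from \cite{fkp:filling}.

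For part (\ref{item:hyp-prime}), I would first verify $A$--adequacy via a local analysis of each twist region. Since the braid is positive, the all--$B$ state is the Seifert state (the $n$ braid strands become $n$ parallel Seifert circles, as noted just before Theorem \ref{thm:positive-volume}), so the all--$A$ state applies the opposite ``horizontal'' smoothing at every crossing: a $\cap$ above and a $\cup$ below. Inside a twist region $\sigma_{i_j}^{r_j}$ with $r_j \geq 2$, the $\cup$ from one crossing and the $\cap$ from the next together enclose a small bigon circle of $s_A$; so the twist region contributes $r_j - 1$ bigons sandwiched between two ``outer'' circles of $s_A$ that extend into the rest of the braid. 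The $r_j$ crossings then appear as $r_j$ edges of $\GA$ arranged in a chain through these $r_j + 1$ circles, and every edge has two distinct endpoints (the bigons are pairwise distinct as small planar regions, and each is distinct from either outer circle since the outer circles contain arcs outside the twist region). Hence $\GA$ contains no $1$--edge loop, and $D$ is $A$--adequate. For primeness, $K$ hyperbolic implies that $K$ is both prime and non-split, and since the braid word is in reduced form ($i_j \neq i_{j+1}$) with all $r_j \geq 2$, the diagram $D$ has no nugatory crossings. Corollary \ref{cor:primelinkadequate} then yields that $D$ is prime.

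For part (\ref{item:prime-hyp}), the plan is to recover $K$ by Dehn filling a simpler link. Associated to the braid diagram $D$ one forms a fully augmented link $L$ by encircling each twist region with an unknotted crossing circle and then undoing the crossings inside that twist region; the original link $K$ is recovered from $L$ by performing $-1/r_j$ Dehn filling along the $j$-th crossing circle. Under the primeness hypothesis together with $k \geq 2$, one shows that $L$ is itself hyperbolic, and then the hypothesis $r_j \geq 6$ provides the slope-length bound needed to apply the $6$--theorem of Agol and Lackenby, in the sharper form developed in \cite{fkp:filling}, to conclude that the Dehn-filled manifold $S^3 \setminus K$ is hyperbolic.

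The hard part will be part (\ref{item:prime-hyp}), since it rests on Dehn filling machinery external to the present monograph. One must separately verify that the augmented link $L$ is hyperbolic under the primeness and $k \geq 2$ hypotheses, and then carry out a slope-length estimate showing that $r_j \geq 6$ is in fact enough to invoke the $6$--theorem. Matching the exact constant $6$ (as opposed to a slightly larger value) will likely require the sharp form of the Dehn filling estimates for highly twisted links developed in \cite{fkp:filling}, rather than a direct application of the original Agol--Lackenby bound.
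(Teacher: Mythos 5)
Your part \eqref{item:hyp-prime} is essentially the paper's own argument. The paper notes that, $b$ being a positive braid, the all--$A$ resolution of every twist region is the long resolution, so every edge of $\GA$ has a bigon circle at one endpoint and no edge is a loop (your chain-of-bigons description is the same computation); primeness then comes, exactly as you say, from ``hyperbolic $\Rightarrow$ prime and non-split'' plus Corollary \ref{cor:primelinkadequate}, after ruling out nugatory crossings using $r_j \geq 2$ (the paper phrases this as: a nugatory crossing in a braid diagram comes only from stabilization, i.e.\ a syllable $\sigma_i^1$).

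For part \eqref{item:prime-hyp} the paper does much less than you propose: it simply cites \cite[Theorem~1.4]{fp:twisted}, observing that the citation applies ``once one knows that $D$ is twist--reduced,'' a verification it leaves to the reader. Your plan unrolls that citation---augment each twist region, show the fully augmented link $L$ is hyperbolic, recover $K$ by Dehn filling, and apply the $6$--theorem with a slope-length estimate---which is legitimate, since it is how the cited theorem is proved. But as written it has a genuine gap at the step ``under the primeness hypothesis together with $k \geq 2$, one shows that $L$ is itself hyperbolic.'' Hyperbolicity of the fully augmented link requires the diagram to be prime \emph{and twist--reduced} (with at least two twist regions); primeness and $k \geq 2$ alone do not suffice, because if two twist regions of $D$ were twist-equivalent, their crossing circles would cobound an essential annulus in $S^3 \setminus L$. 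Twist--reducedness is precisely the hypothesis of \cite[Theorem~1.4]{fp:twisted} and precisely the point the paper flags as the thing to check for these braid diagrams; your sketch never addresses it, so the hyperbolicity of $L$ fails as stated. A smaller slip: $K$ is not recovered from $L$ by $-1/r_j$ filling on the $j$-th crossing circle. Since $1/n$ filling on a crossing circle inserts $2n$ crossings, the correct slope is $\mp 1/(r_j/2)$ when $r_j$ is even (and one leaves a single crossing in the augmented diagram and fills along $\mp 1/\lfloor r_j/2\rfloor$ when $r_j$ is odd); the slope-length estimate that makes the threshold exactly six crossings per twist region must be carried out with these slopes, not with $1/r_j$.
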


\begin{proof}
First, suppose that $r_j \geq 2$ for all $j$. Since $b$ is a positive
braid, the $A$--resolution of every twist region is the long
resolution (see Figure \ref{fig:twist-resolutions} on page
\pageref{fig:twist-resolutions}). Thus every edge of $\GA$ connects to
a bigon on at least one end, and no edge of $\GA$ is a loop. Thus $D$
is $A$--adequate.

If $K$ is hyperbolic, it must be prime and non-split. Thus, by
Corollary \ref{cor:primelinkadequate} on page
\pageref{cor:primelinkadequate}, either $D$ is prime or contains
nugatory crossings. But a nugatory crossing in a braid diagram can
only be created by stabilization, which would imply there is a term
$\sigma_i^1$, contradicting the hypothesis that $r_j \geq 2$ for all
$j$. This proves \eqref{item:hyp-prime}.

Statement \eqref{item:prime-hyp} follows immediately from
\cite[Theorem 1.4]{fp:twisted}, once one knows that $D$ is
twist--reduced. As we will not need conclusion \eqref{item:prime-hyp}
in the sequel (it was mainly included as a pleasant quasi-converse to
\eqref{item:hyp-prime}), we leave it to the reader to show that $D$ is
twist--reduced.
\end{proof}

\begin{lemma}\label{lemma:positive-twist}\index{twist number!in terms of $\chi(\GRA)$}\index{$\GRA$, $\GRB$: reduced state graph!relation to twist number}
Let $D = D_b$ be a diagram of a hyperbolic link $K$, obtained as the
closure positive braid $b=\sigma_{i_1}^{r_1}\sigma_{i_2}^{r_2} \cdots
\sigma_{i_k}^{r_k}$. Suppose that $r_j \geq 3$ for all $j$. Then $D$
is $A$--adequate, and
$$\chi(\GRA) \: = \: \chi(\GA) \: \leq \: -2k/3 \: = \: -2t(D)/3 \:<  \: 0.$$ 
\end{lemma}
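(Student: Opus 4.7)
The plan is to verify $A$-adequacy, then compute $\chi(\GRA) = \chi(\GA)$ by a direct combinatorial count in the all-$A$ resolution, and finally bound this quantity using the hypothesis that $r_j \geq 3$ together with hyperbolicity of $K$. First, since $r_j \geq 3 \geq 2$ for every $j$ and $K$ is hyperbolic, Lemma \ref{lemma:prime-hyp-braid}\eqref{item:hyp-prime} immediately gives that $D$ is a prime, $A$-adequate diagram.

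For the equality $\chi(\GA) = \chi(\GRA)$, I would verify that $\GA$ has no parallel edges. Since positive crossings have horizontal $A$-resolutions, a twist region of $r_j \geq 2$ crossings yields a chain of $r_j - 1$ internal bigons flanked by two external state circles $C, C'$, distinct by $A$-adequacy. The corresponding $r_j$ edges of $\GA$ trace out the path $C\to B_1\to B_2\to\cdots\to B_{r_j-1}\to C'$ through $r_j+1$ consecutive vertices, so no two share both endpoints. Moreover, each internal bigon is unique to its twist region, so edges from different twist regions cannot duplicate either. Hence $\GA = \GRA$ as multigraphs, and their Euler characteristics coincide.

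Combining $\chi(\GRA) = |s_A| - c$ with the decomposition $|s_A| = V_{\mathrm{nb}} + \sum_j(r_j-1) = V_{\mathrm{nb}} + c - k$, where $V_{\mathrm{nb}}$ counts non-bigon state circles, I obtain $\chi(\GRA) = V_{\mathrm{nb}} - k$. The target inequality $\chi(\GRA) \leq -2k/3$ is thus equivalent to $V_{\mathrm{nb}} \leq k/3$. Each of the $k$ twist regions contributes exactly two ``external'' half-arcs (its top cap and bottom cup) incident to non-bigon state circles, giving a count $\sum_C d(C) = 2k$, where the sum runs over non-bigon circles $C$ with $d(C)$ the number of external arcs on $C$. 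The bound $V_{\mathrm{nb}} \leq k/3$ is thus equivalent to the statement that the average external-arc degree on non-bigon state circles is at least $6$.

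The main obstacle will be establishing this degree bound; hyperbolicity enters crucially here. My plan is to argue by contradiction: if some non-bigon state circle $C$ had $d(C) < 6$, then the combinatorics of the all-$A$ resolution around $C$ --- analyzed via the tentacle-chasing framework of Chapter \ref{sec:polyhedra} --- would exhibit either an essential sphere or an essential torus in $S^3 \setminus K$, giving a connect-sum or Seifert fibered piece in the JSJ decomposition, or would force $K$ to be a torus link or cable. Since $K$ is hyperbolic, no such reducing sphere, essential torus, or torus/cable structure can exist, so every non-bigon circle must have $d(C) \geq 6$. Concretely, I would use the Birman--Menasco style classification of closed positive braids together with Nielsen--Thurston theory to identify how a low-degree non-bigon circle encodes an invariant multicurve of the braid's monodromy, and then invoke the hypothesis $r_j \geq 3$ to ensure that the bigon structure of $\GA$ is rich enough to carry out the polyhedral reduction. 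Once this bound is in place, the inequality $\chi(\GRA) = V_{\mathrm{nb}} - k \leq k/3 - k = -2k/3 < 0$ follows, with strict negativity since $k = t(D) \geq 1$.
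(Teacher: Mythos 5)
Your setup is correct and follows the same track as the paper's proof up to the crucial point: you invoke Lemma \ref{lemma:prime-hyp-braid} for $A$--adequacy and primeness, you correctly argue $\GA = \GRA$, you reduce to $\chi(\GA) = V_{\mathrm{nb}} - k$ where $V_{\mathrm{nb}}$ counts the non-bigon (``wandering'') state circles, and you correctly identify that everything hinges on showing each wandering circle meets at least six segments of $H_A$. The gap is that this last step --- which is the entire content of the inequality --- is never actually proved. Your plan to obtain it by contradiction from hyperbolicity, via essential spheres or tori, the JSJ decomposition, Birman--Menasco theory and Nielsen--Thurston theory, is both unsubstantiated and misdirected: you give no mechanism by which a low-degree wandering circle produces an essential torus or an invariant multicurve of the braid monodromy, and no such machinery is needed. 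In the paper, hyperbolicity enters only once, through Lemma \ref{lemma:prime-hyp-braid}, to guarantee that the diagram $D$ is \emph{prime}; the degree bound itself is an elementary combinatorial argument using the braid height function.

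Concretely, the missing argument runs as follows. Segments of $H_A$ are vertical with respect to the braid height function, so the degree of a state circle $C$ in $\GA$ equals its number of critical points, which is even; between critical points $C$ runs along a single braid strand, and at each critical point it switches to an adjacent strand. If $C$ runs along only one strand, that strand is a crossing-free link component, which is absurd; if it runs along exactly two strands, it has exactly two critical points and is a bigon circle, not wandering; if it runs along four or more strands, it needs at least six critical points. The only remaining case is a wandering circle running along exactly three strands with exactly four critical points; such a circle must be configured as in Figure \ref{fig:braid-contradiction}, and that configuration yields a simple closed curve meeting the diagram transversely in two points with crossings on both sides, contradicting the primeness of $D$. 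Without this case analysis (or a genuine substitute for it), your claimed bound $V_{\mathrm{nb}} \leq k/3$, and hence the inequality $\chi(\GRA) \leq -2k/3$, is not established.
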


\begin{proof}
The diagram $D$ is $A$--adequate by Lemma
\ref{lemma:prime-hyp-braid}. Since the $A$--resolution is the long
resolution, every loop in $\GA$ has length at least $3$. Thus $\GA =
\GRA$. It remains to count the vertices and edges of $\GA$.

Recall that the edges of $\GA$ are in one--to--one correspondence with
the crossings in $D_b$; thus there are a total of $\sum r_j$ edges of
$\GA$. The vertices of $\GA$ are in one--to--one correspondence with
the state circles in the $A$--resolution of $D_b$. In a twist region
with $r_j$ crossings, there are $(r_j - 1)$ bigon circles in the long
resolution; thus there are a total of $(\sum r_j) - k$ bigon state
circles.  It remains to count the non-bigon state circles of the
$A$--resolution. We call these the \emph{wandering}\index{state circle!wandering} state circles, as
they wander through multiple twist regions.

Consider the $S^1$--valued height function on $D(K)$ that arises from
the braid position of the diagram. Relative to this height function,
all segments of $H_A$ are vertical, and connect two critical points of
state circles. Thus the number of critical points on a state circle
$C$ equals the number of segments of $H_A$ (equivalently, edges of
$G_A$) met by $C$. To complete the proof of the lemma we need the
following.

\smallskip

\textbf{Claim:} Every wandering state circle $C$ has at least $6$
critical points.

\smallskip

\emph{Proof of claim:} Since $C$ has the same number of minima as
maxima, the total number of critical points on $C$ must be even. Also,
note that between critical points, $C$ runs directly along one of the
$n$ strands of the braid. At a critical point, it crosses from the
$j$-th to the $(j \pm 1)$-st strand.

Consider the number of distinct strands that $C$ runs along. If $C$
only runs along one strand of the braid, with no critical points, then
that strand is a link component with no crossings: absurd. If $C$ only
runs along the $i$-th and $(i+1)$-st strands of the braid, then it
must have exactly $2$ critical points, and is a bigon. This
contradicts the hypothesis that $C$ is wandering.

If $C$ runs along four or more strands of the braid, then it needs at
least $6$ critical points (to get from the $i$-th to the $(i+3)$-rd
strand, and back), hence we are done.  The remaining possibility is
that $C$ runs along exactly three strands of the braid. This means
that $C$ must have at least $4$ critical points. If it has more than
$4$, then we are done.

\begin{figure}
\psfrag{C}{$C$}
\includegraphics{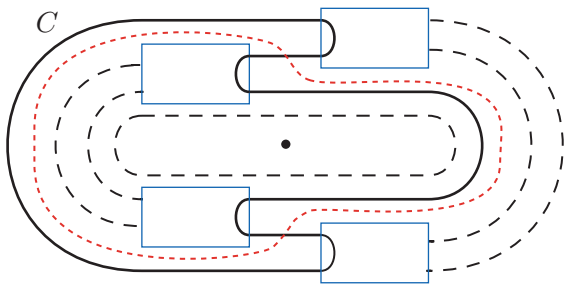}
\caption{A wandering state circle $C$ with exactly $4$ critical
  points. The rectangular boxes are twist regions. The dashed arcs are
  strands of the braid, which may run through other twist regions. The
  red dotted loop provides a contradiction to primeness.}
\label{fig:braid-contradiction}
\end{figure}

Suppose, for a contradiction, that $C$ runs along exactly three
strands and has exactly $4$ critical points. Then, with some choice of
orientation along $C$, it must run from the $i$-th to the $(i+1)$-st
strand at a maximum, then to the $(i+2)$-nd at a minimum, then to the
$(i+1)$-st strand at a maximum, then finally back to the $i$-th at a
minimum. In other words, $C$ must look exactly like the state circle
of Figure \ref{fig:braid-contradiction}. But the figure reveals an
essential loop (dotted, red) meeting $D(K)$ twice, which contradicts
primeness. Since $D$ is prime by Lemma \ref{lemma:prime-hyp-braid},
this finishes the proof of the claim.

\smallskip

To continue with the proof of the lemma, observe that every twist
region contains two critical points of wandering state circles; these
are the ends of the long resolution in Figure
\ref{fig:twist-resolutions} on page \pageref{fig:twist-resolutions}.
On the other hand, by the claim, each wandering circle has at least
$6$ critical points. Thus there must be at least three times as many
twist regions as wandering circles. We may now compute:
\begin{eqnarray*}
\chi(\GRA) = \chi(\GA) &=&
(\mbox{bigon circles}) + (\mbox{wandering circles}) - (\mbox{crossings}) \\
& \leq & \left(\sum r_j \: - k \right) + (k/3) - \left( \sum r_j \right) \\
& = & - \frac{2k}{3} \: = \: - \frac{2t(D)}{3}.  
\end{eqnarray*}

\vspace{-3ex}
\end{proof}

We can now complete the proof of Theorem \ref{thm:positive-volume}.

\begin{proof}[Proof of Theorem \ref{thm:positive-volume}]
The upper bound on volume is due to Agol and D.\ Thurston
\cite[Appendix]{lackenby:volume-alt}, and holds for all diagrams.

To prove the lower bound on volume, recall that $D$ must be prime by
Lemma \ref{lemma:prime-hyp-braid}. By Lemma
\ref{lemma:positive-twist}, we know that $\GA = \GRA$, hence $\GA$ has
no $2$--edge loops. Thus Corollary \ref{cor:no2edge-volume} applies.
Plugging the estimate
$$\negeul(\GRA) \: = \: - \chi(\GRA)  \: \geq \: 2 t(D) / 3$$ 
into Corollary \ref{cor:no2edge-volume} completes the proof. 
\end{proof}

Similar relations between volume and twist number are known for
alternating links, links that admit diagrams with at least seven
crossings in each twist region, and closed 3--braids \cite{ast,
  fkp:filling, fkp:farey}.  To this list, we may add a result about
the volumes of Montesinos links.

\section{Volumes of Montesinos links}\label{sec:monte-volume}

In this section, we will prove Theorem \ref{thm:monte-volume}, which
estimates the volume of Montesinos links.  We begin with a pair of
lemmas.  For the statement of the lemmas, recall Definition
\ref{def:monte-reduced} on page \pageref{def:monte-reduced} and
Definition \ref{def:admissible-tangle} on page
\pageref{def:admissible-tangle}.

\begin{lemma}\label{lemma:monte-twist-tangle}
Let $D(K)$ be a reduced, admissible Montesinos diagram with at least
three positive tangles and at least three negative tangles.  Let
$\GRA$ and $\GRB$ be the reduced all--$A$ and all--$B$ graphs
associated to $D$. Then
$$ - \chi(\GRA) - \chi(\GRB) \: = \: t(D) - Q_{1/2}(D),$$
where $Q_{1/2}(D)$ is the number of rational tangles in $D$ whose
slope has absolute value $\abs{q} \in [1/2, 1).$
\end{lemma}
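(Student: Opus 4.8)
The plan is to compute $\chi(\GRA)$ and $\chi(\GRB)$ separately, directly from the diagram, and subtract. Since $D(K)$ has at least three positive and at least three negative tangles it is non-alternating, so by the conventions of Definition~\ref{def:monte-reduced} every tangle has slope $0<|q_i|<1$; moreover Lemma~\ref{lemma:monte-adequate} shows $D$ is simultaneously $A$--adequate and $B$--adequate, so each of $\GRA,\GRB$ is a connected graph without loops or multiple edges, with one vertex per state circle. Writing $|s_A|,|s_B|$ for the numbers of all--$A$ and all--$B$ state circles and $e_A',e_B'$ for the numbers of edges of $\GRA,\GRB$, we have $\chi(\GRA)=|s_A|-e_A'$ and $\chi(\GRB)=|s_B|-e_B'$, hence
\[
-\chi(\GRA)-\chi(\GRB)=\bigl(e_A'+e_B'\bigr)-\bigl(|s_A|+|s_B|\bigr),
\]
and the problem reduces to evaluating the two bracketed quantities.

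For the state--circle count I would show that $|s_A|+|s_B|=c(D)$, where $c(D)$ is the number of crossings; equivalently, that the Turaev surface of this non-alternating adequate diagram has genus $1$. This is a local verification: using the continued--fraction band structure of each rational tangle together with the pictures of its $A$-- and $B$--resolutions (Figure~\ref{fig:tangle-resolutions}), one counts the state circles a single tangle contributes, distinguishing bigon circles from circles running along tangle boundaries, and then assembles the cyclic sum as in Lemma~\ref{lemma:monte-poly-regions} (one ``outer'' configuration around all positive tangles, one inside each negative tangle in the all--$A$ state, and the mirror picture in the all--$B$ state). Alternatively one may cite the known computation of the Turaev genus of Montesinos links.

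For the edge count, note that every crossing of $D$ lies in a unique twist region $R$, and (Definition~\ref{def:long-short}) $R$ is the long resolution for exactly one of the two states and the short resolution for the other. On its short side $R$ contributes $c_R$ mutually parallel segments, which collapse to a single edge of the reduced graph; on its long side $R$ contributes a length--$c_R$ path through $c_R-1$ new bigon vertices, which collapses to fewer edges only if the two ends of the path lie on the same state circle. Also, two distinct short twist regions may join the same pair of state circles. Ignoring all such collapses, each twist region would contribute $c_R+1$ to $e_A'+e_B'$, for a total of $c(D)+t(D)$. The crux of the argument is to prove that the total number of collapses is exactly $Q_{1/2}(D)$: a band--by--band analysis of each tangle shows the only coincidences occur at a tangle of slope $\pm q$ with $|q|\in[1/2,1)$ (where the outermost continued--fraction band either has both ends on the outer state circle, when the tangle is $[0,\pm2]$, or is a single crossing parallel to the next band, when $a_1=1$), producing precisely the $2$--edge loop of Figure~\ref{fig:neg-tangle}, i.e.\ type~\eqref{item:neg-loop} of Lemma~\ref{lemma:monte-2edge-loop}, and exactly one collapse; twist--reducedness and admissibility rule out all other coincidences. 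Combining the two steps,
\[
-\chi(\GRA)-\chi(\GRB)=\bigl(c(D)+t(D)-Q_{1/2}(D)\bigr)-c(D)=t(D)-Q_{1/2}(D),
\]
as claimed.

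The main obstacle is the second step's bookkeeping: identifying exactly which pairs of twist regions of a reduced admissible Montesinos diagram are joined by parallel segments of $H_A$ or $H_B$, and ruling out all the ``extra'' ones, so that the collapse count is pinned to $Q_{1/2}(D)$; the state--circle count in the first step is of the same combinatorial flavour. Both are statements about the all--$A$ and all--$B$ resolutions only, with no tentacle chasing, but they require a patient case analysis of the continued--fraction bands of each tangle and of how consecutive tangles in the cyclic sum share their outer state circles.
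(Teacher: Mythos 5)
Your proposal is correct and follows essentially the same route as the paper: the paper likewise writes $-\chi(\GRA)-\chi(\GRB)=(e'_A+e'_B)-(v_A+v_B)$, obtains $v_A+v_B=c(D)$ from the fact that the Turaev surface of a cyclic sum of alternating tangles is a torus on which $\GA$ and $\GB$ are dual, and counts the collapsed edges as $c(D)-t(D)+Q_{1/2}(D)$ by invoking the classification of $2$--edge loops in Lemma \ref{lemma:monte-2edge-loop} (twist-region bigons plus one north--south loop per tangle of slope $\abs{q}\in[1/2,1)$, with type \eqref{item:pos-loop} excluded by the tangle-count hypothesis). The only difference is bookkeeping: where you propose a fresh band-by-band verification, the paper simply cites the already-established Lemma \ref{lemma:monte-2edge-loop} and the Turaev-surface construction of \cite{dasbach-futer...}.
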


\begin{proof}
The link diagram $D$ can be used to construct a \emph{Turaev
  surface}\index{Turaev surface} $T$: this is a closed, unknotted
surface in $S^3$, onto which $K$ has an alternating projection. The
graphs $\GA$ and $\GB$ naturally embed in $T$ as checkerboard graphs
of the alternating projection, and are dual to one another.
Furthermore, because $D$ is constructed as a cyclic sum of alternating
tangles, the Turaev surface is a torus. See \cite[Section
  4]{dasbach-futer...} for more details.

Recall that $\chi(\GRA) = v_A - e'_A$, where $v_A$ is the number of
vertices and $e_A$ is the number of edges, and similarly for
$\chi(\GRB)$. We can use the topology of $T$ to get a handle on these
quantities.  Because $\GA$ and $\GB$ are dual, the number of vertices
of $\GB$ equals the number of regions in the complement of $\GA$.
Thus, since $T$ is a torus, we have
$$v_A - e_A + v_B = \chi(T) = 0.$$

Now, consider the number of edges of $\GA$ that are discarded when we
pass to $\GRA$. Because $D$ has at least three positive tangles, Lemma
\ref{lemma:monte-2edge-loop} on page \pageref{lemma:monte-2edge-loop}
implies that edges can be lost in one of two ways:
\begin{enumerate}
\item If $r$ is an $A$--region with $c(r)>1$ crossings, hence $c(r) >
  1$ parallel edges in $\GA$, then $c(r) -1$ of these edges will be
  discarded as we pass to $\GRA$. See Definition \ref{def:long-short}
  and Figure \ref{fig:twist-resolutions} on page
  \pageref{fig:twist-resolutions}. \smallskip

\item If $N_i$ is a negative tangle of slope $q_i \in (-1, -1/2]$,
then one edge of $\GA$ will be lost from the two-edge loop that spans
$N_i$ north to south. See Figure \ref{fig:neg-tangle} on page \pageref{fig:neg-tangle}.
\end{enumerate}
The same principle holds for the $B$--graph $\GB$, with $B$--regions
replacing $A$--regions and positive tangles replacing negative ones.

Combining these facts gives
\begin{eqnarray*}
(e_A - e'_A) + (e_B - e'_B) &=& \sum_{\mbox{twist regions}} (c(r) - 1)
  \: \:+ \#\{ i: \abs{q_i} \in [1/2, 1) \} \\
    &=& c(D) - t(D) + Q_{1/2}(D).
\end{eqnarray*}
Finally, since the edges of $\GB$ are in one-to-one correspondence
with the crossings of $D$,
\begin{eqnarray*}
 -  \chi(\GRA) - \chi(\GRB) 
 &=& e'_A + e'_B - v_A - v_B \\
 &=& (e'_A + e'_B - e_A - e_B) \qquad + e_B \quad + (e_A - v_A - v_B) \\
 &=& - c(D) + t(D) - Q_{1/2} \quad + c(D) \quad + 0 \\
 &=&  t(D) - Q_{1/2}(D).
\end{eqnarray*}

\vspace{-3ex}
\end{proof}

\begin{lemma}\label{lemma:monte-twist}\index{twist number!in terms of $\chi(\GRA)$}\index{$\GRA$, $\GRB$: reduced state graph!relation to twist number}
Let $D(K)$ be a reduced, admissible Montesinos diagram with at least
three positive tangles and at least three negative tangles.  Then
$$ - \chi(\GRA) - \chi(\GRB) \: \geq \: \frac{t(D) -\# K}{2}. $$
where $\# K$ is the number of link components of $K$.
\end{lemma}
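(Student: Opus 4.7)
The plan is to combine Lemma \ref{lemma:monte-twist-tangle} with a count of link components. By that lemma, $-\chi(\GRA) - \chi(\GRB) = t(D) - Q_{1/2}(D)$, so the desired inequality is equivalent to
\[ t(D) + \#K \geq 2\, Q_{1/2}(D). \]

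To bound $t(D)$ from below, I would partition the tangles contributing to $Q_{1/2}(D)$: let $N_1$ be the number with $|q_i| = 1/2$, let $N_2$ be the number with $|q_i| \in (1/2, 1)$, and let $M = r - N_1 - N_2$ count the remainder. A tangle with $|q_i| \in (1/2, 1)$ has continued fraction $[0, \pm 1, a_2, \ldots, a_n]$ with $n \geq 2$, hence $t(T_i) \geq 2$; a tangle with $|q_i| = 1/2$ is $[0, \pm 2]$, contributing exactly one twist region; and each of the $M$ remaining tangles contributes at least one. Summing yields $t(D) \geq N_1 + 2 N_2 + M$, which reduces the goal to the combinatorial estimate $\#K \geq N_1 - M$. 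Since $\#K \geq 1$ always, it suffices to prove the sharper claim $\#K \geq N_1$: the number of link components is at least the number of slope-$\pm 1/2$ tangles.

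The approach to $\#K \geq N_1$ is as follows. Each tangle of slope $\pm 1/2$ is \emph{vertical} in the sense that its denominator $s_i = 2$ is even, so its two strands connect NW to SW and NE to SE within the tangle pillow. Encode the link components of $K$ as cycles of an auxiliary $2$-regular multigraph $G$ on the $2(r+1)$ boundary points $\{N_0, \ldots, N_r, S_0, \ldots, S_r\}$ of the cyclic tangle sum, whose edges are the in-tangle strand connections together with the numerator-closure arcs $N_0 N_r$ and $S_0 S_r$. The components of $K$ biject with the cycles of $G$. Each slope-$\pm 1/2$ tangle $T_i$ contributes a pair of top-bottom edges $N_{i-1} S_{i-1}$ and $N_i S_i$ to $G$. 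A direct cycle-tracing analysis shows that two consecutive vertical tangles produce a $2$-cycle in $G$ (from the doubled edge $N_i S_i$), while vertical tangles separated by non-vertical ones produce longer cycles that remain distinct; together these yield at least $N_1$ distinct cycles.

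The main obstacle is the cycle-counting step in $G$, since the cyclic interaction of vertical, horizontal, and diagonal tangle types can be intricate. The bound is sharp, as witnessed by the Borromean rings $M(-1/2, -1/2, -1/2)$ with $\#K = N_1 = 3$, so any argument must be essentially tight. A clean proof could proceed either by induction on the number of tangles, tracking how insertion of a vertical tangle into the cyclic sum alters $\#K$, or by invoking a parity-based formula for the component count of a Montesinos link in terms of the parities of the $p_i$ and $s_i$.
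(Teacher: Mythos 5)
Your reduction is sound and in fact follows the same route as the paper: you invoke Lemma \ref{lemma:monte-twist-tangle} to trade $-\chi(\GRA)-\chi(\GRB)$ for $t(D)-Q_{1/2}(D)$, you use that a tangle with $\abs{q_i}\in(1/2,1)$ has at least two twist regions while a slope-$\pm 1/2$ tangle has one, and you correctly isolate the pivotal claim $\#K \geq N_1$ (the number of link components is at least the number of slope-$\pm 1/2$ tangles). But that pivotal claim is exactly where your proposal stops: the cycle-counting in your auxiliary graph $G$ is only sketched, you yourself flag it as ``the main obstacle,'' and the assertion that vertical tangles separated by non-vertical ones ``produce longer cycles that remain distinct'' is not justified — the intervening tangles can be horizontal or diagonal depending on parity, and nothing in the sketch rules out several vertical tangles feeding into a single long cycle. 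As written, the inequality $\#K\geq N_1$ is asserted, not proved, so there is a genuine gap.

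The paper closes this gap with a short replacement argument that you may as well adopt: a tangle of slope $\pm 1/2$ pairs its boundary points NW--SW and NE--SE, i.e.\ it has the same endpoint pairing as a tangle of slope $\infty$. Replacing every slope-$\pm 1/2$ tangle by an $\infty$-tangle therefore leaves the number of link components unchanged. After the replacement the cyclic sum is broken at $N_1$ places, so the resulting diagram (hence the original link) has at least $N_1$ components, giving $\#K\geq N_1$ directly — no case analysis of how vertical tangles interact with the other tangle types is needed. If you prefer to keep your graph $G$, the same observation translates into the statement that each of the $N_1$ ``gaps'' between consecutive vertical tangles contains at least one full cycle, which is the distinctness you were missing.
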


\begin{proof}
By Lemma \ref{lemma:monte-twist-tangle}, it will suffice to estimate
the quanity $Q_{1/2}(D)$.  Consider a rational tangle $R_i$ whose
slope satisfies $\abs{q_i} \in [1/2, 1)$. Each such tangle contributes
one unit to the count $Q_{1/2}(D)$. If $\abs{q_i} > 1/2$, then the
continued fraction expansion of $q_i$ has at least two terms, hence
$R_i$ has at least two twist regions. Only one of those twist
regions will be lost to the count $Q_{1/2}(D)$.

Alternately, suppose $q_i = \pm 1/2$. In this case, one strand of $K$
in this tangle runs from the NW to the SW corner of the tangle, and
another strand runs from the NE to the SE corner. In other words, the
number of link components of $K$ will remain unchanged if we replace
$R_i$ by a tangle of slope $\infty$. See Figure \ref{fig:tangles} on
page \pageref{fig:tangles}.

Let $n$ be the number of tangles of slope $\pm 1/2$ in the diagram
$D$. If we replace each such tangle by one of slope $\infty$, the
number $\# K$ of link components is unchanged. But after this
replacement, there are $n$ ``breaks'' in the diagram, hence $K$ is a
link of at least $n$ components. This proves that $n \leq \# K$. In
other words, there is a one--to--one mapping from tangles of slope $\pm
1/2$ to link components.
We conclude that 
\begin{eqnarray*}
Q_{1/2}(D) &=&
\sum_{i \colon \abs{q_i} > 1/2} 1 \: \: + \sum_{i \colon \abs{q_i} = 1/2} 1 \\
& \leq & \sum_{i \colon \abs{q_i} > 1/2} \frac{t(R_i)}{2} \: \: + \sum_{i \colon \abs{q_i} = 1/2} \frac{t(R_i) + 1}{2} \\
& \leq & \frac{t(D) + \# K}{2},
\end{eqnarray*}
and the result follows by Lemma \ref{lemma:monte-twist-tangle}.
\end{proof}

\begin{theorem}\label{thm:monte-volume}\index{hyperbolic volume!of Montesinos links}\index{Montesinos link!hyperbolic volume}\index{hyperbolic volume!in terms of twist number}\index{twist number!and hyperbolic volume}
Let $K \subset S^3$ be a Montesinos link with a reduced Montesinos
diagram $D(K)$.  Suppose that $D(K)$ contains at least three positive
tangles and at least three negative tangles.  Then $K$ is a hyperbolic
link, satisfying
$$ \frac{v_8}{4} \, \left( t(D) - \#K \right) 
 \: \leq \: 
\vol(S^3  \setminus K) \: < \: 2 v_8 \, t(D),$$
where $v_8 = 3.6638...$ is the volume of a regular ideal octahedron
and $\# K$ is the number of link components of $K$.
The upper bound on volume is sharp.
\end{theorem}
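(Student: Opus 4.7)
The proof naturally splits into three pieces: hyperbolicity, the lower volume bound, and the upper volume bound. For hyperbolicity, the hypothesis of at least three positive and three negative tangles ensures $K$ is neither a torus link nor a Seifert-fibered-exceptional Montesinos link, so one invokes Oertel's classification of incompressible surfaces in Montesinos link complements (or Bonahon--Siebenmann) to conclude that $S^3\setminus K$ is atoroidal, anannular, and not Seifert-fibered, hence hyperbolic by geometrization. One can also verify sharpness of the upper bound separately by exhibiting a family of Montesinos links whose augmented-link complements approach $2v_8 \, t(D)$.

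For the lower bound, the plan is to apply Theorem \ref{thm:monteguts} on \emph{both} sides. The positive-tangle hypothesis gives an admissible $A$-adequate diagram with $\negeul(\guts(M_A)) = \negeul(\GRA)$, and the negative-tangle hypothesis (applied to the mirror, or equivalently to $S_B$) gives $\negeul(\guts(M_B)) = \negeul(\GRB)$. Combining each with Theorem \ref{thm:ast-estimate} yields
\[
\vol(S^3\setminus K) \;\geq\; v_8\,\negeul(\GRA), \qquad \vol(S^3\setminus K) \;\geq\; v_8\,\negeul(\GRB).
\]
Averaging and using $\negeul(\cdot)\geq -\chi(\cdot)$ gives $2\,\vol(S^3\setminus K)\geq v_8\bigl(-\chi(\GRA)-\chi(\GRB)\bigr)$, and then Lemma \ref{lemma:monte-twist} finishes the lower bound via $-\chi(\GRA)-\chi(\GRB)\geq (t(D)-\#K)/2$.

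For the upper bound, I would not just invoke the general Agol--D.~Thurston estimate $\vol<10v_3(t(D)-1)$, since $10 v_3 > 2 v_8$. Instead, the plan is to pass to the fully augmented link $L$ obtained by adding a crossing circle around each of the $t(D)$ twist regions of $D$; Dehn filling the crossing-circle components recovers $S^3\setminus K$, so by Thurston's hyperbolic Dehn surgery theorem $\vol(S^3\setminus K)\leq \vol(S^3\setminus L)$ (and the inequality is strict since filling is nontrivial). The augmented Montesinos link $L$ admits an explicit decomposition (via the standard ``flat" augmentation in the style of Adams, Purcell, or Lackenby) into $2t(D)$ ideal tetrahedra arranged along the Montesinos axes, which may be straightened and majorized by $2t(D)$ regular ideal octahedra; this is where the constant $2v_8$ appears. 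Sharpness follows by taking a sequence of Montesinos links whose twist numbers grow while all tangle slopes tend to $\pm 1/2$ in an appropriate sense, so the fillings become arbitrarily long and volumes approach $2 v_8\, t(D)$.

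The main obstacle is the upper-bound step: verifying that the fully augmented Montesinos complement really decomposes into at most $2t(D)$ octahedra (with the right geometric structure to justify sharpness). The lower bound, by contrast, is a clean consequence of the guts computation in Theorem \ref{thm:monteguts} combined with the purely combinatorial input of Lemma \ref{lemma:monte-twist}, and hyperbolicity is a direct application of classical results on Montesinos links.
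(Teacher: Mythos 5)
Your hyperbolicity step and your lower bound are essentially the paper's own argument: hyperbolicity is quoted from the Bonahon--Siebenmann classification of non-hyperbolic arborescent links, and the lower bound is obtained exactly as you say, by applying Theorem \ref{thm:monteguts} to both $S_A$ and $S_B$ (after flyping to an admissible diagram, which does not change $t(D)$), averaging the two guts/volume inequalities from Theorem \ref{thm:ast-estimate}, and invoking Lemma \ref{lemma:monte-twist}. Those parts are fine.

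The genuine gap is the upper bound, which is also where the constant $2v_8$ must come from. Your plan hinges on the claim that the fully augmented Montesinos link (one crossing circle per twist region) decomposes into pieces dominated by $2t(D)$ regular ideal octahedra, so that its volume is at most $2v_8\,t(D)$. You flag this as ``the main obstacle,'' and it is: no such bound is available off the shelf. The general estimate one can cite for augmented links is the Agol--D.~Thurston argument, which gives roughly $10v_3$ per twist region, strictly worse than $2v_8$; and volume-per-crossing-circle for fully augmented links is not in general controlled by an octahedron-per-circle count of the strength you need. So as written the crucial inequality is asserted, not proved. The paper avoids this entirely by augmenting with a \emph{single} belt component $B$ rather than $t(D)$ crossing circles: $K\cup B$ is a belted sum of augmented two-bridge links $L_i$, the twice-punctured disks bounded by $B$ are totally geodesic (Adams) so volume is additive under the belted sum, and the needed per-tangle estimate $\vol(S^3\setminus L_i) < 2v_8\,t(T_i)$ is already proved in the appendix of Gu\'eritaud--Futer; Dehn filling the meridian of $B$ then gives $\vol(S^3\setminus K) < 2v_8\,t(D)$.

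Your sharpness sketch also does not work as stated. Taking tangle slopes tending to $\pm 1/2$ means only two crossings per twist region, which makes the Dehn filling slopes on the crossing circles \emph{short}, so you have no control on the volume drop under filling. The paper's sharp family is the $(n,-n,\ldots,n,-n)$ pretzels: their belted, fully augmented versions are belted sums of $n$ copies of the Borromean rings, with volume exactly $2v_8\,n = 2v_8\,t(D_n)$, and the filling slopes have length at least $n$ on the cusp tori, so the quantitative Dehn filling estimate of \cite{fkp:filling} shows the volume defect is $O(1/n)$. If you want to keep your approach, you would need both a proof of the octahedral volume bound for the augmented Montesinos complements and a similarly quantitative filling estimate with slopes that become long, i.e., tangle slopes tending to $0$ (many crossings per twist region), not to $\pm 1/2$.
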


We note that the upper bound on volume applies to all Montesinos links, without any restriction on the number of positive and negative tangles.

The lower bound on volume is proved using Lemma
\ref{lemma:monte-twist}.  In fact, using Lemma
\ref{lemma:monte-twist-tangle} instead of Lemma
\ref{lemma:monte-twist}, one can obtain the sharper estimate
$$\vol(S^3 \setminus K) \geq \frac{v_8}{2} \, \left( t(D) - Q_{1/2}(D) \right),$$
where $Q_{1/2}(D)$ is the number of rational tangles of slope
$\abs{q_i} \in [1/2, 1)$.

\begin{proof}[Proof of Theorem \ref{thm:monte-volume}]
Let $D(K)$ be a reduced Montesinos diagram that contains at least
three positive tangles and at least three negative tangles. As we have
observed following Definition \ref{def:admissible-tangle} on page
\pageref{def:admissible-tangle}, any reduced diagram can be made
admissible by a sequence of flypes. Since flyping does not change the
twist number of $D$, we may also assume that $D$ is admissible. Thus
Theorem \ref{thm:monteguts} and Lemmas \ref{lemma:monte-twist-tangle}
and \ref{lemma:monte-twist} all apply to $D(K)$.

The hyperbolicity of $K$ follows from Bonahon and Siebenmann's
enumeration of non-hyperbolic arborescent links
\cite{bonsieb:monograph}. See also Futer and Gu\'eritaud \cite[Theorem
  1.5]{fg:arborescent}.
	
The lower bound on volume follows quickly by applying Theorem
\ref{thm:ast-estimate} to both the all--$A$ and all--$B$ state
surfaces:
\begin{eqnarray*}
 \vol(S^3 \setminus K)
& \geq & v_8 /2  \, \left( \negeul \guts(S^3 \cut S_A)  + \negeul \guts(S^3 \cut S_B) \right) \\
& = & v_8 /2 \, \left(\negeul(\GRA) + \negeul(\GRB) \right) \\
& \geq & v_8 /4 \, \left( t(D) - \#K \right)  ,
\end{eqnarray*}
where the last two lines used Theorem \ref{thm:monteguts} and Lemma
\ref{lemma:monte-twist}.

\smallskip

\begin{figure}[h]
\input{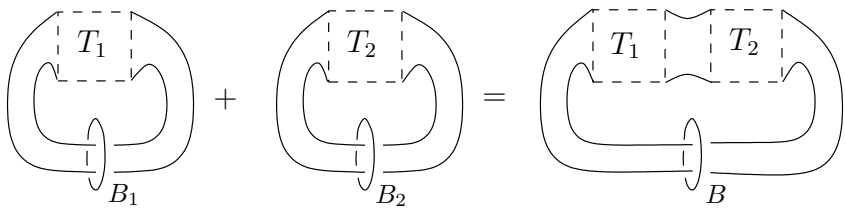}
\caption{A belted sum\index{belted sum}\index{sum of tangles!belted} of tangles $T_1$ and
  $T_2$. The twice--punctured disks bounded by $B_1$ and $B_2$ are
  glued to form the twice--punctured disk bounded by the belt $B$.}
\label{fig:beltsum}
\end{figure}

The upper bound on volume will follow from a standard Dehn filling
argument. Add a link component $B$ to $K$, which encircles the two
eastern ends of some rational tangle $T_i$. Note that $B$ can be moved
by isotopy to lie between any pair of consecutive tangles. Thus the
longitude of $B$ forms (part of) the boundary of $n$ different
twice-punctured disks in $S^3 \setminus (K \cup B)$, with one disk
between every pair of consecutive tangles.

The link $K \cup B$ is arborescent, hence also hyperbolic by
\cite{fg:arborescent}. Each twice--punctured disk bounded by $B$ will
be totally geodesic in this hyperbolic metric, by a theorem of Adams
\cite{adams:3-punct}.

Let $L_i$ be the link obtained by taking the numerator closure\index{numerator closure} 
of
tangle $T_i$, and adding an extra circle $B_i$ about the eastern ends
of the tangle. Then $(K \cup B)$ is a \emph{belted sum}\index{belted sum}\index{sum of tangles!belted} of the tangles $T_1, \ldots, T_n$: it is obtained by cutting
each $S^3 \setminus L_i$ along the twice--punctured disk bounded by
$B_i$, and gluing these manifolds cyclically along the
twice--punctured disks. (See Figure \ref{fig:beltsum}, and see Adams
\cite{adams:3-punct} for more information about belted sums.)

Since the numerator closure of each rational tangle $T_i$ is a
$2$--bridge link, the link $L_i$ is an augmented $2$--bridge
link. Thus each $L_i$ is hyperbolic. Furthermore, if tangle $T_i$
contains $t(T_i)$ twist regions, then $L_i$ is the augmentation of a
$2$--bridge link with at most $t(T_i) + 1$ twist regions. Therefore,
\cite[Theorem B.3]{gf:two-bridge} implies that
$$\vol(S^3 \setminus L_i) \: < \: 2 v_8 \, t(T_i).$$

When we perform the belted sum to obtain $K \cup B$, we cut and reglue
along totally geodesic twice--punctured disks. Volume is additive
under this operation \cite{adams:3-punct}. This gives the estimate
$$\vol(S^3 \setminus (K \cup B)) \: < \: 2 v_8 \, t(D).$$
Since volume goes down when we Dehn fill the meridian of $B$, the
upper bound on $\vol(S^3 \setminus K)$ follows. 

\smallskip

To prove the sharpness of the upper bound, consider the following
sequence of examples. Let $n$ be an even number, and let $K_n$ be a
Montesinos link with $n$ rational tangles, where the slope of the
$j$-th tangle is $(-1)^j / n$. 
This is a $(n, -n, \ldots, n, -n)$ pretzel link.
Then every rational tangle has slope $\pm
1/n$, with alternating signs. The diagram $D_n$ has exactly $n$ twist
regions, with exactly $n$ crossings in each region.

Let $J_n$ be a link obtained by adding a crossing circle about every
twist region, as well as the belt component $B$ of Figure
\ref{fig:beltsum}.  Then, by the above discussion, $J_n$ is a belted
sum of $n$ copies of the Borromean rings, hence
$$\vol(S^3 \setminus J_n) \: = \: 2v_8 \, n \: = \: 2v_8 \, t(D_n).$$
Furthermore, $K_n$ can be recovered from $J_n$ by $(\pm 1, n/2)$ Dehn
filling on the crossing circles and meridional Dehn filling on the
belt component $B$.
 
By \cite[Theorem 3.8]{fp:twisted}, there is an embedded horospherical
neighborhood of the cusps of $J_n$, such that in each of the many
$3$--punctured spheres in $S^3 \setminus J_n$, the cusp neighborhoods
of the $3$ punctures are pairwise tangent. Then, by \cite[Corollary
  3.9 and Theorem 3.10]{fp:twisted}, the Dehn filling curves have the
following length on the horospherical tori:
\begin{itemize}
\item The meridian of the belt $B$ has length $\ell(\mu) \geq n$,
\item The $(\pm 1, n/2)$ curves on the crossing circles have length  $\ell \geq \sqrt{n^2 + 1}$.
\end{itemize}
In particular, each filling curve has length at least $n$.

Now, we may use \cite[Theorem 1.1]{fkp:filling} to bound the change in
volume under Dehn filling. As a corollary of that theorem, it follows
that when several cusps of a manifold $M$ are filled along slopes of
length at least $\ell_{\min} > 2\pi$, the additive change in volume
satisfies
$$\Delta V \: \leq \: \frac{6 \pi \cdot \vol(M)}{\ell_{\min}^2 } .$$
(Deriving this corollary requires expanding the Taylor series for
$(1-x)^{3/2}$; see \cite[Section 2.3]{fkp:filling}.) 
In our setting, $\vol(S^3 \setminus J_n) = 2v_8 \, n$, and all the
filling curves also have length at least $n$. Thus 
$$2v_8 \, t(D_n) - \vol(S^3 \setminus K_n) \: = \: \Delta V \: \leq \:  (6 \pi \cdot 2 v_8 \cdot n)  / n^2,$$
which becomes arbitrarily small as $n \to \infty$. Thus the upper
bound on volume is sharp.
\end{proof}

\begin{remark}	
The bounds on the change of volume under Dehn filling, obtained in
\cite{fkp:filling}, can be fruitfully combined with the results of
this chapter. This combination results in relations between simple
diagrammatic quantities (such as the twist number of an $A$--adequate
knot) and the hyperbolic volume of 3--manifolds obtained by Dehn
surgery on a that knot.  For example, one may combine Theorem
\ref{thm:monte-volume} with \cite[Theorem 3.4]{fkp:filling} and obtain
the following: Let $K \subset S^3$ be a Montesinos knot as in Theorem
\ref{thm:monte-volume}, and let $N$ be a hyperbolic manifold obtained
by $p/q$--Dehn surgery along $K$, where $\abs{q} \geq 12$. Then
$$\frac{v_8}{4} \left(1-\frac{127}{q^2}\right)^{3/2} \: (t(D)-1) \: <\: \vol(N) \: < \: 2 v_8 \, t(D).$$
\end{remark}
\smallskip

\section{Essential surfaces and colored Jones polynomials}\label{sec:jones-apps}
For a knot $K$ let
$$J^n_K(t)= \alpha_n t^{m_n}+ \beta_n t^{m_n-1}+ \ldots + \beta'_n
t^{r_n+1}+ \alpha'_n t^{r_n},$$ denote its $n$-th \emph{colored Jones
  polynomial}\index{colored Jones polynomial}.  One recently observed
relation between the colored Jones polynomials and classical topology is the \emph{slope conjecture}\index{slope conjecture} of
Garoufalidis \cite{garoufalidis:jones-slopes}, which postulates that
the sequence of degrees of the colored Jones polynomials detects
certain boundary slopes of a knot $K$. This conjecture has been proved
for several classes of knots \cite{dunfield-garoufalidis:2fusion,
  fkp:PAMS, garoufalidis:jones-slopes}, including a proof by the
authors for the family of adequate knots \cite{fkp:PAMS}. See Theorem
\ref{thm:slopes} in the Introduction for a precise statement.

In the same spirit, we can now show that certain coefficients of
$J^n_K(t)$ measure how far the surface $S_A$ is from being a fiber.
We need the following lemma; a similar statement holds for
$B$--adequate diagrams.

\begin{lemma}\label{lemma:stabilized}\index{$\GRA$, $\GRB$: reduced state graph!relation to Jones coefficients}\index{$\beta_K$, $\beta_K'$!in terms of $\chi(\GRA)$ or $\chi(\GRB)$}
Let $D=D(K)$ be an $A$--adequate diagram with reduced all--$A$ state
graph $\GRA$.  Then for every $n>1$
\begin{enumerate}
\item $\abs{\alpha'_n}=1 $; and
\item $\abs{\beta'_n}= 1 -\chi(\GRA)$,
\end{enumerate}
where as above $ \alpha'_n$, $\beta'_n$ are the last and next-to-last coefficients of $J^n_K(t)$.
\end{lemma}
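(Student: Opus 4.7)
The plan is to reduce the statement for general $n$ to the case $n=2$, which is precisely the classical Dasbach--Lin result \cite{dasbach-lin:head-tail}. Concretely, I would apply the $n=2$ computation to the blackboard-framed $(n-1)$-cable $D^{n-1}$ of $D$. As noted in the discussion preceding Corollary \ref{cor:cable}, $D^{n-1}$ is again $A$-adequate whenever $D$ is, and $\chi(\GRA(D^{n-1})) = \chi(\GRA(D))$. Hence the Dasbach--Lin formula applied to $D^{n-1}$ yields $|\alpha'_2(D^{n-1})| = 1$ and $|\beta'_2(D^{n-1})| = 1-\chi(\GRA)$; it remains to identify these with the corresponding coefficients of $J^n_K(t)$.

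For this identification I would use the standard cabling formula expressing $J^n_K(t)$ as a framing-corrected Kauffman bracket of $D^{n-1}$ with the Jones--Wenzl idempotent $f^{(n-1)}\in \text{TL}_{n-1}$ inserted along one strand, normalized by the bracket of the Jones--Wenzl-decorated unknot (a quantum integer). Expanding $f^{(n-1)}$ as the identity tangle plus a $\QQ(A)$-linear combination of Temperley--Lieb diagrams containing at least one turnback (cup--cap pair), I would track Kauffman-bracket degrees. A turnback, when substituted into $D^{n-1}$ and then state-summed, locally replaces two parallel strands by a cup--cap; relative to the all--$A$ state, this introduces extra short state circles, each contributing a factor of $(-A^2-A^{-2})$, which strictly raises the lowest $A$-degree of that summand. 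Since $A$-adequacy of $D^{n-1}$ guarantees that the lowest-degree coefficient of $\langle D^{n-1} \rangle$ is achieved uniquely by the all--$A$ state and suffers no cancellation, the two bottom coefficients of $J^n_K(t)$ must agree, up to sign, with those of $\langle D^{n-1} \rangle$. The framing correction $(-A)^{-3w(D)(n-1)}$ and the unknot normalization are both monic monomials (up to an overall $\pm 1$), so they shift degrees but do not alter the absolute values of the two extreme coefficients.

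The main obstacle is the second step: one must verify rigorously that every non-identity term in the Jones--Wenzl expansion of $f^{(n-1)}$ produces a state-sum contribution whose lowest $A$-degree exceeds that of the identity contribution, \emph{and} that within the identity contribution the all--$A$ state of $D^{n-1}$ yields exactly the claimed next-to-lowest coefficient $1-\chi(\GRA)$ without interference from the other terms. Both points are by now standard in the adequate setting (the first appears implicitly in \cite{lickorish:book} and is crucial to the extreme-degree formulas of \cite{dasbach-lin:head-tail}), but they must be assembled carefully so that the exact equality $|\beta'_n| = 1-\chi(\GRA)$ survives the normalization, rather than a weaker shifted version.
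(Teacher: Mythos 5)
The paper does not actually prove this lemma: its entire ``proof'' is a citation, sending part (1) to Lickorish's book \cite{lickorish:book}, part (2) to Dasbach--Lin \cite{dasbach-lin:head-tail}, and mentioning \cite{dfkls:determinant} as an alternate proof of both. Your proposal therefore does more than the paper does, and what you sketch is essentially the argument contained in those references: express $J^n_K$ via the blackboard-framed $(n-1)$-cable decorated with the Jones--Wenzl idempotent $f^{(n-1)}$, expand $f^{(n-1)}$ as the identity plus turnback terms, use $A$--adequacy of $D^{n-1}$ to see that only the identity term can affect the two extreme coefficients, and then feed in the $n=2$ computation for the bracket of the $A$--adequate diagram $D^{n-1}$ together with $\chi(\GRA(D^{n-1}))=\chi(\GRA(D))$; the framing factor and unknot normalization are monomial (up to sign) and harmless. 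So your route is the standard one in the literature rather than a new one, and it is sound in outline.

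Two cautions before you could call this a proof. First, your stated mechanism is off: an extra state circle contributing a factor of $(-A^2-A^{-2})$ does not by itself push the extreme degree in one direction, since that factor spreads degrees both ways. The correct mechanism, as in Lickorish's degree lemma and in \cite{dasbach-lin:head-tail}, is a comparison of the number of all-$A$ (resp.\ all-$B$) state circles of the cable with a turnback inserted against the identity cable; adequacy is what converts that count into a strict degree drop. Second, and more importantly for $\beta'_n$: it is not enough that turnback terms miss the extreme coefficient of $\langle D^{n-1}\rangle$ --- they must miss the next-to-extreme coefficient as well, so you need a degree gap of more than one coefficient slot (degrees jump by $4$ in $A$), and within the identity term you need the Dasbach--Lin statement for the \emph{second} coefficient, not just Kauffman's statement for the first. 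You flag exactly this as the main obstacle and attribute it to the right sources, so the plan is coherent, but as written it is a reduction to the technical estimates of \cite{lickorish:book} and \cite{dasbach-lin:head-tail} rather than a self-contained argument --- which, to be fair, is no worse than the paper itself, whose proof is the citation.
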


\begin{proof}
Part (1) is proved in \cite{lickorish:book}; part (2) is proved in
\cite{dasbach-lin:head-tail}. See \cite{dfkls:determinant} for an alternate proof of both results.
\end{proof}

\begin{define}\label{def:stablevalue}\index{stable value}
With the setting and notation of Lemma \ref{lemma:stabilized}, we
define the \emph{stable penultimate coefficient}\index{colored Jones polynomial!stable coefficients}\index{stable coefficients} of $J^n_K(t)$ to be
$\beta'_K:=\abs{\beta'_n}$\index{$\beta_K$, $\beta_K'$}, for $n>1$. For
completeness we define the \emph{stable last
  coefficient} to be
$\alpha'_K:=\abs{ \alpha'_n}=1 $.

We also define $\epsilon'_K=1$\index{$\epsilon_K$, $\epsilon'_K$} if $\beta'_K=0$
and 0 otherwise.

Similarly, for a $B$--adequate knot $K$, we define the \emph{stable
  second coefficient}\index{colored Jones polynomial!stable coefficients}\index{stable coefficients} of $J^n_K(t)$ to be
$\beta_K:=\abs{\beta_n}$, for $n>1$, and the
\emph{stable first coefficient} to be
$\alpha_K:=\abs{ \alpha_n}=1 $.  We also define
$\epsilon_K=1$ if $\beta_K=0$ and 0 otherwise.
\end{define}

The next result, which is a corollary of Theorem \ref{thm:fiber-tree},
shows that the stable coefficients $\beta'_K$, $\beta_K$ are exactly
the obstructions to $S_A$ or $S_B$ being fibers. We only state the
result for $A$--adequate links.

\begin{corollary}\label{cor:beta-fiber}\index{fiber!detected by Jones coefficient}\index{$\beta_K$, $\beta_K'$!detects fiber surfaces}
For an $A$--adequate link $K$, the following are equivalent:
\begin{enumerate}
\item\label{item:beta=0} $\beta'_K=0$. 
\item\label{item:fiber-every} For \emph{every} $A$--adequate diagram of $D(K)$,  $S^3 \setminus K$ fibers over $S^1$ with fiber the corresponding state surface $S_A = S_A(D)$. 
\item\label{item:ibundle-some} For \emph{some} $A$--adequate diagram $D(K)$,  $M_A = S^3 \cut S_A$ is an $I$--bundle over $S_A(D)$.
\end{enumerate}
\end{corollary}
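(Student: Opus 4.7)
The plan is to deduce all three equivalences as immediate consequences of Theorem \ref{thm:fiber-tree} combined with Lemma \ref{lemma:stabilized}, which together translate the diagrammatic tree condition on $\GRA$ into the invariant condition $\beta'_K = 0$.

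First I would record the elementary graph-theoretic observation that a connected graph $G$ satisfies $\chi(G) = 1$ if and only if $G$ is a tree. Since $\GRA$ is connected (because any $A$--adequate diagram $D$ is assumed connected), Lemma \ref{lemma:stabilized}(2) gives $\beta'_K = |\beta'_n| = 1 - \chi(\GRA)$ for $n>1$, so
\[
\beta'_K = 0 \;\Longleftrightarrow\; \chi(\GRA) = 1 \;\Longleftrightarrow\; \GRA \text{ is a tree}.
\]
Crucially, $\beta'_K$ is a link invariant (it is defined from the colored Jones polynomial of $K$), so this equivalence shows as a byproduct that $\chi(\GRA(D))$ is independent of the chosen $A$--adequate diagram $D$ of $K$.

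Next I would chain together the remaining implications. For $(1)\Rightarrow(2)$: if $\beta'_K = 0$, then by the invariance just noted, $\GRA(D)$ is a tree for \emph{every} $A$--adequate diagram $D(K)$; applying Theorem \ref{thm:fiber-tree} to each such $D$ yields that $S^3\setminus K$ fibers over $S^1$ with fiber $S_A(D)$. The implication $(2)\Rightarrow(3)$ is immediate once we know an $A$--adequate diagram exists (which is given by hypothesis), since a fibration structure on $S^3 \setminus K$ with fiber $S_A$ gives $M_A = S^3 \cut S_A$ the structure of a product $I$--bundle over $S_A$. Finally, for $(3)\Rightarrow(1)$: if $M_A$ is an $I$--bundle over $S_A(D)$ for some $A$--adequate diagram $D$, then Theorem \ref{thm:fiber-tree} applied to $D$ gives that $\GRA(D)$ is a tree, whence $\beta'_K = 1 - \chi(\GRA(D)) = 0$.

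There is no substantive obstacle here; the entire argument is a bookkeeping exercise combining Lemma \ref{lemma:stabilized} with Theorem \ref{thm:fiber-tree}. The only subtle point worth highlighting in the write-up is the diagram-independence of $\chi(\GRA)$, which is not obvious a priori but falls out for free from the fact that $\beta'_K$ depends only on the link type $K$.
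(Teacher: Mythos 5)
Your proposal is correct and follows essentially the same route as the paper: both arguments combine Lemma \ref{lemma:stabilized} (so that $\beta'_K=0$ exactly when $\GRA$ is a tree, for any $A$--adequate diagram) with the equivalences of Theorem \ref{thm:fiber-tree}, handling $(2)\Rightarrow(3)$ by specializing to one diagram and $(3)\Rightarrow(1)$ by reading the tree condition back into $\beta'_K$. The diagram-independence of $\chi(\GRA)$ that you flag is exactly the point the paper also relies on implicitly, so there is nothing to add.
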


\begin{proof}
By Lemma \ref{lemma:stabilized}, $\beta'_K=0$ precisely when $\GRA$ is
a tree, for every $A$--adequate diagram of $K$. Thus $\eqref{item:beta=0} \Rightarrow \eqref{item:fiber-every}$ follows immediately from Theorem
\ref{thm:fiber-tree} on page \pageref{thm:fiber-tree}. The implication $\eqref{item:fiber-every} \Rightarrow \eqref{item:ibundle-some}$ is trivial, by specializing to a particular $A$--adequate diagram. Finally, $\eqref{item:ibundle-some} \Rightarrow \eqref{item:beta=0}$ is again immediate from Theorem
\ref{thm:fiber-tree}  and Lemma \ref{lemma:stabilized}.
\end{proof}

\begin{remark}\label{rem:genus-detect}
Given a knot $K$, the \emph{Seifert genus}\index{Seifert genus} $g(K)$
is defined to be the smallest genus over all orientable surfaces
spanned by $K$.  Since a fiber realizes the genus of a knot
\cite{burde-zieschang:knots}, Corollary \ref{cor:beta-fiber} implies
that $g(K)$ can be read off from any $A$--adequate diagram of $K$ when
$\beta'_K = 0$.  Since $\GA$ is a spine for $S_A$ (Lemma
\ref{lemma:ga-spine}), in this case we have
$$g(K)=\frac{1 - \chi(\GA)}{2}.$$
\end{remark}

Note that $\abs{\beta_K'}-1+\epsilon'_K=0$ precisely when
$\abs{\beta_K'} \in \{0, 1\}$.  By Corollary \ref{cor:beta-fiber}, having $\beta'_K=0$ corresponds to $S_A$ being a fiber. Our next result is that $\abs{\beta_K'} = 1$ precisely when $M_A$ is a book of $I$--bundles (hence, $S_A$ is a fibroid) of a particular type.

\begin{theorem}\label{thm:fibroid-detect}\index{book of $I$--bundles!detected by Jones coefficient}\index{fibroid!detected by Jones coefficient}\index{$\beta_K$, $\beta_K'$!detects book of $I$--bundles}
Let $K$ be an $A$--adequate link, and let $\beta'_K$ be as in
Definition \ref{def:stablevalue}.  Then the following are equivalent:
\begin{enumerate}
\item\label{item:beta=1} $\abs{\beta'_K}=1$.
\item\label{item:fibroid-every} For \emph{every} $A$--adequate diagram of $K$, the 
corresponding 3--manifold $M_A$ is a book of
  $I$--bundles, with $\chi(M_A)= \chi(\GA) - \chi(\GRA)$, and is not a
  trivial $I$--bundle over the state surface $S_A$.
\item\label{item:fibroid-some} For  \emph{some} $A$--adequate diagram of $K$, the 
corresponding 3--manifold $M_A$ is a book of
  $I$--bundles, with $\chi(M_A)= \chi(\GA) - \chi(\GRA)$.
  \end{enumerate}
\end{theorem}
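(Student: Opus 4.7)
The plan is to reduce all three conditions to the single numerical identity $\chi(\GRA) = 0$, using three pillars established earlier in the monograph: Lemma \ref{lemma:stabilized} (which reads off $|\beta'_K|$ from $\chi(\GRA)$), the Alexander-duality identity $\chi(M_A) = \chi(\GA)$ recorded as equation \eqref{eq:alexander} in the proof of Theorem \ref{thm:guts-general}, and the structural Theorems \ref{thm:guts-general} and \ref{thm:fiber-tree}. First I would note that, by Lemma \ref{lemma:stabilized}, $|\beta'_K| = 1 - \chi(\GRA)$, so condition \eqref{item:beta=1} is equivalent to $\chi(\GRA) = 0$; in particular, this vanishing does not depend on the choice of $A$--adequate diagram. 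Next, since $\chi(M_A) = \chi(\GA)$ always holds, the Euler characteristic clause $\chi(M_A) = \chi(\GA) - \chi(\GRA)$ appearing in \eqref{item:fibroid-every} and \eqref{item:fibroid-some} is algebraically equivalent to $\chi(\GRA) = 0$ as well.

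For $\eqref{item:beta=1} \Rightarrow \eqref{item:fibroid-every}$, I would fix any $A$--adequate diagram and apply Theorem \ref{thm:guts-general} to obtain
$$\negeul(\guts(M_A)) \: = \: \negeul(\GRA) - \|E_c\| \: = \: -\|E_c\|,$$
since $\chi(\GRA) = 0$ and $\GRA$ is connected forces $\negeul(\GRA) = 0$. Because both $\negeul(\guts(M_A))$ and $\|E_c\|$ are non-negative, both must vanish; in particular $\guts(M_A) = \emptyset$, so $M_A$ is a book of $I$--bundles. The Euler-characteristic equality in \eqref{item:fibroid-every} is then automatic. Finally, to rule out $M_A$ being a trivial $I$--bundle over $S_A$, I invoke Theorem \ref{thm:fiber-tree}: that would require $\GRA$ to be a tree, i.e.\ $\chi(\GRA) = 1$, contradicting $\chi(\GRA) = 0$.

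The implication $\eqref{item:fibroid-every} \Rightarrow \eqref{item:fibroid-some}$ is trivial by specialization. For $\eqref{item:fibroid-some} \Rightarrow \eqref{item:beta=1}$, given some $A$--adequate diagram realizing \eqref{item:fibroid-some}, the identity $\chi(M_A) = \chi(\GA) - \chi(\GRA)$ combined with $\chi(M_A) = \chi(\GA)$ immediately forces $\chi(\GRA) = 0$, hence $|\beta'_K| = 1$ by Lemma \ref{lemma:stabilized}. This theorem is essentially a corollary of the earlier structural results: there is no genuine obstacle, only the bookkeeping of recognizing that the Euler characteristic clause in \eqref{item:fibroid-every} and \eqref{item:fibroid-some} is a disguised way of encoding $\chi(\GRA) = 0$, and that in this borderline case Theorem \ref{thm:guts-general} forces both $\|E_c\|$ and $\negeul(\guts(M_A))$ to vanish.
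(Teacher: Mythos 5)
Your proposal is correct and follows essentially the same route as the paper: Lemma \ref{lemma:stabilized} to translate $\abs{\beta'_K}=1$ into $\chi(\GRA)=0$, Theorem \ref{thm:guts-general} with non-negativity of $\negeul(\cdot)$ and $\|E_c\|$ to kill the guts, equation \eqref{eq:alexander} ($\chi(M_A)=\chi(\GA)$) for the Euler-characteristic clause and the converse, and the fiber criterion (you cite Theorem \ref{thm:fiber-tree} directly where the paper cites its Corollary \ref{cor:beta-fiber}, which is the same content) to exclude the trivial $I$--bundle.
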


\begin{proof}
For $\eqref{item:beta=1} \Rightarrow \eqref{item:fibroid-every}$, suppose that $\abs{\beta_K'}=1$, and let $D$ be an $A$--adequate diagram. Then, by Theorem \ref{thm:guts-general} and Lemma \ref{lemma:stabilized}, 
$$\negeul(\guts(M_A)) \: =\: \negeul(\GRA) - ||E_c|| \: = \:  1 - \abs{\beta_K'}  - ||E_c|| \: \leq \: 0.$$ 
Since $\negeul(\cdot) \geq 0$ by definition, it follows that $\chi(\guts(M_A)) = 0$, hence $\guts(M_A) = \emptyset$. In other words, if there are no guts, all of $M_A$ is a book of $I$--bundles. But, by Corollary \ref{cor:beta-fiber}, $M_A$ cannot be an $I$--bundle over $S_A$, because $\abs{\beta'_K} \neq 0$.

\smallskip

$\eqref{item:fibroid-every} \Rightarrow \eqref{item:fibroid-some}$ is trivial.

\smallskip

For $\eqref{item:fibroid-some} \Rightarrow \eqref{item:beta=1}$, suppose that for some $A$--adequate diagram, $M_A$ is a book of $I$--bundles, satisfying $\chi(M_A)= \chi(\GA) - \chi(\GRA)$. 
By equation \eqref{eq:alexander} on page \pageref{eq:alexander}, this means $\chi(\GRA) = 0$. Thus, by Lemma \ref{lemma:stabilized}, $\abs{\beta_K'} = 1$.
\end{proof}

One of the main results in this manuscript is the following theorem, which
shows that $\beta'_K$ monitors the topology of $M_A$ quite effectively.

\begin{theorem} \label{thm:jonesguts}\index{guts!in terms of Jones coefficients}\index{$\beta_K$, $\beta_K'$!relation to guts}
Let $D=D(K)$ be a prime $A$--adequate diagram of a link $K$ with prime
polyhedral decomposition of $M_A = S^3\cut S_A$ and let $\beta'_K$ and
and $\epsilon'_K$ be as in Definition \ref{def:stablevalue}. Then we
have
$$|| E_c||\: + \: \negeul( \guts(M_A))= \:
\abs{\beta'_K}-1+\epsilon'_K,$$ where $||E_c||$ is as in Definition \ref{def:ec}, on page \pageref{def:ec}.
\end{theorem}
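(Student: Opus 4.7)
The plan is to derive Theorem \ref{thm:jonesguts} as a short corollary of Theorem \ref{thm:guts-general} together with Lemma \ref{lemma:stabilized}, with all the real work buried in these two earlier results. First I would apply Theorem \ref{thm:guts-general} to rewrite the left-hand side as
$$||E_c|| + \negeul(\guts(M_A)) \: = \: \negeul(\GRA),$$
reducing the theorem to the purely graph-theoretic statement
$$\negeul(\GRA) \: = \: \abs{\beta'_K} - 1 + \epsilon'_K.$$

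Next I would translate the right-hand side using Lemma \ref{lemma:stabilized}, which asserts $\beta'_K = \abs{\beta'_n} = 1 - \chi(\GRA)$, so $\chi(\GRA) = 1 - \beta'_K$. Since $D$ is connected (a standing convention from Chapter \ref{sec:decomp}), the graph $\GRA$ is connected and hence $\chi(\GRA) \leq 1$, with equality precisely when $\GRA$ is a tree. Therefore $\negeul(\GRA) = \max\{-\chi(\GRA),\, 0\} = \max\{\beta'_K - 1,\, 0\}$.

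It remains to split into two cases according to the definition of $\epsilon'_K$. If $\beta'_K = 0$, then $\GRA$ is a tree, $\negeul(\GRA) = 0$, and $\epsilon'_K = 1$, so $\abs{\beta'_K} - 1 + \epsilon'_K = -1 + 1 = 0$, matching $\negeul(\GRA)$. If $\beta'_K \geq 1$, then $\chi(\GRA) \leq 0$, $\negeul(\GRA) = \beta'_K - 1$, and $\epsilon'_K = 0$, so again $\abs{\beta'_K} - 1 + \epsilon'_K = \beta'_K - 1 = \negeul(\GRA)$. Both cases give the required equality, completing the proof.

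There is no genuine obstacle here: the theorem is essentially a bookkeeping consequence of two previously established results. The only subtlety is that $\negeul$ is defined to be nonnegative (Definition \ref{def:neg-euler}), so one cannot simply write $\negeul(\GRA) = \beta'_K - 1$ when $\beta'_K = 0$, and the correction term $\epsilon'_K$ in the statement is precisely what accounts for the flat piece where $\negeul$ equals zero but $\beta'_K - 1$ would be negative.
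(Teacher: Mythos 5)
Your proposal is correct and follows essentially the same route as the paper's proof: reduce via Theorem \ref{thm:guts-general} to the identity $\negeul(\GRA) = \abs{\beta'_K} - 1 + \epsilon'_K$, then invoke Lemma \ref{lemma:stabilized} and handle the tree case, which the paper encodes through the term $\chi_+(\GRA)$ rather than your explicit $\max\{\abs{\beta'_K}-1,\,0\}$ case split. The two formulations are just different bookkeeping for the same argument, and your observation that $\epsilon'_K$ exactly compensates for the flat piece of $\negeul$ is precisely the content of the paper's final step.
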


There is a similar statement for the stable second coefficient of
$J^n_K(t)$ of $B$--adequate links. If $D=D(K)$ be a prime
$B$--adequate diagram of a link $K$, then
$$|| E_c||\: + \: \negeul( \guts(M_B))= \:
\abs{\beta_K}-1+\epsilon_K,$$ where again $||E_c||$ is the smallest number of complex disks required to span the $I$--bundle of the upper polyhedron, as in Definition \ref{def:ec} on page \pageref{def:ec}.

\smallskip

\begin{proof}
By Theorem \ref{thm:guts-general}, page \pageref{thm:guts-general}, we
have
$$\negeul(\guts(M_A))\: + \: || E_c||=\: \negeul(\GRA).$$
By Definition \ref{def:neg-euler} on page
\pageref{def:neg-euler},
$$ \negeul(\GRA)=\: -\chi(\GRA)\: + \:  \chi_{+}(\GRA),$$
where $ \chi_{+}(\GRA)=1$ if $\GRA$ is tree and 0 otherwise.
By Lemma \ref{lemma:stabilized}(2)
$$\abs{\beta'_K}-1=-\chi(\GRA),$$ which implies that
$\abs{\beta'_K}=0$ if and only if $\GRA$ is a tree. This in turn
implies that $\abs{\beta'_K}=0$ if and only if $ \chi_{+}(\GRA)=1$.
Combining all these we see that the quantity
$$\epsilon'_K:=\negeul(\guts(M_A))\: + \: || E_c||-\abs{\beta'_K}+
1,$$ is equal to 1 if $\abs{\beta'_K}=0$ and 0 otherwise. This proves
the equation in the statement of the theorem.
\end{proof}

A simpler version of Theorem \ref{thm:jonesguts} is
Theorem \ref{thm:no2loops}, which was stated in the
introduction. 

\begin{theorem}\label{thm:no2loops}
Suppose $K$ is an $A$--adequate link whose stable penultimate coefficient is $\beta_K' \neq 0$.
Then, for every $A$--adequate diagram $D(K)$,
$$\negeul( \guts(M_A)) + || E_c ||\: = \: \abs{\beta'_K} - 1, $$
where $|| E_c|| \geq 0$ is as in Definition \ref{def:ec}.
Furthermore, if $D$ is prime and every 2--edge loop in $\GA$ has edges belonging to
the same twist region, then $||E_c|| = 0$ and 
$$\negeul( \guts(M_A)) \: = \: \abs{\beta'_K}- 1. $$
\end{theorem}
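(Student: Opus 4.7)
The plan is to derive Theorem \ref{thm:no2loops} as a direct consequence of Theorem \ref{thm:guts-general} together with the combinatorial formula for $|\beta'_K|$ given by Lemma \ref{lemma:stabilized}, with Corollary \ref{cor:onlybigons} handling the second assertion. All the heavy lifting has already been done in Chapters \ref{sec:spanning} and \ref{sec:epds}, so the argument is essentially bookkeeping plus a small case analysis around whether $\GRA$ is a tree.

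First I would apply Theorem \ref{thm:guts-general}, which yields
\[
\negeul(\guts(M_A)) + \|E_c\| \;=\; \negeul(\GRA),
\]
valid for any $A$--adequate diagram $D(K)$. Next I would unpack $\negeul(\GRA)$ using Definition \ref{def:neg-euler}: since $\GRA$ is connected (as $D$ is a connected diagram), $\chi_+(\GRA) = 1$ when $\GRA$ is a tree and $\chi_+(\GRA) = 0$ otherwise, so $\negeul(\GRA) = -\chi(\GRA) + \chi_+(\GRA)$. Lemma \ref{lemma:stabilized}(2) gives $|\beta'_K| = 1 - \chi(\GRA)$, and thus $\beta'_K \neq 0$ is equivalent to $\chi(\GRA) \neq 1$, which for a connected graph is equivalent to $\GRA$ not being a tree. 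Under the hypothesis $\beta'_K \neq 0$, therefore, $\chi_+(\GRA) = 0$ and $\negeul(\GRA) = -\chi(\GRA) = |\beta'_K| - 1$. Substituting into the displayed equation above yields
\[
\negeul(\guts(M_A)) + \|E_c\| \;=\; |\beta'_K| - 1,
\]
which is the first assertion.

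For the second assertion, I would simply invoke Corollary \ref{cor:onlybigons}: under the additional hypotheses that $D$ is prime and that every $2$--edge loop in $\GA$ has both edges in the same twist region, the corollary gives $\negeul(\guts(M_A)) = \negeul(\GRA)$. Combining this equality with the formula from Theorem \ref{thm:guts-general} forces $\|E_c\| = 0$, and the first assertion (already proved) then reduces to $\negeul(\guts(M_A)) = |\beta'_K| - 1$.

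There is no real obstacle to overcome here; the only point requiring a bit of care is the translation between the algebraic condition $\beta'_K \neq 0$ and the graph-theoretic condition ``$\GRA$ is not a tree,'' so that the correction term $\chi_+(\GRA)$ in Definition \ref{def:neg-euler} vanishes and the identity $\negeul(\GRA) = |\beta'_K| - 1$ holds on the nose. Everything else is a citation.
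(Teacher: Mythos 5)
Your proposal is correct and follows essentially the same route as the paper: the paper deduces the first equation by citing Theorem \ref{thm:jonesguts} (noting $\epsilon'_K=0$ when $\beta'_K\neq 0$), which is exactly the combination of Theorem \ref{thm:guts-general}, Lemma \ref{lemma:stabilized}, and the tree/non-tree bookkeeping that you carry out explicitly, and the second equation is obtained in both cases from Corollary \ref{cor:onlybigons} together with Lemma \ref{lemma:stabilized}. The only difference is that you unfold the citation of Theorem \ref{thm:jonesguts} into its one-line derivation, which is fine.
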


\begin{proof}
The first equation of the theorem follows immediately from Theorem \ref{thm:jonesguts}, since $\epsilon_K' = 0$ when $\beta_K' \neq 0$. The second equation of the theorem follows by combining Corollary \ref{cor:onlybigons} with Lemma \ref{lemma:stabilized}, since $|| E_c || =0$ when the edges of each 
 2--edge loop in $\GA$ the
edges belong in the same twist region of the diagram.
\end{proof}

When $|| E_c||=0$, Theorem \ref{thm:jonesguts} provides particularly
striking evidence that coefficients of the Jones polynomials measure
something quite geometric: when $\abs{\beta'_K}$ is large, the link
complement $S^3 \setminus K$ contains essential surfaces that are
correspondingly far from being fibroids.  As a result, if $K$ is
hyperbolic, $S^3 \setminus K$ is forced to have large volume.  As
noted earlier, classes of links with $|| E_c||=0$ include alternating
knots and Montesinos links.  In this case we have the following.

\begin{corollary}\label{cor:monte-exact}\index{guts!for Montesinos links}\index{Montesinos link!guts of state surface}
Suppose $K$ is a Montesinos link with a reduced admissible diagram
$D(K)$ that contains at least three tangles of positive slope.  Then
$$\negeul( \guts(M_A))= \abs{\beta'_K}-1.$$
Similarly, if $D(K)$ contains at least three tangles of negative
slope, then 
 $$\negeul( \guts(M_B))=\abs{\beta_K}-1.$$
\end{corollary}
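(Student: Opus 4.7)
The plan is to combine Theorem \ref{thm:monteguts} with the Dasbach--Lin formula of Lemma \ref{lemma:stabilized}. Under the hypothesis of at least three positive tangles, Theorem \ref{thm:monteguts} gives
\[
\negeul(\guts(M_A)) \: = \: \negeul(\GRA),
\]
and Lemma \ref{lemma:stabilized}(2) yields $\abs{\beta_K'} = 1 - \chi(\GRA)$, so that $\abs{\beta_K'} - 1 = -\chi(\GRA)$. (Alternatively, one could invoke Theorem \ref{thm:jonesguts} together with the fact that $\Vert E_c\Vert = 0$ for such Montesinos diagrams, which is precisely the content of Proposition \ref{prop:monte-main}. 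Either route leads to the same reduction.)

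To conclude, I need to verify that $\negeul(\GRA) = -\chi(\GRA)$. By Definition \ref{def:neg-euler} this equation holds precisely when $\chi(\GRA) \leq 0$, that is, when the connected graph $\GRA$ is not a tree. So the one remaining step is to rule out the possibility that $\GRA$ is a tree for a reduced admissible Montesinos diagram with at least three positive tangles.

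For this I would appeal to Theorem \ref{thm:fiber-tree}: if $\GRA$ were a tree, then $S^3 \setminus K$ would fiber over $S^1$ with fiber $S_A$, and in particular $M_A$ would be an $I$--bundle over $S_A$. I would rule this out by direct combinatorial inspection of the $A$--resolution described in Figures \ref{fig:tangle-resolutions} and \ref{fig:tangle-decomp}: with three or more positive tangles present, the outer state circle bounding the positive polyhedral region together with the innermost state circles inside the positive tangles produces cycles in $\GA$, and these cycles survive the passage to the reduced graph $\GRA$ (since distinct innermost circles are connected to the outer circle by segments from different twist regions, so at most one parallel edge is identified away per twist region). The main, and only non-routine, obstacle in the entire proof is carrying out this combinatorial check cleanly across the two regimes---the alternating regime (all tangles positive) and the non-alternating regime (mixed positive and negative tangles).

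Finally, the symmetric statement $\negeul(\guts(M_B)) = \abs{\beta_K} - 1$ follows by applying exactly the same argument to the mirror of $D(K)$, which exchanges the roles of $A$ and $B$ and converts the three-positive-tangle hypothesis into the three-negative-tangle hypothesis.
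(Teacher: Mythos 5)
Your overall route is exactly the paper's: apply Theorem \ref{thm:monteguts} to get $\negeul(\guts(M_A))=\negeul(\GRA)$, use Lemma \ref{lemma:stabilized}(2) to get $\abs{\beta'_K}-1=-\chi(\GRA)$, and reduce everything to showing that $\GRA$ is not a tree (and the mirror argument for $\GB$). The gap is in how you rule out the tree case. Your parenthetical justification rests on a misreading of Definition \ref{def:reduced}: the reduced graph $\GRA$ is formed by identifying \emph{all} multiple edges between the same pair of vertices, not only those coming from a common twist region. So ``distinct innermost circles are connected to the outer circle by segments from different twist regions, so at most one parallel edge is identified away per twist region'' is false: a $2$--edge loop whose two segments lie in different twist regions still collapses to a single edge of $\GRA$ (this is precisely why the proof of Lemma \ref{lemma:monte-twist-tangle} counts lost edges beyond twist regions). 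Consequently the cycles you propose, even if they existed, would not survive the passage to $\GRA$, and in fact the configuration you describe does not occur: the positive polyhedral region is not bounded by a single outer circle but by the $r$ state circles separating consecutive positive tangles, and by Lemma \ref{lemma:monte-2edge-loop} the only $2$--edge loops meeting the positive region come from single twist regions (or from the exceptional case of exactly two positive tangles, excluded here).

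The correct check, which is the one-line observation in the paper's proof, is to use the loop that winds once around the cyclic tangle sum: with $r\ge 3$ positive tangles, the $r$ circles that run along the east side of $P_i$ and the west side of $P_{i+1}$ are pairwise distinct vertices of $\GA$ (see Figure \ref{fig:tangle-decomp} and the proof of Lemma \ref{lemma:monte-2edge-loop}), and each positive tangle contributes an edge (or, more robustly, a path through its interior circles) joining its west circle to its east circle. The resulting simple cycle has at least three distinct vertices, so no two of its consecutive edges are parallel; a cycle of this kind cannot map to a closed non-backtracking walk in a tree, hence $\GRA$ contains a nontrivial loop and $\chi(\GRA)\le 0$, giving $\negeul(\GRA)=-\chi(\GRA)=\abs{\beta'_K}-1$. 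Your detour through Theorem \ref{thm:fiber-tree} is unnecessary once this loop is exhibited, and the same loop argument covers both the alternating and non-alternating regimes, so the case split you were worried about does not require separate treatment.
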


\begin{proof}
Suppose that $D(K)$ has $r \geq 3$ tangles of positive slope.  Then
Theorem \ref{thm:monteguts} on page \pageref{thm:monteguts} implies
that $\negeul( \guts(M_A)) = \negeul (\GRA)$. Furthermore, observe in
Figure \ref{fig:tangle-decomp} on page \pageref{fig:tangle-decomp}
that the graph $\GA$ contains at least one loop of length $r \geq 3$;
this is the loop that spans every positive tangle west to east.  All
the edges of this loop are distinct in $\GRA$. Thus $\GRA$ contains at
least one non-trivial loop, and is not a tree. Therefore, by Lemma
\ref{lemma:stabilized} on page \pageref{lemma:stabilized},
$$\negeul( \guts(M_A)) \: = \: \negeul(\GRA) \: = \:  - \chi(\GA) \: = \: \abs{\beta'_K}-1.$$

The argument for three negative tangles is identical.
\end{proof}

\section{Hyperbolic volume and colored Jones polynomials}\label{sec:jones-volume}

If the volume conjecture is true, then for large $n$, it would imply a
relation between the volume of a knot complement $S^3 \setminus K$ and coefficients of
$J^n_K(t)$.  For example, for $ n \gg 0$ one would have
$\vol(S^3\setminus K) < C || J^n_K||$, where $|| J^n_K||$ denotes the
$L^1$--norm of the coefficients of $J_K^n(t)$ and $C$ is an
appropriate constant.  A series of articles
written in recent years \cite{dasbach-lin:head-tail, fkp:filling, fkp:conway, fkp:farey}
 has established such relations for several
classes of knots. In fact, in all the known cases the upper bounds on
volume are paired with similar lower bounds. In several cases, our results here
provide an intrinsic and satisfactory explanation for the existence of
the lower bounds.

To illustrate this, let us look at the example of hyperbolic links $K$
that have diagrams $D=D(K)$ that are positive closed braids, such that
each twist region has at least seven crossings.  As before, let
$\beta_K, \beta'_K$ denote the stable second and penultimate
coefficients of of $J^n_K(t)$ (Definition \ref{def:stablevalue}).
Corollary 1.6 of \cite{fkp:filling} states that the quantity
$\abs{\beta_K} + \abs{\beta'_K}$ provides two--sided bounds for the
volume $\vol(S^3 \setminus K)$.  As we saw in the the discussion
before Theorem \ref{thm:positive-volume}, the graph $\GRB$ is a tree,
hence $\beta_K = 0$. Thus the two--sided bound on volume is in terms
of $\abs{\beta'_K}$ alone. However, since the argument of
\cite{fkp:filling} is somewhat indirect (requiring twist number as an intermediate quantity), 
the upper and lower bounds
differ by a factor of about $86$.

Our results in this monograph (Corollary \ref{cor:onlybigons}) reveal
that the quantity $\abs{\beta'_K}-1$ realizes the guts of the
incompressible surface $S_A$; hence, in the light of Theorem
\ref{thm:ast-estimate}, we expect it to show up as a lower bound on
the volume of $S^3\setminus K$. In fact we can now show that
$\abs{\beta'_K}-1$ gives two--sided bounds on the volume of positive
braids that have only three crossings per twist region, rather than
seven. Furthermore, because the argument using guts is more direct and
intrinsic, the factor between the upper and lower bounds is now about
$4.15$.

\begin{corollary}\label{cor:positive-vol-jones}\index{hyperbolic volume!of positive braids}\index{positive braid!hyperbolic volume}\index{braid!hyperbolic volume}\index{hyperbolic volume!in terms of Jones coefficients}\index{$\beta_K$, $\beta_K'$!relation to volume}
Suppose that a hyperbolic link $K$ is the closure of a positive braid
$b=\sigma_{i_1}^{r_1}\sigma_{i_2}^{r_2} \cdots \sigma_{i_k}^{r_k}$,
where $r_j \geq 3$ for all $1\leq j\leq k$. Then
$$v_8 \, ( \abs{\beta'_K}-1 )\: \leq \:\vol(S^3 \setminus K) \: < \: 15 v_3 \, (\abs{\beta'_K}-1) - 10 v_3,$$ 
where $v_3 = 1.0149...$ is the volume of a regular ideal tetrahedron  
and $v_8 = 3.6638...$ is the volume of a regular ideal octahedron.
\end{corollary}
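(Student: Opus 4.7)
The plan is to assemble this two-sided bound from Theorem \ref{thm:positive-volume}, Theorem \ref{thm:no2loops} (together with the Agol--Storm--Thurston estimate of Theorem \ref{thm:ast-estimate}), and the diagrammatic counting already done in Lemma \ref{lemma:positive-twist}. The passage between geometric and quantum quantities is then just Lemma \ref{lemma:stabilized}, which identifies $\abs{\beta'_K}-1$ with $-\chi(\GRA) = \negeul(\GRA)$.

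For the lower bound, I would first invoke Lemma \ref{lemma:prime-hyp-braid} to guarantee that $D$ is a prime $A$--adequate diagram. Then, exactly as noted in the proof of Lemma \ref{lemma:positive-twist}, the hypothesis $r_j \geq 3$ forces the all--$A$ resolution of each twist region to be the long resolution, so every loop in $\GA$ has length at least three; in particular $\GA = \GRA$ and $\GA$ contains no $2$--edge loops whatsoever. Applying Theorem \ref{thm:no2loops} (the second statement, where the $2$--edge loop hypothesis is vacuous) yields $||E_c||=0$ and
\[
\negeul(\guts(M_A)) = \abs{\beta'_K}-1,
\]
where one also uses Lemma \ref{lemma:positive-twist} to see $\chi(\GRA)<0$ so that $\beta'_K\neq 0$. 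Feeding this into Theorem \ref{thm:ast-estimate} applied to the essential surface $S_A\subset S^3\setminus K$ gives
\[
\vol(S^3\setminus K) \;\geq\; v_8\,\negeul(\guts(M_A)) \;=\; v_8\,(\abs{\beta'_K}-1).
\]

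For the upper bound, I would combine the diagram-level volume estimate of Theorem \ref{thm:positive-volume}, namely $\vol(S^3\setminus K) < 10v_3(t(D)-1)$, with the inequality $-\chi(\GRA)\geq 2t(D)/3$ of Lemma \ref{lemma:positive-twist}. Using $\abs{\beta'_K}-1 = -\chi(\GRA)$ once more, this rearranges to $t(D)\leq \tfrac{3}{2}(\abs{\beta'_K}-1)$, and substituting gives
\[
\vol(S^3\setminus K) \;<\; 10v_3\!\left(\tfrac{3}{2}(\abs{\beta'_K}-1)-1\right) \;=\; 15v_3\,(\abs{\beta'_K}-1) - 10v_3,
\]
as required.

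There is essentially no serious obstacle here; the corollary is a bookkeeping consequence of the main theorems together with the combinatorial observation that positive braids with $r_j\geq 3$ have no $2$--edge loops in $\GA$. The only point requiring a moment's care is verifying the hypothesis $\beta'_K\neq 0$ of Theorem \ref{thm:no2loops}, which follows immediately from $\chi(\GRA)\leq -2k/3<0$ via Lemma \ref{lemma:stabilized}.
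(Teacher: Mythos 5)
Your proposal is correct and follows essentially the same route as the paper: the paper's lower bound cites Corollary \ref{cor:no2edge-volume} (which packages the same ingredients — absence of $2$--edge loops via Lemma \ref{lemma:positive-twist}, vanishing of $||E_c||$, the guts computation, and Theorem \ref{thm:ast-estimate}) together with Lemma \ref{lemma:stabilized}, while your invocation of Theorem \ref{thm:no2loops} plus Theorem \ref{thm:ast-estimate} is just a different packaging of the identical argument. The upper bound — combining the Agol--D.~Thurston bound $10v_3(t(D)-1)$ with $t(D)\leq \tfrac{3}{2}(\abs{\beta'_K}-1)$ from Lemma \ref{lemma:positive-twist} — matches the paper exactly.
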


\begin{proof}
By Lemma \ref{lemma:positive-twist}, the graph $\GA$ has no $2$--edge
loops, and $\chi(\GA) = \chi(\GRA) < 0$. Thus, by Corollary
\ref{cor:no2edge-volume},
$$\vol(S^3 \setminus K) \: \geq \: -v_8 \,  \chi(\GRA) \: = \: v_8 (\abs{\beta'_K} -1).$$

For the upper bound on volume, we also use Lemma
\ref{lemma:positive-twist}.  The estimate of that lemma implies that
$$t(D) \: \leq \: - \tfrac{3}{2} \, \chi(\GRA) \: = \: \tfrac{3}{2}
\abs{\beta'_K} - \tfrac{3}{2}.$$
Combined with Agol and D.\ Thurston's bound $\vol(S^3 \setminus K) < 10v_3
(t(D) -1)$, this completes the proof.
\end{proof}

A second result in this vein concerns Montesinos knots and links.

\begin{corollary}\label{cor:jones-volumemonte}\index{hyperbolic volume!of Montesinos links}\index{Montesinos link!hyperbolic volume}\index{hyperbolic volume!in terms of Jones coefficients}\index{$\beta_K$, $\beta_K'$!relation to volume}
Let $K \subset S^3$ be a Montesinos link with a reduced Montesinos
diagram $D(K)$.  Suppose that $D(K)$ contains at least three positive
tangles and at least three negative tangles.  Then $K$ is a hyperbolic
link, satisfying
$$ v_8 \left( \max \{ \abs{\beta_K}, \abs{\beta'_K} \} -1 \right) \:
\leq \: \vol(S^3 \setminus K) \: < \: 4 v_8 \left( \abs{\beta_K} +
\abs{\beta'_K} -2 \right) + 2 v_8 \, (\# K),$$
where $\#K$ is the number of link components of $K$.
\end{corollary}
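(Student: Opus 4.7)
The plan is to derive both inequalities by combining the guts/Jones correspondence for Montesinos links (Corollary \ref{cor:monte-exact}) with the already-established two-sided diagrammatic volume bounds of Theorem \ref{thm:monte-volume}, then converting everything from the language of $\negeul(\GRA), \negeul(\GRB), t(D)$ into the language of $\abs{\beta_K}, \abs{\beta'_K}$ via Lemma \ref{lemma:stabilized}. Hyperbolicity of $K$ is already part of Theorem \ref{thm:monte-volume} under the given hypotheses, so I only need to prove the two inequalities.

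For the lower bound, I would apply Theorem \ref{thm:ast-estimate} of Agol--Storm--Thurston to both state surfaces. Since $D(K)$ has at least three positive tangles, it is $A$--adequate (Lemma \ref{lemma:monte-adequate}), and Corollary \ref{cor:monte-exact} gives $\negeul(\guts(M_A)) = \abs{\beta'_K} - 1$; symmetrically, since $D(K)$ has at least three negative tangles, $\negeul(\guts(M_B)) = \abs{\beta_K} - 1$. Plugging each into Theorem \ref{thm:ast-estimate} yields
$$\vol(S^3 \setminus K) \:\geq\: v_8(\abs{\beta'_K}-1) \quad \text{and} \quad \vol(S^3 \setminus K) \:\geq\: v_8(\abs{\beta_K}-1),$$
and taking the larger of the two bounds produces the desired inequality with $\max\{\abs{\beta_K},\abs{\beta'_K}\}$.

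For the upper bound, I would start from $\vol(S^3 \setminus K) < 2v_8\, t(D)$ (Theorem \ref{thm:monte-volume}) and convert $t(D)$ into Jones data. Lemma \ref{lemma:monte-twist} says
$$-\chi(\GRA) - \chi(\GRB) \:\geq\: \tfrac{1}{2}\bigl(t(D) - \#K\bigr),$$
while Lemma \ref{lemma:stabilized} gives $-\chi(\GRA) = \abs{\beta'_K}-1$ and $-\chi(\GRB) = \abs{\beta_K}-1$ (both reduced graphs contain non-trivial loops by the argument in the proof of Corollary \ref{cor:monte-exact}, so no $\chi_+$ correction is needed). Combining these yields
$$t(D) \:\leq\: 2\bigl(\abs{\beta_K} + \abs{\beta'_K} - 2\bigr) + \#K,$$
and multiplying by $2v_8$ gives exactly the stated upper bound.

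The argument is essentially pure bookkeeping once Corollary \ref{cor:monte-exact}, Theorem \ref{thm:monte-volume}, and Lemma \ref{lemma:monte-twist} are in hand; there is no real obstacle. The only point that deserves a sentence of care is verifying that neither $\GRA$ nor $\GRB$ is a tree (so that $\negeul = -\chi$ rather than $-\chi + 1$), but this follows from the same observation used in the proof of Corollary \ref{cor:monte-exact}: the outer cycle through all $r \geq 3$ positive tangles survives in $\GRA$, and similarly the outer cycle through all $s \geq 3$ negative tangles survives in $\GRB$.
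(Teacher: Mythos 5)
Your proposal is correct and follows essentially the same route as the paper: the lower bound comes from Theorem \ref{thm:ast-estimate} together with Corollary \ref{cor:monte-exact}, and the upper bound from combining $\vol(S^3\setminus K) < 2v_8\,t(D)$ (Theorem \ref{thm:monte-volume}) with Lemma \ref{lemma:monte-twist} and the conversion $-\chi(\GRA) = \abs{\beta'_K}-1$, $-\chi(\GRB) = \abs{\beta_K}-1$ from Lemma \ref{lemma:stabilized}. Your extra remark about the reduced graphs not being trees is harmless but not needed, since Corollary \ref{cor:monte-exact} already delivers the guts equalities in the required form.
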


We remark that the number of link components $\# K$ is recoverable
from the Jones polynomial evaluated at $1$: $J_K(1) = (-2)^{\#K-1}$.
See \cite{jones:announce}.

\begin{proof}
The lower bound on volume is Theorem \ref{thm:ast-estimate} combined
with Corollary \ref{cor:monte-exact}. For the upper bound on volume,
we combine the upper bound of Theorem \ref{thm:monte-volume} with the
estimate of Lemma \ref{lemma:monte-twist}:
\begin{eqnarray*}
\vol(S^3 \setminus K) & < & 2 v_8 \, t(D) \\
& \leq & 2 v_8 \left(  -2 \chi(\GRA) -2 \chi(\GRA) + \# K \right) \\
& = & 4 v_8  \left(  \abs{\beta_K} + \abs{\beta'_K} -2 \right) + 2 v_8 \, (\# K).
\end{eqnarray*}

\vspace{-3ex}
\end{proof}

%% 10
\chapter{Discussion and questions}\label{sec:questions}
In this final chapter, we state some questions that arose from this
work and speculate about future directions related to this project. In
Section \ref{sec:efficient}, we discuss modifications of the diagram $D$ 
that preserve $A$--adequacy. 
 In Section \ref{sec:control}, we
speculate about using normal surface theory in our polyhedral
decomposition of $M_A$ to attack various open problems, for
example the Cabling Conjecture and the determination of hyperbolic
$A$--adequate knots.  In Section \ref{sec:other-states}, we discuss
extending the results of this monograph to states other than the all--$A$
(or all--$B$) state.  Finally, in Section \ref{sec:coarse}, we discuss
a coarse form of the hyperbolic volume conjecture.

\section{Efficient diagrams}\label{sec:efficient}
To motivate our discussion of diagrammatic moves,  recall  the well-known \emph{Tait conjectures}\index{Tait conjectures for alternating links}
for alternating links:
\begin{enumerate}
\item\label{i:alt-crossing} Any two reduced alternating projections of the same link have the same number of crossings.
\item\label{i:alt-minimal} A reduced alternating diagram of  a link has the least number of crossings among all the projections of the link.
\item\label{i:alt-flyping} Given two reduced, prime alternating  diagrams $D$ and $D'$ of the same link, it is possible to transform $D$ to $D'$ by a finite sequence of \emph{flypes}\index{flyping conjecture}.
\end{enumerate}

Statements \eqref{i:alt-crossing} and \eqref{i:alt-minimal} where proved by Kauffman \cite{KaufJones} and Murasugi \cite{murasugitait} using properties of the Jones polynomial.
A shorter proof along similar lines was given by Turaev \cite{turaevsurface}.
 Statement \eqref{i:alt-flyping}, which is known as the ``flyping conjecture"\index{flyping conjecture} was proven by Menasco and Thistlethwaite \cite{taitflype}. Note that the Jones polynomial is also used in that proof.

One can ask to what extend the statements above can be generalized to semi-adequate links.
It is easy to see that statements \eqref{i:alt-crossing} and \eqref{i:alt-minimal} are not true in this case: For instance, the two diagrams 
in Example \ref{example:bad} on page \pageref{example:bad} are both $A$--adequate, but have different numbers of crossings. Nonetheless, some information is known about crossing numbers of semi-adequate diagrams:
Stoimenow showed that the number of crossings of any  semi-adequate projection of a link is bounded above by a link invariant
that is expressed in terms of the 2--variable Kauffman polynomial and the maximal Euler characteristic of the link.
As a result, he concluded that each semi-adequate link has only finitely many semi-adequate reduced diagrams \cite[Theorem 1.1]{stoimenow}.
In view of his work, it seems natural to ask for analogue of the flyping conjecture in the setting of semi-adequate links.

\begin{problem}\label{prob:adequate-flypes} Find a set of diagrammatic moves that preserve $A$--adequacy and that suffice to pass between any pair of reduced, $A$--adequate diagrams of a link $K$.
\end{problem}

A solution to Problem \ref{prob:adequate-flypes} would help to clarify to what extent the various quantities introduced in this monograph actually depend on the choice of $A$--adequate diagram $D(K)$. 
Recall the prime polyhedral decomposition of $M_A = S^3\cut S_A$ introduced above, and let
$\beta'_K$ and $\epsilon_K$ be as in Definition \ref{def:stablevalue}
on page \pageref{def:stablevalue}.  Since
$\abs{\beta'_K}-1+\epsilon'_K$ is an invariant of $K$, Theorem
\ref{thm:jonesguts} implies that the quantity $||E_c|| +
\negeul(\guts(M_A))$ is also an invariant of $K$.  As noted earlier,
$|| E_c||$ and $\negeul( \guts(M_A))$ are not, in general, invariants
of $K$: they depend on the $A$--adequate diagram used. For instance,
in Example \ref{example:bad} on page \pageref{example:bad}, we show
that by modifying the diagram of a particular link, we can eliminate
the quantity $|| E_c||$.  This example, along with the family of
Montesinos links (see Theorem \ref{thm:monteguts} on page
\pageref{thm:monteguts}), prompts the following question.

\begin{question}\label{quest:allequal}\index{$E_c$!dependence on diagram}
Let $K$ be a non-split, prime $A$--adequate link. Is there an
$A$--adequate diagram $D(K)$, such that if we consider the
corresponding prime polyhedral decomposition of $M_A(D) = S^3\cut
S_A(D)$, we will have $|| E_c||=0$? This would imply that
$$\negeul( \guts(M_A)) \:= \: \negeul (\GRA) \: = \: \abs{\beta'_K}-1+\epsilon'_K \, .$$
\end{question}

Among the more accessible special cases of Question
\ref{quest:allequal} is the following.

\begin{question}\label{quest:montegeneral}
Does Theorem \ref{thm:monteguts} generalize to {\em all} Montesinos
links? That is: can we remove the hypothesis that a reduced diagram
$D(K)$ must contain at least three tangles of positive slope?
\end{question}

Note that if $D(K)$ has no positive tangles, then it is alternating,
hence the conclusion of Theorem \ref{thm:monteguts} is known by
\cite{lackenby:volume-alt}. If $D(K)$ has one positive tangle, then it
is not $A$--adequate by Lemma \ref{lemma:monte-adequate}. Thus
Question \ref{quest:montegeneral} is open only in the case where
$D(K)$ contains exactly two tangles of positive slope.

Another tractable special case of Question \ref{quest:allequal} is the
following.

\begin{question}\label{quest:tanglesgen}
Let $K$ be an $A$--adequate link that can be depicted by a diagram  $D(K)$, obtained
by Conway summation of alternating tangles. Each such link admits a Turaev surface\index{Turaev surface} of genus one \cite{dasbach-futer...}.
Does there exist a (possibly different) diagram of $K$, for which $|| E_c|| = 0$?
\end{question}

Prior to this manuscript, there have been only a few cases in
which the $\guts$ of essential surfaces have been explicitly
understood and calculated for an infinite family of $3$--manifolds
\cite{agol:guts, kuessner:guts, lackenby:volume-alt}.
Affirmative answers to Questions \ref{quest:allequal},
\ref{quest:montegeneral}, and \ref{quest:tanglesgen} would add to the
list of these results, and could have further applications.  In
particular, combined with Theorem \ref{thm:ast-estimate}, they would lead to
new relations between quantum invariants and hyperbolic volume.

\smallskip

Next, recall from the end of Section \ref{sec:mod-diagram}, that given an
$A$--adequate diagram $D:=D(K)$, we denote by $D^n$ the $n$--cabling of
$D$ with the blackboard framing.  If $D$ is $A$--adequate then $D^n$
is $A$--adequate for all $n\in {\NN}$.  Furthermore, we have
$$\chi(\GRA (D^n)) =\chi(\GRA(D)),$$ for all $n\geq 1$.  In other
words, the quantity $\chi(\GRA)$ remains invariant under cabling
\cite[Chapter 5]{lickorish:book}.  Recall, from Corollary \ref{cor:cable} on page
\pageref{cor:cable}, that the quantity $\negeul \guts(S^3 \cut
S^n_A)+ || E_c(D^n)||$ is also invariant under planar cabling. This
prompts the following question.

\begin{question}\label{quest:cableques}\index{cabling}\index{$D^n$, the $n$--cabling of a diagram}\index{guts!stability under cabling}\index{cabling!effect on guts}
Let $D:=D(K)$ be a prime, $A$--adequate diagram, of a link $K$.  For
$n \geq 1$, let $D^n$ denote the $n$--cabling of $D$ using the
blackboard framing.  Is it true that $ || E_c(D^n)||= || E_c(D)||$,
hence $\negeul ( \guts(S^3 \cut S^n_A)) = \negeul (\guts(S^3 \cut
S_A))$, for every $n$ as above?
\end{question}

We note that an affirmative answer to Question \ref{quest:cableques}
would provide an intrinsic explanation for the fact that the
coefficient $\beta'_n$ of the colored Jones polynomials stabilizes.
\smallskip

\section{Control over surfaces}\label{sec:control}

In Chapters \ref{sec:ibundle} and \ref{sec:spanning}, we controlled
pieces of the characteristic submanifold of $M_A$ by putting them in
normal form with respect to the polyhedral decomposition constructed
in Chapter \ref{sec:polyhedra}. The powerful tools of normal surface
theory have been used (sometimes in disguise) to obtain a number of
results about alternating knots and links: see, for example,
\cite{lackenby:volume-alt, lackenby:tunnel, menasco:incompress,
  menasco-thist}. It seems natural to ask what other results in
this vein can be proved for $A$--adequate knots and links.

One sample open problem that should be accessible using these methods
is the following the following problem posed by Ozawa \cite{ozawa}.

\begin{problem}\label{problem:composite}\index{prime!diagram}
Prove that an $A$--adequate knot is prime if and only if \emph{every}
$A$--adequate diagram without nugatory crossings is prime.
\end{problem}

Recall that one direction of the problem is Corollary
\ref{cor:primelinkadequate} on page \pageref{cor:primelinkadequate}:
if $K$ is prime and $D(K)$ has no nugatory crossings, then $D$ must be
prime.  To attack the converse direction of the problem, one might try
showing that if $K$ is not prime, then an $A$--adequate diagram $D(K)$
cannot be prime.

Suppose that $K$ is not prime, and $\Sigma \subset S^3 \setminus K$ is
an essential, meridional annulus in the prime decomposition. Then,
since $S_A$ is also an essential surface, $\Sigma$ can be moved by
isotopy into a position where it intersects $S_A$ in a collection of
essential arcs. Thus, after $\Sigma$ is cut along these arcs, it must
intersect $M_A = S^3 \cut S_A$ in a disjoint union of EPDs. Now, all
the machinery of Chapter \ref{sec:ibundle} can be used to analyze
these EPDs, with the aim of proving that $D$ must not be prime.

The same ideas can be used to attack other problems that depend on an
understanding of ``small'' surfaces in the link complement. For
example, if $\Sigma \subset S^3 \setminus K$ is an essential torus,
then $\Sigma \cap S_A$ must consist of simple closed curves that are
essential on both surfaces. Cutting $\Sigma$ along these curves, we
conclude that $\Sigma \cap M_A$ is a union of annuli, which are
contained in the maximal $I$--bundle of $M_A$. Thus once again, the
machinery of Chapter \ref{sec:ibundle} can be brought to bear: by
Lemma \ref{lemma:squares}, each annulus intersects the polyhedra in
normal squares, and so on. This leads to the following question.

\begin{problem}\label{problem:hyperbolic}
Give characterization of hyperbolic $A$--adequate links in terms of
their $A$--adequate diagrams.
\end{problem}

We are aware of only three families of $A$--adequate diagrams that
depict non-hyperbolic links.  First, the standard diagram of a
$(p,q)$--torus link (where $p > 0$ and $q < 0$) is a negative braid,
hence $A$--adequate by the discussion following Definition
\ref{def:positive-braid} on page \pageref{def:positive-braid}.
Second, by Corollary \ref{cor:primelinkadequate} on page
\pageref{cor:primelinkadequate}, a non-prime $A$--adequate diagram
(without nugatory crossings) must depict a composite link. Third, a
planar cable of (some of the components of) a link $K$ in an
$A$--adequate diagram $D$ also produces an $A$--adequate diagram
$D^n$, but clearly is not hyperbolic. Thus the following na\"ive
question has a chance of a positive answer:

\begin{question}\label{quest:hyperbolic}
Suppose $D(K)$ is a prime $A$--adequate diagram that is not a planar
cable and not the standard diagram of a $(p,q)$--torus link.  Is $K$
necessarily hyperbolic?
\end{question}

A related open problem is the celebrated \emph{Cabling Conjecture},
which implies that a hyperbolic knot $K$ does not have any reducible
Dehn surgeries.  While the conjecture has been proved for large
classes of knots \cite{eudave-munoz:band, luft-zhang:symmetric,
  menasco-thist, scharlemann:reducible}, including all non-hyperbolic knots, 
  it is still a major open
problem.  Note that if a Dehn filling of a knot $K$ does contain an
essential $2$--sphere, then $S^3 \setminus K$ must contain an
essential planar surface $\Sigma$, whose boundary is the slope along
which we perform the Dehn filling.  The Cabling Conjecture asserts
that $K$ must be a cable knot and $\Sigma$ is the \emph{cabling
  annulus}. Given existing work \cite{moser:torus-knot-surgery, scharlemann:reducible}, an equivalent formulation is that hyperbolic knots do not have any reducible surgeries.\index{cabling conjecture}
  
   If $K$ is an $A$--adequate knot, then our results here
provide a nice ideal polyhedral decomposition of associated
3--manifold $M_A$.  It would be interesting to attempt to analyze
essential planar surfaces in $S^3 \setminus K$ by putting them in
normal form with respect to this decomposition, to attack the
following problem.

\begin{problem}\label{problem:cabling}
If $\Sigma$ is an essential planar surface in the complement of an
$A$--adequate knot $K$, show that either $\partial \Sigma$ consists of
meridians of $K$, or $\Sigma$ is a cabling annulus.  That is, prove
the Cabling Conjecture for $A$--adequate knots.
\end{problem}

Recall that the class of $A$--adequate knots is very large; see Section
\ref{subsec:largeclass} on page \pageref{subsec:largeclass}.
Therefore, the resolution of Problem \ref{problem:cabling} would be a
major step toward a proof of the Cabling Conjecture.

\section{Other states}\label{sec:other-states}

As we mentioned in Chapter \ref{sec:decomp}, one may associate many
states to a link diagram.   Any choice of state $\sigma$ defines a
state graph ${\G}_{\sigma}$ and a state surface $S_{\sigma}$ (see also
\cite{fkp:PAMS}).  
% \marginpar{DF:We partially answer this question, in remarks in various chapters. Revise this discussion accordinly.}
A natural and interesting question is: to what
extent do the methods and results of this manuscript generalize to
states other than the all--$A$ and the all--$B$ state?  For example,
one can ask the following question.
\begin{question}\label{quest:find-good-state}\index{$S_\sigma$, state surface of $\sigma$}
Does every knot $K$ admit a diagram $D(K)$ and a state $\sigma$ so
that $S_{\sigma}$ is essential in $S^3 \setminus K$?
\end{question}

As we have seen
in Sections
\ref{subsec:generalization}, \ref{subsec:idealsigma}, 
\ref{sec:gensigmahomo}, and \ref{subsec:spanningsigma}, all of our structural results about the polyhedral decomposition  generalize to state surfaces of
 $\sigma$--homogeneous, $\sigma$--adequate  states. In particular,  the state surface $S_{\sigma}$ of such a state must be essential, recovering Ozawa's Theorem \ref{thm:ozawa}.
In \cite{dasbach-futer...}, Dasbach, Futer, Kalfagianni, Lin, and
Stoltzfus show that for any diagram $D(K)$, the entire Jones
polynomial $J_K(t)$ can be computed from the  Bollob\'as--Riordan polynomial \cite{bo-ri, bo-ri1}
of the \emph{ribbon graph}
associated to the all--$A$ graph $\GA$ or the all--$B$ graph
$\GB$. It is natural to ask whether these results extend to
other states.
\begin{question}
Let $D(K)$ be a link diagram that is  $\sigma$--adequate and
$\sigma$--homogeneous. Does the
Bollob\'as--Riordan polynomial of the graph $\G_\sigma$
associated to $\sigma$ carry all of the information in the Jones
polynomial of $K$? How do these polynomials relate to the topology of
the  state surface $S_\sigma$?
\end{question}

\section{A coarse volume conjecture}\label{sec:coarse}

Our results here, as well as several recent articles
\cite{dasbach-lin:head-tail, fkp:filling, fkp:conway, fkp:farey}, have
established two--sided bounds on the hyperbolic volume of a link
complement in terms of coefficients of the Jones and colored Jones
polynomials. These results motivate the following question.

\begin{define}\label{def:coarse}
Let $f,g : Z \to \RR_+$ be functions from some (infinite) set $Z$ to
the non-negative reals. We say that $f$ and $g$ are \emph{coarsely
  related}\index{coarsely related} if there exist universal constants
$C_1 \geq 1$ and $C_2 \geq 0$ such that
$$C_1^{-1} f(x) - C_2 \: \leq \: g(x) \: \leq \: C_1 f(x) + C_2 \quad
\forall x \in Z.$$ This notion is central in coarse geometry. For
example, a function $\varphi: X \to Y$ between two metric spaces is a
\emph{quasi-isometric embedding}\index{quasi-isometric embedding} if
$d_X(x,x')$ is coarsely related to $d_Y(\varphi(x), \varphi(x'))$. Here, $Z = X \times X$.
\end{define}

\begin{question}[Coarse Volume Conjecture] \label{quest:voljp}\index{coarse volume conjecture}\index{volume conjecture!coarse}\index{hyperbolic volume!coarse volume conjecture}\index{$\beta_K$, $\beta_K'$!relation to volume}
Does there exist a function $B(K)$ of the coefficients of the colored
Jones polynomials of a knot $K$, such that for hyperbolic knots,
$B(K)$ is coarsely related to hyperbolic volume?
  
Here, we are thinking of both $\vol : Z \to \RR_+$ and $B : Z \to
\RR_+$ as functions on the set $Z$ of hyperbolic knots.
\end{question}

Work of Garoufalidis and Le \cite{garoufalidisLe} implies that for a
given link $K$, the sequence $\{ J^n_K(t)| n\in \NN \}$ is determined
by finitely many values of $n$. This implies that the coefficients
satisfy linear recursive relations with constant coefficients
\cite{garquasi}.  For $A$--adequate links, the recursive relations
between coefficients of $J^n_K(t)$ manifest themselves in the
stabilization properties discussed in Lemma \ref{lemma:stabilized} on
page \pageref{lemma:stabilized}, and Definition \ref{def:stablevalue}
on page \pageref{def:stablevalue}.  Lemma \ref{lemma:stabilized} is
not true for arbitrary knots. However, numerical evidence and
calculations (by Armond, Dasbach, Garoufalidis, van der Veen, Zagier,
etc.)
prompt the question of whether the first and last two coefficients of
$J^n_K(t)$ ``eventually" become periodic.

\begin{question} \label{quest:periodic}
Given a knot $K$, do there exist a ``stable'' integer $N = N(K)>0$ and a ``period" $p=p(K)>0$, depending
on $K$, such that for all $m \geq N$ where $m-N$ is a multiple of $p$,
$$ \abs{\alpha_m} = \abs{\alpha_N}, \quad \abs{\beta_m}=\abs{\beta_N}, \quad
\abs{\beta'_m}=\abs{\beta'_N}, \quad  \abs{ \alpha'_m }=\abs{ \alpha'_N} \ ? $$
\end{question}

As discussed above,  for knots that are both $A$ and $B$--adequate, any integer $N\geq2$ is ``stable" with period $p=1$.
Examples show that in general, we cannot hope that $p=1$ for arbitrary knots. For example, \cite[Proposition 6.1]{codyoliver} states that for 
several families of torus knots we have $p=2$.
In general, 
if the answer to Question \ref{quest:periodic} is \emph{yes}, then we
if we take $N$ to be the smallest ``stable" integer then we may consider the $4p$ values
\begin{equation}\label{eq:periodic}
 \abs{\alpha_m}, \quad \abs{\beta_m}, \quad  \abs{\beta'_m}, \quad  
 \abs{\alpha'_m} , \quad {\rm for} \quad  N\leq m\leq N+p-1.
 \end{equation}
The results \cite{dasbach-lin:head-tail, fkp:filling, fkp:conway,
  fkp:farey}, as well as Corollary \ref{cor:jones-volumemonte} in Chapter \ref{sec:applications}, prompt the question of whether this family of coefficients of $J^n_K(t)$ 
determines the volume of $K$ up to a bounded constant.

%	To illustrate this, assume for notational simplicity that $K$ is a hyperbolic knot
%	with period $p=1$.
%	Then we can define the stabilized coefficients
%	$$ \alpha_K := \abs{\alpha_N}, \quad \beta_K:= \abs{\beta_N}, \quad   \beta'_K :=\abs{\beta'_N}, \quad   \alpha'_K:=
%	 \abs{\alpha'_N} \, , $$
%	just as in  Definition \ref{def:stablevalue} on page
%	\pageref{def:stablevalue}.
%	
%	The following question asks whether these stable coefficients of $J^n_K(t)$ 
%	provide functions $B(K)$ for Question
%	\ref{quest:voljp}.

\begin{question}\label{quest:volconj-special}\index{stable coefficients!relation to volume}
Suppose the answer  to Question \ref{quest:periodic} is \emph{yes}, and the stable values 
$ \abs{\alpha_m}$ $\abs{\beta_m}$, $\abs{\beta'_m}$, $\abs{\alpha'_m}$ of equation \eqref{eq:periodic} are well--defined.
Is there a function $B(K)$ of these stable coefficients
that is coarsely related to the hyperbolic volume $\vol(S^3\setminus K)$?
\end{question}

\begin{remark} If $K$ is an alternating knot then $ \beta_K, \beta'_K $
are equal to the second and penultimate coefficient  of the ordinary Jones polynomial
$J_K(t)$, respectively.  Since the quantity $\abs{ \beta_K}+ \abs{\beta'_K }$ provides two sided bounds on the volume of
hyperbolic alternating links one may wonder whether there is a function of the second and the penultimate coefficient of 
$J_K(t)$ that controls the volume of all hyperbolic knots $K$. In \cite[Theorem 6.8]{fkp:farey} we show that is is not the case. That is
there is no single function of the the second and the penultimate coefficient of 
the Jones polynomial that can control the volume of all hyperbolic knots.
\end{remark}
Finally, we note that the quantity on the right-hand side of the
equation in the statement of Theorem \ref{thm:jonesguts} can be
rewritten in the form $\abs{\beta'_K}-\abs{\alpha'_K}+\epsilon'_K$.
In the view of this observation, it is tempting to ask whether
analogues of Theorem \ref{thm:jonesguts} on page
\pageref{thm:jonesguts} hold for \emph{all} knots.

\begin{question}\label{quest:jonesguts-analogue}
Given a knot $K$ for which the stable coefficients of Question \ref{quest:periodic} exist, is there an essential spanning surface $S$ with
boundary $K$ such that the stable coefficients $\alpha'_K, \beta'_K$
capture the topology of $S^3\cut S$ in the sense of Theorem
\ref{thm:jonesguts}, Corollary \ref{cor:beta-fiber}, and Theorem
\ref{thm:fibroid-detect}?
\end{question}

\bibliographystyle{hamsplain}
\bibliography{biblio}
\printindex
\end{document}